\documentclass[12pt]{amsart}
\usepackage{geometry} % see geometry.pdf on how to lay out the page. There's lots.
\usepackage{xcolor}
\usepackage{ mathrsfs }

\usepackage{graphicx}
 
\usepackage[colorlinks,linkcolor=cyan,citecolor=teal]{hyperref}

\usepackage[title]{appendix}

\makeatletter
\newcounter{savesection}
\newcounter{apdxsection}
\renewcommand\appendix{\par
  \setcounter{savesection}{\value{section}}%
  \setcounter{section}{\value{apdxsection}}%
  \setcounter{subsection}{0}%
  \gdef\thesection{\@Alph\c@section}}
\newcommand\unappendix{\par
  \setcounter{apdxsection}{\value{section}}%
  \setcounter{section}{\value{savesection}}%
  \setcounter{subsection}{0}%
  \gdef\thesection{\@arabic\c@section}}
\makeatother
\usepackage{imakeidx}
\usepackage{enumitem} 
\def\mfa{\mathfrak{a}}
\def\mfb{\mathfrak{b}}
\def\mfl{\mathfrak{l}}

\def\mfu{\mathfrak{u}}

\def\mfs{\mathfrak{s}}
\def\mfe{\mathfrak{e}}
\def\mfn{\mathfrak{n}}
\def\mfw{\mathfrak{w}}

\def\he{\mathfrak{he}}
\def\mfm{\mathfrak{m}}

\def\scrC{\mathscr{C}}

\def\Ld{\mathrm{L}}
\def\Pd{\mathrm{P}}

\def\mfc{\mathfrak{c}}

\def\mfL{\mathfrak{L}}

\def\wto{\rightharpoonup}

\def\a{\alpha}
\def\b{\beta}
\def\g{\gamma}
\def\la{\lambda}
\def\om{\omega}
\def\th{\theta}

\def\G{\Gamma}
\def\La{\Lambda}

\def\ts{\otimes}
\def\bts{\bigotimes}
\def\tm{\times}

\def\N{\mathbb{N}}
\def\Z{\mathbb{Z}}
\def\R{\mathbb{R}}
\def\C{\mathbb{C}}
\def\Yb{\mathbb{Yb}}

\def\ZD{\mathbb{Z}^{\downarrow N}}

\def\<{\langle}
\def\>{\rangle}

\newtheorem{lem}{Lemma}[section]
\newtheorem{prop}{Proposition}[section]
\newtheorem{defn}{Definition}[section]
\newtheorem{coro}{Corollary}[section]
\newtheorem{thm}{Theorem}[section]
\newtheorem{ex}{Example}[section]

\newcommand{\End}{\mathrm{End}}
\newcommand{\Hom}{\mathrm{Hom}}
\newcommand{\Tr}{\mathrm{Tr}}
\newcommand{\id}{\mathrm{id}}
\newcommand{\Id}{\mathrm{Id}}
\newcommand{\EE}{\mathbb{E}}
\newcommand{\PP}{\mathbb{P}}
\newcommand\Gbb{\mathbb{G}}
\newcommand\YM{\mathrm{YM}}
\newcommand\Wg{\mathrm{Wg}}

\newtheorem{rmk}{Remark}[section]
\title[Large $N$ limit of Wilson loops on orientable closed surfaces]{ Large $N$ limit of Wilson loops on orientable closed surfaces in the light of  Koike-Schur-Weyl duality and Spin Networks}

\author{Antoine Dahlqvist}
\address{University of Sussex, School of Mathematical and Physical Sciences, Pevensey 3 Building, Brighton, UK}
\email{antoine.dahlqvist@sussex.ac.uk}
\begin{document}

\begin{abstract}
 {We prove  the  convergence in probability of Wilson loops under  the Yang-Mills measure on any closed, orientable surface for $U(N)$ or $SU(N)$ structure group as $N\to\infty,$ confirming a conjecture of \cite{DLII}.   Our 
 approach revisits  and refines  arguments   of \cite{MageeI,MageeII}  for average Wilson loops under the Atiyah-Bott-Goldman measure and of \cite{DLII} for the Yang-Mills measure.   It consists in first expanding  Wilson loops moments as sums over highest weights, then in representing unitary integrals  as surface sums using Koike-Schur-Weyl duality for traceless tensors, estimating their Euler characteristic with a fine interplay with Dehn's algorithm for surface groups. The first part applies a new formula due to  T. L\'evy, for which we give an alternative proof. The second  starts by finding Schur-Weyl duals  for projection on traceless tensors suitable for large $N$ limits.  We also apply the arguments of the second part   to prove formulas discovered in theoretical physics  by Gross and Taylor \cite{GTI,GTII} and  later Kimura and Ramgoolam \cite{RamKim}  for dimensions of unitary irreps and non-chiral expansions of the Yang-Mills partition function, as well as a relation between rational Schur functions and Wick ordering of power sums functions. 
    }\end{abstract}
\maketitle
\tableofcontents
 
\section{Introduction}

The two-dimensional Yang-Mills measure is a probability measure associated to any pair formed by a compact Lie group $G$ and a surface $\Sigma $ endowed with a volume form $vol$.  It can be defined \cite{LevMarkovHol,GrossKingSengupta,DriversLassos} as a $G$-valued stochastic process $(h_\ell)_{\ell\in\mathrm{L}(\Sigma)}$ indexed by loops of the surface, whose marginals are closely related to the Brownian motion on $G.$  Its heuristic motivation stems  from the physics and geometry of the standard model for interactions of elementary particles, where $\Sigma$ plays the role of space-time and $G$ accounts for the symmetry of the type of interaction.   The Yang-Mills measure is from this heuristic point of view an instance of a Euclidean Quantum Field Theory  where  fields are connexions on $G$-principal bundles over $\Sigma$.

In \cite{Singer},  informed  by the physics literature \cite{KostovKazakov,DaulKazakov,Rusakov}  following the landmark work \cite{HOOFT} of 't Hooft,   I.M.  Singer proposed to study the Yang-Mills measure on a fixed surface,  when $G=U(N)$ or $SU(N)$ and $N\to\infty.$ This limit, a.k.a  large-N limit, aims to simplify the model while retaining its essential features. The main observables are  Wilson loops $W_\ell=\frac1 N\Tr(h_\ell)$,  where $\Tr$ stands for the standard trace on $N\times N$ matrices.  Formulas for their possible limit $\tau_{\Sigma}(\ell)=\lim_N W_\ell$ under the Yang-Mills measure,  appeared  in the physics literature when $\Sigma$ is the plane, an open disc, or the sphere and was doubt master field. As the Yang-Mills measure, the master field is invariant by area preserving diffeomorphisms and  values on homeomorphic loops   are identical as long as   the areas of the domains they bound are preserved. Whereas  convergence in probability towards  the master field  on the plane and the sphere has  been proved  rigorously in  \cite{LevMF,AS,DN,HallMF}, the one on other surfaces  remained open.   In the joint works \cite{DLI,DLII}, we proved it for the torus and conjectured the following  for any closed surface $\Sigma:$ 
\begin{equation}
\tau_{\Sigma}(\ell)=\left\{\begin{array}{ll}\tau_{\tilde\Sigma}(\tilde \ell) &\text{when}\,\ell \,\text{is contractible},\\
&\\
0&\text{otherwise,}\end{array}\right.
\end{equation}
where $\tilde\ell$ denotes the lift of a loop $\ell\in \Ld(\Sigma)$ for the fundamental cover   $\tilde \Sigma\to\Sigma$  of $\Sigma$ endowed with the volume form identified with the one of $\Sigma$. The main result of the current text is to prove this conjecture for closed orientable surface of genus $\ge 2.$  

One part of our proof  adapts a  reduction argument  \cite{DLII} using the so-called Makeenko-Migdal equations, almost allowing to focus on any given homotopy representative for each loop.  

Another one  builds on recent technical progress \cite{MageeI,MageeII} on the large $N$ limit of $\EE[ W_\ell]$ under the Atiyah-Bott-Goldman measure when $G=SU(N)$. The latter is the weak limit of the Yang-Mills measure    as $vol(\Sigma)\to 0$  and can be identified    \cite{SenguptaSC}    as the normalised  Liouville volume form for the symplectic form on 
the moduli space of flat connexions over $\Sigma$ \cite{GoldmanFunda,AB}.   A technical improvement of the current work consists in showing convergence of $\EE[|W_\ell|^2]$ under the Atiyah-Bott-Goldman measure and to generalise it 
the Yang-Mills measure.   Following\footnote{the first steps of this approach based on weighted surface sums, have also been revisited  in \cite{CaoCovering} for the  closely related  lattice Yang-Mills measure with Wilson action.  }  \cite{MPFree},  the strategy of \cite{MageeII}  we build on here,  is to write explicit formulas for Wilson loops moments as  $N^{\chi}$-weighted sums over surfaces with boundaries, with Euler characteristic $\chi$ which can be  suitably 
upper-bounded.   As in \cite{MageePuder} and \cite{MageeII}, the latter bound   depends crucially on the geometry of the loops, a refinement of Dehn's algorithm \cite{BirmanSeries}  and the realisation of $SU(N)$-irreducible representations as traceless tensors.

Revisiting the physics literature,  we   give two alternative approaches to   \cite{MageeI,MageeII}  to get explicit formulas for moments of Wilson loops: one using the so-called IRF 
formulas due to T. L\'evy, the other using walled Brauer algebras and explicit formulas for projection on traceless tensors.    We  use  IRF formulas to 
write and truncate Wilson loop expectations as sums over irreducible representations. We find and apply  duality relations  for traceless tensors to  evaluate each term of such sums, as 
well as to generalise $\frac 1N$-expansions for the partition function of the Yang-Mills measure \cite{LemoineMaida,NovakYM} revisiting the pioneering work of Gross and Taylor \cite{GTI,GTII} and recovering some formulas of \cite{RamKim}.  The latter expressions start from  combinatorial expressions for the inverse of dimensions of an  $SU(N)$-irreducible representation built from mixed tensors, generalising 
the Weingarten functions. At last we observe a relation, appearing implicitly in \cite{GTII},  between  irreducible characters for $SU(N)$ or $U(N)$ and Wick products   in   $(\Tr(U^n),\overline{\Tr(U^n)})_{n\ge 0}$ under the Haar measure.

\section{Definitions and main results}

\subsection{Heat kernel on (special) unitary groups} In this text, $N\ge 1$  is an integer,  $G_N$ or simply $G$ stands  for the compact Lie group given either  by $U(N)=\{U\in M_N(\C):UU^*=\mathrm{Id}_N\}$ or by $SU(N)=\{U\in U(N): \det(U)=1\}$.  Its Lie algebra $\mathfrak{g}$ is respectively  $\mfu(N)=\{X\in M_N(\C): X+X^*=0\}$ or $\mathfrak{su}(N)=\{X\in \mfu(N):\Tr(X)=0\}.$  It is endowed with its unique Haar probability measure $dg$ and the inner product 

\begin{equation}
\<X,Y\>_N= N\Tr(XY^*)\quad\forall X,Y\in \mfu(N).\label{eq---InnerProdLieA}
\end{equation}
The Laplacian \index{$\Delta_{\mathfrak{g}}$ Laplacian on a compact Lie group $G$ with Lie algebra $\mathfrak{g}$}  associated to this inner product is the operator 
\[\Delta_{\mathfrak{g}}=\sum_{i=1}^{\dim(\mathfrak{g})}\mathcal{L}_{X_i}^2 \]
where $(X_i)_i$ is an arbitrary orthonormal basis of $(\mathfrak{g},\<\cdot,\cdot\>_N)$ and for any $X\in \mathfrak{g},$ $\mathcal{L}_X$ is the Lie derivative operator mapping $f\in C^1(G)$ to
\[\mathcal{L}_X(f) (g)=\left.\frac{d}{dt}\right|_{t=0} f(ge^{tX})\quad \forall  g\in G. \]
The heat-kernel \index{$p_T$, Heat-kernel on a compact group $G$} on $G$ is the fundamental solution $(p_T)_{T>0}$ of the heat equation  
\begin{equation}
\left\{\begin{array}{ll}\partial_T p_T = \frac 1 2\Delta_{\mathfrak{g}}p_T&\text{for}\quad T>0,\\& \\ p_T(g)dg\rightharpoonup \delta_{\Id_N}  &\text{for}\quad T\downarrow 0.\end{array}\right.\label{def---HK}
\end{equation}
\subsection{Two dimensional topological maps}\label{sec---2DMAPS}

For a  finite graph $\mathcal{G}=(V,E),$   let $E_o$ be its set of oriented edges, for $e\in E_o,$ $e^{-1}\in E_o$ is the  edge with opposite orientation, $\underline{e} $ and $ \overline{e}\in V$ are respectively  the source and target vertices  of $e.$  A path of $\mathcal{G}$ is a sequence of edges $(e_1,\ldots,e_n)\in E_o^n$ with $\overline{e}_i=\underline{e}_{i+1}$ for all  $i<n$; we then denote it by $\g= e_1e_2\ldots e_n $  and say it is a based loop when its endpoints   $\underline{\g}=\underline{e}_1$ and $\overline{\g}=\overline{e}_n$ are the same.  An unrooted loop is an equivalence class of loops for the cyclic action   on based loops shifting a loop  $e_1\ldots e_n$ to $e_2\ldots e_ne_1.$ Paths, loops and unrooted loops of $\mathcal{G}$ are denoted $\mathrm{P}(\mathcal{G}),\mathrm{L}(\mathcal{G}), \mathrm{L}_c(\mathcal{G}).$

 A  \emph{map} is a triple   $\Gbb=(V,E,F)$   of three countable sets such that $\mathcal{G}=(V,E)$   is a finite connected graph together with
 
\begin{itemize}
 \item a bijection $\theta :E_0\to E_o$ whose cycles provide a cyclic ordering of $Out_v=\{e\in E_o:\underline{e}=v\}$ for each $v\in V,$ 
\item    a bijection $\partial:F\to \mathcal{C}_\mathcal{S}$ where  $\mathcal{C}_\mathcal{S}$ are the orbits of the map $\mathcal{S}:E_0\to E_0, e\mapsto \theta(e^{-1}).$
 \end{itemize}
%\item  corners $F$ is a finite set  such that $\mathcal{G}$ can be embedded in a closed, orientable surface $\Sigma$ in which  the complement of the embedding of $\mathcal{G}$ have   components indexed by $F$.
Elements of $F$ are called faces of the map. Elements of $\mathcal{C}_\mathcal{S}$ are identified with unrooted loops of $\mathcal{G}$ and we say $\partial f$ is  the boundary of the face $f\in F.$ The left coboundary    $L:E_0\to F$ maps $e$ to $f\in F$ with $e\in \partial f.$   Setting $R(e)=L(e^{-1}),$  as $L\circ \mathcal{S}=L,$
\begin{equation}
R(e)=L(\theta(e))\quad \forall e\in E_0\label{eq---OrderVertex/FaceCompatMap}.
\end{equation}

Identifying $\mathcal{G}$ with a  $1$-CW complex, a closed orientable surface $\Sigma$  can be built  gluing for each  $f\in F$   a disc  along the unrooted loop $\partial f.$   The orientation of $\Sigma$ can then be chosen  so that $\theta $ is given by the clockwise order around vertices. We then say the map $\mathbb{G}$ is \emph{embedded} in $\Sigma.$ With this convention for each edge $e\in E_0,$ the disc attached for the face $L(e)$
appears   on the 
left of the image of $e$ in $\Sigma.$  Call \emph{genus} $g$ of the map $\Gbb$ the genus of $\Sigma$ so that $\#V-\#E+\#F=2-2g.$  Say  $\Gbb$  is finite when $V,E,F$ are \emph{finite} and infinite otherwise.  Except when explicitly mentioned, all maps will be assumed finite. 

A \emph{covering} of $\Gbb$  is a  finite or infinite map  $\tilde \Gbb=(\tilde V,\tilde E,\tilde F)$ with cyclic order $\tilde\th$ together with an application $p:\tilde  E^o\to E^o$ with  \[p(e^{-1})=p(e)^{-1}\quad\text{and}\quad \th\circ p=p\circ \tilde \th\quad\forall e \in \tilde E^o\]
such that  $p_{|Out_v}$  is injective for all $v\in \tilde V.$  A path $\tilde \g=\tilde e_1\ldots \tilde e_n\in\Pd(\tilde \Gbb)$   is a lift of $\g\in \mathrm{P}(\Gbb)$ if  $\g=e_1\ldots e_n\in\Pd(\Gbb)$ with $p(\tilde e_i)=e_i.$ When $a\in \R_+^F$ its lift is the area vector  $\tilde a\in \R_+^{\tilde F}$ with $\tilde a(L( e))=a(L(p(e)))$ for all $e\in \tilde E^o.$

  A \emph{planar map} is a finite map of genus $0$ with one marked face $f_*$, we call it the unbounded face,   set $F^b=F\setminus\{f_*\}$ and for $a\in \R_+^F$ denote $a_{b}\in \R_+^{F_b}$ the restriction to $F_b.$  When $\Gbb$ is embedded in a surface $\Sigma$ and $U $ is a relatively compact, open topological disc of $\Sigma,$  $\Gbb_U=(V_U,E_U,F_U)$ is the planar map obtained by keeping only cells of $\Gbb$ whose embedding is included  in $U$ and identifying all faces with embedding intersecting $\Sigma\setminus U.$ Then, whenever $a\in (\R_+)^F,$ $a_U\in (\R_+^*)^{F^b_U}$  denotes  the \emph{restriction} of $a$ to $F_U^b.$  
\subsection{Lattice gauge configurations and character variety}

When $\mathcal{G}=(V,E)$ is a finite graph,   denote $G^{E_o}_0$ or $\mathcal{M}(\mathcal{G},G)$  the set of  functions $h\in G^{E_o}$ satisfying $h(e^{-1})=h(e)^{-1}.$ 
As a compact group, $G^{E_o}_0$  has identity element  $\id_{\mathcal{G},G}=(\Id_N)_{e\in E_o}\in G^{E_o}_0$ and we denote its Haar measure   $U_{\mathcal{G}}(dh)$ or simply $dh$, writing $\EE_{\mathcal{G},G}$ and $\PP_{\mathcal{\mathcal{G},G}}$ for expectations and probabilities on the probability space $(\mathcal{M}(\mathcal{G},G),\mathcal{B}_{\mathcal{G},G},U_\mathcal{G})$ where $\mathcal{B}_{\mathcal{G},G}$ is the Borel sigma field on $\mathcal{M}(\mathcal{G},G).$  For any path 
$\g=e_1\ldots e_n\in \mathrm{P}(\Sigma),$ $h\in\mathcal{M}(\mathcal{G},G),$ we set 
\begin{equation}
h_\g=h_{e_1}\ldots h_{e_n}.\label{eq---Transport}
\end{equation}
Whenever $\chi:G\to \C$ is a class function or $\mathscr{C}$ is a conjugacy class  $G$ and $\mfl\in \mathrm{C}_c(\mathcal{G}),$ $\chi(h_{\mfl})$ or $h_{\mfl}\in\mathscr{L}$ stand for $\chi(h_{\mfl'}) $ or $h_{\mfl'}\in\mathscr{C}$ where $\mfl'$ is any based loop in the equivalence class  $\mfl.$

When $\Gbb=(V,E,F)$ is a map, we consider the compact subset  \[\mathcal{M}_0(\mathcal{G},G)=\{h\in \mathcal{M}(\mathcal{G},G): h_{\partial f}=\Id_N \,\forall f\in F\}\]
$\mathcal{M}(\mathcal{G},G).$ 
The group $G^V$  acts on both $\mathcal{M}(\mathcal{G},G) $ and $\mathcal{M}_0(\mathcal{G},G)$ with 
\begin{equation}
j.h(e)=j(\underline{e})^{-1}h(e)j(\overline{e}) \quad\forall e\in E_o,j\in G^V.\label{eq---GaugeAction}
\end{equation}
The character variety is the quotient $\mathcal{X}_g(G) =\mathcal{M}_0(\mathcal{G},G)/G^V. $ Restricting \eqref{eq---Transport} to loops based at a same vertex yields an identification of $\mathcal{X}_g(G)$ with $\Hom(\pi_1(\Sigma),G)/G$ and therefore, more concretely, with   
 \[H_g(G)/G\]
 where $H_g(G)=\{h\in G^{2g}: [h_1,h_2]\ldots[h_{2g-1},h_{2g}]=\mathrm{Id}_N\},$  the action of $G$ is by diagonal conjugation and for $a,b\in G,$ $[a,b]=aba^{-1}b^{-1}.$  This quotient space is not a manifold but a manifold with corners which can be removed as follows.  For   a subgroup $H< G,$ denote  $Z(H)=\{g\in G: hg=gh \quad \forall h\in H \} $   its centraliser in $G$. When $G=SU(N)$ and $g\ge 2,$ 
 \begin{equation}
 H^{reg}_g(G)=\{h\in G^{2g}:[h_1,h_2]\ldots[h_{2g-1},h_{2g}]=\mathrm{Id}_N,  Z(\<h_i, 1\le i\le 2g\>)= Z(G) \}
 \end{equation}is  a dense open subset  in $H_g(G)$ and  a manifold of dimension $(2g-1)\dim(G)$ on which $G$ acts freely properly  so that  the quotient 
 \begin{equation}
\mathcal{X}^{reg}_g(G)= H^{reg}_g(G)/G
 \end{equation}
is an open manifold of dimension $(2g-2)\dim(G).$ 

\subsection{Atiyah-Bott-Goldman  and   Yang-Mills measure}
When  $\Gbb=(V,E,F)$ is a map and  $a\in {\R^*_+}^F$, the discrete Yang-Mills measure is the  measure on $\mathcal{M}(\mathcal{G},G)$  defined by 
\begin{equation}
\YM_{\Gbb,a}(dh)= \prod_{f\in F} p_{a(f)}(h_{\partial f}) dh.
\end{equation} 
For any $\Gbb,$ its total volume 
\begin{equation}
 Z_{G,g,|a|}=\YM_{\Gbb,a}(1)<+\infty
\end{equation}
is finite and depends only (see \cite{LevMarkovHol}) on  the group $G,$ the genus $g$ of $\Gbb$ and its total area 
\begin{equation}
|a|=\sum_{f\in F}a(f).
\end{equation}
When $G=SU(N)$ and $g\ge 2,$ we shall recall later this measure extents to $\R_+^F.$  In particular, the following is known as the semi-classical limit of the Yang-Mills measure.
   
\begin{thm} When $G=SU(N),g\ge 2,$ as $a\to0,$ 
\begin{equation}
 \YM_{\Gbb,a}\wto    \mu_{\Gbb,ABG} 
\end{equation}
for some measure $\mu_{ABG}$ with $\mathrm{supp}(\mu_{ABG})=\mathcal{M}_0(\mathcal{G},G).$  
\end{thm}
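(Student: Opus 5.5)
Since $\mathcal{M}(\mathcal{G},G)$ is compact, weak convergence of the finite measures $\YM_{\Gbb,a}$ reduces to convergence of $\int \phi\, d\YM_{\Gbb,a}$ for a dense family of test functions $\phi$. The natural family is matrix coefficients of irreducible representations of $G^{E_o}_0\simeq G^{E}$. I will expand each heat kernel by Peter--Weyl:
\[
p_T(g)=\sum_{\la\in\widehat G} d_\la\, e^{-\frac{c_2(\la)}{2}T}\chi_\la(g),
\]
where $c_2(\la)\ge 0$ is the Casimir eigenvalue of $-\Delta_{\mathfrak g}$ on $V_\la$. Inserting this into $\prod_f p_{a(f)}(h_{\partial f})\,dh$ turns $\YM_{\Gbb,a}(\phi)$ into a sum over colourings $\la\in\widehat G^F$ of faces, weighted by $\prod_f d_{\la_f}e^{-c_2(\la_f)a(f)/2}$. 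Each weight multiplies a Haar integral of $\phi$ against a product of characters of face boundaries.

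The Haar integral is evaluated edge by edge with Schur orthogonality. Each edge $e$ appears in exactly two face boundaries, $L(e)$ and $R(e)$, once in each orientation. Integrating out $h_e$ against matrix coefficients of $V_{\la_{L(e)}}$, $\overline{V_{\la_{R(e)}}}$ and of $\phi$ gives a spin-network contraction at each vertex. For $\phi=1$ this is the classical Witten/Migdal formula
\[
Z_{G,g,|a|}=\sum_\la d_\la^{2-2g}e^{-c_2(\la)|a|/2}.
\]
For general $\phi$ in a fixed finite set of representations, the colourings that contribute have adjacent faces differing by tensoring with a fixed finite set of representations. So $\YM_{\Gbb,a}(\phi)$ is a finite combination of sums of the shape
\[
\sum_{\la} d_\la^{2-2g}\,e^{-\sum_f c_2(\la_f)a(f)/2}\,R_\phi(\la),
\]
with $R_\phi(\la)$ a ratio of dimensions and $6j$-type coefficients that is bounded in $\la$. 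The limit $a\to 0$ then follows by dominated convergence: each term tends to its value at $a=0$, and the dominating series is $\sum_\la d_\la^{2-2g}$. This is Witten's zeta function $\zeta_{SU(N)}(2g-2)$, which converges for $SU(N)$ when $2g-2\ge 2$, i.e.\ $g\ge 2$. This is exactly where the hypotheses enter: for $U(N)$ the sum over the determinant characters $\det^k$ diverges.

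The limit $\mu_{ABG}$ is defined by these values at $a=0$. It is a positive measure as a weak limit of positive measures, and it has total mass $\zeta_{SU(N)}(2g-2)$.

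For the support, first note that $h_{\partial f}=\Id_N$ for every face on $\mathrm{supp}(\mu_{ABG})$. Indeed, $p_T(g)\,dg\wto\delta_{\Id}$ and $p_T$ concentrates, so for a continuous $\psi\ge 0$ vanishing near $\{h_{\partial f}=\Id\}$ one gets $\YM_{\Gbb,a}(\psi)\to 0$. To make this quantitative I will bound $\YM_{\Gbb,a}(\psi)\le \sup_{g\notin U}p_{a(f)}(g)\cdot C$, using the uniform heat-kernel decay away from the identity and bounding the remaining faces by the total mass. Hence $\mathrm{supp}(\mu_{ABG})\subset\mathcal{M}_0(\mathcal{G},G)$.

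The reverse inclusion needs $\mu_{ABG}$ to charge every open set meeting $\mathcal{M}_0(\mathcal{G},G)$. I will identify $\mu_{ABG}$, after gauge fixing along a spanning tree and the identification with $H_g(G)$, with Haar measure on the gauge orbits times the Liouville (symplectic volume) measure on $\mathcal{X}^{reg}_g(G)$. This is Sengupta's / Witten's identification: both sides have the same Fourier coefficients $\sum_\la d_\la^{2-2g}R(\la)$. Since the Liouville measure has a smooth positive density on the open dense manifold $\mathcal{X}^{reg}_g(G)$, the support is the closure of $H^{reg}_g(G)$, which is all of $H_g(G)$, i.e.\ all of $\mathcal{M}_0$.

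The main obstacle is this last step: showing that the limiting functional has full support, not just that it is supported on $\mathcal{M}_0$. Matching Fourier coefficients with the symplectic volume requires the Duistermaat--Heckman/Witten volume formula. If I wanted to avoid it, I would instead use local positivity: near a regular point the map $h\mapsto\prod[h_{2i-1},h_{2i}]$ is a submersion. So near that point the $\YM$ measure looks like a heat kernel pushed through a submersion, and it converges to a disintegration of Haar measure whose density is bounded below.
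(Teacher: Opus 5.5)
The paper does not prove this statement. It quotes it as the known semi-classical limit (Sengupta), with the identification $\pi_*\mu_{ABG}=c\,vol_\omega$ recorded in a footnote. Your argument is the standard one behind that citation, and it is correct as a sketch: the Peter--Weyl expansion of the heat kernels dominated by $\zeta_{SU(N)}(2g-2)<\infty$ (the same dimension-ratio and Euler-characteristic bookkeeping as in the proof of Proposition~\ref{Prop---TruncationWLE}), heat-kernel concentration for $\mathrm{supp}(\mu_{ABG})\subset\mathcal{M}_0(\mathcal{G},G)$, and the Liouville identification for the reverse inclusion.

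Of your two routes to full support, the local-submersion one is the self-contained choice. It needs only that $\mathrm{Im}(dr_h)=\mathfrak{z}(h)^{\perp}$, so that $r$ (or $h\mapsto(h_{\partial f})_{f\in F}$ for a general map) is a submersion at regular flat points, together with the density of $H^{reg}_g(G)$ in $H_g(G)$ stated in the paper. The Liouville route instead imports Witten's volume formula and Sengupta's identification wholesale.
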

 
We  call  $\mu_{\Gbb,ABG}$  the Atiyah-Bott-Goldman\footnote{The  above holds for any compact, semi-simple Lie group $G$. The manifold $\chi_g(G)$ can be endowed with  a symplectic form $\omega$   with symplectic volume form  $vol_\omega=\frac{1}{(g-1)!}\wedge^{g-1} \omega$  satisfying   \begin{equation}
\pi_*\mu_{ABG}=c vol_\omega\label{eq---LiouvilleMeasure}
\end{equation} where $\pi: \mathcal{M}_0(\mathcal{G},G)\to \mathcal{X}_g(G)$ is the quotient map and
\begin{equation}
c= \#Z(G)^{-1} (2\pi)^{(2g-2)\dim(G)} vol(G)^{2-2g}. 
\end{equation}
Here  $vol(G)$ is the Riemannian volume of $G$ for $\<\cdot,\cdot\>_N$ and $\omega$ is defined \cite[p.209]{GoldmanFunda} with respect to $\<\cdot,\cdot\>_N$ as bilinear form $\mathcal{B}$ on $\mathfrak{g}.$ In particular 
\begin{equation}
vol_\omega(\mathcal{X}_g(G))= \#Z(G)  (2\pi)^{(2-2g)\dim(G)} vol(G)^{2g-2} \mu_{ABG}(1). 
\end{equation}} measure.   Its volume 
\begin{equation}
Z_{N,g}=\mu_{\Gbb,ABG}(1)<\infty
\end{equation}
 is finite and depends only of $g\ge 2$ and $N\ge 2.$ In what follows $\EE_{\Gbb,a},\EE_{\Gbb}$ or  $\PP_{\Gbb,a},\PP_{\Gbb}$ stand for the expectation or probability for the probability measures $Z_{G,g,|a|}^{-1}\YM_{\Gbb,a}$ or $Z_{N,g}^{-1}\mu_{\Gbb,ABG}.$
\subsection{Continuum Yang-Mills measure} The discrete Yang-Mills measures for different maps are compatible and restriction of a single measure in the following sense.   Assume that  $\Sigma$ is a closed orientable surface endowed with a Riemannian metric.  We then say that a map $\Gbb$ is embedded in $\Sigma$ if $\Gbb$ is embedded as two dimensional maps, in the sense of section \ref{sec---2DMAPS}, and such that each edge viewed as path of $\Sigma$ has finite length.  Then for each face $f\in F,$  $vol_\Gbb(f)$ denote the Riemannian volume of the image of $f$ in $\Sigma.$ 

Consider   the set $\mathrm{P}(\Sigma)$ of finite length path parametrized by $[0,1]$ at constant speed. It is endowed with a concatenation operation $\g_1\g_2$ defined whenever $\g_1,\g_2\in\mathrm{P}(\Sigma)$ and the endpoint $\overline{\g}_1$ of $\g_1$  equals the starting point $\underline{\g}_2$ of $\g_2.$  Denote $\mathrm{L}(\Sigma)$ the subset of finite length  loops, that is paths  $\ell\in \mathrm{P}(\Sigma)$ with $\underline{\ell}=\overline{\ell}.$ The  length metric on   $\mathrm{P}(\Sigma)$ is defined setting
\begin{equation}
d(\g_1,\g_2)=|\mathscr{L}_{\g_1}-\mathscr{L}_{\g_2}|+\inf_{h}\sup_{0\le t\le 1} d_\Sigma(\g_1(h(t)),\tilde \g_2(t))\quad \forall \g_1,\g_2\in\mathrm{P}(\Sigma),
\end{equation}
where $\mathscr{L}_{\g}$ denote the length of a path $\g\in \mathrm{P}(\Sigma)$ and the infimum is taken over all  homeomorphism $h:[0,1]\to[0,1]$.

We then endow
\begin{equation}
\mathcal{M}(\mathrm{P}(\Sigma),G)=\{h\in G^{\mathrm{P}(\Sigma)}: h_{\g_1\g_2}=h_{\g_1}h_{\g_2}\quad \forall \g_1,\g_2\,\text{with}\,\overline{\g}_1=\underline{\g}_2 \}.
\end{equation}  
with cylinder sigma-field $\mathcal{C}_{\Sigma}$, that is, the smallest sigma field  such that $h\in \mathcal{M}(\mathrm{P}(\Sigma),G)\mapsto h_\g$ is measurable for all $\g\in\mathrm{P}(\Sigma).$ Whenever $\Gbb$ is embedded in $\Sigma,$ there is a canonical map $R^{\Sigma}_{\Gbb}: \mathcal{M}(\mathrm{P}(\Sigma),G) \to \mathcal{M}(\mathcal{G},G).$

 For any $a\in (\R_{+}^*)^F,$ $\YM_{\Gbb,a}(1)<+\infty$ and 

\begin{thm}[\cite{LevMarkovHol}]  Assume $\Sigma$ is a closed orientable surface of genus   endowed with a Riemannian metric. There is a unique measure $\YM_{\Sigma}$  on $(\mathcal{M}(\mathrm{P}(\Sigma),G), \mathcal{C}_\Sigma)$ such that for any map $\Gbb$ embedded in $\Sigma$,
\begin{equation}
{\mathcal{R}^{\Sigma}_{\Gbb}}_*(\YM_{\Sigma})=\YM_{\Gbb,vol_\Gbb}
\end{equation} 
such that $h_{\g_n}$ converges in distribution to $h_{\g}$ under $\YM_{\Sigma}$ whenever  endpoints of $\g_n$ and $\g$ match for all $n\ge 1$ and  $\lim_{n\to\infty}d(\g_n,\g)=0.$
\end{thm}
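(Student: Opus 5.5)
The plan is to build $\YM_\Sigma$ in three steps: first a projective (Kolmogorov) limit over the directed family of embedded maps, then an extension from graph paths to all finite-length paths by a continuity argument, and finally a uniqueness argument.

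\textbf{Step 1: consistency on graphs.} For two embedded maps $\Gbb\preceq\Gbb'$, meaning every edge of $\Gbb$ is a concatenation of edges of $\Gbb'$, there is a natural restriction $\mathcal{M}(\mathcal{G}',G)\to\mathcal{M}(\mathcal{G},G)$ given by \eqref{eq---Transport}. I will show that it pushes $\YM_{\Gbb',vol_{\Gbb'}}$ to $\YM_{\Gbb,vol_\Gbb}$. Any refinement decomposes into elementary moves:
\begin{itemize}
\item \emph{Subdividing an edge.} The new vertex variable is integrated out by invariance of the Haar measure.
\item \emph{Adding an edge that splits a face $f$ into $f_1,f_2$.} Integrating the new edge variable uses the convolution identity $\int_G p_{s}(xg)p_{t}(g^{-1}y)\,dg=p_{s+t}(xy)$, together with the conjugation invariance of $p_t$ so that boundary loops may be rerooted; the areas add because $vol(f)=vol(f_1)+vol(f_2)$.
\item \emph{Adding a vertex inside a face, joined by a new edge.} This is handled by the same convolution identity.
\end{itemize}
Since any two embedded maps with finite-length edges admit a common refinement, up to the usual technicality of edges crossing infinitely often (handled by a density/approximation argument), the family $(\YM_{\Gbb,vol_\Gbb})_\Gbb$ is projective. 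Normalising by the common total mass $Z_{G,g,|a|}$ and applying Kolmogorov's extension theorem on the compact group $G$ gives a measure on multiplicative functions on graph paths.

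\textbf{Step 2: extension to all finite-length paths.} This requires a uniform continuity estimate: if $\g_n\to\g$ in the metric $d$ with matching endpoints, then $h_{\g_n}h_\g^{-1}$ is close to $\Id_N$ in probability. The standard route is to bound $\EE[d_G(h_{\g_n},h_{\g})]$ by $C\sqrt{\text{area enclosed between }\g_n\text{ and }\g}$, using the small-time behaviour of the heat kernel: $\int d_G(g,\Id)\,p_t(g)\,dg\le C\sqrt t$. The isoperimetric inequality then controls the area enclosed by the loop $\g_n\g^{-1}$ by its length, and the length term in $d$ ensures this is small. This step is the main obstacle. Paths need not be simple and the intermediate loops need not lie in a single nice map, so one must approximate by piecewise geodesic paths and control lassos, following L\'evy's approach. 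With this estimate, $h_\g$ is defined as the limit in probability of $h_{\g_n}$ for piecewise geodesic approximations $\g_n$. Multiplicativity passes to the limit almost surely along each countable family, and a measurable version on $\mathcal{M}(\mathrm{P}(\Sigma),G)$ is obtained by applying Kolmogorov again to the consistent finite-dimensional laws.

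\textbf{Step 3: uniqueness.} Two measures satisfying both properties have the same finite-dimensional marginals on piecewise geodesic paths, which lie in common embedded maps. By the convergence-in-distribution requirement they then agree on all finite-dimensional marginals. Since the cylinder $\sigma$-field $\mathcal{C}_\Sigma$ is generated by these marginals, the two measures coincide.
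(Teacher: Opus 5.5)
The paper gives no proof of this statement; it quotes it from L\'evy \cite{LevMarkovHol}, so your proposal has to stand on its own. Its architecture is the right one: discrete measures on a directed family of maps, extension by stochastic continuity, and uniqueness by density. However, the decisive estimate in Step 2 is argued incorrectly, and it is precisely the hard part of the theorem.

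The loop $\g_n\g^{-1}$ has length $\mathscr{L}_{\g_n}+\mathscr{L}_{\g}\to 2\mathscr{L}_{\g}$. So the isoperimetric inequality only bounds the area it encloses by $O(\mathscr{L}_{\g}^2)$, and the term $|\mathscr{L}_{\g_n}-\mathscr{L}_{\g}|$ in $d$ does nothing to make this small. Moreover, $\g_n\g^{-1}$ is typically far from simple, so there is no single enclosed disc to which $\int d_G(g,\Id)p_t(g)\,dg\le C\sqrt t$ applies. If you cut it into simple loops $l_j$ and use the triangle inequality and conjugation invariance of $d_G(\cdot,\Id)$, you get $\sum_j C\sqrt{\mathrm{area}(l_j)}$. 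Isoperimetry applied piece by piece bounds this only by $C'\sum_j\mathscr{L}_{l_j}\le C'(\mathscr{L}_{\g_n}+\mathscr{L}_{\g})$, which does not tend to $0$.

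The smallness must come from the uniform part of $d$: both paths stay in a strip of width $\varepsilon_n\to 0$. The length term is what prevents $\g_n$ from oscillating or winding inside that strip. For example, a detour winding $n^3$ times around a disc of radius $1/n$ touching $\g$ is uniformly close to $\g$, but its holonomy does not converge. Turning this into a bound that tends to $0$ requires a quantitative decomposition that exploits both facts at once. That estimate is the core of L\'evy's extension theorem, and your proposal defers it with a heuristic that would not work.

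Two further points need repair.

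In Step 1, maps with merely finite-length edges need not have a common refinement, since two rectifiable edges can meet along a Cantor set. So the discrete measures over all embedded maps do not form a projective system. The ``density/approximation argument'' you invoke would itself need the continuity of Step 2, which is circular. You should run Step 1 only on piecewise geodesic maps. But then the identity ${\mathcal{R}^{\Sigma}_{\Gbb}}_*(\YM_{\Sigma})=\YM_{\Gbb,vol_\Gbb}$ for an arbitrary embedded map $\Gbb$ is no longer automatic. You must approximate $\Gbb$ by piecewise geodesic maps of the same combinatorial type, whose edges converge in $d$ with fixed endpoints and whose face areas converge to $vol_\Gbb$. You then combine continuity of $a\mapsto\YM_{\Gbb,a}$ with Step 2. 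This step is missing.

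In Step 3, convergence in distribution of each $h_{\g_n}$ separately does not identify joint laws. Apply the continuity property to the loops $\g_n\g^{-1}\to\g\g^{-1}$, whose holonomy is a.s.\ $\Id_N$, to upgrade to convergence in probability. That convergence then passes to finite families of paths.
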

\subsection{Large N limit of Wilson loops and Master field} Whenever $\g$ is a path in  $\mathrm{P}(\Gbb)$ or $\mathrm{P}(\Sigma),$  set 
\begin{equation}
W^N_{\g}=\frac{1}{N}\Tr(h_{\g}).
\end{equation}
where $\Tr$ stands for the standard trace of $h\in U(N)$ acting on $\C^N.$

\begin{thm}[\cite{DLI}] Assume   $\Gbb$ is a map of genus $g\ge 1$ embedded in a surface $\Sigma$ and $U$ is an open topological disc of $\Sigma.$ Then for any loop $\mfl\in \Ld(\Gbb)$  whose embedding in $\Sigma$ is included in $U,$  
\begin{equation}
\EE_{\Gbb,a}[|W_{\mfl}^N-\tau_{\Gbb_{U},a_U}(\mfl)|]\underset{N\to\infty}{\to}0.
\end{equation}
where $\tau_{\Gbb_U, a_U}(\mfl)\in[-1,1]$ is a constant depending only on  the planar map $\Gbb_U=(V_U,E_U,F_U)$ and $a_U\in (\R_+^*)^{F_U^b}.$

\end{thm}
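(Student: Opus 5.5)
The statement is the local convergence result of \cite{DLI}: a loop contained in a disc $U$ has a deterministic limit given by the planar data $(\Gbb_U,a_U)$. The plan has four steps: reduce to simple loops via Makeenko--Migdal, treat simple loops using the density of $h_{\partial U}$, show that density converges to the unitary Brownian motion density at the right time, and finally upgrade convergence of expectations to convergence in $L^1$.

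\textbf{Reduction to simple loops.} Following \cite{DLI}, I would first reduce to loops that are simple and bound a face-union of $\Gbb_U$. The tool is the Makeenko--Migdal equations together with the planar Markov property. Conditionally on the holonomy $h_{\partial U}$ of a simple loop surrounding the loop $\mfl$, the restriction of $\YM_{\Gbb,a}$ to edges inside $U$ is the planar Yang-Mills measure on a disc. This disc measure has a fixed boundary conjugacy class and is independent of the exterior.

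On a disc whose boundary holonomy has a deterministic limiting spectral distribution $\nu$, the Makeenko--Migdal equations and the planar master field results of \cite{LevMF,DN} yield the following. Each Wilson loop $W_\mfl$ converges in probability to a quantity $\Phi_{\Gbb_U,a_U,\nu}(\mfl)$. This quantity is determined by the moments of $\nu$ and the areas $a_U$, and is obtained by solving the Makeenko--Migdal/Kazakov recursion, as in the sphere case \cite{DN}.

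\textbf{Simple loops.} The problem therefore becomes showing that the spectral measure of $h_{\partial U}$ converges to the law of the free unitary Brownian motion at time $t=a(U)$. Here $a(U)$ denotes the area enclosed by $\partial U$ inside $U$. Once this holds, $\tau_{\Gbb_U,a_U}$ is the planar master field, which is what the statement requires.

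For a simple loop $\ell$ bounding a disc $D$ of area $t$ in a genus $g\ge1$ surface of total area $T$, the law of $h_\ell$ has density
\[
g\mapsto \frac{p_t(g)\,Z_{g,1}(T-t;g^{-1})}{Z_{G,g,T}},
\]
where $Z_{g,1}(s;x)=\int_{H} p_s(x[h_1,h_2]\cdots)\,dh$ is the partition function of the surface with one boundary component. Expanding in characters gives
\[
Z_{g,1}(s;x)=\sum_\lambda d_\lambda^{1-2g}\chi_\lambda(x)e^{-sc_\lambda/2}.
\]

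\textbf{Main obstacle.} The key step is showing that this density is $e^{o(N^2)}$ uniformly on the relevant spectral configurations. Equivalently, the free energy of the surface with a hole must not tilt the large deviation principle of the Brownian spectral measure at speed $N^2$. I would attempt this by bounding the sum over highest weights $\lambda$ via $|\chi_\lambda|\le d_\lambda$ and $d_\lambda\ge1$, so that for $g\ge1$
\[
Z_{g,1}(s;x)\le\sum_\lambda e^{-sc_\lambda/2}.
\]
For $U(N)$ the $\lambda$ carrying the $U(1)$ part need separate treatment, by splitting off the determinant. The right-hand side is $e^{O(N^2)}$ but should be compared with $Z_{G,g,T}$. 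The hope is that the ratio is $e^{o(N^2)}$ because both sums are dominated by $\lambda$ near $0$ when $g\ge1$.

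This subexponential control, combined with the large deviation principle for the empirical spectral measure of unitary Brownian motion, forces concentration at the free unitary Brownian law. For the torus, the estimate reduces to explicit bounds on $\sum_\lambda e^{-sc_\lambda/2}$ compared with $\sum_\lambda e^{-Tc_\lambda/2}$. In higher genus the factors $d_\lambda^{2-2g}\le1$ only help.

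\textbf{Concentration.} Convergence in $L^1$ then follows from convergence of the mean together with a variance bound. I would obtain the variance bound by applying the same conditioning to two copies of the loop, since the disc measure concentrates exponentially (Gaussian concentration on $SU(N)$ heat kernels at speed $N^2$).
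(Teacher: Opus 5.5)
The paper gives no proof here; the statement is imported from \cite{DLI}. Judged on its own terms, your plan has the right ingredients, but two things are wrong as written. The step you call decisive is not carried out. And the architecture rests on a disc result that you neither prove nor can take from \cite{LevMF}.

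\textbf{The key estimate.} The comparison with $Z_{G,g,T}$ plays no role. Its terms are nonnegative and $\lambda=0$ contributes $1$, so $Z_{G,g,T}\ge 1$. What you need is an upper bound $e^{o(N^2)}$ on $\sum_\lambda e^{-s\mfc_\lambda/2}$ alone. You leave this as a hope and only record the useless bound $e^{O(N^2)}$.

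The bound is elementary. Write $N\mfc_\lambda=\|\lambda\|^2+2\langle\lambda,\rho\rangle$, where $\langle\lambda,\rho\rangle=\frac12\sum_{i<j}(\lambda_i-\lambda_j)\ge 0$. Then
\[
\sum_{\lambda\in\ZD}e^{-\frac s2\mfc_\lambda}\le\Big(\sum_{k\in\Z}e^{-\frac{sk^2}{2N}}\Big)^N\le\big(1+\sqrt{2\pi N/s}\big)^N=e^{O(N\log N)},
\]
and the determinant needs no separate treatment. For $SU(N)$, use $N\mfc^*_\lambda\ge\|\overline{\lambda}\|^2$ and a representative with mean in $[0,1)$. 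Hence, with $s=T-t>0$, the density $f=Z_{g,1}(s;h_{\partial D}^{-1})/Z_{G,g,T}$ satisfies $\|f\|_\infty\le e^{O(N\log N)}$ uniformly in the boundary holonomy.

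\textbf{The reduction.} Your first step assumes a large-$N$ theorem for Yang--Mills on a disc with an arbitrary prescribed boundary holonomy whose spectral measure converges, uniformly in the boundary condition. \cite{LevMF} only covers Brownian boundary. Making your version rigorous would need exactly the variance control that your last paragraph only gestures at.

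None of this is needed, because the density bound controls the whole restriction of the measure to a disc, not just the law of $h_{\partial D}$:
\begin{itemize}
\item Take a closed disc $D\subset U$ containing $\mfl$, with simple boundary in a refinement of $\Gbb$.
\item By compatibility (the same decomposition the paper uses in Lemma~\ref{lem---YMShortestRep}), the restriction of $\PP_{\Gbb,a}$ to the edges in $D$ is the planar measure reweighted by $f(h_{\partial D})$.
\item Hence $\PP_{\Gbb,a}(A)\le e^{O(N\log N)}\PP_{\mathrm{plane}}(A)$ for every event $A$ depending only on holonomies in $D$.
\item Under the planar measure, $W_\mfl$ is the normalised trace of a word of length $n$ in independent unitary Brownian motions (lasso decomposition). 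It is therefore $n/N$-Lipschitz for $\langle\cdot,\cdot\rangle_N$.
\item The Bakry--\'Emery log-Sobolev inequality (nonnegative Ricci curvature) then gives $\PP_{\mathrm{plane}}(|W_\mfl-\EE_{\mathrm{plane}}W_\mfl|>\varepsilon)\le 4e^{-cN^2\varepsilon^2}$. This is the speed-$N^2$ concentration you mention at the end.
\end{itemize}
Combine this with $\EE_{\mathrm{plane}}[W_\mfl]\to\tau_{\Gbb_U,a_U}(\mfl)$ from \cite{LevMF} and $|W_\mfl|\le 1$. The $L^1$ convergence follows directly, with no Makeenko--Migdal reduction, no spectral analysis of simple loops and no separate variance bound. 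Alternatively, using the uniform-in-$N$ bound on the torus partition function from \cite{DLI}, $\|f\|_\infty$ is even bounded in $N$, and plain planar convergence in probability transfers.
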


 A continuous version of the above statement also holds. 

\begin{thm}[\cite{DLI}] Assume $\Sigma$ is closed orientable surface $\Sigma$ of genus $g\ge 1$endowed with a Riemannian metric and $U$ an open topological disc of $\Sigma$. Then for any loop $\Ld(\Sigma)$ with range included in $U,$ 
\begin{equation}
\EE_{\Sigma}[|W_{\ell}^N-\tau_U(\ell)|]\underset{N\to\infty}{\to}0.
\end{equation}
where $\tau_{U}(\ell)\in[-1,1]$ is a constant depending only on  $U$ and $\ell$ up to area-preserving diffeomorphisms. \end{thm}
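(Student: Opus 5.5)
The plan is to derive this continuum statement from the discrete theorem of \cite{DLI} just stated, by approximating $\ell$ by graph loops and using the continuity of the continuum Yang-Mills measure under the length metric.

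\textbf{Step 1: approximation and convergence of holonomies.} Fix $\ell\in\Ld(\Sigma)$ with range in $U$. Since the range of $\ell$ is compact, it lies in a slightly smaller relatively compact disc $U'\subset U$. Choose loops $\ell_n\in \Ld(\Sigma)$ with range in $U'$ and the same base point, with $d(\ell_n,\ell)\to0$, such that each $\ell_n$ is a loop of some map $\Gbb_n$ embedded in $\Sigma$; for instance take piecewise geodesic approximations and complete their union to a map of genus $g$. By the continuity property in the theorem of \cite{LevMarkovHol}, $h_{\ell_n}\to h_\ell$ in distribution. In fact the standard estimate gives more: $\EE_\Sigma[|W_{\ell_n}^N-W_\ell^N|]\to0$ uniformly in $N$. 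This is obtained by writing $h_{\ell_n}^{-1}h_\ell$ as a holonomy around a lasso-type loop bounding small area and bounding $\EE[|1-\frac1N\Tr|]$ by a heat-kernel estimate of order $\sqrt{\text{area}}$ plus a length term, uniformly in $N$. These uniform bounds are what I would rely on from \cite{DLI}.

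\textbf{Step 2: discrete limits.} For each $n$, the restriction ${\mathcal{R}^\Sigma_{\Gbb_n}}_*\YM_\Sigma=\YM_{\Gbb_n,vol_{\Gbb_n}}$ together with the discrete theorem gives
\[\EE_\Sigma\bigl[|W^N_{\ell_n}-\tau_{\Gbb_{n,U},a_{n,U}}(\ell_n)|\bigr]\underset{N\to\infty}{\to}0 .\]
Set $\tau_n=\tau_{\Gbb_{n,U},a_{n,U}}(\ell_n)$. The uniform estimate of Step 1 then implies
\[\limsup_N \EE_\Sigma\bigl[|W^N_{\ell}-\tau_n|\bigr]\le \epsilon_n\to0 .\]
Hence $|\tau_n-\tau_m|\le\epsilon_n+\epsilon_m$, so $(\tau_n)$ is Cauchy. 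Define $\tau_U(\ell)=\lim_n\tau_n\in[-1,1]$. It follows that $\EE_\Sigma[|W_\ell^N-\tau_U(\ell)|]\to0$, and the limit is independent of the chosen approximation.

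\textbf{Step 3: dependence only on $U$ and $\ell$.} Each $\tau_n$ depends only on the planar map $\Gbb_{n,U}$ and the areas of its bounded faces. An area-preserving diffeomorphism of $U$ maps the approximating sequence to an approximating sequence of the image loop and preserves these combinatorial and area data, so $\tau_U(\ell)$ is invariant.

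\textbf{Main obstacle.} The hard part is the uniform-in-$N$ continuity estimate in Step 1. I would first prove it for the planar case via the heat-kernel bound $\EE[|1-\frac1N\Tr(B_t)|]\le C\sqrt t$ for the unitary Brownian motion. I would then transfer it to genus $g$ by controlling the density of the disc restriction of the genus $g$ measure with respect to the planar one, using that $Z_{G,g,T}$ ratios stay bounded in $N$ for fixed areas, as in \cite{DLI}.
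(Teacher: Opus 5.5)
The paper gives no proof of this statement; it is quoted from \cite{DLI}. Your reduction is the standard one and its logic is sound. The Cauchy argument of Step~2 correctly produces $\tau_U(\ell)=\lim_n\tau_n$ and shows it does not depend on the approximating sequence. Step~3 only uses that $\tau_n$ depends on $(\Gbb_{n,U},a_{n,U})$. This is also the kind of approximation argument the paper itself delegates to \cite{CDG,DN} in Theorem~\ref{THM---ReducFiniteLengthTOReg}. Everything therefore rests on the $N$-uniform estimate $\sup_N\EE_\Sigma[|W^N_{\ell_n}-W^N_\ell|]\to0$, and your sketch of that estimate is only half right.

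\textbf{Transfer step.} This half is correct and can be made precise. Take a disc $D$ of area $t<T$, bounded by a simple loop, with $\overline{U'}\subset D\subset U$. The restriction of $\YM_\Sigma$ to paths in $D$ has density $f_N=Z_{G,g,T}^{-1}\sum_{\a\in\hat G_N}d_\a^{1-2g}\chi_\a(h_{\partial D})e^{-\frac{T-t}{2}\mfc_\a}$ (with $\mfc^*_\a$ for $SU(N)$) with respect to the Yang--Mills measure of the disc, i.e.\ the planar one. Since $|\chi_\a|\le d_\a$, one gets $0\le f_N\le Z_{G,g,T-t}/Z_{G,g,T}$.
\begin{itemize}
\item For $g\ge2$ this ratio is bounded in $N$ by Proposition~\ref{Prop---ConvWittenZeta}, together with the lower bound coming from one-dimensional representations.
\item For $g=1$ the relevant Witten zeta function is evaluated at $s=0$, outside that proposition. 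The large-$N$ behaviour of the torus partition function is then a separate input you must import.
\end{itemize}

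\textbf{Planar step.} This half does not work as written. $h_{\ell_n}^{-1}h_\ell$ is the holonomy of $\ell_n^{-1}\ell$, a loop of length about $2L(\ell)$, not of a single small lasso. When $\ell$ self-intersects, this holonomy is a word in face holonomies with repeated letters. It is therefore not a Brownian motion at a small time, and the bound $\EE[|1-\frac1N\Tr(B_t)|]\le C\sqrt t$ does not apply to it.

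You could cut it into thin loops and use the triangle inequality. But that gives decay only if every thin loop obeys an $N$-uniform $\sqrt{\mathrm{area}}$ bound, and for a general rectifiable $\ell$ these thin loops need not be simple. Going from the simple-loop bound to continuity in the length topology, uniformly in $N$, is exactly the nontrivial estimate L\'evy proves for the planar field; it is what lets him extend the planar master field to all rectifiable loops \cite{LevMF}. You should cite it for the disc measure and then transfer it with $f_N$, rather than derive it from the heat-kernel bound.

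\textbf{A shortcut for $g\ge2$.} In this case the approximation can be bypassed.
\begin{itemize}
\item For $SU(N)$ one has $\|f_N-1\|_\infty\le2(\zeta_{SU(N)}(2g-2)-1)\to0$, so the statement follows directly from the continuum planar theorem.
\item For $U(N)$, $f_N$ converges uniformly to a function of $\det h_{\partial D}$ alone. This only affects the asymptotically trivial $U(1)$ factor.
\end{itemize}
Your scheme is really needed only for $g=1$.
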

 When $\Gbb=(V,E,F)$ is a planar map,  $a\in (\R_+^*)^{F^b}$ and  $\mfl\in \Ld(\Gbb),$ or when $\ell$ is a finite length loop in a disc $U $ endowed with Riemannian metric,  $\tau_{\Gbb}(\mfl)$ or $\tau_U(\ell)$  is called the \emph{planar master field} and is known to be the limit of   Wilson loops under the Yang-Mills measure on the plane \cite{LevMF,AS}. By convention, we set
 \begin{equation}
 \tau_{\Gbb}(\g)=0\quad\text{or}\quad\tau_{U}(\g)=0\quad\text{when}\,\g\in \Pd(\Gbb)\setminus\Ld(\Gbb)\,\text{or}\, \Pd(U)\setminus\Ld(U).
 \end{equation}
 \begin{rmk} \label{Rmk---Paths} This convention is coherent with the following remark. Using gauge invariance,   for any path $\g$ which is not a loop, under the Yang-Mills measure on any surface, $h_{\g}$ has same distribution as a  Haar distributed random variable $U$, so that by orthogonality of characters    \begin{equation}
 \EE[|W_\g^N|^2]= N^{-2}\EE[|\Tr(U)|^2]=\frac 1 {N^2}.
 \end{equation}
 \end{rmk}
 \begin{rmk} The above theorems do not hold when $\Sigma$ is the sphere where  Wilson  loops are known to converge to a different limit \cite{DN} that also depends on the total volume $|a|.$ 
 \end{rmk}

{  \begin{ex}  When $\ell$ is a simple loop in $\R^2$ bounding an disc of area $t>0,$   $\tau_{\R^2(\ell)}=e^{-\frac t 2}.$
 \end{ex}}
 
 When $g=1,$  \cite{DLII}   also proved the restriction on loops to  be contained in a disc can be omitted, with the limit Wilson loop being either vanishing when the loop is not contractible, or  the planar master field at the lift of the loop otherwise.  It was further  conjectured therein  that the same holds for any $g\ge2.$ The following theorems confirm this conjecture and are the main result and motivation for the current work.

 \begin{thm} \label{THM---MFConv}Assume $\Gbb$ is a map of genus $g\ge 2 $ and $\mfl\in \Ld(\Gbb).$ Consider a covering $\tilde \Gbb$ embedded in a surface $\tilde \Sigma$  and   $\tilde\mfl$  a lift of $\mfl$  with embedding included  an open disc $U$ of $\tilde \Sigma$.   Then for any $a\in (\R_+^*)^F,$ 
 \begin{equation}
\lim_{N\to\infty}\EE_{\Gbb,a}[|W^N_\mfl-\tau_{\tilde \Gbb_U,\tilde a_U}(\tilde \mfl)|]=0. \label{def---MasterField}
\end{equation} 
 \end{thm}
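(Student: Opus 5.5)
Convergence in $L^1$ will follow from convergence of the first two moments: we aim to show $\EE_{\Gbb,a}[W^N_\mfl]\to\tau_{\tilde \Gbb_U,\tilde a_U}(\tilde\mfl)$ and $\EE_{\Gbb,a}[|W^N_\mfl|^2]\to|\tau_{\tilde \Gbb_U,\tilde a_U}(\tilde\mfl)|^2$. Expanding $\EE[|W-\tau|^2]$ and applying Cauchy--Schwarz then gives the claim. The plan has three parts: a reduction to a convenient homotopy representative, an exact expansion of moments, and a genus bound.

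\emph{Reduction.} Following \cite{DLII}, we use the Makeenko--Migdal equations together with the continuity of Wilson loops in the areas, which is uniform in $N$. These show that the variation of $\EE[W_\mfl]$ and $\EE[|W_\mfl|^2]$ under a change of area of the faces is controlled by products of Wilson loops of "smaller" loops. This reduces the problem to two cases. In the first, $\mfl$ is contractible and its lift lives in a disc, where the planar result of \cite{DLI} quoted above applies once we can localise the area outside $U$. In the second, $\mfl$ is non-contractible and we may choose it in a reduced form (a Dehn-reduced word in the standard generators of $\pi_1(\Sigma)$, in the sense of \cite{BirmanSeries}); there we must show $\EE[|W_\mfl|^2]\to0$. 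In both cases, the key input is that the mass of the Yang--Mills measure outside a small neighbourhood of the contributions coming from discs is negligible. This is handled through the area-dependence.

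\emph{Expansion.} Using Lévy's IRF formula, we write $\EE_{\Gbb,a}[W_\mfl\overline{W_\mfl}]$ as a sum over highest weights $\lambda$ labelling the faces. Each term is weighted by $e^{-a(f)c_2(\lambda_f)/2}\,d_{\lambda_f}^{\chi\text{-type}}$ and multiplied by unitary integrals of Wilson loops in irreps. We then truncate: thanks to the factor $d_\lambda^{2-2g}$ with $g\ge2$, together with the Gaussian damping from positive areas, only weights $\lambda=(\alpha,\beta)$ with $|\alpha|+|\beta|$ bounded contribute in the limit. This uses the Witten zeta function $\sum_\lambda d_\lambda^{2-2g}\to1$ for $SU(N)$, and a similar estimate for $U(N)$ after summing over the determinant. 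For each fixed mixed weight, we realise the irrep as traceless tensors in $(\C^N)^{\otimes p}\otimes(\overline{\C^N})^{\otimes q}$. We then use Koike--Schur--Weyl duality with the walled Brauer algebra to write the projector onto traceless tensors as an explicit element whose coefficients have controlled $1/N$-expansions. Haar integrals over the edges become Weingarten sums, so each term becomes an $N^{\chi}$-weighted sum over surfaces with boundary glued along $\mfl$ and $\mfl^{-1}$.

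\emph{Genus bound.} This is the main obstacle. We must show that the Euler characteristic of each surface arising is at most $0$, with equality only for configurations that either factor through disc neighbourhoods or, for non-contractible $\mfl$, do not occur at all. Following \cite{MageePuder,MageeII}, we would argue that any such surface maps to $\Sigma$ with boundary on $\mfl$. Each disc or annulus component then yields a null-homotopy of $\mfl$, or a free homotopy between $\mfl$ and $\mfl$, and incompressibility arguments with Dehn's algorithm force the relevant pieces to live in the universal cover. The new difficulty compared with \cite{MageeII} is the correction terms in the traceless projector, which add extra boundary arcs. We would first check that these terms come with an extra $N^{-1}$ that compensates any gain in $\chi$, and then run Dehn's algorithm on the modified words. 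Granting this, the surviving leading terms reproduce the planar computation on the lift $\tilde\mfl$ in $U$, giving $\tau_{\tilde\Gbb_U,\tilde a_U}(\tilde\mfl)$ and its square, and they vanish in the non-contractible case.
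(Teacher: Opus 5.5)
Your outline has the same architecture as the paper: the Makeenko--Migdal reduction of \cite{DLII}, truncation of the character expansion, Koike--Schur--Weyl duality plus Weingarten calculus to get surface sums, and a Dehn/Birman--Series bound on the Euler characteristic. But the step you call the main obstacle and then grant is the heart of the proof, and your sketch misses what makes it true.

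The surfaces $\Sigma_\mfm$ are not only glued along $\mfl$ and $\mfl^{-1}$. For a weight $[\la,\mu]_N$ with $|\la|=n$, $|\mu|=m$ they also carry $R$- and $R^{-1}$-boundaries of total multiplicity $n+m$. Since $d_{[\la,\mu]_N}\asymp N^{n+m}$, one needs $\chi(\Sigma_\mfm)\le -(n+m)+h(\mfm)$ (Proposition \ref{Prop---GeoBoundEuler}); for mere convergence, anything strictly below $2-(n+m)+h(\mfm)$ suffices. Asking which components are discs or annuli bounding $\mfl$ cannot give this. The $WW^{-1}$ annulus exists for every loop, so the whole gain must come from the relator part. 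Capping relator boundaries to obtain a surface mapping to $\Sigma$ forgets exactly their multiplicities.

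Two ingredients are missing.
\begin{enumerate}
\item The traceless projector is not just a perturbation whose $h>0$ terms are compensated by $N^{-h}$; that is the easy Lemma \ref{Lem---CutORCompress}. Its decisive role is exact: it annihilates Weyl contractions, so only admissible pairings survive and no $\a$-arc joins an $R$- to an $R^{-1}$-boundary (Lemma \ref{lem---ProjectionIrrepNoMixing}). Without this, already for $[\la,\mu]=[(1),(1)]$ an $RR^{-1}$ annulus together with a $WW^{-1}$ annulus has $\chi=0$. It contributes a term of order $d_{[(1),(1)]_N}N^{-2}\asymp 1$ to $\EE[|W_\mfl|^2]$.
\item You need the quantitative Birman--Series inequality $e(P)\le(2g-1)\he(P)+2g\chi(P)$ on pieces adjacent to the $W^{\pm}$-boundaries (Lemma \ref{lem---BoundBSPieces}). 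It must be combined with the count $4g(n+m)=2N_{RR^*}+N_{WR^*}$ of arc endpoints on relator boundaries.
\end{enumerate}
Also, the new difficulty relative to \cite{MageeII} is not the traceless corrections. It is the simultaneous presence of a $W$- and a $W^{-1}$-boundary, which allows $WW^{-1}$-arcs and needs the modified bound on $\chi_+$ (Lemma \ref{Lemma---JointDPos}).

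The contractible case is also mishandled. The result of \cite{DLI} concerns loops contained in a disc of $\Sigma$, not loops whose lift lies in a disc of $\tilde\Sigma$. A contractible loop need not lie in any disc of $\Sigma$, for example the boundary of the fundamental polygon. The paper never identifies leading terms of a surface sum with the planar master field, so your final sentence is unsupported and unnecessary.

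Instead, Theorem \ref{THM---ReducMinimalTiling} only requires uniform $L^2$ convergence for minimal tiling length loops on a face $\Delta_{\mfl,K}(T)$ of the area simplex where some faces have zero area. There, by Proposition \ref{Prop---Contract}, $W_\mfl$ has the law of $W_{\g'}$ for an almost-shortest representative $\g'=e\ell e^{-1}\g$. If $\mfl$ is contractible, $\g$ is empty, $\g'$ lies in a disc, and \cite{DLI} applies. Otherwise all area outside the disc can be put on a single face. Then only characters $\chi_\a(B\,r(U))$ of the single surface relator $r$ appear, with the bound needed uniformly in the fixed matrices coming from the disc (Lemma \ref{lem---YMShortestRep}). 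This is what puts you in the one-relator setting where the Dehn/Birman--Series argument works. Expanding over weights on all faces of a general map, as you describe, produces characters on many face boundaries to which that argument does not directly apply.

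Finally, the truncation needs more than $\zeta_{SU(N)}(2g-2)\to 1$. Comparing dimensions on adjacent faces costs powers $N^{d'}$ (Lemma \ref{Lem---RatioPieri}), so you need $\zeta^{(>k)}_{SU(N)}(2g-2)\le K N^{-d}$ for $k\ge k'(d)$ (Proposition \ref{Prop---ConvWittenZeta}).
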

 \begin{rmk}  For any  loop $\mfl,$ there is always  a  lift as above choosing for instance $\tilde \Gbb$ with embedding isomorphic to the fundamental cover  of $\Sigma,$ and the value $\tau_{\tilde \Gbb_U,\tilde a_U}(\tilde \mfl)$ only depends on $\Gbb, \mfl$ and  $a$.  We shall denote the latter by  $\Phi_\mfl(a).$ 
 \end{rmk}
It has the following continuous version.

 \begin{thm}  \label{THM---ContMFConv}  Assume $\Sigma$ is a closed surface of genus $g\ge 2 $ endowed with a Riemannian metric.  Denote by $\tilde \Sigma$ its fundamental cover endowed with the lifted metric. Assume $\ell\in \Ld(\Sigma),$ then for any lift $\ell\in \Ld(\tilde\Sigma)$ and open disc $U$ of $\tilde \Sigma$ including $\tilde \ell,$     
 \begin{equation}
\lim_{N\to\infty}\EE_{\Sigma}[|W^N_\ell-\tau_{U}(\tilde\ell)|]=0.
\end{equation} 
 \end{thm}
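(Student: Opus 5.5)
The plan is to deduce Theorem \ref{THM---ContMFConv} from its discrete counterpart, Theorem \ref{THM---MFConv}, by approximating the loop $\ell$ with loops drawn on embedded maps. Two facts from the continuum construction will be used: the continuity of $h_\g$ in $d$ stated in \cite{LevMarkovHol}, and the continuity of the planar master field $\tau_U$ for the length metric, known from \cite{LevMF}.

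\textbf{Step 1: loops on a map.} Assume first that $\ell$ is the image of a loop $\mfl\in\Ld(\Gbb)$ for some map $\Gbb$ embedded in $\Sigma$. By the restriction property ${\mathcal{R}^{\Sigma}_{\Gbb}}_*(\YM_{\Sigma})=\YM_{\Gbb,vol_\Gbb}$, the law of $W^N_\ell$ under $\YM_\Sigma$ equals the law of $W^N_\mfl$ under $\YM_{\Gbb,vol_\Gbb}$. The covering $\tilde\Sigma\to\Sigma$ pulls $\Gbb$ back to an infinite map $\tilde\Gbb$ embedded in $\tilde\Sigma$, and $\tilde\ell$ is the image of a lift $\tilde\mfl$. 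Given a disc $U\supset\tilde\ell$, the areas of the faces of $\tilde\Gbb_U$ are the lifted areas $\tilde a_U$. The planar master field of the embedded loop in the disc $U$ is the discrete planar master field of the planar map $\Gbb_U$ with area vector $a_U$; this is the identification already used in passing from the discrete to the continuous statements of \cite{DLI}. Hence $\tau_U(\tilde\ell)=\tau_{\tilde\Gbb_U,\tilde a_U}(\tilde\mfl)$, and Theorem \ref{THM---MFConv} gives the claim in this case.

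\textbf{Step 2: approximation.} For a general finite-length loop $\ell$, choose loops $\ell_n$ based at the same point, each traced in some map $\Gbb_n$ embedded in $\Sigma$, with $d(\ell_n,\ell)\to0$. Piecewise geodesic approximations work for $n$ large, after perturbing them into general position so that they form maps. For $n$ large $\ell_n$ is homotopic to $\ell$, so the lifts $\tilde\ell_n$ starting at the base point of $\tilde\ell$ converge to $\tilde\ell$ in $d$ and lie in $U$, since $U$ is open and $\tilde\ell$ is compact. The triangle inequality then gives
\[
\EE_\Sigma[|W^N_\ell-\tau_U(\tilde\ell)|]\le \EE_\Sigma[|W^N_\ell-W^N_{\ell_n}|]+\EE_\Sigma[|W^N_{\ell_n}-\tau_U(\tilde\ell_n)|]+|\tau_U(\tilde\ell_n)-\tau_U(\tilde\ell)|.
\]
The last term tends to $0$ by continuity of the planar master field, and the middle term tends to $0$ as $N\to\infty$ for each fixed $n$ by Step 1.

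\textbf{Main obstacle.} The first term has to be small \emph{uniformly in $N$}, and convergence in distribution of $h_{\ell_n}$ to $h_\ell$ for each fixed $N$ does not give that. I would use $|\Tr(A)-\Tr(B)|\le\sqrt N\,\|A-B\|_2$ and unitary invariance of the Hilbert--Schmidt norm to write
\[
\EE[|W_\ell-W_{\ell_n}|^2]\le \tfrac1N\EE[\|h_\ell-h_{\ell_n}\|_2^2]=2\big(1-\Re\,\EE[W_{\ell_n^{-1}\ell}]\big).
\]
The loop $\ell_n^{-1}\ell$ is then made homotopic to a thin lasso bounding small area $\epsilon_n$. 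One can insert a small loop that is contractible, or small after re-rooting. For such a lasso I would bound $1-\Re\,\EE[W]$ by $C\epsilon_n$ uniformly in $N$, in the style of the uniform Makeenko--Migdal and Lipschitz estimates of \cite{LevMF,DLI}; these are local and hold on any surface. If that bound proves delicate, the fallback is to restrict to a chart: split $\ell$ into finitely many short sub-paths lying in discs, approximate each within its disc, and apply the local uniform estimates of \cite{DLI} there.
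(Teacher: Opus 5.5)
Your argument is essentially the paper's final reduction step, Theorem \ref{THM---ReducFiniteLengthTOReg}, whose proof the paper omits with a reference to \cite{CDG,DN}. Everything the paper actually proves for Theorem \ref{THM---ContMFConv} sits inside your appeal to Theorem \ref{THM---MFConv}. That content is the statement $\lim_N\|\Psi_\mfl\|_{\infty,\Delta_\mfl(T)}=0$ for regular loops $\mfl\in\mfL_g$. The paper obtains it from the Makeenko--Migdal reduction of Theorem \ref{THM---ReducMinimalTiling} to minimal tiling length loops, and then from the $O(N^{-2})$ variance bound of Theorem \ref{THM---CValmostshortestRep} (or from \cite{DLI} in the contractible case).

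That appeal is legitimate only for loops with simple transverse intersections in a regular map. For an arbitrary loop on a map (one running twice along an edge, say), the discrete statement is not established directly. It goes through the very approximation step you are carrying out, so Step 1 as phrased is circular. Restrict it to the $\ell_n$, put in general position and drawn on a map refining an embedded $\Gbb_g$; that is all Step 2 uses.

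The real weakness is in Step 2: your first way of handling the ``main obstacle'' does not work as written.
\begin{itemize}
\item The law of $W_\gamma$ is not invariant under homotopies that sweep positive area. Replacing $\ell_n^{-1}\ell$ by a homotopic thin lasso therefore changes the quantity you must bound.
\item $\ell\ell_n^{-1}$ is exactly a product of $k_n\to\infty$ conjugated small loops. A per-lasso area bound combined with $\|ab-I\|_2\le\|a-I\|_2+\|b-I\|_2$ only yields $\sum_i\sqrt{C\epsilon_i}$. With just the isoperimetric bound $\epsilon_i\lesssim\delta^2$ on pieces of length $\delta$, this is of the order of the length of $\ell$, not small.
\end{itemize}
An $N$-uniform continuity estimate for general small loops on a closed surface is precisely what the paper delegates to \cite{CDG,DN}.

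Your fallback does close the gap, and more cheaply than you expect: no uniformity in $N$ is needed. The triangle inequality only requires $\lim_n\limsup_N\EE|W_\ell-W_{\ell_n}|=0$. Split $\ell=p_1\cdots p_k$ with $k$ fixed and each $p_j$ in a disc $D_j$, and choose approximations $p_j^{(n)}\subset D_j$ with the same endpoints. Telescoping $h_\ell-h_{\ell_n}$ and using unitary invariance of the Hilbert--Schmidt norm gives
\[
\EE\big[|W_\ell-W_{\ell_n}|\big]\le \sum_{j=1}^k\Big(2-2\,\mathrm{Re}\,\EE\big[W_{p_j(p_j^{(n)})^{-1}}\big]\Big)^{1/2}.
\]
By the theorem of \cite{DLI} for loops in a disc, the right-hand side converges as $N\to\infty$ to $\sum_j\big(2-2\,\mathrm{Re}\,\tau_{D_j}(p_j(p_j^{(n)})^{-1})\big)^{1/2}$. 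This tends to $0$ as $n\to\infty$ by continuity of the planar master field \cite{LevMF}, because $p_jp_j^{-1}$ is tree-like.

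With these two corrections, the proposal is a valid deduction of Theorem \ref{THM---ContMFConv} from the regular-loop case. That regular-loop case remains the substance of the paper's proof.
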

 {  \begin{ex}  When $\ell$ is a  contractible loop in $\Sigma$ such that its lift $\tilde\ell$ in $\tilde\Sigma$ is simple and bounds a topological disc of area $t$, then     $\tau_{\Sigma}(\ell)=e^{-\frac t 2}.$
 \end{ex}}
 
Our argument builds on two  independent set of results \cite{DLI,DLII} and \cite{MageeI,MageeII} as well as a new formula for Wilson loop expectations due to T. L\'evy \cite{IRFThierry}.

In \cite{DLII} it was proved    how  to reduce  the above convergence,  for any closed surface, from all loops  to loops  of the form  $a$ or $ab$ where $a$ is included in a disc and of a specific form, and $b$ is a geodesic loop.   When $g=1, $ since any geodesic loop is a power of a simple loop,  combined with an  argument   \cite[Cor 4.3. and 4.6.]{DLI}  alike\footnote{using  additionally Pieri's formula} remark \ref{Rmk---Paths}  allowed to conclude.   When $g\ge 2,$ geodesic loops are generically not simple, and the analysis of the representation theory  involved in computating expected Wilson loops is significantly more involved which prevented  further progress so far.

Independently from \cite{DLI,DLII}, building on former works \cite{MPFree} and  in parallel to  \cite{MageePuder} where the problem was\footnote{The analog of the Atiyah-Bott-Goldman measure is then the uniform measure on $\Hom(\pi_1(\Sigma),S_N)$.  In this setting, the authors of \cite{MageePuder} are able to prove  a more general version of  \eqref{eq---Vanishing AverageABG}  but also to compute explicitly $\lim_{N\to\infty} N\EE[W_{\mfl}^N]$ for any non contractitble loop $\mfl$. } considered for $G=S_N$, much technical progress was made   for the Atiyah-Bott-Goldman measure leading to the following result of the two-paper series \cite{MageeI,MageeII}  we slightly reformulate below.

\begin{thm}[\cite{MageeII}]  For $G=SU(N),$ for any map $\Gbb$ of genus $g\ge 2$ and any loop $\mfl\in\Ld(\Gbb),$
\begin{equation}
\lim_{N\to\infty}\EE_{ABG}[W_{\mfl}^N]=0.\label{eq---Vanishing AverageABG}
\end{equation}
\end{thm}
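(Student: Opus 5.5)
Note first that if $\mfl$ is contractible in $\Sigma$ then $h_{\mfl}=\Id_N$ on $\mathcal{M}_0(\mathcal{G},G)=\mathrm{supp}(\mu_{ABG})$, so $W^N_{\mfl}=1$. The statement should therefore be read, as in \cite{MageeII}, for loops $\mfl$ that are \emph{not} contractible, and that is the case I plan to prove.

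\textbf{Reduction to a word.} Using the gauge action \eqref{eq---GaugeAction} and the identification $\mathcal{X}_g(G)\simeq H_g(G)/G$, I reduce to the one-vertex map of genus $g$ with $2g$ edges. Then $\mfl$ becomes a word $w$ in the free group $F_{2g}=\<x_1,\ldots,x_{2g}\>$ which is non-trivial in $\pi_1(\Sigma)=F_{2g}/\<\!\<R\>\!\>$, where $R=[x_1,x_2]\cdots[x_{2g-1},x_{2g}]$. Passing to the limit $a\to0$ in the heat-kernel expansion $p_T=\sum_\la d_\la e^{-Tc_\la/2}\chi_\la$ gives
\[
\EE_{\Gbb}[W^N_\mfl]=\frac{\sum_{\la}d_\la\int_{G^{2g}}\tfrac1N\Tr(w(h))\,\chi_\la(R(h))\,dh}{\sum_\la d_\la^{2-2g}},
\]
where the sums run over irreducible representations $\la$ of $SU(N)$ and the denominator uses Frobenius' formula $\int\chi_\la(R(h))\,dh=d_\la^{1-2g}$. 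The denominator is the Witten zeta function $\zeta(2g-2)$. Since the dimensions of non-trivial irreps grow at least like $N$, it tends to $1$ for $g\ge2$.

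\textbf{Splitting the sum.} Fix a small $\epsilon>0$ and split the numerator according to the number of boxes $|\la|=|\la^+|+|\la^-|$ of the mixed-tensor highest weight.
\begin{itemize}
\item For $|\la|>N^{\epsilon}$, I bound each term by Cauchy--Schwarz against $|\Tr(w)|\le N$, using $\int|\chi_\la(R)|^2\le C d_\la^{c}$ obtained from the convolution structure of commutators. Known lower bounds on $d_\la$ for large $|\la|$ then make the tail of the zeta-type series $o(1)$.
\item For $|\la|\le N^\epsilon$, I realise $V_\la$ inside traceless mixed tensors $(\C^N)^{\otimes p}\otimes(\overline{\C^N})^{\otimes q}$. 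I write $\chi_\la$ through a projection expressed in the walled Brauer algebra and expand the integral $\int\Tr(w(h))\chi_\la(R(h))\,dh$ with Weingarten calculus. This gives a sum over gluings, that is surfaces with boundary, each weighted by $N^{\chi}$ times Weingarten coefficients.
\end{itemize}

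\textbf{Key estimate.} I then need to show that each such surface contributes at most $N^{-1}d_\la^{2-2g}$ up to a factor $(1+o(1))^{|\la|}$, so that after multiplying by $d_\la$ and summing, the small-$|\la|$ part is $O(N^{-1})\zeta(2g-2)\to0$.

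The geometric input is this. A gluing that would achieve the maximal Euler characteristic forces a van Kampen diagram exhibiting $w$ as trivial in $\pi_1(\Sigma)$, which is impossible. More quantitatively, I choose $w$ cyclically reduced and shortest in its conjugacy class, in the strong sense provided by Birman--Series' refinement of Dehn's algorithm: no subword longer than half a relator appears. With this choice, every boundary component traced by $w$ costs at least one power of $N$ compared with the contributions that build $d_\la^{2-2g}$.

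\textbf{Main obstacle.} The hardest step is making this Euler characteristic bound uniform in $\la$ with $|\la|$ up to $N^\epsilon$. The projection onto traceless tensors must be expanded with Weingarten-type coefficients whose signs and sizes are controlled, so that the surfaces built from the $R$-letters reproduce $d_\la^{2-2g}$ and the $w$-letters contribute a genuine negative correction. To get this I would first find explicit Schur--Weyl duals of the traceless projection that are adapted to $N\to\infty$, and only then carry out the Dehn-algorithm counting of \cite{MageePuder,MageeII}.
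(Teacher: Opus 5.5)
\textbf{The tail step fails.} Your overall architecture is the paper's: character expansion of the ABG density, truncation in $|\la|$, traceless mixed tensors with a walled-Brauer expression of the projection, Weingarten calculus, and a Birman--Series bound on Euler characteristics. (The paper in fact bounds $\EE_{ABG}[|W_\gamma|^2]$, Theorem~\ref{THM---CValmostshortestRep}.) You are also right that $\mfl$ must be non-contractible. But the Cauchy--Schwarz tail argument cannot work. The law of $R(h)$ has density $\sum_\nu d_\nu^{1-2g}\chi_\nu$, so $\int|\chi_\la(R(h))|^2\,dh=\sum_\nu d_\nu^{1-2g}\,\mathrm{mult}(\nu,\la\otimes\la^*)\ge 1$, already from the trivial $\nu$. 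Hence any Cauchy--Schwarz bound on the $\la$-th term $d_\la\int\frac1N\Tr(w(h))\chi_\la(R(h))\,dh$ is at least $d_\la$ times a factor independent of $\la$. Since $\sum_{|\la|>N^\epsilon}d_\la=\infty$, this fails whatever lower bound on $d_\la$ you use. For the same reason, nothing in your plan justifies convergence of the numerator or exchanging $a\to0$ with the sum.

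What is missing is a per-term bound that keeps $d_\la^{1-2g}$ up to a power of $N$ independent of $\la$. The paper gets $|d_\la\int\Tr(w)\chi_\la(R)|\le KN^{d'}d_\la^{2-2g}$ from the IRF formula of Theorem~\ref{thm---IRFformula}. Adjacent faces carry highest weights differing by one box, so Lemma~\ref{Lem---RatioPieri} together with $\sum_f(\chi(f)-m_f/2)=2-2g$ gives $\mathcal{D}_{\a,\mathcal{S}}\le N^{d'}d_{\a(f_*)}^{2-2g}$. (Magee obtains a similar bound via branching rules.) The factor $N^{d'}$ is then beaten by the Witten zeta tail $\zeta^{(>k)}_{SU(N)}(2g-2)\le KN^{-d}$, valid for a \emph{fixed} $k\ge k'(d)$ (Propositions~\ref{Prop---ConvWittenZeta} and~\ref{Prop---TruncationWLE}). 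That tail bound rests on the dimension bound of Lemma~\ref{Lem---BoundGLM}, which is also what gives $\zeta_{SU(N)}(2g-2)\to1$; knowing only $d_\la\ge N$ does not.

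\textbf{The obstacle you single out disappears.} Because $k$ can be fixed, you do not need uniformity up to $|\la|\sim N^\epsilon$, and your key estimate is neither needed nor what the surface expansion produces. Take each of the finitely many $\la=[\la^+,\la^-]_N$ with $n+m=|\la^+|+|\la^-|\le k$. Lemma~\ref{lem---ProjTraceLessBound} ($\kappa_N(\tau)=O(N^{-h(\tau)})$) and Weingarten bounds give $|\int\Tr(w)\chi_\la(R)|\le K_{n,m}N^{\max_\mfm(\chi(\Sigma_\mfm)-h(\mfm))}$. The geometric input is $\chi(\Sigma_\mfm)\le -n-m+h(\mfm)$ (Proposition~\ref{Prop---GeoBoundEuler}). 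With $d_\la\le N^{n+m}$, each term is then $O(N^{-1})$ with an $(n,m)$-dependent constant. That is decay like $N^{-1-|\la|}$, not $N^{-1}d_\la^{1-2g}$. The constants $K_{n,m}$ (number of Brauer maps, Weingarten and $q_{n,m}$ coefficients) are not controlled uniformly up to $|\la|\sim N^\epsilon$, so a cut at that level asks for an estimate the method does not supply.

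\textbf{The Euler characteristic bound needs more than van Kampen.} The bound is not a qualitative van Kampen statement. It needs the reductions of Lemmas~\ref{Lem---CutORCompress} and~\ref{Lem---CutQuadrangles}, the decomposition into pieces and joint discs, and the quantitative inequality $e(P)\le(2g-1)\he(P)+2g\chi(P)$ of Lemma~\ref{lem---BoundBSPieces}. That inequality uses the absence of long chains, not only of subwords longer than half the relator.
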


Simply put, the strategy of \cite{MageeI,MageeII} and \cite{MageePuder} can be  broken into two parts. 

A first one is to expand $\mu_{ABG}$ into sums over   characters  \cite{MageeI} and to show $\EE_{ABG}[W_{\mfl}^N]$ can be approximated by finite sums to arbitrary precision in powers of $\frac{1}{N}.$ This part built on estimates of \cite{LarsenWZeta} that where used to show $\lim_N \mu_{ABG}(1)=1$ and the  technical innovation of \cite{MageeI,MageePuder} is to estimate each term of the sum using  classical branching rules.\footnote{Using a map with a single polygonal face, the idea of the  branching rule  approach of \cite{MageeI,MageePuder} can be summarised as follows.  First use the invariance by 
permutation to write  the  average standard trace of a word $w$ of length $k$  in Haar random variables  as a  sum of monomials  involving only matrix coefficients of the last  $k$ rows and columns. When $\la\in \hat{G}_N$ and $w_g$ is a surface word, $|\int_{G_N^{2g}} \Tr(w(h))\chi_\la(w_g(h))dh | $ is therefore bounded by $N^k\int_{G_{N}^{2g}}| \int_{G_{N-k}^{2g}}  \chi_\la(w_g(hh'))dh'|dh.$ A delicate part consists then in  expanding the character into a basis of the irrep $\la$ compatible with the branching, to explicitly compute the integral over $G_{N-k}$ using  Peter-Weyl isometry,  and at last to   bound it adapting estimate of \cite{LarsenWZeta} to bound  ratios of dimension of  representation of $G_N$ and $G_{N-k}$ related by branching, leading ultimately to a bound of the form $C N^{k'}\chi_\la(1)^{1-2g}$ for some constant $C$ and $k'.$   }

A second one  shows  that each term of this sum  asymptotically vanishes as $N\to\infty$. Each term is then the integral of a character evaluated in a surface word in independent Haar elements.  As is expected for other 
random unitary matrix ensembles, following  \cite{MPFree} together with some representation theory arguments,   these integrals are expressed in \cite{MageeII}  as   sums indexed by surfaces with boundaries, weighted by a factor $N^{\chi}$ where $\chi$ is the Euler characteristic of the index.  
A first key step and technical innovation of \cite{MageeII} is to study irreps in the large $N$ regime when realised as  traceless tensors. 
%and to perform integration over $G_N$ of representation coefficients as some projections within traceless tensors. 
This  significantly  simplifies more naive approaches using Frobenius formula and standard Weingarten calculus \cite{Collins,CollinsSn}.      A second one is then to upper bound the Euler characteristic of these surfaces. This second step crucially relies on the minimal length assumption on the loop.

\subsection*{Outline of the paper} Our main technical contribution is to argue the strategy of \cite{MageeI,MageeII} can be improved to show $L^2$ convergence instead of convergence in expectation and moreover generalised from the Atiyah-Bott-Goldman measure to the Yang-Mills measure.   As in \cite{MageeI,MageeII}, it is divided in two parts respectively sections \ref{sec---Truncation} and \ref{sec---Integration  using Koike-Schur-Weyl duality}.

 The argument of section \ref{sec---Truncation}  addresses the first step of this strategy. It is independent  from \cite{MageeI}  using  as a starting point Pieri's rule instead of branching rules for unitary groups, and rely then on a formula of T. L\'evy \cite{IRFThierry}.  We hope this argument is more easily adaptable to different group series.  
 
  Section \ref{sec---Integration  using Koike-Schur-Weyl duality} revisits the second step with a different approach to write characters in terms of traceless tensors. It is devoted to integration of characters as surface sums formulas using walled Brauer algebras thanks to the Koike-Schur-Weyl duality.  This allows us to give complete arguments for several formulas discovered   in the physics literature by  Gross-Taylor \cite{GTI,GTII} and  Ramgoolam and Kimura \cite{RamKim} for combinatorial interpretations for  $\frac 1N$- expansions  
for  irreps dimensions and  partition functions $Z_{G_N,g,T}$ for $g\ge 2$.  Similar expansions have been recently found independently in \cite{LemoineMaida} for $g=1$ and in \cite{NovakYM} for the so-called chiral regime, involving partial sums over $\hat{G}_N.$   
 From subsection \ref{sec---Integration walled Brauer algebra} onwards, the rest of the section focuses on application to Wilson loop integrations against a character.   We then carefully revisit and slightly improve arguments of  \cite{MageeII} within our framework to  bound the Euler characteristic of surfaces appearing when integrating the square modulus of a Wilson loop for a shortest length loop, leading ultimately to Theorem \ref{THM---CValmostshortestRep}.

Section \ref{sec---MasterFieldAllLoops}   adapts the argument of \cite{DLII} to prove Theorem \ref{THM---CValmostshortestRep} implies $L^2$ convergence for all loops and conclude the proof of Theorems \ref{THM---MFConv} and \ref{THM---ContMFConv}.

In addition, we add for completeness four appendices. Appendix \ref{sec---Witten-Zeta function truncation} recalls a short proof of a version of arguments of \cite{LarsenWZeta} we need 
here to truncate sums over irreps. Appendix \ref{sec---IRF} gives a proof the  IRF formula due to T. L\'evy \cite{IRFThierry} that allows to write Wilson loop expectations as sum 
over irreps.    Appendix \ref{Section---RepBrauer} gives an expression of the  projection onto traceless tensors in terms of walled Brauer algebra and a generalisation of Jucys-Murphy elements.  Appendix \ref{sec---GrossTaylor} gives a proof and new interpretation of a formula of \cite{GTII} for the integration of Newton polynomials, relating Wick product of random variables to projection onto traceless tensors. We prove there another combinatorial expression for the $\frac 1N$ expansion of irreps dimension due to \cite{GTII}.

\section{Decomposition of the discrete Yang-Mills measure into characters}

\label{sec---Truncation}

\subsection{Witten-zeta function truncation}

For any $\a\in\Z^{\downarrow N},$  set 
\begin{equation}
d_\a=\frac{V(\a+\rho)}{V(\rho)}\label{eq---dim}
\end{equation}
where $V(x)=\prod_{1\le i<j\le N}(x_i-x_j)$ and 
\[2\rho=(N-1,N-3,\ldots,-N+3,-N+1).\] 
Let us consider the action of $\Z$ on $\Z^{\downarrow N}$ by translation by the constant vector $1_N=(1,\ldots,1).$ The function \eqref{eq---dim} is   invariant and when $\a\in \Z^{\downarrow N}/\Z$   we also write $d_\a$ for $d_{\a_*}$ where $\a_*$ is any vector in the class of $\a.$ 
 
The \emph{Witten-zeta function}  \footnote{it was introduced in \cite{WittenOnQGT} appearing while computing the volume of the moduli space flat irreducible connections  and doubt Witten-zeta function by  \cite{ZagierWZeta}.}  is defined for  $N\ge1,$ $s>0$ by  
\begin{equation}
\zeta_{SU(N)}(s)=\sum_{\a\in \mathbb{Z}^{\downarrow N}/\Z} \frac{1}{d_\a^s}\in \R\cup\{\infty\}.\label{eq---WittenZeta}
\end{equation}
We shall encounter the following deformation.  For $q<1,$ 
\begin{equation}
\zeta_{SU(N)}(s,q)=\sum_{\a\in \mathbb{Z}^{\downarrow N}/\Z} \frac{1}{d_\a^s}q^{\mfc^*_\a} \quad\text{and}\quad\zeta_{U(N)}(s,q)=\sum_{\a\in \mathbb{Z}^{\downarrow N}} \frac{1}{d_\a^s}q^{\mfc_\a} \label{eq---WZCasimir}
\end{equation}
where 
\begin{equation}
N \mathfrak{c}_\a=\|\a+\rho\|^2-\|\rho\|^2\quad\text{and}\quad N \mfc^*_\a=\|\overline{\a+\rho}\|^2-\|\rho\|^2,  \label{def---Casimir}
\end{equation}
noting that the second function is invariant by translation.
Here for any vector $x\in\R,$
\[\|x\|^2=\sum_ix_i^2\quad\text{and}\quad \overline x= x- \frac1 N \left(\sum_{i}x_i\right)1_N. \]

Let us reparametrise $\ZD.$ An integer partition is a non-decreasing sequence of non-negative integers $\la=(\la_1,\la_2,\ldots)$ with  finitely many non zero terms. We write $\la\vdash n$ when $\sum_i\la_i=n$ and set $\ell(\la)=\#\{i\ge 1:\la_i>0\}.$  
When $\la,\mu$ are two integer partitions with $\ell(\mu)+\ell(\mu)\le N,$  define the vector
\[[\la,\mu]_N=(\la_1,\ldots,\la_{\ell(\la)}, 0,\ldots,0, -\mu_{\ell(\mu)},\ldots,-\mu_1 )\in \ZD\]
where, if there is any, the $N-\ell(\la)-\ell(\mu)$ middle coefficients vanish.   Any vector $\ZD$ is uniquely written in this form where for $\a\in \ZD,$  $\la$ and $-\mu$ are the possibly reordered sequences of 
positive and negative coefficients of $\a$.  This parametrisation of $\ZD$ is adapted to the large $N$  limit of  \eqref{eq---WittenZeta} and \eqref{eq---WZCasimir} in the following 
sense. Let us slightly change this parametrisation of $\ZD$ to make it compatible with the quotient $\ZD/\Z.$ When $\a\in \ZD,$ set 
\begin{equation}
c(\a)=\a_{\lfloor\frac{N-1}{2}\rfloor+1}
\end{equation}
and
\begin{equation}
\la(\a)= (\a_1-c(\a),\ldots,\a_{\lfloor \frac{N-1}{2}\rfloor}-c(\a)) \quad\text{and}\quad \mu(\a)=(c(\a)-\a_N ,\ldots, c(\a)-\a_{\lfloor \frac{N-1}{2}\rfloor+2}).
\end{equation}
Then  $\la(\a),\mu(\a)$ are two integer partitions  whose lengths sum is less than $N$ and 
\begin{equation}
\a= [\la(\a),\mu(\a)]_N+c(\a)1_N.
\end{equation}
Call size\footnote{The above definitions of translation by $\a_{\lfloor \frac{N-1}{2}\rfloor}$ or of size $|\a|$  is not canonical. E.g. a translation by $\a_{k_N}$ with $k_N\to\infty$ and $\underline{\lim}_{N}\frac {k_N}N,\underline{\lim}_{N}\frac {N-k_N}N>0$, or considering $\a_1-\a_N$ in place of $|\a|$ should only require minor changes to the following arguments.} of $\a\in \ZD$ the integer
\begin{equation}
|\a|=|\la(\a)|+|\mu(\a)|=\sum_{i=1}^N(\a_i-\a_N). 
\end{equation}

The functions $\la(\a),\mu(\a),|\a|$ are invariant by translation on $\ZD$ and we denote abusively by the same symbol the associated functions on $\ZD/\Z.$  Define then for  $k\ge 0,$
\begin{equation}
\zeta_{SU(N)}^{(> k)}(s)=\sum_{\a\in \mathbb{Z}^{\downarrow N}/\Z: |\a|>  k} \frac{1}{d_\a^s} \label{eq---WittenZetaT}
\end{equation}
and  for $q<1,$ 
\begin{equation}
\zeta_{SU(N)}^{(> k)}(s,q)=\sum_{\a\in \mathbb{Z}^{\downarrow N}:|\a|> k} \frac{1}{d_\a^s}q^{\mfc^*_\a} \quad\text{and}\quad\zeta_{U(N)}^{(> k)}(s,q)=\sum_{\a\in \mathbb{Z}^{\downarrow N}:|\a|> k} \frac{1}{d_\a^s}q^{\mfc_\a}. \label{eq---WZCasimirT}
\end{equation}
For $c\in \N,$ set furthermore
\begin{equation}
\zeta_{U(N)}^{(>k),(>c)}(s,q)=\sum_{\a\in\ZD: |\a|>k\,\text{and}\,|c(\a)|>c} \frac{1}{d_\a^s}q^{\mfc_\a}. 
\end{equation}

\begin{prop} \label{Prop---ConvWittenZeta}  
For any $d\ge1, s>0$ fixed, there are $k'>0$ and $K>0$ such that for all $k\ge k',$  $N\ge1, q< 1,$
\begin{equation}
 \quad \zeta_{SU(N)}^{(> k)}(s,q)\le \zeta_{SU(N)}^{(> k)}(s) \le  \frac{K}{N^d}. \label{eq---BoundWZT}
\end{equation}
For any $s>0,0<q<1$    and $k\ge 0, c\ge 1$
\begin{equation}
\zeta_{U(N)}^{(> k)}(s,q) \le   \theta(q)  \zeta_{SU(N)}^{(> k)}(s,q)
\end{equation}
and for $k<N,$
\begin{equation}
\zeta_{U(N)}^{(> k),(c+2)}(s,q) \le    \frac{q^{c^2}}{c \log(1/q)}  \zeta_{SU(N)}^{(> k)}(s,q)\label{eq---TruncationCharge}
\end{equation}
where $\theta(q)=\sup_{u\in [0,1]} \sum_{n\in \Z} q^{(n+u)^2}.$
%\theta(\sqrt q) 
\end{prop}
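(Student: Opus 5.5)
The statement has three parts of quite different nature. I would prove the two $U(N)$ inequalities by an exact decomposition of the $U(N)$ Casimir into an $SU(N)$ part plus a one-dimensional Gaussian-type weight in the translation variable. The real content is the second inequality in \eqref{eq---BoundWZT}, which I would obtain from a Larsen-type lower bound on dimensions (Appendix \ref{sec---Witten-Zeta function truncation}).

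\emph{First line.} For the left inequality in \eqref{eq---BoundWZT}, note that $\overline{\a+\rho}=\bar\a+\rho$ because $\rho$ is centred. Then $N\mfc^*_\a=\|\bar\a\|^2+2\<\bar\a,\rho\>\ge 0$, since $\<\a,\rho\>\ge0$ for non-increasing $\a$ (pair coordinates symmetrically around the middle). Hence $q^{\mfc^*_\a}\le1$ for $0<q<1$. (For $q\le 0$ the $q$-power is only meaningful for integral exponents, so I would read the statement as $0<q<1$.)

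For the right inequality I would prove a lower bound on $d_\a=\prod_{i<j}\frac{\a_i-\a_j+j-i}{j-i}$ in terms of $n=|\a|$, roughly:
\begin{itemize}
\item $d_\a\ge (N/C)^{n}$ when $n\le N$;
\item $d_\a\ge (N/C)^{N/C}$ or better when $n\ge N$.
\end{itemize}
The idea is to restrict the product to pairs $(i,j)$ with $i\le \ell(\la)$ and $j>\ell(\la)$, and symmetrically for $\mu$. Each row of $\la$ contributes $\prod_{j}(1+\frac{\la_i}{j-i})$, and this is already $\gtrsim N^{\la_i}/\la_i!$ when $\la_i\ll N$. The number of classes with $|\a|=n$ is at most $p(n)^2\le e^{C\sqrt n}$. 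Summing then gives
\[\sum_{n>k}e^{C\sqrt n}(C/N)^{sn}\le K N^{-d}\quad\text{as soon as }sk\ge d,\]
and the regime $n\ge N$ is handled with the cruder bound. This bound is the main obstacle: it requires uniformity in $N$ and in very unbalanced shapes, such as long first rows or $\la,\mu$ of comparable size to $N$. There I would follow Larsen's argument and split by the number of rows exceeding $\sqrt N$.

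\emph{Second line.} Write $m=\sum_i\a_i$. Then $\|\a+\rho\|^2=\|\bar\a+\rho\|^2+m^2/N$, so
\[\mfc_\a=\mfc^*_\a+(m/N)^2.\]
The orbit of a class in $\ZD/\Z$ consists of the vectors $\a+n1_N$, along which $m/N$ ranges over $u+\Z$ for a fixed $u\in[0,1)$, while $d_\a$, $|\a|$ and $\mfc^*_\a$ stay constant. Summing over the orbit produces the factor $\sum_{n}q^{(n+u)^2}\le\theta(q)$, which proves the inequality.

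\emph{Third line.} The same orbit decomposition applies, now restricted to $|c(\a)|>c+2$. Using $\a=[\la,\mu]_N+c(\a)1_N$, we get
\[\frac mN=c(\a)+\frac{|\la|-|\mu|}{N}.\]
If the offset $|\,|\la|-|\mu|\,|/N$ is at most $2$, the condition forces $|m/N|>c$. The restricted orbit sum is then bounded by the Gaussian tail
\[\sum_{|n+u|>c}q^{(n+u)^2}\le 2\int_{c}^{\infty}\frac{x}{c}\,q^{x^2}\,dx=\frac{q^{c^2}}{c\log(1/q)}.\]
The step to check is the offset bound $|\,|\la|-|\mu|\,|\le 2N$. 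I would try to derive it from the defining choice of the middle index $\lfloor\frac{N-1}2\rfloor+1$ together with $k<N$. If it fails for very large $|\a|$, I would absorb the extra integer shift of $m/N$ into a reindexing of $n$, since only the fractional part $u$ matters.
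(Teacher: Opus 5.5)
Your proof of the first inequality in \eqref{eq---BoundWZT}, via $N\mathfrak{c}^*_\alpha=\|\bar\alpha\|^2+2\langle\alpha,\rho\rangle\ge 0$, is correct; the paper leaves this step implicit. Your proof of the $\theta(q)$ bound, summing $q^{(n+u)^2}$ over each $\mathbb{Z}$-orbit using $\mathfrak{c}_\alpha=\mathfrak{c}^*_\alpha+(m/N)^2$, is also correct and is exactly the paper's argument.

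\textbf{Main estimate.} The gap is in $\zeta^{(>k)}_{SU(N)}(s)\le KN^{-d}$.

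First, both dimension bounds you propose are false. For $\alpha=[(n),\emptyset]_N$ one has $d_\alpha=\binom{N+n-1}{n}\le\bigl(e(N+n-1)/n\bigr)^n$, which is only exponential in $n$ once $n\asymp N$. Taking $n=\lfloor N/4\rfloor$ contradicts $d_\alpha\ge (N/C)^{|\alpha|}$. Taking $n=N$, where $d_\alpha\le 4^N$, contradicts $d_\alpha\ge (N/C)^{N/C}$ for $N$ large.

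Your row-by-row product does give the correct bound $d_\alpha\ge (N/(Cn))^n$ for $n=|\alpha|\le\epsilon N$. To get it, pair the rows of $\lambda$ and of $\mu$ only with the middle block of coordinates, so that no factor is used twice. With $\epsilon=\epsilon(s)$ small, this still makes the sum over $k<n\le\epsilon N$ of order $N^{-s(k+1)}$, so that part is repairable.

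The real missing idea is the tail. A lower bound on $d_\alpha$ that is uniform over $|\alpha|\ge\epsilon N$ cannot control $\sum_{|\alpha|\ge\epsilon N}d_\alpha^{-s}$, which has infinitely many terms. You need summability uniform in $N$. For instance, using $d_\alpha\ge e^{c\min(N,|\alpha|)}$ (the Magee--de la Salle bound recalled at the end of the paper's first appendix), one gets $\sum_{|\alpha|\ge\epsilon N}d_\alpha^{-s}\le e^{-c\epsilon Ns/2}\,\zeta_{SU(N)}(s/2)$. This still requires $\sup_{N\ge N_0}\zeta_{SU(N)}(s/2)<\infty$. That uniform boundedness is exactly the Larsen--Lubotzky input which ``follow Larsen's argument'' defers without supplying. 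Only $N\ge N_0(s)$ is meaningful here, since $\zeta_{SU(2)}(s)=\infty$ for $s\le 1$.

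The paper obtains this from Lemma \ref{Lem---BoundGLM}, which gives $d_\alpha\ge\prod_{l=1}^{N-1}(1+\omega_l(\alpha))^{v_l}$ in the gap coordinates $\omega_l(\alpha)=\alpha_l-\alpha_{l+1}$, with $v_l\ge\max(\log N,\min(l,N-l))$. This bounds $\sum_\alpha d_\alpha^{-s}\prod_l(1+\omega_l(\alpha))^{s'v_l}$ by $\prod_l\zeta((s-s')v_l)$, uniformly in $N$. A Chernoff-type estimate (Corollary \ref{Coro----MassLargeSize}) then bounds the $d_\alpha^{-s}$-mass of $\{|\alpha|>2ke^k\}$ by $KN^{-ks\log 2/8}$. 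Working with the gaps rather than with $(\lambda,\mu)$ is what controls the whole series, not only small $|\alpha|$.

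\textbf{The charge-truncated bound \eqref{eq---TruncationCharge}.} Your comparison $\sum_{|n+u|>c}q^{(n+u)^2}\le 2\int_c^\infty\frac{x}{c}\,q^{x^2}\,dx$ is false without a unit margin. A lattice point at $c+\eta$ with $\eta\downarrow 0$ contributes nearly $q^{c^2}$, which exceeds $q^{c^2}/(c\log(1/q))$ once $c\log(1/q)>1$. Use integrality instead: $|c(\alpha)|\ge c+3$ and offset at most $2$ give $|m/N|\ge c+1$. Then each one-sided lattice sum is at most $\int_c^\infty q^{x^2}\,dx\le q^{c^2}/(2c\log(1/q))$.

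The offset bound $\bigl||\lambda|-|\mu|\bigr|\le 2N$, however, does not follow from $k<N$, because the sum runs over all $|\alpha|>k$. Your reindexing fallback also fails, because the truncation is on $c(\alpha)$ itself. For example, with offset $5$ and $c=1$, the orbit point with $c(\alpha)=-5$ is kept, yet it contributes $q^0=1$. So the orbitwise estimate needs a restriction such as $|\alpha|\le 2N$. The paper's one-line ``comparison with an integral'' tacitly uses the same restriction, and you should state it explicitly rather than try to remove it by reindexing.
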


\begin{proof} The second bound of \eqref{eq---BoundWZT} follows from \eqref{Coro----MassLargeSize} in corollary \ref{Coro----MassLargeSize}.         For the second display, note that for any $\a\in \ZD,$ 
\begin{equation}
\mfc_\a=\mfc_\a^*+m(\a+\rho)^2= \mfc_\a^*+ \left(c(\a)+\frac{|\la|-|\mu|}{N} \right)^2,
\end{equation}
where we write  for $x\in \R^N,$ $m(x)=\frac 1 N \sum_{i}x_i.$ We conclude that for $0<q<1,$
\begin{align}
\zeta_{U(N)}^{(> k)}(s,q)= \sum_{\a\in \mathbb{Z}^{\downarrow N}/\Z:|\a|> k} \frac{q^{\mfc_\a^*}}{d_\a^s} \sum_{n\in\Z} q^{ (n+\frac{|\la(\a)|-|\mu(\a)|}{N})^2}\le \theta(q) \zeta_{SU(N)}^{(> k)}(s,q)  \label{eq---BoundSupTheta}
\end{align}
and likewise  comparison with an integral of the truncated second sum yields \eqref{eq---TruncationCharge}.
\end{proof}

\begin{rmk} Mind that since $d_\a$ is invariant by translation, for any $s>0,$  $\lim_{q \uparrow 1} \zeta_{U(N)}(s,q)=+\infty.$
\end{rmk}

\subsection{Decomposition of the  discrete Yang-Mills measure}

\subsubsection{Generalised Maps on surfaces} \label{sec---GeneralisedMap} A generalised map is a triple   $\Gbb=(V,E,F),$   of three finite sets such that $\mathcal{G}=(V,E)$   is a finite graph, together with
\begin{itemize}
 \item a bijection $\theta :E_0\to E_o$ whose cycles induces a cyclic ordering of $Out_v=\{e\in E_o:\underline{e}=v\}$ for each $v\in V,$ 
\item    a left boundary  map $L:E_0\to F$ such that setting $R(e)=L(e^{-1}),$ 
\begin{equation}
L(e)=R(\theta(e))\quad \forall e\in E_0\label{eq---OrderVertex/FaceCompat}
\end{equation}
\item and a genus map  $g:F\to \mathbb{N}$.
 \end{itemize}
 
A pair $(e,\theta(e))$ with $e\in E_0$ is called a \emph{corner} at the vertex $\underline{e}$ within the face $L(e).$  By \eqref{eq---OrderVertex/FaceCompat},  $L=L\circ \mathcal{S}$ where   $\mathcal{S}:E_0\to E_0, e\mapsto \theta(e^{-1}).$  Orbits of $\mathcal{S}$ are called face boundaries. For  each $f\in F,$ denote  $\partial_1f,\ldots,\partial_{k_f}f $ the orbits of $\mathcal{S}$ in $L^{-1}(f)$ and set
\begin{equation}
\chi (f)= 2-2g(f)-k_f.
\end{equation}

 For each generalised map $\Gbb=(V,E,F)$ a closed orientable surface $\Sigma$  can be built  gluing for each  $f\in F$   a surface $\Sigma_f$ of genus $g_f,$ with $k_f$ boundary components along the $k_f$ loops of 
$X_\mathcal{G}$ associated to $\partial f.$ The orientation of $\Sigma$ can then be chosen  so that the cyclic order of edges around vertices is the clockwise one and for each edge $e\in E_0,$ the  image of $\Sigma_{L(e)}$ in $\Sigma$ 
appears  on the 
left of the embedding of $e.$   We then say the map $\mathbb{G}$ is \emph{embedded} in $\Sigma.$ With this convention for each edge $e\in E_0,$ the surface attached for the face $L(e)$
appears   on the 
left of the image of $e$ in $\Sigma.$  Call \emph{genus} $g$ of the map $\Gbb$ the genus of $\Sigma$ so that 
{\begin{equation}
\#V-\#E+\#F-\sum_{f\in F} 2 g(f)=2-2g.
\end{equation}}

Our main motivation for introducing this definition is the following example.  Assume $\Gbb$ is a { map} embedded in a surface $\Sigma_\Gbb$ and $\mathcal{S}=(\mfl_1,\ldots,\mfl_p)$  is a tuple of  loops of $\Gbb$ having only {simple transverse intersections}.  Consider  
an   open set $X_\mathcal{S}$  of $\Sigma_\Gbb$  that deforms retract to the union of loops' ranges in $\Sigma_\Gbb.$    Let $V$ and $E_+$ (resp. $E$) be the vertices and oriented (resp. non-oriented) edges  of $\Gbb$ visited by loops of $\mathcal{S}$ and let $F$ be the set of connected 
components  of $\Sigma_{\Gbb}\setminus X_\mathcal{S}$.  Each $f\in F$ is a compact, orientable surface with signature $(g_f,k_f)$ and boundary components that deform retract to the range of  loops $\partial f_1, \ldots,\partial f_{k_f}$ of $\mathcal{G}=(V,E)$.   Along each loop $\partial f_i$ the orientation   $E_+$ changes an even number of times  $m_{f,i}$  and we set  
\begin{equation}
m_f=\sum_{i=1}^{k_f}m_{f,i}.\label{eq---OSWAPF}
\end{equation}
The orientation of $\Sigma_\Gbb$  determines then   a left boundary map and  cyclic orderings of outgoing edges that make   the triple  $\Gbb_\mathcal{S}=(V,E,F)$  into a generalised map. We then denote ${{\R^*_+}^{F}}$ by $\Delta_\mathcal{S}$ and $\{a\in \Delta_\mathcal{S}: |a|=T\}$ by $\Delta_\mathcal{S}(T).$

\subsubsection{Discrete Yang-Mills measure on generalised maps}

Assume $\Lambda$ is a subset of $\hat{G}_N.$ For $t>0,s\in\R, $ set for $g_1,\ldots,g_k\in G_N,$
\begin{equation}
p_{t,s}^{(\Lambda)}(g_1,\ldots,g_k)=\sum_{\a\in \Lambda}  e^{-t c_\a}d_\a^s \prod_{i=1}^k\chi_\a(g_i)
\end{equation} 
and for any generalised map $\Gbb=(V,E,F)$ with  area vector $a\in {\R_+^*}^F$ and subset of faces $F_*,$

\begin{equation}
\YM_{\Gbb,a,G_N}^{(\Lambda,F_*)}(dh)=\prod_{f\in F\setminus F_*} p_{a(f),\chi(f)}(h_{\partial f})  \prod_{f\in F_*} p_{a(f),\chi(f)}^{(\Lambda)}(h_{\partial f}) U_{\Gbb}(dh),\label{eq---DefTruncatedYM}
\end{equation}
where for any $f\in F$ with boundary loops\footnote{since $p_{t,s}^{\Lambda}(g_1,\ldots,g_k)$ is invariant by conjugation of $G_N^k,$ the density \eqref{eq---DefTruncatedYM} does not depend on the choice of base points for $\partial f_1,\ldots, \partial f_{k_f}.$} $ \ell_1,\ldots,\ell _{k_f},$ $h_{\partial f}= (h_{\ell_1},\ldots, h_{\ell_{k_f}}).$

\subsection{Refinement and compatibility} For two generalised maps $\Gbb=(V,E,F)$ and  $\Gbb'=(V',E',F')$,  say   $\Gbb$   is a refinement of, or finer than    $\Gbb'$ if  $(V',E')$ is  a connected  sub-graph of $(V,E),$   and the order $\th'$ and genus map $g:F'\to \N$ of $\Gbb'$ are determined by the embedding of $\Gbb.$ 
\begin{rmk} The oriented edges ${E'}^o$ is then a subset ${E}^o$  stable by orientation reversing and   the cyclic order of  ${E'}^{o}$ is the one induced by ${E}^{o}.$  The generalised maps $\Gbb'$ and $\Gbb$ have same genus. 
\end{rmk} 
For such a pair, for any face  $f\in F$ there is a unique face $\iota(f)\in F'$ whose embedding includes the one of $f.$ For $a\in \R_+^F,$  define $a'\in \R_+^{F'}$  setting $a'(x)=\sum_{f\in F:\iota(f)=x} a(f)$ for any $x\in F'.$ The Yang-Mills measures are compatible with refinement in the following sense. Denote  $\mathcal{R}^{\Gbb}_{\Gbb'}: (G^{E^o})_o\to (G^{{E'}^o})_o $ the restriction map.

\begin{prop}\label{Prop--Comp} Assume $\Gbb,\Gbb'$ are two generalised map with $\Gbb$ finer than $\Gbb',$   $a\in {\R^*_+}^{F},$    $F_*\subset F$ and $F'_*=\iota(F_*)$. Then 
\begin{equation}
\mathcal{R}^{\Gbb}_{\Gbb'}(\YM^{(\Lambda,F_*)}_{\Gbb,a,G_N})=\YM^{(\Lambda,F'_*)}_{\Gbb',a',G_N}.\label{eq---CompaYM}
\end{equation}
\end{prop}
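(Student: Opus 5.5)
We reduce to elementary refinement steps: by induction on $\#E-\#E'$ it is enough to treat the case where $\Gbb'$ is obtained from $\Gbb$ by removing a single (non-oriented) edge $e$, or by removing a degree-one vertex with its edge. The restriction maps compose, and so do the maps $a\mapsto a'$, $F_*\mapsto\iota(F_*)$. The proof then uses only integration of characters over the Haar measure of the removed edge variable $h_e$. The two orthogonality identities for $\a,\b\in\hat G_N$ are
\begin{equation}
\int_{G}\chi_\a(xg)\chi_\b(g^{-1}y)\,dg=\frac{\delta_{\a\b}}{d_\a}\chi_\a(xy),\qquad \int_G\chi_\a(xgyg^{-1})\,dg=\frac{1}{d_\a}\chi_\a(x)\chi_\a(y).
\end{equation}

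\textbf{Case 1: $e$ separates two distinct faces.} Suppose $L(e)=f_1\neq f_2=R(e)$. The variable $h_e$ appears exactly once in a boundary loop of $f_1$ and once, inverted, in a boundary loop of $f_2$. Expanding both densities as $\sum_\a e^{-a(f_i)c_\a}d_\a^{\chi(f_i)}\prod\chi_\a(\cdot)$ and integrating $h_e$ with the first identity forces $\a_1=\a_2$. The two boundary loops are merged into the boundary loop(s) of the union face $f$, and one factor $d_\a^{-1}$ appears. The bookkeeping is as follows.
\begin{itemize}
\item If $e$ lies on the same boundary component of $f_1$ as of $f_2$, this cannot happen since $f_1\neq f_2$; so the two components merge into one. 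Then $k_f=k_{f_1}+k_{f_2}-1$ and $g(f)=g(f_1)+g(f_2)$, hence $\chi(f)=\chi(f_1)+\chi(f_2)-1$.
\item The exponent $d_\a^{\chi(f_1)+\chi(f_2)-1}$ therefore matches $d_\a^{\chi(f)}$.
\item The heat factors $e^{-(a(f_1)+a(f_2))c_\a}$ match $e^{-a'(f)c_\a}$.
\end{itemize}
If either face lies in $F_*$, the surviving sum over $\a$ is restricted to $\Lambda$. This is exactly what $f\in F'_*=\iota(F_*)$ requires, because the product of a $\Lambda$-truncated sum and a full sum, after the Kronecker delta, is the $\Lambda$-truncated sum.

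\textbf{Case 2: both sides of $e$ lie in the same face $f$.} The variable $h_e$ appears twice in the boundary loops of $f$, once as $h_e$ and once as $h_e^{-1}$. There are two subcases.
\begin{itemize}
\item The two occurrences lie on two distinct boundary loops $xh_e$ and $h_e^{-1}y$. The first identity merges them into $\chi_\a(xy)$ with a factor $d_\a^{-1}$. Here $k$ drops by one and the genus rises by one, consistent with $\chi(f')=\chi(f)-1$ in the generalised map.
\item The two occurrences lie on the same loop $xh_eyh_e^{-1}$. The second identity splits it into $\chi_\a(x)\chi_\a(y)$ with a factor $d_\a^{-1}$. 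Here $k$ rises by one and the genus is unchanged, again giving $\chi$ decreasing by one.
\end{itemize}
Since $f$ is unchanged as a region, no delta is needed and the truncation set is preserved. If $e$ is a pendant edge, it appears as $h_eh_e^{-1}$, which cancels; integrating $h_e$ is trivial and $\chi$ is unchanged.

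\textbf{Main obstacle.} The delicate step is the topological bookkeeping in Case 2. One must check that the genus map $g'$ and the boundary components of $\Gbb'$ induced by the embedding are exactly those predicted by the character computation, including when loops become empty or degenerate. An empty boundary loop, e.g. the component $y$ being trivial after splitting, contributes $\chi_\a(1)=d_\a$. This raises the power of $d_\a$ and must be matched by treating that component as a closed-up disc, which lowers $k$ accordingly. I would handle this by verifying $\chi(f')=\chi(f)\pm$ the correct amount directly via the Euler relation $\#V-\#E+\#F-\sum_f 2g(f)=2-2g$, which holds for both $\Gbb$ and $\Gbb'$ since they have the same genus.
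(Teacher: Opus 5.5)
Your proposal is correct and is essentially the paper's proof: reduce to one-edge elementary moves and integrate out the removed edge variable with the character convolution identities, which are exactly the paper's three identities for $p^{(\Lambda)}_{t,s}$ (two faces merging, two boundary loops of one face merging, one boundary loop splitting), each lowering the exponent of $d_\a$ by one. Two small corrections. First, the Euler relation you quote in your last paragraph, $\#V-\#E+\#F-2\sum_f g(f)=2-2g$, holds only when every face has a single boundary component; applied to your Case 2 it would force a half-integer change of $\sum_f g(f)$. The correct relation is $\#V-\#E+\sum_{f}\chi(f)=2-2g$, and it shows at once that $\sum_f\chi(f)$ drops by exactly one when a non-pendant edge is removed and is unchanged for a pendant one, so your ``main obstacle'' disappears. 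Second, the paper's extra-vertex move is the subdivision of an edge by a degree-two vertex, which follows immediately from translation invariance of Haar measure, rather than your pendant-edge removal, so it is worth including both.
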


The proof is similar to the restriction property of the  Yang-Mills measure (\cite{LevMarkovHol}). We  sketch here the argument. 

\begin{proof} Without loss of generality, $(\# V,\# E)= (\# V'+1,\# E'+1) $  or  $(\# V,\# E)= (\# V',\# E'+1). $  In both cases all but at most two edges of $E$ can be identified with an edge of $E'.$ The identity \eqref{eq---CompaYM}   is equivalent the equating the density of  $\YM^{(\Lambda,F'_*)}_{\Gbb',a',G_N}$ with the integral of   $\YM^{(\Lambda,F'_*)}_{\Gbb,a,G_N}$ along the edge coordinate of $\mathcal{M}(\mathcal{G},G)$ associated to one these two edges edge.  In the first case, a vertex is added to the interior of an edge $e$ of $E'$ dividing it into two edges $e_1,e_2$  and  $F$ can be identified with $F'$. The identify follows integrating with respect to $h_{e_1}$  from the invariance by multiplication of the Haar measure.     Consider the second case. Whether  $\#F=\#F'+1,$ the identity then follows by Peter-Weyl isometry and 
\begin{equation}
\int_{G} p_{t_1,s_1}^{(\Lambda)}(u g, h_1,\ldots,h_k )  p_{t_2,s_2}(g^{-1}v, h_{k+1},\ldots,h_n )  dg=  p^{(\Lambda)}_{t_1+t_2,s_1+s_2-1}(uv,h_1,\ldots,h_n) 
\end{equation}
for any $u,v,h_i\in G, t_i>0, s_i\in \R$. Or $\#F=\#F'$, it is then likewise a consequence of 
\begin{equation}
\int_{G} p_{t,s}^{(\Lambda)}(u gv g^{-1},h_1,\ldots,h_k)dg = p^{(\Lambda)}_{t,{s-1}}(u,v,h_1,\ldots ,h_k)
\end{equation}
or
\begin{equation}
 \int_{G} p_{t,s}^{(\Lambda)}(u g, g^{-1} v ,h_1,\ldots,h_k)dg = p^{(\Lambda)}_{t,s-1}(u v ,h_1,\ldots,h_k)
\end{equation}
for $u,v,h_i\in G, t>0, s\in \R$.
\end{proof}
 
When $\Gbb=(V,E,F)$ is a  map and $T>0,$  consider $\overline{\Delta_\Gbb(T)}=\{a\in \R_+^F: |a|=T\}$ and 
  \[\mathcal{M}_{a,0}(\mathcal{G},G)=\{h\in \mathcal{M}(\mathcal{G},G): h_{\partial f}=\Id_N \,\forall f\in F\,\text{with}\,a_f=0\}.\]
The  following compatibility relation follows from \eqref{def---HK}. Details of its proof are omitted here. 
 
\begin{prop} \label{Prop---Contract} Assume  $T>0,$   $a\in\overline{\Delta_\Gbb(T)} $ and  $F_*\subset F$ with    $a(f)>0$ for all $f\in F_*.$ Then  the following weak limit of signed measures exists
\[\YM^{(\Lambda,F_*)}_{\Gbb,a,G_N}= \lim_{a' \to a, a'\in\Delta_\Gbb(T)} \YM_{\Gbb,a',G_N}^{(\Lambda,F_*)}. \] 
Moreover,  $\YM^{(\Lambda,F_*)}_{\Gbb,a,G_N}$ is supported on $\mathcal{M}_{a,0}(\mathcal{G},G).$ In particular, when $f\in F\setminus F_*$ with  $a_f=0,$ identifying $f$ with the associated closed  $2$-cell of $\Sigma_\Gbb,$ for any loops $\mfl_1,\mfl'_1,\mfl_2,\ldots,\mfl_k,\mfl'_k\in \mathrm{L}(\Gbb)$ such that $\mfl_i$ is homotopic to $\mfl'_i$  relatively to $\Sigma_{\Gbb}\setminus \iota(f)$,   
\[\YM^{(\Lambda,F_*)}_{\Gbb,a,G_N}(\prod_{i=1}^kW_{\mfl'_i})=\YM^{(\Lambda,F_*)}_{\Gbb,a,G_N}(\prod_{i=1}^kW_{\mfl_i}).\]
\end{prop}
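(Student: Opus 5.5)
The plan is to write the density of $\YM^{(\Lambda,F_*)}_{\Gbb,a',G_N}$ face by face and pass to the limit $a'\to a$ separately in each factor. Faces $f$ with $a(f)>0$ pose no problem. Faces in $F_*$ have $a(f)>0$ by assumption, so every factor $p^{(\Lambda)}_{a'(f),\chi(f)}$ and $p_{a'(f),\chi(f)}$ with $a(f)>0$ converges uniformly on $G_N^{k_f}$ to its value at $a(f)$. This follows from the character expansion $\sum_\a e^{-t\mfc_\a}d_\a^{s}\prod_i\chi_\a(g_i)$, since $|\chi_\a|\le d_\a$ and the bound $\sum_\a e^{-t\mfc_\a}d_\a^{s+k}<\infty$ holds uniformly for $t$ in a compact subset of $(0,\infty)$.

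The degenerate faces are the $f\notin F_*$ with $a(f)=0$. For a map, each such face has a single boundary loop and $\chi(f)=1$, so the factor is $p_{a'(f)}(h_{\partial f})$, the usual heat kernel. As $a'(f)\downarrow0$ it converges weakly to $\delta_{\Id_N}$ by \eqref{def---HK}.

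To turn this into a weak limit of the product measure, I would choose a spanning tree and gauge-fix so that the Haar measure $U_\Gbb$ is written in coordinates. I would also pick, for each degenerate face $f$, an edge $e_f\in\partial f$, with the $e_f$ distinct and lying outside the tree, such that the map $h\mapsto (h_{\partial f})_{f\text{ degenerate}}$ becomes a change of variables in the coordinates $h_{e_f}$. This is possible because the boundary words of any proper subset of faces are free (they can be peeled off one at a time), and not all faces are degenerate since $T>0$. After the substitution $h_{e_f}\mapsto h_{\partial f}$ the Haar measure is preserved, and the density factorises as
\[\prod_{f\ \mathrm{degenerate}} p_{a'(f)}(x_f)\,dx_f\times\Phi_{a'}(x,h'),\]
where $\Phi_{a'}$ collects the non-degenerate factors, which are continuous and converge uniformly. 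Integrating a continuous test function and using weak convergence of the product heat kernels to $\delta_{\Id}^{\otimes}$ gives existence of the limit. The result is the measure obtained by setting $x_f=\Id_N$, so it is supported on $\mathcal{M}_{a,0}(\mathcal{G},G)$. Because the limit formula is intrinsic, it does not depend on how $a'\to a$.

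For the final claim: on $\mathcal{M}_{a,0}$ we have $h_{\partial f}=\Id_N$. If $\mfl_i$ is homotopic to $\mfl'_i$ relative to $\Sigma_\Gbb\setminus\iota(f)$, the homotopy can be realised combinatorially by inserting and deleting backtracks $ee^{-1}$ and conjugates of $\partial f$ (sliding across the cell $f$). Neither operation changes the conjugacy class of $h_{\mfl_i}$ on the support, so the integrands coincide almost surely and the integrals agree.

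The main obstacle I expect is the change-of-variables step: making sure the chosen edges $e_f$ exist when several adjacent degenerate faces share boundary. I would handle this by induction on the number of degenerate faces, using Proposition \ref{Prop--Comp}. That proposition lets one merge a degenerate face with a positive-area neighbour (erasing a common edge), after which only one degenerate face needs to be removed at a time.
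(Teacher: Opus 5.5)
Your argument is correct and is essentially the paper's: the paper only says the proposition follows from the initial condition $p_T(g)\,dg\rightharpoonup\delta_{\Id_N}$ in \eqref{def---HK} and omits the details, which your face-by-face limit and change of variables $h_{e_f}\mapsto h_{\partial f}$ supply. The step you flag as the main obstacle is already settled by your peeling remark: take a spanning tree of the dual graph rooted at a face of positive area (one exists since $|a|=T>0$) and let $e_f$ be the edge joining $f$ to its parent, so that $e_f$ occurs once in $\partial f$ and otherwise only in the parent's boundary. Hence the fallback via Proposition \ref{Prop--Comp} is not needed, and it would not suffice anyway, since that proposition only identifies marginals on a coarser map; likewise the gauge-fixing is superfluous, because Haar measure on $G^{E_+}$ is already a product and weak convergence must also be tested against functions that are not gauge-invariant.
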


\subsection{Exact IRF-formulas for Wilson loops expectations with simple, transverse intersections} \label{sec---StatIRF} When  $\Gbb=(V,E,F)$ is a generalised map,   $\mathcal{R}$ a module and  $x:F\to \mathcal{R}$ a $2$-cycle, denote by $\partial x :E_o \to \mathcal{R}$ its boundary given by 
\[\partial x(e)= x(L(e))-x(R(e)).\]
 
For $N\ge 1$, denote $\mathcal{E}_N $ the set of $N$ elementary vectors  $\om_1=(1,0\ldots,0), \ldots,\om_N= (0,\ldots,0,1)$ of $\Z^N$ and by  $\mathcal{E}^*_N$ its projection to  $\Z^N/\Z.$ When $E_+$ is a fixed orientation of $E,$  introduce the space of Lipschitz functions 
\begin{equation}
\Omega_{N}=\{\a: F\to\ZD: \,\forall e\in E_+,\, -\partial \a(e)\in \mathcal{E}_N \}
\end{equation}
and 
\begin{equation}
\Omega_{N}^*=\{\a: F\to\ZD/\Z: \,\forall e,\, -\partial \a(e)\in \mathcal{E}_N^* \}.
\end{equation}

In the following statement, we fix   a generalised map  $\Gbb_\mathcal{S}=(V,E,F)$  with edges orientation $E_+$ associated to a   loops tuple $\mathcal{S}$ with simple and transverse intersections as in  section \ref{sec---GeneralisedMap}.

     For $\a:F\to\ZD,$  set  
\begin{equation} 
\mathcal{D}_{\a,\mathcal{S}}=\prod_{f\in F} d_{\a(f)}^{\chi(f)-\frac{m_f}2}  \quad\text{and}\quad \mfc_\a(f)= \mfc_{\a(f)} \quad\forall f\in F,
\end{equation} 
and for $a\in \Delta_{\Gbb_\mathcal{S}},$
\begin{equation}
 \<a,\mfc_\a\>=\sum_{f\in F} a(f)\mfc_{\a(f)}.
\end{equation}
When $\a\to\ZD /\Z$,  we use the same notations with  $\mfc^*_\a$ in place of $\mfc_\a.$
 
\begin{thm}[IRF-formula]  \label{thm---IRFformula}For any $T>0$ and any area vector $a\in \Delta_{\Gbb_\mathcal{S}}(T),$ 
\begin{equation}
 \zeta_{U(N)}(2g-2,q_T)\EE_{\YM_{U(N),\Gbb_\mathcal{S},a}}[W_{\mathcal{S}}]= \sum_{\a\in\Omega_N}  \mathcal{D}_{\a,\mathcal{S}} \mathcal{I}_{\a,\mathcal{S} } e^{- \frac 1 2 \langle a,\mfc_\a\rangle } \label{eq---IFRU}\end{equation}
 and 
 \begin{equation}
 \zeta_{SU(N)}(2g-2,q_T)\EE_{\YM_{SU(N),\Gbb_\mathcal{S},a}}[W_{\mathcal{S}}]= \sum_{\a\in\Omega^*_N}  \mathcal{D}_{\a,\mathcal{S}} {{ \mathcal{I}_{\a,\mathcal{S} } }}e^{- \frac 1 2 \langle a,\mfc^*_\a\rangle }, \label{eq---IFRSU}\end{equation}
 where for all $\a:F\to\ZD,$  $\mathcal{I}_{\a,\mathcal{S}}$ is invariant by translation of $\a$ and  $|\mathcal{I}_{\a,\mathcal{S}}|\le 1.$ 
More generally for any subset  $F_*$ of faces of $\Gbb,$  $\Lambda$ or $\Lambda^*$ of $ \ZD$ or $\ZD/\Z$  
\begin{equation}
\YM_{U(N),\Gbb_\mathcal{S},a}^{(\Lambda,F_*)}(W_\mathcal{S})=\sum_{\a\in \Omega_N :\, \forall f\in F_*, \a(f)\in\Lambda}  \mathcal{D}_{\a,\mathcal{S}} \mathcal{I}_{\a,\mathcal{S} } e^{- \frac 1 2 \langle a,\mfc_\a\rangle }\label{eq---IRFGenCut}
\end{equation}
and 
\begin{equation}
\YM_{SU(N),\Gbb_\mathcal{S},a}^{(\Lambda^*,F_*)}(W_\mathcal{S})=\sum_{\a\in \Omega_N^{*} :\, \forall f\in F_*, \a(f)\in\Lambda^*}  \mathcal{D}_{\a,\mathcal{S}} \mathcal{I}_{\a,\mathcal{S} } e^{- \frac 1 2 \langle a,\mfc^*_\a\rangle }.\label{eq---IRFGenCutSpecial}
\end{equation}
\end{thm}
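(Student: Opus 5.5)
We prove the truncated identities \eqref{eq---IRFGenCut} and \eqref{eq---IRFGenCutSpecial}. The untruncated formulas \eqref{eq---IFRU} and \eqref{eq---IFRSU} are then the case $F_*=\emptyset$, divided by the partition function. That partition function is identified with $\zeta(2g-2,q_T)$ by the same computation with no loops, i.e.\ on a one-face generalised map of genus $g$.

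\textbf{Step 1: expand the density into characters.} We start from the definition \eqref{eq---DefTruncatedYM}. Each face factor $p^{(\Lambda)}_{a(f),\chi(f)}(h_{\partial f})$ is expanded as a sum over a label $\a(f)\in\ZD$, restricted to $\Lambda$ when $f\in F_*$. This gives
\[
\YM^{(\Lambda,F_*)}_{\Gbb_\mathcal{S},a}(W_\mathcal{S})=\sum_{\a:F\to\ZD}\prod_f e^{-\frac{a(f)}2\mfc_{\a(f)}}d_{\a(f)}^{\chi(f)}\int \prod_{i}W_{\mfl_i}(h)\prod_{f}\prod_{j=1}^{k_f}\chi_{\a(f)}(h_{\partial_jf})\,dh .
\]
Exchanging sum and integral is justified by absolute convergence, via the Witten-zeta bounds of Proposition \ref{Prop---ConvWittenZeta}. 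In the $SU(N)$ case the same expansion is indexed by $\ZD/\Z$.

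\textbf{Step 2: integrate edge by edge using Pieri's rule.} Every edge of $\Gbb_\mathcal{S}$ is traversed exactly once by a loop, in the direction given by $E_+$. We therefore split each Wilson loop trace $\Tr(h_{\mfl})$ into matrix entries along edges. Fix an edge $e$ with faces $L(e)$ and $R(e)$ on either side. After the splitting, the variable $h_e$ appears in three places: in one boundary word of each of $\chi_{\a(L(e))}$ and $\chi_{\a(R(e))}$, and once in the fundamental representation.

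The Haar integral over $h_e$ projects onto invariants of $V_{\a(L(e))}\otimes V_\square\otimes V_{\a(R(e))}^*$. Pieri's rule says $V_\b\otimes V_\square=\bigoplus_{\om\in\mathcal{E}_N}V_{\b+\om}$, multiplicity-free. Hence the integral vanishes unless $\a(R(e))=\a(L(e))+\om$ for some $\om\in\mathcal{E}_N$, which is exactly the Lipschitz constraint $-\partial\a(e)\in\mathcal{E}_N$ defining $\Omega_N$. When the constraint holds, the integral equals $d^{-1}$ times a contraction of a fixed isometric embedding $V_{\a(R(e))}\hookrightarrow V_{\a(L(e))}\otimes\C^N$ with its adjoint.

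\textbf{Step 3: assemble into a spin-network evaluation and extract the dimension weight.} After all edge integrals, the remaining object is a contraction of these Pieri intertwiners around each vertex, i.e.\ a spin-network evaluation. Each vertex is a simple transverse crossing of two strands. Its four adjacent faces carry labels $\b$, $\b+\om_i$, $\b+\om_j$, $\b+\om_i+\om_j$, and the vertex contributes a $6j$/braiding coefficient for $U(N)$ with one fundamental leg. We define $\mathcal{I}_{\a,\mathcal{S}}$ as the normalised contraction, i.e.\ the product of these unitary recoupling coefficients.

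The dimension factors combine as follows. Along each boundary component of a face, consecutive edges either keep or swap their orientation relative to the face. Each orientation swap changes whether the face label sits on the "large" or the "small" side of the Pieri pair. We account for this with one factor $d^{-1/2}$ per swap, which yields the total exponent $\chi(f)-m_f/2$ in $\mathcal{D}_{\a,\mathcal{S}}$.

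Translation invariance of $\mathcal{I}_{\a,\mathcal{S}}$ comes from tensoring with the determinant representation, which commutes with all intertwiners. The bound $|\mathcal{I}_{\a,\mathcal{S}}|\le1$ follows because it is a matrix coefficient of a product of partial isometries between unit vectors, or alternatively by Cauchy--Schwarz on the recoupling matrices, which are unitary.

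For $SU(N)$, the same argument applies on $\ZD/\Z$ with $\mathcal{E}^*_N$, using $\mfc^*$ in place of $\mfc$.

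\textbf{Expected main obstacle.} The hard part is the normalisation bookkeeping in Step 3. One must show that the normalisations of the intertwiners produce exactly the exponent $\chi(f)-m_f/2$, with no leftover ratios of dimensions, so that the remaining factor is genuinely bounded by $1$. The clean way to do this is to choose, for each face and orientation-swap corner, a square-root normalisation $\sqrt{d_{\b+\om}/d_\b}$ of the Pieri embedding. Around each vertex these ratios should telescope, and this telescoping is what we would verify in detail.
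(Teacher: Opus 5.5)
Your outline follows the paper's route: expand each face density into characters, reduce to $\EE[W_\mathcal{S}W_\la]$, integrate edge by edge via Peter--Weyl with the multiplicity-free Pieri rule (this forces $\la\in\Omega_N$ and produces a factor $1/d_{\la(R(e))}$ per edge $e\in E_+$), and bound each vertex contraction using that the flip $\id\otimes\tau$ is unitary. The dimension bookkeeping you flag closes without any square-root renormalisation: with isometric Pieri embeddings, the contraction at $v$ is the Hilbert--Schmidt pairing of two path intertwiners of squared norm $d_{\la(\mfe_v)}$, hence equals $d_{\la(\mfe_v)}$ times a coefficient of modulus at most $1$, and the double count $\prod_{e\in E_+}d_{\la(R(e))}^2=\prod_{v}d_{\la(\mfn_v)}d_{\la(\mfs_v)}d_{\la(\mfe_v)}^2$ then leaves exactly $\prod_{f}d_{\la(f)}^{-m_f/2}$, which is your factor $d^{-1/2}$ per orientation swap.
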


Formulas similar but less explicit than the above were given by Witten \cite{WittenInt}; such sums over weights also occur in interaction-around-the-face (IRF) statistical mechanics 
models. 
The formula \eqref{eq---IFRU}, with the exact form of $\mathcal{I}_{\a,\mathcal{S}}$, was first proved by T. L\'evy. We give in the appendix an argument  different from the original one \cite{IRFThierry}, proving also  \eqref{eq---IRFGenCut}.

\subsection{Bound on truncated discrete Yang-Mills measure}

For $t>0,$ $k\ge 1,$ let us set $\Lambda_k=\{\a\in \ZD:|\a|\le k\}$ and
\begin{equation}
p_t^{(k)}= p_t^{(\Lambda_k)} .
\end{equation} 
 for any map $\Gbb=(V,E,F)$ with a subset of faces $F_*,$  set for any $k\ge 1$ and area vector $a,$
 
\begin{equation}
\YM_{\Gbb,a,G_N}^{(k,F_*)}(dh)=\YM_{\Gbb,a,G_N}^{(\Lambda_k,F_*)}(dh)\label{def---TruncYM}
\end{equation}
and 
\begin{equation}
\YM_{\Gbb,a,G_N}^{(> k,F_*)}=\YM_{\Gbb,a,G_N}-\YM_{\Gbb,a,G_N}^{(k,F_*)}.
\end{equation}

\begin{prop}\label{Prop---TruncationWLE}  Assume $\mfl_1,\ldots,\mfl_p$ are loops of a  map $\mathbb{G}$ of genus $g\ge2$ and  let $F_*$ be a non-empty subset of its 
faces.  There is an integer $d'\ge 1$ and $K>0$ such that for all $k\ge 1 ,$  $N\ge1, T>0$ and any area vector $a\in \Delta_\Gbb(T),$ 
\begin{equation}
|\YM^{(> k,F_*)}_{\Gbb,a,G_N}(W_{\mfl_1,\ldots,\mfl_p})|\le  K N^{d'} \zeta^{(> k)}_{G_N}(2g-2, q_{\frac T 2}).\label{eq---TruncationWittenBound}
\end{equation}
In particular,  for any $d\ge1,$ there is $k'>0$ and $K>0$ such that for all $k\ge k',$  $N\ge1, T>0$ and any area vector $a\in \Delta_\Gbb(T),$ 
 \begin{equation}
|\YM^{(> k,F_*)}_{\Gbb,a,U(N)}(W_{\mfl_1,\ldots,\mfl_p})|\le \frac{K}{N^d}\theta(q_{\frac T2})\quad\text{and}\quad |\YM^{(> k,F_*)}_{\Gbb,a,SU(N)}(W_{\mfl_1,\ldots,\mfl_p})|\le \frac{K}{N^d}.
\end{equation}
\end{prop}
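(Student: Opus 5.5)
We will reduce the statement to the IRF formula of Theorem \ref{thm---IRFformula}, applied to a generalised map built from the loops. First, by the compatibility of truncated measures under refinement (Proposition \ref{Prop--Comp}) and the area-zero contraction of Proposition \ref{Prop---Contract}, we replace $\mfl_1,\ldots,\mfl_p$ by homotopic loops on a finer map with only simple transverse intersections. The contracted faces carry zero area, and we may assume $F_*$ is kept among faces of positive area. We then pass to the generalised map $\Gbb_\mathcal{S}$ of section \ref{sec---GeneralisedMap}, in which every face of $\Gbb$ not visited by the loops is absorbed into a face of positive genus or larger boundary. After this step the quantity of interest is $\YM^{(\Lambda,F_*)}_{\Gbb_\mathcal{S},a}(W_\mathcal{S})$ with $\Lambda=\Lambda_k$, and the difference $\YM-\YM^{(k,F_*)}$ is, by \eqref{eq---IRFGenCut}, the sum over $\a\in\Omega_N$ such that $|\a(f)|>k$ for some $f\in F_*$:
\[
\Big|\YM^{(>k,F_*)}(W_\mathcal{S})\Big|\le \sum_{\a\in\Omega_N:\,\exists f\in F_*,\,|\a(f)|>k}\mathcal{D}_{\a,\mathcal{S}}\, e^{-\frac12\<a,\mfc_\a\>},
\]
where we used $|\mathcal{I}_{\a,\mathcal{S}}|\le 1$.

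Next we exploit the Lipschitz constraint. Adjacent faces differ by an elementary vector, so every $\a(f)$ lies within bounded distance $D$ (the diameter of the dual graph) of a reference value $\b=\a(f_0)$. This has two consequences. First, the number of $\a\in\Omega_N$ with a given $\b$ is at most $N^{\#E}$. Second, dimensions of neighbouring weights are comparable polynomially: from \eqref{eq---dim}, adding $\om_i$ changes $d_\a$ by a factor between $N^{-c}$ and $N^{c}$, since each factor $\frac{x_i-x_j+1}{x_i-x_j}$ lies in $[\frac12,2]$ except for a bounded number of pairs. Likewise the Casimirs satisfy $\mfc_{\a(f)}\ge \mfc_\b - C$ up to bounded additive errors, where $C$ is uniform in $N$ because $\mfc$ is normalised by $1/N$. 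Since $\sum_f(\chi(f)-\frac{m_f}{2})\ge 2-2g-\#E$ (up to bookkeeping with $\#V-\#E$), we obtain $\mathcal{D}_{\a,\mathcal{S}}\le N^{c'}d_\b^{2-2g}$ and $e^{-\frac12\<a,\mfc_\a\>}\le e^{C T}q_{T/2}^{\mfc_\b}$. The condition $|\a(f)|>k$ for some $f$ implies $|\b|>k-D'$, so after summing over $\b$ we get the bound $K N^{d'}\zeta^{(>k-D')}_{G_N}(2g-2,q_{T/2})$. Shifting $k$ and enlarging $K$ then gives \eqref{eq---TruncationWittenBound}. The $SU(N)$ case is identical using \eqref{eq---IFRSU} and $\mfc^*$.

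The second display follows by combining \eqref{eq---TruncationWittenBound} with Proposition \ref{Prop---ConvWittenZeta}. For $SU(N)$, we choose $k'$ so that $\zeta^{(>k)}_{SU(N)}(2g-2)\le K N^{-d-d'}$. For $U(N)$, we additionally use $\zeta_{U(N)}^{(>k)}\le\theta(q)\zeta_{SU(N)}^{(>k)}$.

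\textbf{Main obstacle.} I expect the hardest step to be the uniform comparison of $d_{\a(f)}$ across faces when weights have nearly coinciding coordinates. The factor $(x_i-x_j+1)/(x_i-x_j)$ can be as large as $2$ for many pairs simultaneously, and I will need to check that this only yields $N^{O(1)}$ and not $2^{N}$. I would first try bounding the ratio $d_{\a+\om_i}/d_\a$ directly through the hook-content type formula: it equals $\prod_{j\neq i}\frac{x_i-x_j+1}{x_i-x_j}$, and a telescoping argument over the strictly ordered $x=\a+\rho$ shows this product is at most $N$. A second delicate point is the $T$-uniformity of the constant $e^{CT}$. If this fails, I would absorb it by using $q_{T/2}$ instead of $q_T$, since the Casimir error is linear in $\mfc_\b$ plus $O(1)$, which explains the appearance of $T/2$ in the statement.
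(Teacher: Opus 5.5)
Your overall route is the paper's: IRF expansion with $|\mathcal{I}_{\a,\mathcal{S}}|\le 1$, comparison of each $\a(f)$ with one reference weight along dual-graph paths, a Casimir comparison, the count $N^{\#E}$ of Lipschitz fillings, and summation into a truncated Witten zeta function. Your telescoping bound $\prod_{j>i}\frac{x_i-x_j+1}{x_i-x_j}\le\prod_{k=1}^{N-i}\frac{k+1}{k}\le N$ is a correct and more elementary substitute for the residue argument of Lemma \ref{Lem---RatioPieri}. The ``$[\frac12,2]$ except for boundedly many pairs'' justification is wrong and should be dropped.

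The genuine gap is the step producing $d_\b^{2-2g}$. You assert $\sum_f(\chi(f)-\frac{m_f}{2})\ge 2-2g-\#E$ ``up to bookkeeping''. But from $\mathcal{D}_{\a,\mathcal{S}}\le N^{d'}d_\b^{\sum_f(\chi(f)-m_f/2)}$ you can only conclude $\mathcal{D}_{\a,\mathcal{S}}\le N^{d'}d_\b^{2-2g}$ if the exponent is \emph{at most} $2-2g$, so a lower bound points the wrong way. Nor is a discrepancy harmless. For fixed $N$, $d_\b$ is unbounded on $\ZD$, so an excess in the exponent cannot be traded for a power of $N$; it would change which Witten zeta function appears, possibly into a divergent one. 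What is needed, and what the paper proves, is the exact identity $\sum_f(\chi(f)-\frac{m_f}{2})=\chi(\Gbb)=2-2g$. Gluing gives $2-2g=\#V-\#E+\sum_f\chi(f)$. The loop graph is $4$-regular, so $\#E=2\#V$ and $\sum_f\chi(f)=2-2g+\#V$. At each crossing exactly two corners see a change of orientation of $\mathcal{S}$, so $\sum_f m_f=2\#V$.

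Two further corrections.

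First, your primary Casimir claim is false. $N(\mfc_{\b+\om_i}-\mfc_\b)=2(\b+\rho)_i+1$ is unbounded in $\b$ for fixed $N$, so there is no uniform additive comparison. The right estimate is Lemma \ref{lem---BoundCasimir}. Since $\|\b\|^2\le N\mfc_\b$ (because $\<\b,\rho\>\ge0$), AM--GM gives $\mfc_{\a(f)}\ge\frac12\mfc_\b-C$ with $C$ depending only on the graph. Hence $e^{-\frac12\<a,\mfc_\a\>}\le e^{CT/2}q_{T/2}^{\mfc_\b}$. So the passage to $q_{T/2}$ is forced by this multiplicative loss; it is not a cure for $T$-uniformity. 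The additive constant still leaves a factor $e^{CT/2}$, and the paper's argument likewise produces $K'=e^{\frac T2(e+1)^3}$, with $e$ the number of edges.

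Second, a fixed reference face $f_0$ only yields $\zeta^{(>k-D')}$. Enlarging $K$ does not turn this into $\zeta^{(>k)}$, since the terms with $k-D'<|\b|\le k$ exceed the latter by powers of $N$. Instead take $\b=\a(f_*)$ for a face $f_*$ with $|\a(f_*)|>k$, and pay a factor $\#F$ for the choice of $f_*$, as the paper does. For the second display the shift would have been harmless anyway.
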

Thanks to the Witten zeta functions  truncation of  proposition \ref{Prop---ConvWittenZeta} and to the IRF formula of Theorem  \ref{thm---IRFformula}, the above claim relies on the following classical bound.

\begin{lem} \label{Lem---RatioPieri} For any $\a,\b\in\ZD$ with $\b-\a\in\mathcal{E}_N,$
\[ \frac 1 N\le \frac{d_\b}{d_\a}\le N.\]
\end{lem}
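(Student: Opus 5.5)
Write $\b=\a+\om_k$ for some $1\le k\le N$ and set $x=\a+\rho$. Then $\b+\rho=x+\om_k$. Since $\b\in\ZD$, the coordinates of $x+\om_k$ are still strictly decreasing. The plan is to compute the ratio directly from the Weyl dimension formula \eqref{eq---dim}. Only the factors of the Vandermonde involving the index $k$ change, so
\[
\frac{d_\b}{d_\a}=\prod_{j\neq k}\frac{x_k+1-x_j}{x_k-x_j}=\prod_{j\neq k}\Bigl(1+\frac{1}{x_k-x_j}\Bigr).
\]
The same formula holds with $j<k$ once signs are tracked: the factor $(x_j-x_k)$ becomes $(x_j-x_k-1)$, and we still get $1+\frac{1}{x_k-x_j}$.

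Next, set $m_j=x_k-x_j$. These are integers, since $2\rho$ has entries of the same parity and all differences $\rho_i-\rho_j$ are integers. Their signs and sizes are controlled by the strict decrease of $x$:
\begin{itemize}
\item for $j>k$, $m_j\ge j-k\ge 1$;
\item for $j<k$, $m_j\le -(k-j)$;
\item the admissibility of $\b$ forces $x_{k-1}>x_k+1$, so in fact $m_j\le -(k-j)-1$ for $j<k$.
\end{itemize}

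For the upper bound, discard the factors with $j<k$, each of which lies in $(0,1)$. The remaining factors with $j>k$ are bounded via
\[
\prod_{j>k}\Bigl(1+\frac1{m_j}\Bigr)\le\prod_{i=1}^{N-k}\Bigl(1+\frac1i\Bigr)=N-k+1\le N,
\]
using $m_j\ge j-k$ and telescoping.

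For the lower bound, discard the factors with $j>k$, each of which is $\ge1$. Since $m_j\le -(k-j)-1$, each remaining factor satisfies $1+\frac1{m_j}\ge 1-\frac1{k-j+1}=\frac{k-j}{k-j+1}$. Telescoping over $i=k-j$ from $1$ to $k-1$ gives
\[
\prod_{j<k}\Bigl(1+\frac1{m_j}\Bigr)\ge\prod_{i=1}^{k-1}\frac{i}{i+1}=\frac1k\ge\frac1N.
\]

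The only delicate point is this lower-bound step. It needs the sharpened gap $x_{k-1}-x_k\ge2$, which comes from $\b$ being dominant, rather than the gap $\ge1$ coming from $\a$. Without the sharpened gap, a factor $1+\frac{1}{m_j}$ could vanish. With it, everything is an elementary telescoping product.
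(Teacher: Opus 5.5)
Your proof is correct, but it takes a different route from the paper's. The paper proves only the upper bound directly. With $x=\a+\rho$, it notes that $\sum_{i=1}^N d_{\a+\om_i}/d_\a=\sum_{i}\prod_{j\neq i}\frac{x_i+1-x_j}{x_i-x_j}$ is minus the residue at infinity of $\prod_i\frac{z+1-x_i}{z-x_i}$, i.e.\ $N$; this is just the dimension count $N d_\a=\sum_i d_{\a+\om_i}$ behind Pieri's rule. Each summand is nonnegative (it vanishes exactly when $\a+\om_i\notin\ZD$), so every ratio is at most $N$. The paper then gets the lower bound by duality, $d_\g=d_{\g^*}$ with $\g^*=(-\g_N,\dots,-\g_1)$, which turns adding a box in row $k$ into removing one in row $N+1-k$.

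You instead isolate the $N-1$ factors of the Vandermonde ratio that involve $k$. You bound each one using integrality and the sign and size of $m_j=x_k-x_j$ given by dominance of $\a$ and $\b$, and then telescope. This avoids both the sum rule and the duality step. It also gives the sharper row-dependent bounds $\frac1k\le d_\b/d_\a\le N-k+1$, which are attained, for instance at $\a=0,\ k=1$ and at $\a=(1,\dots,1,0),\ k=N$. The paper's argument is shorter and makes the link with Pieri's rule transparent; yours is entirely elementary and more quantitative.
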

\begin{proof}  It is equivalent to prove $\frac{d_\b}{d_\a}\le N$ for any $\a,\b\in\ZD$ with $\b-\a\in\mathcal{E}_N\cup-\mathcal{E}_N.$ Since $d_{\g}=d_{-\g}$ for any $\g\in\ZD,$ 
it is enough to prove this claim for $\b-\a\in\mathcal{E}_N.$   Now for any $\a\in\ZD,$ setting $x=\a+\rho,$
\[\sum_{\varepsilon\in\mathcal{E}_N}\frac{d_{\a+\varepsilon}}{d_\a}=\sum_{i=1}^N\prod_{j\not=i}\frac{x_i+1-x_j}{x_i-x_j}\]
is the opposite of the residue at infinity\footnote{Alternatively, the decomposition 
\[\prod_{i=1}^N\frac{z+1-x_i}{z-x_i}-1=\sum_{i=1}^N \prod_{j\not=i}\frac{x_i+1-x_j}{x_i-x_j} \frac{1}{z-x_i}\]
yields the identity at stake, multiplying both sides by $z$ and taking the limit as $z\to\infty.$} of $\prod_{i=1}^N\frac{z+1-x_i}{z-x_i},$ that is $N.$ In particular $\frac{d_{\a+\varepsilon}}{d_\a}\le N $ for all $\varepsilon\in\mathcal{E}_N.$
\end{proof}

\begin{proof}[Proof of \ref{Prop---TruncationWLE}] 
 
Thanks to Proposition \ref{Prop---ConvWittenZeta}, it is enough to prove \eqref{eq---TruncationWittenBound}.  By 
compatibility by refinement of {Proposition \ref{Prop--Comp} (with generalised map)} for both $\YM$ and $\YM^{(k,F_*)}$ for any $k\ge1$, we can assume that $\mfl_1,\ldots,\mfl_p$ have {simple, transverse intersections} and that {$\Gbb$ is the generalised map associated to $\{\mfl_1,\ldots,\mfl_p\}.$ } Now thanks to the {IRF formula of Theorem \ref{thm---IRFformula},}
\begin{equation}
|\YM^{(> k,F_*)}_{U(N),\Gbb,a}(W_{\mfl_1,\ldots,\mfl_p})|\le \sum_{\a: F\to\ZD:  \forall e \in E,  \partial \a(e)\in \mathcal{E}_N, \exists f\in F \,\text{with}\,|\a(f)|>k }  \mathcal{D}_{\a,\mathcal{S}} e^{- \frac 1 2 \langle a,\mfc_\a\rangle }.\label{eq---PartialBoundTruncWLE}
\end{equation}
Consider $\a:F\to\ZD$ as in the above sum and $f_*\in F$ with $|\a(f_*)|>k.$ For any $f\in F,$ thanks to Lemma \ref{Lem---RatioPieri}, 
\begin{equation}
\frac1{N^l}\le \frac{d_{\a(f)}}{d_{\a(f*)}}\le N^l,
\end{equation}
where $l$ is the length of the shortest path in the dual graph from $f_*$ to $f.$ Hence there is an integer $d'$ such that 
\begin{equation}
 \mathcal{D}_{\a,\mathcal{S}}\le N^{d'}  d_{\a(f_*)}^{\sum_{f\in F}\left(\chi(f)-\frac{m_f}{2}\right)}. 
\end{equation}
Denoting by $v$ and $e$ the number of vertices and edges of $\Gbb,$ since  $\Gbb$ is $4$-regular,  
\[\sum_{f\in F}\chi(f)=\chi(\Gbb)+3v-e=\chi(\Gbb)+v.\]
Around each vertex, using the orientation of $\mathcal{S},$ there are two incoming   and two outgoing edges, hence two corners where the edge orientation changes so that
\[\sum_{f}m_f=2 v. \]
All in all,
\begin{equation}
{\sum_{f\in F}\left(\chi(f)-\frac{m_f}{2}\right)}=\chi(\Gbb)=2-2g. \label{eq---DimensionIRF}
\end{equation}
Now for any $f\in F,$ $y_f=\a(f)-\a(f_*)$ is a sum of at most $e$  elementary vectors, it is has at most $e$ non-zero coefficients and  $\|y_f\|_2 \le e.$  Using Lemma \ref{lem---BoundCasimir} below, the summand of \eqref{eq---PartialBoundTruncWLE} is 
 
{upper bounded  by }
\[ e^{\frac{T}{2} (e+1)^3 }N^{d'}d_{\a(f_*)}^{2-2g}e^{-\frac{T}{4} \mfc_{\a(f*)}  }.\]

Setting $K'=e^{\frac{T}{2} (e+1)^3 },$ we  conclude that the right-hand side of \eqref{eq---PartialBoundTruncWLE} is upper-bounded by 
\begin{align*}
K' N^{d'}\sum_{\a_*\in\ZD,|\a|>k,f_*\in F} &d_{\a_*}^{2-2g}e^{-\frac{T}{4} \mfc_{\a(f*)}  } \#\{\a:F\to\ZD: \forall e \in E, \partial \a(e)\in \mathcal{E}_N, \a(f_*)=\a_* \} \\
&\le K' N^{d'+e}  \#F  \sum_{\a\in\ZD,|\a|>k }d_{\a}^{2-2g}e^{-\frac{T}{4} \mfc_{\a}  }\\
&\le K' \#F  N^{d'+e}\zeta_{U(N)}^{(>k)}(2-2g,q_{\frac T 2}).
\end{align*}
A simpler version of the same argument applies to the $SU(N)$ case and is left to the Reader.
 
  \end{proof}

\begin{lem} \label{lem---BoundCasimir} For all $N\ge3, s\ge 1$  and  $\a,\b\in \ZD$ such that $\a-\b$  has at most $s$ non-zero coefficients, 
\[\frac{|\|\b+\rho\|^2-\|\a+\rho\|^2|} N \le  \frac{\|\a+\rho\|^2-\|\rho\|^2}{2N}+  (\sqrt{s}+1)  \|\b-\a\|^2. \]
\end{lem}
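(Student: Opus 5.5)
Set $x=\a+\rho$ and $y=\b-\a$. Expanding $\|x+y\|^2$ reduces the statement to a bound on
\[\|\b+\rho\|^2-\|\a+\rho\|^2=2\<\a,y\>+2\<\rho,y\>+\|y\|^2 .\]
If $y=0$ there is nothing to prove. Otherwise $y$ is a nonzero integer vector, so $\|y\|\ge1$. Hence $\|y\|\le\|y\|^2$ and $\|y\|_1\le\sqrt{s}\,\|y\|$ by Cauchy--Schwarz, since $y$ has at most $s$ non-zero entries.

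\textbf{Step 1: lower bound for the right-hand side.} I would first show
\[\|\a+\rho\|^2-\|\rho\|^2=\|\a\|^2+2\<\a,\rho\>\ge\|\a\|^2 .\]
This amounts to $\<\a,\rho\>\ge0$. It follows from Chebyshev's sum inequality: $\a$ and $\rho$ are both non-increasing, and $\rho$ has zero mean, so $\sum_i\a_i\rho_i\ge\frac1N\sum_i\a_i\sum_i\rho_i=0$. This is the key point. It lets the harmless quantity $\|\a\|^2$ absorb the cross term with $\a$, while the large vector $\rho$, of size of order $N$, is handled separately.

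\textbf{Step 2: the $\rho$ term.} Every coordinate satisfies $|\rho_i|\le\frac{N-1}2<\frac N2$. Therefore
\[\frac{2|\<\rho,y\>|}N\le\|y\|_1\le\sqrt{s}\,\|y\|\le\sqrt{s}\,\|y\|^2 .\]

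\textbf{Step 3: the $\a$ term and conclusion.} By Young's inequality, $2|\<\a,y\>|\le\frac12\|\a\|^2+2\|y\|^2$. Dividing by $N$ and using Step 1 gives
\[\frac{2|\<\a,y\>|}N\le\frac{\|\a+\rho\|^2-\|\rho\|^2}{2N}+\frac{2}{N}\|y\|^2 .\]
Combining the three pieces through the triangle inequality yields
\[\frac{|\|\b+\rho\|^2-\|\a+\rho\|^2|}N\le\frac{\|\a+\rho\|^2-\|\rho\|^2}{2N}+\Big(\sqrt{s}+\frac3N\Big)\|y\|^2 .\]
Since $N\ge3$, we have $\frac3N\le1$, which gives the claim. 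The only non-routine step is the sign of $\<\a,\rho\>$ in Step 1. Everything else is elementary inequalities.
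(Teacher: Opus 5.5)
Your proof is correct and follows essentially the same route as the paper. You expand $\|\beta+\rho\|^2-\|\alpha+\rho\|^2$, bound the $\rho$-cross term via $|\rho_i|\le N/2$ and Cauchy--Schwarz on the support of $\beta-\alpha$, bound the $\alpha$-cross term by Young's inequality, and absorb $\|\alpha\|^2$ using $\langle\alpha,\rho\rangle\ge 0$, reaching the same constant $\sqrt{s}+3/N\le\sqrt{s}+1$. The one addition is your justification of $\langle\alpha,\rho\rangle\ge 0$ by Chebyshev's sum inequality, which the paper asserts without proof.
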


\begin{proof} Consider $N,s,\a,\b$ as above.  Setting $y=\b-\a,$ 
\[|\<y,\rho\>|\le   \frac{N}{2}\sum_i|y_i|\le \frac{\sqrt{s}\|y\|}{2}N\le \frac{\sqrt{s}\|y\|^2}{2}N\]
and using GM inequality,
\[| \|\b+\rho\|^2-\|\a+\rho\|^2 - \|y\|^2|=2|\<y,\a+\rho\>|\le  2\|y\|^2+\frac {\|\a\|^2} 2+2|\<y,\rho\>|\le (\sqrt{s}N+2)\|y\|^2 +\frac{\|\a\|^2}2. \]
Since  $\<\a,\rho\>\ge 0,$
\[\|\a+\rho\|^2-\|\rho\|^2\ge \|\a\|^2\]
and the claim follows.
\end{proof}
 
\section{Integration  using Koike-Schur-Weyl duality}
\label{sec---Integration  using Koike-Schur-Weyl duality}

\subsection{Mixed tensors and Koike-Schur-Weyl duality}
Let us recall some standard facts about Schur-Weyl duality for tensors and its slightly less standard generalisation to traceless mixed tensors due to Koike \cite{Koike}. Our motivation here is to realise all 
irrep of $U(N)$  
as traceless mixed tensors.

Denote by $V=\C^N$ the natural representation of $U(N)$ and by $V^*$ its dual. When $n,m\ge 0$ are two integers,
 the space of complex mixed tensors\footnote{All tensor products will be taken in this section over the complex numbers. Moreover by convention, $V^{\ts 0}$ and ${V^*}^{\ts 0}$ are defined as the trivial representation $\C.$ } $T_{n,m}=V^{\otimes n }\otimes {V^*}^{\ts m}$  is endowed with the action of 
the unitary group and of the permutation group $S_{n,m}=S_n\tm S_m$.   When $v_1,\ldots, v_n\in V,$  $\om_1,\ldots, \om_m\in V^*$,  denote  
\begin{equation}
U. (v_1\ts\ldots v_n\ts \om_1\ts\ldots\ts \om_m)= U.v_1\ts\ldots\ts U. v_n\ts \om_1\circ U^{-1}\ts\ldots \om_m\circ U^{-1}. \label{eq---UnitAct}
\end{equation}
for $U\in U(N)$ and
\begin{equation}
\a\tm \b. (v_1\ts\ldots v_n\ts \om_1\ts\ldots\ts \om_m)= v_{\a^{-1}(1)}\ts\ldots\ts  v_{\a^{-1}(n)}\ts \om_{\b^{-1}(1)}\ts\ldots \om_{\b^{-1}(m)} \label{eq---Shuffle}
\end{equation}
for  $ \a\in S_n,\b\in S_m.$ Extending these definitions linearly defines   representations of $U(N)$ and $S_{n,m}$ which commute with one another.  When $n$ or $m$ vanish,  the standard 
Schur-Weyl duality asserts that any endomorphism of $T_{n,m}$ commuting with the unitary action \eqref{eq---UnitAct} is a linear combination of permutation endomorphisms of the form 
\eqref{eq---Shuffle}. \footnote{and vice-versa by bi-commutant theorem, any endomorphism of $T_{n,m}$ commuting with the shuffle action is a  linear combination of diagonal unitary 
endomorphisms of the form \eqref{eq---UnitAct}.}   By Schur's lemma,  the  decomposition of $V^{\ts n}$ into  irreducible components of $S_{n}\tm U(N)$ is multiplicity free, which defines a 
pairing between 
irrep of $S_{n}$ and $U(N)$ occuring in the isotypic decompositions of $T_{n}.$  More explicitly,  
\begin{equation}
V^{\ts n}=\bts_{\la \vdash n:\ell(\la)\le N}V^\la\ts V_\la
\end{equation} 
where\footnote{This is  H. Weyl's construction \cite{FultonHarris,Weyl} of an irrep of $U(N)$   for  each integer partition $\la$ with $\ell(\la)\le N$.  Identifying    $U(1)$ with scalar $U(N)$ matrices, for any 
$t\in V^{\ts n},$ $z. t= z^n t. $  All irreps built this way are therefore non isomorphic. It can further be shown that  any other irrep  is  given by the vector space of the irrep $(V_\la,\rho_\la)$ for some 
integer  
partition $\la$ endowed with the twisted action
 
\[\rho_{\la,c}(U). v = \det(U)^c \rho_\la(U).v,\quad\forall v\in V_\la,\forall U\in U(N) \] 
for some $c\in\Z.$ } $V^\la$ and $V_{\la}$ are the irreps respectively of $S_n$ and $U(N)$ associated to the integer partition $\la\vdash n.$

 When $n,m\ge 1,$ the above duality does not hold: the commutant of the unitary  action on $T_{n,m}$ is strictly larger than the algebra of endomorphism spanned by $S_{n,m}$ as we will recall in the 
 next section. However it is true if we restrict to traceless tensors. 
 
 Assume $n,m\ge 1. $ For $1\le i\le n, 1\le j \le m,$  define $c_{i,j}\in \Hom(T_{n,m},T_{n-1,m-1})$ setting 
 \begin{equation}
 c_{i,j}(v_1\ts\ldots v_n\ts \om_1\ts\ldots\ts \om_m)=\om_j(v_i) v_1\ts\ldots \hat{v_i}\ldots \ts v_n\ts \om_1\ts \ldots \hat\om_j\ldots \ts \om_m
 \end{equation}
 whenever $v_1,\ldots, v_n\in V,$  $\om_1,\ldots, \om_m\in V^*,$ where $\hat{v_i}$ means that the $i$th tensor is omitted. The space of \emph{traceless tensors} of order $(n,m)$ is
\begin{equation}
\overset{\circ}{T}_{n,m}=\bigcap_{i\le n,j\le m} \ker(c_{i,j}).\label{eq---TracelessTensor}
\end{equation}
Since  $c_{i,j}\in \Hom_{U(N)}(T_{n,m},T_{n-1,m-1})$ and $\ker(c_{\sigma(i),\sigma(j)})=\sigma.\ker(c_{i,j})$ for all $i,j$ and $\sigma\in S_{n,m},$    $\overset{\circ}{T}_{n,m}$ is a   
sub-representation of $T_{n,m}$ for both the $U(N)$ and $S_{n,m}$ action. For any $U\in U(N)$ and $\sigma\in S_{n,m}$ we shall write  $\overset{\circ}{\rho}_{n,m}(U)$ and 
$\overset{\circ}{\rho}_N(\sigma)$ the endomorphisms of $\overset{\circ}{T}_{n,m}$ of  these representations. The Koike-Schur-Weyl duality due to K. Koike  is the following 
statement.   

\begin{thm}[\cite{Koike}] The algebras spanned by $U(N)$ and $S_{n,m}$ are commutant of one another. That is, 
\begin{equation}
 \End_{U(N)}(\overset{\circ}{T}_{n,m})= \<\overset{\circ}{\rho}_{N}(S_{n,m})\>\quad\text{and}\quad  \End_{S_{n,m}}(\overset{\circ}{T}_{n,m})= \<\overset{\circ}{\rho}_{n,m}(U(N))\>
\end{equation}
 where the right-hand sides stand respectively for the algebras spanned  by  $\overset{\circ}{\rho}_{n,m}(S_{n,m})$ and $\overset{\circ}{\rho}_{N}(U(N)).$ Moreover, the isotopic decomposition 
 of $\overset{\circ}{T}_{n,m}$ for the action of $ S_{n,m}\times U(N)$ is
 \begin{equation}
 \overset{\circ}{T}_{n,m}=\bigoplus_{\substack{\la\vdash n,\mu\vdash m:\\ \ell(\la)+\ell(\mu)\le N}} V^{[\la,\mu]}\ts V_{[\la,\mu]_N}\label{eq---IsotTraceless}
 \end{equation}
 where $V^{[\la,\mu]}$ is the irrep $V^{\la}\ts V^{\mu}$ of $S_n\tm S_m$ and $V_{[\la,\mu]_N}$ is the irrep of $U(N)$ with highest weight $[\la,\mu]_N.$
\end{thm}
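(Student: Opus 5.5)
The plan is to derive the first commutant identity from the first fundamental theorem of invariant theory for $U(N)$. Then we get the second identity from the double commutant theorem, and finally identify the isotypic components by exhibiting explicit highest weight vectors.

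\emph{Step 1 (commutant of $U(N)$ on $T_{n,m}$).} By the first fundamental theorem, $\End_{U(N)}(T_{n,m})\simeq (T_{n,m}\ts T_{n,m}^*)^{U(N)}$ is spanned by walled Brauer diagrams. Concretely, these are products of permutations in $S_{n,m}$ with compositions of contractions $c_{i,j}$ and their adjoints $c_{i,j}^*$, where $c_{i,j}^*$ inserts the identity tensor $\sum_k e_k\ts e_k^*$ in slots $(i,j)$. Let $P$ be the orthogonal projection (for the standard Hermitian structure, which is both $U(N)$- and $S_{n,m}$-invariant) onto $\overset{\circ}{T}_{n,m}$. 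Since $\overset{\circ}{T}_{n,m}=\bigcap\ker c_{i,j}=\big(\sum_{i,j}\mathrm{Im}\, c_{i,j}^*\big)^{\perp}$, every diagram $b$ with at least one horizontal arc satisfies $PbP=0$: either $b$ starts with a contraction, which kills $\overset{\circ}{T}_{n,m}$, or its image lies in some $\mathrm{Im}\,c_{i,j}^*$, which is orthogonal to $\overset{\circ}{T}_{n,m}$. Moreover $P$ commutes with $U(N)$, so $\End_{U(N)}(\overset{\circ}{T}_{n,m})=P\End_{U(N)}(T_{n,m})P$. This space is spanned by the $P\sigma P=\overset{\circ}{\rho}_N(\sigma)$, which proves the first equality. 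The second equality follows from the bicommutant theorem, since both actions are unitary on a finite-dimensional space and hence semisimple.

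\emph{Step 2 (shape of the decomposition).} Double commutant theory gives $\overset{\circ}{T}_{n,m}=\bigoplus_{\la\vdash n,\mu\vdash m}V^{[\la,\mu]}\ts W_{\la,\mu}$, where each nonzero $W_{\la,\mu}$ is an irreducible $U(N)$-module and distinct nonzero ones are pairwise non-isomorphic. It remains to show two things: $W_{\la,\mu}\simeq V_{[\la,\mu]_N}$ when $\ell(\la)+\ell(\mu)\le N$, and $W_{\la,\mu}=0$ otherwise.

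\emph{Step 3 (highest weight vectors).} Assume $\ell(\la)+\ell(\mu)\le N$. Form the tensor $t=\bts_{i}e_{r(i)}\ts\bts_j e^*_{N+1-s(j)}$, with $r(i)$ the row of box $i$ in $\la$ and $s(j)$ the row of box $j$ in $\mu$. The index sets $\{1,\ldots,\ell(\la)\}$ and $\{N+1-\ell(\mu),\ldots,N\}$ are disjoint, so every $c_{i,j}$ kills $t$ and all its $S_{n,m}$-translates. Applying the product of Young symmetrizers $y_\la\ts y_\mu$ then gives a nonzero vector $v\in\overset{\circ}{T}_{n,m}$, lying in the $V^{[\la,\mu]}$-isotypic component and of weight $[\la,\mu]_N$. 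That isotypic component sits inside the $(\la,\mu)$-isotypic part of $T_{n,m}$, whose $U(N)$-content is $V_\la\ts V_\mu^*$. The weights of $V_\la\ts V_\mu^*$ are dominated by $[\la,\mu]_N$, so $v$ is a highest weight vector. Irreducibility of $W_{\la,\mu}$ therefore forces $W_{\la,\mu}\simeq V_{[\la,\mu]_N}$.

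\emph{Step 4 (vanishing, the expected main obstacle).} We must show that $W_{\la,\mu}=0$ when $\ell(\la)+\ell(\mu)>N$. Let $w\in\overset{\circ}{T}_{n,m}$ be a highest weight vector in that isotypic component, with dominant weight $\nu=[\la',\mu']_N$. I would try to prove $\nu=[\la,\mu]_N$ with $|\la'|=n$ and $|\mu'|=m$, which is impossible when the lengths exceed $N$. The first thing to try is to expand $V_\la\ts V_\mu^*$ by Littlewood--Richardson. Its components are $V_{[\la/\kappa,\mu/\kappa]}$-type pieces, with total size reduced by $2|\kappa|$, and I expect those with $\kappa\neq\emptyset$ to lie in $\sum_{i,j}\mathrm{Im}\,c^*_{i,j}$, i.e.\ to be orthogonal to traceless tensors. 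If this is delicate, the fallback is a dimension count. Iterating $T_{n,m}=\overset{\circ}{T}_{n,m}\oplus\sum\mathrm{Im}\,c^*_{i,j}$ gives $T_{n,m}\simeq\bigoplus_k \binom nk\binom mk k!\,\overset{\circ}{T}_{n-k,m-k}$ as $U(N)$-modules (after checking that the sum of images is filtered appropriately). Comparing $U(N)$-characters with $(\sum_i x_i)^n(\sum_i x_i^{-1})^m$ and using Step 3 would then rule out any extra components.
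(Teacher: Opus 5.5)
\emph{Your Steps 1--3 are correct; the gap is Step 4.} Steps 1--3 work as written. The first fundamental theorem, together with the fact that $P$ kills every diagram carrying a horizontal strand, gives $\End_{U(N)}(\overset{\circ}{T}_{n,m})=P\,\C[S_{n,m}]\,P$. The double commutant theorem then gives the multiplicity-free shape. Your vector $(y_\la\ts y_\mu)t$ is a nonzero traceless highest weight vector of weight $[\la,\mu]_N$ whenever $\ell(\la)+\ell(\mu)\le N$.

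Step 4, however, is left as a plan, and both routes you sketch break exactly where they are needed. Since $\ell(\la)+\ell(\mu)\le n+m$, the case $\ell(\la)+\ell(\mu)>N$ only arises when $N<n+m$, i.e.\ outside the stable range.
\begin{itemize}
\item \emph{The fallback isomorphism is false there.} The claim $T_{n,m}\simeq\bigoplus_k\binom{n}{k}\binom{m}{k}k!\,\overset{\circ}{T}_{n-k,m-k}$ fails because iterated images of the insertions $c^*_{i,j}$ stop being independent once $\rho_N$ is not injective on $\mathcal{B}_{n,m}(N)$. For $N=1$, $n=m=2$, the $k=2$ summand alone is $2$-dimensional while $\dim T_{2,2}=1$. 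For $N=2$, the right-hand side has dimension at least $5+4\cdot 3+2=19>16$. So no character comparison of this form can detect the vanishing.
\item \emph{The Littlewood--Richardson route is not an argument.} Outside the stable range, $V_\la\ts V_\mu^*$ is not given by the naive sum over $\kappa$. The $\kappa=\emptyset$ term $[\la,\mu]_N$ is not even a weight when $\ell(\la)+\ell(\mu)>N$, and modification rules intervene. The ``$\kappa$-pieces'' are not canonical subspaces once multiplicities occur. Finally, the claim that they lie in $\sum_{i,j}\mathrm{Im}\,c^*_{i,j}$ is exactly what Step 4 has to prove.
\end{itemize}

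\emph{A way to close the gap with the Casimir.} Assume $\ell(\la),\ell(\mu)\le N$ (otherwise $V_\la\ts V_\mu^*=0$) and pad $\la,\mu$ with zeros.
\begin{itemize}
\item For the inner product $\Tr(X^*Y)$ on $\mfu(N)$, the Casimir acts on $T_{n,m}$ as $-N(n+m)-2\sum(\text{transpositions of }S_n\text{ and of }S_m)+2\sum_{i\le n<j}\langle i\,j\rangle$. This is the computation behind \eqref{eq---SWSpecialU}.
\item On the $V^{[\la,\mu]}$-isotypic part of $\overset{\circ}{T}_{n,m}$, the Weyl contractions vanish and the transposition sums act by the content sums $\mathrm{ct}(\la)$ and $\mathrm{ct}(\mu)$. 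Hence the Casimir is the scalar $-N(\mfc_\la+\mfc_\mu)$, using $N\mfc_\la=\|\la+\rho\|^2-\|\rho\|^2=Nn+2\,\mathrm{ct}(\la)$.
\item On the other hand, $W_{\la,\mu}$ is an irreducible constituent $V_\nu$ of $V_\la\ts V_\mu^*$. So $\nu$ is dominated by $\nu_{\max}=(\la_1-\mu_N,\ldots,\la_N-\mu_1)$.
\item The map $\nu\mapsto\|\nu+\rho\|^2$ is strictly increasing along dominance on dominant weights. A direct computation gives $N\mfc_{\nu_{\max}}=N\mfc_\la+N\mfc_\mu-2\sum_i\la_i\mu_{N+1-i}$, and the last sum is positive exactly when $\ell(\la)+\ell(\mu)>N$.
\item Therefore $\mfc_\nu<\mfc_\la+\mfc_\mu$, which contradicts the Casimir eigenvalue above. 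Hence $W_{\la,\mu}=0$.
\end{itemize}

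Alternatively, you can use the second fundamental theorem input that your proposal is missing. Take $p\le\ell(\la)$ and $q\le\ell(\mu)$ with $p+q=N+1$. Antisymmetrizing $p$ of the $V$-slots and $q$ of the $V^*$-slots is nonzero on $V^\la\ts V^\mu$. But it kills $\overset{\circ}{T}_{n,m}$, because contraction is injective on $\Lambda^pV\ts\Lambda^qV^*$ when $p+q>N$.
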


The above result can be used to generalise Weyl's  construction to all irreps of $U(N)$ using traceless tensors spaces in place of tensors.  Indeed  any highest weight of $U(N)$ is uniquely written as  
$[\la,\mu]_{N}$ with   $\ell(\la)+\ell(\mu)\le N.$  For our purpose, it is mainly useful to compute characters $\chi_{[\la,\mu]_N}.$ For any $n,m\ge1, \la\vdash n,\mu\vdash m,$ consider the $S_{n,m}$ irreducible character 
\begin{equation}
\chi^{[\la,\mu]}(\a\tm\b)=\chi^\la(\a)\chi^\mu(\b)\quad\forall \a\in S_n,\b\in S_m.
\end{equation}

{
 
\begin{coro} \label{coro---CharacTraceHarmoT} For any $n,m\ge1, \la\vdash n,\mu\vdash m$ and $U\in U(N),$ 
\begin{equation}
 n!m!\chi_{[\la,\mu]_N}(U)=\Tr_{\overset{\circ}{T}_{n,m}}\left(\rho_{N}(\chi^{[\la,\mu]})\rho_{n,m}(U)\right)
\end{equation}
and for any  irreducible idempotent   $p=\sum_{\sigma\in S_{n,m}} p(\sigma)\sigma$ of $\C[S_{n,m}]$ with $p\chi^{[\la,\mu]}\not=0$,  
\begin{equation}
\chi_{[\la,\mu]_N}(U)=\Tr_{\overset{\circ}{T}_{n,m}}\left(\rho_{N}(p)\rho_{n,m}(U)\right).
\end{equation}
 
 \end{coro}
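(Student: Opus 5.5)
The plan is to read both identities off the isotypic decomposition \eqref{eq---IsotTraceless} of Koike's theorem, together with the standard facts about characters and idempotents of the finite group $S_{n,m}=S_n\tm S_m$. By \eqref{eq---IsotTraceless}, the commuting actions of $\C[S_{n,m}]$ and $U(N)$ on $\overset{\circ}{T}_{n,m}$ are, on each summand $V^{[\la',\mu']}\ts V_{[\la',\mu']_N}$, of the form $x\ts\id$ and $\id\ts y$. Hence for any $x\in\C[S_{n,m}]$ and $U\in U(N)$ the trace factorises:
\begin{equation}
\Tr_{\overset{\circ}{T}_{n,m}}\left(\rho_N(x)\rho_{n,m}(U)\right)=\sum_{\substack{\la'\vdash n,\mu'\vdash m\\ \ell(\la')+\ell(\mu')\le N}}\Tr_{V^{[\la',\mu']}}(x)\,\chi_{[\la',\mu']_N}(U).
\end{equation}
This reduces both statements to computing $\Tr_{V^{[\la',\mu']}}(x)$ for the two choices of $x$.

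For the first identity, take $x=\chi^{[\la,\mu]}$, viewed as the element $\sum_\sigma\chi^{[\la,\mu]}(\sigma)\sigma$ (with complex conjugation understood if needed, which is harmless since symmetric group characters are real). By the orthogonality relations for $S_{n,m}$,
\begin{equation}
\Tr_{V^{[\la',\mu']}}(\chi^{[\la,\mu]})=\sum_{\sigma}\chi^{[\la,\mu]}(\sigma)\chi^{[\la',\mu']}(\sigma)=n!m!\,\delta_{\la\la'}\delta_{\mu\mu'}.
\end{equation}
Equivalently, $\chi^{[\la,\mu]}$ is $\frac{n!m!}{\dim V^{[\la,\mu]}}$ times the central idempotent, so its trace on $V^{[\la,\mu]}$ is $n!m!$. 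Plugging into the factorised trace leaves the single term $n!m!\chi_{[\la,\mu]_N}(U)$, provided $\ell(\la)+\ell(\mu)\le N$. If $\ell(\la)+\ell(\mu)>N$ the summand is absent, and the statement should be read with the convention that $\chi_{[\la,\mu]_N}=0$ (or under that length hypothesis); I would add this remark explicitly.

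For the second identity, let $p$ be a primitive idempotent of $\C[S_{n,m}]$. The left ideal $\C[S_{n,m}]p$ is then irreducible, isomorphic to exactly one $V^{[\la',\mu']}$. The hypothesis $p\chi^{[\la,\mu]}\ne0$ says that $p$ has a nonzero component in the block of $[\la,\mu]$, since $\chi^{[\la,\mu]}$ is proportional to the central idempotent $e_{[\la,\mu]}$. Primitivity then forces $p=pe_{[\la,\mu]}$, so $p$ acts as zero on the other irreps. On $V^{[\la,\mu]}$, $p$ acts as a rank-one projection, because in the matrix block $\End(V^{[\la,\mu]})$ a primitive idempotent has rank one. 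Hence $\Tr_{V^{[\la',\mu']}}(p)=\delta_{\la\la'}\delta_{\mu\mu'}$, and the factorised trace gives $\chi_{[\la,\mu]_N}(U)$.

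The only delicate point, and the main thing to check carefully, is the conventions rather than any real obstacle. One needs that $\rho_N$ extended linearly to $\C[S_{n,m}]$ is an algebra representation, so idempotents map to idempotents, and that the identification of $\chi^{[\la,\mu]}$ with a group algebra element uses the correct normalisation, possibly with $\sigma^{-1}$ in place of $\sigma$, which does not matter since characters are class functions invariant under inversion for symmetric groups. One also needs that \eqref{eq---IsotTraceless} is genuinely multiplicity-free with the stated labelling, which is Koike's theorem as quoted.
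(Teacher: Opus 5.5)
Your proposal is correct and follows essentially the paper's route: the paper also reads the identity off the decomposition \eqref{eq---IsotTraceless}, using that $\frac{d^\la d^\mu}{n!m!}\chi^{[\la,\mu]}$ acts as the projection onto the $V^{[\la,\mu]}$-isotypic component, and this is equivalent to your componentwise factorisation of the trace combined with character orthogonality. Your rank-one argument for the primitive idempotent fills in what the paper only calls ``similar'', and your remark that the statement implicitly requires $\ell(\la)+\ell(\mu)\le N$ is apt.
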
}
\begin{proof} Recall the action of $\frac{\chi^{[\la,\mu]}(1)}{n!m!}\chi^{[\la,\mu]}$ on  any $S^{n,m}$-module is the $S^{n,m}$-equivariant projection onto its  $V^{\la,\mu}$-isotypic component. Consider the module $T_{n,m},$ using the isotypic decomposition  \eqref{eq---IsotTraceless}, we conclude that
\begin{align*}
\frac{\chi^{[\la,\mu]}(1)}{n!m!}\Tr_{\overset{\circ}{T}_{n,m}}\left(\rho_{N}(\chi^{[\la,\mu]})\rho_{n,m}(U)\right)&=\Tr_{V^{[\la,\mu]}\ts V_{[\la,\mu]_N}}(\rho^{[\la,\mu]}({\id_{n,m}})\ts\rho_{[\la,\mu]_N}(U))\\
&=\chi^{[\la,\mu]}(1)\chi_{[\la,\mu]_N}(U).
\end{align*}
The second identity follows in a similar way.
\end{proof}
Consider the $S_{n,m}\tm U(N)$-equivariant projection\footnote{It is the orthogonal projection $\overset{\circ}{T}_{n,m}$ associated to any $S_{n,m}\tm U(N)$-invariant inner product on $T_{n,m}$. Denoting by the same symbol $\<,\>$  the standard inner product on $V=\C^N$ and its dual, setting
\begin{equation}
\<v_{1}\ts\ldots v_n\ts \om_1\ldots\ts \om_m,v'_{1}\ts\ldots v'_n\ts \om'_1\ldots\ts \om'_m\>=\prod_i \<v_i,v_{i}'\>\prod_j \<\om_j,\om_{j}'\>\quad \forall v_i\in V, \om_j\in V^{*} \label{eq---IPTensors}
\end{equation}
defines  an equivariant inner product on $T_{n,m}.$
} 
$P_{N}^{n,m}$ from $T_{n,m}$ onto $\overset{\circ}{T}_{n,m}$ associated to the isotypic decomposition of $T_{n,m}.$ The above formula implies
\begin{equation}
 \chi_{[\la,\mu]_N}(U)=\frac{1}{n!m!}\Tr_{T_{n,m}}\left(P_{N}^{n,m}\rho_{N}(\chi^{[\la,\mu]})\rho_{n,m}(U)\right).\label{eq---CharacterTraceless}
\end{equation}

\begin{rmk} Alternatively, $\chi_{[\la,\mu]_N}$ can be expressed using Littlewood-Richardson's coefficients \cite[Thm 2.3]{Koike} in terms of  Schur polynomials.  However, we shall not attempt here to analyse this formula as $N\to\infty$ and will  prefer instead \eqref{eq---CharacterTraceless} for that purpose.
\end{rmk}

Since the action of  $P_N^{n,m}$ on mixed tensors commutes with the one of $U(N),$ we shall next look for an expression for the Schur-Weyl dual of   $P_{N}^{n,m}.$ 

\begin{rmk}  As $P^{n,m}_{N}$ is also in the bi-commutant of the action of $U(N),$ it is also natural to look for an expression of $P_{n,m}^N$ as a weighted integral of $\rho_{n,m}(U)$.  From the $U(N)$-isotypic 
decomposition of $T_{n,m},$ it follows that 
\begin{equation}
P_{n,m}^N=\int_{U(N)} f_{n,m}(U^{-1})\rho_{n,m}(U)dU
\end{equation}
where 
\begin{equation}
f_{n,m}=\sum_{\la\vdash n,\mu\vdash m:\ell(\la)+\ell(\mu)\le N} d_{[\la,\mu]_N}\chi_{[\la,\mu]_N}.
\end{equation}
\end{rmk}
\begin{rmk}As argued in  \cite{BenkartCO} and \cite{MageeII}, an alternative to \eqref{eq---CharacterTraceless} is to consider a highest weight vector $v_{[\la,\mu]_N}$ of weight $[\la,\mu]_N$ in $ \overset{\circ}{T}_{n,m}$ and,    denoting by $P_{v_{[\la,\mu]_N}}$  the orthogonal projection from $T_{n,m}$  onto $v_{[\la,\mu]_N}$ for the  standard inner product \eqref{eq---IPTensors},  to notice 
{
\begin{equation}
P_{[\la,\mu]_N}=d_{[\la,\mu]_N}  \int_{U(N)} \rho_{n,m}(U)P_{v_{[\la,\mu]_N}} \rho_{n,m}(U^{-1})dU  \label{eq---CharacterWithHW}
\end{equation}}
is the equivariant projection from $T_{n,m}$ to the irreducible component of $\overset{\circ}{T}_{n,m}$ containing $v_{[\la,\mu]_N}.$ Hence
 
\begin{equation}
\chi_{[\la,\mu]_N}(U)=\Tr_{T_{n,m}}(P_{[\la,\mu]_N}\rho_{n,m}(U)).
\end{equation}
In \cite{MageeII}, to bound expected Wilson loops,  the author bounds the Schur-Weyl dual of $\int_{U(N)} \rho_{n,m}(U)P_{v_{[\la,\mu]_N}} \rho_{n,m}(U^{-1})dU$  and bounds      $d_{[\la,\mu]_N}$ independently using  \cite[Thm 2.3]{Koike}.    Using instead the Schur-Weyl dual of $P_{n,m}$ has the  advantage to recover this bound on the dimension but also to yield  combinatorial $\frac 1N$-expansions of the dimension and in turn of Yang-Mills partition function and of  Wilson loops moments. 
  \end{rmk}

\subsection{Walled Brauer algebras and traceless mixed tensors}

The commutant of the action of $U(N)$ on $T_{n,m}$ is strictly larger than the algebra spanned by permutations but can also be described explicitly as follows thanks to the so-called walled Brauer algebra \cite{Nikitin,HalversonWBrauer,DBrauer}.  Let us recall what we need from this duality. 

Assume $n,m\ge 1$. A walled Brauer diagram  is  an involution $\pi$ without fixed point  on $\{-m,\ldots,-2,-1,1,2,\ldots,n\}\times \{-1,1\}$ satisfying 
\[ \mathrm{sign}(v')\eta' =-\mathrm{sign}(v)\eta \]
whenever $\pi(v,\eta)=(v',\eta').$ In other words, $\pi$ is a perfect matching of the complete graph with vertices $\{-m,\ldots,-2,-1,1,2,\ldots,n\}\times \{-1,1\}$ with no matching crossing the $y$-coordinate axis.
Denote the set of walled Brauer diagrams by $\mathcal{D}_{n,m}$ and  by $\mathcal{B}_{n,m}(N)$ the formal complex linear combinations in $\mathcal{D}_{n,m}.$ When $\pi,\mu\in \mathcal{D}_{n,m}$ concatenating $\pi$ on top of $\mu$ yields a diagram $c(\pi,\mu)$  matching top and bottom vertices as well as  $l(\pi,\mu)$ loops that are  disconnected from the top and the bottom vertices. Setting
\[\pi.\mu=N^{l(\pi,\mu)} c(\pi,\mu)\]
defines an algebra structure on $\mathcal{B}_{n,m}(N)$ called the walled Brauer algebra. For any $\pi\in\mathcal{D}_{n,m},$ denote by $\rho_N(\pi)$ the endomorphism of $T_{n,m}$ with   
 
\begin{align}
&\<\rho_N(\pi)u_{1,-1}\ts\ldots u_{n,-1}\ts u_{-1,-1}\ts\ldots u_{-m,-1} ,u_{1,1}\ts\ldots u_{n,1}\ts  u_{-1,1}\ts\ldots u_{-m,1} \>\nonumber\\
&=\prod_{v,v':\pi(v)=v'} \<u_v,u_{v'}\> \quad { \forall u_{j,-1}, u_{-i',1} \in V,  u_{i,1}, u_{-j',-1}\in V^*, 1\le i,j\le n, 1\le i',j'\le m.}   \label{def---BrauerAction}
\end{align}
It is elementary to check that this defines an algebra morphism $\rho_N:\mathcal{B}_{n,m}(N)\to \End_{U(N)}(T_{n,m}).$ It can be further be shown this map is surjective for all $N\ge 1$ and injective  if 
$N\ge n+m$ \cite{HalversonWBrauer,BenkartCO}.  
 
Hence the action of   $U(N)$ and  $\mathcal{B}_{n,m}(N)$ on $T_{n,m}$ are mutually commutant. Let us use this point of view to express elements of $\End_{U(N)}(T_{n,m}).$

For each $\pi\in\mathcal{D}_{n,m}$,   there are as many top as bottom horizontal strings.  Denote  half the number of horizontal strings by $h(\pi).$ The linear span  $\mathcal{J}_{1}$ of $\{\pi\in\mathcal{D}_{n,m}:h(\pi)>0\}$ is a left and right ideal of $\mathcal{B}_{n,m}(N).$  When  $h(\pi)=0$, we call $\pi$ a permutation diagram; 
permutation diagrams can and will be identified below  with permutations in $S_{n,m}.$  
 
 \begin{lem} \label{Lem---HarmoProjectorIdCompo} For any $N\ge n+m,$ 
 For $n,m\ge 1, N\ge n+m$, there is a unique 
\begin{equation}
q_{n,m}= \sum_{\tau\in \mathcal{D}_{n,m}} \kappa_N(\tau)\tau \in \mathcal{B}_{n,m}(N)\label{eq-RepTracelessNonExp}
\end{equation}
with
\begin{equation}
 P^{n,m}_N= \rho_N(q_{n,m}).
\end{equation}
It further satisfies
\begin{equation}
q_{n,m}-\id_{n,m}\in \mathcal{J}_1\quad\text{and}\quad { q_{n,m}\mathcal{J}_1=\mathcal{J}_1q_{n,m}=\{0\}}. \label{eq---RelationBrauerHarmo}
\end{equation}
\end{lem}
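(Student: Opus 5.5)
Existence and uniqueness come from the walled Brauer duality recalled above. Since $P^{n,m}_N$ is the $S_{n,m}\times U(N)$-equivariant projection onto $\overset{\circ}{T}_{n,m}$, it lies in $\End_{U(N)}(T_{n,m})$. Surjectivity of $\rho_N:\mathcal{B}_{n,m}(N)\to\End_{U(N)}(T_{n,m})$ then gives some $q_{n,m}$ with $\rho_N(q_{n,m})=P^{n,m}_N$. Injectivity of $\rho_N$ for $N\ge n+m$ makes this preimage unique, and expanding it in the basis $\mathcal{D}_{n,m}$ gives the coefficients $\kappa_N(\tau)$. Uniqueness will also be the tool for the algebraic relations: any identity between images under $\rho_N$ lifts to $\mathcal{B}_{n,m}(N)$.

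\textbf{Step 1: the ideal $\mathcal{J}_1$ acts through contractions.} I first show that
\[
\rho_N(\mathcal{J}_1)\,T_{n,m}\subset (\overset{\circ}{T}_{n,m})^{\perp}
\quad\text{and}\quad
\overset{\circ}{T}_{n,m}\subset\ker \rho_N(\pi)\ \text{ for every } \pi\in\mathcal{D}_{n,m} \text{ with } h(\pi)>0 .
\]
\begin{itemize}
\item \emph{Kernel inclusion.} A diagram with $h(\pi)>0$ has a bottom horizontal string joining some $(i,-1)$ with $(-j,-1)$. From \eqref{def---BrauerAction}, $\rho_N(\pi)$ factors through the contraction $c_{i,j}$: it equals $A\circ c_{i,j}$ for some linear map $A:T_{n-1,m-1}\to T_{n,m}$. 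Hence $\rho_N(\pi)$ vanishes on $\ker c_{i,j}\supset\overset{\circ}{T}_{n,m}$, so $\rho_N(\pi)P^{n,m}_N=0$.
\item \emph{Range inclusion.} By symmetry, a top horizontal string gives $\rho_N(\pi)=c_{i,j}^*\circ B$, whose range lies in $\mathrm{Im}(c_{i,j}^*)=\ker(c_{i,j})^\perp\subset(\overset{\circ}{T}_{n,m})^\perp$ for the inner product \eqref{eq---IPTensors}. Since $P^{n,m}_N$ is the orthogonal projection onto $\overset{\circ}{T}_{n,m}$, this gives $P^{n,m}_N\rho_N(\pi)=0$.
\end{itemize}
One must check that the adjoint of $c_{i,j}$ for \eqref{eq---IPTensors} is the map inserting $\sum_k e_k\otimes e_k^*$ in slots $(i,-j)$. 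This is a direct computation.

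\textbf{Step 2: the relation $q_{n,m}\mathcal{J}_1=\mathcal{J}_1q_{n,m}=\{0\}$.} By Step 1, $\rho_N(q_{n,m}x)=\rho_N(xq_{n,m})=0$ for every $x\in\mathcal{J}_1$. Injectivity of $\rho_N$ then gives $q_{n,m}\mathcal{J}_1=\mathcal{J}_1q_{n,m}=0$.

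\textbf{Step 3: the relation $q_{n,m}-\id_{n,m}\in\mathcal{J}_1$.} Decompose $T_{n,m}=\overset{\circ}{T}_{n,m}\oplus W$ with
\[
W=(\overset{\circ}{T}_{n,m})^\perp=\sum_{i,j}\mathrm{Im}(c_{i,j}^*) .
\]
Each $c_{i,j}^*$ composed with anything equivariant is $\rho_N$ of an element of $\mathcal{J}_1$; for instance $c_{i,j}^*c_{i,j}=\rho_N(\text{the diagram with a single cup--cap pair at } (i,-j))$.

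The main obstacle is to show that $\id-P^{n,m}_N$ lies in $\rho_N(\mathcal{J}_1)$. I would argue through the quotient algebra. $\mathcal{B}_{n,m}(N)/\mathcal{J}_1\cong\C[S_{n,m}]$ is semisimple, so write $q_{n,m}=s+j$ with $s\in\C[S_{n,m}]$ and $j\in\mathcal{J}_1$.
\begin{itemize}
\item Idempotence of $q_{n,m}$ together with $q_{n,m}\mathcal{J}_1=0$ gives $q_{n,m}=q_{n,m}^2=q_{n,m}s$.
\item On the traceless part, $\rho_N(q_{n,m})=\id$ and $\rho_N(j)=0$, so $\rho_N(s)$ restricts to the identity on $\overset{\circ}{T}_{n,m}$.
\item Koike's theorem says $\rho_N$ is injective from $\C[S_{n,m}]$ to $\End(\overset{\circ}{T}_{n,m})$ when $N\ge n+m$: every pair $\la\vdash n$, $\mu\vdash m$ then satisfies $\ell(\la)+\ell(\mu)\le N$, so all isotypic components occur. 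Hence $s=\id_{n,m}$.
\end{itemize}
Therefore $q_{n,m}-\id_{n,m}=j\in\mathcal{J}_1$.
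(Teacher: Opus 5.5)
Your proof is correct, but it reaches the relation $q_{n,m}-\id_{n,m}\in\mathcal{J}_1$ by a different route from the paper's, and it treats the annihilation relations more explicitly.

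\textbf{The relation $q_{n,m}-\id_{n,m}\in\mathcal{J}_1$.} The paper applies $\rho_N(q_{n,m})$ to a single explicit traceless tensor $e_1\otimes\cdots\otimes e_n\otimes\omega^{1}\otimes\cdots\otimes\omega^{m}$, built from distinct basis indices with the vector and covector indices disjoint. Every diagram with a horizontal string kills this tensor. The permutation diagrams send it to linearly independent tensors, so one reads off $\kappa_N(\id)=1$ and $\kappa_N(\sigma)=0$ for $\sigma\neq\id$.

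You instead split $q_{n,m}=s+j$ along $\C[S_{n,m}]\oplus\mathcal{J}_1$. Your kernel inclusion gives $\rho_N(s)|_{\overset{\circ}{T}_{n,m}}=\id$. You then conclude from faithfulness of $\C[S_{n,m}]$ on $\overset{\circ}{T}_{n,m}$, which you deduce from Koike's decomposition \eqref{eq---IsotTraceless} and $\ell(\lambda)+\ell(\mu)\le n+m\le N$.

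The underlying fact is the same in both. The paper's test vector is in effect a direct proof of faithfulness, since its $S_{n,m}$-orbit spans a copy of the regular representation. The paper's version is elementary and does not need Koike's theorem; yours is more conceptual.

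\textbf{The relation $q_{n,m}\mathcal{J}_1=\mathcal{J}_1q_{n,m}=\{0\}$.} The paper only notes that $\ker\rho_N(\langle i\,-j\rangle)=\ker c_{i,j}$. It leaves implicit the passage from Weyl contractions to all of $\mathcal{J}_1$, and it says nothing about the left-multiplication side. Your factorisations $\rho_N(\pi)=A\circ c_{i,j}$ and $\rho_N(\pi)=c_{i,j}^*\circ B$, valid for every $\pi$ with $h(\pi)>0$, cover both sides. For the range side you combine them with the fact that $P^{n,m}_N$ is the orthogonal projection for \eqref{eq---IPTensors}.

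\textbf{Minor points.} The idempotence bullet is never used. The remark that $c_{i,j}^*$ composed with equivariant maps lands in $\rho_N(\mathcal{J}_1)$ is true but unproved and also unused. Semisimplicity of the quotient is not needed. All three can be dropped.
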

\begin{proof} Since $P_{n,m}$ is $U(N)$-equivariant, existence  and uniqueness follows from  the invertibility of  $\rho_N: \mathcal{B}_{n,m}(N)\to \End_{U(N)}(\mathcal{T}_{n,m})$ for $n+m\ge  N.$   Let us write $q_{n,m}=\sum_{\pi\in\mathcal{D}_{n,m}}\kappa(\pi) \pi,$  $(e_i)_i$ a basis of $V$ and $(\om^{N-i})_i$ the dual basis of $V^*$. For $N\ge n+m,$  $e_1\ts \ldots \ts e_n \ts \om^1\ts \ldots \ts \om^m\in \overset{\circ}{T}_{n,m}$ and applying the projection $P_{n,m}^N=\rho_{N}(q_{n,m}),$
\[e_1\ts \ldots \ts e_n \ts \om^1\ts \ldots \ts \om^m=\sum_{\a\times \b\in S_{n,m}} \kappa(\a^{-1}\times \b^{-1})e_{\a(1)}\ts \ldots e_{\a(n)}\ts \om^{\b(1)}\ldots \ts \om^{\b(m)}.\]
Since the vectors of the  summand are linearly independent for $N\ge n+m,$ $\kappa(\sigma)=0$ for $\sigma\in S_{n,m}\setminus\{\id\}$ and $\kappa(\id)=1$ so that $q_{n,m}-\id_{n,m}\in\mathcal{J}_1.$ At last, for any $t\in T_{n,m} $ and $1\le i\le n, 1\le j\le m,$  $c_{i,j}(t)=0$ if and only if $\rho_N({\<i\,-j\>})(t)=0,$ where $\<i\,-j\>$ is the diagram\footnote{following \cite{LevMF}, we call such diagrams Weyl's contractions.}  with two horizontal strings from $\{i,\pm1\}$ to $\{-j,\pm1\}$ and only vertical strings otherwise. The second identity follows.
\end{proof}
The following can be proved similarly.  
\begin{lem}\label{lem---CharacProjB} For $n,m\ge 1,$  $P_{n,m}=\rho_N(q_{n,m})$ whenever $q_{n,m}\in\mathcal{B}_{n,m}$ satisfies
\begin{enumerate}
\item $q_{n,m}^2=q_{n,m},$
\item $q_{n,m}x=x q_{n,m}=0$ for all $x\in  \mathcal{J}_{1}.$
\end{enumerate}
Equivalently,  $\Id_{\mathcal{T}_{n,m}}-P_{n,m}=\rho_N (p_{n,m}),$   whenever $p_{n,m}\in\mathcal{J}_{1}$ with 
\begin{equation}
p_{n,m}x=x p_{n,m}=x \quad \forall x\in  \mathcal{J}_{1}.
\end{equation}
Furthermore $q_{n,m}$ and $p_{n,m}$ are unique when  $N\ge n+m$ and then satisfy
\begin{equation}
q_{n,m}y=yq_{n,m}\quad\text{and}\quad p_{n,m}y=yp_{n,m}\quad \forall  y\in \C[S_{n,m}].
\end{equation}
\end{lem}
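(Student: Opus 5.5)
The plan is to reduce everything to two facts about the images under $\rho_N$: the range of $\rho_N(\mathcal{J}_1)$ and the joint kernel of $\rho_N(\mathcal{J}_1)$. The kernel identity was already used in the proof of Lemma \ref{Lem---HarmoProjectorIdCompo}. For each Weyl contraction $\<i\,-j\>$, $\rho_N(\<i\,-j\>)t=0$ if and only if $c_{i,j}(t)=0$. Every diagram $\pi$ with $h(\pi)>0$ factors as $\pi=\sigma\<1\,-1\>\sigma'$ up to a power of $N$, with $\sigma,\sigma'\in S_{n,m}$. Hence
\[\bigcap_{x\in\mathcal{J}_1}\ker\rho_N(x)=\overset{\circ}{T}_{n,m}.\]

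For the range, I would dualise using the equivariant inner product \eqref{eq---IPTensors}. The adjoint of $\rho_N(\pi)$ is $\rho_N(\pi^*)$, where $\pi^*$ is the diagram flipped upside down, and $\mathcal{J}_1$ is stable under $*$. So $\sum_{x\in\mathcal{J}_1}\mathrm{Im}\,\rho_N(x)$ is the orthogonal complement of the joint kernel of the adjoints, which is $(\overset{\circ}{T}_{n,m})^\perp=\ker P_{n,m}$.

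Now let $q$ satisfy (1) and (2). Since $x q=0$ for every $x\in\mathcal{J}_1$, the image of $\rho_N(q)$ lies in the joint kernel, namely $\overset{\circ}{T}_{n,m}$. Since $q x=0$, $\rho_N(q)$ kills every $\mathrm{Im}\,\rho_N(x)$, and therefore kills $\ker P_{n,m}$.

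It remains to show that $\rho_N(q)$ is the identity on $\overset{\circ}{T}_{n,m}$, and this is the delicate step. Idempotence alone does not suffice, since $\rho_N(q)$ could a priori be $0$. I would read the intended hypothesis as including $q-\id_{n,m}\in\mathcal{J}_1$, as in \eqref{eq---RelationBrauerHarmo}; the "equivalent" formulation with $p=\id-q\in\mathcal{J}_1$ makes this explicit. With that hypothesis, for $t\in\overset{\circ}{T}_{n,m}$ we get $\rho_N(q)t=t-\rho_N(p)t=t$, because $\rho_N(p)$ kills the joint kernel. So $\rho_N(q)=P_{n,m}$.

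The $p$-formulation is handled the same way. Assume $p\in\mathcal{J}_1$ and $px=xp=x$ for all $x\in\mathcal{J}_1$. Then $\rho_N(p)$ vanishes on $\overset{\circ}{T}_{n,m}$, since $p\in\mathcal{J}_1$. It acts as the identity on each $\mathrm{Im}\,\rho_N(x)$, since $px=x$, hence on $\ker P_{n,m}$. So $\rho_N(p)=\Id-P_{n,m}$. Setting $q=\id-p$ translates between the two characterisations: $px=x$ is equivalent to $qx=0$, and $p^2=p$ follows by taking $x=p$.

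Uniqueness for $N\ge n+m$ follows from injectivity of $\rho_N$, as in Lemma \ref{Lem---HarmoProjectorIdCompo}. For commutation with $y\in\C[S_{n,m}]$, there are two routes. $P_{n,m}$ is $S_{n,m}$-equivariant, so $\rho_N(yq)=\rho_N(qy)$, and injectivity gives $yq=qy$. Alternatively, one checks directly that $\sigma q\sigma^{-1}$ satisfies the same characterisation, because $\mathcal{J}_1$ is stable under conjugation by $S_{n,m}$, and then applies uniqueness. Either way, linearity in $y$ and $p=\id-q$ conclude.
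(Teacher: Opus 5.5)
Your argument is correct and follows essentially the route the paper has in mind: the paper only says the lemma is proved like Lemma~\ref{Lem---HarmoProjectorIdCompo}, i.e.\ via $\overset{\circ}{T}_{n,m}=\bigcap_{i,j}\ker\rho_N(\<i\,-j\>)$ and injectivity of $\rho_N$ for $N\ge n+m$. You are also right that (1)--(2) alone are satisfied by $q_{n,m}=0$, so the normalisation $q_{n,m}-\id_{n,m}\in\mathcal{J}_1$ (equivalently $p_{n,m}\in\mathcal{J}_1$) has to be part of the hypothesis.

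One slip needs fixing: a diagram with $h(\pi)\ge 2$ is not of the form $\sigma\<1\,-1\>\sigma'$ up to a power of $N$. Instead, if $\pi$ has a bottom horizontal string joining $(i,-1)$ and $(-j,-1)$, then $\pi\<i\,-j\>=N\pi$, so $\rho_N(\pi)$ factors through $c_{i,j}$, and this gives the joint-kernel identity you need.
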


\begin{ex} This lemma can be used to compute explicitly $q_{n,m}.$ For $n,m\ge 1,$ denote by $e_1$  the Brauer diagram with only vertical strands except for two horizontal strings matching  points with first coordinate $-1,1$. When $n=m=1,$  since the equivariant projection on traceless matrices $M_N(\C)$ is the map $f\mapsto f-\frac1 N\Tr(f),$ for any $N\ge 1,$
\begin{equation}
q_{1,1}= \id_{1,1}- \frac{1}{N}e_1.
\end{equation}
When $n\ge 1, m=1,N\ge n,$  $N+X_n$ is invertible in $\C[S_n]$ and it easily follows from Lemma \ref{lem---CharacProjB}    that
\begin{equation}
q_{n,1}=\sum_{i\le n} (i\, n) (N+X_n)^{-1} e_1 (i\, n),
\end{equation}
where $X_n=(1\,n)+\ldots+(n-1\,n )$ is the $n$-th Jucys-Murphy element.
\end{ex}

The next lemma is the starting point of our bound on Wilson loop expectations.
\begin{lem}\label{lem---ProjTraceLessBound} For $n,m\ge 1, N\ge n+m$ and  for any $\tau\in \mathcal{D}_{n,m},$  
\begin{equation}
\kappa_N(\tau)=O(N^{-h(\tau)}).\label{eq---BoundHarmonicProj}
\end{equation}
\end{lem}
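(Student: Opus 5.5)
We will derive the bound from the characterisation of $q_{n,m}$ in Lemma \ref{lem---CharacProjB}: $q_{n,m}=\id_{n,m}-p_{n,m}$, where $p_{n,m}\in\mathcal{J}_1$ is the unit of the ideal $\mathcal{J}_1$. The plan is to build $p_{n,m}$ as a convergent Neumann-type series in $\frac1N$, taking care that every diagram with $h(\tau)=h$ receives a factor $N^{-h}$. The point is that when diagrams are multiplied in $\mathcal{B}_{n,m}(N)$, each closed loop brings a factor $N$. The number of closed loops created by a product is bounded by the number of horizontal strings that are cancelled.

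\textbf{Step 1 (grading and loop counting).} For $\pi,\mu\in\mathcal{D}_{n,m}$ we will prove the inequality
\[ l(\pi,\mu)\le \min(h(\pi),h(\mu)),\qquad h(c(\pi,\mu))\ge \max(h(\pi),h(\mu))-l(\pi,\mu). \]
Both follow by inspecting the middle row of the concatenation. A closed loop can only use the bottom horizontal strings of $\pi$ and the top horizontal strings of $\mu$. Each closed loop uses at least one of each, and it destroys at most one top horizontal string of $\pi$ relative to $c(\pi,\mu)$. Consequently $N^{l(\pi,\mu)}N^{-h(\pi)-h(\mu)}\le N^{-h(c(\pi,\mu))}$ as soon as $h(\pi),h(\mu)\ge1$. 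So the renormalised basis $\hat\pi=N^{-h(\pi)}\pi$ has structure constants that are $0$ or bounded by $1$ in the filtration sense, i.e. $\hat\pi\hat\mu=N^{-\delta}\widehat{c(\pi,\mu)}$ with $\delta\ge0$.

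\textbf{Step 2 (approximate unit).} Consider $E=\sum_{i,j}\frac1N\<i\,-j\>$, the normalised sum of Weyl contractions, or more generally the element $e=\sum_{\tau:h(\tau)\ge1} c_\tau \hat\tau$ chosen so that $x e= x+O(1/N)x$ for generators $x$ of $\mathcal{J}_1$. We will use the fact that for a Weyl contraction $w=\<i\,-j\>$ we have $w^2=Nw$, so $\frac1N w$ is an idempotent with $\frac1N w\cdot w=w$. Then we will set $p^{(0)}=1-\prod(1-\frac1N w_{i,j})$ in a suitable order, or equivalently define $p$ through the inclusion–exclusion formula over sets of contractions. Every term is a product of $k$ renormalised contractions. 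By Step 1, a product of diagrams with total weight $N^{-k}$ produces a diagram $\tau$ with coefficient $O(N^{-h(\tau)})$. The resulting element is a left and right quasi-unit up to errors in $\frac1N\mathcal{J}_1$ of renormalised size.

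\textbf{Step 3 (correction by inversion).} We write $p_{n,m}=p^{(0)}(1+r)^{-1}$ restricted to $\mathcal{J}_1$, where $r$ has renormalised norm $O(1/N)$ in the finite-dimensional algebra spanned by the $\hat\tau$. The inverse is a Neumann series, convergent for $N\ge n+m$ large. For the finitely many remaining $N$ the bound is vacuous, or it follows from the uniqueness given by Lemma \ref{lem---HarmoProjectorIdCompo}. By Step 1, the series preserves the property that the coefficient of $\tau$ is $O(N^{-h(\tau)})$. Uniqueness in Lemma \ref{lem---CharacProjB} then identifies the result with $p_{n,m}$, and hence gives $\kappa_N(\tau)=-[\tau]p_{n,m}=O(N^{-h(\tau)})$ for $\tau\ne\id$.

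The main obstacle is Step 1's loop/horizontal-string inequality. One must check that concatenation never gains loops without consuming horizontal strings on both sides, so that renormalised products stay bounded uniformly in $N$. One must also check that the products of contractions appearing in Step 2 do not accumulate more than $O(1)$ constants, which holds since $n,m$ are fixed. As a fallback, if a clean quasi-unit is hard to write, we will instead solve $q_{n,m}x=0$ for $x$ ranging over the Weyl contractions. This is a linear system for $\kappa_N$ whose matrix, in the renormalised basis, is $\mathrm{Id}+O(1/N)$ by Step 1. Inverting it gives the same bound.
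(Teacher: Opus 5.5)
\textbf{There is a genuine gap, in Step 2.}

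\textbf{Step 1 is too weak for a quasi-unit.} All Step 1 can give is that structure constants in the basis $\hat\tau=N^{-h(\tau)}\tau$ are $O(1)$. Even this needs the upper bound $h(c(\pi,\mu))+l(\pi,\mu)\le h(\pi)+h(\mu)$, not the inequalities you state. It holds because top horizontal strings of $\pi$ survive in $c(\pi,\mu)$, while every other top horizontal string of $c(\pi,\mu)$, and every closed loop, uses its own top horizontal string of $\mu$. Sub-multiplicativity of $\|x\|_h=\sum_\tau N^{h(\tau)}|\kappa_\tau(x)|$ alone never yields the relative $O(1/N)$ error a quasi-unit needs.

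\textbf{Your first candidate is not a quasi-unit.} For $n,m\ge 2$, $w_{11}E$ contains $\frac1N w_{11}w_{22}=N\,\widehat{w_{11}w_{22}}$, which is as large as $w_{11}=N\hat w_{11}$.

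\textbf{The product candidate works only via a cancellation you do not supply.} For $a=1-\prod_{i,j}(1-\frac1N w_{ij})$ the quasi-unit property is true, for three reasons:
\begin{itemize}
\item $w_{ab}$ commutes with $w_{ij}$ if $i\neq a$ and $j\neq b$;
\item if exactly one index is shared, $w_{ab}w_{ij}=w_{ab}s$ with $s$ a transposition, so $h$ does not grow and a factor $\frac1N$ is gained;
\item $w_{ab}(1-\frac1N w_{ab})=0$.
\end{itemize}
Pushing $w_{ab}$ through the product, in any order, kills the main term and leaves $\|w_{ab}(1-a)\|_h=O(1)$, against $\|w_{ab}\|_h=N$. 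Since $\hat\tau=\frac1N\hat\tau\, w_{ij}$ whenever $\{i,-j\}$ is a bottom string of $\tau$, right multiplication $R_a$ on $\mathcal{J}_1$ is then $\mathrm{Id}+O(1/N)$ in $\|\cdot\|_h$. Step 3 must be read as $p_{n,m}=R_a^{-1}(a)$, using $p_{n,m}a=a$. Defining $r$ through $p_{n,m}$, as ``$p^{(0)}(1+r)^{-1}$'' suggests, would be circular.

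\textbf{The fallback misstates its key point: the matrix is not $\mathrm{Id}+O(1/N)$.} Use $\kappa_N(\mathrm{id})=1$ and $\kappa_N=0$ on other permutations, and set $y_\tau=N^{h(\tau)}\kappa_N(\tau)$. Reading $q_{n,m}w_{ij}=0$ at a diagram $\sigma$ with bottom string $\{i,-j\}$ and multiplying by $N^{h(\sigma)-1}$ gives
$y_\sigma+\sum_{\tau\neq\sigma,\ c(\tau,w_{ij})=\sigma}N^{h(\sigma)-1-h(\tau)}y_\tau=-\delta_{\sigma,w_{ij}}$.
The terms with $h(\tau)=h(\sigma)-1$ have coefficient exactly $1$; for example $\tau=w_{22}$, $\sigma=w_{22}w_{11}$, $(i,j)=(1,1)$. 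What saves it is that these $O(1)$ entries are strictly triangular for the grading by $h$, while all other entries are at most $N^{-1}$. The system is also overdetermined. Choose one bottom string for each $\sigma$ to make it square; its matrix is then $I+L+O(1/N)$ with $L$ nilpotent and $0/1$ entries of fixed size. Such a matrix has a uniformly bounded inverse for large $N$, and since $\kappa_N$ solves the system, $y$ is bounded.

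\textbf{Comparison with the paper.} Either repair gives an elementary argument different from the paper's. The paper uses the explicit Jucys--Murphy product formula of Lemma \ref{Lem---BrauerExpression}, which rests on the spectral theory of the walled Brauer algebra. It combines this with the norm weighted by the word length $|\pi|$ of Lemma \ref{Lem---TriangularBrauer}, and so obtains the sharper bound $\kappa_N(\tau)=O(N^{-|\tau|})$. Your route needs no representation theory but only reaches $O(N^{-h(\tau)})$, which is exactly the statement.
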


   Its more explicit versions and a proof are given in the next sub-section and in appendix \ref{Section---RepBrauer}.  The  
 expression of the next sub-section  allows us to prove  
 $\frac 1N$-expansions of Yang-Mills partition function of a surface of genus $g\ge 2$ that appeared  in the physics literature \cite{RamKim}.

When $\la\vdash n,\mu\vdash m,$ we shall also consider    the composition 
\begin{equation}
P^{\la,\mu}_N=\frac{d^\la d^\mu}{n! m!} \rho_N(\chi^{[\la,\mu]})P_{N}^{n,m}\label{eq---EquiProjIrrepSym}
\end{equation}
of $P_{N}^{n,m}$ with the $S_{n,m}$-equivariant projection on 
the $V^{[\la,\mu]}$-component of $\overset{\circ}{T}_{n,m}.$ Since the multiplication by $S_{n,m}$ does not change the number of horizontal strings of a diagram,  Lemma \ref{lem---ProjTraceLessBound} implies the following.

\begin{coro}\label{lem---ProjMixedTIrrep}Assume $\lambda\vdash n,\mu\vdash m,$ 
\begin{equation}
 P^{\lambda,\mu}_N= \sum_{\tau\in \mathcal{D}_{n,m}} \kappa_N^{\la,\mu}(\tau)\tau
\end{equation}
where for any $\tau\in \mathcal{D}_{n,m},$ $\kappa^{\la,\mu}_N(\tau)=O(N^{-h(\tau)}).$
\end{coro}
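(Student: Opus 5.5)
We deduce the corollary from Lemma \ref{lem---ProjTraceLessBound} by computing the product in $\mathcal{B}_{n,m}(N)$ and tracking the number of horizontal strings. The key input is that $\rho_N$ is an algebra morphism. Writing $\chi^{[\la,\mu]}=\sum_{\sigma\in S_{n,m}}\chi^{[\la,\mu]}(\sigma)\sigma$, viewed as an element of $\C[S_{n,m}]\subset \mathcal{B}_{n,m}(N)$ through permutation diagrams, we get
\[P^{\la,\mu}_N=\rho_N\Big(\frac{d^\la d^\mu}{n!m!}\chi^{[\la,\mu]}q_{n,m}\Big).\]
Expanding,
\[\frac{d^\la d^\mu}{n!m!}\chi^{[\la,\mu]}q_{n,m}=\sum_{\sigma\in S_{n,m}}\sum_{\tau\in\mathcal{D}_{n,m}}\frac{d^\la d^\mu}{n!m!}\chi^{[\la,\mu]}(\sigma)\kappa_N(\tau)\,\sigma\tau.\]

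The next step is the combinatorial observation on the product. For a permutation diagram $\sigma$ and any $\tau\in\mathcal{D}_{n,m}$, stacking $\sigma$ on top of $\tau$ produces no closed loop. Every strand of $\sigma$ is vertical, so any component through the middle row reaches either the top or the bottom. Hence $\sigma\tau=c(\sigma,\tau)$ is a single diagram with coefficient $N^0$. This diagram has the same bottom horizontal strings as $\tau$, and its top horizontal strings are those of $\tau$ relabelled by $\sigma$. In particular $h(\sigma\tau)=h(\tau)$, and for fixed $\sigma$ the map $\tau\mapsto\sigma\tau$ is a bijection of $\mathcal{D}_{n,m}$, with inverse $\tau\mapsto\sigma^{-1}\tau$.

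Collecting terms, we set
\[\kappa^{\la,\mu}_N(\tau')=\frac{d^\la d^\mu}{n!m!}\sum_{\sigma\in S_{n,m}}\chi^{[\la,\mu]}(\sigma)\kappa_N(\sigma^{-1}\tau').\]
By Lemma \ref{lem---ProjTraceLessBound}, each $\kappa_N(\sigma^{-1}\tau')$ is $O(N^{-h(\sigma^{-1}\tau')})=O(N^{-h(\tau')})$. The prefactor and the character values do not depend on $N$, and the sum has $n!m!$ terms. Therefore $\kappa^{\la,\mu}_N(\tau')=O(N^{-h(\tau')})$.

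The only point needing care is uniqueness of the expansion. For $N\ge n+m$, $\rho_N$ is injective, so writing $P^{\la,\mu}_N$ as $\rho_N$ of this element defines the coefficients unambiguously. For smaller $N$ the asymptotic statement is vacuous, or we simply take this expansion as the definition. No real obstacle is expected; the main check is the no-loop claim for products with permutation diagrams.
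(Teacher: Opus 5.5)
Your proposal is correct and follows the paper's own one-line argument. You write $P^{\la,\mu}_N=\rho_N\bigl(\frac{d^\la d^\mu}{n!m!}\chi^{[\la,\mu]}q_{n,m}\bigr)$ and use that multiplication by a permutation diagram is a loop-free, $h$-preserving bijection of $\mathcal{D}_{n,m}$, so Lemma \ref{lem---ProjTraceLessBound} transfers term by term. You also make explicit the details the paper leaves implicit: no closed loops appear, the sum is reindexed, and $\rho_N$ is injective for $N\ge n+m$.
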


\subsection{$\Omega$-functions and combinatorial expression for the expansion of the partition function}  Let us give here an explicit expression for the representation \eqref{eq-RepTracelessNonExp} of the 
projection on traceless tensors as an element of the walled Brauer algebra.  This part is independent from the rest of the text and is not strictly necessary to understand the main argument for the convergence of Wilson loop moments of Theorem 
\ref{THM---MFConv}.

A first  motivation is to give a generalisation to the traceless tensor setting of the by now standard unitary integration tools of tensors pioneered by D. Weingarten  and S. Samuel and  known as Weingarten 
calculus, which has been rigorously proved and further investigated in the regime where $N\to\infty$  see  \cite{Collins,CollinsSn} or \cite[Sect. 3]{LevMF}.   These formulas for non-mixed tensors can be formulated thanks to a specific element  $\mathrm{Wg}_n$ of $\C[S_n]$ called Weingarten function.

Another motivation is to obtain combinatorial expressions for coefficients of $\frac{1}{N}$-expansions for Wilson loop expectations or for the Yang-Mills partition function.  We provide below such an expansion  with  a  rigorous interpretation  for the formulas 
occurring in the physics literature due Gross and Taylor    \cite{GTI,GTII} (see \cite{CMR} for a later review),  in particular clarifying the notion of $\Omega^{-1}$-factor appearing therein that we define here as an element of $\C[S_{n,m}]$ generalising 
the Weingarten function.   We  recover first some formulas found by S. Ramgoolam and Y.  Kimura in \cite[equations (2.7) and (2.8)]{RamKim}  giving independent arguments.

When $n,N\ge 1,$ consider $\Omega_n^N\in \C[S_n]$ the character associated to the representation $V^{\otimes n}$ of $S_n,$ that is\footnote{mind that $\#\sigma=\#\sigma^{-1}$ for any permutation.} 
\begin{equation}
\Omega_n^N= \sum_{\sigma\in S_n} \Tr_{V^{\ts n}}(\rho_N(\sigma^{-1}))\sigma=\sum_{\sigma\in S_n} N^{\# \sigma^{-1}} \sigma.
\end{equation}   
When the context is clear we shall drop the superscript $N$. When $N\ge n,$ $\Omega_n$ is invertible (see e.g. \cite{LevSW,DBrauer} or  Appendix \ref{Section---RepBrauer}) in the algebra $\C[S_n]$. Its inverse 
\begin{equation}
\mathrm{Wg}_{n}^N=\Omega_n^{-1}
\end{equation} 
is called the \emph{Weingarten function}. Its usefulness stems in part from allowing to compute the Schur-Weyl dual of an equivariant invariant tensor from its traces. 

\begin{lem}[\cite{LevMF,CollinsSn,FranckII}] \label{Lem---SchurWeylCumu} Assume $T\in \End_{U(N)}(V^{\ts n})$ given by Schur-Weyl duality by $T=\rho_N(\kappa_T)$ where $\kappa_T\in \C[S_n]$ and consider $m_T=\sum_{\sigma\in S_n}m_T(\sigma)\sigma$ with 
\[m_T(\sigma)=\Tr_{V^{\ts n}}(T\rho_N(\sigma^{-1}))\quad\forall \sigma\in S_n.\] 
Then for $n\ge N,$
\[\kappa_T=\mathrm{Wg}_{n}^N*m_T.\]
\end{lem}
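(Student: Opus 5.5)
We plan to express $m_T$ in terms of $\kappa_T$ and then invert $\Omega_n^N$. Note that the stated hypothesis ``$n\ge N$'' should presumably read $N\ge n$: that is the regime in which $\Omega_n^N$ is invertible, so that $\mathrm{Wg}_n^N$ is defined. We work under this assumption.

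\emph{Step 1: compute the traces of products of permutation operators.} Write $\kappa_T=\sum_{\tau\in S_n}\kappa_T(\tau)\tau$. Since $\rho_N$ is an algebra morphism on $\C[S_n]$, for every $\sigma\in S_n$ we have
\[
m_T(\sigma)=\sum_{\tau\in S_n}\kappa_T(\tau)\Tr_{V^{\ts n}}\big(\rho_N(\tau\sigma^{-1})\big)=\sum_{\tau\in S_n}\kappa_T(\tau)N^{\#(\tau\sigma^{-1})}.
\]
Here we use the elementary fact that a permutation operator acting on $V^{\ts n}$ has trace $N^{\#\text{cycles}}$. This fact follows by expanding in the standard basis: each cycle forces its indices to be equal and contributes one free sum of $N$ terms.

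\emph{Step 2: recognise a convolution.} Put $\pi=\tau\sigma^{-1}$, so that $\tau=\pi\sigma$. Using $\#\pi=\#\pi^{-1}$, we obtain
\[
m_T(\sigma)=\sum_{\pi}N^{\#\pi^{-1}}\kappa_T(\pi\sigma)=\sum_{\pi}\Omega_n^N(\pi^{-1})\,\kappa_T(\pi\sigma).
\]
Reindexing once more shows that this is the coefficient of $\sigma$ in the product $\Omega_n^N*\kappa_T$ in $\C[S_n]$. Note that $\Omega_n^N$ is a class function, hence central, so the order of the factors in the convolution does not matter. It follows that $m_T=\Omega_n^N*\kappa_T$.

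\emph{Step 3: invert.} Multiply both sides by $\mathrm{Wg}_n^N=(\Omega_n^N)^{-1}$. This gives $\kappa_T=\mathrm{Wg}_n^N*m_T$.

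The only delicate points are the following.
\begin{itemize}
\item We must track the convention for convolution and for the inverse $\sigma^{-1}$ in the definition of $m_T$. Centrality of $\Omega_n^N$ makes this harmless.
\item We must justify that $\Omega_n^N$ is invertible for $N\ge n$. This is quoted from \cite{LevSW,DBrauer}. Alternatively, $\Omega_n^N$ acts on the irrep $V^\la$ by the scalar $n!\,\dim V_\la/d^\la$. This scalar is nonzero exactly when $\ell(\la)\le N$, which holds for all $\la\vdash n$ once $N\ge n$.
\end{itemize}

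Beyond the invertibility of $\Omega_n^N$, no uniqueness statement for $\kappa_T$ is needed. Indeed, the identity $\kappa_T=\mathrm{Wg}_n^N*m_T$ holds for any $\kappa_T$ representing $T$, since $m_T$ depends only on $T$ itself.
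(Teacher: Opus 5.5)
Your proof is correct and matches the paper's argument: the paper also expands $\Tr_{V^{\ts n}}(\rho_N(\a\b^{-1}))=N^{\#(\a\b^{-1})}$ to get $m_T=\Omega_n*\kappa_T$, and then inverts $\Omega_n$. You are also right that the hypothesis should read $N\ge n$, which is the condition the paper's own proof uses for the invertibility of $\Omega_n$.
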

\begin{proof}  For any $n,N\ge1,$
\[m_T=\sum_{\alpha,\beta\in S_n}\kappa_T(\a)\Tr_{V^{\ts n}}(\rho_N(\a \b^{-1}))  \beta=\sum_{\alpha,\beta\in S_n}\kappa_T(\a) \Omega_{n}(\beta\a^{-1})  \beta=\Omega_n*\kappa_T.\]
When $N\ge n,$ $\Omega_n$ is invertible and the claim follows.
\end{proof}
 
\begin{rmk}Denote by $(e_i)_{1\le i\le N}$ the canonical basis of $\C^N$, set $e_I=e_{i_1}\ts\ldots \ts e_{i_n}$ for $I\in [N]^n$ and $T_{I,J}\in\End(V^{\ts n})$  the tensor with 
\[\<T_{I,J}e_{J'_1,\ldots,J'_n},e_{I'_1}\ts \ldots e_{I'_n}\>=\delta_{I,I'}\delta_{J,J'}\quad\forall I',J'\in[N]^n.\]
  Considering a Haar unitary matrix $U$ on $U(N),$ applying this lemma  to $\hat{T}_{J,J'}=\mathbb{E}[\rho_n(U)T_{J,J'}\rho_n(U^{-1})]$ with $J,J'\in [N]^n,$ it is simple to recover the integration formula 
  \begin{equation}
  \EE[U_{i_1,j_1}\ldots U_{i_n,j_n}\overline{U}_{i'_1,j'_1}\ldots \overline{U}_{i'_n,j'_n}]=\< \hat{T}_{J,J'}, T_{I,I'}\>=\sum_{\a,\b\in S_n:i_{\a(k)}=i'_k,j_{\b(k)}=j'_k} \mathrm{Wg}_{n,N}(\a\b^{-1})\label{eq---WeingartenFormulaUnit}
  \end{equation}
  for $I,I',J,J'\in[N].$
 \end{rmk}

In view of the isotypic decomposition of  $V^{\ts n},$ the $\Omega$ function also allows to compute the dimension of $U(N)$ irreps in terms of characters of the symmetric group.

\begin{lem}[\cite{LevSW}] For any $n\ge 1$ and  any partition $\la\vdash n$ with $\ell(\la)\le N,$
\begin{equation}
d_{[\la,\emptyset]_N}=\frac{\chi^\lambda(\Omega_{n}^N)} {n!}.
\end{equation}

\end{lem}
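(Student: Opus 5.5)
Since $V^{\ts n}=\bigoplus_{\nu\vdash n,\ell(\nu)\le N}V^\nu\ts V_\nu$ as an $S_n\tm U(N)$-module, I will compute the trace of a suitable element of $\C[S_n]$ acting on $V^{\ts n}$ in two ways. The natural choice is the central idempotent
\[e_\la=\frac{d^\la}{n!}\sum_{\sigma\in S_n}\chi^\la(\sigma^{-1})\sigma,\qquad d^\la=\chi^\la(\id),\]
whose image under $\rho_N$ is the $S_n$-equivariant projection onto the $V^\la$-isotypic component of $V^{\ts n}$.

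\emph{Representation side.} By Schur--Weyl duality, $\rho_N(e_\la)$ projects onto $V^\la\ts V_{[\la,\emptyset]_N}$ when $\ell(\la)\le N$, and vanishes otherwise. Hence
\[\Tr_{V^{\ts n}}(\rho_N(e_\la))=d^\la\, d_{[\la,\emptyset]_N}.\]
Here I identify $\dim V_\la$ with $d_{[\la,\emptyset]_N}$ as given by the Weyl dimension formula \eqref{eq---dim}, with highest weight $[\la,\emptyset]_N$.

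\emph{Combinatorial side.} The trace of a permutation on $V^{\ts n}$ is $\Tr_{V^{\ts n}}(\rho_N(\sigma))=N^{\#\sigma}$, since each cycle contributes one free index. Therefore
\[\Tr_{V^{\ts n}}(\rho_N(e_\la))=\frac{d^\la}{n!}\sum_{\sigma\in S_n}\chi^\la(\sigma^{-1})N^{\#\sigma}.\]
Reindexing with $\sigma\mapsto\sigma^{-1}$ and using $\#\sigma=\#\sigma^{-1}$, this equals $\frac{d^\la}{n!}\sum_{\sigma}\chi^\la(\sigma)N^{\#\sigma^{-1}}$. Extending $\chi^\la$ linearly to $\C[S_n]$, this is exactly $\frac{d^\la}{n!}\chi^\la(\Omega^N_n)$.

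\emph{Conclusion.} Equating the two computations and dividing by $d^\la\neq0$ gives $d_{[\la,\emptyset]_N}=\chi^\la(\Omega_n^N)/n!$.

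The only delicate point is the representation side: we need the fact that the isotypic component of $V^\la$ is precisely $V^\la\ts V_\la$ with multiplicity space of dimension $d_{[\la,\emptyset]_N}$. This is the standard Schur--Weyl statement recalled above, together with Weyl's identification of $V_\la$ as the irrep of highest weight $\la$, so I will simply invoke it.
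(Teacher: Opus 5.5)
Your proof is correct and follows essentially the same route as the paper. Both arguments compute the trace on $V^{\ts n}$ of the central idempotent $\frac{d^\la}{n!}\chi^\la$ in two ways. Via Schur--Weyl duality the trace is $d^\la d_{[\la,\emptyset]_N}$, and via $\Tr_{V^{\ts n}}(\rho_N(\sigma))=N^{\#\sigma}$ it is $\frac{d^\la}{n!}\chi^\la(\Omega_n^N)$.
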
 
\begin{proof} For any $\la\vdash n,$ $\rho_N(\frac{d^{\la}}{n!}\chi^\la)$ is the equivariant projection on the $V^\la$-isotypic component of $V^{\ts n}.$ By Schur-Weyl duality it is null  for $\ell(\la)>N$ and isomorphic to $V^{\la}\ts V_{[\la,\emptyset]_N}$ when $\ell(\la)\le N.$ In the latter case, we can conclude since the dimension of this component is
\[d^\la d_{[\la,\emptyset]_N}= \frac {d^\la} {n!}\Tr_{V^{\ts n}} (\rho_N( \chi^\la))=\frac {d^\la} {n!} \chi^\la(\Omega^N_n).\]

\end{proof}

\begin{rmk}In particular, \[\frac{d_{[\la,\emptyset]_N}}{N^n}=1+\sum_{k=1}^{n-1}a_k(\la) N^{-k}  \]
with $a_k(\la)=\frac{1}{n!} \sum_{\sigma\in S_n: n-\#\sigma=k}\chi^\la(\sigma)$ and the above expression gives a Taylor expansion of $d_{[\la,\emptyset]_N}$ for $\la$ fixed as $N\to\infty.$ For instance, when  $1\le h\le n$, for $\la=(1^n),$ $V^\la$ is the sign representation of $S_n,$ $V_{[\la,\emptyset]_N}= \Lambda^n(V)$ is the $n$-th exterior product of $V$ and
\begin{equation}
d_{[1^n,\emptyset]}= \frac{N(N-1)\ldots(N-n+1)}{n!}=\sum_{k=0}^{n-1} s(n,k)N^k,
\end{equation}
where $s(n,k)$ are Stirling numbers of the first kind.
\end{rmk}

\begin{rmk} For any $n,m\ge 1,$ $\la\vdash n,\mu\vdash m$ partitions, the above formula applies as well to compute $d_{[\la,\mu]_N}=d_{[\la,\mu]_N+(\mu_1,\ldots,\mu_1)}=d_{[\tilde \la,\emptyset]_N}$ with $\tilde\la\vdash n+N\mu_1-m.$  Though as 
$N\to\infty,$ it is a priori harder to use   since $|\tilde \la|\to\infty$ .
\end{rmk}

\label{subsec---OmegaPoint}

Let us now consider traceless tensors in place of tensors. For $n,m\ge 1,$ consider $\Omega_{n,m}\in\C[S_{n,m}]$ the character associated to the representation $\overset{\circ}{T}_{n,m}$ of $S_{n,m},$ that is 
\begin{equation}
\Omega_{n,m}^N=\sum_{\sigma\in S_{n,m}}\Tr_{\overset{\circ}{T}_{n,m}}(\rho_N(\sigma^{-1}))\sigma.
\label{eq---OmegaMixedDef}\end{equation}

Just as in the non-mixed tensor case, it can be motivated by the following proposition.  Let $\th: \mathcal{D}_{n,m}\to S_{n+m}$ be the  bijection mapping a walled diagram $\pi$ to a permutation diagram by transposing the $m$ last top and bottom vertices. Denote by  $\theta: \mathcal{B}_{n,m}(N)\to \C[S_{n+m}]$ its linear extension.

\begin{prop}\label{Prop---HarmoWeingarten} Assume $T\in \End_{U(N)}(\overset{\circ}{T}_{n,m})$ given by Koike-Schur-Weyl duality by $T=\rho_N(\kappa_T)$ where $\kappa_T\in \C[S_{n,m}]$ and consider $m_T=\sum_{\sigma\in S_n}m_T(\sigma)\sigma$ with 
\[m_T(\sigma)=\Tr_{\overset{\circ}{T}_{n,m}}(T\rho_N(\sigma^{-1}))\quad\forall \sigma\in S_{n,m}.\] 
For  $N\ge n+m,$   $\Omega_{n,m}^N$ is central and invertible in $\C[S_{n,m}]$ with inverse 
\begin{equation}
\Wg_{n,m}^N=(\Omega_{n,m}^N)^{-1}=\sum_{\a\times \b\in S_{n,m}} \Wg_{n+m}^N(\a\times\b)\a\times\b\label{eq---HarmoWeingarten}
\end{equation}
and 
\begin{equation}
\kappa_T=\Wg_{n,m}^{N}*m_T.\label{eq--CenteredCumu}
\end{equation}
Moreover,
\begin{equation}
q_{n,m}=\Omega_{n,m}\theta^{-1}(\Wg^N_{n+m})=\theta^{-1}(\Wg_{n+m}^N)\Omega_{n,m}.\label{eq---FormulaProHarmoWeingarten}
\end{equation}
\end{prop}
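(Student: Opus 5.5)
The plan is to derive everything from the standard Weingarten conditional expectation on $V^{\ts (n+m)}$, transported to $T_{n,m}$ by partial transposition. After that, I would compare coefficients in the walled Brauer basis, which is legitimate because $\rho_N$ is injective for $N\ge n+m$. I would establish the claims in this order.

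\emph{Centrality and the trace identity.} Since $\sigma\mapsto \Tr_{\overset{\circ}{T}_{n,m}}(\rho_N(\sigma^{-1}))$ is a character of $S_{n,m}$, it is a class function, so $\Omega^N_{n,m}$ is central in $\C[S_{n,m}]$. For $\kappa_T\in\C[S_{n,m}]$, expanding $T=\rho_N(\kappa_T)$ exactly as in the proof of Lemma \ref{Lem---SchurWeylCumu} gives
\[m_T=\Omega_{n,m}*\kappa_T .\]
Hence \eqref{eq--CenteredCumu} follows at once, provided $\Omega_{n,m}$ is invertible with the stated inverse.

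\emph{Transport of the Weingarten expectation.} Identify $V^*\cong V$ through the dual basis and let $A\mapsto A^\Gamma$ denote partial transposition on the last $m$ factors. Two facts are needed:
\begin{itemize}
\item $\rho_N(\pi)^\Gamma=\rho_N(\theta(\pi))$ for every $\pi\in\mathcal{D}_{n,m}$;
\item $\rho_{n,m}(U)$ corresponds under $\Gamma$ to $U^{\ts n}\ts \overline U^{\ts m}$, while $\Gamma$ preserves the Hilbert--Schmidt pairing.
\end{itemize}
Let $\mathcal{E}(A)=\int \rho_{n,m}(U)A\rho_{n,m}(U)^{-1}dU$. Writing the integral through \eqref{eq---WeingartenFormulaUnit} and undoing $\Gamma$, I expect
\[
\mathcal{E}(A)=\sum_{\pi,\pi'\in\mathcal{D}_{n,m}}\Wg^N_{n+m}\big(\theta(\pi)\theta(\pi')^{-1}\big)\,\Tr_{T_{n,m}}\big(A\,\rho_N(\pi')^{*}\big)\,\rho_N(\pi).
\]
This is the step I expect to be the main obstacle. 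The difficulty is the bookkeeping of how $\theta$ interacts with adjoints and products: $\theta$ is not multiplicative, so one must check that the argument of $\Wg$ is really $\theta(\pi)\theta(\pi')^{-1}$. One must also check that $\rho_N(\pi')^*=\rho_N(\pi'^{\,*})$, with $\pi'^{\,*}$ the flipped diagram, which has the same number $h$ of horizontal strings. I would verify this directly on matrix units, using the definition \eqref{def---BrauerAction}.

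\emph{Applying the expectation and comparing coefficients.} Apply $\mathcal{E}$ to $A=\rho_N(\sigma)P^{n,m}_N$ with $\sigma\in S_{n,m}$. This operator commutes with $U(N)$, so $\mathcal{E}(A)=A$. By Lemma \ref{Lem---HarmoProjectorIdCompo}, $P^{n,m}_N$ annihilates the image of $\rho_N(\mathcal{J}_1)$ on both sides. Therefore $\Tr(A\rho_N(\pi')^*)=0$ unless $\pi'=\beta\in S_{n,m}$, and in that case it equals $\Omega_{n,m}(\beta\sigma^{-1})$ up to the inversion convention. This gives
\[
\rho_N(\sigma)P^{n,m}_N=\sum_{\pi}\Big(\sum_{\beta\in S_{n,m}}\Wg_{n+m}(\theta(\pi)\beta^{-1})\,\Omega_{n,m}(\beta\sigma^{-1})\Big)\rho_N(\pi).
\]
For permutations $\alpha,\beta\in S_{n,m}$ the map $\theta$ is just the inclusion $S_{n,m}\subset S_{n+m}$, so $\theta(\alpha)\beta^{-1}=\alpha\beta^{-1}$ stays in $S_{n,m}$. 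Using injectivity of $\rho_N$ together with $q_{n,m}-\id\in\mathcal{J}_1$, the coefficient of the permutation $\alpha$ on the left is $\delta_{\alpha,\sigma}$. Comparing permutation coefficients therefore yields $(\Wg_{n+m}|_{S_{n,m}})*\Omega_{n,m}=\id$. In a finite-dimensional algebra a one-sided inverse is two-sided, which gives \eqref{eq---HarmoWeingarten}.

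Taking $\sigma=\id$ and reading off the coefficients of all diagrams $\pi$ gives $q_{n,m}=\theta^{-1}(\Wg^N_{n+m})\,\Omega_{n,m}$. The other order, $\Omega_{n,m}\theta^{-1}(\Wg^N_{n+m})$, follows by applying the same argument to $P^{n,m}_N\rho_N(\sigma)$, or by taking adjoints, since $P^{n,m}_N$ is self-adjoint and $\Wg$ is invariant under inversion.
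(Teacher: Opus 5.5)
Your proposal is correct and follows the paper's argument. Both use partial transposition to $V^{\ts(n+m)}$, the standard Weingarten inversion (the paper applies Lemma \ref{Lem---SchurWeylCumu} to $\Theta(\rho_N(\kappa_T)P^{n,m}_N)$, which is your conditional-expectation identity for an equivariant operator), the vanishing $q_{n,m}\mathcal{J}_1=\mathcal{J}_1q_{n,m}=\{0\}$ to kill horizontal diagrams, and comparison of permutation coefficients modulo $\mathcal{J}_1$; the only reordering is that you get the inverse first (already from $\sigma=\id$) and deduce \eqref{eq--CenteredCumu} from $m_T=\Omega_{n,m}*\kappa_T$, whereas the paper proves \eqref{eq--CenteredCumu} first.

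There is one harmless slip. With the paper's $\theta$, a permutation $\a\times\b$ is sent to $\a\times\b^{-1}$, not to itself; this changes nothing, since $\Wg^N_{n+m}$ restricted to $S_{n,m}$ and $\Omega_{n,m}$ depend only on cycle types. Also, the bookkeeping you worried about is not needed in the transported Weingarten formula, which is a plain reindexing through the bijection $\theta$. It is needed when you match coefficients with those of $\theta^{-1}(\Wg^N_{n+m})\Omega_{n,m}$, using $\theta(\pi(\b_1\times\b_2))=(\id\times\b_2^{-1})\,\theta(\pi)\,(\b_1\times\id)$ and centrality of $\Wg^N_{n+m}$.
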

 Expressions for $\Omega_{n,m}$ seem to have first appeared in the physics literature \cite{GTII,RamKim}.  In the appendix  \ref{sec---GrossTaylor}, we  prove the first  expression\footnote{in a slightly  different form.} \eqref{eq---OmegaFunctionGT}  discovered by   \cite{GTII} for $\Omega_{n,m}$. The above alternative  identity \eqref{eq---HarmoWeingarten}   is due to \cite{RamKim} with an argument relying on \cite[Thm 2.3]{Koike}.  We give now an independent 
 detailed argument using Lemma \ref{Lem---SchurWeylCumu}.  We shall need first the following observation.

\begin{proof}[Proof of Prop. \ref{Prop---HarmoWeingarten}]  
Identifying $ \End(T_{n,m})$ and $ \End(V^{\ts n+m})$ respectively with $V^{\ts n}\ts {V^*}^{\ts m} \ts {V^{*}}^{\ts n}\ts V^{\ts m}$ and $V^{\ts n}\ts {V}^{\ts m} \ts {V^{*}}^{\ts n}\ts {V^*}^{\ts m},$ consider the  $U(N)\times S_{n,m}$-equivariant application 
$\Theta: \End(T_{n,m})\to \End(V^{\ts n+m})$ given by the partial transposition,  mapping $v_1\ts\omega_1 \ts \omega_2 \ts v_2$ to $v_1\ts v_2 \ts \omega_2 \ts \om_1$   for $v_1\in V^{\ts n},v_2\in V^{\ts m},\om_1\in {V^{*}}^{\ts m},\om_2\in {V^{*}}^{\ts n}.$  Let us note two properties of
 $\Theta:$  
\begin{equation}
\Tr_{V^{\ts n+m}}(\Theta(T))=\Tr_{T_{n,m}}(T)\quad \forall T\in  \End(T_{n,m})
\end{equation}
and 
\begin{equation}
\Theta(\rho_N(x))=\rho_N(\theta(x))\quad\forall x\in\mathcal{B}_n(N).
\end{equation}
Now for $T\in\End_{U(N)}(\overset{\circ}{T}_{n,m})$ and $\kappa_T\in \C[S_{n,m}]$ as above,   consider  $\tilde T\in \End_{U(N)}(T_{n,m})$ with $\tilde T P_{n,m}^N= \tilde T $ and  $\tilde T v= Tv$ for $v \in \overset{\circ}{T}_{n,m}$ so that $\tilde T=\rho_N (\kappa_T)P_{n,m}^N.$ Since for any $\sigma\in S_{n+m}\setminus S_{n,m}, $  $P_{n,m}^N\Theta^{-1}(\sigma)=0,$
\begin{align*}
m_{\Theta(\tilde T)}&= \sum_{\sigma\in S_{n+m}} \Tr_{V^{\ts n+m}}(\Theta(\tilde T)\rho_N(\sigma^{-1}))\sigma=\sum_{\sigma\in S_{n+m}} \Tr_{T_{n,m}}(\tilde T\Theta^{-1}\left[\rho_N(\sigma^{-1})\right])\sigma \\
&= \sum_{\a\times\b\in S_{n,m}} \Tr_{\overset{\circ}{T}_{n,m}}(T\Theta^{-1}(\a^{-1}\times \b^{-1}))\a\times \b\\
&= \sum_{\a\times\b\in S_{n,m}} \Tr_{\overset{\circ}{T}_{n,m}}(T\rho_N(\a^{-1}\times \b))\a\times \b=\th(m_T).
\end{align*}
From Lemma \ref{Lem---SchurWeylCumu}, it follows that for $N\ge n+m,$
\begin{equation}
\Theta(\tilde T)=\rho_N(\Wg_{n+m,N}*\th(m_T)).
\end{equation}
Since   $\Theta(\tilde T)=\rho_N (\theta(\kappa_T q_{n,m}))$ and  $\rho_N:\C[S_{n+m}]\to\End(V^{\ts n+m})$ is injective for $N\ge n+m,$  
\begin{equation}
\kappa_T q_{n,m}=\theta^{-1}(\Wg_{n+m,N}*\th(m_T)).\label{eq---WeingartenHarmo}
\end{equation}
For any $x\in\mathcal{B}_{n+m}(N),$  denote by $[x]_0$ its projection onto $\mathcal{B}_{n,m}(N)/\mathcal{J}_1.$ Thanks to Lemma \ref{Lem---HarmoProjectorIdCompo}, we conclude that 
\begin{align*}
[\kappa_T]_0&= \sum_{\sigma\in S_{n+m}} \Wg_{n+m,N}(\sigma) [\theta^{-1}(\sigma*\th(m_T))]_0\\
&= \sum_{\sigma\in S_{n,m}} \Wg_{n+m,N}(\sigma) [\theta^{-1}(\sigma*\th(m_T))]_0\\
&= \sum_{\a\times \b,\g\times\delta\in S_{n,m}} \Wg_{n+m,N}(\a\times\b) m_T({\g}^{-1}\times{\delta}^{-1}) [\theta^{-1}(\a\times\b*\g\times {\delta}^{-1})]_0\\
&= \sum_{\a\times \b,\g\times\delta\in S_{n,m}} \Wg_{n+m,N}(\a\times\b) m_T({\g}^{-1}\times{\delta}^{-1}) [\a\g\times \delta\b]_0\\
&=\sum_{\a\times \b,\g\times\delta\in S_{n,m}} \Wg_{n+m,N}(\a\times\b) m_T({\g}^{-1}\times{\delta}^{-1}) [\a\g\times \b\delta]_0= [\Wg_{n+m,N}]_0 [m_T]_0,
\end{align*}
where we used in the penultimate and last identities that $\Wg_{n+m,N}$ is central.  Identifying $\mathcal{B}_{n,m}(N)/\mathcal{J}_1$ with $\C[S_{n,m}]$ the identity \eqref{eq--CenteredCumu} follows.

Let us now   deduce \eqref{eq---FormulaProHarmoWeingarten} using the previous argument applied  to $T=\Id_{\overset{\circ}{T}_{n,m}}. $ Then $\kappa_T=\id_{n,m},$  $m_T=\Omega_{n,m}$ and    \eqref{eq---WeingartenHarmo} together with centrality of $\Wg_{n+m}^N$ yields
\begin{align*}
 q_{n,m}&=\sum_{\sigma\in S_{n+m},\a\times\b \in S_{n,m}} \Wg^N_{n+m}(\sigma) \Omega_{n,m}(\a\times\b)\th^{-1}(\sigma . (\a \times \b))\\
 &=\sum_{\sigma\in S_{n+m},\a\times\b \in S_{n,m}}\Wg^N_{n+m}(\sigma) \Omega_{n,m}(\a\times\b)\th^{-1}(  (\a\times \id_m)\sigma (  \id_{n}\times \b))\\
 &=\Omega_{n,m}\th^{-1}(\Wg_{n+m}^N)=\th^{-1}(\Wg_{n+m}^N)\Omega_{n,m}.
\end{align*}

Since $\Omega_{n,m}$ is associated to a character, it is central. To conclude\footnote{alternatively, projecting  the last identity onto $\mathcal{B}_{n,m}(N)/\mathcal{J}_1\simeq \C[S_{n,m}]$ and using Lemma \ref{Lem---HarmoProjectorIdCompo} implies that $\Wg_{n,m}^N$ is the inverse of $\Omega_{n,m}^N$, minding that since $\Wg_{n+m}^N$ is central,  $[\Wg_{n+m}^N]_0=[\th^{-1}(\Wg_{n+m}^N)]_0.$ } and prove \eqref{eq---HarmoWeingarten},  observe that for any $n,m,N\ge1,$  $T\in \End(\overset{\circ}{T}_{n,m})$ and $\kappa_T$ as above, just as for non-mixed tensors, 
\[m_T=\sum_{\alpha,\beta\in S_{n,m}}\kappa_T(\a)\Tr_{\overset{\circ}{T}_{n,m}}(\rho_N(\a \b^{-1}))  \beta=\sum_{\alpha,\beta\in S_{n,m}}\kappa_T(\a) \Omega_{n,m}(\beta\a^{-1})  \beta=\Omega_{n,m}*\kappa_T.\]
Using this last identify for $T=\rho_N(\id_{n,m})$ and $\kappa_T=\id_{n,m},$  using \eqref{eq--CenteredCumu} for $N\ge n+m$ implies
\[\id_{n,m}=\kappa_T=\Wg_{n,m}^N*m_T= \Wg_{n,m}^N*\Omega_{n,m}^N.\]
\end{proof}

 Given the isotypic decomposition \eqref{eq---IsotTraceless} of  $\overset{\circ}{T}_{n,m},$ the $\Omega_{n,m}$ function allows to compute  dimensions of $U(N)$-irreps in terms of characters of the symmetric group.

\begin{lem} For any $n,m\ge 1$ and  any partitions $\nu\vdash n,\mu\vdash m$ with $\ell(\nu)+\ell(\mu)\le N,$
\begin{equation}
d_{[\nu,\mu]_N}=\frac{\chi^{[\nu,\mu]}(\Omega_{n,m}^N)} {n!m!}.\label{eq---DimOmega}
\end{equation}

\end{lem}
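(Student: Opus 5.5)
The argument mirrors the proof of the preceding lemma for $d_{[\la,\emptyset]_N}$, with the isotypic decomposition \eqref{eq---IsotTraceless} of Koike-Schur-Weyl duality in place of classical Schur-Weyl duality. We start from the definition \eqref{eq---OmegaMixedDef}: $\Omega^N_{n,m}$ is the character of the $S_{n,m}$-module $\overset{\circ}{T}_{n,m}$, written as an element of $\C[S_{n,m}]$. Pairing it with the irreducible character $\chi^{[\nu,\mu]}$ gives
\[
\chi^{[\nu,\mu]}(\Omega^N_{n,m})=\sum_{\sigma\in S_{n,m}}\Tr_{\overset{\circ}{T}_{n,m}}(\rho_N(\sigma^{-1}))\chi^{[\nu,\mu]}(\sigma)=\Tr_{\overset{\circ}{T}_{n,m}}\bigl(\rho_N(\chi^{[\nu,\mu]})\bigr),
\]
where we use that $\chi^{[\nu,\mu]}(\sigma)=\chi^{[\nu,\mu]}(\sigma^{-1})$, since characters of symmetric groups are real.

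Next we use that $\frac{d^\nu d^\mu}{n!m!}\rho_N(\chi^{[\nu,\mu]})$ is the $S_{n,m}$-equivariant projection onto the $V^{[\nu,\mu]}$-isotypic component of $\overset{\circ}{T}_{n,m}$; this is the same fact invoked in the proof of Corollary \ref{coro---CharacTraceHarmoT}. By \eqref{eq---IsotTraceless}, since $\ell(\nu)+\ell(\mu)\le N$, this component is $V^{[\nu,\mu]}\ts V_{[\nu,\mu]_N}$. Its dimension is $d^\nu d^\mu\, d_{[\nu,\mu]_N}$, because $\dim V^{[\nu,\mu]}=d^\nu d^\mu$ and the $U(N)$ irrep with highest weight $[\nu,\mu]_N$ has dimension $d_{[\nu,\mu]_N}$ by Weyl's formula \eqref{eq---dim}.

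The trace of a projection equals the dimension of its image, so
\[
\frac{d^\nu d^\mu}{n!m!}\chi^{[\nu,\mu]}(\Omega^N_{n,m})=d^\nu d^\mu\, d_{[\nu,\mu]_N}.
\]
Dividing by $d^\nu d^\mu>0$ gives \eqref{eq---DimOmega}. Equivalently, one can apply Corollary \ref{coro---CharacTraceHarmoT} at $U=\Id_N$.

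There is no real obstacle. The only points needing care are the inverse-versus-direct convention in the definition of $\Omega_{n,m}$, handled by the reality of the characters, and the use of the hypothesis $\ell(\nu)+\ell(\mu)\le N$, which guarantees that the isotypic component is nonzero and is given by \eqref{eq---IsotTraceless}. No condition $N\ge n+m$ is needed.
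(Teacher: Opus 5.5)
Your proof is correct and is the paper's argument: you identify $\frac{d^\nu d^\mu}{n!m!}\rho_N(\chi^{[\nu,\mu]})$ as the isotypic projection in $\overset{\circ}{T}_{n,m}$, compute its rank $d^\nu d^\mu d_{[\nu,\mu]_N}$ from Koike's decomposition \eqref{eq---IsotTraceless}, and recognise its trace as $\chi^{[\nu,\mu]}(\Omega^N_{n,m})$. You also spell out the $\sigma$ versus $\sigma^{-1}$ convention in the definition of $\Omega^N_{n,m}$, which the paper leaves implicit.
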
 
\begin{proof} For any $\la\vdash n,\mu\vdash m,$ $\rho_N(\frac{d^{\la}d^{\mu}}{n!m!}\chi^{[\la,\mu]})$ is the equivariant projection on the $V^{[\la,\mu]}$-isotypic component of $\overset{\circ}{T}_{n,m}.$ By Koike-Schur-Weyl duality it is null  for $\ell(\la)+\ell(\mu)>N$ and isomorphic to $V^{[\la,\mu]}\ts V_{[\la,\emptyset]_N}$ when $\ell(\la)+\ell(\mu)\le N.$ In the latter case, the dimension of this component is
\[d^\la d^{\mu} d_{[\la,\mu]_N}= \frac {d^\la d^{\mu}} {n!m!}\Tr_{\overset{\circ}{T}_{n,m}} (\rho_N( \chi^{[\la,\mu]}))=\frac {d^\la d^\mu} {n!m!} \chi^{[\la,\mu]}(\Omega^N_{n,m}).\]
\end{proof}

Let us now consider some  properties of $\Omega_{n,m}^N$ as  $N\to\infty.$ 

\begin{lem} \label{Lem---OmegaF}For $N\ge n+m$ and any $\sigma\in S_{n,m},$  
\begin{equation}
\Omega_{n,m}(\sigma)=Q_\sigma(N) \label{eq---rationalOm}
\end{equation}
 is a rational function  in $N$ with $Q_\sigma\in \C(X)$ satisfying
 
\begin{equation}
Q_\sigma(X)=X^{\# \sigma}+ \sum_{g\ge 1} h^{n,m}_g(\sigma) X^{\# \sigma -2 g} \label{eq---ExpOm}
\end{equation}
where $h^{n,m}_g(\sigma)\in \Z$ for all $g\ge 1.$ 
\end{lem}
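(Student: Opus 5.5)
Throughout, $\#\sigma$ for $\sigma=\a\times\b\in S_{n,m}$ denotes $\#\a+\#\b$, the number of cycles. The approach is to compute $\Omega_{n,m}(\sigma)=\Tr_{\overset{\circ}{T}_{n,m}}(\rho_N(\sigma^{-1}))$ as the trace over the full space $T_{n,m}$ of $\rho_N(\sigma^{-1})P^{n,m}_N$. The projection is then expanded via the explicit formula \eqref{eq---FormulaProHarmoWeingarten} of Proposition \ref{Prop---HarmoWeingarten}, or equivalently via its Lemma \ref{Lem---HarmoProjectorIdCompo} expansion $q_{n,m}=\sum_\tau\kappa_N(\tau)\tau$. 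This gives
\[
\Omega_{n,m}(\sigma)=\sum_{\tau\in\mathcal{D}_{n,m}}\kappa_N(\tau)\,\Tr_{T_{n,m}}(\rho_N(\sigma^{-1}\tau))=\sum_{\tau}\kappa_N(\tau)N^{\mathrm{loops}(\sigma^{-1}\tau)},
\]
since the trace of a walled Brauer diagram on $T_{n,m}$ is $N$ to the number of closed loops obtained by closing it up. So the first task is to show each $\kappa_N(\tau)$ is a rational function of $N$. I will do this using $q_{n,m}=\th^{-1}(\Wg^N_{n+m})\Omega_{n,m}$ together with the standard fact that $\Wg^N_{n+m}(\sigma)$ is rational in $N$ with integer-coefficient expansion in $N^{-1}$ of the form $N^{-(n+m)-|\sigma|}(\text{series in }N^{-2})$. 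It is cleaner, however, to avoid circularity by working directly with the characterisation of Lemma \ref{lem---CharacProjB}: solving the linear system $q x=0$ for $x\in\mathcal{J}_1$, $q-\id\in\mathcal{J}_1$, whose coefficients are polynomial in $N$, shows that $\kappa_N(\tau)\in\mathbb{Q}(N)$, and that it is invertible for $N\ge n+m$.

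For the leading term, $\tau=\id$ has $\kappa=1$ and contributes $N^{\#\sigma}$. By Lemma \ref{lem---ProjTraceLessBound} every other $\tau$ has $h(\tau)\ge1$ and contributes $O(N^{-h(\tau)}N^{\mathrm{loops}(\sigma^{-1}\tau)})$. I will check by a diagrammatic count that $\mathrm{loops}(\sigma^{-1}\tau)\le\#\sigma+h(\tau)-2$: each pair of horizontal strings (a Weyl contraction) merges at least two cycles of $\sigma$, or cuts one into two, which changes the loop count by $\pm1$ relative to the $N^{-1}$ it costs. This gives the form $X^{\#\sigma}+O(X^{\#\sigma-1})$.

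The parity claim, that only the exponents $\#\sigma-2g$ occur, is the main obstacle. I would get it from a symmetry $N\mapsto -N$. Concretely, $\Tr_{T_{n,m}}(\rho_N(\pi))=N^{\mathrm{loops}(\pi)}$, and for walled Brauer diagrams the parity of $\mathrm{loops}(\pi)$ agrees with the parity of $n+m-\ell(\pi)$, where $\ell(\pi)$ is the Cayley length of $\th(\pi)$. This is because the partial transpose $\th$ preserves loops: $\Tr_{T_{n,m}}(\rho_N(\pi))=\Tr_{V^{\ts n+m}}(\rho_N(\th(\pi)))$, by the trace property of $\Theta$ recorded in the proof of Proposition \ref{Prop---HarmoWeingarten}. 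Transporting everything by $\Theta$, I would write
\[
\Omega_{n,m}(\sigma)=\sum_{\pi\in S_{n+m}}\Wg^N_{n+m}(\pi)\,\Omega_{n,m}\text{-terms}\ldots
\]
More efficiently, I would rewrite $\Omega_{n,m}$ through $m_{\Theta(P)}$ and Lemma \ref{Lem---SchurWeylCumu}: $\Omega_{n,m}(\sigma)$ becomes a sum over $\pi,\pi'\in S_{n+m}$ of products $\Wg_{n+m}(\pi)N^{\#(\cdot)}$. Each such term has parity of exponent determined by the signs of the permutations, via $\mathrm{sgn}(\pi)=(-1)^{n+m-\#\pi}$ and the identity $\Wg^{-N}(\pi)=(-1)^{n+m}\mathrm{sgn}(\pi)\Wg^N(\pi)$. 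Integrality of the coefficients $h_g$ then follows because $\Omega_{n,m}(\sigma)$ is an integer (a character value, being the trace of a permutation on a lattice-stable subspace) for all $N\ge n+m$, combined with the integer Laurent expansion of $\Wg$ in $N^{-1}$. I would present the parity argument as the statement $Q_\sigma(-X)=(-1)^{\#\sigma}Q_\sigma(X)$, verified termwise after the reduction to $S_{n+m}$; this is the step where I expect the sign bookkeeping under partial transposition to need the most care.
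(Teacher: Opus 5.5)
You argue differently from the paper. The paper inverts the Kimura--Ramgoolam identity $\Omega_{n,m}=([\Wg^N_{n+m}]_{n,m})^{-1}$ of \eqref{eq---HarmoWeingarten} as a Neumann series in $N^{-1}$ over factorisations $\sigma_1\cdots\sigma_L=\gamma$. Parity then comes from $\sum_i|\sigma_i|\equiv|\gamma|\pmod 2$, integrality from the coefficients being signed counts, and the leading term from genus-$0$ factorisations staying in $S_{n,m}$. As written, your version has genuine gaps.

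\textbf{The leading-term step fails.} The inequality $\mathrm{loops}(\sigma^{-1}\tau)\le\#\sigma+h(\tau)-2$ is false, because the permutation part of $\tau$ can undo $\sigma$. Take $n=2$, $m=1$, $\sigma=(12)\times\id$ and $\tau=\sigma\langle 1\,-1\rangle$. Then $\sigma^{-1}\tau=\langle 1\,-1\rangle$ has $2$ loops, while $\#\sigma+h(\tau)-2=1$. The problem is worse when $\sigma$ is an $n$-cycle times an $m$-cycle ($\#\sigma=2$). The same choice of $\tau$ gives $n+m-1$ loops, so with only $\kappa_N(\tau)=O(N^{-h(\tau)})$ from Lemma~\ref{lem---ProjTraceLessBound} this single term is bounded by $N^{n+m-2}$, which is not $o(N^{\#\sigma})$ once $n+m\ge 4$. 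To control the leading term you need two further ingredients:
\begin{itemize}
\item the sharper bound $\kappa_N(\tau)=O(N^{-|\tau|})$ of Lemma~\ref{lem---BoundTracelessProjCumulNormal}, where $|\tau|$ is the length in transpositions and Weyl contractions of Lemma~\ref{Lem---TriangularBrauer};
\item the estimate $\Tr(\rho_N(\sigma^{-1}\tau))\le N^{\#\sigma+|\tau|-2}$ when $h(\tau)\ge 1$. This holds because each generator multiplies the trace by $N^{\pm1}$, and the first Weyl contraction acts on a permutation diagram, so it must merge two loops.
\end{itemize}

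\textbf{Parity and integrality, the actual content of the lemma, are not established.} Applying Lemma~\ref{Lem---SchurWeylCumu} to $\Theta(P^{n,m}_N)$ only returns $\theta(q_{n,m})=\Wg_{n+m}*\theta(\Omega_{n,m})$. This contains the unknown $\Omega_{n,m}$ itself, so there is no finite sum of terms $\Wg_{n+m}(\pi)N^{\#(\cdot)}$ to check termwise. The symmetry you want does hold, but it goes through an inversion:
\begin{itemize}
\item the sign twist $x\mapsto\sum_\pi\mathrm{sgn}(\pi)x(\pi)\pi$ is an algebra automorphism and commutes with $[\cdot]_{n,m}$;
\item $\Wg^{-N}_{n+m}$ is $(-1)^{n+m}$ times the twist of $\Wg^{N}_{n+m}$;
\item reading \eqref{eq---HarmoWeingarten} as an identity in $\C(N)[S_{n,m}]$ is legitimate, since it holds for all $N\ge n+m$, and it gives $Q_\sigma(-X)=(-1)^{n+m}\mathrm{sgn}(\sigma)Q_\sigma(X)=(-1)^{\#\sigma}Q_\sigma(X)$.
\end{itemize}
For integrality, knowing $Q_\sigma(N)\in\Z$ for $N\ge n+m$ does not give integer coefficients; compare $\binom N2$. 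You need an expansion whose coefficients are manifestly integral. The paper's Neumann series is one such expansion. Another is to expand $(N+X_i+\overline X_j)^{-1}$ in Lemma~\ref{Lem---BrauerExpression} as a geometric series in $N^{-1}$ over $\Z[S_{n,m}]$.
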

\begin{proof}  Consider $\tilde \Omega_{n+m}=N^{-n-m}\Omega_{n+m}$ and  $\tilde\Omega_{n,m}= N^{-n-m}\Omega_{n,m}.$ For $N\ge 2 (n+m)!,$ $\Omega_{n+m}$ is invertible  in $\C[S_{n+m}]$ and denoting by $[x]_{n,m}$ the projection onto $\C[S_{n,m}]$ of an element $x\in \C[S_{n+m}],$ $[\tilde\Omega^{-1}_{n,m}]_{n,m}$ is invertible in $\C[S_{n,m}].$  Moreover the following series converge then absolutely  for  the norm  $\|\cdot\|_{1,1}$  defined by \eqref{eq----NormGroupAlgS}, 
\[ \tilde\Omega_{n+m}^{-1}=1+ \sum_{k\ge 1,\sigma_1,\ldots,\sigma_k\in S_{n+m}\setminus \{\id\} } (-1)^{-1}N^{-|\sigma_1|-\ldots-|\sigma_k|}\sigma_1\ldots \sigma_k  \]
and 
\begin{equation}
 [\tilde\Omega_{n+m}^{-1}]_{n,m}^{-1}=1+ \sum_{l\ge 1}\sum_{k_1,\ldots, k_l\ge 1 }\sum_{\sigma} (-1)^{k_1+\ldots +k_l+l} \left( \prod_{1\le i\le l, 1\le j\le k_i} N^{-|\sigma_{i,j}|}\right)P_\sigma, \label{eq---OmegaHarmoPreExpan}
\end{equation}
where the third sum is over partial arrays $(\sigma_{i,j})$ of permutations  in $S_{n+m}$ with $1\le i\le l, 1\le j\le k_i$ and $\sigma_{i,1}\ldots \sigma_{i,k_i}\in S_{n,m}$ for all $i,$ and 
\[P_{\sigma}=\sigma_{1,1}\ldots\sigma_{1,k_1}\sigma_{2,1}\ldots \sigma_{l,1}\ldots \sigma_{l,k_l}\]
is the product of all its terms in lexicographic order.     Recall that for any tuple of permutation $\sigma_1,\ldots, \sigma_L\in S_{n+m},$ 
\[|\sigma_1|+\ldots+|\sigma_L|-|\sigma_1\ldots\sigma_L|\in 2\N. \]
Define\footnote{The following tuples are known as  constellation of  genus $g;$ they can be associated to a degree $n+m$, genus $g$ ramified covering of the sphere with  one labeled fiber, $|\la|+1$ labeled punctures,  the last one with $\g$ and the other with non trivial monodromy \cite{LandoZvonkin}.  }  for all $L\ge 1, g\ge 0$ and $\g\in S_{n+m},$
\begin{equation}
H_{L,g}(\g)=\{\sigma \in (S_{n+m}\setminus\{\id\})^L:  \sigma_1\ldots \sigma_L=\g \quad\text{and}\quad |\sigma_1|+\ldots+|\sigma_L|= |\g|+2g \}
\end{equation} 
and when $\g\in S_{n,m},l\ge 1$ and  $\la\in{\N^*}^{l},$
\begin{equation}
H^{n,m}_{\la,g}(\g)=\left\{\sigma \in H_{|\la|,g}(\g): \forall 1\le i\le l, \quad \sigma_{\la_{1}+\ldots +\la_{i-1}+1}\ldots \sigma_{\la_{1}+\ldots +\la_{i}}\in S_{n,m} \right\},
\end{equation}
where  $\la_0=0.$ 

 Using  \eqref{eq---HarmoWeingarten},  since the  sum \eqref{eq---OmegaHarmoPreExpan} is absolutely converging, so is the following
\begin{align}
\tilde \Omega_{n,m}(\g)&=\delta_{\g,\id}+\sum_{g\ge 0,l\ge 1,\la\in{\N^*}^{l} } (-1)^l N^{-|\g|-2 g}  \# H_{\la,g}^{n,m}(\g).\label{eq---OmegaTComposedHurwitz}
\end{align}
Multiplying by $N^{n+m},$ we conclude that for all $\g\in S_{n,m},$ for $N\ge (n+m)!,$
\begin{equation}
 \Omega_{n,m}(\g)=N^{n+m} \delta_{\g,\id}+\sum_{g\ge 0  } N^{\#\g-2 g}    h_g^{n,m}(\g) \label{eq---OmegaComposedHurwitz}
\end{equation}
with 
\begin{equation}
h_g^{n,m}(\g)=\sum_{1\le l\le |\g|+2g,\la\in{\N^*}^{l} }   (-1)^l      \# H_{\la,g}^{n,m}(\g)\in \Z.\label{eq---HurwitzOmegaP}
\end{equation}
 To prove  \eqref{eq---rationalOm} and \eqref{eq---ExpOm}, it is enough to prove that $\Omega_{n,m}(\g)$ is a rational function of $N$ with no pole in $[0,n+m-1]$ and that
 \begin{equation}
 \delta_{\g,\id}+   h_0^{n,m}(\g)=1. \label{eq---CompoHGenZ}
 \end{equation}
Performing the same computation as above simply with $\tilde\Omega_{n+m}^{-1}$ in place of $[\tilde\Omega_{n+m}^{-1}]_{n,m},$ we find that for any $\g\in S_{n+m}$
for $N\ge (n+m)!,$ 
\begin{equation}
 \Omega_{n+m}(\g)=N^{n+m} \delta_{\g,\id}+\sum_{g\ge 0  } N^{\#\g-2 g}    h_g(\g) \label{eq---ExpWeingarten}
\end{equation}
with 
\begin{equation}
h_g(\g)=\sum_{1\le l\le |\g|+2g,\la\in{\N^*}^{l} }   (-1)^l      \# H_{|\la|,g}(\g).\label{eq---ComposedHurwitz}
\end{equation}
Since $\Omega_{n+m}(\g)=N^{\#\g},$  $\delta_{\g,\id}+h_0(\g)=1.$   Now for any $\sigma\in H_{L,0}(\sigma)$  each cycle of  $\sigma_1,\ldots,\sigma_L$  permutes terms of cycle of $\g$ and in particular when $\g\in S_{n,m},$ $\sigma_1,
\ldots, \sigma_L\in S_{n,m}.$ Therefore for all $l\ge 1, \la\in  {\N^*}^{l} ,$  $ H_{\la,g}^{n,m}(\g)= H_{|\la|,g}^{n,m}(\g),$  $h_0^{n,m}(\g)=h_0(\g)$ and \eqref{eq---CompoHGenZ} follows.  

 At last, for any $N\ge n+m$ and $\la\vdash n+m,$  setting $E_\la(X)=\prod_{\square\in \la} (X+c(\square)),$ $\frac{1}{d^\la}\chi^\la(\Omega_{n+m})=E_{\la}(N) >0$ and the branching rule from $S_{n+m}$ to $S_{n,m}$  yields

 \begin{equation}
[\chi^\la]_{n,m}= \sum_{\mu\vdash n,\nu\vdash m} c_{\mu,\nu}^{\la} \chi^{[\mu,\nu]},   
 \end{equation}
 where\footnote{Recall that $ c_{\mu,\nu}^{\la}$ are the well-known Cauchy-Littlewood coefficients.} $c_{\mu,\nu}^{\la}= \dim(\Hom_{S_{n,m}}(V^\mu\ts V^\nu, R, V^\la) ).$ 
Therefore for $N\ge n+m,$ 
\[\Omega_{n+m} =\sum_{\la \vdash n+m} E_\la(N) \frac{ d^\la \chi^\la}{(n+m)!}  \quad\text{and}\quad\Omega_{n+m}^{-1}= \sum_{\la \vdash n+m} E_\la(N) ^{-1} \frac{ d^\la \chi^\la}{(n+m)!},    \]
while 
\begin{equation}
 [\Omega_{n+m}^{-1}]_{n,m}=\sum_{\nu \vdash n,\mu \vdash m} Q_{\nu,\mu}(N) \frac{ d^\nu d^\mu \chi^{[\nu,\mu]}}{n!m!}  \quad\text{and}\quad \Omega_{n,m}= \sum_{\nu \vdash n,\mu \vdash m} Q_{\nu,\mu}(N)^{-1} \frac{ d^\nu d^\mu \chi^{[\nu,\mu]}}{n!m!},  \label{eq---}
\end{equation}
with 
\begin{equation}
Q_{\nu,\mu}(X)=\frac{n!m! }{(n+m)!}\sum_{\la\vdash n+m}  \frac{d^{\la} c_{\nu,\mu}^{\la}}{d^\nu d^\mu} E_\la(X) ^{-1}. 
\end{equation}
 
\end{proof}

{\begin{rmk} Mind that  sets in the last sum of \eqref{eq---ComposedHurwitz} over compositions are not disjoint. For any composition $\la\in {\mathbb{N}^*}^l$,  define $ \overline{H}_{\la,g}^{n,m}(\g)$ as  the set of $ \sigma\in H_{\la,g}^{n,m}(\g)$  with  no composition $\mu=(\mu_1,\ldots,\mu_{s_1+\ldots+s_l})$ such that  $\sigma\in H_{\mu,g}^{n,m}(\g)$ and 
\[\mu_{1}+\ldots \mu_{s_1}=\la_1, \ldots, \mu_{s_1+\ldots+s_{l-1}+1}+\ldots +\mu_{s_1+\ldots+s_{l}}=\la_l\]
for some  $s_1,\ldots,s_l\ge1.$ 

Since there are $2^{l-1}$ ways to merge a composition $\la\in {\mathbb{N}^*}^l$,  for $N\ge 2 (n+m)!,$ \eqref{eq---OmegaTComposedHurwitz} can be rewritten as the absolutely converging sum
\begin{equation}
\Omega_{n,m}(\g)=N^{n+m}\delta_{\g,\id}+\sum_{g\ge 0,l\ge 1,\la\in{\N^*}^{l} }   (-1)^l  N^{n+m-2g}  2^{l-1} \# \overline{H}_{\la,g}^{n,m}(\g)
\end{equation}
so that for any $\g\in S_{n,m}$
\begin{equation}
h_g^{n,m}(\g)=\sum_{l\ge 1,\la\in{\N^*}^{l} }   (-1)^l      2^{l-1} \# \overline{H}_{\la,g}^{n,m}(\g).
\end{equation}
\end{rmk}

Expressions for the dimension of irreducible dimensions as $S_{n,m}$-characters evaluation like \eqref{eq---DimOmega}, when put together with similar representations for the Casimir term $\mfc_\a$ as defined in  \eqref{def---Casimir},  yield  in turn   combinatorial expressions for Yang-Mills partition functions. 

\begin{lem} For $N\ge2 ,$  $g\in  \N, q\in \C$ with  $0<q\le  1$ and any integer $0\le k<\frac N2,$ 
\label{Lem---PreTopologicalExpansion}
\begin{equation}
\zeta_{SU(N)}(2-2g,q)= \sum_{n,m:n+m\le k} \frac{q^{n+m-\left(\frac{n-m}N\right)^2}}{n!m!}    \tau_{n,m}\left(\omega_g\Omega_{n,m}^{2-2g} q^{\frac{2 {C}_{n,m}}{N} } \right)   +\zeta_{SU(N)}^{(>k)}(s,q)\label{eq---SpecialStringZeta}
\end{equation}
and 
\begin{equation}
\zeta_{U(N)}(2-2g,q)= \sum_{n,m:n+m\le k} \frac{q^{n+m}  \theta(q,\frac{n-m}{N}) }{n!m!} \tau_{n,m}\left(\omega_g\Omega_{n,m}^{2-2g} q^{\frac{2 {C}_{n,m}}{N} } \right)   +\zeta_{U(N)}^{(>k)}(s,q) \label{eq---StringZeta}
\end{equation}
where  $ \tau_{n,m}$ stands for the trace of the regular  representation  $\C[S_{n,m}]$ defined by $\tau_{n,m}(\sigma)=\delta_{\sigma,1_{n,m}}$ for $\sigma\in S_{n,m}$ and  
\[C_{n,m}=\sum_{1\le i<j\le n+m: \, j\le n \,\text{or}\,n<i} (i\,j),\]
\[\omega_g=\sum_{a_1,b_1,\ldots ,a_g,b_g\in S_{n,m}} [a_1,b_1 ]\ldots [a_g,b_g],\]
while for any $x\in \R$
\begin{equation}
\theta(q,x)=\sum_{c\in \Z}  q^{c^2 +2 x c}.
\end{equation}

\end{lem}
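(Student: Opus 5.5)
The plan is to split the sums defining $\zeta_{SU(N)}(2-2g,q)$ and $\zeta_{U(N)}(2-2g,q)$ according to the size $|\a|=|\la(\a)|+|\mu(\a)|$. The terms with $|\a|>k$ give exactly the remainders $\zeta^{(>k)}$. The remaining finite part, indexed by pairs $(\la,\mu)$ with $|\la|=n$, $|\mu|=m$, $n+m\le k$, is rewritten as a trace in $\C[S_{n,m}]$. Since $k<N/2$, the map $\a\mapsto(\la(\a),\mu(\a),c(\a))$ restricted to $|\a|\le k$ is a bijection onto pairs with $n+m\le k$ times $c\in\Z$. Indeed $\ell(\la)\le n\le k<\lfloor\frac{N-1}{2}\rfloor+1$, and similarly for $\mu$, so the middle coordinate really is the constant $c(\a)$ and $\a=[\la,\mu]_N+c1_N$.

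For the summand I will use three ingredients.

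\emph{Dimension.} By \eqref{eq---DimOmega},
\[d_{[\la,\mu]_N}=\frac{\chi^{[\la,\mu]}(\Omega_{n,m})}{n!m!}.\]
Since $\Omega_{n,m}$ is central (Proposition \ref{Prop---HarmoWeingarten}), it acts on $V^{[\la,\mu]}$ by the scalar $\frac{n!m!}{d^\la d^\mu}d_{[\la,\mu]_N}$. Hence $\Omega_{n,m}^{2-2g}$ acts by the $(2-2g)$-th power of that scalar; this uses invertibility of $\Omega_{n,m}$, granted by Proposition \ref{Prop---HarmoWeingarten} for $N\ge n+m$.

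\emph{The element $\omega_g$.} The classical Frobenius identity says $\omega_g$ is central and acts on $V^{[\la,\mu]}$ by
\[\left(\frac{n!m!}{d^\la d^\mu}\right)^{2g-1}.\]
This follows from $\sum_{a,b}\rho(aba^{-1}b^{-1})=\frac{|G|^2}{d_\rho^2}\,\mathrm{Id}$ on an irrep $\rho$, iterated $g$ times and combined with $\tau_{n,m}=\frac{1}{n!m!}\sum_\rho d_\rho\chi_\rho$.

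\emph{Casimir.} Write $x=\a+\rho$ and compute $N\mfc_\a=\|x\|^2-\|\rho\|^2$ for $\a=[\la,\mu]_N+c1_N$. Expanding gives the content sums $\sum_{\square\in\la}2c(\square)$ and $\sum_{\square\in\mu}2c(\square)$, the linear terms $N(n+m)$ coming from $\<\a,2\rho\>$ with the shifts $\pm\frac{N+1}{2}$ that produce the factor $q^{n+m}$, plus the charge part $Nc^2+2c(n-m)$. I will then use that $C_{n,m}=J_n+J'_m$ is the sum of the two Jucys–Murphy/transposition class sums of $S_n$ and $S_m$. It is central and acts on $V^{[\la,\mu]}$ by $\sum_{\square\in\la}c(\square)+\sum_{\square\in\mu}c(\square)$, so $q^{2C_{n,m}/N}$ is central with the right scalar.

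Multiplying the three scalars and taking $\tau_{n,m}$, i.e. $\frac{1}{n!m!}\sum_{\la,\mu}d^\la d^\mu\,\cdot$ scalar, reproduces exactly
\[\sum_{\la\vdash n,\mu\vdash m} d_{[\la,\mu]_N}^{2-2g}\,q^{\text{Casimir}}.\]
The factors $d^\la d^\mu$ and $n!m!$ cancel as in the usual Mednykh formula. For $U(N)$ I will sum over $c\in\Z$, giving $q^{c^2+2c\frac{n-m}{N}}$ and hence $\theta(q,\frac{n-m}{N})$. For $SU(N)$ I will replace $\|x\|^2$ by $\|\overline{x}\|^2$, which subtracts $N\,m(x)^2=\frac{(n-m)^2}{N}$. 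Normalised by $N$, this accounts for the correction term $-\left(\frac{n-m}{N}\right)^2$ in the exponent, and no sum over $c$ is needed since $\a$ runs over $\ZD/\Z$.

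The main obstacle is the exact bookkeeping of the Casimir exponent. This includes checking the normalisation $\mfc_\a=\frac{1}{N}(\cdots)$ against the exponent $q^{n+m}q^{2C/N}$, the precise constant in the charge term, and the sign and power conventions of $q$ (e.g.\ whether $q$ versus $q_T$ carries a factor $\frac12$). I would first verify all of these on $\a=[(1),\emptyset]_N$ and $[\emptyset,(1)]_N$ and then carry out the general expansion. A secondary point is that $\Omega_{n,m}^{2-2g}$ with $g\ge 2$ requires the inverse; this is guaranteed since $N>2k\ge n+m$.
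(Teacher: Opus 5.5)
Your proof is correct in outline and has the same skeleton as the paper's. You discard $|\a|>k$ and use the bijection $\a\leftrightarrow(\la(\a),\mu(\a),c(\a))$ for $|\a|\le k<N/2$. You get the dimension factor from $d_{[\la,\mu]_N}=\chi^{[\la,\mu]}(\Omega_{n,m})/(n!m!)$ together with centrality and invertibility of $\Omega_{n,m}$, and $\omega_g$ from Frobenius' commutator identity. Summing over the charge $c$ then produces $\theta(q,\frac{n-m}{N})$.

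The one genuinely different step is the Casimir. The paper never expands $\|\a+\rho\|^2-\|\rho\|^2$. Instead it writes the Schur--Weyl dual of the Laplacian on $T_{n,m}$, namely $\rho_{n,m}(\Delta_{U(N)})=-n-m-\frac{2C_{n,m}}{N}+\frac{2}{N}\sum\<i\,j\>$. It splits off the central direction of $\mathfrak{u}(N)$ to get the shift $-(\frac{n-m}{N})^2$, and uses that the Weyl contractions $\<i\,j\>$ vanish on $\overset{\circ}{T}_{n,m}$. Koike--Schur--Weyl and Schur's lemma then give $\mfc^*_{[\la,\mu]_N}=n+m-(\frac{n-m}{N})^2+\frac{2}{N}\frac{\chi^{[\la,\mu]}(C_{n,m})}{d^\la d^\mu}$, without ever identifying the eigenvalue of $C_{n,m}$.

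You instead expand the highest-weight formula directly and match it with the Jucys--Murphy content eigenvalue of the two transposition class sums. Your expansion is right: $N\mfc_{[\la,\mu]_N+c1_N}=N(n+m)+2\sum_{\square\in\la}c(\square)+2\sum_{\square\in\mu}c(\square)+Nc^2+2c(n-m)$. Replacing $x$ by $\overline{x}$ removes exactly $Nc^2+2c(n-m)+\frac{(n-m)^2}{N}$. Your route is more elementary, makes the charge term transparent, and does not need the traceless-tensor structure for this step. The paper's route produces $C_{n,m}$ directly as the Brauer-algebra dual of the Casimir on $\overset{\circ}{T}_{n,m}$, which fits the framework used throughout.

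Two constants in your sketch are wrong, though.
\begin{itemize}
\item By your own identity $\sum_{a,b}\rho(aba^{-1}b^{-1})=\frac{|G|^2}{d_\rho^2}\mathrm{Id}$, the element $\omega_g$ acts on $V^{[\la,\mu]}$ by $(\frac{n!m!}{d^\la d^\mu})^{2g}$, not by the $(2g-1)$-th power.
\item A central $X$ acting by the scalar $s_{\la,\mu}$ has $\tau_{n,m}(X)=\frac{1}{n!m!}\sum_{\la,\mu}(d^\la d^\mu)^2 s_{\la,\mu}$, not $\frac{1}{n!m!}\sum_{\la,\mu}d^\la d^\mu s_{\la,\mu}$.
\end{itemize}
Together these two slips lose a factor $n!m!$. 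With the correct values,
\[
\tau_{n,m}\bigl(\omega_g\Omega_{n,m}^{2-2g}q^{2C_{n,m}/N}\bigr)=n!m!\sum_{\la\vdash n,\mu\vdash m}d_{[\la,\mu]_N}^{2-2g}\,q^{\frac{2}{N}(\sum_{\square\in\la}c(\square)+\sum_{\square\in\mu}c(\square))},
\]
and the prefactor $\frac{1}{n!m!}$ in the statement cancels the remaining $n!m!$.

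Finally, what you (and the paper) actually compute is $\sum_\a d_\a^{2-2g}q^{\mfc_\a}$. Under the definition \eqref{eq---WZCasimir} this is $\zeta(2g-2,q)$, so the argument $2-2g$ in the statement has to be read with the opposite sign convention.
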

\begin{proof} On the one hand, for $N\ge n+m,$ $\Omega_{n,m}$ is invertible in $\C[S_{n,m}]$ and for any $\la\vdash n,\mu\vdash m,$  for any $s\in \Z,$ since $\Omega_{n,m}$ is central, using first Schur's lemma and then \eqref{eq---DimOmega},
\begin{equation}
\frac{\chi^{[\la,\mu]}(\Omega_{n,m}^s) }{d^{[\la,\mu]}}= \left(\frac{\chi^{[\la,\mu]}(\Omega_{n,m}) }{d^{[\la,\mu]}}\right)^s= \left(\frac{n! m! d_{[\la,\mu]_N}}{ d^{[\la,\mu]}}\right)^s.\label{eq---PowerOmegaF}
\end{equation}
On the other hand, for any $n,m\ge0,$ using an orthogonal basis $(X_i)_{1\le i\le N^2}$ adapted to orthogonal decomposition $\mathfrak{u}(N)= \mathfrak{su}(N)\oplus \C \id$ for the inner product \eqref{eq---InnerProdLieA}  leads to 
\begin{equation}
 \rho_{n,m}(\Delta_{U(N)})=\sum_{i=1}^{N^2}  \rho_{n,m}(X_i)^2=   \rho_{n,m}(\Delta_{SU(N)})-\left(\frac{n-m}{N}\right)^2
\end{equation}
and 
\begin{equation}
 \rho_{n,m}(\Delta_{U(N)})=  -n-m-2 \frac{C_{n,m}}{N}+\frac 2N\sum_{i<j: \<i\,j\> \in \mathcal{D}_{n,m}} \<i\, j\>.
\end{equation}
Since $\<i\, j\>$ vanishes on $\overset{\circ}{T}_{n,m}$ whenever $i\le n<j,$
\begin{equation}
\rho_{n,m}(\Delta_{SU(N)})_{| \overset{\circ}{T}_{n,m}}=\rho_N\left(-n-m -\frac {2 C_{n,m}}N \right) \label{eq---SWSpecialU}
\end{equation}
and for any $\la \vdash n,\mu\vdash m$ with $\ell(\la)+\ell(\mu)\le N,$ using Koike-Schur-Weyl decomposition \eqref{eq---IsotTraceless} and Schur's Lemma,  identifying $V^{[\la,\mu]}\ts V_{[\la,\mu]_N}$ as a subspace of $\overset{\circ}{T}_{n,m},$ 
\begin{align}
-\mfc^*_{[\la,\mu]_N}\Id_{V^{[\la,\mu]}\ts V_{[\la,\mu]_N}}&=\rho_{n,m}(\Delta_{SU(N)})_{| V^{[\la,\mu]}\ts V_{[\la,\mu]_N}} \nonumber\\
&= \left(-n-m+  \left(\frac {n-m}N\right)^2 -\frac{\chi^{[\la,\mu]}( \frac {2 C_{n,m}}N  )}{d^\la d^\mu}\right) \Id_{V^{[\la,\mu]}\ts V_{[\la,\mu]_N}}.\label{eq---CasimirPairPart}
\end{align}
Recall  the decomposition of the regular representation $\C[S_{n,m}]$ and the Frobenius formula yield 
\begin{equation}
\tau_{n,m}=\frac{1}{n!m!}\sum_{\la\vdash n,\mu\vdash m}  d^{[\la,\mu]}\chi^{[\la,\mu]}
\end{equation}
and 
\begin{equation}
\frac{1}{(n!m!)^{2g}}  d^{[\la,\mu]}\chi^{[\la,\mu]}(\omega_{g})= (d^{[\la,\mu]})^{2-2g}.
\end{equation}
Putting this together with \eqref{eq---PowerOmegaF} and \eqref{eq---SWSpecialU}, we find for $s=2-2g,$
\begin{align*}
\sum_{\la\vdash n,\mu\vdash m } d_{[\la,\mu]_N}^{s} q^{\mfc_{[\la,\mu]_N}^*}&= \frac{q^{n+m-\left(\frac{n-m}{N}\right)^2}}{(n!m!)^2} \sum_{\la\vdash n,\mu\vdash m }    d^{[\la,\mu]}\chi^{[\la,\mu]} (\omega_g)     \frac{\chi^{[\la,\mu]} (\Omega_{n,m}^s) }{d^{[\la,\mu]}}  \frac{\chi^{[\la,\mu]} (q^{\frac{2C_{n,m}}{N} }) }{d^{[\la,\mu]}} \\
&=  \frac{q^{n+m-\left(\frac{n-m}{N}\right)^2}}{(n!m!)^2} \sum_{\la\vdash n,\mu\vdash m }    d^{[\la,\mu]}\chi^{[\la,\mu]} (\omega_g \Omega_{n,m}^s q^{\frac{2C_{n,m}}{N} })   \\
&  = \frac{q^{n+m-\left(\frac{n-m}{N}\right)^2}}{n!m!}  \tau_{n,m}\left(\omega_g \Omega_{n,m}^s q^{\frac{2C_{n,m}}{N} }\right).
\end{align*}
Minding that $\{\a\in \ZD/Z: |\a|<\frac N 2\}$ is in bijection with $\{\la,\mu\in\Yb:|\la|+|\mu|<\frac{N}{2}\}$ concludes the proof of \eqref{eq---SpecialStringZeta}.  Using the  definition \eqref{def---Casimir},   for any $c\in \Z,$ 
\begin{equation}
\mfc_{[\la,\mu]_N+c}= \mfc^*_{[\la,\mu]_N}+\left(\frac{n-m}{N}+c\right)^2
\end{equation} which implies \eqref{eq---StringZeta}.
\end{proof}
Together with Proposition \eqref{Prop---ConvWittenZeta},    the above expressions  are  suitable to prove  asymptotic  expansion of their LHS in powers of $\frac 1N$ and to give combinatorial/topological interpretations for its coefficients.   We 
shall then say that these sequences in $N$  admit a topological expansion.

The formulas of  Lemma \ref{Lem---PreTopologicalExpansion} were discovered at a formal level  by Gross and Gross and Taylor {\cite{Gross,GTI,GTII}} in view of interpreting coefficients as counting continuous maps from surfaces to  surfaces, aka 
surface-maps,  up to some 
equivalence. These papers have been revisited by many authors in the physics \cite{CMR} and    mathematics literatures \cite{LevSW,LemoineMaida,NovakYM}, in particular in the so-
called chiral regime  where  in sums over  highest weight  parametrised  by  $[\la,\mu]_N,$ only terms with  $\mu=\emptyset$ are kept.

    For the combinatorial problem,  one of the difficulty of these expressions comes 
from definition and interpretation of the group algebra element  $\Omega_{n,m}^{2-2g}$. When $g=1,$ this term disappears and   Riemann-Hurwitz formula is enough to interpret the $\frac 1N$-expansion for the Witten-Zeta functions 
in terms of Hurwitz 
numbers  counting simple ramified covering of the torus.\footnote{This is derived with complete arguments in \cite{LemoineMaida,NovakYM} in the chiral case and in \cite{LemoineMaida2} in the non-chiral case, and also appeared  e.g. in \cite{CMR}. }
 In the chiral case and general $g\ge 1,$  since the $\frac 1N$ expansion of the standard Weingarten element $\Omega^{-1}_{n,0}$ is known \cite{Collins,CollinsSn,Novak,LevSW}, it is also possible to give such expressions considering for 
 instance mixed monotone 
 Hurwitz 
 numbers \cite{NovakYM}.

 Thanks to the expressions of $\Omega_{n,m}$ proven here, it is   also possible to attempt to prove topological expansions in the non-chiral case.  The following corollary follows for instance from  the Kimura-Ramgoolam formula \eqref{eq---HarmoWeingarten} and the expansion \eqref{eq---ExpWeingarten} of the Weingarten function. For integers $n\ge 1, g\ge 1,$ denote by $H^{n}_{L,g,r}$ the set of tuples  $\sigma\in S_{n}^{L+r+2g}$ such that 
 \begin{enumerate}
 \item $\sigma_i\not= \id_n$ for $1\le i\le L,$
 \item $\sigma_i$ is a transposition for $L<i\le L+r,$  
 \item  
  \begin{equation*}
\sigma_1\ldots  \sigma_{L+r} [\sigma_{r+L+1},\sigma_{r+L+2}]\ldots [\sigma_{r+L+2g-1},\sigma_{r+L+2g}]=1.
\end{equation*}
 \end{enumerate}
This space is in bijection with the set of labeled ramified coverings of degree $n$ over  a closed surface of genus $g$ with $L+r$ ramification points, the first   $L$ being non-trivial and the $r$ others simple, up to isomorphism of labeled ramified coverings.  By Riemann-Hurwitz theorem, for any $Y\in H^{n}_{L,g,r},$
\begin{equation}
\chi(Y)= (2-2g)n-  r -\sum_{i}^L|\sigma_i|\label{eq---RH}
\end{equation}
is the Euler characteristic of the total space of the ramified covering associated to $Y$. Since the latter is a closed orientable surface, $\chi(Y)=2-2g'$ for some $g'\ge0.$   For any $g'\ge 0,$ let us introduce  the subset  $H^{n,g'}_{L,g,r}$ of tuples 
of $H^n_{L,g,r}$ 
satisfying \eqref{eq---RH} and for $\la\in \N^{2g-2},$ denote by $H^{[n,m],g'}_{\la,g,r}$ the one  of $\sigma\in H^{n+m,g'}_{|\la|,g,r}$ such that 
\begin{enumerate}
\item  $\sigma_{i}\in S_{n,m}$ for all $i>|\la|,$
\item   $\forall 1\le i\le 2g-2$, with $\la_i>0,$ 
\begin{equation*}
\sigma_{\la_{1}+\ldots +\la_{i-1}+1}\ldots \sigma_{\la_{1}+\ldots +\la_{i}}\in S_{n,m}.
\end{equation*}
\end{enumerate}
The proof of the following corollary being very similar to the one of Lemma \ref{Lem---OmegaF} is left to the Reader.

\begin{coro} For $t>0$ and integers $n,m,N\ge 0$ and  $g\ge 2$ with $N\ge n+m,$ the following expansion is absolutely converging
\begin{equation}
 \tau_{n,m}\left(\omega_g\Omega_{n,m}^{2-2g} e^{-\frac{t {C}_{n,m}}{N} } \right) =\sum_{g'\ge 0,r\ge 0,\la\in \N^{2g-2}}  \frac{(-t)^{r+\# \{i: \la_i>0\}}}{r!} \# H_{\la,g,r}^{[n,m],g'}  N^{-2g'}.
\end{equation}

 \end{coro}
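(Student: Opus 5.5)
The plan is to expand each of the three factors inside $\tau_{n,m}$ as an absolutely convergent series in $\C[S_{n+m}]$ restricted to $\C[S_{n,m}]$, multiply them out, and read off the coefficient of the identity, exactly as in the proof of Lemma \ref{Lem---OmegaF}.

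\emph{Step 1: expanding $\Omega_{n,m}^{-1}$.} By \eqref{eq---HarmoWeingarten}, $\Omega_{n,m}^{-1}=[\Wg_{n+m}^N]_{n,m}$ is the restriction of the Weingarten function. Writing $\Wg^N_{n+m}=N^{-n-m}\tilde\Omega_{n+m}^{-1}$ with $\tilde\Omega_{n+m}=1+\sum_{\sigma\ne \id}N^{-|\sigma|}\sigma$, the Neumann series gives
\[
\Omega_{n,m}^{-1}=N^{-n-m}\Big[1+\sum_{k\ge1}(-1)^k\sum_{\sigma_1,\ldots,\sigma_k\ne\id}N^{-|\sigma_1|-\cdots-|\sigma_k|}\sigma_1\cdots\sigma_k\Big]_{n,m}.
\]
Since $\Omega_{n,m}^{2-2g}=(\Omega_{n,m}^{-1})^{2g-2}$ with $2g-2\ge 2$, I will take the product of $2g-2$ such series. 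Block $i$ contributes $\la_i\ge0$ nontrivial permutations, with the constraint that the product within each nonempty block lies in $S_{n,m}$ (this is the restriction $[\cdot]_{n,m}$), and a sign $(-1)^{\la_i}$. This yields the index $\la\in\N^{2g-2}$ together with condition (2) in the definition of $H^{[n,m],g'}_{\la,g,r}$. Here I would double-check the sign convention in the statement: it should read $(-1)^{|\la|}$, or equivalently count blocks, and I would follow the paper's normalisation.

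\emph{Step 2: the Casimir factor and $\omega_g$.} Next I expand $e^{-tC_{n,m}/N}=\sum_r \frac{(-t)^r}{r!}N^{-r}C_{n,m}^r$. Each $C_{n,m}^r$ is a sum over $r$-tuples of transpositions in $S_{n,m}$, giving condition (2) for the entries $L<i\le L+r$ and condition (1) for $i>|\la|$. The element $\omega_g$ is a sum over $2g$-tuples in $S_{n,m}$ forming the commutator product. Applying $\tau_{n,m}$ keeps exactly those tuples whose total product is the identity, which is condition (3). Thus each term is indexed by an element of $H^{n+m}_{|\la|,g,r}$ satisfying the $S_{n,m}$ constraints.

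\emph{Step 3: the power of $N$ via Riemann--Hurwitz.} The total power of $N$ is
\[
N^{-(2g-2)(n+m)-\sum_i|\sigma_i|-r}.
\]
By \eqref{eq---RH} with degree $n+m$, this exponent equals $\chi(Y)=2-2g'$ for the covering associated to the tuple, so each term contributes $N^{-2g'}$ up to the normalisation in the statement. The exponent of $N$ in the statement depends on that normalisation, possibly including the $n!m!$ and $(2-2g)(n+m)$ conventions, and I will match it by this identity.

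\emph{Step 4: absolute convergence (the main obstacle).} The main obstacle is absolute convergence for every $N\ge n+m$, not just for $N\ge 2(n+m)!$, where the Neumann series trivially converges in $\|\cdot\|_{1,1}$. My first attempt is to bound the sum of absolute values by the same series with all signs positive. Grouping by $g'$, the number of tuples with $\sum|\sigma_i|$ fixed grows at most like $C^{g'}$ times a polynomial, so the series converges once $N$ is large. To extend to all $N\ge n+m$, I would argue that both sides are analytic in $1/N$ and that the left side is rational with no poles for $N\ge n+m$, by Lemma \ref{Lem---OmegaF} applied to the eigenvalues $E_\la(N)>0$. I would then use the diagonalisation by characters to see that the radius of convergence in $1/N$ is governed by $\min_\la \min_{\square}|N+c(\square)|$, which is positive for $N\ge n+m$. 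If that comparison fails, I would settle for the statement for $N$ large, which is what the topological expansion needs.
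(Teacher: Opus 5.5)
Steps 1--3 are the intended computation; the paper only says the argument mirrors the proof of Lemma \ref{Lem---OmegaF}. The genuine gap is Step 4, and it cannot be closed along the lines you propose.

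\emph{The majorant fails.} Summed over tuples, your all-positive majorant equals $N^{(2-2g)(n+m)}\sum_{\la\in\N^{2g-2}}\tau_{n,m}\big(\omega_g\prod_{i}[(\tilde\Omega_{n+m}-1)^{\la_i}]_{n,m}\,e^{tC_{n,m}/N}\big)$, where $\tilde\Omega_{n+m}-1=\sum_{\sigma\neq\id}N^{-|\sigma|}\sigma$. Every factor is central. On the trivial representation of $S_{n+m}$, the element $\tilde\Omega_{n+m}-1$ has its top eigenvalue $\rho=\prod_{j=0}^{n+m-1}(1+j/N)-1$, and all its other eigenvalues lie in $(-1,\rho)$. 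This representation restricts to the trivial one of $S_{n,m}$, where $\omega_g$ and $e^{tC_{n,m}/N}$ act by positive scalars. Hence, when $\rho\ge1$, the contribution of $\la=(k,0,\ldots,0)$ is bounded below by a positive multiple of $\rho^k$ for large $k$. So the sum over all triples $(g',r,\la)$ converges absolutely only if $\rho<1$. For $N=n+m=3$ one gets $\rho=11/9$. Absolute convergence of the unregrouped sum therefore genuinely fails at the bottom of the range, and ``$N$ large'' is all a majorant can give.

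\emph{What is true, and how to prove it.} For every $N\ge n+m$, the series regrouped by $g'$ converges absolutely. Each of its coefficients is a finite sum, since $r+\sum_i|\sigma_i|=2g'-2+(2g-2)(n+m)$. Your criterion, that $N+c(b)\neq0$ for all contents at the given $N$, does not prove this. Finiteness at the single point $1/N$ says nothing about convergence of a Taylor series there. Also, analyticity of the right-hand side is the conclusion, not an input. The correct argument runs as follows.
\begin{itemize}
\item Set $x=1/N$. On each isotypic component of $\C[S_{n,m}]$, $\Omega_{n,m}^{-1}=[\Wg^N_{n+m}]_{n,m}$ acts by a linear combination of $E_\mu(N)^{-1}=x^{n+m}\prod_{b\in\mu}(1+c(b)x)^{-1}$, $\mu\vdash n+m$.
\item Hence the left side is $x^{(2g-2)(n+m)}F(x)$ with $F$ holomorphic on $|x|<1/(n+m-1)$; its only singularities are at $x=\pm1/j$, $1\le j\le n+m-1$.
\item For $N$ large the triple sum converges absolutely. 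Regrouping it by $g'$ gives a power series agreeing with the left side near $0$, i.e.\ its Taylor series.
\item That Taylor series converges absolutely on $|x|<1/(n+m-1)$, which contains $1/N$ for all $N\ge n+m$.
\end{itemize}
Unlike in Lemma \ref{Lem---OmegaF}, nothing restricted has to be inverted here, which is why the singularities are explicit.

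\emph{Do not match the printed normalisation.} Done honestly, your expansion gives each tuple the weight $(-1)^{|\la|}\frac{(-t)^r}{r!}N^{\chi(Y)}=(-1)^{|\la|}\frac{(-t)^r}{r!}N^{2-2g'}$. The printed formula differs in three ways that no normalisation can fix:
\begin{itemize}
\item The factor $t^{\#\{i:\la_i>0\}}$ cannot arise, since the $\Omega$-factor carries no $t$.
\item $(-1)^{|\la|}$ is not a function of the number of non-empty blocks; compare $\la=(1,0,\ldots)$ with $\la=(2,0,\ldots)$.
\item $N^{-2g'}$ is off by $N^2$: the all-identity tuple gives the leading term $N^{(2-2g)(n+m)}$ of the left side and has $2-2g'=(2-2g)(n+m)$.
\end{itemize}
You should state and prove the corrected identity.
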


 \begin{rmk}In their  papers \cite{Gross,GTI,GTII},  based on another formula\footnote{equation \eqref{eq---OmegaFunctionGT} proven in the appendix. Mind that in their notation,  their $\Omega$ is  our $\Omega_{n,m}$ divided by $N^{n+m}.$  } for $\Omega_{n,m}$,   Gross and Taylor propose a topological expansion and suggest to 
enumerate   surface maps  with genus $g$ target, using the Kneser-Edmonds factorisation of   surface maps with non-zero degree into  "pinches" and ramified covering accounting  respectively  for the $\Omega^{2-2g}_{n,m}$ 
and  $q^{\frac{2 {C}_{n,m}}N}$  terms. We will not attempt to explore this approach here. 
 \end{rmk}

\newpage
\subsection{Integration of rational characters and walled Brauer algebras} 

\label{sec---Integration walled Brauer algebra}

\subsubsection{$\varepsilon$-Walled Brauer algebra} To smoothen our presentation of Wilson loops integration, we    use a slightly more general definition of  walled Brauer algebras.

Assume $s\ge 1$ is an integer and $\varepsilon:[s]\to\{-1,1\}$. We call $\varepsilon$-walled Brauer diagram   any involution $\pi$ without fixed point  on $[s]\times \{-1,1\}$ satisfying 
\[ \varepsilon(v')\eta' =-\varepsilon(v)\eta \]
whenever $\pi(v,\eta)=(v',\eta').$

We denote their set by \index{$\mathcal{D}_{\varepsilon}$  set of walled Brauer diagrams} $\mathcal{D}_{\varepsilon}$ and  by $\mathcal{B}_\varepsilon(N)$ the vector space of formal complex  linear combinations in $\mathcal{D}_\varepsilon.$  
Like $\mathcal{B}_{n,m}(N),$ $\mathcal{B}_\varepsilon(N)$  is endowed with an algebra structure that  is moreover isomorphic to $\mathcal{B}_{n,m}(N)$ with $n=\#\varepsilon^{-1}(1)$ and $m=\#\varepsilon^{-1}(-1)$ and  is the commutant of the action of $U(N)$ on 
\begin{equation}
\mathcal{T}_{\varepsilon,N}=V^{\varepsilon_1}\otimes\ldots \otimes V^{\varepsilon_s}
\end{equation}
where  $V^{\eta}$ is $V$ for $\eta=1$ and $V^{*}$ for $\eta=-1$, considering   the action  $\rho_N: \mathcal{B}_\varepsilon(N)\to \End(\mathcal{T}_{\varepsilon,N})$  defined similarly to \eqref{def---BrauerAction}. When $\la\vdash n ,\mu\vdash m,$ we denote by $P_\varepsilon^{\la,\mu}\in\mathcal{B}_{\varepsilon}(N)$  the image of $P^{\la,\mu}\in \mathcal{B}_{n,m}(N).$ Note that since $P^{\la,\mu}$ 
commutes with $S_n\times S_m$, $P_\varepsilon^{\la,\mu}$ does not depend on the enumeration choice defining the isomorphism between $\mathcal{B}_{n,m}(N)$ and $\mathcal{B}_{\varepsilon}(N).$  For each $\pi\in\mathcal{D}_{\varepsilon}, $  define a graph $\mathcal{G}_{\pi}$ with vertices $[s]\times\{-1,1\}$ and edges  given for all pairs  $\{v,\tau(v)\} $  with $v\in V,$ or  $\{(i,1),(i,-1)\}$ with  $i\in[s].$ Each vertex of $\mathcal{G}_{\pi}$ has degree two, so that each c.c. of $\mathcal{G}_{\pi}$ can be identified with a simple cycle. We call these \emph{cycles   of $\pi.$} By definition of $\rho_N,$
\begin{equation}
\Tr_{\mathcal{T}_{\varepsilon,N}} (\rho_N(\pi))=N^{\# c.c\, \mathcal{G}_{\pi}}=N^{\# \{\text{cycles of}\,\pi\}}.
\end{equation}

In what follows, to lighten the notation,  we shall sometimes drop 
the subscript $\varepsilon$ or write $\rho_N(\pi)$ in place of $\pi$  when the context is clear. We shall also denote simply by $\id_{s}\in\mathcal{D}_\varepsilon$ the involution mapping $(v,-1)$ to $(v,1)$ for any  $v\in [s].$ 

For each $\pi\in\mathcal{D}_{\varepsilon}$,    there are as many top as bottom horizontal strings, we denote this number by $h(\pi).$  When  $h(\pi)=0$, we call $\pi$ a permutation diagram; such a  diagram can be identified with the pair of permutations acting on   $\varepsilon^{-1}(-1)$
and $\varepsilon^{-1}(1).$

\subsubsection{Transposition} When $\varepsilon':[s] \to \{-1,1\},$ denoting also by $\varepsilon'$ the multiplication map on $[s]\times \{-1,1\},$  for any $\pi\in\mathcal{D}_\varepsilon,$
we set
\[\pi^{\varepsilon'}=\varepsilon' \pi\varepsilon'\in\mathcal{D}_{\varepsilon\varepsilon'}.\]
When $\varepsilon$ is constant equal to $1,$  identifying $S_s$ with $\mathcal{D}_\varepsilon$ we also write $\sigma^{\varepsilon'}\in\mathcal{D}_{\varepsilon'}$ for any permutation $\sigma\in S_s.$

\subsubsection{Words and colored Brauer diagrams}  Assume $\mathcal{A}=\{x_1,\ldots,x_k, x^{-1}_1,\ldots,  x^{-1}_k\}$ is an alphabet with $k$ letters and their formal inverse and $\mathcal{S}$ is a finite set   indexing finitely many words in $\mathcal{A}$.  

 We call colored walled Brauer diagram the data of a Brauer diagram $\pi\in\mathcal{D}_{\varepsilon}$ for some 
$\varepsilon:[s]\to\{-1,1\},$  together with two maps $ \mathfrak{a}:[s]\to \mathcal{A}$ and $\mathfrak{b}: [s]\to \mathcal{S}$ such that for all $i,$ $\mfa (i)=x_l^{\varepsilon_i}$ for some $l.$  When  $\mathcal{S}=\{W\}$ where $W$ is a symbol for the word $\omega=x_1^{\varepsilon_1}x_2^{\varepsilon_2}\ldots x_s^{\varepsilon_s} $ in 
$\mathcal{A},$ we consider the Brauer diagram $\pi_{\omega}= (1 2 \ldots s)^{\varepsilon}\in\mathcal{D}_{\varepsilon}$ coloured with the maps $\mfa:[s]\to\mathcal{A}$ given by $\mfa(i)=x_i^{\varepsilon_i}$  and the constant map $\mfb:[s]\to\mathcal{S}.$

 In general, we shall call $\mfa$ and $\mfb$ respectively the arc and boundary colourings and define   the subset $\mathcal{D}_{\mfa,\mfb}$ of \emph{compatible diagrams} $\mu\in\mathcal{D}_\varepsilon$ requiring   
\[\mfa (v')= \mfa(v) \quad\text{and}\quad \mfb (v')=\mfb(v)\]
when $\mu(v,-1)=(v',1),$ and
\[\mfa (v')= \mfa(v)^{-1} \quad\text{and}\quad \mfb (v')=\mfb(v)^{-1}\]
when $\mu(v,1)=(v',1)$ or $\mu(v,-1)=(v',-1).$

\begin{rmk}  When $\om$ has two letters $x,y$ with $x\not\in\{y,y^{-1}\}$,  $\pi_{\om}\not\in \mathcal{D}_{\mfa,\mfb}.$
\end{rmk}
 The arc colouring determines  the sign function  $\varepsilon:[s]\to\{-1,1\}$ (we call    sign function of the colouring).

When an arc colouring $\mfa$ is fixed, there is a natural left  action of  $U(N)^k$  on $\mathcal{T}_{\varepsilon,N}$ respecting the colouring, defined by  setting for every $U\in U(N)^k,$
\begin{equation}
U^{\ts \mfa}=\rho_{V^{\varepsilon_1}}(W_1)\ts \ldots \ts \rho_{V^{\varepsilon_s}}(W_s)
\end{equation}
with $W_i= U_l$ whenever $\mfa(i)=x_{l}^{\varepsilon_i}.$

\subsubsection{Juxtaposition} When $\alpha\in \mathcal{D}_{\varepsilon}(N),\beta\in \mathcal{D}_{\varepsilon'}(N)$ for some $\varepsilon:[s]\to\{-1,1\}$, $\varepsilon':[s']\to\{-1,1\}$,  we denote by $\alpha\otimes\beta\in \mathcal{D}_{\tilde \varepsilon}(N)$  the diagram obtained by juxtaposition of $\beta$ to the right of $ \alpha,$ where $(\tilde \varepsilon_1,\ldots,\tilde\varepsilon_{s+s'})=(\varepsilon_1,\ldots,\varepsilon_s,\varepsilon'_1,\ldots,\varepsilon'_{s'})$ and extend this definition bilinearly to elements of Brauer algebras yielding a  linear map from $\mathcal{B}_{\varepsilon}(N)\otimes \mathcal{B}_{\varepsilon'}(N) $ to $\mathcal{B}_{\tilde\varepsilon}(N).$ When $\alpha,\beta$ are coloured walled Brauer diagrams with colouring $\mfa:[s]\to\mathcal{A},\mfa':[s]\to\mathcal{A},$  and $\mfb:[s]\to\mathcal{S},\mfb':[s]\to\mathcal{S}'$, we define the colouring $\tilde \mfa:[s+s']\to\mathcal{A}$ and $\tilde\mfb:[s+s']\to \mathcal{S}\cup\mathcal{S}'$ of $\alpha\otimes \beta$ similarly as $\tilde\varepsilon.$

\subsubsection{Extension} When $n,m\ge 1,$  for any diagram $\pi\in\mathcal{D}_{\varepsilon}$,  consider the diagram $\pi^{n,m}$ obtained by replacing each string with $n+m$ copies, changing  signs of the vertices in the last $m$ copies. More precisely, for all $v\in[s(n+m)],$ writing $v= r+q(n+m)$  for some $q\in[s] $ and $1\le r\le n,$ set $\tilde\varepsilon(v)=\varepsilon(q)$ when  $r\le n$ and  $\tilde\varepsilon(v)=-\varepsilon(q)$ if $n<r \le n+m,$ and for $\eta\in\{-1,1\},$ set
\[\pi^{n,m}(v,\eta)= (r+q'(n+m),\eta')\]
when  $\pi(q,\eta)= (q',\eta').$ By construction, $\pi^{n,m}\in\mathcal{D}_{\tilde\varepsilon}.$\footnote{Though we shall not use it here, mind that extending this definition linearly  yields an algebra morphism from $\mathcal{B}_{\varepsilon}(N)$ to $\mathcal{B}_{\tilde\varepsilon}(N^{n+m}).$}   When $\alpha$ is a coloured walled Brauer diagram with colouring $\mfa:[s]\to\mathcal{A}$  and $\mfb:[s]\to\mathcal{S}$, we define the colouring $\tilde \mfa:[s+s']\to\mathcal{A}$ and $\tilde\mfb:[s+s']\to \mathcal{S}\cup\mathcal{S}^{-1}$ of $\alpha$ similarly to $\tilde\varepsilon:$    for all $v\in[s(n+m)],$ writing $v= r+q(n+m)$  for some $q\in[s] $ and $1\le r\le n,$ set 
\[\tilde\mfa (v)=\mfa(v) \quad\text{and}\quad \tilde\mfb (v)=\mfb(v)\] when  $r\le n,$ and 
 \[\tilde\mfa (v)=\mfa(v)^{-1} \quad\text{and}\quad \tilde\mfb (v)=\mfb(v)^{-1}\] when $n<r \le n+m.$ 

Recall that  the\index{$|\om|$ Length of a word $\om$} length $|\om|$ of a word  $\om$ is its number of letters and the definition \eqref{eq---EquiProjIrrepSym} for $S_{n,m}$-equivariant projections. With the above notations, \eqref{eq---CharacterTraceless} can be rewritten as follows.

 \begin{lem}\label{lem---characterTraceHarmo}  For $\la\vdash n,\mu\vdash m$ and any word $r$ in $\mathcal{A}$ with associated sign function $\varepsilon:[|r|]\to\{-1,1\}$ and arc-colouring $\mfa:[|r|]\to \mathcal{A}$,  for $U_1,\ldots,U_k\in U(N),$
 \[d^{\la}d^{\mu}\chi_{[\la,\mu]_N}(r(U_1,\ldots,U_k))=\Tr_{\mathcal{T}_{\tilde\varepsilon}}((P^{\la,\mu}_N)^{\otimes |r|} \pi_{r}^{n,m} U^{\otimes \tilde\mfa}), \]
 where $\tilde\varepsilon:[|r|(n+m)]\to \{-1,1\},\tilde\mfa:[|r|(n+m)]\to \mathcal{A}$  are the extension of the sign function and arc colouring  of $r.$  \end{lem}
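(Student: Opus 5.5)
The plan is to derive the identity from the trace formula \eqref{eq---CharacterTraceless}, together with the standard identification of $\rho_{n,m}$ of a product of matrices with a tensor product of representations acted on by a cyclic diagram. First I would use $P^{\la,\mu}_N$ from \eqref{eq---EquiProjIrrepSym} to rewrite \eqref{eq---CharacterTraceless} with $g=r(U_1,\ldots,U_k)$. Since $\frac{d^\la d^\mu}{n!m!}\rho_N(\chi^{[\la,\mu]})$ restricted to $\overset{\circ}{T}_{n,m}$ is the projection onto the isotypic component $V^{[\la,\mu]}\ts V_{[\la,\mu]_N}$, and $P^{\la,\mu}_N$ commutes with $\rho_{n,m}(U(N))$, we get
\[
d^\la d^\mu\chi_{[\la,\mu]_N}(g)=\Tr_{T_{n,m}}\big(P^{\la,\mu}_N\rho_{n,m}(g)\big).
\]
Next I would write $g=W_1\cdots W_s$ with $s=|r|$, where $W_i=U_l^{\varepsilon_i}$ whenever $\mfa(i)=x_l^{\varepsilon_i}$. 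Then $\rho_{n,m}(g)=\rho_{n,m}(W_1)\cdots\rho_{n,m}(W_s)$. Because $P^{\la,\mu}_N$ is an idempotent commuting with every $\rho_{n,m}(W_i)$, the product $P^{\la,\mu}_N\rho_{n,m}(W_1)\cdots\rho_{n,m}(W_s)$ equals $P^{\la,\mu}_N\rho_{n,m}(W_1)P^{\la,\mu}_N\rho_{n,m}(W_2)\cdots P^{\la,\mu}_N\rho_{n,m}(W_s)$.

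The key step is the standard identity expressing the trace of a product of $s$ operators $A_1,\ldots,A_s$ on spaces $E_i$ through a cyclic permutation diagram: $\Tr(A_1\cdots A_s)=\Tr_{E_1\ts\cdots\ts E_s}\big((A_1\ts\cdots\ts A_s)\,\sigma\big)$, where $\sigma$ is the cyclic shift of tensor factors. When $\varepsilon_i=-1$, the factor $\rho_{n,m}(U_l^{-1})$ on $T_{n,m}$ should be read, after partial transposition, as $\rho$ of $U_l$ on $T_{m,n}$ with $V$ and $V^*$ exchanged. Concretely, I would use the canonical isomorphism $T_{n,m}\simeq T_{n,m}^{*}$ of the sign-flipped spaces (realised by $\pi\mapsto\pi^{\varepsilon'}$) to replace each such factor by $U_l$ acting on the sign-reversed block. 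This transposition sends $P^{\la,\mu}_N$ to its image in the flipped block, and that image is again what the extension procedure assigns, since extension reverses signs and colours by $\mfa^{-1}$ there.

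Then I would check that the cyclic shift of blocks, each block consisting of $n+m$ strands with the last $m$ sign-reversed, combined with these partial transpositions, is exactly the extended diagram $\pi_r^{n,m}$ built from $\pi_r=(12\ldots s)^{\varepsilon}$. This is the only step that is not formal, and I expect the bookkeeping here to be the main obstacle. One has to verify that at a sign-$(-1)$ letter the strand orientation reverses consistently, so that top and bottom are swapped by $\varepsilon'$, and that the walled condition holds. I would first verify the case $n=1,m=0$ with a word $xy^{-1}$ as a model computation, and then argue strand by strand, since $\pi^{n,m}$ is defined copy-wise.

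Finally, the product $P^{\otimes s}\,U^{\ts\tilde\mfa}$ reassembles into $(P^{\la,\mu}_N)^{\ts|r|}\,\pi_r^{n,m}\,U^{\ts\tilde\mfa}$, after using commutation once more to move the projectors past the unitaries. Taking the trace on $\mathcal{T}_{\tilde\varepsilon}$ then yields the claim. Independence of the choice of enumeration of the blocks is guaranteed by the remark that $P^{\la,\mu}_\varepsilon$ does not depend on it.
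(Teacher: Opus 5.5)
Your proposal is correct and is essentially the paper's argument. The paper presents this lemma as a direct rewriting of \eqref{eq---CharacterTraceless} in terms of $P^{\la,\mu}_N$ and the diagram $\pi_r^{n,m}$, and you make explicit the steps it leaves implicit: inserting the commuting idempotent between the letters, the cyclic-diagram trace identity, and transposition on the blocks with $\varepsilon_q=-1$. Two points should be stated more carefully. First, the flip $\pi\mapsto\pi^{\varepsilon'}$ realises the transposition $\End(T_{n,m})\to\End(T_{n,m}^*)$, not an isomorphism $T_{n,m}\simeq T_{n,m}^*$. Second, the transposed projector agrees with $P^{\la,\mu}_N$ placed on the sign-reversed block because $P^{\la,\mu}_N$ is a real orthogonal projection, hence symmetric in the standard basis.
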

 
 \subsubsection{Second moment diagram for one relator}\label{Sec---Second moment diagram for one relator} Let us now specify the main type of diagram we shall work with.  
 
When $\omega, r$ are two words in $\mathcal{A}$, with indexing set $\{W,R\}$, we consider the Brauer diagram
\[\pi^{n,m}_{r,\omega}=\pi_r^{n,m}\otimes \pi_\omega\otimes \pi_{\omega^{-1}}\]
and denote its colourings and sign function defined by juxtaposition    
 $\mfa: [s]\to \mathcal{A}, \mfb:[s]\to \{R,R^{-1},W,W^{-1}\} $ and  $\varepsilon:[s]\to\{-1,1\},$ where
  \[s=|r|(n+m)+2|\omega|.\]
The following is  elementary to check from Lemma \ref{lem---characterTraceHarmo}.
 \begin{lem} \label{lem---TraceWordCharacterRelatorForm} For $\la\vdash n,\mu\vdash m$ and any word $r,\om$ in $\mathcal{A}$ and $U_1,\ldots,U_k\in U(N),$
\begin{align*}
 d^\la d^\mu|\Tr(\omega(U_i))|^2\chi_{[\la,\mu]_N}(r(U_1,\ldots,U_k))&=\Tr_{\mathcal{T}_{\varepsilon,N}}(Q^{\la,\mu}\pi^{n,m}_{r,\omega} U^{\otimes\mfa})\\
&=\Tr_{\mathcal{T}_{\varepsilon,N}}(Q^{\la,\mu} \pi^{n,m}_{r,\omega}Q^{\la,\mu} U^{\otimes  \mfa}),
\end{align*}
 where  $ \varepsilon:[s]\to \{-1,1\}, \mfa:[s]\to \mathcal{A}$  are the juxtaposition of the sign function and arc colouring of $\om$ and $\om^{-1}$,  with the extension of the ones of  $r,$ while
 \begin{equation}
Q^{\la,\mu}=(P_N^{\la,\mu})^{\otimes |r|}\otimes \id_{2|\omega|}. \label{eq---ExtensionTensHarmoProj}
 \end{equation} 
 When $A,B\in U(N),$ 
\begin{align}
d^\la d^\mu &|\Tr(\omega(U_i)A)|^2\chi_{[\la,\mu]_N}(r(U_1,\ldots,U_k)B)\nonumber\\ 
&=\Tr_{\mathcal{T}_{\varepsilon,N}}\left[Q^{\la,\mu}\pi^{n,m}_{r,\omega} U^{\otimes \mfa}W^{\otimes\varepsilon}\right]\\
&=\Tr_{\mathcal{T}_{\varepsilon,N}}\left[Q^{\la,\mu}\pi^{n,m}_{r,\omega} Q^{\la,\mu}U^{\otimes \mfa}W^{\otimes\varepsilon}\right]
\end{align}
  { for some $W_1,\ldots,W_s \in U(N)$ with $W^{\ts \varepsilon} Q^{\la,\mu}=Q^{\la,\mu} W^{\ts \varepsilon}. $} 
 \end{lem}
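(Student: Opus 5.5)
The plan is to reduce everything to Lemma \ref{lem---characterTraceHarmo} and the multiplicativity of traces under juxtaposition. For a word $\omega$ with sign function $\varepsilon_\omega$ and arc colouring $\mfa_\omega$, the diagram $\pi_\omega=(1\,2\ldots|\omega|)^{\varepsilon_\omega}$ realises the cyclic contraction of indices. Hence
\[
\Tr_{\mathcal{T}_{\varepsilon_\omega,N}}(\pi_\omega U^{\otimes\mfa_\omega})=\Tr(\omega(U_1,\ldots,U_k)).
\]
This is the case $n=1,m=0$, $\la=(1)$ of Lemma \ref{lem---characterTraceHarmo}, where $P^{(1),\emptyset}_N=\id$. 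Applied to $\omega^{-1}$, the same identity gives $\Tr(\omega^{-1}(U))=\overline{\Tr(\omega(U))}$, because the $U_i$ are unitary.

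Next, I will use that the trace on $\mathcal{T}_{\varepsilon,N}=\mathcal{T}_{\tilde\varepsilon}\otimes\mathcal{T}_{\varepsilon_\omega}\otimes\mathcal{T}_{\varepsilon_{\omega^{-1}}}$ of a tensor product of endomorphisms is the product of traces. By definition of juxtaposition, $\rho_N(\pi^{n,m}_{r,\omega})=\rho_N(\pi_r^{n,m})\otimes\rho_N(\pi_\omega)\otimes\rho_N(\pi_{\omega^{-1}})$ and $U^{\otimes\mfa}=U^{\otimes\tilde\mfa}\otimes U^{\otimes\mfa_\omega}\otimes U^{\otimes\mfa_{\omega^{-1}}}$. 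Moreover $Q^{\la,\mu}$ has the form $(P^{\la,\mu}_N)^{\otimes|r|}\otimes\id\otimes\id$. Multiplying the three factors and invoking Lemma \ref{lem---characterTraceHarmo} for the first one gives the first equality.

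For the second equality, $Q^{\la,\mu}$ is an idempotent, since $P^{\la,\mu}_N$ is a projection. Also, $U^{\otimes\mfa}$ commutes with $Q^{\la,\mu}$: on each block of $n+m$ strands coming from one letter of $r$, the action is $\rho_{n,m}(W)$ for a single $W\in\{U_l\}$, and $P^{\la,\mu}_N$ is $U(N)$-equivariant. By cyclicity of the trace,
\[
\Tr(Q\pi U)=\Tr(QQ\pi U)=\Tr(Q\pi UQ)=\Tr(Q\pi QU).
\]

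For the version with $A$ and $B$, I will insert $A$ and $B$ as extra letters. Concretely, I expand $\Tr(\omega(U)A)$ as the cyclic contraction in which $A$ is multiplied into the last factor, i.e. the last letter's matrix is replaced by $W_{|\omega|}$ times $A$. Since a matrix is not generally unitary-of-the-same-letter, it is cleaner to keep $\pi$ fixed and define $W^{\otimes\varepsilon}$ as acting by $A$ (respectively $\bar A$ via the dual) on one strand of the $\omega$ (respectively $\omega^{-1}$) block, and by the identity elsewhere. For the $r$ block, I take $W^{\otimes\varepsilon}$ to act by $\rho_{n,m}(B)$ on the $n+m$ strands of the last letter, and by $\Id$ elsewhere.

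With this choice, $\chi_{[\la,\mu]_N}(r(U)B)$ is given by Lemma \ref{lem---characterTraceHarmo} applied with an extra unitary multiplying the last letter. The commutation $W^{\otimes\varepsilon}Q=QW^{\otimes\varepsilon}$ then follows again from the equivariance of $P^{\la,\mu}_N$. The same cyclicity argument gives the last line.

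The only delicate point is bookkeeping: checking that transposing $\pi_r$ via the extension and the signs $\tilde\varepsilon$ places $B$ correctly. This is routine once one writes the matrix coefficients of \eqref{def---BrauerAction}.
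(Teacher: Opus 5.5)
Your argument for the identities without $A,B$ is correct and is exactly the intended one; the paper only says it is elementary from Lemma \ref{lem---characterTraceHarmo}. The trace factorises over $\pi^{n,m}_r\ts\pi_\om\ts\pi_{\om^{-1}}$, and $Q^2=Q$, $QU^{\ts\mfa}=U^{\ts\mfa}Q$ and cyclicity give the second line.

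The gap is in the $A,B$ part, precisely at the step you call routine bookkeeping. The diagram $\pi^{n,m}_r$ depends only on the sign pattern of $r$. So Lemma \ref{lem---characterTraceHarmo}, applied to the word with the same signs but pairwise distinct letters and evaluated at the unitaries $U_{l_q}g_q$, gives the following: if $W^{\ts\varepsilon}$ acts diagonally by $g_q$ on the $q$-th block, then in $\Tr[Q^{\la,\mu}\pi^{n,m}_{r,\om}U^{\ts\mfa}W^{\ts\varepsilon}]$ the $q$-th letter $U_{l_q}^{\varepsilon_q}$ is replaced by $(U_{l_q}g_q)^{\varepsilon_q}$. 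Hence a $U$-independent unitary can only be inserted right after a positive letter, or, inverted, right before a negative one.

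Your choice of $B$ on the block of the last letter $b_g^{-1}$ therefore computes the character of $r$ with $b_g^{-1}$ replaced by $B^{-1}b_g^{-1}$. That is not $\chi_{[\la,\mu]_N}(r(U)B)$. The slot where $B$ must go, after $b_g^{-1}$ and before $a_1$, is of neither type.

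Choosing a different strand does not help. Take the word $x_1x_2^{-1}$ in the standard representation, which is the case you use for the $\om$-blocks. Then $\Tr[\pi_{x_1x_2^{-1}}(U_1W_1\ts\rho_{V^*}(U_2W_2))]=\Tr(U_1W_1W_2^{-1}U_2^{-1})$, and this equals $\Tr(U_1U_2^{-1}B)$ for all $U_1,U_2$ only if $B$ is scalar. The same obstruction affects $A$ whenever $\om$ starts with a positive letter and ends with a negative one.

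As literally worded (``for some $W_1,\ldots,W_s$''), the statement can be saved by letting $W$ depend on $U$. For example, take the diagonal action of $U_{a_1}^{-1}BU_{a_1}$ on the block of $a_1$, which still commutes with $Q^{\la,\mu}$. That version is useless for Corollary \ref{Coro---BrauerRepWLE}, however, where $\EE_U$ is taken with $W$ held fixed.

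The sound fix is to allow $U$-independent unitaries on both sides. That is, prove the identity in the form $\Tr[Q^{\la,\mu}\pi^{n,m}_{r,\om}{W'}^{\ts\varepsilon}U^{\ts\mfa}W^{\ts\varepsilon}]$, with both factors diagonal on each $r$-block.
\begin{itemize}
\item For $r$: let $B$ act from the left on the block of $a_1$. This turns $U_{a_1}$ into $BU_{a_1}$ and gives $\chi_{[\la,\mu]_N}(Br(U))=\chi_{[\la,\mu]_N}(r(U)B)$.
\item For $\om$ and $\om^{-1}$: place $A$, resp.\ $A^{-1}$, in front of the first letter of $\om$, resp.\ $\om^{-1}$. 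Insert it from the left if that letter is positive, and inverted from the right if it is negative.
\end{itemize}
Both factors are $U$-independent and commute with $Q^{\la,\mu}$, so the proof of Corollary \ref{Coro---BrauerRepWLE} goes through. Afterwards only the bound $|\mathrm{tr}_I(A,B)|\le 1$ is used.
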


 \subsubsection{Unitary integration and invariant ideal} \label{sec---Unitary integration and invariant ideal} When the sign function satisfies $\varepsilon:[s]\to \{-1,1\} $  and  $\#\varepsilon^{-1}(\{1\})=\#\varepsilon^{-1}(\{-1\}),$  denote by $S_\varepsilon$  the set of bijections from $ \varepsilon^{-1}(\{-1\})$ to $\varepsilon^{-1}(\{1\}).$   When $\mfa:[s]\to \mathcal{A}$ is an arc colouring,  $S_\mfa$ denotes the subset of bijections $\a\in S_{\varepsilon}$ with
 \[ \mfa(\a(v))=\mfa(v)^{-1}\]
 for all $v\in [s].$ 
 
   For any pair of 
 permutations $\alpha,\beta\in S_\varepsilon,$ we define an element $\pi_{\a,\beta}\in \mathcal{D}_\varepsilon$ setting
{
 
\[\pi_{\a,\b}(v,-1)=(\b(v),-1) \quad \text{and}\quad \pi_{\a,\b}(v,1)=(\a(v),1)\]  }
 for any $v\in\varepsilon^{-1}(\{-1\}).$ The standard unitary  Weingarten formula \eqref{eq---WeingartenFormulaUnit} and bounds on the Weingarten function (see e.g. \cite[Cor. 2.4. and Prop. 2.6.]{CollinsSn}) reads as follows in our notations. When $\sigma\in S_n$ is a permutation denote by $\# \sigma$ its number of cycles. 
 
 \begin{lem}[\cite{CollinsSn}] \label{lem---ColorWeing} For any function $\mfa:[s]\to\mathcal{A},$ when $U_1,\ldots,U_k$ are $k$ independent Haar unitary matrices,
 \begin{equation}
 \EE[U^{\ts \mfa}]=\sum_{\a,\b\in S_{\mfa}}W_{N,\mfa}(\a,\b)\pi_{\a,\b}\label{eq---ColorWeing}
 \end{equation}
where $W_{N,\mfa}(\a,\b)= O(N^{-{2}s+\# \a^{-1}\b})$ for all $\a,\b\in S_{\mfa}.$
 
 \end{lem}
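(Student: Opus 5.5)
The plan is to reduce the statement to the classical Weingarten formula \eqref{eq---WeingartenFormulaUnit}, applied independently to each of the $k$ matrices, and then to rewrite the result in the diagrammatic language of $\pi_{\a,\b}$.

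\emph{Step 1: factorisation over colours.} Since $U_1,\ldots,U_k$ are independent, we may permute the tensor factors of $\mathcal{T}_{\varepsilon,N}$ so that those with $\mfa(i)\in\{x_l,x_l^{-1}\}$ are grouped together. Then $U^{\ts\mfa}$ becomes, up to this fixed reordering, a tensor product $\bigotimes_{l=1}^k \rho_l(U_l)$. Here $\rho_l$ is the action of $U_l$ on $V^{\ts n_l}\ts {V^*}^{\ts m_l}$, with $n_l=\#\mfa^{-1}(x_l)$ and $m_l=\#\mfa^{-1}(x_l^{-1})$. The expectation therefore factorises as $\bigotimes_l \EE[\rho_l(U_l)]$. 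If $n_l\neq m_l$ for some $l$, then invariance under $U_l\mapsto zU_l$ with $z\in U(1)$ gives $\EE[\rho_l(U_l)]=0$. In the same case $S_\mfa=\emptyset$, so both sides of \eqref{eq---ColorWeing} vanish.

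\emph{Step 2: one colour.} Assume $n_l=m_l$. Writing matrix coefficients, $V^*$ carries $\overline{U}$, so \eqref{eq---WeingartenFormulaUnit} gives
\[
\EE[\rho_l(U_l)]=\sum_{\a_l,\b_l}\Wg^N_{n_l}(\a_l\b_l^{-1})\,\pi_{\a_l,\b_l}.
\]
Here $\a_l,\b_l$ range over bijections from $\mfa^{-1}(x_l^{-1})$ to $\mfa^{-1}(x_l)$, which are exactly the components of $S_\mfa$. The diagram $\pi_{\a_l,\b_l}$ pairs top $V$-slots with top $V^*$-slots according to $\a_l$, and bottom slots according to $\b_l$. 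This matches the index constraints $i_{\a(k)}=i'_k$, $j_{\b(k)}=j'_k$ once one checks the convention for $\rho_N$ in \eqref{def---BrauerAction}. The main care point is to verify that the orientation conventions (which of top and bottom carry $\a$, and whether an inverse appears) agree with the definition of $\pi_{\a,\b}$. If they do not, one relabels $\a\leftrightarrow\b$ or takes inverses, which does not affect the form of the bound.

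Taking the tensor product over $l$, and noting that juxtaposition of the diagrams $\pi_{\a_l,\b_l}$ gives $\pi_{\a,\b}$ with $\a=\sqcup_l\a_l$ and $\b=\sqcup_l\b_l$, yields \eqref{eq---ColorWeing} with
\[
W_{N,\mfa}(\a,\b)=\prod_l \Wg^N_{n_l}(\a_l\b_l^{-1}).
\]

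\emph{Step 3: the bound.} We use the estimate $\Wg^N_n(\sigma)=O(N^{-n-|\sigma|})=O(N^{-2n+\#\sigma})$ from \cite{CollinsSn}. Multiplying over $l$, the exponents add, with $\sum_l n_l=s/2$ and $\sum_l \#(\a_l\b_l^{-1})=\#\a^{-1}\b$ after identifying the cycle count of the bijection $\a^{-1}\b$ of $\varepsilon^{-1}(-1)$ with that of the $\a_l\b_l^{-1}$. This gives
\[
W_{N,\mfa}(\a,\b)=O\!\left(N^{-s+\#\a^{-1}\b}\right)\le O\!\left(N^{-2s+\#\a^{-1}\b}\right)\cdot N^{s}.
\]
The stated exponent $-2s$ therefore appears to need either $s$ read as the number of pairs, or it is a weaker (or typo) form of the sharp bound. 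Before finalising the constant in the exponent, I would reconcile the normalisation of $s$ against the statement. This bookkeeping, together with the convention check in Step 2, is the main obstacle; the rest is a direct transcription.
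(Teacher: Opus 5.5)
Your argument is correct and takes the same approach as the paper. The paper gives no separate proof: it simply rewrites the standard Weingarten formula \eqref{eq---WeingartenFormulaUnit} and the Collins--\'Sniady estimate $\Wg^N_n(\sigma)=O(N^{-n-|\sigma|})$ in the $\pi_{\a,\b}$ notation, which is what your colour-by-colour factorisation does.

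The exponent you obtain, $-s+\#\a^{-1}\b$ with $s$ the number of tensor slots, is the correct one. It coincides with the printed $-2s+\#\a^{-1}\b$ only if $s$ counts pairs, as you suggest. You should not describe the printed $-2s$ as a possibly weaker bound: with $s$ as defined it is a stronger claim, and it is false. For $s=2$ and $\mfa=(x_1,x_1^{-1})$ one has $\EE[U^{\ts\mfa}]=\frac1N\pi_{\a,\a}$ with $\#\a^{-1}\a=1$, and $\frac1N$ is not $O(N^{-3})$.

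So $-2s$ is a misprint. The paper itself uses $N^{-s+\#\a^{-1}\b}$ in the proof of Lemma \ref{lem---BoundTraceWordCharactRel Euler}. There this factor combines with $N^{\#\,\mathrm{c.c.}\,\mathcal{C}_I}$ to give $N^{\chi(\Sigma_\mfm)}$, where $\chi(\Sigma_\mfm)=-s+\#\,\mathrm{c.c.}\,\mathcal{C}_I+\#\a^{-1}\b$.
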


  When $\mfb:[s]\to \{R,R^{-1},W,W^{-1}\}$ is a boundary colouring, $S_{\mfa,\mfb}$ is the subset of permutations $\a\in S_\mfa$ with\footnote{In other words, for a  permutation of $S_{\mfa,\mfb}$,  two matched points  of $[s]$    have inverse $\mfa$-label, but a point with $R$  $\mfb$-label cannot be matched to  a $R^{-1}$ point. }
\begin{equation}
{ \mfb(\a(v))\not=R^{-\varepsilon} \label{eq---AdmissibilityPerm}}
\end{equation}
 
 for any $\varepsilon\in\{1,-1\}$ and $v\in \mfb^{-1}(R^\varepsilon).$

With these notations, the second identity of \eqref{eq---RelationBrauerHarmo} implies the following. 

 \begin{lem} \label{lem---ProjectionIrrepNoMixing}For any ${ n,m\ge1}, \la\vdash n,\mu\vdash m,$   $\a,\b\in S_{\varepsilon}$ with $\a\not\in  S_{\mfa,\mfb},$
 \[(P_N^{\la,\mu})^{\otimes |r|}\otimes \id_{2|\omega|}. \pi_{\a,\b} = \pi_{\b,\a} .(P^{\la,\mu}_N)^{\otimes |r|}\otimes \id_{2|\omega|}=0.  \]
 \end{lem}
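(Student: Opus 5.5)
The plan is to reduce the statement to the annihilation property $q_{n,m}\mathcal{J}_1=\mathcal{J}_1q_{n,m}=\{0\}$ of Lemma \ref{Lem---HarmoProjectorIdCompo}, transported to a single $\varepsilon$-walled block. Concretely, I will show that for $\a\notin S_{\mfa,\mfb}$ the diagram $\pi_{\a,\b}$ factors as $\pi_{\a,\b}=x\cdot y$, where $x$ is a Weyl contraction $\<i\,-j\>$ placed inside one of the $|r|$ blocks of size $n+m$. The left factor $(P_N^{\la,\mu})^{\otimes|r|}\otimes\id_{2|\omega|}$ then kills it. Recall that $P^{\la,\mu}_N$ is, by \eqref{eq---EquiProjIrrepSym}, a multiple of $\rho_N(\chi^{[\la,\mu]})\rho_N(q_{n,m})$, so it annihilates $\mathcal{J}_1$ on both sides.

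First I locate the contraction. Since $\a\in S_\varepsilon\setminus S_{\mfa,\mfb}$ (I read the hypothesis as $\a\in S_\mfa$, or else also treat a violation of the $\mfa$-condition in the same way), there is some $v$ with $\mfb(v)=R^{\eta}$ and $\mfb(\a(v))=R^{-\eta}$. Under the extension, the vertices labelled $R$ and $R^{-1}$ are exactly the first $n$ and last $m$ positions of each of the $|r|$ blocks, with opposite signs. Using the $S_{n,m}$-invariance of $P^{\la,\mu}_N$ within each block, I may conjugate by permutations of the block. If $v$ and $\a(v)$ lie in the same block, $\pi_{\a,\b}$ has a top horizontal string joining a $V$-position and a $V^*$-position of that block. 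Composing on top with the block projector, $\rho_N(q_{n,m})\<i\,-j\>=0$ then gives a zero factor, via $\pi_{\a,\b}=\<i\,-j\>\cdot\pi'$ for a suitable diagram $\pi'$ (up to a power of $N$, which is harmless since the product is $0$).

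If $v$ and $\a(v)$ lie in different blocks, I will instead insert a permutation diagram that is a product of transpositions between blocks. Such a permutation does not commute with the tensor product of block projectors, so this is the main obstacle. What I will try first: the top horizontal string from $(v,1)$ to $(\a(v),1)$ restricted to the top row is the same as the string appearing in $\big(\rho_N(q_{n,m})\otimes\rho_N(q_{n,m})\big)\cdot\pi_{\a,\b}$. Writing $c_{v,\a(v)}$ for the contraction of tensor factors $v$ and $\a(v)$, the composite factors through $c_{v,\a(v)}\circ(P\otimes P)$. The key point is then that if $t$ is traceless in each block, contracting a $V$-slot of block $A$ with a $V^*$-slot of block $B$ does not obviously vanish. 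So the argument must use more than the annihilation property: it must use the $U(N)$-structure, namely integration.

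I therefore expect the intended reading to be that $\pi_{\a,\b}$ comes with the colour constraint that $\mfa$-labels match. After the sign reversal in the extension, an $R$ vertex matched to an $R^{-1}$ vertex must then be matched by $\a$ inside a relator block, since the letters correspond to the same position of $r$. This follows because the $\mfa$-colour records the letter index and position within $r$ is determined by which block it came from. In that case the same-block situation covers everything, and the proof reduces to the first case. The statement for $\pi_{\b,\a}\cdot(P_N^{\la,\mu})^{\otimes|r|}\otimes\id_{2|\omega|}$ follows symmetrically, by applying the same argument to bottom strings with $\mathcal{J}_1 q_{n,m}=0$, or by taking adjoints, since $\pi_{\a,\b}^*=\pi_{\b,\a}$ and $P^{\la,\mu}_N$ is self-adjoint.
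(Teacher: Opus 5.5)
Your final argument is the paper's. The paper proves the lemma only by remarking that $q_{n,m}\mathcal{J}_1=\mathcal{J}_1q_{n,m}=\{0\}$ (Lemma \ref{Lem---HarmoProjectorIdCompo}), and this tacitly uses the point you isolate. That point is that the forbidden string $\{(v,1),(\a(v),1)\}$ of $\pi_{\a,\b}$ joins the two sides of a single block. Then $\pi_{\a,\b}=N^{-1}\<v\,\a(v)\>\,\pi_{\a,\b}$, where $\<v\,\a(v)\>$ is a Weyl contraction of that block and is annihilated by $P^{\la,\mu}_N$. Your adjoint argument for $\pi_{\b,\a}$ is also fine.

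The same-block step needs a precise reason rather than ``the $\mfa$-colour records the letter index''. An $R$-vertex in the block of the letter $y\in\mathcal{A}$ carries $\mfa=y$, and an $R^{-1}$-vertex in the block of the letter $z$ carries $\mfa=z^{-1}$. So $\a\in S_\mfa$ forces $y=z$. This determines the block only because every element of $\mathcal{A}$ occurs exactly once in the surface word \eqref{eq---SurfaceWord}.

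Two of your side remarks are wrong and should be dropped.

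First, no $U(N)$-integration argument can rescue a cross-block string: the product then genuinely does not vanish, so the lemma is false for general $r$. Take $r=x_1x_1$ and $n=m=1$. Let $\a$ send the $V^*$-slot of each block to the $V$-slot of the other block, and pair $\omega$ with $\omega^{-1}$ letter by letter; then $\a\in S_\mfa\setminus S_{\mfa,\mfb}$. Identifying each block with $M_N(\C)$, $P^{(1),(1)}_N$ is the map $E\mapsto E-\frac1N\Tr(E)\Id_N$. The top tensor of $\pi_{\a,\b}$ is $F=\sum_{i,j}E_{ij}\otimes E_{ji}$, and $(P^{(1),(1)}_N\otimes P^{(1),(1)}_N)F=F-\frac1N\Id_N\otimes\Id_N\neq0$ for $N\ge2$.

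Second, a violation of the $\mfa$-condition cannot be ``treated in the same way''. For $\a\in S_\varepsilon\setminus S_\mfa$ there may be no $R$--$R^{-1}$ string at all, and then the product need not vanish.

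So the lemma must be read with $\a\in S_\mfa$ (as in the sum of Corollary \ref{Coro---BrauerRepWLE}) and with $r$ the surface word (as in all its applications). Under that reading your proof is complete.
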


We can now deduce our main integral representation formula in terms of Brauer algebra using Corollary \ref{lem---ProjMixedTIrrep}.

\begin{coro} \label{Coro---BrauerRepWLE}  For $\la\vdash n,\mu\vdash m,$  any word $r,\om$ in $\mathcal{A}$ and $A,B\in U(N),$

\begin{align*}
d^\lambda d^\mu& \EE_U[ |\Tr(\omega(U_i)A)|^2\chi_{\la,\mu}(r(U_1,\ldots,U_k)B)]=\\
&\sum_{\substack{\tau_{u},\tau_b\in \mathcal{D}_{n,m}^{|r|}\\\a,\b\in S_{\mfa,\mfb}}} \kappa^{\lambda,\mu}_N( \tau_{u},\tau_b,\pi_{\a,\b}) \Tr_{\mathcal{T}_{\varepsilon,N}}\left[x_u \pi^{n,m}_{r,\omega}x_b \pi_{\a,\b} W^{\otimes \varepsilon}\right]
\end{align*}
 { for some $W_1,\ldots,W_s \in U(N),$}  where
 \[x_l=\tau_{l,1} \ts \ldots \ts \tau_{l,|r|} \otimes \id_{2|\omega|} \quad\text{for}\quad l\in\{u,b\},\]
  \begin{equation}
\kappa^{\lambda,\mu}_N(\tau_u,\tau_b,\pi_{\a,\b})= W_{N,\mfa}(\a,\b) \prod_{i=1}^{|r|}\left(\kappa_{N}^{\lambda,\mu}(\tau_{u,i})\kappa_{N}^{\lambda,\mu}(\tau_{b,i})\right) \label{eq---WeingIrrep}
\end{equation}
and $\varepsilon,\mfa,\mfb$ are the sign function and colourings associated to $\pi_{r,\omega}^{n,m}.$ 
\end{coro}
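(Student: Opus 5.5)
The plan is to start from the second trace identity of Lemma \ref{lem---TraceWordCharacterRelatorForm}, with the projection $Q^{\la,\mu}$ placed on both sides of $\pi^{n,m}_{r,\omega}$:
\[d^\la d^\mu|\Tr(\omega(U_i)A)|^2\chi_{[\la,\mu]_N}(r(U)B)=\Tr_{\mathcal{T}_{\varepsilon,N}}\left[Q^{\la,\mu}\pi^{n,m}_{r,\omega} Q^{\la,\mu}U^{\otimes \mfa}W^{\otimes\varepsilon}\right],\]
where the $W_j$ commute with $Q^{\la,\mu}$ in the sense stated there. Since the trace is linear and the $W_j$ are deterministic, we can take the expectation inside the trace and replace $U^{\otimes\mfa}$ by $\EE[U^{\otimes\mfa}]$. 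Lemma \ref{lem---ColorWeing} then gives
\[\EE[U^{\otimes\mfa}]=\sum_{\a,\b\in S_\mfa}W_{N,\mfa}(\a,\b)\pi_{\a,\b}.\]
This is legitimate because $\mfa$ pairs each letter with its inverse and we integrate against independent Haar matrices.

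Next we expand each copy of $Q^{\la,\mu}=(P^{\la,\mu}_N)^{\otimes|r|}\otimes\id_{2|\omega|}$ using Corollary \ref{lem---ProjMixedTIrrep}. Writing $P^{\la,\mu}_N=\sum_\tau\kappa^{\la,\mu}_N(\tau)\tau$, the tensor power becomes
\[\sum_{\tau\in\mathcal{D}_{n,m}^{|r|}}\prod_{i}\kappa^{\la,\mu}_N(\tau_i)\,\tau_1\otimes\cdots\otimes\tau_{|r|}\otimes\id_{2|\omega|},\]
where each $\tau_i$ is transported to the $i$-th block of $\mathcal{D}_\varepsilon$. The transport uses the identification of $\mathcal{B}_{n,m}(N)$ with $\mathcal{B}_{\varepsilon}(N)$ on that block, and the result does not depend on the enumeration because $P^{\la,\mu}$ commutes with $S_{n,m}$. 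Doing this for the upper and lower $Q$ gives the elements $x_u$ and $x_b$. Multiplying out, the coefficient of each term is exactly $\kappa^{\la,\mu}_N(\tau_u,\tau_b,\pi_{\a,\b})$ as in \eqref{eq---WeingIrrep}, and the trace is $\Tr[x_u\pi^{n,m}_{r,\omega}x_b\pi_{\a,\b}W^{\otimes\varepsilon}]$, after cyclically moving $W^{\otimes\varepsilon}$ past $\pi_{\a,\b}$ if needed. The ordering of $\pi_{\a,\b}$ and $W^{\otimes\varepsilon}$ must be matched to the statement. Since $\EE[U^{\otimes\mfa}]$ commutes with the diagonal action, and $W^{\otimes\varepsilon}$ commutes with $Q$, cyclicity of the trace lets us arrange the factors as written.

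The remaining step is to restrict the sum from $\a,\b\in S_\mfa$ to $\a,\b\in S_{\mfa,\mfb}$, using Lemma \ref{lem---ProjectionIrrepNoMixing}. Before expanding $Q$, the trace is
\[\Tr[Q\,\pi^{n,m}_{r,\omega}\,Q\,\pi_{\a,\b}W^{\otimes\varepsilon}]=\Tr[\pi^{n,m}_{r,\omega}\,Q\,\pi_{\a,\b}\,Q\,W^{\otimes\varepsilon}],\]
using $Q$ idempotent, $Q$ commuting with $W^{\otimes\varepsilon}$, and cyclicity. The right-hand side vanishes whenever $Q\pi_{\a,\b}=0$ or $\pi_{\a,\b}Q=0$. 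By the lemma, this happens if $\a\notin S_{\mfa,\mfb}$, and symmetrically, via $\pi_{\b,\a}Q=0$, if $\b\notin S_{\mfa,\mfb}$.

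The main obstacle is this index bookkeeping: making sure the roles of $\a$ (top matching) and $\b$ (bottom matching) agree with the convention of Lemma \ref{lem---ProjectionIrrepNoMixing}, and that the vanishing is applied before expanding $Q$ into diagrams. Individual diagram terms $\tau$ need not annihilate $\pi_{\a,\b}$, so the restriction only holds at the level of the full projection. Once both inadmissible cases are discarded, expanding $Q$ as above yields the stated formula.
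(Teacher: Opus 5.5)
Your proposal is correct and follows the paper's proof essentially step by step. You start from the $Q^{\la,\mu}\pi^{n,m}_{r,\omega}Q^{\la,\mu}U^{\otimes\mfa}W^{\otimes\varepsilon}$ form of Lemma~\ref{lem---TraceWordCharacterRelatorForm} and apply the coloured Weingarten formula. You then use cyclicity and $Q^{\la,\mu}W^{\otimes\varepsilon}=W^{\otimes\varepsilon}Q^{\la,\mu}$ to reach $\Tr[\pi^{n,m}_{r,\omega}Q^{\la,\mu}\pi_{\a,\b}Q^{\la,\mu}W^{\otimes\varepsilon}]$, so that Lemma~\ref{lem---ProjectionIrrepNoMixing} discards non-admissible $\a$ or $\b$, and only then expand $Q^{\la,\mu}$ via Corollary~\ref{lem---ProjMixedTIrrep}. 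The extra rearrangement you mention, using that $\EE[U^{\otimes\mfa}]$ commutes with the diagonal action, is unnecessary (and $W^{\otimes\varepsilon}$ is not a diagonal action), because the factor order coming out of the lemma already matches the statement.
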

\begin{proof}  Thanks to Lemmas \ref{lem---TraceWordCharacterRelatorForm}, using \eqref{eq---ColorWeing} and that $W^{\ts\varepsilon}$  commutes with $Q^{\la,\mu},$ the left-hand side is
\[  \sum_{\a,\b\in S_{\mfa}} W_{N,\mfa}(\a,\b)\Tr_{\mathcal{T}_{\varepsilon,N}}\left[ \pi^{n,m}_{r,\omega}  Q^{\la,\mu} \pi_{\a,\b}Q^{\la,\mu}  W^{\otimes \varepsilon}\right]. \]
By Lemma \ref{lem---ProjectionIrrepNoMixing},  $Q^{\la,\mu} \pi_{\a,\b}Q^{\la,\mu} =0$ whenever  $\a \not\in S_{\mfa,\mfb}$ or $\b\not\in S_{\mfa,\mfb}.$  Since  $Q^{\la,\mu}W^{\ts\varepsilon}=W^{\ts\varepsilon}Q^{\la,\mu},$ the claim follows from expanding \eqref{eq---ExtensionTensHarmoProj} using Corollary \ref{lem---ProjMixedTIrrep}.

\end{proof}

We shall call  diagrams of the form $\pi_{\a,\b}$ with $\a,\b\in S_{\varepsilon}$ \emph{horizontals}.  When  $\a=\b,$ we call  them \emph{quadrangles-diagrams}. When  $\a,\b\in S_{\mfa,\mfb}$ we say  $\pi_{\a,\b}$ is  \emph{admissible.}  Denote the set of horizontal diagrams by $\mathcal{D}^h_\varepsilon$ and the subset of admissible ones by $\mathcal{D}^h_{\mfa,\mfb}.$  When $\a\in S_n$ is a permutation, recall  
\[|\sigma|=n-\#\sigma,\]
where $\#\sigma$ is the number of cycles of $\sigma,$ is the smallest $k$ such that $\sigma$ is a product of $k$ transpositions. When $\varphi=\pi_{\a,\b},$ we set 
\[\#\varphi= \#\alpha^{-1}\b\quad\text{and}\quad|\varphi|=|\a^{-1}\b|\]
so that $|\varphi|=0$ if and only if $\varphi$ is a quadrangle-diagram. \label{SubSec--HorizontalDiag}
%\color{orange} 
\begin{rmk}
{Note that horizontal diagrams span an ideal  $\mathcal{I}_\varepsilon$ of $\mathcal{B}_\varepsilon(N).$  When $n=m,$ $\mathcal{I}_{\varepsilon}\not=\{0\}$ and when  $2n\le N$, it follows from Koike-Schur-Weyl duality \cite{HalversonWBrauer}  that  $\mathcal{B}_{\varepsilon}(N)$ is semi-simple while  $\mathcal{I}_\varepsilon$,  as a sub-algebra, is simple and isomorphic to 
$ \mathrm{End}(V^{[\emptyset,\emptyset]})$ where $V^{[\emptyset,\emptyset]}=\Hom_{U(N)}(V_{\rho_N}, \mathcal{T}_{n,m}).$
}
\end{rmk}

\subsection{Surfaces  associated to products of Brauer diagrams}

Let us call \emph{Brauer map}    the data of a tuple $\mfm=(\tau_u,\pi,\tau_b,\varphi)$ formed of a coloured walled Brauer diagram $\pi\in\mathcal{D}_\varepsilon$  together with a horizontal  $\varphi\in\mathcal{D}^h_\varepsilon$ and 
two {compatible} diagrams $\tau_u,\tau_b\in \mathcal{D}_{\mfa,\mfb},$ where  $\mfa:[s]\to \mathcal{A}$ and $\mfb:[s]\to \mathcal{S}$ are the arc-boundary colourings and  $\varepsilon:[s]\to\{-1,1\}$ is 
the sign function of $\mfa.$ For such a tuple, consider the $2$-CW complex built in the following way.  Informally,  the complex is built from  the graph appearing in the product 
$\tau_u\pi\tau_b\varphi$ in $\mathcal{B}_{\varepsilon}$ (including possible inner loops) by  adding vertical edges between top   and bottom vertices of $\varphi,$  idenfitying top and bottom 
vertices of $\tau_u\pi\tau_b\varphi$ and  gluing  discs  along cycles associated to the trace of $\tau_u\pi\tau_b\varphi$ and  $\varphi$ in $\mathcal{B}_\varepsilon.$  See Figure \ref{FIG---DefBrauerMap} for an example.

\begin{figure} 
\includegraphics[width=6cm]{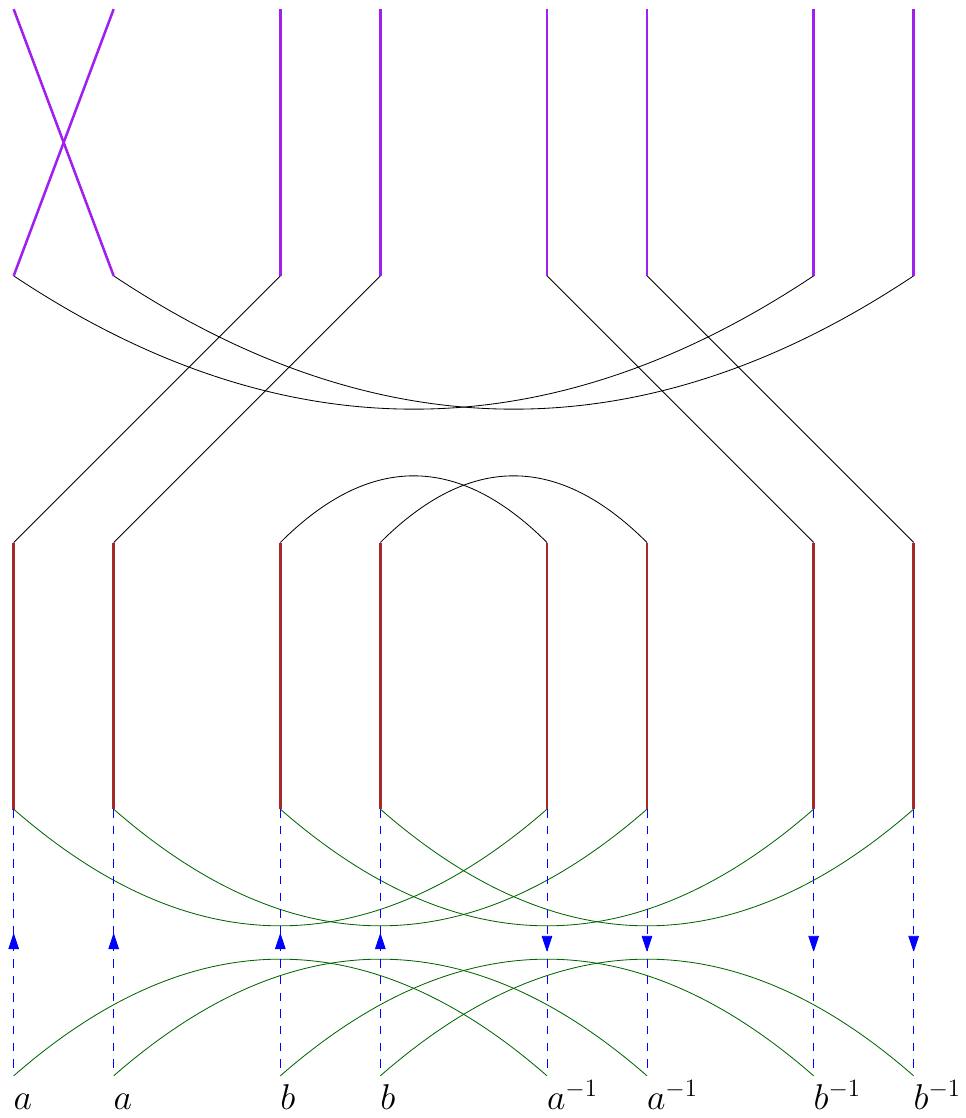}
 \qquad \includegraphics[width=6cm,height=7 cm]{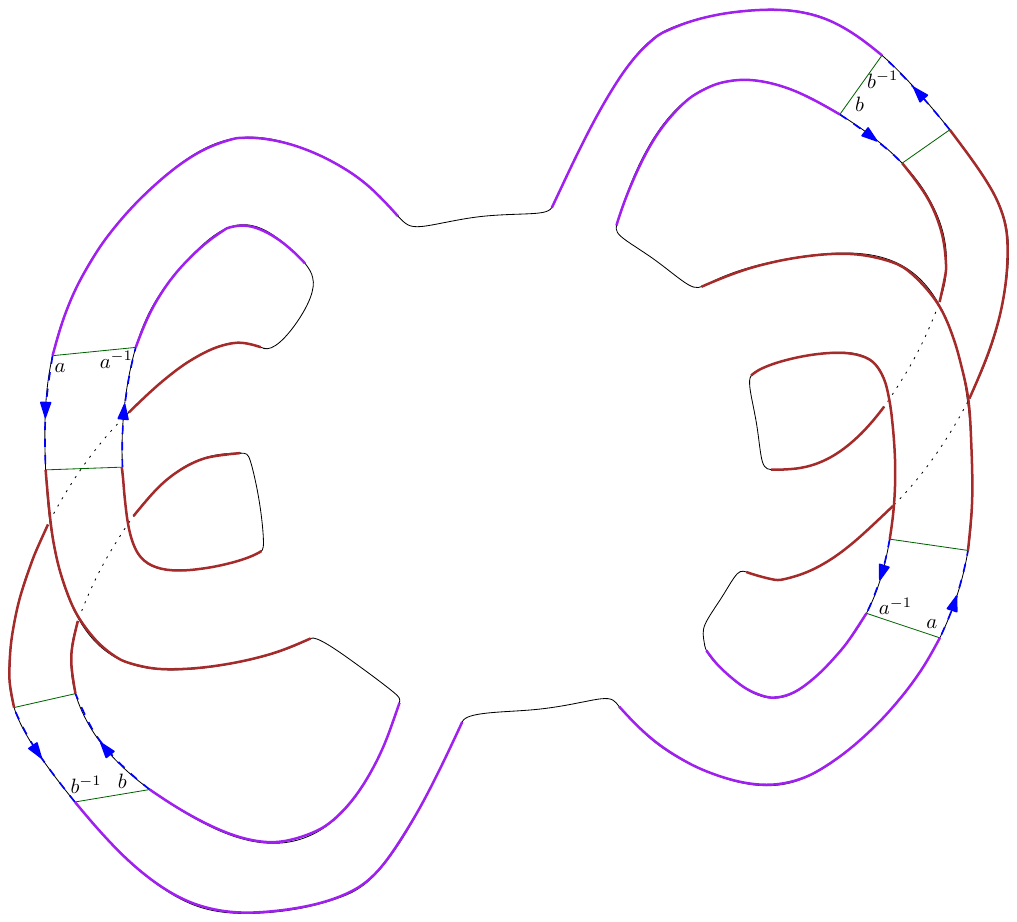}
\caption{A Brauer map\label{FIG---DefBrauerMap} and its associated surface}
\end{figure}

More precisely, set $V=[s]\times\{-2,-1,0,1,2\}.$   Consider  involutions  $(\tau_u',\pi',\tau'_b,\varphi')$ of $[s]\times \{1,2\},$ $[s]\times \{0,1\},$ $[s]\times \{-1,0\}$  and $[s]\times \{-2,-1\}$ 
obtained from $(\tau_u,\pi,\tau_b,\varphi)$ by conjugating them respectively by the bijections acting only the second coordinate and mapping $(-1,1)$ to 
$(1,2),(0,1),(-1,0)$ and $(-2,-1)$.    For each pairing of $(\tau_u',\pi',\tau'_b,\varphi')$, attach  an interval between its matched points; call the resulting interval \emph{$\tau,\pi$ or $\varphi$-interval} according to the involution used.  Let $V'$ be the quotient of $V$ identifying $(i,2)$ with $(i,-2)$ for any $i\in [s].$ Since each point of $V'$ belongs to exactly two such 
pairings, the resulting $1$-skeleton is a disjoint union {$\mathcal{C}_I$ of circles}. Attach additionally 
intervals between projections of  $(x,-2)$ and $(x,-1)$ for each $x\in[s]$ to yield a $1$-skeleton $\mathcal{G}_\mfm.$   Call this last type of intervals, \emph{$\iota$-intervals}.  Orient an $\iota$-interval upwards when its first coordinate $s$ satisfies $\varepsilon(s)=1$ and downwards otherwise.  Mark in both cases its source vertex by the letter $\mfa(s).$ Since  $\tau_u,\tau_b,\pi,\varphi\in \mathcal{D}_{\varepsilon}$ the orientation of $\iota$ induces an orientation of $\mathcal{C}_I$-circle.

Concatenating alternatively intervals associated to   $\varphi'$ and $\iota$ yields a disjoint \emph{union $\mathcal{C}_{II}$ of circles} in $\mathcal{G}_\mfm$. Since $\varphi\in \mathcal{D}_{\varepsilon},$ $\iota$ induces also an orientation of 
$\mathcal{C}_{II}$ circles.  For each connected component of 
$\mathcal{C}_I$ or of $\mathcal{C}_{II}$, attach an oriented disc gluing its positively oriented boundary along the given circle with  same   orientation as the one of $\iota$.  
We denote the resulting $2$-CW complex by $X_\mfm.$  { Since  exactly  two or three intervals are attached  to each point of $V'$, 
whereas  exactly one or two discs are attached along each interval, the total space of $X_\mfm$ 
is a surface. We denote it by $\Sigma_\mfm.$}  

\begin{rmk}[Orientation]\label{rmk---OrientBrauer} Orient each strand of  a Brauer diagram $\nu\in \mathcal{D}_\varepsilon$ from $v$ to $v'$ when $v=(s,\eta),v'=(s',\eta')$ with  $\varepsilon_s\eta=-1,\varepsilon'_s\eta'=1$.  It induces an orientation of its cycles. When $\nu$ is an element of a Brauer map $(\tau_u,\pi,\tau_b,\varphi)$, the orientation of $\nu$  is the same as the one induced by $\mathcal{C}_{II}$-circles when $\nu=\varphi$ and by $\mathcal{C}_I$-circles otherwise.
\end{rmk}

\begin{rmk} \label{Rmk---FaceDiagram}  When $\varphi=\pi_{\a,\b},$  connected components of $\mathcal{C}_{II}$ are in bijection with cycles of $\a^{-1}\b$ so that $\#\varphi$ is the number of connected  components of 
$\mathcal{C}_{II}.$ Moreover, for a given connected component,  its number of $\varphi$-intervals is twice the size of the associated cycle of $\a^{-1}\b$.  Orienting $\varphi$-intervals as in remark \ref{rmk---OrientBrauer}, top and bottom strands have  opposite orientation so that  the orientation of $\varphi$-intervals  alternates along any $\mathcal{C}_{II}$-circle.   \end{rmk}

\begin{rmk}[Boundary]

Each inner point of a $\varphi$-interval  belongs to exactly one $\mathcal{C}_I$-circle and one $\mathcal{C}_{II}$-circle, so that the image of an open $\varphi$-interval in $X_{\tau_u,\pi,\tau_b,\varphi}$ is included in 
exactly two $2$-cells and belongs to the interior of $\Sigma_\mfm$.  For any other interval, inner points belong to exactly one circle and hence to the boundary of  $\Sigma_\mfm$.  Lastly each point of $V'$ 
is exactly in two intervals of type $\iota,\pi$ or $\tau.$ The union of images of such intervals is a disjoint union of simple loops that is 
exactly the boundary $\partial\Sigma_\mfm.$   \label{Rmk---InnerORBound}
\end{rmk}

We therefore  call $\varphi$-intervals \emph{inner intervals}, and $\iota,\tau$ or $\pi$  intervals
\emph{boundary intervals}.

\begin{rmk} Since  the vertices  image of $[s]\times\{-2,-1\}$ in $V'$ have degree three, while all others have degree $2,$  
\begin{equation}
\chi(\mathcal{G}_\mfm)=-s.
\end{equation}
Together with the previous remark,
\begin{equation}
\chi(\Sigma_\mfm)=-s+\# \text{c.c.}\, \mathcal{C}_I+\#\alpha^{-1}\beta.
\end{equation}
\end{rmk}
By definition of the Brauer algebra and of $\rho_N,$   for any Brauer map $\mathfrak{m}=(\tau_u,\pi,\tau_b,\varphi),$ with sign function $\varepsilon:[s]\to \{-1,1\},$
\begin{equation}
\Tr_{\mathcal{T}_{\varepsilon,N}}(\rho_{N}(\tau_u\pi\tau_b \varphi))=N^{\# c.c.\, \mathcal{C}_I}. \label{eq---TraceBM}
\end{equation}
Putting this together with formulas of the previous subsection gives the following upper-bound.  For  a Brauer map $\mfm=(\tau_u,\pi,\tau_b,\varphi)$,  set 
\begin{equation}
h(\mfm)=h(\tau_u)+h(\tau_b).
\end{equation}

\begin{lem} \label{lem---BoundTraceWordCharactRel Euler} Assume $G_N$ is $U(N)$ or $SU(N).$ For $\la\vdash n,\mu\vdash m$ and  any words $r,\om$ in $\mathcal{A}$, define $\pi_{r,\omega}^{n,m}$  and  its  colouring and sign functions $\mfa,\mfb,\varepsilon$ as in section \ref{sec---Unitary integration and invariant ideal}. Then there is  
a constant  $K>0$ depending only on $n,m,r$ and $\omega,$ such that for all $N\ge1,$ $U_1,\ldots,U_k,A,B\in G_N,$
\begin{equation}
| \EE_U[ |\Tr(\omega(U_i)A)|^2\chi_{\la,\mu}(r(U_1,\ldots,U_k)B)]|\le K_{n,m}  N^{\max_{\mfm\in\mathcal{M}_{n,m,r,\omega}} \chi(\Sigma_{\mfm})-h(\mfm)}, \label{eq---BoundSquareTraceCharac}
\end{equation}
where $\mathcal{M}_{n,m,r,\omega}$ is the set of $(\varepsilon,\mfa,\mfb)$-admissible Brauer maps $\mfm=(\tau_u,\pi,\tau_b,\varphi)$ with $\pi=\pi_{r,\omega}^{n,m}, \varphi=\pi_{\a,\b}$ for some $\a,\b\in S_{\mfa,\mfb}$ and   $\tau_u,\tau_b$ are of the form $\tau_1\otimes \ldots \otimes \tau_r\otimes \id_{2|\omega|}$ for some $\tau_i\in\mathcal{D}_{n,m}.$ 
\end{lem}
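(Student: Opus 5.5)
The plan is to start from the exact expansion of Corollary \ref{Coro---BrauerRepWLE}, which writes $d^\la d^\mu \EE_U[|\Tr(\om(U_i)A)|^2\chi_{\la,\mu}(r(U)B)]$ as a finite sum. The sum runs over $\tau_u,\tau_b\in\mathcal{D}_{n,m}^{|r|}$ and admissible $\a,\b\in S_{\mfa,\mfb}$. Each summand has coefficient $\kappa^{\la,\mu}_N(\tau_u,\tau_b,\pi_{\a,\b})$ and trace $\Tr_{\mathcal{T}_{\varepsilon,N}}[x_u\pi^{n,m}_{r,\om}x_b\pi_{\a,\b}W^{\ts\varepsilon}]$. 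The index set depends only on $n,m,r,\om$, not on $N$. Since $d^\la d^\mu\ge 1$, it then suffices to bound each summand by a constant times $N^{\chi(\Sigma_\mfm)-h(\mfm)}$, where $\mfm=(x_u,\pi^{n,m}_{r,\om},x_b,\pi_{\a,\b})$. By construction this $\mfm$ is an element of $\mathcal{M}_{n,m,r,\om}$. Terms with non-compatible $\tau$'s will be handled by observing they vanish, or we simply include them as the statement requires.

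The coefficient bound comes first. By \eqref{eq---WeingIrrep} it is a product: Corollary \ref{lem---ProjMixedTIrrep} gives $\kappa^{\la,\mu}_N(\tau_{l,i})=O(N^{-h(\tau_{l,i})})$, so the $\kappa$ factors contribute $O(N^{-h(x_u)-h(x_b)})=O(N^{-h(\mfm)})$. Lemma \ref{lem---ColorWeing} gives $W_{N,\mfa}(\a,\b)=O(N^{-s+\#\a^{-1}\b})$; here one must check that the exponent is the right one, with $s/2$ the number of pairs counted against $2s$ total points. Both bounds hold for $N\ge n+m$ and a fixed range. For the finitely many small $N$, the left-hand side is trivially bounded, since $|\chi|\le d_{[\la,\mu]_N}$ and traces are at most $N$. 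Enlarging $K$ therefore covers all $N\ge1$.

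The trace bound comes next. The matrices $W_j$ are unitary, and $\rho_N$ of a diagram times $W^{\ts\varepsilon}$ factors along the cycles of $\mathcal{C}_I$. Each cycle contributes the trace of a product of unitaries, hence has modulus at most $N$. Together with \eqref{eq---TraceBM}, this gives $|\Tr[\cdots]|\le N^{\#\text{c.c.}\,\mathcal{C}_I}$. Multiplying the three bounds yields $N^{-s+\#\text{c.c.}\,\mathcal{C}_I+\#\a^{-1}\b-h(\mfm)}$. By the Euler characteristic remark, $\chi(\Sigma_\mfm)=-s+\#\text{c.c.}\,\mathcal{C}_I+\#\a^{-1}\b$, so each summand is bounded by a constant times $N^{\chi(\Sigma_\mfm)-h(\mfm)}$. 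Taking the maximum exponent over the finite set and summing the constants gives $K$.

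The main obstacle I expect is bookkeeping: matching the Weingarten exponent convention with $s$ as defined for the Brauer map, and justifying that inserting $W^{\ts\varepsilon}$ does not increase the trace beyond $N^{\#\text{c.c.}}$. For the latter, I would write the trace explicitly as a product over cycles of $\Tr$ of a unitary product.
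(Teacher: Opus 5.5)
\textbf{Gap: the case $G_N=SU(N)$.} Your argument starts from Corollary \ref{Coro---BrauerRepWLE}. That corollary rests on Lemma \ref{lem---ColorWeing}, which is the Weingarten formula for independent Haar matrices on $U(N)$. The lemma, however, also covers $U_i$ Haar on $SU(N)$, and there the moments $\EE[U^{\ts\mfa}]$ are in general different. Indeed, $V^{\ts p}\ts {V^*}^{\ts q}$ has nonzero $SU(N)$-invariants with $p\neq q$ whenever $N$ divides $p-q$. So your starting expansion is not available as it stands, and the $SU(N)$ half of the statement is left unproved.

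The paper settles this before anything else. A Haar unitary has the law of $D_\xi S$, with $\xi$ uniform on $U(1)$ and independent of $S$ Haar on $SU(N)$. One has $|\Tr(\om(\xi S)A)|^2=|\Tr(\om(S)A)|^2$, and $r(\xi S)=r(S)$ when every letter has zero exponent sum in $r$, which is the surface-word situation where the lemma is used. Hence for $A,B\in SU(N)$ the two expectations coincide, and it suffices to treat $U(N)$.

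A repair that fits your framework and works for arbitrary $r$ is to show $\EE_{SU(N)}[U^{\ts\mfa}]=\EE_{U(N)}[U^{\ts\mfa}]$ for $N>s$. For each letter, the Haar average is the orthogonal projection onto the invariants of $V^{\ts p}\ts{V^*}^{\ts q}$, with $p+q\le s$.
\begin{itemize}
\item If $p=q$, scalar matrices act trivially, so the $SU(N)$- and $U(N)$-invariants agree.
\item If $0<|p-q|\le s<N$, the central element $e^{2i\pi/N}\Id_N\in SU(N)$ acts by $e^{2i\pi(p-q)/N}\neq 1$, so both invariant spaces are zero.
\end{itemize}
Lemma \ref{lem---TraceWordCharacterRelatorForm} holds for any $U_i\in U(N)$, so the same expansion and all your bounds then apply verbatim. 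The finitely many $N\le s$ are absorbed into $K$, exactly as you already do for small $N$.

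Apart from this, your $U(N)$ argument is the paper's proof.
\begin{itemize}
\item You use the expansion of Corollary \ref{Coro---BrauerRepWLE}.
\item The $\kappa$-factors are $O(N^{-h(\mfm)})$ by Corollary \ref{lem---ProjMixedTIrrep}.
\item $W_{N,\mfa}(\a,\b)=O(N^{-s+\#\a^{-1}\b})$. This is the correct exponent and the one the paper's proof actually uses; the $-2s$ printed in Lemma \ref{lem---ColorWeing} is a misprint.
\item The trace is at most $N^{\#\mathrm{c.c.}\,\mathcal{C}_I}$, by factoring it into traces of unitary words.
\item You conclude with $\chi(\Sigma_\mfm)=-s+\#\mathrm{c.c.}\,\mathcal{C}_I+\#\a^{-1}\b$.
\end{itemize}

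Your hedge about non-compatible $\tau$'s is unnecessary: every $\tau_1\ts\cdots\ts\tau_{|r|}\ts\id_{2|\om|}$ with $\tau_i\in\mathcal{D}_{n,m}$ automatically lies in $\mathcal{D}_{\mfa,\mfb}$.
\begin{itemize}
\item Vertical strands stay inside the $n$-part or the $m$-part of a single block, where both colourings are constant.
\item Horizontal strands join the $n$-part (arc colour $x$, boundary colour $R$) to the $m$-part (arc colour $x^{-1}$, boundary colour $R^{-1}$) of the same block.
\end{itemize}
So the index set of the corollary is exactly $\mathcal{M}_{n,m,r,\om}$.
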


\begin{proof}  When  $S,U,\xi$ are Haar distributed random variables on respectively $SU(N),$  $U(N)$ and $U(1),$  $U$ has same law $D_\xi S $ where $D_\xi$ is the diagonal matrix with diagonal coefficient $\xi.$ In particular when $A,B\in SU(N),$ the expectation in the left-hand side of \eqref{eq---BoundSquareTraceCharac} for $G_N=U(N)$ is the same as the one for $G_N=SU(N)$. It is therefore enough to prove the claim for $G_N=U(N).$

Recall \eqref{eq---WeingIrrep} and for $ \a,\b\in S_\varepsilon,\tau_u,\tau_b\in \mathcal{D}_\varepsilon,$ set \begin{equation}
\kappa^{\lambda,\mu}_N(\tau_u,\tau_b,\pi_{\a,\b}) =  \kappa^{\lambda,\mu}_N(Z_u,Z_b,\pi_{\a,\b})  
\end{equation}
if  for any $l\in\{u,b\},$  $\tau_l=Z_{u,i}\otimes\ldots\otimes Z_{l,|r|}\otimes \id_{2|\omega|}$ for some $Z_{l}\in\mathcal{D}_{\varepsilon}^{|r|}$ and $\kappa^{\lambda,\mu}_N(\tau_u,\tau_b,\pi_{\a,\b}) =0$ otherwise. Putting together Corollary \ref{Coro---BrauerRepWLE} and \eqref{eq---TraceBM},
\begin{equation}
\EE_U[ |\Tr(\omega(U_i)A)|^2\chi_{\la,\mu}(r(U_1,\ldots,U_k)B)]=\sum_{(\tau_u,\pi,\tau_b,\varphi)\in \mathcal{M}_{n,m,r,\omega}}  \kappa^{\lambda,\mu}_N(\tau_u,\tau_b, \varphi)N^{\# \text{c.c.}\,\mathcal{C}_I} \mathrm{tr}_{I}(A,B)
\end{equation}
where 
\[ \mathrm{tr}_{I}(A,B)= N^{-\#\text{c.c.}\, \mathcal{C}_I}\Tr_{\mathcal{T}_{\varepsilon,N}}(\tau_u\pi\tau_b\varphi V^{\otimes \varepsilon}) \]
and $V_1,\ldots ,V_{|r|+2|\omega|}\in U(N)$ are as in Corollary \ref{Coro---BrauerRepWLE}.    Since $\Tr_{\mathcal{T}_{\varepsilon,N}}(\tau_u\pi\tau_b\varphi V^{\otimes \varepsilon})$ is a product of $\#\mathcal{C}_I$ traces in words in $V_1,\ldots,V_s$ and their inverse,\footnote{More explicitely, for each $\mathcal{C}_I$-cycle $c$, consider the unitary matrix $h_c\in U(N)$ formed by multiplying resp. by  $B, B^{-1},A$ or $A^{-1}$ whenever $c$ visits $[n]\times \{1\},\{n+1,\ldots,n+m\}\times \{2\}, \{|r|(n+m)+1\}\times \{2\}$ or $\{|r|(n+m) +|\omega|+1\}\times \{2\}.$ Then \[ \mathrm{tr}_I(A,B)=\prod_{c\,\,\mathcal{C}_I-\text{cycle}} \frac 1 N\Tr(h_c).\]   } $| \mathrm{tr}_{I}(A,B)|\le 1.$ Now thanks to \eqref{lem---ColorWeing} and Lemma \ref{lem---ProjMixedTIrrep}, there is $K>0$ such that for any Brauer map  $(\tau_u,\pi,\tau_b,\varphi)\in \mathcal{M}_{n,m,r,\omega},$  
\begin{equation}
| \kappa^{\lambda,\mu}_N(\tau_u,\tau_b, \varphi)| N^{\#c.c.\,\mathcal{C}_I}\le  K N^{-s+\# c.c. \,\mathcal{C}_{II}+\# c.c.\,\mathcal{C}_{I}}=K N^{\chi(\Sigma_{\tau_u,\pi,\tau_b,\varphi})}. 
\end{equation}
\end{proof}

\begin{thm} \label{thm---BoundTraceWordCharactRel Size Irrep}Assume $\omega$ is a cyclically shortest length representative of an element of $\G_g\setminus\{1\}$ and $r$ is the surface word \eqref{eq---SurfaceWord}.  Assume $G_N$ is $U(N)$ or $SU(N).$ For any $k\ge1,$ there is $K_{k}>0$ such that for all $\a\in\mathbb{Z}^{\downarrow N}$ with $|\a|\le k,$ and  all $N\ge1,$ $A,B\in G_N,$
\[  | \EE_U[ |\Tr(\omega(U_1,\ldots,U_{2g})A)|^2\chi_{\a}(r(U_1,\ldots,U_{2g})B)]|\le  K_{k} N^{-|\a|}\]
where $U_1,\ldots,U_{2g}$ are independent and Haar distributed on $G_N.$
\end{thm}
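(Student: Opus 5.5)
Write $\a=[\la,\mu]_N+c1_N$ with $\la\vdash n$, $\mu\vdash m$ and $n+m=|\a|\le k$. First I remove the shift $c$. For $G_N=SU(N)$ the determinant factor is trivial. For $G_N=U(N)$, $\det(r(U))=1$ because $r$ is a product of commutators, so $\chi_\a(r(U)B)=\det(B)^c\chi_{[\la,\mu]_N}(r(U)B)$ and the modulus is unchanged. Since only finitely many pairs $(\la,\mu)$ occur, it is enough to prove the bound for one fixed pair, for $N$ large (say $N\ge n+m$). For small $N$ the left-hand side is bounded by $N^2 d_\a$, which is bounded uniformly once $N$ ranges over a finite set.

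Lemma \ref{lem---BoundTraceWordCharactRel Euler} bounds the expectation by $K N^{\max_\mfm(\chi(\Sigma_\mfm)-h(\mfm))}$ over $\mfm\in\mathcal{M}_{n,m,r,\omega}$. The theorem therefore reduces to a purely combinatorial statement:
\[\chi(\Sigma_\mfm)-h(\mfm)\le -(n+m)\quad\text{for every admissible Brauer map }\mfm.\]
I plan to prove this in two steps. First, I reduce to $h(\mfm)=0$. Each horizontal pair of strings in $\tau_u$ or $\tau_b$ is a Weyl contraction. I would show that replacing such a contraction by the identity strands changes the number of $\mathcal{C}_I$-circles by at most one, so $\chi(\Sigma_\mfm)$ rises by at most $1$ while $h$ drops by $1$. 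Hence the penalty $N^{-h}$ compensates for contractions, and one may take $\tau_u,\tau_b$ to be permutation diagrams in $S_{n,m}^{|r|}$.

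Second, in the permutation case, I analyse the surface $\Sigma_\mfm$ topologically. Gluing $\iota$-intervals along $\varphi$ produces a surface with boundary. Its boundary components read, in the labels $\mfa$, the words that appear: $n$ copies of $r$ and $m$ copies of $r^{-1}$, shuffled between different copies by the permutations in $\tau$, together with one copy each of $\omega$ and $\omega^{-1}$. Admissibility ($\a,\b\in S_{\mfa,\mfb}$) forbids matching an $R$-letter with an $R^{-1}$-letter, so no copy of $r$ can be cancelled against a copy of $r^{-1}$ directly. Following \cite{MageeII}, I would:
\begin{enumerate}
\item fill each $\mathcal{C}_I$-disc to obtain a map from a closed-up surface into the $K(\G_g,1)$ complex with one face;
\item use the fact that the boundary $\omega$-loop is nontrivial (it is cyclically reduced and not $1$ in $\G_g$);
\item show that every component of $\Sigma_\mfm$ containing $R$-boundaries has $\chi\le -(\text{number of }R\text{-copies it contains})$, and that components carrying only $\omega,\omega^{-1}$ have $\chi\le 0$.
\end{enumerate}
Summing over components gives $\chi(\Sigma_\mfm)\le -(n+m)$.

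The main obstacle is this last Euler characteristic bound. A naive count only gives $\chi\le 0$ per component. To get the extra $-1$ for each copy of the relator, I would argue by contradiction: a component with $\chi$ too large has a disc- or annulus-like piece in which many letters of $r$ are matched consecutively to letters of a neighbouring copy of $r$ or of $\omega$. Dehn's algorithm for surface groups (the refinement of \cite{BirmanSeries}) then says that a word containing more than half of a cyclic conjugate of $r^{\pm1}$ can be shortened. This contradicts either the cyclic minimality of $\omega$, or the fact that $r$-copies cannot cancel against each other, which is excluded by admissibility. I expect the delicate part to be the bookkeeping that turns the combinatorics of $\varphi=\pi_{\a,\b}$ with $\a\neq\b$ (non-quadrangle diagrams) into such a long common subword. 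I would first treat quadrangle diagrams, where $\Sigma_\mfm$ is a union of rectangles and the Dehn argument is cleanest, and then use $|\varphi|>0$ to absorb the extra $\#\a^{-1}\b$ deficit.
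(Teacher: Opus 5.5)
Your outer reductions are the paper's: removing the shift $c$ via $\det r(U)=1$, applying Lemma~\ref{lem---BoundTraceWordCharactRel Euler}, trading horizontal strands of $\tau_u,\tau_b$ against $N^{-h(\mfm)}$ as in Lemma~\ref{Lem---CutORCompress}, and passing to quadrangle diagrams as in Lemma~\ref{Lem---CutQuadrangles}. Your separate handling of $N<n+m$ is a useful detail.

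\textbf{The core estimate is missing.} The content of the theorem is the bound $\chi(\Sigma_\mfm)\le -(n+m)$ of Proposition~\ref{Prop---GeoBoundEuler} for permutation/quadrangle Brauer maps. You only announce it, and the mechanism you sketch cannot deliver it. You never explain how an excess of Euler characteristic would force a long matched subword.

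The combinatorial input you actually state is Dehn's property: more than half of a cyclic conjugate of $r^{\pm1}$ can be shortened. That only excludes blocs of length $>2g$, so it only gives $e(P)\le 2g\,\he(P)+2g\chi(P)$ for a piece $P$. But the exponent $-(n+m)$ is tight in the accounting. After cutting along $RR$ and $R^{-1}R^{-1}$ arcs, one has $\chi(\Sigma_\mfm)=\chi_-+\chi_+-N_{RR^*}$ with $\chi_-\le -\frac12\sum_P\chi(P)$. Suppose you have an inequality $e(P)\le c\,\he(P)+2g\chi(P)$. Combining it with $\sum_P e(P)=N_{WR^*}$, $2N_{RR^*}=\he'_{RR^*}+\sum_P\he(P)$ and $4g(n+m)=2N_{RR^*}+N_{WR^*}$ gives
$\chi(\Sigma_\mfm)+(n+m)\le \bigl(\frac{c+1}{2g}-1\bigr)N_{RR^*}+\chi_+-\frac{c\,\he'_{RR^*}}{4g}$.
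The coefficient of $N_{RR^*}$ is non-positive only if $c\le 2g-1$. That is exactly the Birman--Series constant of Lemmas~\ref{Lem-BSWORDS} and~\ref{lem---BoundBSPieces}. Its proof needs the absence of long \emph{chains} of blocs, not just of long blocs, and this is where cyclic minimality of $\omega$ (rather than mere Dehn-reducedness) enters.

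\textbf{The $W$/$W^{-1}$ mixing is not addressed.} This is what is new relative to \cite{MageeII}. $\Sigma_\mfm$ carries both a $W$- and a $W^{-1}$-boundary, possibly joined by $WW^{-1}$ arcs. So a region along the $\omega$-boundaries may read a concatenation of subwords of $\omega$ and $\omega^{-1}$, to which no shortest-word inequality applies. The paper handles this as follows.
\begin{itemize}
\item Pre-pieces are only the $2$-cells meeting exactly one $W^{\pm}$-interval. Then each piece is an annulus or a disc with two $W^{\varepsilon}R^{\varepsilon'}$-arcs on its boundary, and reads a subword of a single $\omega^{\varepsilon}$.
\item Joint discs meeting $WR^*$-arcs are bounded by $1-\frac{d^*}{2}\le -\frac{d_{WR^*}}{4}$.
\item $W^*$-discs are bounded using $d^*\ge 2$, since $\omega$ is reduced.
\item $R^{\pm}$-discs are bounded in total by $\he'_{RR^*}/(4g)$.
\end{itemize}

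Note also that copies of $r$ \emph{can} be matched against each other. $RR$ and $R^{-1}R^{-1}$ arcs are allowed and are precisely what the count above controls; admissibility only rules out $RR^{-1}$ arcs. So ``$r$-copies cannot cancel against each other'' is not an available contradiction.

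Without this quantitative piece/joint-disc bookkeeping and the sharp Birman--Series inequality, your step (3) remains an assertion rather than a proof.
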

\begin{proof}  For all $N\ge1,$ $U_1,\ldots,U_{2g},A,B\in G_N,$ $\la\vdash n,\mu\vdash m,$ and $c\in \mathbb Z,$ since $r(U_1,\ldots,U_{2g})\in SU(N),$
\[\chi_{[\la,\mu]_N+c1_N}(r(U_1,\ldots,U_{2g})B)= \det(B)^{c} \chi_{[\la,\mu]_N}(r(U_1,\ldots,U_{2g})B).\]
It is therefore enough to prove the bound for $\a=[\la,\mu]_N$ for  fixed  $\la\vdash n,\mu\vdash m.$    We can then apply Lemma \ref{lem---BoundTraceWordCharactRel Euler} and the claim  follows from Proposition  \ref{Prop---GeoBoundEuler} 
below.
\end{proof}

Following \cite{MageeII}, two types of upper-bounds on the Euler characteristic $\chi(\Sigma_\mfm)$ lead to the proof of Proposition  \ref{Prop---GeoBoundEuler}.

\subsubsection{Upper-bound I:  reducing $\tau$-diagrams to permutations and  $\varphi$-diagrams to quadrangles.}
 
 Let us argue that we can focus on a smaller set of Brauer maps.  The arguments are here relatively standard and variations of  the arguments in \cite[Sec. 4.2.]{MageeII}  
 adapted to our notations.

\begin{lem}  \label{Lem---CutORCompress} For each  Brauer map $\mfm=(\tau_u,\pi,\tau_b,\varphi)$ with arc-boundary colouring $\mfa,\mfb,$ there  are compatible permutation diagram $\tau_u',\tau_b'\in\mathcal{D}_{\mfa,\mfb}$ such that  
\[|\chi(\Sigma_{\tau_u',\pi,\tau_b',\varphi})-\chi(\Sigma_{\mfm})|\le h(\mfm).\]
\end{lem}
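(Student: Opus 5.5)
We argue by induction on $h(\mfm)=h(\tau_u)+h(\tau_b)$. The case $h(\mfm)=0$ is trivial, taking $\tau_u'=\tau_u$ and $\tau_b'=\tau_b$. The strategy is to remove one pair of horizontal strings at a time from $\tau_u$ or $\tau_b$. Each removal replaces the diagram by another compatible diagram with one fewer pair of horizontal strings, and should change $\chi(\Sigma_\mfm)$ by at most $1$. Iterating $h(\mfm)$ times then gives the claim.

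\textbf{Elementary move.} Suppose $\tau_u$ has $h(\tau_u)\ge 1$. Pick a top horizontal string matching $(i,1)$ with $(j,1)$, and a bottom horizontal string matching $(k,-1)$ with $(l,-1)$. Compatibility forces $\mfa(j)=\mfa(i)^{-1}$ and $\mfb(j)=\mfb(i)^{-1}$, and likewise for $k,l$. Replace these two strings by two vertical strings. There are two candidate reconnections, $(i,1)\leftrightarrow(k,-1)$, $(j,1)\leftrightarrow(l,-1)$ or the crossed one. We choose the one respecting the wall condition $\varepsilon(v')\eta'=-\varepsilon(v)\eta$. Because $\varepsilon(i)=-\varepsilon(j)$ and $\varepsilon(k)=-\varepsilon(l)$, exactly one choice is admissible. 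Moreover, colour compatibility is inherited: the admissible choice matches equal labels, since $\mfa$ determines $\varepsilon$. So the new diagram $\tau_u''$ lies in $\mathcal{D}_{\mfa,\mfb}$ with $h(\tau_u'')=h(\tau_u)-1$.

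\textbf{Effect on the surface.} The $1$-skeleton $\mathcal{G}_\mfm$ keeps the same number of vertices and edges, so $\chi(\mathcal{G}_\mfm)=-s$ is unchanged. The $\mathcal{C}_{II}$-circles depend only on $\varphi$, so they are also unchanged. The only change is to the $\mathcal{C}_I$-circles, which undergo a local surgery of the form ``two strands $\to$ two strands'', i.e.\ a $2$-switch on the $\tau$-intervals. By the formula $\chi(\Sigma_\mfm)=-s+\#\text{c.c.}\,\mathcal{C}_I+\#\a^{-1}\b$, it therefore suffices to show that $\#\text{c.c.}\,\mathcal{C}_I$ changes by at most $1$. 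This is the standard fact that reconnecting two arcs either merges two circles into one, splits one into two, or keeps the number fixed. Here it must be checked with the orientation convention of Remark \ref{rmk---OrientBrauer}, and the check goes as follows. The $\mathcal{C}_I$-circles are oriented, and a $2$-switch between two oriented arcs that preserves orientations changes the number of components by exactly $\pm1$. Our admissible choice is precisely the one preserving the orientation induced by $\iota$.

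\textbf{Iteration and obstacle.} Repeating the move on $\tau_u$ and then on $\tau_b$ yields permutation diagrams $\tau_u',\tau_b'$ after exactly $h(\mfm)$ moves. These are in fact block-diagonal of the form $\tau_1\otimes\cdots\otimes\id_{2|\om|}$ if we pair strings within the same block, which we may do since $\tau_{l,i}\in\mathcal{D}_{n,m}$ has equal numbers of top and bottom horizontal strings. By the triangle inequality, $|\chi(\Sigma_{\tau_u',\pi,\tau_b',\varphi})-\chi(\Sigma_\mfm)|\le h(\mfm)$. The main point to verify carefully is the existence of a colour- and wall-compatible reconnection. The concern is that the inherited labels might force the crossed pairing, which could violate the $\mfb$-constraint (no matching of $R$ with $R^{-1}$ across different blocks). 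To avoid this, we pick the bottom string inside the same $\mathcal{D}_{n,m}$ block as the top one, where all labels coincide up to inversion.
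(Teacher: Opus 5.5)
\textbf{Gap in the colour-compatibility step.} The elementary move rests on the sentence ``the admissible choice matches equal labels, since $\mfa$ determines $\varepsilon$''. This reverses the implication. The wall condition only forces each new vertical strand to join two points with the same sign $\varepsilon$, and equal signs do not force equal letters or equal $\mfb$-labels.

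For example, suppose the top string joins points coloured $a_1,a_1^{-1}$ and the bottom string joins points coloured $b_1,b_1^{-1}$. This already happens in $\mathcal{M}_{n,m,r,\om}$ when the two strings lie in blocks of different letters. Then the only wall-admissible reconnection joins an $a_1$-point to a $b_1$-point, and the resulting diagram is not in $\mathcal{D}_{\mfa,\mfb}$. So the problem is not just the $\mfb$-constraint you flag at the end.

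Your repair, pairing the two strings inside one $\mathcal{D}_{n,m}$ block, does work when $\tau_u,\tau_b$ have the form $\tau_1\ts\cdots\ts\tau_{|r|}\ts\id_{2|\om|}$. Inside a block, $\varepsilon$ does determine $\mfa$ and $\mfb$, and this is the only case needed for Proposition \ref{Prop---GeoBoundEuler}. However, the lemma is stated for an arbitrary Brauer map with arbitrary compatible $\tau_u,\tau_b\in\mathcal{D}_{\mfa,\mfb}$ and an arbitrary colouring. There is then no block structure, and a string may join two distant points carrying the same letter. So your argument does not prove the statement as given.

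\textbf{The missing idea.} Choose the two strings along a common cycle of $\tau_u$, meaning a connected component of $\mathcal{G}_{\tau_u}$, as the paper does.
\begin{itemize}
\item Along any such cycle, horizontal strings alternate between top and bottom. So if $h(\tau_u)>0$, you can pick a bottom string $\{v_b',v_b\}$ and a top string $\{v_t,v_t'\}$ that are consecutive horizontal strings, with the cycle read as $v_b',v_b,\dots,v_t,v_t'$.
\item Between $v_b$ and $v_t$ only vertical strands and closing edges $(x,1)$--$(x,-1)$ occur, and both preserve labels; only horizontal strings invert them. Hence $\mfa(v_b)=\mfa(v_t)$ and $\mfa(v_b')=\mfa(v_t')$, and likewise for $\mfb$.
\item Therefore the reconnection $v_b\leftrightarrow v_t$, $v_b'\leftrightarrow v_t'$ lies in $\mathcal{D}_{\mfa,\mfb}$. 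It is automatically the wall-admissible one, since equal letters give equal signs.
\end{itemize}
With this choice the rest of your argument goes through and coincides with the paper's proof: the $1$-skeleton and the $\mathcal{C}_{II}$-circles are unchanged, and an orientation-respecting $2$-switch changes $\#\,\mathrm{c.c.}\,\mathcal{C}_I$ by exactly $\pm1$. Your explicit use of the orientation to exclude the ``no change'' case is a welcome precision that the paper leaves implicit.
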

\begin{proof}  Consider a Brauer map $\mfm=(\tau_u,\pi,\tau_b,\varphi)$ with   arc-boundary colouring $\mfa:[s]\to\mathcal{A},\mfb:[s]\to\mathcal{S}$ and $h(\mfm)>0.$ Assume w.l.o.g. $h(\tau_u)>0.$ Consider one bottom and one top horizontal strand of $\tau_u$ {in the same cycle of 
$\tau_u.$} Denote their endpoints by  $v_b,v_b'\in [s]\times \{-1\}$ and $v_t,v_t'\in[s]\times \{1\}$  so that $\tau_u(v_b)=v_b',$  $\tau_u(v_t)=v'_t$ and $v'_bv_bv_tv'_t$  are cyclically ordered by the latter $\tau_u$-cycle. Since $\tau_u$ is compatible with $\mfa,\mfb$ {and that the two strands are in the same $\tau_u$-cycle}, we can assume
\begin{equation}
\mfa(v_t')=\mfa(v_b')=\mfa(v_b)^{-1}=\mfa(v_t)^{-1} \quad\text{and}\quad \mfb(v_t')=\mfb(v_b')=\mfb(v_b)^{-1}=\mfb(v_t)^{-1} .\label{Eq---CompRedPerm}
\end{equation}

 Consider the involution $\tau_u'$ of $[s]\times\{-1,1\}$ that agrees with $\tau_u$ away from $v_b,v_t,v'_b,v'_t$ and with
 \[{ \tau_u'(v'_b)=v'_t \quad\text{and}\quad \tau_u'(v_b)=v_t.}\]
 
Thanks to  \eqref{Eq---CompRedPerm}, $\tau_u'\in\mathcal{D}_{\mfa,\mfb}$ and by construction  $h(\tau_u')=h(\tau_u)-1.$  Setting $\mfm'=(\tau'_u,\pi,\tau_b,\varphi),$ it is enough to show  
\[|\chi(\Sigma_{\mfm'})- \chi(\Sigma_\mfm)|\le 1.\]
 
Denote  $I'_1,I_2'$ the intervals of $X_{\mfm'}$ with respective endpoints $(v'_b,v'_t)$ and $(v_b,v_t).$  All cells of $X_{\mfm}$ but $I_1,I_2$ can be identified  with a cell of   $X_{\mfm'}$ but $I'_1,$ $I'_2,$ with the exception of faces bounded respectively by $I_1$ or $I_2,$ and  by $I'_1$ or $I'_2$.   

with the exception of faces bounded respectively by $I_1$ or $I_2,$ and  by $I'_1$ or $I'_2$.

Either  $I_1,I_2$ belong to the same $\mathcal{C}_I$-circle  $I_2p_2 I_1p_1$  of $\mathcal{G}_\mfm$ and replacing them with $I'_1,I'_2$ cuts this cycle into the two $\mathcal{C}_I$-circles $I'_2p_2$ and $I'_1p_1$ of $\mathcal{G}_{\mfm'}$. Or vice-versa  they belong to two different $\mathcal{C}_I$-circles  $I_1p_1$ and $ I_2p_2$  and  replacing them with $I'_1,I'_2$ merges these  $\mathcal{C}_I$-circles of $\mathcal{G}_\mfm$ into the $\mathcal{C}_I$-circle  $I'_2p_2 I'_1p_1$ of $\mathcal{G}_{\mfm'}.$ In both cases, only the number of faces changes by $\pm 1$, so $\chi(\Sigma_{\mfm'})-\chi(\Sigma_{\mfm})$ is respectively  $1$ and $-1$.

\end{proof}

\begin{figure} 
\includegraphics[width=6cm]{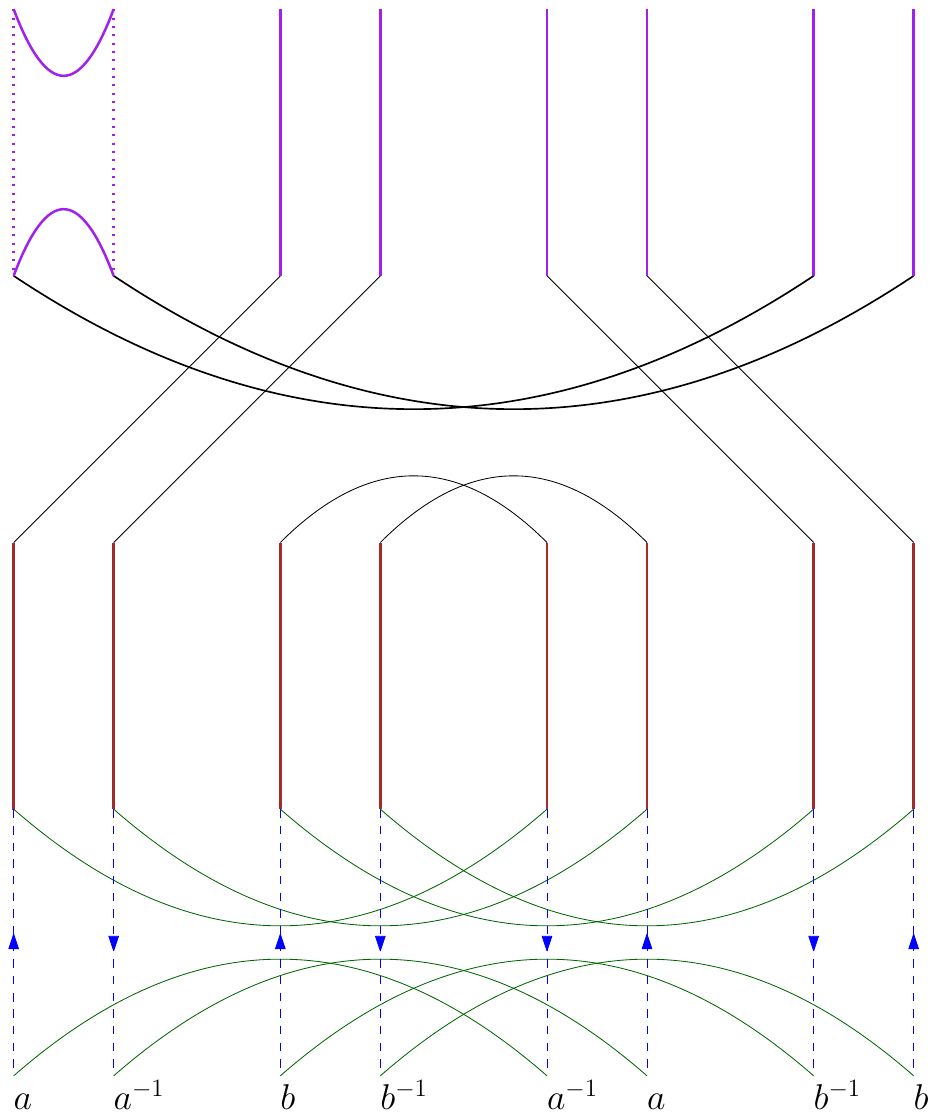}
 \qquad \includegraphics[width=6cm,height=7 cm]{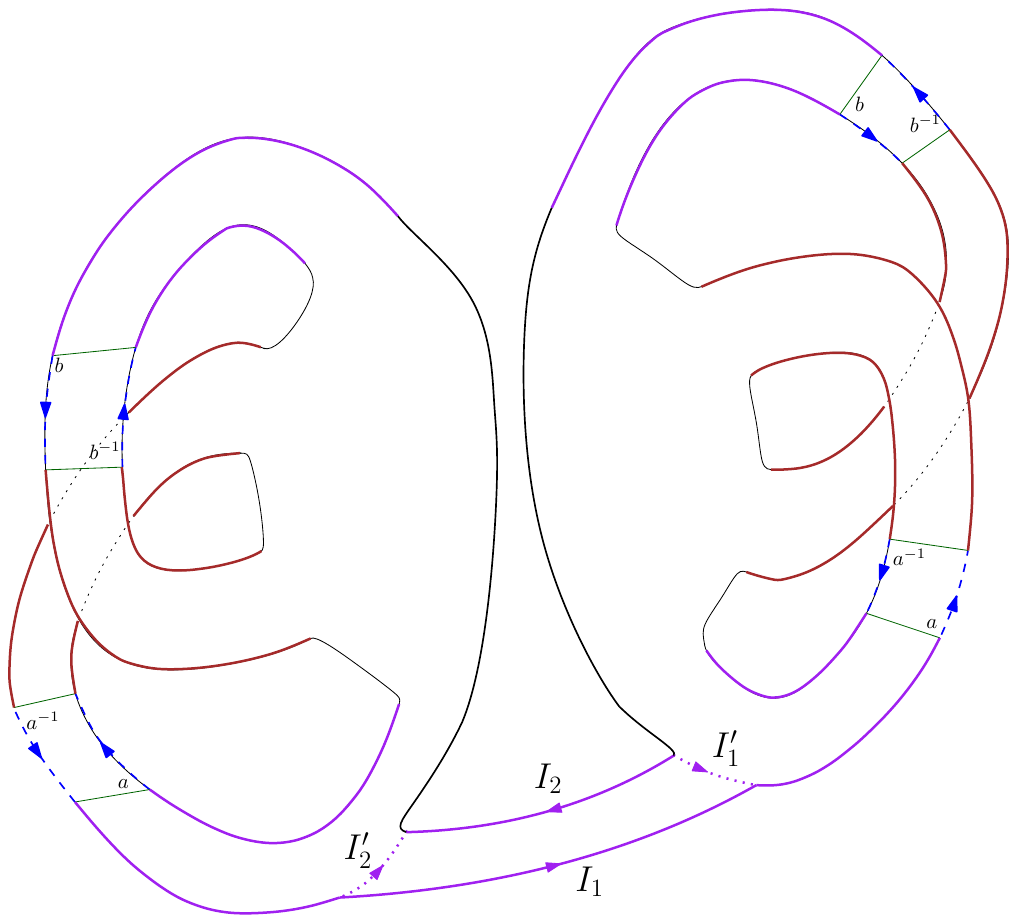}
\caption{A Brauer map \label{TwistedBrauerMap} $\mfm$ with $h(\mfm)=1$ and its associated surface.  The intervals $I_1,I_2,I'_2,I'_3$ displayed in the right-hand side are as defined  in the first case of  Lemma  \ref{Lem---CutORCompress}'s  proof. }
\end{figure}

Similarly to the previous lemma, let us argue that we can focus on Brauer maps $(\tau_u,\pi,\tau_b,\varphi)$ with $\varphi$ a quadrangle diagram.

\begin{lem}  \label{Lem---CutQuadrangles} For each  Brauer map $(\tau_u,\pi,\tau_b,\varphi)$ with arc-boundary colouring $\mfa,\mfb,$ there is a quadrangle diagram $\varphi'\in\mathcal{D}^h_{\mfa,\mfb}$ with 
\[\chi(\Sigma_{\tau_u,\pi,\tau_b,\varphi})\le \chi(\Sigma_{\tau_u,\pi,\tau_b,\varphi'}).\]
When   $\varphi$ is admissible\footnote{recall the definition of subsection \ref{SubSec--HorizontalDiag}} for $\mfa,\mfb$, so is $\varphi'.$  
\end{lem}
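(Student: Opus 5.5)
Write $\varphi=\pi_{\a,\b}$ with $\a,\b\in S_\varepsilon$. I would proceed by induction on $|\varphi|=|\a^{-1}\b|$, modelled on the surgery in the proof of Lemma \ref{Lem---CutORCompress}. The target is a modification $\varphi\mapsto\varphi''$ that
\begin{itemize}
\item lowers $|\varphi|$ by one,
\item keeps the Euler characteristic from decreasing,
\item and preserves admissibility.
\end{itemize}
Starting from $\chi(\Sigma_\mfm)=-s+\#\text{c.c.}\,\mathcal{C}_I+\#\a^{-1}\b$, note what changes under such a move. The term $-s$ is fixed. The number of $\mathcal{C}_{II}$-circles $\#\a^{-1}\b$ rises by exactly one when we multiply $\a^{-1}\b$ by a transposition whose entries lie in the same cycle. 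The number of $\mathcal{C}_I$-circles changes by at most one.

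Concretely, suppose $|\varphi|>0$. Choose a cycle of $\a^{-1}\b$ of length at least two, and two distinct points $v,w\in\varepsilon^{-1}(-1)$ in it. I would replace the bottom matching $\b$ by $\b\circ(v\,w)$. This amounts to swapping the endpoints of two bottom $\varphi$-strands $(v,-1)$–$(\b(v),-1)$ and $(w,-1)$–$(\b(w),-1)$, which is exactly the local move of Lemma \ref{Lem---CutORCompress}.
\begin{itemize}
\item On $\mathcal{C}_{II}$, the cycle of $\a^{-1}\b$ is split, so $\#\varphi$ increases by $1$.
\item On $\mathcal{C}_I$, the two intervals are either on the same circle, which is then cut into two, or on two circles, which are then merged. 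So $\#\text{c.c.}\,\mathcal{C}_I$ changes by $\pm1$.
\end{itemize}
In the cutting case $\chi$ strictly increases. In the merging case $\chi$ is unchanged. Either way $\chi(\Sigma_{\mfm''})\ge\chi(\Sigma_\mfm)$ and $|\varphi''|=|\varphi|-1$. Iterating ends at $|\varphi'|=0$, that is a quadrangle diagram $\varphi'=\pi_{\a',\a'}$.

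The main obstacle is the colouring constraint. For $\varphi''$ to remain in $\mathcal{D}^h_{\mfa,\mfb}$, so that the map stays a Brauer map and admissibility survives, we need $\mfa(\b(w))=\mfa(v)^{-1}$ and $\mfa(\b(v))=\mfa(w)^{-1}$. In addition, when $\varphi$ is admissible, the $R$/$R^{-1}$ condition \eqref{eq---AdmissibilityPerm} must still hold. The swap is allowed only if $v$ and $w$ carry the same $\mfa$-label, and the same $\mfb$-type with respect to $R^{\pm1}$.

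I would obtain such a pair from the cycle structure. Along a $\mathcal{C}_{II}$-circle the points alternate $v\mapsto\b(v)\mapsto\a^{-1}\b(v)$. Since $\a,\b\in S_\mfa$, we have $\mfa(\a^{-1}\b(v))=\mfa(v)$, so every point on a given cycle of $\a^{-1}\b$ has the same $\mfa$-label. The same argument applies to the admissibility class $\{R,R^{-1}\}$ versus $\{W,W^{-1}\}$, because $\a,\b\in S_{\mfa,\mfb}$. Hence any two points $v,w$ in one nontrivial cycle are compatible. It follows that $\b\circ(v\,w)$ lies in $S_\mfa$, and in $S_{\mfa,\mfb}$ whenever $\b$ does.

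Finally, since $\tau_u,\pi,\tau_b$ are untouched, the tuple is still a Brauer map. The inequality then follows by induction.
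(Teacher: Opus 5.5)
Your surgery and Euler-characteristic bookkeeping are sound. They are essentially the paper's argument with the roles of $\a$ and $\b$ exchanged:
\begin{itemize}
\item the paper keeps $\b$ fixed and replaces $\a$ by $(a\,\sigma(a))\a$, where $\sigma=\b\a^{-1}$;
\item you keep $\a$ fixed and replace $\b$ by $\b\circ(v\,w)$.
\end{itemize}
In both cases a cycle of $\a^{-1}\b$ splits and $\#\text{c.c.}\,\mathcal{C}_I$ changes by $\pm1$. Your observation that $\mfa$ is constant along cycles of $\a^{-1}\b$ correctly gives $\b\circ(v\,w)\in S_\mfa$.

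The step that fails is the admissibility claim. The $R^{\pm1}$-type is \emph{not} constant along a cycle of $\a^{-1}\b$. Each of $\a,\b$ may pair an $R$- or $R^{-1}$-point with a $W^{\pm1}$-point, so both types can lie on one cycle. For example, take a $3$-cycle $(v\,w\,u)$ of $\a^{-1}\b$ with
\[
\mfb(v)=R,\quad \mfb(\b(v))=W,\quad \mfb(w)=\mfb(\b(w))=R^{-1},\quad \mfb(u)=W^{-1},\quad \mfb(\b(u))=R.
\]
This is compatible with $\a,\b\in S_{\mfa,\mfb}$, and it is realised for instance with $n=m=1$ and $\om$ containing the letter $a_1$. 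Yet $\b\circ(v\,w)$ sends the $R$-point $v$ to the $R^{-1}$-point $\b(w)$, which violates \eqref{eq---AdmissibilityPerm}. So the two assertions ``any two points of a nontrivial cycle are compatible'' and ``$\b\circ(v\,w)\in S_{\mfa,\mfb}$ whenever $\b$ is'' are false.

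The repair is easy, and it is exactly what the paper does; its footnote warns that the intermediate $\a'$ may leave $S_{\mfa,\mfb}$. You do not need admissibility at intermediate steps: a Brauer map only requires $\varphi\in\mathcal{D}^h_\varepsilon$, and you retain $S_\mfa$ anyway. Because you never modify $\a$, the induction stops when $|\a^{-1}\b'|=0$, that is, at the quadrangle $\pi_{\a,\a}$. This quadrangle is admissible as soon as $\a\in S_{\mfa,\mfb}$, in particular whenever $\varphi$ is admissible.
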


\begin{proof}  Consider a Brauer map $\mfm=(\tau_u,\pi,\tau_b,\varphi)$ with   arc-boundary colouring $\mfa:[s]\to\mathcal{A},\mfb:[s]\to\mathcal{S}$ and $|\varphi|>0.$ 
Assume $\varphi=\pi_{\a,\b},$ set {$\sigma=\beta\alpha^{-1}$ and 
$\tilde\sigma=\alpha^{-1}\beta$. }  Let us first argue that there is  $\a'\in S_{\mfa}$ such that\footnote{{With the choice below,  in this first step, $\a'\in S_{\mfa}$ but possibly $\a'\not\in S_\mfb,$  even when $\varphi\in D_{\mfa,\mfb}$.}} setting $\varphi'=\pi_{\a',\b} $ and $\mfm'=(\tau_u,\pi,\tau_b,\varphi'),$
\begin{equation}
|\varphi'|=|\varphi|-1\quad\text{and}\quad\chi(\Sigma_{\mfm'})-\chi(\Sigma_\mfm)\in\{0,2\}.\label{eq---InductionCUTVARPHI}
\end{equation}
Since $|\varphi|>0,$ there is  a cycle  of  $\varphi$ with length larger than $2.$  
Orienting  $\varphi$-intervals of the associated cycle as in remark  \ref{Rmk---FaceDiagram}, choose five consecutive intervals $I_1,\ldots,I_5$ of the associated $\mathcal{C}_{II}$-circle.
% As illustrated on figure \ref{Fig---CUT},
Assume w.l.o.g. the $\varphi$-intervals $I_1, I_3,I_5$  to have endpoints $((a,-1),(b,-1)),$ $((b,-2),(\sigma(a),-2))$  and $((\sigma(a),-1),(\tilde\sigma(b),-1))$ with $b=\a^{-1}(a)$ for some $a\in [s].$   Set 
\[\a'= (a \,\sigma(a)) \a \quad\text{and}\quad\varphi'=\pi_{\a',\b}.\]
With this choice  $\a'(b)=\sigma(a)=\b(b),$   ${\a'}^{-1}\b(b)=b $ and ${\a'}^{-1}\b$ has exactly one more cycle than $\a^{-1}\b$ with  $b$ as additional fixed point and  \begin{equation}
\#\varphi'=\#\varphi+1\label{eq---cylePlus}
\end{equation} so that  $|\varphi'|=|\varphi|-1.$ Moreover as  $\varepsilon\circ \sigma=\varepsilon$ and $\mfa\circ \sigma=\mfa,$ $\varphi'\in\mathcal{D}_{\varepsilon}$ and $\a'\in S_{\mfa},$ and $\mfm'= (\tau_u,\pi,\tau_b,\varphi')$ is a Brauer map.  
The cells of the complex $X_\mfm$ can be identified with the ones of $X'=X_{\mfm'}$  except for two $\varphi$-intervals,  $I_1,I_5$ for $X$, $I'_1,I'_5$ for $X'$, and the faces bounding them. Here the intervals $I'_1,I'_5$ of $X'$ have endpoints 
\[((a,1),(\tilde\sigma(b),1))\quad\text{and}\quad((\sigma(a),1),(b,1)).\]
By \eqref{eq---cylePlus}, $X'$ has exactly one $\mathcal{C}_{II}$ -circle more than $X.$ Similarly to the previous Lemma, depending on whether $I_1,I_5$ belong to the same $\mathcal{C}_I$-circle of $X,$ $X'$ has one more or one  less $\mathcal{C}_I$-circle  as $X$ and the second claim of \eqref{eq---InductionCUTVARPHI} follows.

We conclude by induction on $|\a^{-1}\b|$ that 
\[\chi(\Sigma_{\tau_u,\pi,\tau_b,\varphi})\le \chi(\Sigma_{\tau_u,\pi,\tau_b,\varphi'})\]
with $\varphi'=\pi_{\b,\b}.$  If $\varphi$ is admissible  then $\a,\b\in S_{\mfa,\mfb}$ and $\varphi'$ is also admissible.

\end{proof}

\subsubsection{Upper-bound II: bound for shortest words representatives}
\label{sec---bound for shortest words representatives}
Let us fix here $\mathcal{A}=\{a_1,b_1\ldots,a_g,b_g,a_1^{-1},b_1^{-1},\ldots,b_g^{-1}\}$ to be an alphabet with $2g$ letters and their inverse  and consider a surface word
\begin{equation}
r=[a_1 \,b_1]\ldots [a_g\,b_g]\label{eq---SurfaceWord}
\end{equation}
where for $x,y\in\mathcal{A},$ $[x \, y]=xyx^{-1}y^{-1}$ for $g\ge2$ and a word $w$ in $\mathcal{A}$  which is a \emph{cyclically shortest length representative} of an element of  surface group
\[\G_g=\left\langle a_1,b_1,\ldots,a_g,b_g|r\right\rangle\]
We then consider for $n,m\ge1$ the diagram $\pi^{n,m}_{r,\omega}$
defined in section \ref{Sec---Second moment diagram for one relator} together with the associated  colouring 
$\mfa:[s]\to\mathcal{A},\mfb:[s]\to \mathcal{S}=\{R,R^{-1},W,W^{-1}\}$ with 
\[s=4g(n+m)+2|w|.\]

The following is a refinement of the argument of [Magee II, section 4.3], allowing to consider surfaces with a $W $-boundary  and a $W^{-1}$-boundary in place of  only one  $W$  boundary.

\begin{prop} \label{Prop---GeoBoundEuler}
Assume   $\mfm=(\tau_u,\pi,\tau_b,\varphi)$ is an admissible Brauer map with $\pi=\pi^{n,m}_{r,\omega}$. 
 
Then 
\[\chi(\Sigma_\mfm)\le -n-m+h(\mfm). \]
\end{prop}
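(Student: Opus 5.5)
First use the two reductions of Upper-bound I to bring everything to the case $h(\mfm)=0$ with $\varphi$ a quadrangle. By Lemma \ref{Lem---CutORCompress} we replace $\tau_u,\tau_b$ by compatible permutation diagrams $\tau_u',\tau_b'$. This changes $\chi$ by at most $h(\mfm)$, so it suffices to prove $\chi(\Sigma_{\mfm'})\le -n-m$ for admissible Brauer maps $\mfm'$ with $h(\mfm')=0$. By Lemma \ref{Lem---CutQuadrangles} we then replace $\varphi=\pi_{\a,\b}$ by an admissible quadrangle $\pi_{\b,\b}$ without decreasing $\chi$.

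One point needs care here. Lemma \ref{Lem---CutORCompress} only asserts compatibility of $\tau'$, not that $\tau'$ keeps the product form $\tau_1\otimes\cdots\otimes\tau_{|r|}\otimes\id_{2|\omega|}$. I would check from its proof that the reconnection only occurs inside a single $\mathcal{D}_{n,m}$ block, since it swaps endpoints of two horizontal strands of the same block. Hence $\tau'_u,\tau'_b$ act as permutations within each $S_{n,m}$-block, and they are identity on the $W,W^{-1}$ part.

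In the reduced case, $\Sigma_\mfm$ is a genuine surface with a transparent description. Each quadrangle of $\varphi=\pi_{\b,\b}$ glues an arc-letter $x$ to an $x^{-1}$ in a rectangle. The $\mathcal{C}_I$-cycles are the boundary components; after the block permutations, these give $n+m$ copies of the relator $r$ (oriented according to sign, each block yielding $n$ copies of $R$ and $m$ of $R^{-1}$), together with one $W$ and one $W^{-1}$ boundary. Cutting along the quadrangles, I would identify $\Sigma_\mfm$ with a surface tiled by rectangles, whose dual structure maps to the $2$-complex of $\G_g$ with one $4g$-gon. Concretely, each connected component of $\Sigma_\mfm$ comes with a map to the closed genus $g$ surface, sending $R$-type boundary pieces to $4g$-gon faces. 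Admissibility (condition \eqref{eq---AdmissibilityPerm}) forbids an $R$ letter to be glued to the same letter position in an $R^{-1}$ copy. This is exactly the condition excluding cancelling pairs of faces (reduced van Kampen diagrams).

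The core estimate, following \cite[Sec. 4.3]{MageeII}, is then to prove $\chi(C)\le -(\#\text{$R$-boundaries in }C)$ for each connected component $C$, summing to $-n-m$. I would argue by contradiction and split into components. Components without $W$-boundaries are closed up by $R$-boundaries only and behave like tiled surfaces of genus $\ge g$ mapped to $\Sigma_g$. For these, a Gauss–Bonnet/curvature count on the $4g$-gons (each $R$-boundary has $4g$ corners, vertices of valence $\ge$... as in Dehn's algorithm) gives $\chi\le -\#R$.

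The main obstacle is the component(s) containing the $W$ and $W^{-1}$ boundaries, which may be the same component or different ones. If $\chi(C)> -\#R(C)$, a combinatorial curvature argument produces either a region of the boundary word $W$ containing more than half of a relator (more than $2g$ consecutive letters of a cyclic conjugate of $r^{\pm1}$), or a pair of $R$-faces that can be cancelled. The first possibility is the point where the refinement of Dehn's algorithm \cite{BirmanSeries} enters. In the Birman–Series sense, $\omega$ being cyclically shortest forbids $\omega$ from containing more than half a relator, except for the $2g$-letter "halves", which only give length-preserving rewrites. The second possibility is excluded by admissibility. Having two boundaries $W$ and $W^{-1}$ rather than one, I would handle each boundary separately, distributing curvature to each and applying the shortest-word property to each. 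The annulus case, where $W$ is glued directly to $W^{-1}$ with no $R$-faces, gives $\chi=0$ with $\#R=0$, consistent with the bound. Summing over components yields $\chi(\Sigma_\mfm)\le -n-m+h(\mfm)$.
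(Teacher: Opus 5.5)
Your reductions via Lemmas \ref{Lem---CutORCompress} and \ref{Lem---CutQuadrangles} match the paper. Your remark that the reconnection in Lemma \ref{Lem---CutORCompress} stays inside one $\mathcal{D}_{n,m}$-block is also correct.

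\textbf{First gap: the boundary structure.} The $\mathcal{C}_I$-circles bound the $2$-cells; the boundary of $\Sigma_\mfm$ is made of $\iota$, $\pi$ and $\tau$-intervals. After the block permutations, a relator boundary reads $r^{\pm p}$, where $p$ is the length of a cycle of the composed block permutation. So there are in general fewer than $n+m$ relator boundaries, and $n,m$ count them \emph{with multiplicity}. This is the whole content of the proposition. If every multiplicity is $1$, then $\chi(\Sigma_\mfm)+n+m=2\sum_C(1-h_C)-2$, summing over components $C$ with genera $h_C$. The only component that can be planar is one containing both $W$ and $W^{-1}$:
\begin{itemize}
\item a planar component with only $R^{\pm1}$-boundaries caps off to a sphere mapping with nonzero degree onto the genus $g$ surface;
\item a planar component containing exactly one of $W,W^{-1}$ caps off to a van Kampen disc for $\omega$, contradicting $\omega\neq 1$.
\end{itemize}
So in that case the bound is automatic. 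Your target ``$\chi(C)\le-\#R(C)$'', with ``each $R$-boundary has $4g$ corners'', is written for this multiplicity-one picture. It never explains why a boundary reading $r^p$ costs $p$.

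\textbf{Second gap: components meeting $W$ or $W^{-1}$.} This step is asserted, not proved, and your criterion is not the one that makes the count close. The paper proceeds as follows.
\begin{itemize}
\item After collapsing rectangles to $\a$-arcs, it groups pre-pieces into pieces, each seeing only $W$ or only $W^{-1}$. This keeps $WW^{-1}$-arcs out of the word combinatorics.
\item It proves $e(P)\le(2g-1)\he(P)+2g\chi(P)$ (Lemma \ref{lem---BoundBSPieces}).
\item It cuts along $RR^*$-arcs and computes $\chi$ via the dual graph, split into $\chi_-$ and $\chi_+$ (Lemmas \ref{Lemma---JointDNeg} and \ref{Lemma---JointDPos}). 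The latter uses that discs meeting only $WW^{\pm1}$-arcs have $d^*\ge 2$ because $\omega$ is cyclically reduced.
\item It closes with $4g(n+m)=2N_{RR^*}+N_{WR^*}$, which is exactly where multiplicities are accounted for.
\end{itemize}
The coefficient $2g-1$ in Lemma \ref{lem---BoundBSPieces} comes from excluding long \emph{chains} of $(2g-1)$-blocs, not just blocs longer than $2g$. Dehn's ``no more than half a relator'' only yields $e(P)\le 2g\he(P)+2g\chi(P)$. With that, the same count leaves an excess of $N_{RR^*}/(2g)$ over $-n-m$. Your remark that $2g$-letter halves give only length-preserving rewrites does not address chains. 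The dichotomy ``long relator subword or cancelling pair'' is precisely the statement that would need a proof.
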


Thanks to the reduction lemmas  \ref{Lem---CutORCompress} and \ref{Lem---CutQuadrangles}, to prove Proposition \ref{Prop---GeoBoundEuler}, it  is enough to show  
\begin{equation}
\chi(\Sigma_{\tau_u,\pi,\tau_b,\varphi})\le -n-m,\label{eq---EulerBoundQBrauerMap}
\end{equation}
when $\tau_u,\tau_b$ and $\varphi=\pi_{\a,\a}$ are respectively  compatible permutation diagrams and an admissible 
quadrangle diagram, hence with $\tau_u,\tau_b\in\mathcal{D}^0_{\mfa,\mfb},\a\in S_{\mfa,\mfb}.$  Under this assumption, let us  rename  $\varphi$-arcs $\a$-arcs. For such a Brauer map each $\mathcal{C}_{II}$-face of $\Sigma_{\tau_u,\pi,\tau_b,\varphi}$ can 
be identified with a rectangle 
with two parallel $\a$-arcs and two parallel $\iota$-arcs labeled by a letter of $\{a_1,b_1,\ldots,b_g\} $ and its inverse. 
 Recall that each connected component of 
$\partial \Sigma_{\tau_u,\pi,\tau_b,\varphi}$ is the concatenation of $\tau,\pi$ and $\iota$ intervals. Since  $\tau_u,\tau_b,\pi,\id_{s}$ are compatible for $\mfb,$   vertices along each  boundary component have same $\mfb$-colouring say  $B\in\mathcal{S}$. In the latter case call the boundary component \emph{ $B$-boundary.} Since $\tau_u,\tau_b\in\mathcal{D}_{\mfa,\mfb}^0$, reading 
the $\mfa$-value  of  $\iota$ intervals along an 
oriented $B$-boundary  spells out  a power $p$ of a cyclic permutation of the word labeled $B$ for each cycle of length $p$ of $\tau.$ We then assign the \emph{multiplicity of the boundary component to be $p. $}   Note that  $p=1$ when 
$B\in\{W,W^{-1}\},$  since, as permutations,  $\tau_u,\tau_b$ act trivially on $\mfb^{-1}(\{W,W^{-1}\}).$ Counted with their multiplicity, there are $n$ boundaries  coloured  $R,$ $m$  coloured $R^{-1},$ one   $W$ and one
   $W^{-1}.$  Recalling the constraint \eqref{eq---AdmissibilityPerm},  since $\a\in S_{\mfa,\mfb},$ an $\a$-arc cannot have one endpoint in a  $R$-boundary  and the other in a $R^{-1}$-boundary.

Let us denote by $\Sigma_{\tau_u,\tau_b,\omega,\a}$  the surface  obtained  by retracting  each rectangle  in the directions of $\iota$-arcs to a  middle $\a$-parallel interval, thereby 
collapsing their top and bottom $\a$-arcs, labelling the middle arc with a transverse vector  and $\{a_1,\ldots,b_{2g}\}$-label  as illustrated in the example of  Figure 
\ref{Fig---ContractingRect}. 
  \begin{figure}\label{Fig---ContractingRect}

{\centering  \includegraphics[width=12cm,height=6cm]{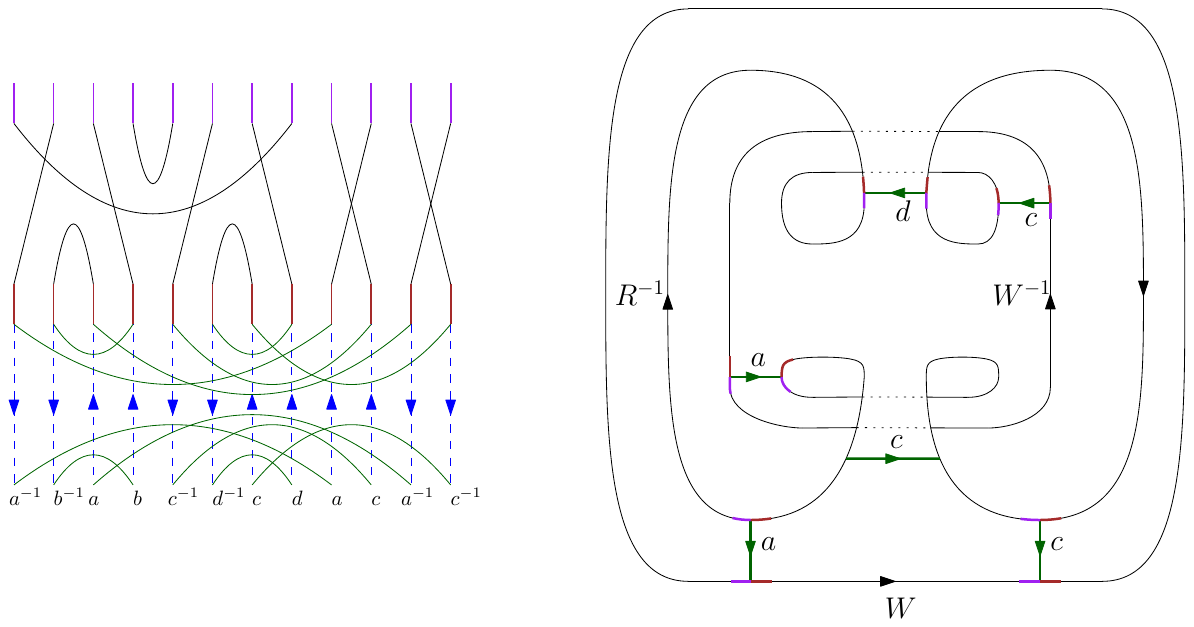}
}

 \caption{Left: a Brauer map associated to a quadrangular Brauer map $(\tau_u,\pi,\tau_b,\varphi)$ with $\pi=\pi^{0,1}_{r,ac}$ and its associated colouring, in particular,  
  $\mfb(s)$  is  $R^{-1}, W$ and $W^{-1}$   when $s$ is in  respectively $\{1,\ldots,8\}, \{9,10\}$ and $  \{11,12\}.$  Right: its associated surface with marked arcs in green. }
\end{figure}

The surface $\Sigma_{\tau_u,\tau_b,\omega,\a}$ being homeomorphic to $\Sigma_{\tau_u,\pi,\tau_b,\varphi},$ $\chi(\Sigma_{\tau_u,\tau_b,\omega,\a})=\chi(\Sigma_{\tau_u,\pi,\tau_b,\varphi})$ and we need to prove 
\begin{equation}
\chi(\Sigma_{\tau_u,\tau_b,\omega,\a})\le -n-m\label{eq---EulerBoundCullerS}.
\end{equation}
The new surface $\Sigma_{\tau_u,\tau_b,\omega,\a}$ is the total space of a $2$-CW complex  $X_{\tau_u,\tau_b,\omega,\a}$ with $2$-cells in one-to-one correspondence with $\mathcal{C}_I$ cycles of $X_{\tau_u,\pi,\tau_b,\varphi}$ and  its $1$-skeleton can be identified to the $1$-skeleton of $X_{\tau_u,\pi,\tau_b,\varphi}$ where $\iota$-intervals are collapsed to their middle point.  
The boundary of $\Sigma_{\tau_u,\tau_b,\omega,\a}$ is a disjoint union of circles  in one-to-one correspondence with boundary components of  $\Sigma_{\tau_u,\pi,\tau_b,\varphi}$,  each c.c. of  $\partial\Sigma_{\tau_u,\tau_b,\omega,\a}$  being obtained from the corresponding 
component of $\partial \Sigma_{\tau_u,\pi,\tau_b,\varphi}$ by contracting its $\iota$-intervals to 
points.  We then label each c.c. of $\partial\Sigma_{\tau_u,\tau_b,\omega,\a} $ with the same $\mfb$-label and same multiplicity so that, counting the latter multiplicity, there are exactly  $n$ $R$-boundaries,  $m$ $R^{-1}$-ones,   one  $W$ and  one  $W^{-1}.$  For $B\in\mathcal{S},$ we  denote by $\partial \Sigma_{B}$ the union of $B$-boundaries of  $\Sigma_{\tau_u,\tau_b,\omega,\a}.$
%\color{orange}
{Following an oriented boundary component of $\Sigma_{\tau_u,\tau_b,\omega,\a}$  reading the label  
of the transverse vector whenever an $\a$-arc is crossed positively, or its inverse when it is crossed negatively, spells out a word   cyclically equivalent either to $\omega,\omega^{-1}$  or to a power of $r.$  Since $\omega,\omega^{-1}$ are cyclically reduced, the two endpoints of  an $\a$-arc cannot appear \emph{consecutively} on the same $W$ or $W^{-1}$ boundary.
}
 
\begin{rmk}
For each open disc, its boundary in $\Sigma_{\tau_u,\tau_b,\omega,\a}$ is given by the range of 
the closed curve  obtained by concatenation of $\a$ arcs and $\tau,\pi$-intervals according to the associated $\mathcal{C}_I$-circle.
Mind that this curve might go through the same $\a$-arc of $\Sigma_{\tau_u,\tau_b,\omega,\a}$  twice with opposite orientation.
\end{rmk}

 For $B,B'\in\mathcal{S},$ we say an $\a$-arc is a $BB'$-arc when one endpoint belongs  to an  $B$-boundary and the other to a $B'$-boundary.  Recall that since $\a\in S_{\mfa,\mfb},$ \textbf{there are no $RR^{-1}$-arcs}.\footnote{In contrast, there might be 
$WW^{-1}$ arcs.}  We say an arc is  a  $WR^*$-arcs if it is  of type $WR,WR^{-1},W^{-1}R$ or $W^{-1}R^{-1}$.

\begin{rmk}
As the boundary components of $\Sigma_{\tau_u,\tau_b,\omega,\a}$ can include both $W$ and $W^{-1}$-intervals, the argument of \cite{MageeII} does not apply a priori. We shall however 
argue that it can be adapted and that the bound \eqref{eq---EulerBoundCullerS} still holds. 

\end{rmk}

We will next compute and bound $\chi(\Sigma_{\tau_u,\tau_b,\omega,\a})$ cutting $\Sigma_{\tau_u,\tau_b,\omega,\a}$ along some $\a$-arcs, extracting discs or annuli  along  $W$ and $W^{-1}$-boundaries of $\Sigma_{\tau_u,\tau_b,\omega,\a}$, such 
that 
each component has in its boundary only $W$-intervals or  only $W^{-1}$ intervals.\footnote{Boundary components of pieces shall be concatenation of  $\a$-arcs and of boundary 
intervals. Also, since there are no $RR^{-1}$-arcs, boundary component of pieces shall include only $R$-intervals, or only $R^{-1}$-intervals.}  As  in \cite{MageeII}, for these { "good" components}, called   below pieces,  the shortest length condition on  $\om$ allows to  bound from above the number of $W^{\pm}R^{\pm},W^{\pm}R^{\mp} $ arcs by the ones of $RR$ or $R^{-1}R^{-1}$  arcs.\footnote{Allowing components with  $WW^{-1}$-arcs would break this bound.}   The main points here is then to prove that a variant of \cite{MageeII}  yields the correct Euler characteristic bound for the remaining components despite having possibly more types of components.  The argument can be split in two, with a first part bounding  the Euler characteristics, denoted $\chi_-$ below, of a "neighbourhood" of the good components. 

\begin{defn}  Each $2$-cell of $X_{\tau_u,\tau_b,\omega,\a}$ has a boundary  in  $\Sigma_{\tau_u,\tau_b,\omega,\a}$ which is obtained by alternatively concatenating $\a$-arcs and    intervals  of $\partial\Sigma_{\tau_u,\tau_b,\omega,\a}.$  We say such a $2$-cell is a \emph{pre-piece} 
when exactly one of these intervals    belongs to  a $W$ or $W^{-1}$-boundary.  We then call \emph{piece}  any
  connected component of the union of pre-piece  open $2$-cells and of $WR^*$-arcs.\footnote{Mind that the closure   in $\Sigma_{\tau_u,\tau_b,\omega,\a}$ of a pre-piece might not be disc, whenever its boundary  uses twice the same $\a$-arcs in  $\Sigma_{\tau_u,\tau_b,\omega,\a}.$  } \end{defn}

Let us first label pre-pieces and pieces according to the arcs their boundary meet. 
 
\begin{lem} 
\label{lem---PrepieceType}
   Assume $X$ is a pre-piece.  There are $\varepsilon,\varepsilon'\in\{-1,1\}$ such that the closure $\overline{X}$ 
      only intersects  $W^{\varepsilon}$ and $R^{\varepsilon'}$-boundaries.   Then $\overline{X}$ only 
 contains  intervals 
 of $W^{\varepsilon}$ and $R^{\varepsilon'}$ boundaries and exactly two $W^{\varepsilon}R^{\varepsilon'}$-arcs.   Call  such a pre-piece a $W^{\varepsilon}R^{\varepsilon'}$-pre-piece.
 
\end{lem}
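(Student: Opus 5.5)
The plan is to read off the boundary of the pre-piece $X$ as a closed curve $c$ in $\Sigma_{\tau_u,\tau_b,\omega,\a}$. Write $c$ as a cyclic alternating concatenation $I_1 A_1 I_2 A_2\ldots I_p A_p$. Here each $I_j$ is an interval of $\partial\Sigma_{\tau_u,\tau_b,\omega,\a}$, the image of a $\tau$ or $\pi$-interval once the $\iota$-intervals are collapsed, and each $A_j$ is an $\a$-arc. By definition of a pre-piece, exactly one interval, say $I_1$, lies on a $W$ or $W^{-1}$-boundary; call its sign $\varepsilon$. Every other $I_j$, $j\ge 2$, therefore lies on an $R$ or $R^{-1}$-boundary.

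The first step shows that all $R^{\pm}$-intervals share a common sign $\varepsilon'$. Consecutive intervals $I_j,I_{j+1}$ are joined by the $\a$-arc $A_j$, whose two endpoints lie on the boundaries containing $I_j$ and $I_{j+1}$. For $2\le j<p$ both are $R^{\pm}$-boundaries. Admissibility $\a\in S_{\mfa,\mfb}$, through \eqref{eq---AdmissibilityPerm}, forbids $RR^{-1}$-arcs, so $I_j$ and $I_{j+1}$ carry the same sign. Inducting along $I_2,\ldots,I_p$ yields a single $\varepsilon'$ with every $I_j$, $j\ge2$, on an $R^{\varepsilon'}$-boundary. This settles the first two claims: $\overline X$ meets only $W^\varepsilon$ and $R^{\varepsilon'}$-boundaries and contains only intervals of them. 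One should also note that the closure adds nothing beyond the arcs and intervals of $c$ and their endpoints, since the $2$-cell is glued along $c$.

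The second step counts the $W^{\varepsilon}R^{\varepsilon'}$-arcs. The two arcs adjacent to $I_1$, namely $A_p$ and $A_1$, each have one endpoint on the $W^\varepsilon$-boundary containing $I_1$. Their other endpoint is on $I_p$, respectively $I_2$, which lie on $R^{\varepsilon'}$-boundaries, so both are $W^\varepsilon R^{\varepsilon'}$-arcs. All remaining arcs $A_j$ with $2\le j\le p-1$ are $R^{\varepsilon'}R^{\varepsilon'}$-arcs.

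The step I expect to be the main obstacle is ruling out degenerate cases in which the count differs from exactly two.

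\begin{itemize}
\item If $p=1$, the arc $A_1$ joins $I_1$ to itself, giving a $WW$-type arc whose endpoints occur consecutively on a $W^{\pm}$-boundary. This is excluded because $\omega,\omega^{-1}$ are cyclically reduced, as observed before the definition of pieces. The same observation excludes $A_1=A_p$ coinciding as an arc traversed twice.
\item The remark that the cell boundary may traverse an $\a$-arc twice with opposite orientation must be handled. An arc can be traversed twice only if both traversals have the same endpoint-boundary types. A $W^\varepsilon R^{\varepsilon'}$-arc occurs only adjacent to $I_1$, which is visited once. So if $A_1=A_p$ as arcs, then $I_1$ would be bounded on both sides by the same arc, forcing consecutive endpoints of an $\a$-arc on the $W^\varepsilon$-boundary, which is again excluded.
\end{itemize}

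Hence the two $W^\varepsilon R^{\varepsilon'}$-arcs are distinct and there are exactly two, which is the lemma.
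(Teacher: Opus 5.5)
Your argument follows the paper's proof. The paper also takes the unique interval $I$ of $\partial X$ on a $W^{\varepsilon}$-boundary, uses cyclic reducedness of $\omega$ to say the two $\alpha$-arcs at the ends of $I$ are distinct, and uses the pre-piece condition to keep their far endpoints off $W^{\pm}$-boundaries. It then uses the absence of $RR^{-1}$-arcs to get a single sign $\varepsilon'$. Your decomposition $I_1A_1\cdots I_pA_p$ and the induction along $I_2,\ldots,I_p$ just make this explicit.

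One step does not hold as stated: your exclusion of $A_1=A_p$. The paper's one-line distinctness claim glosses over the same point.

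\textbf{Why the deduction fails.} $A_1$ and $A_p$ are $W^{\varepsilon}R^{\varepsilon'}$-arcs, so each has a single endpoint on the $W^{\varepsilon}$-boundary. Hence $A_1=A_p$ only forces the initial and terminal points of $I_1$ to coincide. It does not produce two consecutive endpoints of one arc on that boundary. This is a contradiction exactly when $|\omega|\ge 2$.

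\textbf{The case $|\omega|=1$.} This case is allowed, e.g.\ $\omega=a_1$. The $W$-boundary then carries a single marked point and $I_1$ is a loop at it. The cell through $I_1$ therefore \emph{always} traverses the unique arc at that point twice, in opposite directions.

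\textbf{An explicit pre-piece.} Take the $R$-copy of $r=a_1b_1a_1^{-1}b_1^{-1}\cdots$ and pair:
\begin{itemize}
\item the $W$-point with the letter $a_1^{-1}$ of that copy;
\item the letter $a_1$ of that copy with the $W^{-1}$-point;
\item the $b_1$ of that copy with its $b_1^{-1}$, and all other letters inside their own boundary copy.
\end{itemize}
The cell through $I_1$ is then $I_1$, the $WR$-arc, two $R$-intervals, the $b_1b_1^{-1}$-arc, and the same $WR$-arc backwards. It is a pre-piece whose closure contains only one $W^{\varepsilon}R^{\varepsilon'}$-arc.

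So ``exactly two'' is true here only when counting oriented traversals. That is how the paper uses pre-pieces afterwards: annular pieces with $e(P)=1$, and ``two distinct oriented $WR^*$-arcs''. You should either assume $|\omega|\ge 2$ for the distinctness claim, or state the count with multiplicity.

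Your exclusion of $p=1$ survives when $|\omega|=1$, but for a different reason than cyclic reducedness: an $\alpha$-arc never joins a point to itself.
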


\begin{proof} By assumption,  $\partial X$   intersects an  interval $I$ of a $W^{\varepsilon}$-boundary for some $\varepsilon\in\{-1,1\}$.  {Since the word $\omega$ is reduced}, the two
  $\a$-arcs attached to the endpoints $I$ are distinct.  Since a pre-piece boundary  only has exactly one interval that belongs to either a $W$ or $W^{-1}$ boundary, the other endpoints  of these  $\a$-arcs which are not in $I$ cannot 
 belong to a $W$ or $W^{-1}$ boundary.   Since there are no $RR^{-1}$-arcs, $\partial X$ cannot meet both $R$ and $R^{-1}$ boundaries and the claim follows.
 
 \end{proof}

\begin{lem}\label{lem---PieceType} Each piece $P$ is either  an 
 open annulus, or the union of a disc $D$ with two distinct $W^{\varepsilon}R^{\varepsilon'}$-arcs belonging to $\partial D$.   We set  
 $\chi(P)=0$ when $P$ is an annulus and $1$ otherwise. In both cases, $\overline{P}$ only intersects pre-pieces and  $WR^*$-arcs of same type, say $W^{\varepsilon}R^{\varepsilon'},$ that we then call type of $P.$
\end{lem}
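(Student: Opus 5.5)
The plan is to encode a piece by a small adjacency graph and read off its topology from the shape of that graph. Let $\Gamma$ be the graph whose vertices are the pre-pieces and whose edges are the $WR^*$-arcs. An arc $A$ is made incident to a pre-piece $X$ once for each side of $A$ along which $X$ lies; a $WR^*$-arc bordered by a pre-piece on one side only becomes a half-edge ("dangling" edge). Since every piece is by definition a union of pre-piece open $2$-cells and $WR^*$-arcs, each piece $P$ corresponds to one connected component of $\Gamma$.

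First I will determine the degrees in $\Gamma$. By Lemma \ref{lem---PrepieceType}, the boundary of a pre-piece $X$ contains exactly two $W^{\varepsilon}R^{\varepsilon'}$-arcs. Its proof shows that these are the two distinct arcs attached to the endpoints of the unique $W^{\pm}$-interval of $\partial X$, distinct because $\omega$ is cyclically reduced. I also need that each of them is traversed only once by the boundary curve of $X$. If an arc were traversed twice, both of its endpoints would lie on that same $W^{\pm}$-interval, or a second $W^{\pm}$-interval would be needed, and either way the pre-piece condition or reducedness is violated. Hence every vertex of $\Gamma$ has degree exactly $2$. In the other direction, an $\a$-arc in the interior of $\Sigma_{\tau_u,\tau_b,\omega,\a}$ is bordered by at most two $2$-cells, one on each side, so every edge meets at most two vertices. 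Therefore each component of $\Gamma$ is either a cycle or a finite path whose two end-edges are dangling.

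Next I translate this into topology. Each open pre-piece $2$-cell is an open disc whose boundary runs once along each of its two $WR^*$-arcs. Gluing such discs along their shared arcs in a path pattern gives an open disc $D$, and the two dangling end arcs are distinct arcs lying in $\partial D$. This is the second alternative of the statement, with $\chi(P)=1$. Gluing along a cycle gives either an open annulus or an open Möbius band. The Möbius band is excluded because $\Sigma_{\tau_u,\tau_b,\omega,\a}$ is orientable: it inherits the orientation of $\Sigma_{\tau_u,\pi,\tau_b,\varphi}$, which is assembled from oriented discs. A cycle of length one would require both sides of a single arc to border the same pre-piece, which the previous paragraph ruled out; so the cycle case gives an annulus, with $\chi(P)=0$.

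Finally I will check that the type is constant along a component. Two pre-pieces joined by an edge of $\Gamma$ share a $W^{\varepsilon}R^{\varepsilon'}$-arc, and by Lemma \ref{lem---PrepieceType} each of them meets only the boundaries of that arc's type, so both have the same type. By connectedness the whole of $\overline{P}$ meets only pre-pieces and $WR^*$-arcs of a single type $W^{\varepsilon}R^{\varepsilon'}$. I expect the main obstacle to be the degree count in the first step, namely excluding cases where the boundary curve of a pre-piece uses the same $\a$-arc twice, which the footnote to the definition allows in general. I would handle it with the argument sketched there: such a repeat forces either a second $W^{\pm}$-interval on $\partial X$, contradicting the definition of a pre-piece, or two consecutive endpoints of one $\a$-arc on a $W^{\pm}$-boundary, contradicting that $\omega$ is cyclically reduced.
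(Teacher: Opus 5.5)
Your argument is the paper's, made explicit. The paper proves the lemma by induction on the number of pre-pieces, attaching them along the two $W^{\varepsilon}R^{\varepsilon'}$-arcs supplied by Lemma \ref{lem---PrepieceType}; that is your degree-two graph $\Gamma$, and the type is propagated the same way.

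The step that fails is exactly the one you singled out: you cannot exclude a pre-piece $X$ whose boundary traverses the same $WR^*$-arc $A$ twice.

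\begin{itemize}
\item If it does, $X$ lies on both sides of $A$ at its $W^{\pm}$-endpoint $p$. So $\partial X$ contains both boundary intervals adjacent to $p$.
\item Since $X$ has a single $W^{\pm}$-interval, these two intervals coincide. This happens exactly when that boundary carries one vertex, i.e.\ $|\omega|=1$.
\item Cyclic reducedness is irrelevant here. It only rules out an $\alpha$-arc joining two distinct consecutive vertices of a $W^{\pm}$-boundary.
\item $|\omega|=1$ is allowed, e.g.\ $\omega=a_1$, and the configuration does occur. Take $\tau_u,\tau_b$ trivial, and label the vertices of one $R$-boundary $v_1,v_2,v_3,v_4,\ldots$ according to $r=a_1b_1a_1^{-1}b_1^{-1}\cdots$. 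Let $\alpha$ join $p$ to $v_3$ (arc $A$) and $v_4$ to $v_2$ (arc $C$), and complete $\alpha$ admissibly.
\item Tracing the $2$-cell containing the unique $W$-interval $I$ gives the boundary $I\,A\,J\,C\,J'\,A^{-1}$, where $J,J'$ are the two $R$-intervals at $v_3$. This is a pre-piece using $A$ twice.
\end{itemize}

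The distinctness assertion in the proof of Lemma \ref{lem---PrepieceType}, which you rely on, has the same exception.

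So $\Gamma$ can have loops, and your exclusion of cycles of length one is false; your degree count survives if a loop counts twice. The fix is immediate and leaves the conclusion intact. By orientability $A$ is traversed in opposite directions, so $X\cup A$ is an open annulus, which is the first alternative of the lemma. This is consistent with the paper's later count $e(P)-\chi(P)=1$ pre-piece.

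Separately, a $WR^*$-arc bordered by no pre-piece is a vertex-free component of $\Gamma$, and its piece is the bare arc. It fits neither alternative literally. The paper handles it only through the convention in the footnote after Lemma \ref{Lemma---JointDNeg}, so you should state it as its own case rather than absorb it into your path case.
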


\begin{proof} Assume $X$ is  a  $W^{\varepsilon}R^{\varepsilon'}$-pre-piece or a  $W^{\varepsilon}R^{\varepsilon'}$-arc for some $\varepsilon,\varepsilon'\in\{-1,1\}.$ Thanks to Lemma \ref{lem---PrepieceType}, when $X$ is a pre-piece, among pre-pieces and $WR^*$-arcs,    $\overline{X}$  only  intersects closures of $W^{\varepsilon}R^{\varepsilon'}$-pre-pieces and $W^{\varepsilon}R^{\varepsilon'}$-arcs   along the two $W^{\varepsilon}R^{\varepsilon'}$-arcs  in its boundary.   Consider a connected set  $ P',$ union of  pre-pieces and $WR^*$-arcs  with  $ P'\supset X.$ By induction on the number of  pre-pieces $\overline{P}'$ intersects,  $P'$ is either  an open annulus or a disc $D$  with  two distinct $W^{\varepsilon}R^{\varepsilon'}$-arcs in its boundary, and $P'$ only intersects arcs and pre-pieces of type $W^{\varepsilon}R^{\varepsilon'}.$ \end{proof}

Let us now consider words spelled out when going through  boundaries of a pre-piece. Let $P$ be a $W^{\varepsilon}R^{\varepsilon'}$-piece.    Consider first  intersections with $\partial\Sigma_{W^\varepsilon}$ and then the one with $\partial \Sigma_{R^{\varepsilon'}}.$ 

\begin{defn} The side length $e(P)$ is the number of $W^{\varepsilon}R^{\varepsilon'}$-arcs endpoints the boundary $\partial \Sigma_{W^{\varepsilon}}\cap \overline{P}$ goes through.  Going throughout $\partial \Sigma_{W^{\varepsilon}}\cap \overline{P}$ according to its orientation and  reading letters  for each   $\a$-arc crossed, spells out a  word or a cyclic equivalence of a word $\omega_P$ when $\chi(P) $  is respectively $1$ or $0.$
\end{defn}
Either  $\chi(P)=0$ and $\partial \Sigma_{W^{\varepsilon}}\cap \overline P=\partial \Sigma_{W^{\varepsilon}},$ or $\chi(P)=1$ and  $\partial \Sigma_{W^{\varepsilon}}\cap \overline P$  is a single interval of $\partial\Sigma_{W^{\varepsilon}}$. 
In both cases, 
 \begin{equation}
 e(P)=|\omega_P|\label{eq---SidesLengthWord}
 \end{equation}
 and\footnote{Besides remark that this intersection has respectively $e(P)$ or $e(P)-1$ $\pi$-intervals.}    $e(P)-\chi(P)$  is the number of pre-pieces included in $P.$ The word $\omega_P$ is a subword of a cyclic permutation of $\omega^{\varepsilon}$ when $\chi(P)=1,$ and   the cyclic-equivalence class of  $\omega^{\varepsilon}$ when $\chi(P)=0.$

\begin{defn} Assume $ P'$  is a $W^{\varepsilon}R^{\varepsilon'}$-pre-piece,  $\mathfrak{he}(P')+1 $ denotes the number of connected components of $  \partial \Sigma_{R^{\varepsilon'}}\cap\overline{P}'. $  
\end{defn}
By definition, any  $W^{\varepsilon}R^{\varepsilon'}$-pre-piece $P'$ meets  $\partial\Sigma_{R^{\varepsilon'}}$ in its boundary and $\he(P')\ge0.$   Since there are no $RR^{-1}$-arcs,  possibly counting  $\a$-arc with both orientations, a loop  parametrizing $\partial P$  goes through $\he(P')$ oriented arcs of type $R^{\varepsilon'}R^{\varepsilon'}$ and two $W^{\varepsilon}R^{\varepsilon'}$-ones. Now for a piece $P,$ set 
\begin{equation}
\he(P)=\sum_{P'} \he( P'),\label{eq---DefHE}
\end{equation}
summing over pre-pieces $P'$ included in $P.$   Then $\he(P)$ counts\footnote{Summing over the same index as in \eqref{eq---DefHE} leads also to
\begin{equation}
\sum_{P'}(\he(P')+1)=e(P)-\chi(P)+\he(P).
\end{equation}} the number of oriented $R^{\varepsilon'}R^{\varepsilon'}$-arcs  visited when going through $\partial P.$  

As in \cite[Lemma 4.10.]{MageeII}, the two quantities $\he(P)$ and $e(P)$ are constraint by the Birman-Series bound  refining Dehn's algorithm as follows.

\begin{lem}\label{lem---BoundBSPieces} Assume $g\ge 1.$ For any piece $P,$  
\begin{equation}
 e(P)\le (2g-1)\he(P)+2g \chi(P). 
\end{equation}
\end{lem}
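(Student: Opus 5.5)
We split according to whether $P$ is a disc or an annulus, and in both cases read the boundary of $P$ as a van Kampen–type diagram over $\G_g$. The pre-pieces in $P$ are discs, or closures of discs. The side of each pre-piece along $\partial\Sigma_{R^{\varepsilon'}}$ spells a subword of a cyclic permutation of a power of $r^{\varepsilon'}$. Across each of the $\he(P')$ $R^{\varepsilon'}R^{\varepsilon'}$-arcs of a pre-piece $P'$, the word on $\partial\Sigma_{R^{\varepsilon'}}$ jumps from one occurrence of $r^{\pm1}$ to the next.

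Concretely, fix a pre-piece $P'$ with its unique $W^\varepsilon$-interval, which crosses one $\a$-arc letter, together with its $\he(P')+1$ components along $R^{\varepsilon'}$-boundaries. Reading around $\partial P'$ gives a relation in $\G_g$. The letter read along the $W^\varepsilon$ side equals the product of the $\he(P')+1$ subwords read along the $R^{\varepsilon'}$ sides. These subwords are separated by $\a$-arcs, which contribute no letters once the rectangles are collapsed, so the products are taken in the free group modulo $r$.

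Gluing the pre-pieces of $P$ along the $W^\varepsilon R^{\varepsilon'}$-arcs, we get the following. The word $\omega_P$, of length $e(P)$, is equal in $\G_g$ to a word $u_P$ obtained by concatenating subwords of cyclic conjugates of $(r^{\varepsilon'})^{\pm1}$. These come in $\he(P)+\chi(P)$ blocks, each a proper piece of a relator cycle, since the block boundaries are exactly the $R^{\varepsilon'}R^{\varepsilon'}$-arcs plus, in the disc case, the two end arcs. In the annulus case the equality is up to cyclic conjugation.

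The key input is the Birman–Series refinement of Dehn's algorithm \cite{BirmanSeries}, as used in \cite[Lemma 4.10]{MageeII}. Since $\omega_P$ is a subword of a cyclically shortest word, it is geodesic, or cyclically geodesic when $\chi(P)=0$. A geodesic word cannot contain more than half of a relator cycle, that is, more than $2g$ consecutive letters of a cyclic conjugate of $r^{\pm1}$. Birman–Series sharpens this to $2g-1$ letters per block, except at the two ends. I would bound the length of each of the $\he(P)$ internal blocks as follows. Each internal block is the letters of $\omega_P$ lying between two consecutive $R^{\varepsilon'}R^{\varepsilon'}$ jumps, and it must be a subword of a relator cycle of length at most $2g-1$. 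Otherwise, replacing it by the complementary part of the relator would shorten $\omega_P$, or leave its length unchanged while producing a non-geodesic configuration forbidden by Birman–Series. This contributes at most $(2g-1)\he(P)$. In the disc case the two end blocks contribute at most $2g$ more in total, which gives the term $2g\chi(P)$. In the annulus case there are no end blocks and the contribution is $0$.

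The main obstacle is to justify that each pre-piece letter really lands in a block, so that the count of letters of $\omega_P$ against $R^{\varepsilon'}$-segments is exactly the sum over blocks. A second difficulty is handling pre-pieces whose closure is not a disc, where an arc is reused. I would deal with both by working in the universal cover, lifting $P$ to a disc diagram in the Cayley complex of $\G_g$ and reading $\partial P$ there. The reused arcs then become distinct lifts, and the Birman–Series statement about geodesics in the hyperbolic tiling applies directly.
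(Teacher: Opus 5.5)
Your block decomposition of $\om_P$ is the right picture: cut it at the $R^{\varepsilon'}R^{\varepsilon'}$-arcs, so that each block is a subword of a relator. But the step that carries the estimate fails, because an internal block need not have length at most $2g-1$. Geodesicity only rules out blocks of length $>2g$. A block of length exactly $2g$ is half a relator, and replacing it by its complement does not shorten anything. What Birman--Series additionally rules out is a \emph{long chain} $b_1\cdots b_l$ with $|b_1|,|b_l|\ge 2g$, $|b_i|=2g-1$ for $1<i<l$, and unit turning between consecutive blocks. This is a condition on several consecutive blocks, and it does not exclude, for instance, an internal block of length $2g$ flanked by blocks of length $2g-2$.

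The usable statement is the amortised one proved in Lemma \ref{Lem-BSWORDS}. Set $d_i=|b_i|-(2g-1)\he(b_ib_{i+1})$, so that $d_i\le 1$. Any two indices with $d_i=1$ are separated by some $d_k\le -1$, which gives \eqref{eq---BSInterval}; the cyclic version $|\om|\le(2g-1)\he^{(0)}(\om)$ handles the annulus. Your count does not close even if the per-block bounds are granted. You have $\he(P)+1$ blocks in the disc case, hence only $\he(P)-1$ internal ones, and each of the two end blocks may have length $2g$. This yields $(2g-1)\he(P)+2g+1$, and the missing $1$ is exactly the exclusion of a long chain spanning $\om_P$.

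The input from the surface should also be a local winding inequality, not a relation in $\G_g$ read around $\partial P'$. A pre-piece plays the role of a vertex, not a face, and its $W^{\varepsilon}$-interval sits between two consecutive letters $z_k,z_{k+1}$ of $\om_P$ rather than carrying one. Each $R^{\varepsilon'}$-interval of $\partial P'$ carries two consecutive letters of $r^{-\varepsilon'}$, and each $\a$-arc has inverse labels at its ends. So by Lemma \ref{Lem---OrderVertexFaceCayleyGraph} the labels met around $\partial P'$ satisfy $x_0x_1\cdots x_{\he(P')+1}\prec_c\hat r^{-\varepsilon'}$. Hence $\he(P')=\he_{-\varepsilon'}(z_k,z_{k+1})+4g\,r_{P'}\ge\he_{-\varepsilon'}(z_k,z_{k+1})$.

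Sum this over the pre-pieces of $P$. All the turning is on the same side $-\varepsilon'$, not $(r^{\varepsilon'})^{\pm1}$, and this matters for the chain argument. You get $\he(P)\ge\he^{(\chi(P))}_{-\varepsilon'}(\om_P)$, and Lemma \ref{Lem-BSWORDS} applied to $\om_P$ concludes. No lift to the universal cover is needed: only the cyclic sequence of labels around $\partial P'$ is used, so reused arcs cause no trouble.
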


The proof is a variation of \cite[Lemma 5.18]{MageePuder}.   Since it is the main place where the shortest length condition plays a role,  let us detail a proof for completeness.

When $\om_1,\om_2$  are two words with letters  in $\mathcal{A},$ let us write $\om_1\prec_c\om_2$  when $\om_1$ is a subword of a cyclic permutation of $\om_2,$ and $\om_1\sim_c\om_2$ when $\om_1$ is  cyclic permutation of $\om_2.$ Consider the word 
\begin{equation}
\hat{r}= [a_1 \,b^{-1}_1]\ldots [a_g\,b^{-1}_g].
\end{equation}
 For any letters $x,y\in \mathcal{A}$ set 
 \begin{equation}
\he(x,y)=\inf\{|w|\ge 0: w\, \text{word}\,\text{in}\,\mathcal{A} \,\text{with}\, x^{-1}wy\prec_c \hat{r} \}.
 \end{equation}
 The following lemma is straightforward to check.
 
 \begin{lem} \label{Lem---OrderVertexFaceCayleyGraph}  Consider two letters  $x,y\in\mathcal{A}.$ The following conditions are equivalent. 
 \begin{enumerate}
 \item $xy\prec_c r,$ 
 \item  $x^{-1}y\prec_c \hat{r},$
 \item $\he(x,y)=0.$
 \end{enumerate}
 \end{lem}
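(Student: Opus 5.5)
The plan is to check the two equivalences separately. The equivalence (2)$\Leftrightarrow$(3) is purely definitional, and (1)$\Leftrightarrow$(2) reduces to a finite comparison of the length-two cyclic subwords of $r$ and $\hat r$.

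\emph{(2)$\Leftrightarrow$(3).} By definition, $\he(x,y)$ is the infimum of $|w|$ over words $w$ with $x^{-1}wy\prec_c\hat r$, and lengths are nonnegative integers. Hence $\he(x,y)=0$ if and only if the empty word is admissible, that is, if and only if $x^{-1}y\prec_c\hat r$. If no $w$ is admissible, the infimum is $+\infty\neq 0$, so this case is consistent with the equivalence.

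\emph{(1)$\Leftrightarrow$(2).} I would list all two-letter subwords of cyclic permutations of $r$ and of $\hat r$. Since $g\ge 1$, both words have length $4g\ge 4$, so these subwords are exactly the pairs of cyclically consecutive letters. Indices $i$ are read modulo $g$, so $a_{g+1}=a_1$.

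For $r=[a_1\,b_1]\cdots[a_g\,b_g]$, the consecutive pairs are, for $1\le i\le g$,
\[a_ib_i,\quad b_ia_i^{-1},\quad a_i^{-1}b_i^{-1},\quad b_i^{-1}a_{i+1}.\]
For $\hat r=[a_1\,b_1^{-1}]\cdots[a_g\,b_g^{-1}]$, the consecutive pairs are
\[a_ib_i^{-1},\quad b_i^{-1}a_i^{-1},\quad a_i^{-1}b_i,\quad b_ia_{i+1}.\]
Consider the involution $xy\mapsto x^{-1}y$ on two-letter words. It sends $a_ib_i\mapsto a_i^{-1}b_i$, $b_ia_i^{-1}\mapsto b_i^{-1}a_i^{-1}$, $a_i^{-1}b_i^{-1}\mapsto a_ib_i^{-1}$, and $b_i^{-1}a_{i+1}\mapsto b_ia_{i+1}$. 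So it maps the first list bijectively onto the second, and $xy\prec_c r$ if and only if $x^{-1}y\prec_c\hat r$.

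There is no real obstacle here. The only points needing care are the cyclic wrap-around pair $b_g^{-1}a_1$ (respectively $b_ga_1$), and checking that no two-letter pair arises twice or is missed. Both follow from the explicit listing, since the letters $a_i^{\pm1},b_i^{\pm1}$ each occur exactly once in $r$ and in $\hat r$.

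Geometrically, this matches the standard fact that $\hat r$ records the cyclic order of edges around the single vertex of the one-face map defining $\Sigma_g$, while $r$ records the face boundary. That explains the name of the lemma, but the proof does not need it.
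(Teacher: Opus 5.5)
Your proof is correct, and it is the direct verification the paper intends: the paper gives no proof, only calling the lemma ``straightforward to check''. You get (2)$\Leftrightarrow$(3) from the definition of $\he(x,y)$ by allowing the empty word, and (1)$\Leftrightarrow$(2) by matching the $4g$ cyclically consecutive pairs of $r$ and $\hat r$ under $xy\mapsto x^{-1}y$; your reading of $\prec_c$ as a contiguous subword is the right one, since under a scattered-subsequence reading the equivalence would fail for $y=x^{-1}$.
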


  For any word $\omega=x_0x_1\ldots x_{l}$ in $\mathcal{A}$ with $l\ge 1$, define the (clockwise) winding of $\om$ as 
\begin{equation}
\he^{(1)}(\om)=\sum_{k=0}^{l-1} \he(x_k,x_{k+1})
\end{equation}
or setting $x_{l+1}=x_0,$
\begin{equation}
\he^{(0)}(\om)=\sum_{k=0}^{l} \he(x_k,x_{k+1}).
\end{equation}

 \begin{lem}{(\cite{BirmanSeries} and \cite[Lem 5.18]{MageePuder})} \label{Lem-BSWORDS} Assume $\omega$ is a subword of $\g,$ where  $\g$ is a shortest length representative of a non-trivial element of $\Gamma_g$ with $g\ge1.$ Then 
\begin{equation}
|\om|\le (2g-1)\he^{(1)}(\om)+2g. \label{eq---BSInterval}
\end{equation}
When $\om=\g,$ 
\begin{equation}
|\om|\le (2g-1)\he^{(0)}(\om).
\end{equation}
\end{lem}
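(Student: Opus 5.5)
The plan is to read $\om$ as an edge path in the Cayley complex of $\G_g$: the tiling of the hyperbolic plane by $4g$-gons, with $4g$ faces around each vertex and face boundaries labelled by cyclic permutations of $r^{\pm1}$. By Lemma \ref{Lem---OrderVertexFaceCayleyGraph}, $\he(x_k,x_{k+1})$ measures how far the path turns at the vertex between $x_k$ and $x_{k+1}$. Precisely, it counts the faces strictly between the incoming edge $x_k^{-1}$ and the outgoing edge $x_{k+1}$ on the clockwise side. The value $\he=0$ means that $x_kx_{k+1}\prec_c r$, that is, the two edges lie on the boundary of a common face.

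I would first cut $\om=x_0\ldots x_l$ at every index $k$ with $\he(x_k,x_{k+1})\ge1$ (call these \emph{turns}). This produces at most $t+1$ maximal \emph{blocks}, where $t\le\he^{(1)}(\om)$ is the number of turns. Inside a block all consecutive turning numbers vanish, so the block runs along the boundary of a single $4g$-gon. It is therefore a subword of a cyclic permutation of $r^{\pm1}$ of length at most $4g-1$. Since $\g$ is a shortest representative and $\om\prec\g$, a block of length $>2g$ could be replaced by the complementary arc of that face, of length $<2g$, contradicting minimality. Hence every block has length $\le 2g$. This alone gives $|\om|\le 2g(t+1)$, which is too weak, so the bound must be refined by controlling the blocks of length exactly $2g$ (\emph{long} blocks, i.e.\ half-relators).

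The key step is a Birman--Series chain argument. Suppose two long blocks are joined through a sequence of blocks of length exactly $2g-1$ with all intermediate turns having $\he=1$. Then the path runs along a chain of faces sharing consecutive edges. Pushing it to the other side of this chain yields a path with the same endpoints that is shorter by at least $2$ (equal length if only the final block is long), so this contradicts $\g$ being shortest. Consequently, between any two long blocks, either some turn has $\he\ge2$ or some block has length $\le 2g-2$; in both cases one unit of slack is gained. I expect this step to be the main obstacle. It requires making the "other side of the chain" replacement precise with the clockwise convention built into $\hat r$, and checking that the replaced path is still a word of the right length.

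Granting that step, the count goes as follows. Write $L$ for the number of long blocks. Then
\[
|\om|\le(2g-1)(t+1)+L-\delta,
\]
where $\delta$ is the total shortfall of the non-long blocks below $2g-1$. The chain argument gives $L\le 1+(\he^{(1)}(\om)-t)+\delta$. Since $t\le \he^{(1)}(\om)$, combining the two yields $|\om|\le(2g-1)\he^{(1)}(\om)+2g$.

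For the cyclic statement $\om=\g$, I would work cyclically with $x_{l+1}=x_0$. The blocks then close up, so there are $t$ of them rather than $t+1$, and no long block is exempt from the chain argument because there are no free endpoints. This gives $L\le(\he^{(0)}(\om)-t)+\delta$. Combined with $|\om|\le(2g-1)t+L-\delta$, it yields $|\om|\le(2g-1)\he^{(0)}(\om)$. If there are no turns at all, $\om$ would be a power of a cyclic permutation of $r^{\pm1}$ (i.e.\ run around a face boundary), contradicting that $\g$ is nontrivial and shortest; so $t\ge1$ in the cyclic case.
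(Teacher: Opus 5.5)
Your proposal is correct and is essentially the paper's proof: you cut $\om$ into maximal blocs at the turns, use minimality of $\g$ to bound blocs by $2g$ and to rule out long chains (two half-relators linked by blocs of length $2g-1$ with all junction turns equal to $1$), and then count, with your $L,\delta$ bookkeeping an equivalent repackaging of the paper's $d_i=|b_i|-(2g-1)\he(b_ib_{i+1})$ and $\sum_i d_i\le 1$. For the cyclic bound, which the paper only asserts, one case needs its own argument: with a single long block, the slack-free chain closes up on itself and is not a subword of any cyclic permutation of $\g$, so you must push across the closed strip of faces to get a conjugate of $\g$ shorter by $2$, which uses that $\g$ is cyclically shortest and not merely shortest.
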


\begin{proof} We shall prove here only the first claim as the second one follows by the same argument and write $\he$ in place of $\he^{(1)}.$  

Let us fix  a word $\om$ in $\mathcal{A}.$  For any pair of words $\om_1,\om_2,$ write $\om_1\prec\om_2$ when $\om_1$ is a subword of $\om_2.$  

  Call a word $b$ in $\mathcal{A}$ a bloc if $b\prec_c r$ and $b\prec \om.$ Say it is long when $|b|>2g.$
  
  Call a word $\mathscr{C}$ a chain\footnote{By convention a chain with $l=1$ is a bloc, while a chain with $l=2$ is a word $b_1b_2$ with $b_1,b_2$ blocs such that $b_1b_2\prec \om$ and $\he(b_1b_2)=1.$ }  when $\mathscr{C}\prec \om$ and $\mathscr{C}=b_1\ldots b_l$ with $b_i$ blocs such that $|b_{i}|=2g-1$ for $2\le i\le l-1$ and $\he(b_{i}b_{i+1})=1$  for $1\le i<l.$ Say it is a long chain  when $|b_{1}|,|b_l|\ge 2g. $

For any bloc $b,$ denote $\overline{b}$ the shortest  word in $\mathcal{A}$ with $b\overline{b}\sim_c r$. For any chain $\mathscr{C}=b_1\ldots b_l,$ set $\overline{\mathscr{C}}=\overline{b}_1\ldots \overline{b}_l$.   For any long bloc $b$,  resp. long chain $\mathscr{C}$, replacing $b$ by $\overline{b}$ in $\g,$  resp.  $\mathscr{C}$ by $\overline{\mathscr{C}},$  yields a word $\g'$ with $|\g'|<|\g|$ representing the same element in $\Gamma$ as $\g.$  Assume now $\om$ as in the statement.  Since $\g$ is a shortest length representative, from the above discussion  there are neither long blocs nor long chains.  Let us write $\om=b_1\ldots b_l$ with blocs $(b_i)_{1\le i\le l}$   such that $\he(b_ib_{i+1})\ge 1 $ for $i<l.$ Set $d_i=|b_i|-(2g-1)\he(b_{i}b_{i+1}) $ for $i<l$ and $d_l=|b_l|-2g+1$ so that 
\[|\om|-(2g-1)\he(\om)=2g-1+\sum_{i=1}^md_i.\]
Since there is  no long bloc and no long chain, $d_m\le 1$ for all $m$ and whenever   $d_i=1=d_j$ for some $1\le i<j\le l$, there is $i< k<j $  with $d_{k}=-1.$ Consequently $\sum_{i=1}^md_i\le 1$ and \eqref{eq---BSInterval} follows from the last display. \end{proof}

\begin{rmk} \label{rmk---CClockWinding} Since  $|\om|=|\om^{-1}|,$ the inequality also holds with $\he^{\chi}_-(\om)$ defined with $\hat{r}^{-1}$ in place of $\hat{r}.$  Also note that  $r^{-1}= [b_g\,a_g]\ldots [b_1\,a_1]$ and $\hat{r}^{-1}=  [b_g\,a^{-1}_g]\ldots [b_1\,a^{-1}_1],$ so by Lemma  \ref{Lem---OrderVertexFaceCayleyGraph},  for any letters  $x,y\in\mathcal{A},$  
\begin{enumerate}
\item $xy\prec_c\hat{r}^{-1},$
\item $x^{-1}y\prec_cr^{-1},$
\item $\he_-(x,y)=0$
\end{enumerate}
are equivalent.  
\end{rmk}

\begin{proof}[Proof of Lemma \ref{lem---BoundBSPieces}]  Assume $P$ is an $W^{\varepsilon}R^{\varepsilon'}$-piece. We shall consider words spelled out  going around  $\partial P$ and the constraint it imposes on $\he(P).$

     Consider a $W^{\varepsilon}R^{\varepsilon'}$-pre-piece $P'$ of $P$. On the one hand,  each connected component of $\partial  P'\cap \Sigma_{R^{\varepsilon'}}$ is the concatenation of exactly one $\pi$-interval and two $\tau$-intervals and 
  meets exactly two $R^{\varepsilon'}R^{\varepsilon'}$-arcs endpoints. Since these $\a$-arcs endpoints are consecutive in
 
   {$\partial\Sigma_{R^{\varepsilon'}}$},  following the orientation of 
  $\partial\Sigma_{R^{\varepsilon'}},$ their  $\mfa$-value  must be given by two consecutive\footnote{\label{LRNote}reading from left to right} letters of a cyclic permutation of $r^{\varepsilon'}.$ On the other hand, for a 
  single $\a$-arc, the  $\mfa$-values at both ends  are inverse of one another.

  All in all,  starting from the interior of the interval $\partial \Sigma_{W^{\varepsilon}}\cap \overline P',$ the $\mfa$-value read along a loop parametrising $\partial P'$ following its orientation  spells out the word
  
    \begin{equation}
 x_{\he(P')+1}x_{\he(P')+1}^{-1}x_{\he( P')}x_{\he( P')}^{-1}\ldots   x_{1} x_{1}^{-1} x_{0}x_{0}^{-1}
 \end{equation}
 
where  $x_0^{-1}x_{\he(P')+1}$ is  associated to the interval   $\partial\Sigma_{W^{\varepsilon}}\cap\overline{P}'$ with same orientation as $\partial \Sigma_{W^{\varepsilon}}$ so that $x_0^{-1}x_{\he(P')+1} \prec_c\om^{\varepsilon},$ 
  while  for any $k\in\{0,\ldots, \he( P')\},$   $x_{k}^{-1}x_{k+1}$  is associated to an interval of $\partial\Sigma_{R^{-\varepsilon'}}\cap\overline{P}'$ with orientation opposite to $\partial\Sigma_{R^{-\varepsilon'}}$ and $x_{k}^{-1}x_{k+1}\prec_c r^{-\varepsilon'}.$
 Thanks to Lemma  \ref{Lem---OrderVertexFaceCayleyGraph} and Remark \ref{rmk---CClockWinding},   
 \[x_0x_1\ldots x_{\he(P')}x_{\he(P')+1}\prec_c \hat{r}^{-\varepsilon'}\] 
 so that 
 \begin{equation}
 \he(P')=\he_{-\varepsilon'}(x_0^{-1},x_{\he(P')+1})+4 g r_{P'} \label{eq---RamifIntMap}
 \end{equation}
 for some $r_{P'}\in \N,$ where $\he_1,\he_{-1}$ stand for $\he,\he_-.$    Now writing  $\om_P=z_0\ldots z_l, $ recall  $P$ has $e(P)-\chi(P)=|\om_P|-\chi(P)=l+1-\chi(P)$ pre-pieces  $(P'_k)_{0\le k\le l-\chi(P)}. $ Setting   $z_{l+1}=z_0$ and using 
 \eqref{eq---RamifIntMap},
\begin{equation}
\he(P)=\sum_{k=0}^{l-\chi(P)} \he(P'_{k}) \ge \sum_{k=0}^{l-\chi(P)} \he_{-\varepsilon'}(z_{k},z_{k+1})=\he^{(\chi(P))}_{-\varepsilon'}(\om_P).
\end{equation}
Since  $\om_P\prec_c \om^{\varepsilon},$ with  $\om_P\sim_c \om^{\varepsilon}$ when $\chi(P)=0,$   Lemma \ref{Lem-BSWORDS} implies
\[(2g-1)\he(P)\ge |\om_P|- 2g\chi(P)=  e(P)-2 g\chi(P). \]\end{proof}

 Let us now prove Lemma \ref{lem---BoundBSPieces} implies \eqref{eq---EulerBoundCullerS}, which will conclude the proof of Proposition \ref{Prop---GeoBoundEuler}.

  \begin{defn} Say a $2$-cell of $X_{\tau_u,\tau_b,\omega,\a}$ is a \emph{joint disc} if it is not a pre-piece.
  \end{defn}
  Let us first reduce the Euler characteristic computation in \eqref{eq---EulerBoundCullerS} to the one of a graph. Consider the surface $\Sigma^*$ obtained from $\Sigma_{\tau_u,\tau_b,\omega,\a}$ by cutting it along all its $RR$ or $R^{-1}R^{-1}$ arcs.   Then denoting by $N_{RR^*}$ the number of such arcs
\begin{equation}
\chi(\Sigma^*)=\chi(\Sigma_{\tau_u,\tau_b,\omega,\a})+N_{RR^*}. \label{eq---EulerCutRR}
\end{equation}
Since all connected components of $\Sigma^* $ and $\Sigma_{\tau_u,\tau_b,\omega,\a}$ have non-empty boundary, they deform retract to the dual graph  associated to any cell decomposition.
Denote $\hat{\mathcal{G}}$  the graph dual to the $1$-skeleton of $X_{\tau_u,\tau_b,\omega,\a}$, so that vertices are labeled by $2$-cells of $X_{\tau_u,\tau_b,\omega,\a}$ and edges by $\a$-arcs.  
  Let  $\hat{\mathcal{G}}_{R}$ be the graph obtained from $\hat{\mathcal{G}}$ by removing all edges given by  $RR$  and $R^{-1}R^{-1}$-arcs.   Then $\Sigma^*$ deforms retract to $\hat{\mathcal{G}}_{R}$ so that 
\begin{equation}
\chi(\Sigma^*)=\chi(\hat{\mathcal{G}}_{R})=\sum_{D} \left(1-\frac{d^*(D)}{2}\right)\label{eq---EulerDualGraph}
\end{equation}
  where the sum is over $2$-cells of  $X_{\tau_u,\tau_b,\omega,\a}$ and $d^*(D)$ is the degree of $D$ in $\hat{\mathcal{G}}_R,$ that is, the number of oriented $WR^*$ or $WW^*$-arcs the boundary $\partial D$ goes through.  Denoting  by $d_{WR^*}(D)$ the number of $WR^*$-arcs $\partial D$ goes through,  
  \[d_{WR^*}(D)\le d^*(D)\]
  and  as there is no $RR^{-1}$ arcs, $D$ is a pre-piece  if and only if
\begin{equation}
d_{WR^*}(D)= d^*(D)=2.\label{eq---EqPiece}
\end{equation}
The only contribution to $\chi(\Sigma^*)$ therefore comes from joint discs.   Let us  split the sum \eqref{eq---EulerDualGraph} as 
\[\chi(\Sigma^*)=\chi_-+\chi_+\]
with 
\[\chi_-= \sum_{D \,\text{joint disc}: \,d_{WR^*}(D)>0} \left(1-\frac{d^*(D)}{2}\right)\quad\text{and}\quad \chi_+= \sum_{D \,\text{joint disc}: \,d_{WR^*}(D)=0}
 \left(1-\frac{d^*(D)}{2}\right).\]

Let us start with the left sum.

\begin{lem} If $D$ is a joint disc with $d_{WR^*}(D)>0,$ then \label{Lemma---JointDNeg}
\begin{equation}
1-\frac{d^*(D)}{2}\le -\frac{d_{WR^*}(D)}{4}.
\end{equation}
\end{lem}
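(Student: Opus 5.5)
The inequality $1-\frac{d^*(D)}{2}\le -\frac{d_{WR^*}(D)}{4}$ is equivalent to $d^*(D)\ge 2+\frac{d_{WR^*}(D)}{2}$. So the plan is to bound from below the number of oriented $WR^*$ and $WW^*$ arc-passages along $\partial D$. Here $D$ is a joint disc, so it is not a pre-piece, and $d_{WR^*}(D)>0$.

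The key step is a parity and counting argument on the cyclic sequence of arcs traversed by $\partial D$. The boundary of $D$ alternates between $\a$-arcs and boundary intervals. Each boundary interval lies in a boundary component of some colour in $\mathcal{S}=\{R,R^{-1},W,W^{-1}\}$, and each arc joins the colours of the two intervals it separates. Call an interval a $W$-interval if it lies on $\partial\Sigma_W\cup\partial\Sigma_{W^{-1}}$, and an $R$-interval otherwise.

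The $R$-intervals group into maximal runs separated by $W$-intervals. A maximal run of consecutive $R$-intervals is bordered on each side by an arc leading to a $W$-interval, and these two bordering arcs are $WR^*$-arcs. Consecutive $W$-intervals are instead joined by a $WW^*$-arc. Let $k$ be the number of $W$-intervals on $\partial D$ and $j$ the number of maximal $R$-runs. Then $d_{WR^*}(D)=2j$, and the number of $WW^*$ passages is $k-j$ when $j\ge1$, since the $k$ $W$-intervals split into $j$ blocks and contribute $k-j$ internal joins. Since $d_{WR^*}(D)>0$, we have $j\ge1$. This gives
\[d^*(D)=2j+(k-j)=k+j=k+\tfrac{d_{WR^*}(D)}{2}.\]

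It therefore suffices to show $k\ge2$. Because $d_{WR^*}(D)>0$, some $W$-interval exists, so $k\ge1$. If $k=1$, then $\partial D$ contains exactly one interval of a $W$ or $W^{-1}$ boundary, which is precisely the definition of a pre-piece. This contradicts $D$ being a joint disc, so $k\ge2$. We conclude $d^*(D)\ge 2+\frac{d_{WR^*}(D)}{2}$, which rearranges to the claim.

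The main obstacle is the bookkeeping for a boundary curve that may traverse the same $\a$-arc twice with opposite orientations (see the remark above). To handle this, I will count \emph{oriented} arc passages throughout, consistent with the definition of $d^*(D)$ and $d_{WR^*}(D)$ as counts of oriented arcs visited. I must also check that $RR^*$ passages are excluded from $d^*$ while $WW^*$ passages are included, which matches the definition of $\hat{\mathcal{G}}_R$. Finally, the case where all intervals are $W$-intervals has $j=0$ and is excluded by the hypothesis $d_{WR^*}(D)>0$.
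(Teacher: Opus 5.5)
Your proof is correct. It uses the same two ingredients as the paper: the alternation of $R^*$- and $W^*$-intervals along $\partial D$, and the hypothesis that $D$ is not a pre-piece. The organisation differs, however.

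The paper argues by cases. It first notes that $d_{WR^*}(D)$ is even. When $d_{WR^*}(D)\ge 4$ it uses only the crude bound $d^*(D)\ge d_{WR^*}(D)$, which already gives the inequality. Only when $d_{WR^*}(D)=2$ does it invoke that $D$ is not a pre-piece, to get $d^*(D)\ge 3$.

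You instead derive the exact identity $d^*(D)=k+\frac{d_{WR^*}(D)}{2}$, where $k$ is the number of $W^{\pm}$-intervals on $\partial D$, and you apply the non-pre-piece hypothesis uniformly as $k\ge 2$.

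The paper's route is shorter and needs no bookkeeping of $WW^*$ passages. Yours gives more information: it shows the inequality is an equality exactly when $\partial D$ meets precisely two $W^{\pm}$-intervals. Your explicit convention of counting oriented arc passages also makes transparent the parity of $d_{WR^*}(D)$, which the paper asserts without detail.
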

\begin{proof} Since type of boundaries alternate between %\color{orange}
{$R^*$ and $W^*$} boundaries when going through $\partial D$ and crossing a $WR^*$-arc, $d_{WR^*}(D)$ is even. If $d_{WR^*}(D)\ge 4, $ then 
\[d^{-1}_{WR^*}(D)\left(1-\frac{d^*(D)}{2}\right)\le d^{-1}_{WR^*}(D)\left(1-\frac{d_{WR^*}(D)}{2}\right)\le \frac 1 4-\frac 1 2=-\frac 14.   \]
If $d_{WR^*}(D)=2,$ since $D$ is not a pre-piece, $d^*(D)\ge 3$ and   
\[1-\frac{d^*(D)}{2}\le   -\frac 1 2=-\frac{d_{WR^*}(D)}{4}. \]
\end{proof}

 Since any $WR^*$-arc bounding a joint disc is also bounding a piece, while any piece boundary goes through exactly two distinct oriented $WR^*$-arcs,\footnote{with the convention that for $WR^*$-arc that do not belong to any pre-piece, the boundary of the piece is exactly the arc with some orientation followed by its reverse. } 
 
  \begin{equation}
  \sum_{P \,\text{piece}} 2\chi(P) = \sum_{D \,\text{joint disc}: \,d_{WR^*}(D)>0} d_{WR^*}(D).
 \end{equation}
Together with Lemma \ref{Lemma---JointDNeg},
 \begin{equation}
 \chi_-= \sum_{D \,\text{joint disc}: \,d_{WR^*}(D)>0} \left(1-\frac{d^*(D)}{2}\right)\le -\frac 1 2 \sum_{P \,\text{piece}} \chi(P). \label{eq---PreBoundNegPart}
 \end{equation}
Now using the Birman-Series's inequality of  Lemma \ref{lem---BoundBSPieces},
\begin{equation}
-\frac 1 2 \sum_{P \,\text{piece}} \chi(P)\le  \frac 1 {4g} \sum_{P \,\text{piece}} \left( (2g-1) {\he(P)}-e(P)\right).\label{eq---FirstBoundNeg}
\end{equation}
Let us rewrite the right-hand side in terms of number of $\a$-arcs. Since each  $WR^*$-arc  is included  in exactly one piece, their number $N_{WR^*}$ is 
\begin{equation}
N_{WR^*}= \sum_{P \,\text{piece}}\# \{WR^*\text{-arc included in }P\}=\sum_{P \,\text{piece}}  e(P).
\end{equation}
 Denote by $\he'_{RR^*}$ the number of oriented $RR^*$-arcs  through which no piece boundary goes positively. Since each oriented $RR^*$-arc is positively bounding at most one piece,
\begin{equation}
2 N_{RR^*}=\he'_{RR^*}+\sum_{P \,\text{piece}}  \he(P).
\end{equation}
Using the last two identities with \eqref{eq---PreBoundNegPart} and \eqref{eq---FirstBoundNeg}, 
\begin{equation}
\chi_-\le N_{RR^*} - \frac{2 N_{RR^*} +N_{WR^*}}{4g} - \frac{(2g-1)\he_{RR^*}'}{4g} .\label{eq---EulerNegBound}
\end{equation}
By definition, the vertices of $X_{\tau_u,\tau_b,\omega,\a}$  belonging to a $R^*$-boundary and an  $\a$-arc can be identified with $[4g(n+m)]$. Since each $\a$-arc has two distinct endpoints, 
 \begin{equation}
 4g(n+m)=2N_{RR^*}+N_{WR^*}.\label{eq---ExtensionSizeArcsCount}
 \end{equation}
Gathering the above bounds yields
 \begin{equation}
\chi_-\le N_{RR^*} -(n+m)- \frac{(2g-1)\he'_{RR^*}}{4g}.\label{eq---LoopPart}
 \end{equation}
Recalling \eqref{eq---EulerCutRR},  to show \eqref{eq---EulerBoundQBrauerMap}  it is enough to prove 
 \[\chi_+\le   \frac{(2g-1)\he'_{RR*}}{4g}.\]

 As $g\ge 1,$ the following Lemma therefore concludes the proof of Proposition \ref{Prop---GeoBoundEuler}.
 \begin{lem} For any $g\ge1,$  \label{Lemma---JointDPos}
\begin{equation}
\chi_+\le \frac{\he'_{RR^*}}{4g}.
\end{equation}
\end{lem}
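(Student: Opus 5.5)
The plan is to classify the $2$-cells contributing to $\chi_+$ and check that only one kind contributes positively. We then charge each such cell to $4g$ distinct oriented $RR^*$-arcs counted in $\he'_{RR^*}$.

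\emph{Step 1: classification.} Let $D$ be a joint disc with $d_{WR^*}(D)=0$. Going around $\partial D$, the type of boundary interval can only change across a $WR^*$-arc, and $\partial D$ crosses none. Hence all boundary intervals of $\partial D$ lie on $W^*$-boundaries, or all lie on $R^*$-boundaries. In the second case they all lie on $R^{\varepsilon'}$-boundaries for a single $\varepsilon'$, since there are no $RR^{-1}$-arcs.

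\emph{Step 2: the $W^*$ case and degenerate cells.}
\begin{itemize}
\item If $\partial D$ only meets $W^*$-boundaries, every arc it crosses is a $WW^*$-arc, so $d^*(D)$ is the number of oriented arcs on $\partial D$.
\item $d^*(D)\ge 1$: otherwise $\partial D$ would be an entire boundary circle containing no arc endpoint, which is impossible since $\omega$ is nonempty.
\item $d^*(D)\neq 1$: otherwise the single interval of $\partial D$ would join the two endpoints of one $\a$-arc consecutively on a $W^{\pm1}$-boundary. This is excluded because $\omega$ is cyclically reduced.
\item Hence $d^*(D)\ge 2$, and such discs contribute $1-d^*(D)/2\le 0$.
\item The same nonemptiness argument, using that $r$ is nonempty, rules out an $R^*$-only $2$-cell containing no arc.
\end{itemize}
Consequently $\chi_+\le \#\{D \text{ joint disc meeting only } R^{\varepsilon'}\text{-boundaries}\}$. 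Each such $D$ has $d^*(D)=0$, because its arcs are all $RR^*$-arcs and these were removed in $\hat{\mathcal G}_R$, so each contributes exactly $1$.

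\emph{Step 3: winding bound.} Let $D$ be an $R^{\varepsilon'}$-only disc. Read the $\mfa$-labels around $\partial D$ exactly as in the proof of Lemma \ref{lem---BoundBSPieces}. Each boundary interval between consecutive arc endpoints gives a pair $x_k^{-1}x_{k+1}\prec_c r^{-\varepsilon'}$ of consecutive letters, and each arc contributes a letter and its inverse at its two ends. By Lemma \ref{Lem---OrderVertexFaceCayleyGraph} and Remark \ref{rmk---CClockWinding}, successive letters $x_k,x_{k+1}$ are then consecutive in $\hat r^{-\varepsilon'}$. Since $\partial D$ closes up, the cyclic word $x_0x_1\cdots x_{k}$ winds an integer number of times around the cyclic word $\hat r^{-\varepsilon'}$. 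This is the link of a vertex in the $4g$-gon tiling. Its length, i.e.\ the number of oriented $RR^*$-arcs traversed positively by $\partial D$, is therefore a positive multiple of $4g$, and in particular at least $4g$.

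\emph{Step 4: counting.} Each oriented $\a$-arc positively bounds exactly one $2$-cell. The oriented arcs positively traversed by an $R^{\varepsilon'}$-only disc $D$ are therefore not positively traversed by any pre-piece, hence by no piece. They are thus counted in $\he'_{RR^*}$. Distinct discs use disjoint sets of such oriented arcs, so
\[\chi_+\le \#\{R^*\text{-only discs}\}\le \frac{\he'_{RR^*}}{4g}.\]

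The main obstacle is Step 3, namely making rigorous that the cyclic label sequence around a closed $R^{\varepsilon'}$-only cell is a full nontrivial winding of $\hat r^{\mp1}$. One must check that the orientation conventions match those of Lemma \ref{lem---BoundBSPieces}, and handle cells whose boundary passes through the same arc twice. For the repeated-arc case I would argue at the level of oriented arcs, which is what $\he'$ counts: when an arc is traversed twice by $\partial D$, the two traversals have opposite orientations and therefore correspond to two distinct oriented arcs, both lying on $\partial D$, so each still counts once.
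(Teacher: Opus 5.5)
Your proposal is correct and follows the paper's proof essentially step for step. In both, $W^*$-discs have $d^*(D)\ge 2$ because $\omega$ is cyclically reduced, each $R^{\pm}$-disc contributes exactly $1$ and traverses a positive multiple of $4g$ oriented $RR^*$-arcs, and none of those arcs is positively traversed by a piece, which gives $\chi_+\le \#\{R^*\text{-discs}\}\le \he'_{RR^*}/(4g)$. The orientation concern you flag in Step 3 is harmless: since the surface is oriented, every $R^{\varepsilon'}$-interval of $\partial D$ is read with the same orientation relative to $\partial\Sigma_{R^{\varepsilon'}}$, and each letter of $\mathcal{A}$ occurs exactly once in $r$, so the letter-to-next-letter map is a single $4g$-cycle on $\mathcal{A}$ and closing up forces a multiple of $4g$ steps.
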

\begin{proof} Assume $D$ is a joint disc with $d_{WR^*}(D)=0.$ Then, as there are no $RR^{-1}$ arcs,  its boundary meets only $WW^*$,  only $RR$ or only $R^{-1}R^{-1}$ arcs. Call  $D$  a $W^*$-disc  in the first case, and  $R$-disc or $R^{-1}$-disc in the last two cases. If $D$ is a $W^*$-disc, since $\omega$ is reduced, $\partial D$ goes through at least  two $WW^*$-arcs so that $d^*(D)\ge2$ and 
\[{1-\frac{d^*(D)}2\le0}.\]
If $D$ is a $R^\varepsilon$-disc, $d^*(D)=0$ and for each oriented interval of $\partial \Sigma_{R^\varepsilon}$ meeting its boundary,  the $\mfa$-values $x,y$  at its endpoints satisfy $xy\prec r^{\varepsilon}.$  Since $\partial D$ alternates between $R^{\varepsilon}R^{\varepsilon}$-arcs and such intervals,  it follows that  $\partial D$ goes through  $4gn$ oriented $RR^*$-arcs for some $n\ge 1$. Since no piece boundary goes positively through such  oriented $RR^*$-arcs, 
\[ 4g\#\{R^{*}\text{-discs}\}\le \he'_{RR^*}.\]  
Together with the previous display,
\[\chi_+\le \#\{R^{*}\text{-discs}\}\le \frac{\he'_{RR^*}}{4g}.\] 
\end{proof}

Gathering the last Lemma with  \eqref{eq---LoopPart},
\begin{equation}
\chi(\Sigma_{\tau_u,\tau_b,\om,\a})= \chi(\Sigma^*)-N_{RR^*}=\chi_-+\chi_+-N_{RR^*}\le   - (n+m)- \frac{2g-2}{4g}\he'_{RR^*},
\end{equation}
which concludes the proof of Proposition \ref{Prop---GeoBoundEuler} and thereby completes the one of Theorem \ref{thm---BoundTraceWordCharactRel Size Irrep}.

 \begin{rmk} Note that the above argument to bound $\chi(\Sigma^*)$ follows from  \cite{MageeII}, where it was applied with only one $W$-boundary.  Essentially only the bound of $\chi_+$ needed a change. Note also the same bound applies to any number of $W,W^{-1}$ boundaries. \end{rmk}

\subsection{Second moment bound for Wilson loops}

Assume $g\ge2.$ From here onwards, $\Gbb_g$ is a fixed   map  of genus $g$ with one vertex $v_*$, $4g$ oriented edges $a_1^{\pm},\ldots,a_{2g}^\pm $ and one face, with the standard cyclic ordering of its edges %\color{orange}
{$a_1a_2^{-1}a_1^{-1}a_2a_3\ldots$} around its vertex.  We call a map \emph{regular} if it is a refinement of  $\Gbb_g. $   Let us say  a combinatorial loop $\g'$ is  an \emph{almost-shortest loop representative}  if  $\g'\in\Ld(\Gbb)$ is a  loop   of a regular map $\Gbb$ with
\begin{equation}
\g'=e\ell e^{-1}\g\label{eq---AlmostSLRep}
\end{equation}
where %\color{orange}
{$\g$ is  a path of $\Gbb_g$ that is a cyclically shortest representative of an element of \[\G_g=\<a_1,\ldots ,a_{2g}|[a_1,a_2]\ldots[a_{2g-1},a_{2g}]=1\>,\]}while $\ell$ is a loop of $\Gbb$  included in a %\color{orange}
{closed disc D} and $e\in E^o$  with  $\underline{e}=v_*,$  $\overline{e}\in D$ and  interior disjoint from $D$ and from edges of $\Gbb_g.$   For any map $\Gbb=(V,E,F)$ and $T>0$ write 
$\Delta_\Gbb(T)=\{a\in {{\R_+^*}^F}:|a|=T\}.$

\begin{thm}\label{THM---CValmostshortestRep} Assume  $\g'$ is a non-contractible, almost shortest loop representative of $\G_g$ in a  regular map $\Gbb.$  There is $K>0$ such that for all  { $T>0, a\in \Delta_\Gbb(T)$} and $N\ge 1,$
\begin{equation}
\EE_{\YM_{\Gbb,a,U(N)}}[|W_{\g'}|^2]\le \frac{K}{N^2}\frac{\theta(q_{\tilde T})}{\theta(q_T)}\quad\text{and}\quad \EE_{\YM_{\Gbb,a,SU(N)}}[|W_{\g'}|^2]\le \frac{K}{N^2}, 
\end{equation}
where $\tilde T=\min(\frac T2, a(\Sigma\setminus D)).$  In particular for any shortest word representative $\g,$ there is $K>0$ such that for all $N\ge 1,$
\begin{equation}
\EE_{ABG_g}[|W_{\g}|^2]\le \frac{K}{N^2}.
\end{equation}
\end{thm}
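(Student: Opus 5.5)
We will reduce the $L^2$ bound to the character estimate of Theorem \ref{thm---BoundTraceWordCharactRel Size Irrep}, using the truncation of Proposition \ref{Prop---TruncationWLE} to control large highest weights.

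\textbf{Step 1: integrate out the disc.} By Proposition \ref{Prop--Comp}, we may coarsen $\Gbb$ to a map consisting of $\Gbb_g$ together with the edge $e$ and the edges carrying $\ell$ inside $D$. The faces inside $D$ give a planar-type piece with total area $a(D)$. The face outside $D$ is the single face of $\Gbb_g$ with a hole, and it carries area $T-a(D)\ge 0$. Conditionally on the holonomies $U_1,\dots,U_{2g}$ of the edges of $\Gbb_g$, we will write $h_{\g'}=h_e h_\ell h_e^{-1}h_\g$. Gauge invariance at the endpoint of $e$ then lets us absorb $h_e$, and the density of the outer face becomes a heat kernel evaluated at $r(U)\,h_{\partial D}^{-1}$, where $r$ is the surface word \eqref{eq---SurfaceWord}. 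Expanding this outer heat kernel over characters gives
\[
Z_{G,g,T}\,\EE[|W_{\g'}|^2]=\sum_{\a} d_\a\, e^{-\frac{a(\Sigma\setminus D)}{2}\mfc_\a}\;\EE\Big[\EE_U\big[|\tfrac1N\Tr(\g(U)A)|^2\chi_\a(r(U)B)\big]\Big],
\]
where $A,B$ are functions of the holonomies inside $D$ (essentially $A=h_\ell$ up to conjugation, and $B=h_{\partial D}^{-1}$). The outer expectation averages over those holonomies under the planar Yang-Mills weight; that weight is a positive measure, and we normalise it by the disc heat kernel. For the commuting requirement of Lemma \ref{lem---TraceWordCharacterRelatorForm}, the relevant $W_i$ are built from $A$ and $B$ only, so we expect no issue there.

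\textbf{Step 2: truncate.} We will split the sum into $|\a|\le k$ and $|\a|>k$. For $|\a|>k$ we bound $|\Tr(\cdot)|^2\le N^2$ and argue exactly as in Proposition \ref{Prop---TruncationWLE}, now with areas $T/2$ or $\tilde T$ to account for the disc face. Combined with \eqref{eq---BoundWZT} and the $\theta$-comparison of Proposition \ref{Prop---ConvWittenZeta}, this part is $O(N^{-d})\,\theta(q_{\tilde T})$ for $k$ large. We then divide by $Z_{G,g,T}=\zeta_{G_N}(2g-2,q_T)\ge$ the contribution of the trivial weight and its translates, which is $\gtrsim\theta(q_T)$ for $U(N)$ and $\ge 1$ for $SU(N)$.

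\textbf{Step 3: finitely many weights.} For $|\a|\le k$, Theorem \ref{thm---BoundTraceWordCharactRel Size Irrep} applies because $\g$ is cyclically shortest and nontrivial. It gives $|\EE_U[\cdots]|\le K_k N^{2-|\a|}$ before dividing by $N^2$. Since $d_\a\le C N^{|\a|}$ for bounded size, each term contributes $O(N^{-2})$ uniformly in $A$ and $B$. The Casimir weights are bounded by $1$, and summing over the charge $c(\a)$ for $U(N)$ produces the factor $\theta(q_{\tilde T})$, as in \eqref{eq---BoundSupTheta}. The ABG statement is the case $\ell$ trivial and $a\to0$ for $SU(N)$; the bounds are uniform in $a$ by Proposition \ref{Prop---Contract}.

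\textbf{Main obstacle.} The delicate point is Step 1. We must make the factorisation rigorous, with the disc contribution entering only through $A$ and $B$ as a positive measure and with constants uniform in $a$. We also need care when $a(\Sigma\setminus D)$ is small, which is why $\tilde T$ appears rather than $T$.
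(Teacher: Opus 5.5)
Your route is the paper's. You reduce to one face $f$ outside $D$ with $a(f)=a(\Sigma\setminus D)$, split its character expansion at $|\a|=k$, and handle the tail with Proposition~\ref{Prop---TruncationWLE}. The classes with $|\a|\le k$ are handled by Theorem~\ref{thm---BoundTraceWordCharactRel Size Irrep}, $d_\a\le N^{|\a|}$ (Lemma~\ref{Lem---RatioPieri}) and the $\theta$-sum over the charge. You then divide by the partition function and let $a\to0$. Your lower bound by the central weights ($\ge\theta(q_T)$, resp.\ $\ge1$) is exactly what produces the denominator. However, two steps fail as written.

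\textbf{Step 2: the tail.} You cannot bound the loop factor pointwise, $|\Tr(\g(U)A)|^2\le N^2$, inside the character sum. That replaces $\EE_U[\chi_\a(r(U)B)]=\chi_\a(B)d_\a^{-2g}=O(d_\a^{1-2g})$ by $\EE_U|\chi_\a(r(U)B)|$. The latter is bounded below by $\EE_U|\chi_\a(r(U)B)|^2/d_\a$, which is of order $d_\a^{-1}$. So the bound you obtain contains terms, e.g.\ from $\a=[(k+1),\emptyset]_N$, that are bounded below independently of $N$, and no $N^{-d}$ decay survives.

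The correct move is to apply Proposition~\ref{Prop---TruncationWLE} as stated to the loops $\g',\g'^{-1}$ with $F_*=\{f\}$. There the Wilson loops are absorbed into the IRF formula, only the amplitude $\mathcal{I}_{\a,\mathcal{S}}$ is bounded by $1$, and the dimension exponents sum to $2-2g$. This gives $KN^{-d}\theta(q_{T/2})$ directly, with no change of areas.

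\textbf{Step 3: the exponent.} You misquote Theorem~\ref{thm---BoundTraceWordCharactRel Size Irrep}. It bounds the unnormalised quantity $\EE_U[|\Tr(\g(U)A)|^2\chi_\a(r(U)B)]$ by $K_kN^{-|\a|}$, not by $K_kN^{2-|\a|}$. With your exponent, dividing by $N^2$ and multiplying by $d_\a\le N^{|\a|}$ gives only $O(1)$ per class. With the correct exponent each class contributes at most $K_kN^{-2}e^{-\frac{a(f)}{2}\mfc_\a}$. Summed over charges, this is the paper's Lemma~\ref{lem---YMShortestRep}.

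Finally, no normalisation of the disc weight is needed. The free-boundary measure $\YM_{\Gbb_D,a_D,G_N}$ is already a probability measure, and the character bound is uniform in $A,B$.
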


Recall the definition \eqref{def---TruncYM} for the truncation of the Yang-Mills measure.
 
\begin{lem}\label{lem---YMShortestRep}  Assume $\g'  $ is a non-contractible, almost shortest loop representative in a regular map $\Gbb,$  $D$  the  associated closed disc.   For any 
$k>0, $
 there is $K>0$ such that   
for any  $T>0$ and any area vector $a\in\Delta_\Gbb(T),$
\[|\YM^{(k)}_{\Gbb,a,U(N)}[|W_{\g'}|^2]|\le \frac{K}{N^2}\theta(q_{T'})\quad\text{and}\quad |\YM^{(k)}_{\Gbb,a,SU(N)}[|W_{\g'}|^2]|\le \frac{K}{N^2}.\]
where $T'=a(\Sigma\setminus D)$.   
\end{lem}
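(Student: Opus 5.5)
The idea is to expand the truncated measure into finitely many characters on the face(s) outside $D$, integrate out the edges of $\Gbb_g$ using Theorem \ref{thm---BoundTraceWordCharactRel Size Irrep}, and treat the part of the map inside $D$ as producing fixed unitary matrices $A,B$.

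\textbf{Step 1: reduce to $\Gbb_g$ with a disc inside.} By the compatibility of Proposition \ref{Prop--Comp}, we may coarsen $\Gbb$. Keep the edges of $\Gbb_g$, the edge $e$, and a refinement of $D$ fine enough to carry $\ell$. Then all faces of $\Gbb$ outside $D$ merge into a single face $f_*$, of area $T'=a(\Sigma\setminus D)$ and genus-type data $\chi(f_*)$. We take $F_*=\{f_*\}$; the truncation is applied there, which is the convention in \eqref{def---TruncYM}.

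\textbf{Step 2: expand in characters and isolate the disc.} We write
\[p^{(k)}_{T',\chi(f_*)}(h_{\partial f_*})=\sum_{|\a|\le k}e^{-T'\mfc_\a/2}d_\a^{\chi(f_*)}\chi_\a(h_{\partial f_*}).\]
The boundary of $f_*$ reads $r(h_{a_1},\dots,h_{a_{2g}})$ up to conjugation, composed with $h_{\partial D}$ inserted through $e$. The loop $\g'=e\ell e^{-1}\g$ then gives
\[W_{\g'}=\tfrac1N\Tr\bigl(h_\g A\bigr),\qquad A=h_eh_\ell h_e^{-1}.\]
Conditioning on the edge variables inside $D$ and on $h_e$, the edges $h_{a_i}$ are independent and Haar distributed. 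The integrand becomes $|\Tr(\om(U)A)|^2\chi_\a(r(U)B)$ with $\om=\g$ and $B$ built from $h_{\partial D}$ and $h_e$. Both $A$ and $B$ are fixed once we condition. The heat-kernel densities of the faces inside $D$ have the form $p_t$ of positive measures, so we may bound their integral by their total mass. That mass is a partition-function-type quantity for a disc, bounded uniformly; for $SU(N)$ the bound is $1$ after normalising by $p$ on the disc.

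\textbf{Step 3: apply the character bound.} Theorem \ref{thm---BoundTraceWordCharactRel Size Irrep} gives, uniformly in $A,B$,
\[\bigl|\EE_U[|\Tr(\om A)|^2\chi_\a(rB)]\bigr|\le K_kN^{-|\a|}.\]
Hence each term is bounded by
\[N^{-2}\,K_k\,N^{-|\a|}\,e^{-T'\mfc_\a/2}\,d_\a^{\chi(f_*)},\]
with $\chi(f_*)=2-2g-1<0$ when the disc is removed, so $d_\a^{\chi(f_*)}\le1$. For $SU(N)$ we sum over finitely many classes $\a$ with $|\a|\le k$, since $\mfc^*\ge0$, and obtain $K/N^2$. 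For $U(N)$ the translations $\a+c1_N$ are infinitely many. Each translation multiplies the character by $\det(B)^c$ and leaves $|\a|$ unchanged. Summing $e^{-T'\mfc_{\a+c}/2}$ over $c$, using the decomposition $\mfc_\a=\mfc^*_\a+(c+\tfrac{|\la|-|\mu|}{N})^2$ from the proof of Proposition \ref{Prop---ConvWittenZeta}, gives at most $\theta(q_{T'})$.

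\textbf{Main obstacle.} The delicate point is the bookkeeping in Step 2. We must check that after coarsening, the Yang-Mills weights inside $D$ genuinely factor as a bounded positive measure independent of $\a$. We must also check that $\chi(f_*)\le0$, so that the dimension factors $d_\a^{\chi(f_*)}$ do not grow with $N$. I would handle both by writing $D$'s contribution through the convolution identities in the proof of Proposition \ref{Prop--Comp}. This collapses the disc to a single heat kernel $p_{a(D)}(h_{\partial D})$, whose integral against any bounded function is at most $\|\cdot\|_\infty$.
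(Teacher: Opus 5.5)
Your overall route matches the paper's: coarsen so that a single face $f_*$ lies outside $D$, expand its truncated heat kernel in characters, integrate the edges of $\Gbb_g$ via Theorem \ref{thm---BoundTraceWordCharactRel Size Irrep} with $A,B$ fixed, and bound the disc part by its total mass. However, Step 3 rests on a false claim.

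You keep the edges $a_1,\dots,a_{2g}$ of $\Gbb_g$, and you must, since they produce the Haar variables $U_i$. So $f_*$ is the $4g$-gon of $\Gbb_g$ with $D$ and the arc $e$ removed. That is a disc: $g(f_*)=0$ and $k_{f_*}=1$, so $\chi(f_*)=1$, not $1-2g$. The value $1-2g$ would arise only if you erased the edges of $\Gbb_g$, but then no $U_i$ would remain to integrate and $\g'$ would no longer be a path of the map. The density on $f_*$ is therefore $\sum_{|\a|\le k}d_\a\chi_\a e^{-T'\mfc_\a/2}$, with a \emph{positive} power of $d_\a$. Already for $\a=\om_1$ one has $d_\a=N$, so $d_\a^{\chi(f_*)}\le 1$ fails. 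The check you flag in your last paragraph therefore cannot succeed, and collapsing the interior of $D$ into one heat kernel does not change the topology of $f_*$.

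The missing ingredient is the bound $d_\a\le N^{|\a|}$. It follows from Lemma \ref{Lem---RatioPieri} by walking from the trivial weight to $[\la,\mu]_N$ in $|\la|+|\mu|$ Pieri steps, each of which multiplies the dimension by at most $N$. This factor exactly cancels the decay $N^{-|\a|}$ of Theorem \ref{thm---BoundTraceWordCharactRel Size Irrep}, so each term is at most $K_k N^{-2}e^{-T'\mfc_\a/2}$. You then conclude as you intended: use finiteness of $\{\a\in\ZD/\Z:|\a|\le k\}$ for $SU(N)$, and the $\theta(q_{T'})$ summation over translates for $U(N)$.

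This cancellation is the real content of the lemma. It explains why Proposition \ref{Prop---GeoBoundEuler} must deliver a gain $N^{-n-m}$ in the Euler characteristic, rather than the bound of order $1$ that your version of Step 3 would have required.
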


\begin{proof}  By restriction and compatibility properties \ref{Prop--Comp} and \ref{Prop---Contract}, we can assume $\Gbb$ has exactly one face $f$ that is not included in $D$ so  that  $a(f)=T'>0.$ Then 
\begin{align*}
N^2\YM^{(k,\{f\})}_{\Gbb,a,G_N}[|W_{\g'}|^2]&= \YM_{\Gbb_D,a_D,G_N} ( \EE_{U}[|\Tr(H_\ell\omega(U_1,\ldots,U_{2g}) )|^2p_{a(f)}^{(k)}( H_{\partial D}^{-1} r(U_1,\ldots,U_{2g}) ) ] )
\end{align*}
where
\[p_{a(f)}^{(k)}=\sum_{\a\in\ZD:|\a|\le k} d_\a \chi_\a e^{-\frac{a(f)\mfc_\a }2} \]
and $U_1,\ldots,U_{2g}$ are independent and Haar distributed on $G_N$.
Theorem \ref{thm---BoundTraceWordCharactRel Size Irrep} now yields similarly to \eqref{eq---BoundSupTheta}, %\color{orange}
{\[N^2|\YM^{(k,\{f\})}_{\Gbb,a,U(N)}[|W_{\g'}|^2]|\le \sum_{\a \in \ZD:|\a|\le k}K_k d_\a N^{-|\a|} e^{- \frac{T'}{2}\mfc_\a}\le \theta(q_{T'})K_k \sum_{\a \in \ZD/\Z:|\a|\le k}d_\a N^{-|\a|}\]}
and 
 
{\[N^2|\YM^{(k,\{f\})}_{\Gbb,a,SU(N)}[|W_{\g'}|^2]|\le K_k \sum_{\a \in \ZD/\Z:|\a|\le k}d_\a N^{-|\a|}.\]}
By Lemma \ref{Lem---RatioPieri} for any $\a\in\ZD/\Z,$ $d_\a\le N^{|\a|}$ and the right-hand side's sum is bounded by  $\#\{\a \in \ZD/\Z:|\a|\le k\} \le \#\{\la\in\Yb:|\la|\le k \}^2.$

\end{proof}

\begin{proof}[Proof of Theorem \ref{THM---CValmostshortestRep}]  Assume $\g'$ as in the statement. Thanks to restriction and compatibility propositions \ref{Prop--Comp} and \ref{Prop---Contract}, we can apply propositions \ref{Prop---ConvWittenZeta} and \ref{Prop---TruncationWLE} with $d=2$, there is $k'>0$ and $K>0,$ such that for all $k\ge k',  N\ge 1,T>0$ and area vector $a\in \Delta_\Gbb(T),$
\begin{equation}
|\YM^{(> k)}_{\Gbb,a,U(N)}[|W_{\g'}|^2]\le \frac{K}{N^2}\theta(q_{\frac T2})\quad\text{and}\quad |\YM^{(\ge k)}_{\Gbb,a,SU(N)}[|W_{\g'}|^2]|\le \frac{K}{N^2}.\label{eq---TruncWLE}
\end{equation}
Since $\g'$ is an almost shortest representative using Lemma \ref{lem---YMShortestRep} and \eqref{eq---TruncWLE},  there is a constant  $K'>0$ such that for all $N\ge1$ and all $T>0$ and  area vector $a\in\Delta_\Gbb(T),$ 
\begin{equation}
\YM_{\Gbb,a,U(N)}[|W_{\g'}|^2]\le \frac{K'}{N}\theta(q_{\tilde T})\quad\text{and}\quad \YM_{\Gbb,a,SU(N)}[|W_{\g'}|^2]\le \frac{K'}{N}.\label{eq---VarBoundSumUpProof}
\end{equation}
The last bound is also valid with $\g$ in place of $\g'$ and by definition,
%Thanks to {\color{orange}Theorem semiclassical limit}, when $\g' =e\ell e^{-1}\g$ is as in \eqref{eq---AlmostSLRep}, 
\begin{equation}
 \mu_{ABG,SU(N)}[|W_{\g}|^2]=\lim_{a\to 0} \YM_{\Gbb,a,SU(N)}[|W_{\g'}|^2] \le \frac{K'}{N^2}. \label{eq---VarBoundSumUpProofABG}
\end{equation}
Since   $\YM_{\Gbb,a,G_N}(1),\mu_{ABG,SU(N)}(1)$ converge as $N\to\infty$ to a non-zero limit, the claim follows from Proposition \ref{Prop---ConvWittenZeta}, \eqref{eq---VarBoundSumUpProof} and  
\eqref{eq---VarBoundSumUpProofABG}.
\end{proof}

\section{Convergence towards the planar master field}

\label{sec---MasterFieldAllLoops}

We shall explain here how to adapt the arguments of \cite{DLII}  to reduce  Wilson loops convergence for all  loops to the one for loops within a topological disc of   
$\Sigma$ or loops appearing in Theorem \ref{THM---CValmostshortestRep}. This reduction is allowed by a system  of differential equations, named after Makeenko and Migdal,   expressing in particular  the derivative of  Wilson loop expectation and variance  
along some directions in the area 
simplex.  

Let us first recall and slightly amend the main statement of \cite{DLII}.   First recall the class of loops we shall work with.  Denote by $\mfL_g$ the set of loops  with simple and transverse intersections, within some regular map of genus $g\ge 0.$

Call elements of $\mfL_g$ for some $g\ge 1$, \emph{regular} loops.  For each $N\ge 1$ and
$\mfl\in \mfL_g, T>0$ define  the following function on its area simplex   
\[\Psi^N_{\mfl}(a)= { \EE^N_{\Gbb_\mfl,a}[|W_{\mfl} -\Phi_{\mfl}(a)|^2]}\quad \forall a\in {  \Delta_\mfl(T)}\setminus\{0\} .\]
where  $\Phi_\mfl(a)=\tau_{\tilde \Gbb_U,\tilde a_U}(\tilde \mfl) $  is  defined as in Theorem \ref{THM---MFConv} when $\mfl$ is contractible and $\Phi_\ell=0$ otherwise.

The following was proved  { in \cite[Sec. 4]{CDG}    and  \cite[Sec. 5]{DN}} when  the base surface $\Sigma$ is the plane or the sphere. The argument for general surfaces being very closed to these cases is  omitted here.

\begin{thm} \label{THM---ReducFiniteLengthTOReg} Assume $T>0$ and  \begin{equation}
\lim_{N\to\infty}\|\Psi_{\mfl}\|_{\infty,\Delta_\mfl(T)}=0\label{eq---ConvL2}
\end{equation}
for all regular loops $\mfl\in \mfL_g.$ Then if $\Sigma_T$ is a closed orientable surface of genus $g\ge 2$ with a Riemannian metric of total volume $T>0,$  for any finite length loop $\ell\in \Ld(\Sigma_g)$ 
\[\lim_{N\to\infty}\EE_{\Sigma_g}[|W_\ell-{ \tau_{\Sigma_g}(\ell)}|]=0. \] 
\end{thm}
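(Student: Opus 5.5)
The plan is to follow the approximation scheme of \cite{CDG} and \cite[Sec. 5]{DN}: approximate an arbitrary finite length loop $\ell$ by regular loops in a way that controls Wilson loops uniformly in $N$, and then use \eqref{eq---ConvL2} together with the continuity of the limiting object $\tau_{\Sigma_g}$.

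\emph{Step 1: uniform approximation.} Given $\ell\in\Ld(\Sigma_g)$, choose a sequence of regular loops $\ell_n$, each embedded as a loop $\mfl_n\in\mfL_g$ of some regular map $\Gbb_n$ embedded in $\Sigma_g$, with $d(\ell_n,\ell)\to0$. Such a sequence exists by first taking piecewise geodesic approximations and then perturbing them so that all self-intersections are simple and transverse; the $1$-skeleton of a fixed fundamental polygon is then added so that the map refines $\Gbb_g$. The key estimate is the $N$-uniform bound
\[
\EE_{\Sigma_g}\bigl[|W_\ell-W_{\ell_n}|^2\bigr]\le C\,\sqrt{\mathscr{L}_\ell\, d(\ell,\ell_n)}+o_n(1).
\]
To prove it, write $|W_\ell-W_{\ell_n}|^2\le \frac1N\Tr\bigl((h_\ell-h_{\ell_n})(h_\ell-h_{\ell_n})^*\bigr)=2-2\,\mathrm{Re}\,W_{\ell\ell_n^{-1}}$ after matching base points. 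Then bound $1-\EE\,\mathrm{Re}\,W$ for the loop $\ell\ell_n^{-1}$, which bounds a union of small "lassos". This uses the Lévy-type estimate that for a loop enclosing area $t$ inside a disc, $\EE[1-\mathrm{Re}\,W]\le Ct$ uniformly in $N$. That estimate comes from the heat kernel on the disc and from the Markov property and restriction in Proposition \ref{Prop--Comp}, and it gives the area (hence length) control via the isoperimetric bound of \cite{LevMF}. I expect adapting this uniform-in-$N$ continuity from the plane or sphere to a genus $g\ge2$ surface to be the main obstacle. I would handle it by localizing: since $d(\ell_n,\ell)\to0$, the loop $\ell\ell_n^{-1}$ can be decomposed into loops each contained in a small disc $U$. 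On such a disc, by \cite{LevMarkovHol}, the Yang-Mills holonomy restricted to $U$ has a density with respect to the planar measure, bounded uniformly in $N$ after conditioning on the boundary holonomy. The planar estimate therefore transfers, with a constant depending only on the metric.

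\emph{Step 2: continuity of the limit.} Show that $\tau_{\Sigma_g}(\ell_n)=\Phi_{\mfl_n}(vol_{\Gbb_n})\to\tau_{\Sigma_g}(\ell)$. When $\ell$ is not contractible, $\ell_n$ is homotopic to $\ell$ for $n$ large, so both sides vanish. Otherwise the lifts $\tilde\ell_n\to\tilde\ell$ lie in a common disc $U$ of $\tilde\Sigma$, and the continuity of the planar master field in the length topology \cite{LevMF} applies. (Indeed $\tau_{\Sigma_g}(\ell)$ is defined as this limit.)

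\emph{Step 3: conclusion.} By the restriction property of $\YM_{\Sigma}$, $\EE_{\Sigma_g}[|W_{\ell_n}-\Phi_{\mfl_n}|^2]=\Psi^N_{\mfl_n}(vol_{\Gbb_n})\le\|\Psi^N_{\mfl_n}\|_{\infty,\Delta_{\mfl_n}(T)}$. Then apply the triangle inequality:
\[
\EE|W_\ell-\tau(\ell)|\le \EE|W_\ell-W_{\ell_n}|+\EE|W_{\ell_n}-\Phi_{\mfl_n}|+|\Phi_{\mfl_n}-\tau(\ell)|.
\]
Fix $\varepsilon>0$ and use Steps 1 and 2 to choose $n$ such that the first and third terms are below $\varepsilon$ for all $N$. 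With $n$ fixed, hypothesis \eqref{eq---ConvL2} makes the middle term tend to $0$ as $N\to\infty$, which concludes.
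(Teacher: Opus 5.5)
Your skeleton is the approximation argument of \cite{CDG} and \cite{DN} to which the paper defers without giving details: regular approximants $\ell_n$, an $N$-uniform continuity estimate, continuity of $\tau_{\Sigma_g}$, then fixing $n$ before letting $N\to\infty$ in the triangle inequality. Steps 2 and 3 are fine. The gap is in Step 1, at exactly the point you call the main obstacle: the localization is asserted rather than proved, and the justification you give is not the right one.

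First, the $N$-uniform density does not follow from \cite{LevMarkovHol}, and conditioning on the boundary holonomy is beside the point. Given $h_{\partial U}$, the inner fields on $\Sigma_g$ and in the plane have the same law, so the whole density sits in the marginal of $h_{\partial U}$. Integrate out the complement of a disc $U$ of area $t<T$ using Proposition \ref{Prop--Comp}; the complement is a face of genus $g$ with one boundary component. The density with respect to the planar field on a disc of area $t$ is then $p_{T-t,1-2g}(h_{\partial U}^{-1})/Z_{G_N,g,T}$, and
\[
\frac{|p_{T-t,1-2g}(x)|}{Z_{G_N,g,T}}\le \frac{\zeta_{G_N}(2g-2,q_{T-t})}{\zeta_{G_N}(2g-2,q_T)}\quad\forall x\in G_N .
\]
This is bounded uniformly in $N$ by Proposition \ref{Prop---ConvWittenZeta}, since the denominator is at least $1$. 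This is exactly where $g\ge 2$ is used: on the sphere the analogous ratio $p_{T-t}(x^{-1})/p_T(\Id_N)$ is unbounded in $N$. So this bound has to be part of the proof.

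Second, the discs must be chosen independently of $n$.
\begin{itemize}
\item Cover $\ell$ by finitely many discs $U_1,\ldots,U_k$.
\item Write $h_\ell h_{\ell_n}^{-1}$ as a product of $k$ conjugates of holonomies $h_{\g^{(n)}_i}$ of loops $\g^{(n)}_i\subset U_i$.
\item Use $\|\prod_i g_iA_ig_i^{-1}-\Id_N\|_2\le\sum_i\|A_i-\Id_N\|_2$ for the normalised Hilbert--Schmidt norm, together with Minkowski's inequality.
\item Each term then tends to $0$ uniformly in $N$, by the density bound above and L\'evy's $N$-uniform continuity of planar holonomies in the length topology, since $\g^{(n)}_i$ converges to a backtracking loop.
\end{itemize}

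Do not instead refine the lassos as $n\to\infty$ and bound each by its area (``$\le Ct$''); the isoperimetric bound does not rescue that route. The Banchoff--Pohl inequality only bounds winding area by length squared, which does not vanish. Moreover, the number of lassos grows with $n$, so their errors can only be summed using the independence available in the plane, not after a disc-by-disc transfer to $\Sigma_g$.

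With these two points supplied, the argument goes through.
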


We can now  prove the assumption of  Theorem \ref{THM---ReducFiniteLengthTOReg} holds true { which will conclude the proof of Theorem \ref{THM---ContMFConv}.}  First let us recall the main reduction result of \cite{DLII}.    From here onwards let us fix  a map $\Gbb^*_g$ { dual} to $\Gbb_g.$ A {regular} loop    $\mfl$  within a regular map $\Gbb$ determines a path on $\Gbb^*_g$ and we say $\mfl$ has  \emph{minimal tiling length} when  the latter  is a {cyclically shortest representative of } $\Gamma_g.$ 

When $K$ is a proper subset of faces in a  map $\Gbb=(V,E,F)$, denote $\Delta_{\Gbb,K}$ the set of area vectors $a\in \R^F$ with $a(f)=0$ for any $f\in K$  and  by $\Delta_{\Gbb,K}(T)=\{ a\in \Delta_{\Gbb,K}:{|a|}=T\}.$

\begin{thm} \label{THM---ReducMinimalTiling} Fix $T>0.$ Assume  that for any regular loop $\mfl$ with minimal tiling length,  there is a refining regular map $\Gbb$ and a proper subset  $K$  of its faces, such that 
 \begin{equation}
\lim_{N\to\infty}\|\Psi_{\mfl}\|_{\infty,\Delta_{\mfl,K}(T)}=0.\label{eq---SLConvL2}
\end{equation}
Then \begin{equation*}
\lim_{N\to\infty}\|\Psi_{\mfl}\|_{\infty,\Delta_\mfl(T)}=0
\end{equation*}
 for all regular loops $\mfl\in \mfL_g.$ 
\end{thm}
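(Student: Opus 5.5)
The plan is to follow the reduction scheme of \cite{DLII} (itself modelled on \cite{DN} for the sphere). It is an induction on a complexity attached to each regular loop $\mfl\in\mfL_g$: first the tiling length $|\mfl|_*$ of the path it determines on $\Gbb^*_g$, then the number of self-intersections of $\mfl$, ordered lexicographically. The induction hypothesis is that \eqref{eq---ConvL2} holds for all regular loops of smaller complexity, and also for all finite tuples of such loops, in the factorised form
\[\EE_{\Gbb,a}\Big[\big|\prod_i W_{\mfl_i}-\prod_i\Phi_{\mfl_i}(a)\big|^2\Big]\to0 .\]
This tuple version follows from the single-loop statement, since $|W_{\mfl}|\le1$.

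Three tools are used throughout.
\begin{itemize}
\item \textbf{Makeenko--Migdal equations.} For each self-intersection $v$ of $\mfl$, with adjacent faces $f_1,\dots,f_4$ in cyclic order, the alternating sum $(\partial_{a(f_1)}-\partial_{a(f_2)}+\partial_{a(f_3)}-\partial_{a(f_4)})$ applied to $\EE[W_\mfl]$ equals $\EE[W_{\mfl_1}W_{\mfl_2}]$, up to an $O(N^{-2})$ error. Here $\mfl_1,\mfl_2$ are the two loops obtained by desingularising $\mfl$ at $v$. The analogous identity for $\EE[|W_\mfl|^2]$ produces products involving $W_{\mfl_1}W_{\mfl_2}\overline{W_\mfl}$. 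These come from integration by parts for the heat kernel, as in \cite{DLII}.
\item \textbf{Consistency of the candidate limit.} The candidate $\Phi_\mfl$ satisfies the same equations with right-hand side $\Phi_{\mfl_1}\Phi_{\mfl_2}$. This holds because $\Phi$ is the planar master field of the lift, which satisfies the planar Makeenko--Migdal equations, and lifts of $\mfl_1,\mfl_2$ are the desingularisations of the lift of $\mfl$. Consequently every Makeenko--Migdal derivative of $\Psi^N_\mfl$ is a combination of $\Psi$-type quantities for $\mfl_1,\mfl_2$ and their products. Since $\mfl_1,\mfl_2$ have fewer crossings and tiling length at most $|\mfl|_*$, these derivatives tend to $0$ uniformly on $\Delta_\mfl(T)$ by induction.
\item \textbf{Uniform Lipschitz bounds.} The first derivatives of $a\mapsto\EE_{\Gbb,a}[W_{\mfl}\overline{W_{\mfl}}]$ in the areas are bounded uniformly in $N$ on $\Delta_\mfl(T)$. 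This follows from the heat-kernel representation and the fact that $\Delta_{\mathfrak g}$ acting on Wilson loops has norm $O(1)$ in the normalisation \eqref{eq---InnerProdLieA}. This bound gives equicontinuity, so convergence on a dense set upgrades to uniform convergence.
\end{itemize}

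The steps, in order, are as follows.
\begin{enumerate}
\item \textbf{Minimal tiling length.} Suppose $\mfl$ has minimal tiling length, with $\Gbb$ and $K$ given by the hypothesis \eqref{eq---SLConvL2}. One checks, as in \cite[Sec. 3]{DLII}, that the vectors
\[\{\text{Makeenko--Migdal vectors at the crossings}\}\cup\{e_f-e_{f'}:\ f,f'\ \text{faces not adjacent to the loop, or in the same component of}\ \Sigma\setminus\mfl\}\]
span the tangent space of $\Delta_\mfl(T)$ transversally to $\Delta_{\mfl,K}(T)$. Along the second family $\Psi_\mfl$ is constant, because the law depends only on the areas of the components of $\Sigma\setminus\mfl$. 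Along the first family its derivative is $o(1)$. Integrating along piecewise linear paths of bounded length from $\Delta_{\mfl,K}(T)$ to any point of $\Delta_\mfl(T)$ therefore gives $\|\Psi_\mfl\|_{\infty,\Delta_\mfl(T)}\to0$. Near the boundary of the simplex, the Lipschitz bound and Proposition~\ref{Prop---Contract} control the estimate.
\item \textbf{Non-minimal tiling length.} If $\mfl$ does not have minimal tiling length, the Dehn/Birman--Series argument of Lemma~\ref{Lem-BSWORDS} locates a long bloc or long chain, i.e.\ a portion of the loop sweeping more than half a tile. Take $K$ to be the faces of $\Gbb$ inside the region bounded by that portion and its complementary word $\overline b$. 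On $\Delta_{\mfl,K}(T)$, Proposition~\ref{Prop---Contract} shows that $\mfl$ may be homotoped relatively to $\Sigma\setminus\bigcup K$ to a loop $\mfl'$ of strictly smaller tiling length, without changing the Yang--Mills expectations. The limit $\Phi$ is unchanged as well, since the lift is deformed inside a zero-area region. By induction $\Psi_{\mfl'}\to0$, hence $\Psi_\mfl\to0$ on $\Delta_{\mfl,K}(T)$. The same extension argument as in step 1 then gives convergence on all of $\Delta_\mfl(T)$.
\item \textbf{Base case.} The base case is a simple loop, or a loop with tiling length $0$ contained in a disc. This is covered by the planar result of \cite{DLI} (see also \cite{DLII}).
\end{enumerate}

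The main obstacle is the spanning/connectivity claim used in step 1 and repeated in step 2. One must show that, for the chosen $K$, the directions along which the derivative of $\Psi_\mfl$ is controlled are numerous enough that every point of $\Delta_\mfl(T)$ is reachable from $\Delta_{\mfl,K}(T)$. In \cite{DLII} this is a counting argument: the number of crossings plus the number of constraints coming from face-area invariance matches $\#F-1$ minus $\dim\Delta_{\mfl,K}$, which amounts to an Euler characteristic count for the generalised map $\Gbb_\mfl$. I would first try to redo that count in genus $g\ge2$, choosing $K$ to consist of faces that are all adjacent to one common "removable" region. If this count fails in genus $g\ge 2$, the fallback is to refine $\Gbb$ so that $K$ shrinks to a single face adjacent to the long bloc, where a single extra direction suffices.
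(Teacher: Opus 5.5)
Your overall route is the paper's: its proof simply invokes the Makeenko--Migdal reduction of \cite{DLII} (Prop.~3.22, Cor.~3.23), with the good boundary condition relaxed to data on $\Delta_{\mfl,K}(T)$ and geodesic loops replaced by minimal tiling loops via Birman--Series. However, the extension step that your Steps 1 and 2 both rest on is false as stated.

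\textbf{The extension step fails for null-homologous loops.} Take $\mfl$ null-homologous; this covers every contractible loop and, when $g\ge 2$, e.g.\ every separating simple curve. Let $n_\mfl$ be its winding function on faces, so $n_\mfl(L(e))-n_\mfl(R(e))=1$ along $\mfl$.
\begin{itemize}
\item Around a crossing $v$ the four corner faces carry $k-1,k,k+1,k$ in cyclic order. Hence $\langle n_\mfl,\mu_v\rangle=0$ for every Makeenko--Migdal vector $\mu_v=e_{f_1}-e_{f_2}+e_{f_3}-e_{f_4}$.
\item Moving area between faces of the refining map that lie in one face of $\Gbb_\mfl$ does not move the point of $\Delta_\mfl(T)$ at all.
\end{itemize}
So the algebraic area $A(a)=\sum_f n_\mfl(f)\,a(f)$ is constant along every direction in which you control $\Psi_\mfl$. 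Integrating from $\Delta_{\mfl,K}(T)$ therefore only reaches the level sets of $A$ that already meet $\Delta_{\mfl,K}(T)$.

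More precisely, $x\perp\mu_v$ for all $v$ forces the jump $x(L(e))-x(R(e))$ to be constant along $\mfl$. Hence the orthogonal complement of the $\mu_v$ is $\mathrm{span}(1,n_\mfl)$ when $[\mfl]=0$ in $H_1(\Sigma;\R)$, and $\mathrm{span}(1)$ otherwise. Your spanning claim is true exactly for non-null-homologous loops.

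No crossing-versus-constraint count can repair this, because the obstruction is a linear relation, not a shortage of vectors. Moreover, in Step 1 the set $K$ is imposed by the hypothesis, so refining $\Gbb$ to shrink $K$ is not available. The simplest failure is a simple separating curve: it has no crossings, $\Delta_\mfl(T)$ is an interval, and if $K$ contains every face on one side then $\Delta_{\mfl,K}(T)$ is a single endpoint.

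\textbf{What is missing.} You need boundary values on every level set of $A$. Each level set is a convex polytope whose tangent space is spanned by the $\mu_v$ and whose relative boundary lies in $\partial\Delta_\mfl(T)$. So it suffices to control $\Psi_\mfl$ on the facets $\{a(f)=0\}$. On such a facet with $f$ a disc, Proposition~\ref{Prop---Contract} lets you homotope $\mfl$ across $f$, and this is how lower-complexity loops from the induction feed in boundary values. Where no such reduction exists, the data on $\Delta_{\mfl,K}(T)$ must itself meet every level set of $A$; the separating simple curve is an example, since it has no crossings and its complementary faces have positive genus. Combining these facet reductions with the hypothesis is the part the paper delegates to the induction lemmas 4.2 and 4.5 of \cite{DLII}. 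Your plan never uses any facet of $\Delta_\mfl(T)$ other than $\Delta_{\mfl,K}(T)$, so it cannot close.

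\textbf{Two smaller points.}
\begin{itemize}
\item Non-contractible simple loops are not covered by the planar result of \cite{DLI}, so your base case is misstated; these loops have to come from the hypothesis.
\item The uniform Lipschitz bound is not justified by the Laplacian having norm $O(1)$: on a closed surface the area derivative also involves neighbouring heat kernels and the normalisation. It is also unnecessary. Integrating along segments from boundary points only uses the Makeenko--Migdal derivative bounds and the continuity up to the boundary given by Proposition~\ref{Prop---Contract}.
\end{itemize}
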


The above statement is a minor amendment of Proposition 3.22 and Corollary 3.23 of \cite{DLII}; we explain below  the necessary changes.

\begin{proof}    The functions  $(\Psi^N_\mfl,\mfl\in\mfL_g)$ satisfies a system of differential inequalities given in (49) and (50) of \cite{DLII}, where $\phi^\infty$ can be assumed to be {the Master field\footnote{following the terminology of \cite{DLII},  the latter is indeed an exact solution of MM equations.} on $\Sigma_g$ defined in \eqref{def---MasterField}.   }
The statement follows from Proposition 3.22 and Corollary 3.23 of \cite{DLII} considering cyclically shortest length loops instead of geodesic loops and with the notion defined therein of good boundary conditions of Makeenko-Migdal equations  amended 
as follows. 
Let us set say that a subset $\mathcal{F}$ of $\mathfrak{L}_g$ is a \emph{good boundary condition for the Makeenko--Migdal equations}  if assuming for all $\mfl\in \mathcal{F}$
 \begin{equation}
\lim_{N\to\infty}\|\Psi_{\mfl}\|_{\infty,\Delta_{\mfl,K}(T)}=0 \label{eq---GoodBC}
\end{equation}
for some  proper  subset $K$ of the faces of a {refining map} of $\mfl$ implies 
\[\lim_{N\to\infty}\|\Psi_{\mfl}\|_{\infty,\Delta_\mfl(T)}=0\quad\forall \mfl\in\mfL_g.\]
This new definition of  the good  boundary condition does not change the use of the two induction lemmas 4.2 and 4.5 of \cite{DLII}. Lastly,  replacing  minimal tiling length loops in place of geodesic loops   follows from\footnote{The latter theorem shows geodesic loops induce minimal tiling length loops.}  \cite[Theorem 2.12]{BirmanSeries} or a  simpler version of  Proposition 2.17 of 
\cite{DLII}.  
\end{proof}

We now conclude  the proof of Theorem  \ref{THM---ContMFConv} with this last lemma. 
 
\begin{lem} The assumptions of Theorem \ref{THM---ReducMinimalTiling} hold true for $G_N=U(N)$ or  $SU(N).$
\end{lem}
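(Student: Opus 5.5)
Fix a regular loop $\mfl$ with minimal tiling length. The plan is to produce a refining regular map $\Gbb$ in which $\mfl$ is, up to a homotopy inside a region whose faces we kill, an almost-shortest loop representative $\g'=e\ell e^{-1}\g$ in the sense of \eqref{eq---AlmostSLRep}. Then Theorem \ref{THM---CValmostshortestRep} applies on the corresponding face of the area simplex.

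\emph{Step 1: the geometric construction.} Since $\mfl$ has minimal tiling length, the dual path it draws on $\Gbb^*_g$ is a cyclically shortest representative $\g$ of its class in $\Gamma_g$. I would take a common refinement $\Gbb$ of $\Gbb_g$ and of the map of $\mfl$. I would let $K$ be the set of faces of $\Gbb$ lying in a thin neighbourhood of the $1$-skeleton of $\Gbb_g$, together with the strips between $\mfl$ and the edges it crosses. On the face $\Delta_{\mfl,K}(T)$, Proposition \ref{Prop---Contract} lets us replace $\mfl$ by any loop homotopic to it relative to the complement of the zero-area faces. The claim to be checked is then that $\mfl$ is so homotopic to a loop $e\ell e^{-1}\g$, where $\ell$ sits in a closed disc $D$ inside the single face of $\Gbb_g$, and that the complement of $K$ still has positive area, so that $K$ is proper. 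This is the same picture as in \cite{DLII} with geodesic loops replaced by minimal tiling length ones.

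\emph{Step 2: the estimates.}
\begin{itemize}
\item If $\mfl$ is non-contractible, then $\Phi_\mfl=0$. Theorem \ref{THM---CValmostshortestRep} gives $\Psi^N_\mfl(a)=\EE[|W_{\g'}|^2]\le \frac{K}{N^2}\frac{\theta(q_{\tilde T})}{\theta(q_T)}$ for $U(N)$, and $\le K/N^2$ for $SU(N)$.
\item In the $U(N)$ case, $\tilde T=\min(T/2,a(\Sigma\setminus D))$ may degenerate. I would enlarge $K$, or restrict to a compact sub-face, so that $a(\Sigma\setminus D)$ stays bounded below by a fixed fraction of $T$. If that is not compatible with the statement, I would instead use the bound $\theta(q_{\tilde T})\le C(1+\tilde T^{-1/2})$.
\item If $\mfl$ is contractible, then $\g$ is empty and $\mfl$ is homotopic to a loop inside a disc. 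On $\Delta_{\mfl,K}(T)$ the convergence then follows from the disc result of \cite{DLI} stated above in $L^1$. To get the $L^2$ statement, I would bound $|W|\le 1$ and use $\Phi_\mfl=\tau_{\tilde\Gbb_U,\tilde a_U}(\tilde\mfl)$, which agrees with the planar master field of the disc.
\end{itemize}

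\emph{Step 3: uniformity in $a$.} The constants $K$ in Theorem \ref{THM---CValmostshortestRep} are uniform over the simplex, so the bound is in sup norm. For the disc case, uniformity over $a$ comes from the equicontinuity arguments of \cite{DLII}.

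The main obstacle is Step 1: showing that minimal tiling length yields, after degenerating the faces in $K$, exactly the almost-shortest form with $\g$ cyclically shortest (rather than merely shortest up to the based conjugation $e\,\cdot\,e^{-1}$), while keeping $K$ proper. I would first follow the construction of Proposition 2.17 of \cite{DLII}, using \cite[Theorem 2.12]{BirmanSeries}.
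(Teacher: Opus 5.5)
Your overall plan follows the paper: degenerate a set $K$ of faces so that, by Proposition \ref{Prop---Contract}, $W_\mfl$ has the law of $W_{\g'}$ for an almost-shortest representative $\g'$. Then conclude with Theorem \ref{THM---CValmostshortestRep} in the non-contractible case and with the disc result of DLI in the contractible case.

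\textbf{The gap is Step 1.} You leave it open, and the concrete choices you make there point the wrong way.

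\emph{Choice of base map.} In \eqref{eq---AlmostSLRep}, $\g$ must be a path of the one-vertex, one-face base map. Minimal tiling length is a property of the path drawn on the dual $\Gbb_g^*$, so the base map has to be $\Gbb_g^*$. Hence one refines to a regular map $\Gbb'$ containing $\Gbb_g^*$.

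\emph{Choice of $K$.} To push $\mfl$ onto $\Gbb_g^*$, each arc of $\mfl$ between two consecutive crossings of edges of $\Gbb_g$ must be swept across the interior of the $4g$-gon to the dual vertex. Your $K$ (a thin neighbourhood of the $1$-skeleton of $\Gbb_g$, plus strips) keeps positive area in the middle of the polygon, which is exactly what must be swept. So the homotopy you need does not run through zero-area faces.

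\emph{What the paper does instead.} It picks a face $D$ of $\Gbb'$ adjacent to the vertex $v_*$ of $\Gbb_g$, and homotopes $\mfl$ relatively to $D$, i.e.\ through the other faces, which are the ones degenerated. The missing idea is elementary topology:
\begin{itemize}
\item $\Sigma$ minus a point of $D$ deformation retracts onto the wedge of $2g$ circles $\Gbb_g^*$.
\item Free homotopy classes there are cyclically reduced words in $F_{2g}$. For a suitable choice of $D$ at $v_*$, the word is read off from the crossings with the edges of $\Gbb_g$. Free homotopy suffices, since $W$ only depends on the unrooted loop.
\item Minimal tiling length says this word is cyclically shortest in $\Gamma_g$, hence cyclically reduced in $F_{2g}$.
\item So $\mfl$ is homotopic relatively to $D$ to the loop $\g$ on $\Gbb_g^*$ spelling that word. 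This is the required almost-shortest representative; the paper also allows a lasso $e\ell e^{-1}$ around $D$.
\end{itemize}
No Birman--Series argument is needed for this lemma. Proposition 2.17 of DLII and the Birman--Series theorem enter only in passing from geodesic to minimal-tiling loops in Theorem \ref{THM---ReducMinimalTiling}.

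\textbf{The $U(N)$ factor.} Your worry about $\tilde T$ is justified, but only one of your remedies is admissible. The hypothesis of Theorem \ref{THM---ReducMinimalTiling} is a sup over all of $\Delta_{\mfl,K}(T)$, so you cannot restrict to a compact sub-face. The bound $\theta(q_{\tilde T})\le C(1+\tilde T^{-1/2})$ also gives nothing uniform if $\tilde T$ can approach $0$ on that face.

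The control of $\tilde T$ has to come from the construction itself:
\begin{itemize}
\item If $K$ consists of all faces other than $D$, then $\Delta_{\Gbb',K}(T)$ is a single point, and your Step 3 becomes vacuous.
\item As long as the disc carrying $\ell$ in $\g'$ has zero area, one gets $a(\Sigma\setminus \text{disc})=T$ and hence $\tilde T=T/2$ uniformly. This holds for instance when no lasso around $D$ is needed, so that $\g'=\g$.
\end{itemize}
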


\begin{proof}
{For any minimal tiling length loop $\mfl\in\mfL_g$ } within a regular map $(\Gbb,\Gbb_g),$  consider a finer  regular map $\Gbb'$  containing  the dual $\Gbb_g^*$ of $\Gbb_g$ and a face $D$ of $\Gbb'$ neighbouring a vertex of $\Gbb_g.$  Then   $\mfl$ is homotopic  relatively to $D$ to an almost shortest length loop representative $\g'$ in the regular map $(\Gbb,\Gbb^*_g)$. Therefore for any area vector $a\in \Delta_{\Gbb',K},$ under ${\YM_{\Gbb',a,U(N)}}$, $W_{\mfl}$ has same distribution as $W_{\g'}.$ If $\g'$ is contractible it is included in a topological disc and by Theorem 2.14 of \cite{DLI},  
  \[ \sup_{a\in\Delta_{\Gbb',K}}\EE_{\YM_{\Gbb',a,G_N}}[|W_{\mfl}-\Phi_\mfl(a)|^2 ]= \sup_{a\in\Delta_{\Gbb',K}}\EE_{\YM_{\Gbb',a,G_N}}[|W_{\g'}-\Phi_\mfl(a)|^2 ]\underset{N\to\infty}{\to}0. \]
 Otherwise by Theorem \ref{THM---CValmostshortestRep},  
 
 \[\sup_{a\in\Delta_{\Gbb',K}} \EE_{\YM_{\Gbb',a,G_N}}[|W_{\mfl}|^2 ]=\sup_{a\in\Delta_{\Gbb',K}}\EE_{\YM_{\Gbb',a,G_N}}[|W_{\gamma'}|^2 ]\underset{N\to\infty}{\to}0. \]
\end{proof}

\appendix

\setcounter{secnumdepth}{5} 
\renewcommand\thefigure{\thesection.\arabic{figure}}  
\setcounter{figure}{0} 
 \section{Witten-Zeta function truncation}
 \label{sec---Witten-Zeta function truncation}
Let us recall  the argument of \cite{LarsenWZeta} to prove \eqref{eq---BoundWZT}  of proposition \ref{Prop---ConvWittenZeta}. For any $\a\in\ZD,$ set 
\begin{equation}
\omega_l(\a)=\a_l-\a_{l+1},\forall l\in \{1,\ldots, N-1\}. 
\end{equation}
For $s>0,$ consider the probability on $\ZD$ given by 
\[\]
 \begin{lem}[\cite{LarsenWZeta}Lemma 8] There are $v_1,\ldots,v_{N-1}>0$ with \label{Lem---BoundGLM}
\begin{equation}
v_l=v_{N-l}\ge l\max(1,\log(N)-\log(l)),
\end{equation}
such that  for all $\a\in\ZD,$
 \[d_\a\ge \prod_{l=1}^{N-1}(1+\omega_l(\a))^{v_l}.\]
 
 \end{lem}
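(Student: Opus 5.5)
The plan is to prove the inequality factor by factor from the Weyl dimension formula, using concavity of the logarithm to split each pair $(i,j)$ among the gaps $\omega_l$ with $i\le l<j$, and then to bound the resulting exponents $v_l$ from below by an explicit harmonic-sum computation.

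\emph{Step 1 (factorisation).} Since $(\rho_i-\rho_j)=j-i$, formula \eqref{eq---dim} reads
\[
d_\a=\prod_{1\le i<j\le N}\frac{\a_i-\a_j+j-i}{j-i}=\prod_{1\le i<j\le N}\Big(1+\frac{1}{j-i}\sum_{l=i}^{j-1}\omega_l(\a)\Big),
\]
because $\a_i-\a_j=\sum_{l=i}^{j-1}\omega_l(\a)$. Each factor is one plus the arithmetic mean of the $j-i$ nonnegative numbers $\omega_i(\a),\ldots,\omega_{j-1}(\a)$.

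\emph{Step 2 (concavity).} The function $x\mapsto\log(1+x)$ is concave on $\R_+$, so by Jensen
\[
\log\Big(1+\frac{1}{j-i}\sum_{l=i}^{j-1}\omega_l\Big)\ge \frac{1}{j-i}\sum_{l=i}^{j-1}\log(1+\omega_l).
\]
Summing over all pairs and exchanging the sums gives $d_\a\ge\prod_{l=1}^{N-1}(1+\omega_l(\a))^{v_l}$ with
\[
v_l=\sum_{1\le i\le l<j\le N}\frac{1}{j-i}.
\]
The reversal $i\mapsto N+1-j$, $j\mapsto N+1-i$ maps the pairs straddling $l$ bijectively onto those straddling $N-l$ and preserves $j-i$. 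Hence $v_l=v_{N-l}$, and it suffices to treat $l\le N/2$.

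\emph{Step 3 (lower bound on $v_l$, the only real computation).} Group the pairs by their gap $k=j-i$. For $l\le N/2$:
\begin{itemize}
\item If $k\le l$, there are exactly $k$ pairs straddling $l$. Together these contribute $\sum_{k\le l}k\cdot\frac1k=l$.
\item If $l<k\le N-l$, there are exactly $l$ pairs straddling $l$. Together these contribute $l\sum_{k=l+1}^{N-l}\frac1k\ge l\log\frac{N-l+1}{l+1}$.
\end{itemize}
Hence
\[
v_l\ge l\Big(1+\log\frac{N-l+1}{l+1}\Big).
\]
This already gives $v_l\ge l$. To get $v_l\ge l\log(N/l)$, it suffices to have $e\,l(N-l+1)\ge N(l+1)$. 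Since $N-l+1\ge N/2$, this holds as soon as $(e/2)\,l\ge l+1$, i.e. for $l\ge 3$.

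The small cases are the main (mild) obstacle, and I would check them directly, also keeping the discarded terms with $k>N-l$ if needed:
\begin{itemize}
\item For $l=1$, $v_1=\sum_{k=1}^{N-1}1/k\ge\log N$, and $v_1\ge 1$.
\item For $l=2$, $v_2=1+2\sum_{k=2}^{N-2}1/k+\frac{1}{N-1}$. Comparing with $\int_2^{N-1}dx/x$ gives $v_2\ge 2\log(N/2)$ for all $N$ where the max is not already $1$; for tiny $N$, where $\log(N/2)\le 1$, the bound $v_2\ge 2$ suffices.
\end{itemize}
If the constants turn out tight, I would retain the full contribution of the gaps $k>N-l$ (each with count $N-k$) to absorb the deficit.
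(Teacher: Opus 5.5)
Your proof is correct and follows the paper's argument. Both use Weyl's formula with AM--GM (your Jensen step), the same exponents $v_l=\sum_{i\le l<j}\frac{1}{j-i}$, the same reflection symmetry, and the same count by gap giving $v_l\ge l$ for $l\le N/2$. The one difference is the logarithmic bound: the paper gets it uniformly in $l$ from $\frac{1}{j-i}\ge\frac{1}{j-1}$, i.e.\ $v_l\ge l\sum_{k=l}^{N-1}\frac{1}{k}\ge l\log(N/l)$, so your separate checks for $l\in\{1,2\}$ are not needed, though they are correct.
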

 
 \begin{proof} Recalling Weyl dimension formula and using the AM-GM inequality,
 
 \begin{align*}
    d_\a&= \prod_{1\le  i<j\le N}(j-i+\omega_i(\a)+\omega_{i+1}(\a)+\ldots+\omega_{j-1}(\a) )\\
    &\ge   \prod_{1\le  i<j\le N, i\le l< j} (1+\omega_l(\a))^{\frac 1 {j-i}}\ge \prod_{1\le l\le N-1}(1+\omega_l(\a))^{v_l}
 \end{align*}
 where
 \begin{equation}
 v_l=v_{N-l}= \sum_{i,j:1\le i \le  l<j\le N}\frac{1}{j-i}, \label{eq---coeffFundWDimRep}
 \end{equation}
 where the first identity follows setting $i'=N-j+1,j'=N-i+1.$
 For all $ l\le\frac N2,$
 \[ v_l=\sum_{k= 1}^{N-1} \frac{1}{k} \#\{ i,j: 1\le i\le l<j\le N,j-i=k\}\ge \sum_{k= 1}^{l} \frac{1}{k} k=l. \]

and 
\[v_l\ge  \sum_{i,j:1\le i \le  l<j\le N} \int_{j}^{j+1}\frac{dx}{x-1}= l(\log(N)-\log(j)).  \]
 \end{proof}
 The above  bound   yields the following two estimates.

%{\color{orange}Convention Empty partition, N=1}
 \begin{coro} 
%  For any $\a\in \ZD,$ set 
% \[|\a|_L=\sum_{i=1}^{N-1} \log(1+\omega_i(\a)).\]
 For any $0\le s'<s,$
  \begin{equation}
 K_{s',s}=\sup_{N\ge 1}\sum_{\a\in \ZD/\Z} \frac{ \prod_{i=1}^{N-1}(1+\omega_i(\a))^{s'v_i}}{d_\a^s} <+\infty.
 \end{equation}
% \begin{equation}
% K_{s',s}=\sup_N\sum_{\a\in \ZD} \frac{N^{s' |\a|_L}}{d_\a^s} <+\infty.
% \end{equation}
 \end{coro}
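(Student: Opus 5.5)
The plan is to combine Lemma \ref{Lem---BoundGLM} with a reparametrisation of $\ZD/\Z$ by the gaps $\omega_l$, so that the sum becomes a product of one-dimensional zeta-type sums, and then to control that product uniformly in $N$ using the growth of the exponents $v_l$.

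First, the map $\a\mapsto(\omega_1(\a),\ldots,\omega_{N-1}(\a))$ is a bijection from $\ZD/\Z$ onto $\N^{N-1}$. Set $t=s-s'>0$. By Lemma \ref{Lem---BoundGLM}, each summand is at most $\prod_{l=1}^{N-1}(1+\omega_l)^{-t v_l}$. The sum therefore factorises as
\[
\sum_{\a\in\ZD/\Z}\frac{\prod_{i}(1+\omega_i(\a))^{s'v_i}}{d_\a^s}\le \prod_{l=1}^{N-1} Z(t v_l),\qquad Z(x)=\sum_{k\ge 1}k^{-x}=1+\sum_{k\ge2}k^{-x}.
\]
For $x\ge 2$ we have the elementary bound $\sum_{k\ge 2}k^{-x}\le C\,2^{-x}$, with $C$ a universal constant. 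It then suffices to show that $t v_l\ge 2$ for all $l$, and that $\sum_{l=1}^{N-1}2^{-t v_l}$ is bounded uniformly in $N$. Given these, $\log\prod_l Z(tv_l)\le C\sum_l 2^{-tv_l}$ is bounded.

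For the uniform bound, use the symmetry $v_l=v_{N-l}$ to restrict to $l\le N/2$, and use $v_l\ge l\max(1,\log(N/l))$.
\begin{itemize}
\item For $l\ge L:=\lceil 2/t\rceil$, we have $tv_l\ge tl\ge 2$, and $\sum_{l\ge L}2^{-tl}$ is a convergent geometric series independent of $N$.
\item For $l<L$, we have $tv_l\ge t\log(N/L)$. This is at least $2$ once $N\ge N_0(t)$, and these finitely many terms contribute $O(L\,2^{-t\log(N/L)})\to 0$.
\end{itemize}
Hence the supremum over $N\ge N_0(t)$ is finite.

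The main obstacle is the remaining small values of $N$. There the exponents $v_l$ need not exceed $1/t$, and the relevant scale is $v_1=H_{N-1}$. Indeed, for $N=2$ one has $d_\a=1+\omega_1$, so the sum is $\sum_k k^{-t}$, which diverges when $t\le 1$. So the statement as written needs either a restriction to $N\ge N_0(t)$, or the condition $t\,v_1(N)>1$ for every $N$ considered (for instance $t>1$ covers all $N\ge 2$). For each such fixed $N$, the product $\prod_l Z(tv_l)$ is finite, and taking the maximum over finitely many $N$ together with the uniform bound above gives $K_{s',s}<\infty$. I would state the corollary with this explicit proviso. It is harmless for the application to \eqref{eq---BoundWZT}, where the truncated sums are only needed for large $N$, with small $N$ absorbed into the constant $K$.
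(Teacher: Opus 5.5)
Your proof is correct and is essentially the paper's argument. You bound $d_\alpha^{-s}$ by Lemma~\ref{Lem---BoundGLM}, factorise the sum over the gaps $\omega_l$ into $\prod_{l}\zeta(t v_l)$ with $t=s-s'$, and control this uniformly in $N$ using $v_l\ge \log N$ for the few indices with $\min(l,N-l)$ small and $v_l\ge \min(l,N-l)$ otherwise. The only difference is that the paper bounds those $2k$ factors by $\zeta(kt)$ with $k>1/t$, rather than showing they tend to $1$.

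Your caveat is also justified, and it applies to the paper's proof as well, which only treats $N\ge e^k$. At $N=2$ the sum equals $\zeta(s-s')=+\infty$ when $s-s'\le 1$, so the supremum has to be restricted to large $N$, or to those $N$ with $(s-s')v_1>1$.
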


 \begin{proof} Set  $u=s-s'$ and fix an integer $k>\frac 1u.$  Since for any $i, v_i\ge \max(\log(N),i),$ for any  $N\ge e^k,$
 \[ \sum_{\a\in \ZD/\Z} \frac{ \prod_{i=1}^{N-1}(1+\omega_i(\a))^{s'v_i}}{d_\a^s}\le \sum_{\omega\in \N^{N-1}}\prod_{i=1}^{N-1}(1+\omega_i(\a))^{-u v_i}=\prod_{i=1}^{N-1}\zeta( u v_i)\le  K_{s',s} \]
% with  $K_{s',s}=\zeta(k u)^{{\color{orange}2}k} \prod_{j\ge k}^{+\infty}\zeta( j u )^2.$ Since $\zeta(ju)-1\sim_{j\to\infty}2^{- ju},$  $K_{s',s}<\infty.$
 with  $K_{s',s}=\zeta(k u)^{{2}k} \prod_{j\ge k}^{+\infty}\zeta( j u )^2.$ Since $\zeta(ju)-1\sim_{j\to\infty}2^{- ju},$  $K_{s',s}<\infty.$
 \end{proof}
Since $v_i\ge \log(N)$ for all $i,$ $d_{\a}\ge N$ for any non constant $\a\in \ZD/\Z ,$ and for any $s>0$
\[0\le \zeta_{SU(N)}(s)-1\le \frac{K_{0,\frac s 2}}{N^{\frac s 2}}.\]
In particular  
\begin{equation}
\lim_{N\to\infty}\zeta_{SU(N)}(s)=1.
\end{equation}

 \begin{coro} \label{Coro----MassLargeSize} For $s>0,N\ge 1$ define a measure on $\ZD/\Z$ setting 
  \begin{equation}
 \mu_{N,s}(\a)= d_\a^{-s}\quad\forall \a\in\ZD/\Z.
 \end{equation}
 Set for  any $l> 0 $ and $1\le k<\frac N2,$
  \[A_N(l)= \bigcup_{1\le  i\le N} \{\a\in\ZD/\Z:  1+\omega_i(\a) \ge e^{ l}\} \quad\text{and}\quad B_N(k)=\bigcup_{k\le i\le N-k}\{\a\in\ZD/\Z: \omega_i(\a)\ge 1\}.\]
% \[A_N(k)=\{\a\in\ZD: \sum_{i= 1}^{N-1}\log(1+\omega_i(\a))\ge  k\}\]
 For any $s>0$ and $  k\ge 2,$ there is a constant $K>0$ such that for all $N> k^4,$
\begin{equation}
\mu_{N,s}( A_N(k)\cup B_N(k) )\le K N^{- \frac{k s\log(2)}{8} }\label{eq---MassBoundZeta}
\end{equation}
and in particular  
\begin{equation}
\mu_{N,s}(\{\a: |\a| >2 k e^k\} )\le K N^{- \frac{k s\log(2)}{8}}. \label{eq---MassLargeSize}
\end{equation}
%and for all $ 1\le k< \frac N 2,$
%\begin{equation}
%\mu_{N,s}(\exists i\in[N]: k\le i\le N-k, \omega_i(\a)\ge 1)\le K N^{- \frac{k \log(2)}{4} }
%\end{equation}
 \end{coro}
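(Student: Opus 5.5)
The plan is to use only the pointwise lower bound of Lemma \ref{Lem---BoundGLM}, $d_\a\ge \prod_{l}(1+\omega_l(\a))^{v_l}$, together with the uniform summability of the preceding corollary. The trick is to split $d_\a^{-s}=d_\a^{-s/2}\,d_\a^{-s/2}$. On the set in question, one factor $d_\a^{-s/2}$ is bounded uniformly by a negative power of $N$. The other factor is summed over all of $\ZD/\Z$, and this sum is bounded by $K_{0,s/2}<\infty$ uniformly in $N$. Hence it suffices to show
\[ d_\a\ge N^{c\,k}\quad \text{for all }\a\in A_N(k)\cup B_N(k),\qquad\text{with } c\ge \tfrac{\log 2}{4}, \]
since then $\mu_{N,s}(A_N(k)\cup B_N(k))\le N^{-cks/2}K_{0,s/2}\le K N^{-\frac{ks\log 2}{8}}$. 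The advantage of this argument is that it is pointwise in $\a$, so no union bound over $i$ is needed.

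\emph{The set $A_N(k)$.} If $1+\omega_i(\a)\ge e^k$ for some $i$, then, keeping only that factor in Lemma \ref{Lem---BoundGLM}, $d_\a\ge e^{k v_i}$. By symmetry $v_i=v_{N-i}$, and taking $l=\min(i,N-i)$ in the lemma gives $v_i\ge l\max(1,\log(N/l))$. The function $x\mapsto x\log(N/x)$ is increasing on $[1,N/e]$, and for $l\ge N/e$ one has $l\ge N/e\ge\log N$ once $N$ is moderately large. Hence $v_i\ge \log N$ up to finitely many $N$, and these are absorbed into $K$. It follows that $d_\a\ge N^k$, which is more than enough.

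\emph{The set $B_N(k)$.} Here $\omega_i(\a)\ge1$ for some $k\le i\le N-k$, so $d_\a\ge 2^{v_i}$. Set $l=\min(i,N-i)\in[k,N/2]$. I would split into two cases.

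If $l\le\sqrt N$, then $v_i\ge l\log(N/l)\ge k\cdot\tfrac12\log N$.

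If $l>\sqrt N$, then $v_i\ge l>\sqrt N$. The assumption $N>k^4$ is exactly what makes $\sqrt N\ge \tfrac{k}{4}\log N$ for $k\ge2$: writing $N=k^4t$ with $t>1$, one checks $k^2\sqrt t\ge \tfrac k4(4\log k+\log t)$.

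In both cases $d_\a\ge 2^{k\log N/4}=N^{k\log 2/4}$, giving \eqref{eq---MassBoundZeta}. I expect the main care to be here: the bound on $v_i$ in the middle range, and tracking the constants so as to land on the exponent $\frac{ks\log2}{8}$.

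\emph{The bound \eqref{eq---MassLargeSize}.} It suffices to show that $\{|\a|>2ke^k\}\subset A_N(k)\cup B_N(k)$. Take $\a$ outside this union; then $\omega_l(\a)<e^k-1$ for every $l$, and $\omega_l(\a)=0$ for $k\le l\le N-k$. Writing $\a-c(\a)1_N$ through the increments $\omega_l$, one gets $|\la(\a)|=\sum_{l<\text{mid}}l\,\omega_l(\a)$ and $|\mu(\a)|=\sum_{l\ge\text{mid}}(N-l)\,\omega_l(\a)$. Only the indices $l<k$ and $l>N-k$ contribute, each with weight at most $k$, so $|\a|$ is bounded by an explicit function of $k$ alone, of the form $C k^2e^k$. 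I would verify that this matches the stated threshold $2ke^k$. If the weights force the larger threshold $k^2e^k$, I would restate the inclusion with it; this does not affect its use in Proposition \ref{Prop---ConvWittenZeta}, where only the existence of some finite threshold $k'(k)$ matters.
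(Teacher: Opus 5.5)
Your argument for \eqref{eq---MassBoundZeta} is the paper's. You split $d_\a^{-s}=d_\a^{-s/2}d_\a^{-s/2}$ and sum one factor to $K_{0,s/2}$. You bound the other pointwise through Lemma \ref{Lem---BoundGLM}: on $A_N(k)$ via $v_i\ge\log N$, and on $B_N(k)$ via the $\sqrt N$ dichotomy together with $N>k^4$. There are no exceptional $N$ to absorb in the $A_N(k)$ step, since $N/e\ge\log N$ holds for every $N\ge1$.

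The gap is in \eqref{eq---MassLargeSize}: you leave the threshold unresolved and offer to weaken the statement. Your suspicion about the inclusion is justified. Outside $A_N(k)\cup B_N(k)$ you only get $|\a|<(e^k-1)k(k-1)$, which exceeds $2ke^k$ once $k\ge4$, and the inclusion really fails. Take $k=4$ and $N$ large, with $\omega_1=\omega_2=\omega_3=\omega_{N-3}=\omega_{N-2}=\omega_{N-1}=53$ and all other $\omega_l=0$. Then $\la(\a)=\mu(\a)=(159,106,53)$, so $|\a|=636>8e^4$, while $1+\omega_l\le 54<e^4$. Hence $\{|\a|>2ke^k\}\not\subset A_N(k)\cup B_N(k)$. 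This is exactly the inclusion the paper asserts. Its justification (``at least $2k$ indices $j$ with $\omega_j(\a)\ge1$'') ignores the weights $l$ and $N-l$ in $|\a|$, and would only be correct for the size $\a_1-\a_N=\sum_l\omega_l$.

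The stated bound nevertheless holds and needs no weakening, because you can decouple the two parameters. Outside $A_N(k/2)\cup B_N(k)$ your computation gives $|\a|<(e^{k/2}-1)k(k-1)<2ke^k$, using $k-1<2e^{k/2}$. So $\{|\a|>2ke^k\}\subset A_N(k/2)\cup B_N(k)$. On $A_N(k/2)$ your pointwise bound gives $d_\a\ge N^{k/2}$, hence $\mu_{N,s}(A_N(k/2))\le K_{0,s/2}N^{-ks/4}\le K_{0,s/2}N^{-\frac{ks\log 2}{8}}$. Combined with your bound on $B_N(k)$, this yields \eqref{eq---MassLargeSize} with the stated threshold $2ke^k$.
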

  \begin{proof} For $k\ge 1,$ if $\a\in\ZD/\Z$ with $|\a|>2ke^k$, then either there is some $i$ with $\omega_i(\a)\ge e^k$, or  $\omega_i(\a)< e^k$ for all $i$ and  there are at  least  
  $2k$ indices  $j$ with $\omega_j(\a)\ge1.$ Therefore $\{\a:|\a|>2ke^k\}$ is included in $A_N(k)\cup B_N(k)$ and \eqref{eq---MassLargeSize} follows from 
  \eqref{eq---MassBoundZeta}.
  
Since $v_i\ge \log(N)$ for all $i,$ for all  $s>0, l\ge 1,$
\begin{equation}
\mu_{N}(A_N(l))\le  \sum_{\a\in A_N(l)} d_\a^{-\frac s 2} \prod_{i}(1+\omega_i(\a))^{-\frac {\log(N) s}{2}}\le e^{- \frac{\log(N)l s }{2}} K_{0,\frac s 2}=N^{-\frac{s }{2}l}.  \label{eq---Chernoff}
\end{equation}
To conclude  for $N\ge k^4,$ for all $i $  with $k\le i\le N-k,$ $v_i \ge  k \frac{\log(N)}{4}. $ Indeed for $k\le i\le \sqrt{N}, $ $v_i\ge \frac{i \log(N)}{2}\ge \frac{k\log(N)}{2},$ while 
for $\sqrt{N}<i\le \frac N 2,$ $v_i\ge i\ge k N^{\frac 1 4}\ge k\frac{\log(N)}4. $  As in the previous bound, 
\begin{equation}
\mu_{N}(B_N(k))\le 2^{- \frac{\log(N)k s }{8}} K_{0,\frac s 2}=N^{-\frac{s \log(2) }{8}k} K_{0,\frac s 2}. \label{eq---ChernoffLengthPartitions}
\end{equation}
    \end{proof}
With a little more care, it is also possible to deduce the following from Lemma  \ref{Lem---BoundGLM}.

 \begin{lem} For all  $s>\log(N),$
 \[\zeta_{SU(N)} (s)<\infty.\]
 
 \end{lem}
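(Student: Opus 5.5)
The plan is to reuse the lower bound of Lemma \ref{Lem---BoundGLM} exactly as in the proof of the corollary defining $K_{s',s}$, but at fixed $N$ and without splitting off any exponent. Parametrise $\ZD/\Z$ by the gap vector $\omega(\a)=(\omega_1(\a),\ldots,\omega_{N-1}(\a))\in\N^{N-1}$. This map is a bijection, since translation by $1_N$ is exactly the ambiguity in recovering $\a$ from its successive differences. Lemma \ref{Lem---BoundGLM} then gives
\[\zeta_{SU(N)}(s)=\sum_{\a\in\ZD/\Z}d_\a^{-s}\le\sum_{\omega\in\N^{N-1}}\prod_{l=1}^{N-1}(1+\omega_l)^{-s v_l}=\prod_{l=1}^{N-1}\zeta(s v_l),\]
where $\zeta$ is the Riemann zeta function. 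The product factorises because the gaps are independent summation variables. So it suffices to check that $s v_l>1$ for every $1\le l\le N-1$.

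For this I will use the explicit weights \eqref{eq---coeffFundWDimRep} rather than only the stated lower bound $l\max(1,\log N-\log l)$. The key step is to show that $\min_l v_l=v_1=v_{N-1}=\sum_{k=1}^{N-1}\frac1k=H_{N-1}$. To see this, write $v_l=\sum_{k=1}^{N-1}\frac1k\,\#\{i\le l<j: j-i=k\}$. For each $k$ the count equals $\min(k,l,N-l,N-k)$, which is at least $1$. So $v_l\ge H_{N-1}$, with equality at $l=1$.

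Since $H_{N-1}\ge\log N$, the condition $s>1/\log N$ already forces $s v_l>1$ for all $l$, and each factor $\zeta(s v_l)$ is then finite. For $N\ge3$ we have $\log N>1\ge 1/\log N$, so the stated hypothesis $s>\log N$ suffices.

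The main obstacle is not analytic but a boundary case in the statement. For $N=2$ the bound is sharp: $d_\a=1+\omega_1(\a)$ and $v_1=1$, so $\zeta_{SU(2)}(s)$ is the Riemann zeta function. That sum is infinite for $s\in(\log 2,1]$. I would therefore state and prove the lemma under the hypothesis $s>1/H_{N-1}$ (which gives $s>1$ when $N=2$), and note that this covers $s>\log N$ for all $N\ge3$. This is presumably the intended reading, with "$s>\log(N)$" standing for a condition of the form $s\log N>1$.
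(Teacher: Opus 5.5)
Your argument is correct and follows the route the paper indicates. The paper gives no proof, saying only that the lemma follows \emph{with a little more care} from Lemma \ref{Lem---BoundGLM}; your bound $\zeta_{SU(N)}(s)\le\prod_{l}\zeta(sv_l)$ via the gap parametrisation does exactly this, and your exact computation $\min_l v_l=v_1=H_{N-1}\ge\log N$ slightly sharpens the paper's bound $v_l\ge\log N$. You are also right that the literal hypothesis $s>\log N$ fails for $N=2$, where $\zeta_{SU(2)}$ is the Riemann zeta function. The intended condition is evidently $s\log N>1$, and your sufficient condition $s>1/H_{N-1}$ covers it.
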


\begin{rmk} { The bound  \eqref{eq---MassLargeSize} can also be improved using another lower bound on $d_\a.$  It is shown in \cite{MageeDelaSalle} that there is $c>0$ such that for all $N\ge 1,$
 \begin{equation}
 d_{\a}\ge e^{c \min(N,|\a|)}.
 \end{equation}
 Hence  for all $0<s'<s,$
 \[\mu_{N,s}(e^{ \frac{c}{s-s'}\min(N,|\a|)})\le \zeta_{SU(N)}(s').\]}

\end{rmk}

 \section{Unitary IRF Formulas}
 
 \label{sec---IRF}
 
The aim of this section is to prove  Theorem \ref{thm---IRFformula}, giving an exact value for $\mathcal{I}_{\a,\mathcal{S}}$ using a variant of the approach of \cite{IRFThierry}.    While \cite{IRFThierry} relies  on Schur-Weyl duality and Vershik-Okounkov  approach 
\cite{OkounkovVershik} to representations of the symmetric group to identify $\mathcal{I}_{\a,\mathcal{S}}$, our argument uses instead the eigenvalue decomposition of the Laplacian acting on tensors similarly to \cite{Naz}.     
 
\subsection{Spin networks integration}

Let us first note a general formula for a large class of observables of gauge configurations that only relies on Peter-Weyl theorem  and decomposition of tensor products into irreps.  

\subsubsection{Spin Networks and Peter-Weyl orthogonality theorem} The original notion of spin network is due to R. Penrose \cite{Penrose}. The definition we use below is close to  the one of \cite{LevSpin}; we also point the Reader to  \cite{LabourieLecture} for an introduction. For sake of simplicity,  the presentation below is rather pedestrian; a more elegant  but more abstract one could be to consider {semi-simple, monoidal categories \cite{TuraevQInv}. }

Consider a compact group $G$ and its set  $\mathcal{R}(G)$  of finite dimensional  representations up to isomorphism. For each $\mu\in\mathcal{R}(G),$ fix  $(V_\mu,\rho_\mu)$ a representation in  its  class,  denote by $\chi_\mu$ its character, $d_\mu$ its dimension and  by $\mu^*$ the class of its dual. 
\begin{defn}
A spin network is the data of a finite graph  $\mathcal{G}=(V,E)$ together with
\begin{itemize}
\item a map $\mu:E_o\to \mathcal{R}(G)$ with  
\[\mu(e^{-1})=\mu(e)^*\quad \forall e \in E_o,\]
\item a collection of tensors $(I_v)_{v\in V}$ with 
\begin{equation}
I_v\in \bigotimes_{e\in E_o: \underline e= v} V_{\mu(e)}\quad \forall v\in V.\label{contractionT}
\end{equation}
\end{itemize}
\end{defn}
We shall say that a map satisfying the first condition is skew symmetric. Denote by $\hat{G}$ the set of {irreducible} representations up to isomorphism. We shall call  the spin network $(\mu,I)$ is \emph{irreducible}, when 
\begin{equation*}
\mu(e)\in\hat{G}\quad\forall e\in E_o.
\end{equation*} 
From the first map, we can build   two  tensor spaces dual to one another: 
\[T_{V,\mu}=\bigotimes_{v\in V} \left(\bigotimes_{e\in E_o: \underline e=v}V_{\mu( e)}  \right) \]
and 
%\[T_E=\bigotimes_{e\in E} (V_{\mu( e)}^*\otimes V_{{\mu( e)}})\]
\[T_{E,\mu}=\bigotimes_{e\in E_o} V_{\mu( e)}^*= \bigotimes_{e\in E_o} V_{\mu( e^{-1})}\]
paired by the bilinear map $\<\cdot,\cdot\>_{E,V} $ satisfying
%\[ \<\varphi,t\>_{E,V}=\prod_{v\in V} \prod_{e\in E_o:\underline e=v}\<S_{e,},t_{v,e}\>,\]
%for any $(w,t)=(\otimes_{e\in E} (S_{e,\underline e}\otimes S_{e,\overline e}),\otimes_{v\in V}(\otimes_{e\in E_o: \underline e=v} t_{v,e}))\in T_E\times T_V.$
\[ \<\varphi,t\>_{E,V}=\prod_{v\in V} \prod_{e\in E_o:\underline e=v}\<\varphi_e,t_{v,e}\>,\]
for any $(\varphi,t)=(\otimes_{e\in E_o}\varphi_{e},\otimes_{v\in V}(\otimes_{e\in E_o: \underline e=v} t_{v,e}))\in T_{E,\mu}\times T_{V,\mu}$.  When there is no ambiguity, we shall drop the second index $\mu$ and write simply $T_V$ and $T_E.$
Moreover, for any choice of edges orientation,  there is  a $G$-isomorphism between  $T_E$ and $\bigotimes_{e\in E}\Hom(V_{\mu(e)},V_{\mu(e)})\simeq \bigotimes_{e\in E} V^*_{\mu(e)}\otimes V_{\mu(e)}. $

From the  tensor  data \eqref{contractionT}, identifying $T_{E,\mu}$ with $\bigotimes_{e\in E}\Hom(V_{\mu(e)},V_{\mu(e)})$,  we define the spin network map
\begin{align*}
S_{\mu,I}:G_{0}^{E_o}&\to \C\\
h&\mapsto \<\otimes_{e\in E} \rho_{\mu(e)}(h_e),\otimes_{v\in V}I_v\>_{E,V}.
\end{align*}
Note that $S_{\mu,I}$ does not depend on the choice of orientation made to identify $T_{E,\mu}$ with  $\bigotimes_{e\in E}\Hom(V_{\mu(e)},V_{\mu(e)})$.

\begin{ex} Consider  $\mu\in\hat{G}$ and   a  {non-rooted loop} $\ell$  in the graph $\mathcal{G}$ using each edge at most once.    Then  $W_{\ell, \mu}:G_{0}^{E_o}\to\C, h\mapsto \chi_\mu(h_\ell)$ is an irreducible spin network.
\end{ex}

Consider a random variable $h$ in $G_o^{E_o}$ distributed according to the Haar measure and   $\mu\in \mathcal{R}(G).$     By invariance of the Haar measure, when $g$ is Haar distributed,  $\EE[\rho_\mu(g)]=P_{\mu}$ is the  equivariant projection on $G$-invariants 
$V_\mu^G$ of $V_{\mu},$   yielding by linearity the following first moment of spin networks.

\begin{lem}   \label{ESN} Assume $h$ is a Haar distributed random variables on $\mathcal{M}(\mathcal{G},G).$  For any spin network $(\mu,I),$ 
\[\EE_{\mathcal{G},G}(S_{\mu,I})=\<\otimes_{e\in E} P_{\mu(e)},\otimes_{v\in V} I_v\>_{E,V}=S_{\mu_0,I_0}(\id_{\mathcal{G},G}),\]
where for any $e\in E_o, $ $\mu_0(e)$ is the trivial representation with multiplicity $\dim(V_{\mu(e)}^G)$ and for any 
$v\in V,$  $I_{o,v}$ is the equivariant\footnote{for the  action of $G^{\{e\in E_o:\underline{e}=v\}}$ on $\otimes_{e\in E_o:\underline{e}=v} V_{\mu(e)}.  $} projection of $I_v$ onto $\bigotimes_{e\in E_o: \underline e=v}V_{\mu( e)}  ^G.$ 
\end{lem}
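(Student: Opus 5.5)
The plan is to expand the expectation as a product over edges and use, edge by edge, that the Haar average of a representation is the equivariant projection onto invariants. First, by definition
\[S_{\mu,I}(h)=\<\otimes_{e\in E} \rho_{\mu(e)}(h_e),\otimes_{v\in V}I_v\>_{E,V},\]
and the map $h\mapsto \otimes_{e\in E}\rho_{\mu(e)}(h_e)$ is multilinear in the factors. Under the Haar measure on $\mathcal{M}(\mathcal{G},G)$ the variables $(h_e)_{e\in E}$, one for each chosen orientation, are independent and Haar distributed on $G$. Hence we may take expectations factor by factor inside the pairing (linearity of $\<\cdot,\cdot\>_{E,V}$ in the first argument), obtaining $\<\otimes_{e\in E}\EE[\rho_{\mu(e)}(h_e)],\otimes_v I_v\>_{E,V}$.

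Second, for a Haar variable $g$, $\EE[\rho_\mu(g)]$ commutes with $\rho_\mu(G)$, is idempotent by invariance ($\EE[\rho(g)]^2=\EE[\rho(gg')]=\EE[\rho(g)]$), and its range is $V_\mu^G$ since $\rho(x)\EE[\rho(g)]=\EE[\rho(g)]$. It is therefore the equivariant projection $P_{\mu}$ onto $V_\mu^G$, which gives the first equality. One checks that the statement does not depend on the orientation chosen for each edge, because $P_{\mu(e)}$ and $P_{\mu(e^{-1})}$ correspond under the identification $\Hom(V_\mu,V_\mu)\simeq V_\mu^*\ts V_\mu$, which transposes the operator.

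Third, to get the expression as a spin network evaluated at the identity, I write $P_{\mu(e)}=\sum_{k} x_k\ts \xi_k$ with $(x_k)$ a basis of $V_{\mu(e)}^G$ and $(\xi_k)$ the dual family vanishing on the complementary $G$-stable subspace, which lies in $V_{\mu(e)}^{*G}$. Pairing edge-wise then amounts to applying, at each vertex $v$, the projection $\otimes_{e:\underline e=v}P_{\mu(e)}$ (or its transpose) to $I_v$; this is the equivariant projection of $I_v$ onto $\bigotimes_e V_{\mu(e)}^G$, i.e. $I_{0,v}$. Regarding $V^G_{\mu(e)}$ as a trivial representation of multiplicity $\dim V_{\mu(e)}^G$ gives $\mu_0$, and on the trivial representations $\rho_{\mu_0(e)}(\Id)=\Id$. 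The remaining pairing is therefore exactly $S_{\mu_0,I_0}(\id_{\mathcal{G},G})$.

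The only delicate point is bookkeeping: the projection at an edge must be distributed to its two endpoints consistently with the duality $\mu(e^{-1})=\mu(e)^*$. I would handle this with the factorisation $P=\sum_k x_k\ts\xi_k$, noting that the dual of $V^G$ inside $V^*$ is $(V^*)^G$.
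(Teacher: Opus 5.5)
Your proof is correct and follows the same route as the paper. The paper's argument is just the one-line observation that, by Haar invariance, $\EE[\rho_\mu(g)]=P_\mu$ is the equivariant projection onto $V_\mu^G$, concluded by linearity over independent edge variables. Your extra bookkeeping for the second equality makes explicit what the paper leaves implicit: you factor $P_{\mu(e)}=\sum_k x_k\otimes\xi_k$ with $\xi_k\in(V_{\mu(e)}^*)^G$, which transfers the edge projections onto the vertex tensors $I_v$.
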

 
The orthogonality of coefficients of irreps yields in turn a second moment identity.  {For any $\mu,\nu \in\hat{G},$    when $g$ is Haar distributed, Peter-Weyl orthogonality yields
 
\begin{equation}
\EE[\<\varphi, \rho_{\mu}(g)x\>  \<\psi, \rho_{\nu}(g)y\>]=\EE[\<\varphi, \rho_{\mu}(g)x\>  \<\rho_{\nu^*}(g)\psi, y\>]=\delta_{\mu,\nu} \frac{\<\varphi, y\>\<\psi,x\> }{d_{\mu}} \label{eq---PWHaar}
\end{equation}}
for all $\varphi\in V_{\mu^*},\psi \in V_\nu^*, x\in V_\mu,y\in V_\nu.$ Consequently, the equivariant projection $P_{\mu,\nu^*}$ of $V_{\mu}\ts V_{\nu}^*$ onto $ (V_{\mu}\ts V_{\nu}^*)^{G}$ is 

\begin{equation}
P_{\mu,\nu^*}=\EE[\rho_{\mu}(g)\ts \rho_{\nu^*}(g)]= \frac{\delta_{\mu,\nu}}{d_{\mu}}  \<1\,2 \>_{\mu}\label{eq---PeterWeylProj}
\end{equation}
where  for any $x\in V_\mu,\varphi\in V_\mu^*, $ identifying  $\Id_{V\mu}$ with an element of $V_\mu\ts V_\mu^*,$ 
\begin{equation}
\<1\, 2\>_\mu (x\ts \varphi)= \<\varphi,x\> \Id_{V_\mu}.
\end{equation}
We shall use next the Peter-Weyl orthogonality in this last form together with Lemma \ref{ESN}. Before doing so, let us recall that is also implies the following orthogonality relation of irreducible spin networks.

Consider the natural bilinear pairing  $\<\cdot,\cdot\>_V $ of   $T_{V,\mu^*}=\bigotimes_{v\in V} \left(\bigotimes_{e\in E_o: \underline e=v}V_{\mu( e)}^*  \right) $  with $T_{V,\mu}$ setting 
\begin{equation}
 \<\varphi, t\>_{V}= \prod_{v\in V, e\in E_o: \underline e=v}\<\varphi_{v,e},t_{v,e} \>
\end{equation}
for any $(\varphi,t)=(\otimes_{v\in V}(\otimes_{e\in E_o: \underline e=v} \varphi_{v,e}),\otimes_{v\in V}(\otimes_{e\in E_o: \underline e=v} t_{v,e}))\in T_V^*\times T_V.$ 
The orthogonality \eqref{eq---PWHaar} then yields the following.

\begin{thm}[Peter-Weyl] \label{thm: OrthoSN}When $(\mu,I),(\nu,J)$ are irreducible spin networks,
\[\EE_{\mathcal{G},G}[S_{\mu,I}S_{\nu,J}]=\delta_{\mu,\nu^*} \frac{\<I,J\>_V}{\prod_{e\in E} d_{\mu(e)}}.\]
\end{thm}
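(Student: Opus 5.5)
Since the Haar measure on $\mathcal{M}(\mathcal{G},G)$ is the product of the Haar measures on the edge coordinates $h_e$, $e\in E$ (after fixing an orientation $E_+$), the plan is to expand both spin network functions as products over edges and integrate each edge independently using \eqref{eq---PWHaar}. For an irreducible spin network $(\mu,I)$, identifying $T_{E,\mu}$ with $\bigotimes_{e\in E_+}V_{\mu(e)}^*\ts V_{\mu(e)}$, the function $S_{\mu,I}(h)$ is the contraction of $\bigotimes_{v}I_v$ with $\bigotimes_{e\in E_+}\rho_{\mu(e)}(h_e)$, where each $\rho_{\mu(e)}(h_e)$ pairs the leg of $I_{\underline e}$ indexed by $e$ with the leg of $I_{\overline e}$ indexed by $e^{-1}$ (which lives in $V_{\mu(e)^*}$). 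The product $S_{\mu,I}S_{\nu,J}$ is then the contraction of $\bigotimes_v (I_v\ts J_v)$ with $\bigotimes_{e\in E_+}\rho_{\mu(e)}(h_e)\ts\rho_{\nu(e)}(h_e)$. By independence of the $h_e$ and multilinearity, the expectation is obtained by replacing each factor $\rho_{\mu(e)}(h_e)\ts\rho_{\nu(e)}(h_e)$ by its mean.

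The key step is to compute, for $g$ Haar distributed, $\EE[\rho_{\mu(e)}(g)\ts\rho_{\nu(e)}(g)]$. Writing $\rho_{\nu(e)}(g)$ through the dual, this is exactly the projection \eqref{eq---PeterWeylProj} with $\nu(e)$ replaced by $\nu(e)^*$. It therefore vanishes unless $\nu(e)=\mu(e)^*$, and in that case it equals $d_{\mu(e)}^{-1}$ times the Weyl contraction $\<1\,2\>_{\mu(e)}$. Since $\mu$ and $\nu$ are skew symmetric, the condition $\nu(e)=\mu(e)^*$ for all $e\in E_+$ is equivalent to $\nu=\mu^*$ on all of $E_o$. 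This gives the factor $\delta_{\mu,\nu^*}$, and otherwise the expectation is $0$.

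When $\nu=\mu^*$, each edge contributes $d_{\mu(e)}^{-1}$, and the contraction $\<1\,2\>$ rewires the pairing. Instead of pairing the $e$-leg of $I_{\underline e}$ with the $e^{-1}$-leg of $I_{\overline e}$ through $h_e$, it pairs the $e$-leg of $I_{\underline e}$ (in $V_{\mu(e)}$) directly with the $e$-leg of $J_{\underline e}$ (in $V_{\mu(e)}^*$). It likewise pairs the $e^{-1}$-leg of $I_{\overline e}$ with the $e^{-1}$-leg of $J_{\overline e}$. Collecting over all edges, every leg of every $I_v$ is paired with the corresponding leg of $J_v$ at the same vertex, and this full contraction is exactly $\<I,J\>_V$. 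The total factor is $\prod_{e\in E}d_{\mu(e)}^{-1}$, which is independent of the chosen orientation because $d_{\mu^*}=d_\mu$.

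The main obstacle is the bookkeeping in \eqref{eq---PWHaar}: one must verify which indices $\<1\,2\>$ contracts, so that the resulting pairing is vertex-local ($I_v$ against $J_v$) rather than a crossed pairing. I would check this on the matrix-coefficient form $\EE[\rho_\mu(g)_{ij}\rho_{\mu^*}(g)_{kl}]=d_\mu^{-1}\delta_{ik}\delta_{jl}$ in dual bases, which sends row index to row index (the vertex $\underline e$) and column index to column index (the vertex $\overline e$). Loops (edges with $\underline e=\overline e$) and multiple edges need no special treatment, since the computation is edge by edge.
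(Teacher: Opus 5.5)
Your proposal is correct and follows the paper's argument: the paper derives the theorem in one line from the edgewise Peter--Weyl orthogonality \eqref{eq---PWHaar} (equivalently the projection formula \eqref{eq---PeterWeylProj}), together with the independence of the edge variables under the Haar measure on $\mathcal{M}(\mathcal{G},G)$, exactly as you do. Your matrix-coefficient check $\EE[\rho_\mu(g)_{ij}\rho_{\mu^*}(g)_{kl}]=d_\mu^{-1}\delta_{ik}\delta_{jl}$ is the right way to confirm that the contraction is vertex-local and equals $\<I,J\>_V$ with the factor $\delta_{\mu,\nu^*}$. Note that the first expression in \eqref{eq---PWHaar}, as printed, should involve $\rho_\nu(g^{-1})$, i.e.\ the dual action, for its stated Kronecker delta to be correct.
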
 
In the above lemma, note that $\mu=\nu^*$ implies  $I\in  \bigotimes_{v\in V} \left(\bigotimes_{e\in E_o: \underline e=v}V_{\nu( e)}^*  \right)$ so that $\<I,J\>_V$ is well defined.

\subsubsection{Tensor product decompositions}  Let us fix  a few notations relative to the decomposition of the tensor product of two irreps.  For any $\a,\b \in \hat{G},$   consider the isotypic decomposition
\begin{equation}
V_\a\otimes V_\b=  \bigoplus_{\g\in \hat{G}}  (V_\a\otimes V_\b)^{\g}.\label{Isotop Tens}
\end{equation}
For $\g\in \hat{G},$   the associated projection on  $(V_\a\otimes V_\b)^{\g}$ is
\[P^{\a,\b}_\g=d_\g\EE[\chi_\g(g^{-1})\rho_\a(g)\otimes \rho_\b(g) ]\] 
and  we write
\begin{equation}
(t)^\g=P^{\a,\b}_\g(t)\quad \forall t\in V_\a\otimes V_\b.
\end{equation}
We shall denote $P_{\a,\b}^\g\in \Hom_G((V_\a\ts V_\b)^\g,V_\a\ts V_\b)$ its right inverse.

We also use the equivariant identification
\begin{equation}
(V_\a\otimes V_\b)^\g\simeq H^\g_{\a,\b}\otimes V_\g,\label{eq---TensProdIsoT}
\end{equation}
where $H^\g_{\a,\b}=\Hom_G(V_\g,V_\a\otimes V_\b )$ and  in the right-hand side,  $G$ acts  only\footnote{$H_{\a,\b}^\g,$ called multiplicity module,  is the space of multiplicities of the occurrence of $V_\g$ in the isotypic decomposition of $V_\a\otimes V_\b.$ } on  the right tensor, that is, for any $g\in G,$ $h\in H^{\g}_{\a,\b}$ and $v\in V_\g,$
\[g. (h\otimes v)= h\otimes (\rho_\g(g).v).\]
Using this identification, we define a bilinear map  $(V_\a\otimes V_\b)  \times V_\g^*\to H_{\a,\b}^\g$ setting for any  $\varphi\in V_\g^*,$ 
\begin{equation}
\<\varphi, t\>=\left\{\begin{array}{ll}\<\varphi,v\> h& \text{ if }t= h\otimes v\in (V_\a\otimes V_\beta)^\g,\\&\\0&\text{ if  }(t)^\g=0.\end{array}\right.\label{Pairing Isotop}
\end{equation}
Note that the vector space  $H^{\g^*}_{\b^*,\a^*}$ is naturally dual to $H^{\g}_{\a,\b}$ via the pairing defined by 
\begin{equation}
\< \varphi,\psi\>_{H^{\g^*}_{\b^*,\a^*},H^{\g}_{\a,\b}}= \sum_{i=1}^{d_\g}\<\varphi(\varepsilon^i),\psi(e_i)\>_{V_{\b}^* \ts  V_{\a}^*, V_\a\ts V_\b }\label{eq---DualMultTens}
\end{equation}
where  $(e_i)_i$ is a basis of $V_\g$ and $(\varepsilon^i)_i$  is  its dual,   the latter pairing being independent of the choice of basis.  
Using the identification \eqref{eq---TensProdIsoT}, we shall also identify $P^{\a,\b}_\g$and $P_{\a,\b}^\g$ with  elements of $\Hom(V_{\a}\ts V_{\b}, H^{\gamma}_{\a,\b}\ts V_\g)$ and $\Hom(V_{\a}\ts V_{\b}, H^{\gamma}_{\a,\b}\ts V_\g)$ that we shall represent as in Figure \ref{Fig---TensorIsotop}.

  \begin{figure}\label{Fig---TensorIsotop}
 
{\centering  \includegraphics[width=8cm,height=4cm]{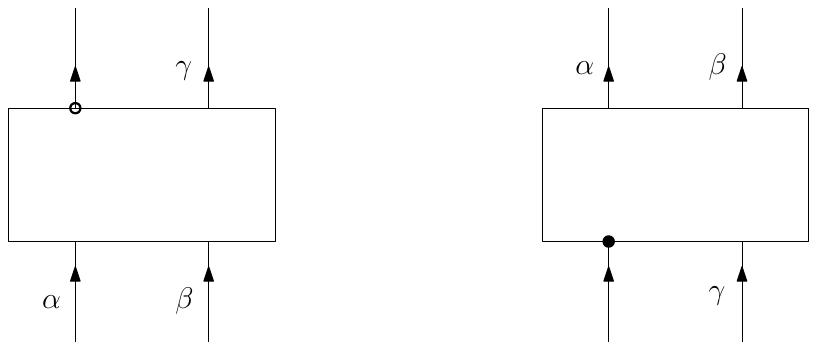}
}

 \caption{The  graphical representation of  the morphism $P^{\a,\b}_\g\in \Hom(V_{\a}\ts V_\b, H^{\g}_{\a,\b} \ts V_{\g})$ and $P^{\g}_{\a,\b}\in \Hom(H^{\g}_{\a,\b} \ts V_{\g},V_{\a}\ts V_\b).$ }
\end{figure}

When $\dim(H_{\a,\b}^\g)=1$ and that a choice of vector in $H_{\a,\b}^\g$ has been fixed, we shall identify $P_{\a,\b}^\g,P_{\g}^{\a,\b}$ with elements of $\Hom(V_{\a}\ts V_\b, V_\g)$ and   $\Hom(V_\g,V_{\a}\ts V_\b)$ that we represent as in Figure  \ref{Fig---TensorIsotopMFree}

  \begin{figure}\label{Fig---TensorIsotopMFree}

{\centering  \includegraphics[width=8cm,height=4cm]{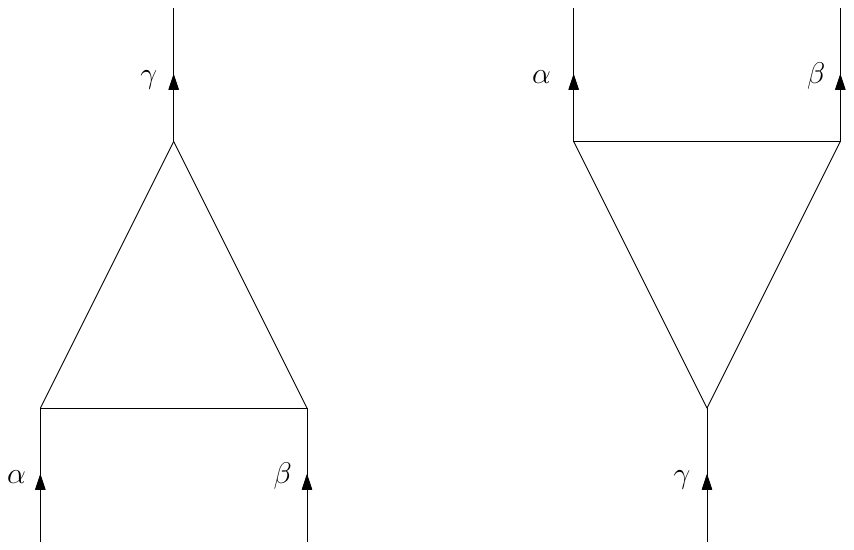}
}

 \caption{Graphical representation of  the morphisms $P_{\a,\b}^\g\in \Hom(V_{\a}\ts V_\b,  V_{\g})$ and $P_{\g}^{\a,\b}\in \Hom(  V_{\g},V_{\a}\ts V_\b)$ when $\dim(H^{\g}_{\a,\b})=1$ and an element of $H_{\a,\b}^{\g}\setminus\{0\}$ has been fixed. }
\end{figure}

\subsubsection{Interaction-round-the-face models and Wilson loop expectations} In this subsection we  fix a generalised map $\Gbb=(V,E,F),$  $\la: F\to\hat G.$ We consider the spin network
\[W_{\la}=\prod_{f\in F}W_{\partial f, \la(f)}:  G_{0}^{E_o}\to\C\]
and  an irreducible spin network   $(\mu, I)$ on the graph $\mathcal{G}=(V,E).$  We would like to give  an  expression for
\[\mathcal{I}_{I,\mu,\la}= \EE_{\mathcal{G},G}[S_{\mu,I} W_{\la} ] \]
using Lemma \ref{ESN} and the Peter-Weyl  formula \eqref{eq---PeterWeylProj}. 
% {\color{orange}using Theorem \ref{thm: OrthoSN} and the  decomposition of $W_{\la}$ into irreducible spin networks we shall explain below. }
 Before doing so, mind the following simpler situation.

\begin{lem} \label{eq---AdjCondSpinN} If $\exists e\in E_o$ with $\dim(\Hom_G(V_{\la(R(e))},V_{\la(L(e))}\otimes V_{\mu(e)}))=0$ then $\mathcal{I}_{I,\mu,\la}=0.$
 
\end{lem}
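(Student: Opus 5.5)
The plan is to integrate out the single edge variable $h_e$ and invoke Schur's lemma. We first isolate, in the product $S_{\mu,I}W_\la$, every factor that depends on $h_e$.

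\emph{Locating the $h_e$-dependence.} The spin network $S_{\mu,I}$ involves $\rho_{\mu(e)}(h_e)$ exactly once, after fixing the orientation of $e$ (using $\mu(e^{-1})=\mu(e)^*$). Among the loop factors $W_{\partial f,\la(f)}$, the edge $e$ is traversed positively by the boundary of the left face $L(e)$ and negatively by the boundary of the right face $R(e)$, i.e.\ $e^{-1}$ lies in $\partial R(e)$. Each boundary loop is a product of edge holonomies, so each character factor can be written as a contraction of $\rho_{\la(f)}$ applied to its edges.

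\emph{Isolating the $h_e$-dependent tensor.} Collecting the $h_e$-dependent pieces, we can write
\[ S_{\mu,I}W_\la = \big\langle \Phi,\; \rho_{\la(L(e))}(h_e)\otimes \rho_{\mu(e)}(h_e)\otimes \rho_{\la(R(e))^*}(h_e)\big\rangle, \]
where $\Phi$ is a tensor depending only on the other edges. If $L(e)=R(e)$, or if $e$ is used several times, one gets a tensor power instead of a single factor, but the argument below is identical.

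\emph{Integrating over $h_e$.} The coordinates $h_e$, for $e\in E$, are independent and Haar distributed. By Fubini we therefore integrate $h_e$ first. By invariance of Haar measure, $\EE[\rho_\a(g)\otimes\rho_\b(g)\otimes\rho_{\g^*}(g)]$ is the equivariant projection onto the invariants
\[ (V_\a\otimes V_\b\otimes V_\g^*)^G\simeq \Hom_G(V_\g,V_\a\otimes V_\b), \]
as in Lemma~\ref{ESN}. We apply this with $\a=\la(L(e))$, $\b=\mu(e)$, $\g=\la(R(e))$, matching up to a relabeling/duality the pair $\Hom_G(V_{\la(R(e))},V_{\la(L(e))}\otimes V_{\mu(e)})$ in the hypothesis. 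This Hom space has dimension $0$ by assumption, so the projection vanishes and the conditional expectation given the other edges is $0$. Consequently $\mathcal{I}_{I,\mu,\la}=0$.

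The only delicate step is the orientation bookkeeping: one must check that $L(e)$ contributes $\rho_{\la(L(e))}(h_e)$ while $R(e)$ contributes the dual $\rho_{\la(R(e))^*}(h_e)$, so that the relevant invariant space really is $\Hom_G(V_{\la(R(e))},V_{\la(L(e))}\otimes V_{\mu(e)})$ and not one with the dualities reversed. Reversing the orientation of $e$ swaps $L$ with $R$ and $\mu(e)$ with $\mu(e)^*$, so the condition is intrinsic. When $L(e)=R(e)$ we use the tensor-power version, and the corresponding invariant space still contains the stated Hom space as the relevant factor.
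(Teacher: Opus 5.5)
Your argument is correct and is essentially the paper's. The paper rewrites $S_{\mu,I}W_\la$ as a single spin network whose edge $e$ carries $\Hom(V_{\la(R(e))},V_{\la(L(e))}\otimes V_{\mu(e)})\simeq V_{\la(L(e))}\otimes V_{\mu(e)}\otimes V_{\la(R(e))}^*$, then applies Lemma~\ref{ESN}; this is exactly your edgewise Haar projection onto $\Hom_G(V_{\la(R(e))},V_{\la(L(e))}\otimes V_{\mu(e)})=\{0\}$.

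Your caveat about tensor powers when $L(e)=R(e)$ is unnecessary. The face boundaries are the orbits of $\mathcal{S}$, and these partition $E_o$, so $h_e$ always enters exactly once, through $\rho_{\la(L(e))}\otimes\rho_{\mu(e)}\otimes\rho_{\la(R(e))^*}$, even when $L(e)=R(e)$. This matters, because your proposed patch (``the invariant space contains the Hom space as a factor'') would not by itself force the projection to vanish.
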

\begin{proof}The spin network $S_{\mu,I}W_{\la}$ is   a spin network of the form $S_{\nu,J},$ with $V_{\nu(e)}=\Hom(V_{\la(R(e))},V_{\la(L(e))}\otimes V_{\mu(e)})$ for all $e\in E_o,$  where  for any $e\in E_o,$ we make the identification
%\footnote{the corresponding operators are sometimes called {\color{orange}Wigner-Racah operators.}}
\begin{align*}
\Hom(V_{\la(R(e))},V_{\la(L(e))}\otimes V_{\mu(e)})^* \simeq\Hom(V_{\la(R(e^{-1}))},V_{\la(L(e^{-1}))}\otimes V_{\mu(e^{-1})}).
\end{align*}
The claim then follows from Lemma  \ref{ESN}.

\end{proof}
%\begin{proof} Since $S_{\mu,I}S_{\l}$ is a linear combination of spin networks $S_{\nu,J}$ where for all $e\in E_o,$ $\nu(e)=\Hom_G(V_{\l(L(e))}\otimes V_{\mu(e)}, V_{\l(R(e))}),$ the claim follows from Lemma  \ref{ESN}.
%\end{proof}
%decompose $W_{\la}$ and $S_{\mu,I}W_{\la}$ into irreducible components.  

Let us now fix further notations to rewrite $S_{\mu,I}W_{\la}$ more explicitly as a single spin network.

When $\la: F\to \hat{G} ,$  let  
\[\partial_l \la,\partial_r\la,\partial \la:E_0\to \mathcal{R}(G) \]
be the  maps with  
\[\partial_l \la(e)= \la(L(e)) ,\partial_r \la(e)= \la(R(e)) \text{ and } \partial \la(e)= \la(L(e))\otimes \la(R(e))^*\]
 for all $e\in E_o$ and define the tensor
\begin{equation}
J^\la=\ts_{v\in V} J^\la_v \in T_{V,\partial \la}\label{Face tensor}
\end{equation}
%\[J_l,J_r,J\in T_{V,\partial \la}\]
where   $J^\la_v\in \bigotimes_{e\in E_o: \underline e=v}V_{\partial \la( e)}  $  is  the image of the identify map through the identification of $\bigotimes_{e\in E_o: \underline e=v} \Hom (V_{\la (R(e))},V_{\la (R(e))}) $  with $\bigotimes_{e\in E_o: \underline e=v}V_{\partial \la( e)}.  $

When $\mu:E_o\to  \hat{G},\g:E_0\to \hat G$ are skew symmetric and $E_+$ is an orientation of $E$,  define    skew symmetric  maps 
\[\partial \la \# \mu,(\partial \la \# \mu)^{\g,l}:E_o\to  \mathcal{R}(G)\]
setting  for  all $e\in E_+,$
\begin{equation}
\partial \la \# \mu(e)=\partial_l\la(e)\otimes \mu(e)\otimes\partial_r\la(e)^* \
\end{equation}
and 
\begin{equation}
(\partial \la \# \mu)^{\g,l}(e)=(\partial_l\la(e)\otimes \mu(e))^{\g(e)}\otimes\partial_r\la(e)^*
\end{equation}
and  identifying  $(\partial_l\la(e)\otimes \mu(e)\otimes\partial_r\la(e)^*)^*$  with  $\partial_r\la(e)\otimes \mu(e^{-1})\otimes\partial_l\la(e)^*.$  Lastly, let us also define  two  bilinear maps from $T_{V,\partial \la}\times T_{V,\mu}$ to respectively  $T_{V,\partial \la\# \mu}$ and $T_{V,(\partial \la\# \mu)^{\g,l}}$,  mapping $(K,I)$  respectively to
\begin{equation}
K\#I =\bigotimes_{v\in V} \left(\otimes_{e:E_o:\underline e=v}\varphi_e\otimes t_{e}\otimes \psi_{e}\right)
\end{equation}
and 
\begin{equation}
(K\#I)^{\g,l} =\bigotimes_{v\in V} \left(\otimes_{e:E_o:\underline e=v} x_{e}\right)
\end{equation}
for any 
$K=\bigotimes_{v\in V} \left(\otimes_{e:E_o:\underline e=v}\varphi_e\otimes \psi_{e}\right)\in T_{V,\partial \la}$ and  $I=\otimes_{v\in V}(\otimes_{e\in E_o: \underline e=v} t_{e})\in T_{V,\mu},$ where  for each  $e\in E_+$ we identify $\left((\partial_l\la (e)\ts\mu(e))^{\g (e)}\ts \partial_r\la(e)^*\right)^* $ with $ \partial_r\la(e)\ts ( \mu(e^{-1})\ts \partial_l\la (e)^* )^{\g (e^{-1})} $ and set
\[x_e= \left\{\begin{array}{ll}(\varphi_e\otimes t_{e})^{\g(e) }\otimes \psi_{e}& \text{ if }e\in E_+,\\&\\\varphi_e\otimes (t_{e} \otimes \psi_{e})^{\g(e)}& \text{ if }e^{-1}\in E_+.\end{array}\right.\]
This is best understood graphically, for instance $J_\la\#I$ is drawn on Figure \ref{Fig---SNFacesAroundVertex}.

  \begin{figure}\label{Fig---SNFacesAroundVertex} 
%{\centering {\includegraphics[width=6cm,height=4 cm]{BrauerMAPAC.pdf}}
% \qquad \includegraphics[width=6cm,height=7 cm]{SurfaceAC.pdf}
%}

{\centering  \includegraphics[width=10cm,height=6cm]{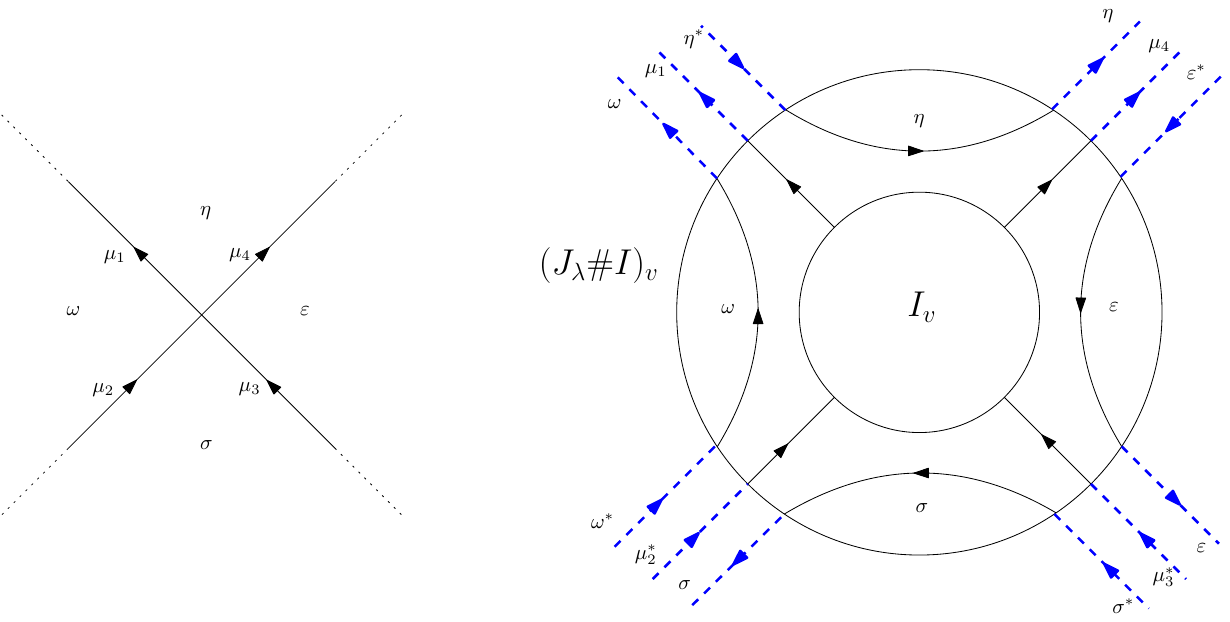}
}

 \caption{Left: the $\hat G$-labeled edges of a spin network near a vertex $v$ with  edges orientations fixed.  The graph is here the one of a map and values of $\la:F\to \hat G$ are drawn on each face near $v.$  Right: The outer circle represents the tensor $(J_\la\#I)_v$. }
\end{figure}
 
%( \partial_l\la (e)\ts\mu(e))^{\g (e)}

With these notations, when $\mu:E_0\to  \hat{G}$  is skew symmetric and $I\in T_{V,\mu},$  using \eqref{Isotop Tens} for each oriented edge of $E_+,$  by linearity
 \begin{equation}
 S_{\mu,I}W_\la=S_{ \partial \la\#\mu, J^\la\#I}=\sum_{\g:E_0\to \hat G:\g(e^{-1})=\g(e)^*}  S_{(\partial\la\#\mu)^{\g,l}, (J^\la\#I)^{\g,l}}.\label{eq---DecProductSpinN}
 \end{equation} 
We are now  almost ready to apply Lemma \ref{thm: OrthoSN} together with Peter-Weyl projection formula \eqref{eq---PeterWeylProj}; it remains to account for multiplicity spaces of $(\partial\la\#\mu)^{\g,l}.$   When $\mu:E_o\to  \hat{G}$  and $\la:F\to\hat G$ are fixed as above, define a skew symmetric map using the pairing \eqref{eq---DualMultTens}, setting for  any $e\in E_o,$ 
\[H_{e,\g}= \left\{\begin{array}{ll} H_{\la(L(e)) ,\mu(e)}^{\g(e)} & \text{ if } e\in E_+,\\&\\H_{\mu(e), \la (R(e))}^{\g(e)} & \text{ if } e^{-1}\in E_+.\end{array}\right.\] 

Let us define bilinear maps  on $T_{V,\partial \la} \times T_{V,\mu}$ setting for all $e\in E_o,$ first
 \[\<K\#I\>^{\g,l}_e = \left\{\begin{array}{ll}\<\psi_e, (\varphi_e\otimes t_{e})^{\g(e)} \> & \text{ if }e\in E_+,\\&\\\<(t_{e} \otimes \psi_e)^{\g(e)},\varphi_e\>& \text{ if }e^{-1}\in E_+\end{array}\right.\]
 and     
\begin{equation}
\<K\#I\>^{\g,l}_{v} =\otimes_{e\in E_o:\underline e=v}  \<K\#I\>^{\g,l}_e   \in \bigotimes_{e\in E_o:\underline e= v} H_{e,\g}
\end{equation}
 using the pairing \eqref{Pairing Isotop}, and  then using \eqref{eq---DualMultTens},
% \begin{equation}
%\<K\#I\>^{\g,l} =\prod_{e\in E_+} \big\< \<K\#I\>^{\g,l}_{\overline e},\<K\#I\>^{\g,l}_{\underline e}\big\>_{H_{e,\g}^*,{H_{e,\g}}},\end{equation}
 \begin{equation}
\<K\#I\>^{\g,l} =\prod_{e\in E_+} \big\< \<K\#I\>^{\g,l}_{e^{-1}},\<K\#I\>^{\g,l}_{ e}\big\>_{H_{e^{-1},\g},{H_{e,\g}}}\in\C\end{equation} 
for any 
$K=\bigotimes_{v\in V} \left(\otimes_{e:E_o:\underline e=v}\varphi_e\otimes \psi_{e}\right)\in T_{V,\partial \la}$ and  $I=\bigotimes_{v\in V}(\otimes_{e\in E_o: \underline e=v} t_{e})\in T_{V,\mu}.$
%  When $K=\bigotimes_{v\in V} \left(\otimes_{e:E_o:\underline e=v}(k_{v,l}\otimes k_{v,r})\right)\in T_{V,\partial \l}$ and  
% $I=\otimes_{v\in V}(\otimes_{e\in E_o: \underline e=v} t_{v,e})\in T_{V,\mu}$, we set 
% \begin{equation}
% \<K\#I\>^{\g,l}= \prod_{v\in V,e:E_o:\underline e=v}\<P^{\partial_l \la(e),\partial_r\la(e)}_\g (k_{v,l}\otimes t_{v,e}),k_{v,r}\>
% \end{equation}

\begin{lem} \label{Gen Int formula SN compact form}For any $\mu: E_o\to \mathcal{R}(G) $ skew symmetric, $\la:F\to \hat G$ and $I\in T_{V,\mu},$ 
\[\EE[S_{\mu,I}W_\la]= \frac{1}{\prod_{e\in E_+}d_{\partial_r\la(e)}}\<J^\la\#I\>^{\partial_r\la,l}. \]
\end{lem}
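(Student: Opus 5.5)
The plan is to combine the decomposition \eqref{eq---DecProductSpinN} with Lemma \ref{ESN} and the Peter--Weyl projection formula \eqref{eq---PeterWeylProj}, applied edge by edge.

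\emph{Step 1: rewrite the product as one spin network.} Write $S_{\mu,I}W_\la=S_{\partial\la\#\mu,J^\la\#I}$. Expand it via \eqref{eq---DecProductSpinN} as a sum over skew symmetric $\g:E_o\to\hat G$ of the spin networks $S_{(\partial\la\#\mu)^{\g,l},(J^\la\#I)^{\g,l}}$.

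\emph{Step 2: take the Haar expectation edge by edge.} By Lemma \ref{ESN}, the expectation is obtained by replacing, for each $e\in E_+$, the factor $\rho(h_e)$ acting on
\[
(\partial_l\la(e)\ts\mu(e))^{\g(e)}\ts\partial_r\la(e)^*
\]
by its average. Under \eqref{eq---TensProdIsoT}, $h_e$ acts on $H^{\g(e)}_{\la(L(e)),\mu(e)}\ts V_{\g(e)}\ts V_{\la(R(e))}^*$ only through the last two factors. By \eqref{eq---PeterWeylProj}, the average of $\rho_{\g(e)}(h_e)\ts\rho_{\la(R(e))^*}(h_e)$ vanishes unless $\g(e)=\la(R(e))$. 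When $\g(e)=\la(R(e))$, it equals $d_{\la(R(e))}^{-1}\<1\,2\>$, i.e.\ the contraction of $V_{\g(e)}$ against $V^*_{\la(R(e))}$, followed by insertion of the identity. Hence only the term $\g=\partial_r\la$ survives, and each edge of $E_+$ contributes a factor $d_{\partial_r\la(e)}^{-1}$.

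\emph{Step 3: identify the remaining contraction.} The surviving contraction pairs, at each endpoint of $e$, the vector $(\varphi_e\ts t_e)^{\g(e)}$ against $\psi_e$. This is precisely the pairing \eqref{Pairing Isotop} defining $\<J^\la\#I\>^{\g,l}_e\in H_{e,\g}$. What is left on $e$ is the pairing between the multiplicity-space components at its two endpoints $\underline e$ and $\overline e$. I would check, using the duality \eqref{eq---DualMultTens} and the identification of $(\partial\la\#\mu)^{\g,l}(e)^*$ with the $e^{-1}$ data, that this edge contraction equals
\[
\big\<\<J^\la\#I\>^{\g,l}_{e^{-1}},\<J^\la\#I\>^{\g,l}_e\big\>.
\]
Taking the product over $E_+$ then gives $\<J^\la\#I\>^{\partial_r\la,l}$.

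The main obstacle is this last bookkeeping. One has to verify that the identity element $\Id_{V_{\g}}$ produced by \eqref{eq---PeterWeylProj} contracts with the basis/dual-basis sum in \eqref{eq---DualMultTens} with the correct normalisation, with no stray factor $d_\g$. One must also check that the asymmetric conventions for $x_e$ (depending on whether $e$ or $e^{-1}$ lies in $E_+$) match up. I would first verify this on a single edge with two vertices, writing everything in explicit bases, and then extend by multiplicativity over edges.
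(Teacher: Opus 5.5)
Your route is the paper's. You expand via \eqref{eq---DecProductSpinN}, apply Lemma~\ref{ESN} edge by edge with invariant projection $\Id_{H_{e,\g}}\ts P_{\g(e),\partial_r\la(e)^*}$, use \eqref{eq---PeterWeylProj} to keep only $\g=\partial_r\la$ with a factor $d_{\partial_r\la(e)}^{-1}\<1\,2\>$, and then identify what is left. The paper asserts that last identification in one line, so the step you defer is exactly the one it leaves undetailed.

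\textbf{The gap:} that deferred check does not come out as you expect if \eqref{eq---DualMultTens} is taken literally. On $e\in E_+$ write $x_e=h(v)\ts\psi$ and $x_{e^{-1}}=\phi\ts h'(w)$. Then $\eta_e:=\<J^\la\#I\>^{\g,l}_e=\<\psi,v\>h$ and $\eta_{e^{-1}}:=\<J^\la\#I\>^{\g,l}_{e^{-1}}=\<w,\phi\>h'$. Let $c(h,h')$ be the Schur constant defined by $\<h'(w'),h(v')\>=c(h,h')\<w',v'\>$ for all $v'\in V_\g$, $w'\in V_{\g^*}$.

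Contracting with $\Id_{H_{e,\g}}\ts\<1\,2\>$ gives $\sum_k c(h,h')\<\psi,v\>\<w,e_k\>\<\varepsilon^k,\phi\>=c(\eta_e,\eta_{e^{-1}})$. By contrast, \eqref{eq---DualMultTens} gives $\<\eta_{e^{-1}},\eta_e\>=c(\eta_e,\eta_{e^{-1}})\sum_i\<\varepsilon^i,e_i\>=d_{\g(e)}\,c(\eta_e,\eta_{e^{-1}})$. So each edge produces precisely the stray factor you were worried about. With the literal pairing, the correct prefactor is $\prod_{e\in E_+}d_{\partial_r\la(e)}^{-2}$, not $d^{-1}$.

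A quick test confirms this. Take the sphere with one vertex and one loop edge $e$ (faces with $\partial f_1=e$, $\partial f_2=e^{-1}$), $\mu$ trivial, $I=1$, $\la\equiv\a$. Then $\EE[S_{\mu,I}W_\la]=\EE[|\chi_\a(h_e)|^2]=1$. The right-hand side computed with \eqref{eq---DualMultTens} is instead $d_\a^{-1}\cdot d_\a\cdot d_\a=d_\a$. The three factors are the prefactor, the $d_\a$ diagonal terms of $J^\la_v$, and $\<h_0',h_0\>=d_\a$ for the canonical $h_0,h_0'$.

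\textbf{The fix:} the statement, and the second equality in the paper's proof, hold once the multiplicity spaces are paired by the duality induced from $(V_\a\ts V_\b)^\g$ and $(V_{\b^*}\ts V_{\a^*})^{\g^*}$. Equivalently, divide \eqref{eq---DualMultTens} by $d_\g$. With that normalisation your Steps 1--3 go through verbatim. This is also the normalisation effectively used later in Lemma~\ref{LEM---LemTraceVertex}, where vertex factors are computed with isometries $h_e$ and their adjoints. The IRF formula downstream is therefore not affected.
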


{\begin{proof} For any skew symmetric $\g:E_o\to\hat{G},$  for all $e\in E_+$, identifying  $(\partial \la \# \mu)^{\g,l}(e)$ with $H_{e,\g}\ts V_{\g(e)} \ts V_{\partial_r\la(e)^* }, $ the projection on $G$-invariants of $(\partial \la \# \mu)^{\g,l}(e)$ is
\[ P_{(\partial\la\#\mu)^{\g,l}(e)}= \Id_{H_{e,\g}}\ts P_{\g(e), \partial_r\la(e)^*}.\]
Together with  Lemma \ref{thm: OrthoSN}  and \eqref{eq---PeterWeylProj}, we find
\begin{align*}
\EE[S_{(\partial\la\#\mu)^{\g,l}, (J^\la\#I)^{\g,l}}] &= \frac{ \delta_{\partial_r\la,\g}}{\prod_{e\in E_+} d_{\partial_r \la(e)}} \<\otimes_{e\in E_+} \left(  \Id_{H_{e,\g}}\ts \<1\, 2\>_{\partial_r\la(e)}\right),  (J^\la\#I)^{\g,l} \>_{E,V} \\
&= \frac{ \delta_{\partial_r\la,\g}}{\prod_{e\in E_+} d_{\partial_r \la(e)}} \<J^\la\#I\>^{\partial_r\la,l}.
\end{align*}
Summing over $\g:E_0\to \hat G,$  we conclude thanks to \eqref{eq---DecProductSpinN}. This computation is illustrated in Figure \ref{Fig---PWAroundVertex}.

  \begin{figure}\label{Fig---PWAroundVertex}
%{\centering {\includegraphics[width=6cm,height=4 cm]{BrauerMAPAC.pdf}}
% \qquad \includegraphics[width=6cm,height=7 cm]{SurfaceAC.pdf}
%}

{\centering  \includegraphics[width=6cm,height=6cm]{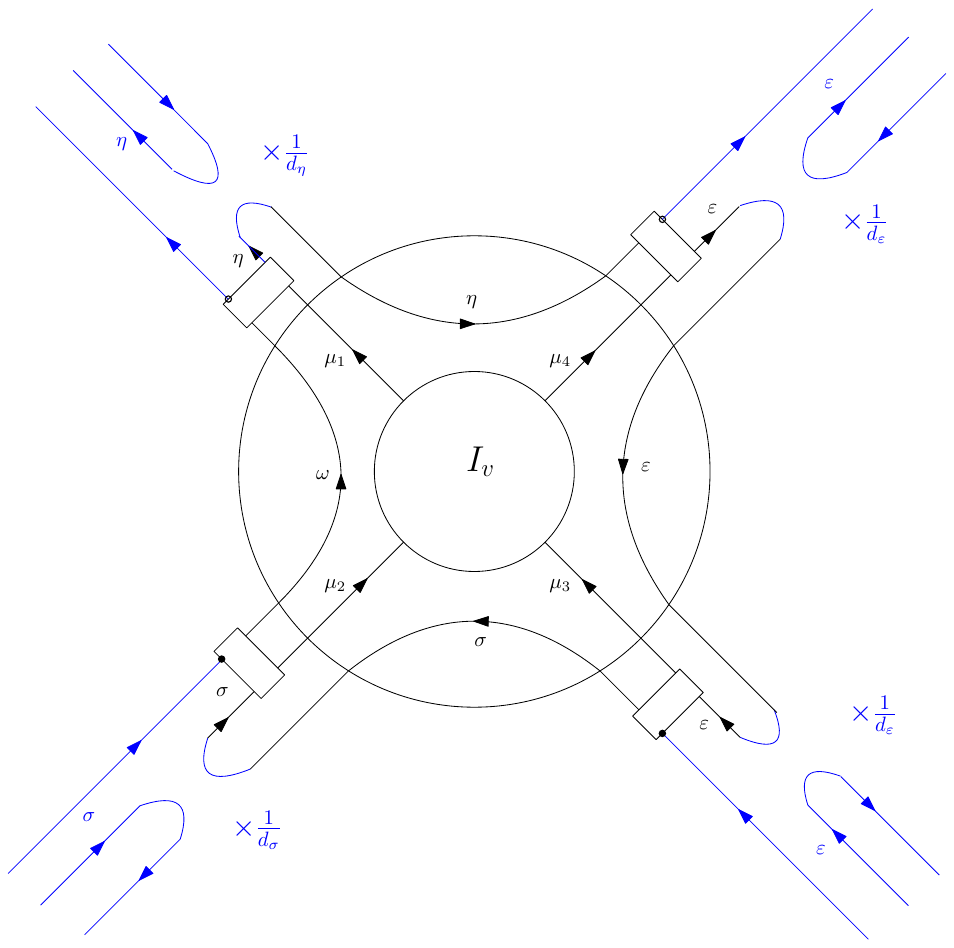}
}

 \caption{Graphical representation of the computation of Lemma \ref{Gen Int formula SN compact form}'s proof near a vertex $v$, with same notations as in Figure \ref{Fig---SNFacesAroundVertex}. Blue parts represent contributions of the integration of the  Haar measure along each edge of the graph.    }
\end{figure} 
\end{proof}}

Let us improve slightly this last formula trying to write the contraction in the right-hand side as a product over vertices.  For  $\mu: E_o\to \hat G $ skew symmetric and $\la:F\to \hat G$,  set   for all $e\in E_+,$ $m_{e} =\dim ( H_{\partial_l \la(e),\mu(e)}^{\partial_r\la(e)}),$  
fix  a basis $(\varepsilon_{i,e})_{1\le i\le m(e) }$ of $(H_{\partial_l \la (e),\mu(e)}^{\partial_r\la(e)})^*$ and denote by  $(\varepsilon_{i,e^{-1}})_{1\le i\le m(e) }$ the dual basis of  $(H_{\mu(e^{-1}), \partial_r \la (e^{-1})}^{\partial_l\la(e^{-1})})^*$. Expanding  $\<J^\la\#I\>^{\g,l}_e $ in the basis of $H_{e,\g}$ dual to  these for all  $e\in E_o$  yields the following expression.

\begin{lem} \label{lem: Spin Face General} For any $\mu: E_o\to \hat G $ skew symmetric, $\la:F\to \hat G$ and $I\in T_{V,\mu},$ 
\[\EE_{\mathcal{G},G}[S_{\mu,I}W_\la]= \frac{1}{\prod_{e\in E_+}d_{\partial_r\la(e)}}\sum_{\substack{i:E_o\to \N\\ 1\le i(e)\le m(e), i(e)=i(e^{-1}), \forall e\in E_o}}\prod_{v\in V}\big\<\otimes_{e\in E_o:\underline e=v} \varepsilon_{i(e),e}, \<J^\la\#I\>_v^{\partial_r\la,l}\big\>. \]
\end{lem}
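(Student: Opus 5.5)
This is a direct rewriting of Lemma \ref{Gen Int formula SN compact form}, so the proof starts from that lemma. We have
\[\EE_{\mathcal{G},G}[S_{\mu,I}W_\la]=\frac{1}{\prod_{e\in E_+}d_{\partial_r\la(e)}}\<J^\la\#I\>^{\partial_r\la,l}.\]
It therefore remains to rewrite the scalar $\<J^\la\#I\>^{\partial_r\la,l}$ as a sum over edge indices of a product of vertex contributions. By its definition, this scalar is the product over $e\in E_+$ of the pairings
\[\big\<\<J^\la\#I\>^{\partial_r\la,l}_{e^{-1}},\<J^\la\#I\>^{\partial_r\la,l}_{e}\big\>_{H_{e^{-1},\g},H_{e,\g}},\qquad \g=\partial_r\la.\]
Each such pairing involves one vector attached to the vertex $\underline e$ and one attached to $\overline e=\underline{e^{-1}}$.

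The key step is to insert a resolution of identity on each multiplicity space. For $e\in E_+$, let $(h_{i,e})_{i}$ be the basis of $H_{e,\g}=H^{\partial_r\la(e)}_{\partial_l\la(e),\mu(e)}$ that is dual to $(\varepsilon_{i,e})_i$. Through the pairing \eqref{eq---DualMultTens}, the basis $(\varepsilon_{i,e^{-1}})_i$ is the dual basis of $(H_{e^{-1},\g})^*$; I will check this index-matching carefully, since it is the only non-formal point. For any $x\in H_{e,\g}$ and $y\in H_{e^{-1},\g}$ we then have
\[\<y,x\>=\sum_{i=1}^{m(e)}\<\varepsilon_{i,e^{-1}},y\>\<\varepsilon_{i,e},x\>.\]
This holds because both sides are bilinear, and on basis vectors both sides reduce to Kronecker deltas by the duality convention.

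Substituting this identity for every $e\in E_+$ and expanding the product over edges turns $\<J^\la\#I\>^{\partial_r\la,l}$ into a sum over maps $i:E_o\to\N$ with $1\le i(e)\le m(e)$ and $i(e)=i(e^{-1})$. For each such $i$, the summand is $\prod_{e\in E_o}\<\varepsilon_{i(e),e},\<J^\la\#I\>^{\partial_r\la,l}_e\>$. Regrouping the oriented edges $e$ by their source vertex $\underline e$, and using that $\<J^\la\#I\>^{\partial_r\la,l}_v$ is the tensor product over $e$ with $\underline e=v$, the inner product factorises as $\prod_{v}\<\otimes_{e:\underline e=v}\varepsilon_{i(e),e},\<J^\la\#I\>_v^{\partial_r\la,l}\>$. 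This is the claimed formula.

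The main obstacle is purely one of convention: checking that the pairing \eqref{eq---DualMultTens} and the identification of $H_{e^{-1},\g}$ with $H^{\partial_l\la(e^{-1})}_{\mu(e^{-1}),\partial_r\la(e^{-1})}$ make the two chosen bases genuinely dual, so that no extra scalar factor appears.
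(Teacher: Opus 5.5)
Your proposal is correct and is essentially the paper's argument: starting from Lemma \ref{Gen Int formula SN compact form}, the paper expands each $\<J^\la\#I\>^{\partial_r\la,l}_e$ in the basis of $H_{e,\g}$ dual to $(\varepsilon_{i,e})_i$. This turns each edge pairing into $\sum_i \<\varepsilon_{i,e^{-1}},\cdot\>\<\varepsilon_{i,e},\cdot\>$, and regrouping oriented edges by their source vertex then gives the product over vertices. Your explicit check that the two bases are dual under the pairing \eqref{eq---DualMultTens}, together with the implicit use of multilinearity to pass from pure tensors to general $I$ and $J^\la$, fills in details the paper compresses into a one-line remark.
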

Mind that the last choice of basis does not change the summand  in the right-hand side as $\varepsilon_{i(e),e}$ always comes therein with its dual $\varepsilon_{i(e),e^{-1}}$. 

When  $V_{\la(L(e))}\otimes V_{\mu(e)}$ is multiplicity free for each $e\in E_+$, this amounts to choose    for each $e\in E_+$  an element $h_e\in H_{\partial_l\la(e),\mu(e)}^{\partial_r\la(e)}\setminus\{0\}.$  Applying the last lemma under this assumption yields the next Lemma, where for each such choice we identify $\bigotimes_{e\in E_o:\underline e= v} H_{e,\g}$ with $\C$ for each vertex $v.$ In this case the computation can be illustrated by Figure \ref{Fig---PWAroundVertexMFree}.

  \begin{figure}\label{Fig---PWAroundVertexMFree}
%{\centering {\includegraphics[width=6cm,height=4 cm]{BrauerMAPAC.pdf}}
% \qquad \includegraphics[width=6cm,height=7 cm]{SurfaceAC.pdf}
%}

{\centering  \includegraphics[width=6cm,height=6cm]{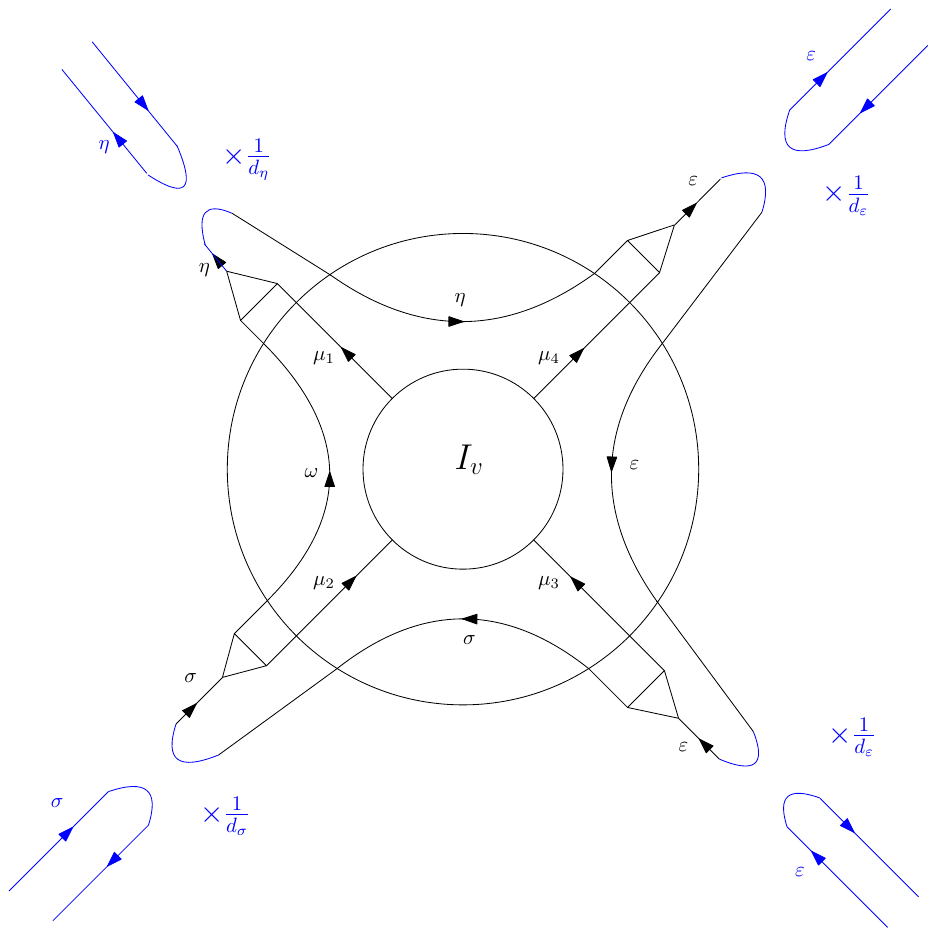}
}

 \caption{Graphical representation of the computation of Lemma \ref{Gen Int formula SN compact form}'s proof near a vertex $v$, with same notations as in Figure \ref{Fig---SNFacesAroundVertex}, in the multiplicity free case of Lemma \ref{LEM---SpinNCharacFaces}. Blue parts represent contributions of the integration of the  Haar measure along each edge of the graph.}
\end{figure}

%When $\a,\b \in  \hat{G}$ are such that the decomposition  $V_{\a}\otimes V_\beta$ is multiplicity free,  let us make an identification $(V_\a\otimes V_\b)^\g\simeq V_\g$ and $H_{\a,\b}^\g\simeq \C,$  for any $\g\in \hat{G}$ contributing to the decomposition, which 
%then amounts to a choice of a non-zero element in the 
%line $H_{\a,\b}^\g.$   These identifications are unique up to a scalar, but these choices do not  change the right-hand side of Lemma \ref{lem: Spin Face General}. With this latter convention, the last expression can be simplified as follows in the multiplicity-free case.
 
\begin{lem} \label{LEM---SpinNCharacFaces} Assume  that for any $e\in E_+,$ $V_{\la(L(e))}\otimes V_{\mu(e)}$ is multiplicity free. Then for any choice of $h_e\in H_{\partial_l\la(e),\mu(e)}^{\partial_r\la(e)}$ for all $e\in E_+,$ 
\[\EE[S_{\mu,I}W_\la]=  \frac{1}{\prod_{e\in E_+}d_{\partial_r\la(e)}}\prod_{v\in V}   \<J^\la\#I\>_v^{\partial_r\la,l} . \]
 \end{lem}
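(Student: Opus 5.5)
This follows directly from Lemma \ref{lem: Spin Face General}; the only work is to check that in the multiplicity free case the sum over indices collapses to a single term. Under the assumption, for each $e\in E_+$ the multiplicity space $H_{\partial_l\la(e),\mu(e)}^{\partial_r\la(e)}$ has dimension $m(e)\le 1$. First dispose of the degenerate case: if $m(e)=0$ for some $e$, then by Lemma \ref{eq---AdjCondSpinN} the left-hand side vanishes. On the right-hand side, the corresponding $\<J^\la\#I\>^{\partial_r\la,l}_e$ lies in a zero space, so the vertex factor at $\underline e$ is $0$ and both sides agree.

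Otherwise $m(e)=1$ for all $e\in E_+$. Fix the chosen nonzero $h_e$. Take $\varepsilon_{1,e}\in (H_{\partial_l\la(e),\mu(e)}^{\partial_r\la(e)})^*$ to be the functional with $\varepsilon_{1,e}(h_e)=1$, and let $\varepsilon_{1,e^{-1}}$ be its dual vector via the pairing \eqref{eq---DualMultTens}. With these choices, the sum over $i:E_o\to\N$ in Lemma \ref{lem: Spin Face General} has exactly one term, $i\equiv 1$. That term is a product over vertices of the evaluations $\<\otimes_{e:\underline e=v}\varepsilon_{1,e},\<J^\la\#I\>_v^{\partial_r\la,l}\>$.

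Now use the identification fixed in the statement. Identifying each line $H_{e,\g}$ with $\C$ through $h_e$ (respectively through its dual for $e^{-1}\in E_+$) identifies $\bigotimes_{e:\underline e=v}H_{e,\g}$ with $\C$. Under it, the evaluation against $\otimes_e\varepsilon_{1,e}$ is exactly the scalar $\<J^\la\#I\>_v^{\partial_r\la,l}$. This gives the stated product formula with the prefactor $1/\prod_{e\in E_+}d_{\partial_r\la(e)}$ inherited unchanged.

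The one point needing care, and the only place I expect any real check, is consistency of normalisations at the two ends of an edge. For $e\in E_+$ the factor at $\underline e$ is normalised by $h_e$. The factor at $\overline e=\underline{e^{-1}}$ is normalised by the dual element, so their product reproduces the pairing $\<\<J^\la\#I\>_{e^{-1}},\<J^\la\#I\>_e\>$ used in Lemma \ref{Gen Int formula SN compact form}. Rescaling $h_e\mapsto c h_e$ multiplies one factor by $c$ and the other by $c^{-1}$, so the result is independent of the choice of the $h_e$, consistent with the remark after Lemma \ref{lem: Spin Face General}.
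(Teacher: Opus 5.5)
Your proposal is correct and follows the paper's route: the paper obtains this lemma by specialising Lemma \ref{lem: Spin Face General} to one-dimensional multiplicity spaces. The basis is fixed by the chosen $h_e$, so the index sum collapses to the single term $i\equiv 1$, and each $\bigotimes_{e:\underline e=v}H_{e,\g}$ is identified with $\C$. Your extra checks on the degenerate case $m(e)=0$ and on invariance under rescaling $h_e$ are consistent with how the paper uses the lemma, where Lemma \ref{eq---AdjCondSpinN} handles the vanishing case separately.
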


\begin{rmk} Note that when $G$ is $U(N)$ or $SU(N)$ and $V=\C^N$ is the standard representation, then by Pieri's rule,  $V_\a\otimes V$ is multiplicity free for any $\a\in \hat G$. 
\end{rmk}

\subsection{Wilson loops expectations for the standard representation}  The last formula in the unitary and special unitary cases can be made fully explicit in the case of a Wilson loop in the standard representation. It implies in particular the following.

Assume  $\Gbb=(V,E,F)$ is a generalised map with edge orientation $E_+$ associated to a tuple $\mathcal{S}$ of loops with simple, transverse intersections as defined in section \ref{sec---GeneralisedMap}.

When  $\mu: E_o\to \mathcal{R}(G)$ is skew symmetric, constant equal to the class of  $V_\sigma=\C^N$ on $E_+$,  since $\mathcal{G}=(V,E)$ is $4$-regular with two inwards and two outwards edges of $E_+$ at each vertex, we can and do make the identification
\[\bigotimes_{e\in E_o: \underline e= v} V_{\mu(e)}\simeq  \End(V_{\sigma}^{\otimes 2} ) \]
to  define the flip tensor $\tau_\mu\in T_{V,\mu}$ setting for any $u,w\in V,$ 
\begin{equation}
\tau_{\mu,v}:\begin{array}{ccc}V_{\sigma}^{\otimes 2}&\longrightarrow& V_{\sigma}^{\otimes 2}\\ u\otimes w&\longmapsto &w\otimes u.\end{array}
\end{equation}
so that 
 \begin{equation}
S_{\mu,\tau_\mu}=\prod_{\mfl\in \mathcal{S}} \chi_\sigma(h_{\mfl})=\prod_{\mfl\in \mathcal{S}} \Tr(h_{\mfl}).\label{eq---WilsonASSpinN}
\end{equation}
 
For all $v\in V,$ we  denote by $\mathfrak{s}_v,\mathfrak{w}_v,\mathfrak{n}_v,\mathfrak{e}_v$ the faces cyclically\footnote{mind that some of them might be identical} ordered around $v$ starting with the one bounded by two incoming edges of $E_+$ at $v.$ 

Let us assume from there onwards that $G=U(N)$ or $SU(N)$  and $V_\sigma=\C^N.$   In this case when $\la:F\to \hat G,$ the constraint \begin{equation}
\dim(\Hom_G(V_{\la(R(e))},V_{\la(L(e))}\otimes V_{\mu(e)}))>0,\forall e\in E_+\label{eq---AdjWeight}
\end{equation}
appearing in Lemma \ref{eq---AdjCondSpinN} takes a particularly nice form.  Indeed recall Pieri's rule: for any  $\a\in \ZD,$ 
\begin{equation}
V_\a\ts V= \bigoplus_{i} V_{\a+\omega_i}\quad\text{and}\quad V_\a\ts V^*=\bigoplus_{j} V_{\a-\omega_j}  
\end{equation}
where the sums  are respectively over $i$ and $j$ with  $1\le i,j\le N$ and\footnote{recall the definition for the canonical basis  $(\om_i)_i$ in section  \ref{sec---StatIRF}. } $\a+{\om_i},\a-\om_j\in\ZD. $  Therefore $\la:F\to \hat{G}$ satisfies \eqref{eq---AdjWeight} if and only if there is $i: E_+\to \{1,\ldots,N\}$ such that 
\begin{equation}
\partial\la(e)=\la(L(e))-\la(R(e))=-\om_{i(e)} \label{eq---diffWeight}
\end{equation}
or equivalently $\la\in \Omega_N$ or $\Omega_N^*$ when $G$ is resp. $U(N)$ or $SU(N)$ as defined in section \ref{sec---StatIRF}.  Let us give a third reformulation.  When the last condition is satisfied,  consider  for each    $k\in\{1,\ldots,N\}$ the co-exact  co-cyle 
\begin{equation}
\mathscr{C}_{k,\la}=-\partial \la _k\label{eq---DefCoExactCoCycle}
\end{equation} so that \begin{equation}
\mathscr{C}_{k,\la}= \sum_{e\in E_+:i(e)=k} e_+
\end{equation}
where  the right-hand side vanishes when the index set is empty. Let $\mathcal{S}_\la$ be the unique collection of simple labeled loops $(\mfl_1,i_1),\ldots,(\mfl_q,i_q)$ of $\Gbb$ with distinct edges obtained by  de-singularising some of the intersection points of $\mathcal{S}$ such that
\begin{enumerate}
\item for all $k$ with  $\mathscr{C}_{k,\la}\not=0,$
\[ \mathscr{C}_{k,\la}= \sum_{1\le l\le q: i_l =k} \om_{\mfl_l},\]
\item   two  loops of the same label cannot cross transversally,
\item   $i_1,\ldots,i_q\in\{1,\ldots,N\}.$ 
\end{enumerate}

A vertex $v\in V$  of $\Gbb_\mathcal{S}$ can then be of two types.  Call  $v$ a \emph{crossing point} if   loops of $\mathcal{S}_\la$ visiting  $v$ of $\Gbb$ cross transversally at $v,$ and  
\emph{contact point} otherwise.    When the two (oriented) loops ordered by their incoming edges at $v,$ have label $a$ and $ b$,  call $v$  an $(a,b)$-contact/crossing point.   By (1), for any $a,$ reading the  value of $\la_a$  along a $b$-loop,  say on its left, $\la_a$ changes by  respectively $-1$ or $ +1$ at each  $(a,b)$ or $(b,a)$-crossing point 
the loop visits and does not change otherwise.  In particular for $b\not=a,$ there are as many    $(a,b)$ as $(b,a)$-crossing points along each $b$-loop, and    all in all  as many $(a,b)$ as $(b,a)$ crossing points.   

%
%A vertex $v\in V$   can then be of two types.  Whether  two loops of $\mathcal{S}_\la$ visit the same vertex $v$ of $\Gbb$ without transversally crossing, we call $v$ a \emph{contact point}. Or the  loops transversally cross, we call $v$ a 
%\emph{crossing point}.  When the two loops ordered by their incoming edges at $v,$ have label $a,b$  call $v$  an $(a,b)$-contact/crossing point.   By (1), for any $a,$ reading the  value of $\la_a$  along a $b$-loop,  say on its left, $\la_a$ changes by  respectively $-1$ or $ +1$ at each  $(a,b)$ or $(b,a)$-crossing point 
%the loop visits and does not change otherwise.  In particular for $b\not=a,$ there are as many    $(a,b)$ as $(b,a)$-crossing points along each $b$-loop, and so there are all in all  as many $(a,b)$ as $(b,a)$ crossing points.   

%{\color{orange}\begin{rmk} With the same argument as above for loops of $\mathcal{S}$ in place of $\mathcal{S}_\la,$ there are many $(a,b)$ as $(b,a)$-contact  points.
%\end{rmk}}
{\begin{rmk} With the same argument as above for loops of $\mathcal{S}$ in place of $\mathcal{S}_\la,$ there are many $(a,b)$ as $(b,a)$-contact  points.
\end{rmk}}

%\begin{rmk} By assumption (2), loops of $\mathcal{S}_\la$ are simple.
%\end{rmk}
\begin{rmk}  The faces of $\Gbb_{\mathcal{S}_\la}=(V',E',F')$ have boundary  loops given by $\mathcal{S}_{\la}$ and in particular the orientation does not change along their boundaries, i.e.,  for any $f'\in F',$ $m_{f'}=0.$\label{Rk---CuttingLoopsNoAlternatingO}
\end{rmk}
\begin{rmk}\label{Rk---NSRep} A vertex is a contact point if and only if $\la(\mathfrak{n}_v)=\la(\mathfrak{s}_v). $  
\end{rmk}
Faces of $\Gbb_\mathcal{S}$ and $\Gbb_{\mathcal{S}_\la}$ can be  embedded in  the total surface $X_{\Gbb} $  of $\Gbb$ such that any face $f\in F$ is included in a face $f'$ of $\Gbb_{\mathcal{S}_\la}.$   Thanks to remark \ref{Rk---NSRep}, for a given $f'\in F',$ $\la(f)=\a$ is constant over all $f\in F$ included in $f'$  and we then set  $\la(f')=\a.$

%When $\mathcal{S}$ is a single loop The loops $\mathcal{S}_{\la}$ can be obtained from $\mathcal{\Sc}$ by swapping   $p-q$  intersection points of $\mathcal{S}$ 

%let $a$ and $b$ be the labels   of  respectively the  left and the right one\footnote{that is  bounding respectively positively and negatively the faces $\mathfrak{w}_v$ and $\mathfrak{e}_v.$ Equivalently, setting $i(e)=i(e^{-1})$ for all $e\in E_o,$ $(b,a,a,b)$ are the consecutive values of $i$ on the outgoing edges at $v.$    }
Let us introduce the main new ingredient of the next theorem. For each vertex, we shall see below that flip  tensors $\tau_{\mu_v}$ act like reflections\footnote{on a  multiplicity  plane or line.} defined by  angles $\theta_v$   we specify now.

 \begin{figure}
%{\centering {\includegraphics[width=6cm,height=4 cm]{BrauerMAPAC.pdf}}
% \qquad \includegraphics[width=6cm,height=7 cm]{SurfaceAC.pdf}
%}

{\centering  \includegraphics[width=9cm,height=9cm]{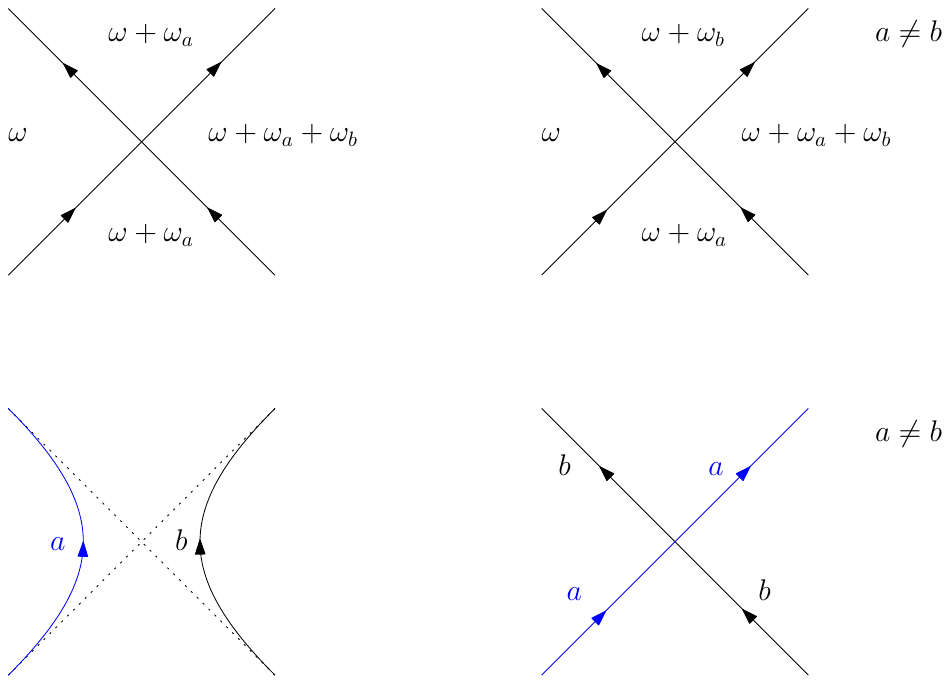}
}
 \caption{Top row: Loop $\mfl\in \mathcal{S}$ near an $(a,b)$-point together with the values of $\la:F_\mathcal{S}\to \hat{G}$ on its corners.  Bottom row:  the associated loops of $\mathcal{S}_\la$. Left column: contact point, $a,b $ are arbitrary. Right column: crossing point,  $a\not=b$.\label{Fig--CrossingContactPoints}}
\end{figure}

If $v$ is an $(a,b)$-point   as on { figure \ref{Fig--CrossingContactPoints}, writing $\la_v=\la(\mathfrak{s}_v), $ 
\begin{equation}
r_v=\la_{v,b}- \la_{v,a}+\rho_b-\rho_a+1\in\Z^*.
\end{equation}
Indeed  since $\la_v\in\ZD,$ $r_v  \ge 1$    if $b\le  a,$     and $r_v\le -2$ if $b>a+1,$ while when $b=a+1,$  since by    \eqref{eq---DefCoExactCoCycle},   $\la(\mathfrak{e}_v)=\la_v+\omega_{a+1}\in\ZD,$  
\[   r_v=  \la_{v,a+1}- \la_{v,a} =\la(\mathfrak{e}_v)_{a+1}-\la(\mathfrak{e}_v)_{a}-1\le -1. \]
Let   $\theta_v\in [0,\pi]$ be such that  
  \begin{equation}
  \cos(\theta_v)=\frac 1 {r_v}.
  \end{equation}
Note that $r_v$ and therefore $\theta_v$ only depend on $\la$ up to an additive constant, that is, only on the projection of  $\la$ onto $\ZD/\Z.$
%\begin{rmk}
%With the same argument as above,  for any integer $n\in\Z, $ along any loop  $\mfl\in\mathcal{S}_\la$  the  number of crossing points $v$  with $r_v=n$ visited by $\mfl$  is even.  In particular, for any $n$ the number of crossing points $v$ with $r_v=n$ is 
%even.
%\end{rmk}

\begin{rmk} An $(a,a)$-point $v$ is necessarily a contact point and $\theta_v=0.$ 
\end{rmk}

{\begin{rmk} For any  $b$-loop of $\mathcal{S}_\la$ and $a\not=b,$    the $\{a,b\}$-crossing points visited by the loop can be paired according to the value of $S_v=\partial_l\la(e)_a$  where $e$ is  the loop's  incoming edge at the crossing point.  Furthermore,  denoting by $e'$ the $b$-loop outgoing edge, the pairing can be chosen so that  for each paired crossing points $(v,w),$ $\partial_l\la(e')_a-\partial_l\la(e)_a$  takes opposite values at $v$ and $w$. For each such pair $(v,w)$ of crossing points, $r_v=-r_w$ and $\theta_v=\pi-\theta_w.$ In particular the number of crossing points is even and $S(\la)=\prod_{v\in V \text{crossing point}}\sin(\theta_v)$ is a rational function of $\la.$
 \end{rmk}}
 
% 
%If $v$ is an $(a,b)$-crossing point, 
%  
%\begin{equation}
%\cos(\theta_v)=\frac{1}{ (\la(\mathfrak{e}_v)+\rho)_b-(\la(\mathfrak{n}_v)+\rho)_a}=\frac{1}{ \la_{v,b}- \la_{v,a}+\rho_b-\rho_a+1}= \frac{2}{\|\la(\mathfrak{e}_v)+\rho\|^2-\|\la(\mathfrak{w}_v)+\rho\|^2+2}
%\end{equation}
%we set 
%%(\la(\mathfrak{e}_v)+\rho)_b-(\la(\mathfrak{n}_v)+\rho)_a
%
%\begin{equation}
%I
%\end{equation}
%by  \eqref{eq---DefCoExactCoCycle}, 
%\begin{equation}
%\la(\mathfrak{n}_v)=\la(\mathfrak{s}_v).
%\end{equation}
%
% either two loops denoting $e_{1,in}, e_{2,in}, e_{3,out},e_{4,out}in E_+$  the two incoming and outgoing edges at  $v$, since  
%
%% consider  the partition of   $E_+$  given by levels of $i:E_+\to \{1,\ldots,N\}$ and
%
%  associated to is the set of edges of $\mathcal{S},$  
% 
%
%
%

% let us identify $\hat{G}$ with $P_+=\{\l\in \mathbb{Z}^N:\l_1\ge \l_2\ge \ldots \l_N\}$ and set 
%{\color{orange}\[\mathcal{S}=\left\{\omega\in \mathbb{N}^N:\sum_{i}\omega_i=1\right\}.\]}
We can now state the main result of this section. The formula below is due to Thierry L\'evy in a slightly different form.\footnote{{The  expression of T. L\'evy is in terms of   Young diagrams contents instead of highest weights.
% and his argument uses the decomposition of Okounkov and Vershik \cite{OkVershik} of Coxeter transpositions  into a Gelfand-Zetlin basis,  whereas the argument below  uses the  action of  the Laplacian on $U(N)$ following similar ideas similar to  \cite{Naz} (the argument  of \cite{Naz} also allows to derive the same decomposition for Coxeter elements
 }} One of our motivation was to give a statement and a proof which is hopefully more easily adaptable  to other compact Lie groups.

\begin{thm}\label{THM---IRFOneSpinN} Assume $G=U(N)$ or $SU(N),$ $\mathcal{S}$ is a collection of loops with simple transverse intersections, with associated generalised map $\Gbb_{\mathcal{S}}=(V,E,F)$. Then for any  $\la:F\to\hat G,$    
\begin{equation}
\EE_{\mathcal{G}_\mathcal{S},G}[ W_\mathcal{S}W_\la] =\epsilon \frac{ \mathcal{D}_{\la,\mathcal{S}} \mathcal{I}_{\la,\mathcal{S}} 1_{\la\in \Omega_N}}{\prod_{f\in F} d_{\la(f)}^{\chi(f)}}\label{eq---WilsonLoopFaceCharac}
\end{equation}
 where 
\begin{equation}
   \mathcal{D}_{\la,\mathcal{S}}=\prod_{f\in F} d_{\la(f)}^{\chi(f)-\frac{m_f}2}= \prod_{f\in F'} d_{\la(f)}^{\chi(f)} \label{eq---DimEulerSwap}
\end{equation}
 with $F'$ the  set of faces of  $\Gbb_{\mathcal{S}_\la}$   and 
\begin{equation}
 \mathcal{I}_{\la,\mathcal{S}}=\prod_{v\in V\,\text{contact point}} \cos(\th_v) \prod_{v\in V\,\text{crossing point}} \sin(\th_v).
\end{equation}
\end{thm}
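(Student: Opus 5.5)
The plan is to obtain \eqref{eq---WilsonLoopFaceCharac} as a direct specialisation of the spin network formula of Lemma \ref{LEM---SpinNCharacFaces}, in four steps. First, by \eqref{eq---WilsonASSpinN}, $W_{\mathcal{S}}=S_{\mu,\tau_\mu}$, where $\mu$ is constant equal to the standard representation on $E_+$. By Lemma \ref{eq---AdjCondSpinN} together with Pieri's rule, the expectation vanishes unless \eqref{eq---diffWeight} holds, i.e.\ unless $\la\in\Omega_N$ (resp.\ $\Omega^*_N$). This accounts for the indicator $1_{\la\in\Omega_N}$. When $\la\in\Omega_N$, Pieri's rule makes every $V_{\la(L(e))}\ts V$ multiplicity free. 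Hence Lemma \ref{LEM---SpinNCharacFaces} applies and writes $\EE[W_\mathcal{S}W_\la]$ as $\prod_{e\in E_+}d_{\partial_r\la(e)}^{-1}$ times a product over vertices of local scalars $\<J^\la\#\tau_\mu\>^{\partial_r\la,l}_v$.

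Second, I will fix the free choice of $h_e\in H^{\partial_r\la(e)}_{\partial_l\la(e),V}$ by taking isometric embeddings $V_{\la(R(e))}\hookrightarrow V_{\la(L(e))}\ts V$ for the invariant inner products. With this choice, the local scalar at $v$ becomes the following. Let $\a=\la(\mfs_v)$; the faces $\mfw_v,\mfe_v$ carry $\a+\om_a$ or $\a+\om_b$, and $\mfn_v$ carries $\a+\om_a+\om_b$. The local scalar is then a trace, over a copy of $V_{\la(\mfn_v)}$, of the composition: embed $V_{\la(\mfn_v)}$ into $V_\a\ts V\ts V$ through one intermediate weight (the west face), apply the flip $\tau_{\mu,v}$ on the two $V$ factors, then project back through the other intermediate weight (the east face). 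By Schur's lemma this trace equals $d_{\la(\mfn_v)}$ times a matrix coefficient $\langle u_{\mfe},\tau\, u_{\mfw}\rangle$. Here $u_{\mfw},u_{\mfe}$ are the unit vectors spanning the lines of the (at most two dimensional) multiplicity space $\Hom_G(V_{\a+\om_a+\om_b},V_\a\ts V\ts V)$ adapted to the two intermediate weights.

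Third, and this is the main step, I must compute $\tau$ on this multiplicity space. Following the idea of using the Laplacian, I will use that on $V\ts V$ one has $\sum_i X_i\ts X_i=-\frac1N\tau$ for an orthonormal basis of $\mfu(N)$ for $\<\cdot,\cdot\>_N$. Consequently, comparing the Casimir of $V_\a\ts V$ with the Casimirs of $V_\a$ and $V$ shows that the Jucys--Murphy type operator $\sum_i\rho_\a(X_i)\ts X_i$ acts on the $V_{\a+\om_k}$ component by the content $(\a+\rho)_k$ plus a constant. Writing the total Casimir on $V_\a\ts V\ts V$ in two ways then gives, on the multiplicity plane, $\tau = J_2-J_1$ up to constants, where $J_1$ and $J_2$ are the Jucys--Murphy operators attached to the first and to the second copy of $V$. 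Combining this with $\tau^2=1$ and with the fact that $J_1$ is diagonal in the basis $(u_{\mfw},u_{\mfe})$, with eigenvalues differing by $r_v$, I will conclude that $\tau$ is the reflection with diagonal entries $\pm 1/r_v=\pm\cos\th_v$ and off-diagonal entry $\sin\th_v$. At a contact point, $\mfw_v$ and $\mfe_v$ carry the same weight and we read the diagonal entry $\cos\th_v$; when $a=b$ the plane degenerates to a line and $\th_v=0$. At a crossing point we read the off-diagonal entry $\sin\th_v$. The signs from orientation conventions will be collected into $\epsilon$.

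Fourth, I will do the dimension bookkeeping. Each vertex contributes one factor $d_{\la(\mfn_v)}$, and each edge of $E_+$ contributes $d_{\partial_r\la(e)}^{-1}$. Regrouping these factors face by face, and using that a face $f$ of $\Gbb_{\mathcal{S}}$ with $k_f$ boundaries, genus $g_f$ and $m_f$ orientation swaps receives the appropriate count of corners, I expect to obtain $\prod_f d_{\la(f)}^{-m_f/2}$ times the Euler-characteristic-free part. This should yield the exponent $\chi(f)-\tfrac{m_f}{2}-\chi(f)$ after dividing by $\prod_f d_{\la(f)}^{\chi(f)}$. Finally, Remarks \ref{Rk---CuttingLoopsNoAlternatingO} and \ref{Rk---NSRep} let me merge the faces of $\Gbb_\mathcal{S}$ into those of $\Gbb_{\mathcal{S}_\la}$, which gives the second equality in \eqref{eq---DimEulerSwap}. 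I expect the main obstacle to be this third-and-fourth step interplay: choosing the normalisations of the $h_e$ consistently around each vertex, so that the local scalars are exactly the reflection entries and the leftover dimension factors assemble precisely into $\mathcal{D}_{\la,\mathcal{S}}$.
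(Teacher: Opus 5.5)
Your architecture is the paper's: reduce to Lemma \ref{LEM---SpinNCharacFaces} with the Pieri vanishing, write each vertex scalar as a dimension times a matrix coefficient of the flip on a multiplicity plane, use a Casimir computation to show the flip is a reflection with $\cos\th_v=1/r_v$, then do the dimension bookkeeping and desingularise the contact points. However, the step you call the main one is stated incorrectly.

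Let $J_1,J_2$ be the Jucys--Murphy operators of the chain $V_\a\subset V_\a\ts V\subset V_\a\ts V\ts V$. Both are diagonal in the path basis of the multiplicity plane, and $J_1+J_2$ is a scalar there (total Casimir minus constants). So ``$\tau=J_2-J_1$ up to constants'' would force $\tau$ to be diagonal, i.e.\ $\sin\th_v=0$ at every crossing point. If instead $J_2$ means the interaction of $V_\a$ with the second copy alone, namely $\tau J_1\tau$, the true identity is $N(J_1+\tau J_1\tau)=\tau+c$, with a plus sign. Your minus sign would then give a diagonal entry independent of the path.

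What is needed, and what the paper proves, is the intertwining relation $Z_2\Theta-\Theta Z_1=2$ of \eqref{eq---Hecke}, with $Z_1,Z_2$ both diagonal in the path basis. Its diagonal matrix elements give \eqref{eq---CosEigLap}, i.e.\ $\cos\th_v=1/r_v$. This also covers the one-dimensional case you miss: $a\neq b$ but one intermediate weight is not dominant, where $\tau=-1$ and $\th_v=\pi$.

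Only after that do $\tau^2=1$ and the isometry property give the off-diagonal entry, and then only up to sign. These signs are not orientation conventions. The paper fixes them with Lemma \ref{lem---SignCoxeter}, choosing the intertwiners globally on the weight lattice so that every $\th_f\in(0,\pi)$: the sign cochain is closed by the braid relation, and $H^2$ vanishes. Since the vertex product does not depend on the choice, this pins down the sign. Absorbing the signs into $\epsilon$ only gives the formula up to a $\la$-dependent sign.

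Second, your vertex geometry is wrong. By \eqref{eq---diffWeight}, the weight increases by $\om_{i(e)}$ when you cross $e$ from left to right. Hence the faces bounded by two incoming or two outgoing edges, $\mfs_v$ and $\mfn_v$, carry the \emph{intermediate} weights. The faces $\mfw_v$ and $\mfe_v$ carry the extremal ones, with $\la(\mfe_v)=\la(\mfw_v)+\om_a+\om_b$, so $\la(\mfw_v)\neq\la(\mfe_v)$ always. The vertex trace is over $V_{\la(\mfe_v)}$ and passes through $\la(\mfs_v)$ and $\la(\mfn_v)$. A vertex is a contact point iff $\la(\mfs_v)=\la(\mfn_v)$ (Remark \ref{Rk---NSRep}), not when $\mfw_v$ and $\mfe_v$ agree.

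This is exactly what your fourth step, left at ``I expect'', requires. The right faces of the four edges at $v$ are $\mfe_v,\mfe_v,\mfs_v,\mfn_v$, so
$\prod_{e\in E_+}d_{\partial_r\la(e)}^2=\prod_{v}d_{\la(\mfn_v)}d_{\la(\mfs_v)}d_{\la(\mfe_v)}^2$.
The vertex factors $d_{\la(\mfe_v)}$ then leave $\prod_v(d_{\la(\mfs_v)}d_{\la(\mfn_v)})^{-1/2}$. The $\mfs$- and $\mfn$-corners are precisely where the orientation swaps, and each face $f$ is the $\mfs$-face at $m_f/2$ vertices and the $\mfn$-face at $m_f/2$ vertices. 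This yields $\prod_f d_{\la(f)}^{-m_f/2}$.

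Likewise, for the second equality in \eqref{eq---DimEulerSwap}, the faces merged when you desingularise a contact point are $\mfs_v$ and $\mfn_v$. The merge lowers $\chi$ by $1$ and $m$ by $2$, so $\chi-\tfrac{m}{2}$ is preserved.
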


\begin{rmk} Equivalently to \eqref{eq---OSWAPF},  for any  face $f\in F,$   $\frac{m_f}2$ counts the  number of  south (or north) corners  within the face $f,$  that is, 
\[m_f= 2 \#\{v\in V: \mathfrak{s}_v=f\}=2 \#\{v\in V: \mathfrak{n}_v=f\}. \] 
\end{rmk}

This last expression is explicit enough to prove Theorem \ref{thm---IRFformula}.

\begin{proof}[Proof of Theorem \ref{thm---IRFformula}]  Under the assumption of the theorem, 
\[\zeta_{U(N)}(2g-2,q_T)\EE_{\YM_{U(N),\Gbb_\mathcal{S},a}}[W_{\mathcal{S}}]= \sum_{\la: F\to \ZD} \prod_{f\in F} d_{\la(f)}^{\chi(f)} \EE[ W_\mathcal{S} W_{\la} ]e^{-\<c_{\la},a\>}\]
and the formula follows from \eqref{eq---WilsonLoopFaceCharac}.  The special unitary and truncated cases follow verbatim from the same argument. 
\end{proof}
 
From Lemma \ref{LEM---SpinNCharacFaces},  the proof of Theorem \ref{THM---IRFOneSpinN}   can be completed using this last new algebraic input.

\begin{lem}\label{LEM---LemTraceVertex}
Assume $G=U(N)$ or $SU(N),$   $\Gbb=(V,E,F)$ is a generalised map as above,  $\la:F\to\hat G,$   $\mu :E_o\to\hat{G}$  is  skew symmetric with $V_{\mu(e)}=\C^N$ for all $e\in E_+$ and $I\in T_{V,\mu}$ is the flip tensor. Then  there is a choice of isomorphisms $H_{e,\partial_r\la(e)}\simeq\C$ for all $e\in E_+,$ such that for all $v\in V, $ 
\begin{equation}
\<J^\la\#\tau_\mu\>_v^{\partial_r\la,l}= \left\{\begin{array}{ll} d_{\la(\mathfrak{e}_v)}\cos(\theta_v) & \text{if}\,\,v\,\,\text{is a contact point,} \\ &\\   d_{\la(\mathfrak{e}_v)} \sin(\theta_v) & \text{if} \,\,v \,\,\text{is a crossing point.} \end{array}\right.\label{eq---IntertwinerContraction}
\end{equation}
\end{lem}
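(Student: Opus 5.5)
The contraction at a vertex $v$ is local: four faces $\mathfrak{s}_v,\mathfrak{w}_v,\mathfrak{n}_v,\mathfrak{e}_v$ carry highest weights $\la_v$, $\la_v+\om_a$ or $\la_v+\om_b$, and $\la(\mathfrak{n}_v)=\la_v+\om_a+\om_b$ (up to the labelling of Figure \ref{Fig--CrossingContactPoints}). The flip acts on two copies of $V=\C^N$. I plan to rewrite $\<J^\la\#\tau_\mu\>_v^{\partial_r\la,l}$ as a trace, over $V_{\la(\mathfrak{n}_v)}$ (or $V_{\la(\mathfrak{e}_v)}$, according to the orientation conventions), of the intertwiner
\[
V_{\la_v}\ts V\ts V\;\longrightarrow\; V_{\la_v}\ts V\ts V
\]
obtained by composing Pieri inclusions and projections with $\Id\ts\tau$. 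Concretely, contracting $J^\la_v$ glues the identity maps of the face representations around $v$. Schur's lemma then reduces the vertex factor to a scalar times a dimension. This is the source of the factor $d_{\la(\mathfrak{e}_v)}$, once the multiplicity-free normalisations $h_e$ are chosen as in Lemma \ref{LEM---SpinNCharacFaces}.

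The key algebraic step is to diagonalise $\Id\ts\tau$ on the multiplicity space
\[
\Hom_G\bigl(V_{\la_v+\om_a+\om_b},\,V_{\la_v}\ts V\ts V\bigr).
\]
By Pieri this space has dimension $2$ when $a\neq b$, with basis given by the two paths $\la_v\to\la_v+\om_a\to\la_v+\om_a+\om_b$ and $\la_v\to\la_v+\om_b\to\la_v+\om_a+\om_b$ (one path may be absent when it leaves $\ZD$). It has dimension $1$ when $a=b$. To compute the matrix of $\tau$ on it, I will use the Casimir: write
\[
\tau=\frac N2\bigl(\Delta_{V\ts V}-\Delta_V\ts 1-1\ts\Delta_V\bigr)
\]
up to constants, in the spirit of \eqref{eq---SWSpecialU}. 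Then, on the intermediate space $V_{\la_v}\ts V\ts V$, express the mixed Casimir $\sum_i X_i\ts X_i$ acting on factors $2,3$ through Casimir differences: the Casimir of $V_{\la_v}\ts V\ts V$, minus that of $V_{\la_v}\ts V$, and so on. In the path basis, the Casimir of the first two factors is diagonal, with the contents $\mfc_{\la_v+\om_a}-\mfc_{\la_v}$ computed by \eqref{def---Casimir}. This gives the diagonal entries $\pm 1/r_v$ of $\tau$ in an orthonormal path basis, with $r_v=\la_{v,b}-\la_{v,a}+\rho_b-\rho_a+1$ the content difference. Since $\tau^2=1$ and $\tau$ is self-adjoint for a unitary invariant inner product, $\tau$ is a reflection of the form
\[
\begin{pmatrix}\cos\th_v&\sin\th_v\\ \sin\th_v&-\cos\th_v\end{pmatrix}.
\]
In the one-dimensional case $a=b$, $\tau=1=\cos 0$.

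Finally I will identify which matrix entry the contraction picks out. At a contact point, $\la(\mathfrak{n}_v)=\la(\mathfrak{s}_v)$ by Remark \ref{Rk---NSRep}, so the incoming and outgoing paths coincide and we read a diagonal entry $\cos\th_v$. At a crossing point the paths differ and we read the off-diagonal entry $\sin\th_v$. Signs can be absorbed by choosing the isomorphisms $H_{e,\partial_r\la(e)}\simeq\C$ consistently along edges, for instance fixing the Gelfand--Tsetlin normalisation for each Pieri embedding; this is allowed because each $h_e$ appears at both endpoints of $e$. The main obstacle I expect is this bookkeeping: checking that the normalisations of the Pieri maps at the two ends of each edge agree, so that the residual factor is exactly $d_{\la(\mathfrak{e}_v)}$ with no stray dimension ratios, and that a global sign choice exists. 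I would handle it by normalising each $P^{\g}_{\a,\b}$ to be an isometry for the invariant inner products. The per-vertex trace then produces a dimension via $\Tr\Id_{V_\g}=d_\g$, and the remaining ratios cancel against the $1/\prod_e d_{\partial_r\la(e)}$ prefactor of Lemma \ref{LEM---SpinNCharacFaces}.
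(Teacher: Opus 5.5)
Your route is essentially the paper's. Graphical calculus turns the vertex factor into $\Tr_{V_\varepsilon}\big(h_{\om\eta\varepsilon}^*\,\Theta\,h_{\om\sigma\varepsilon}\big)=\<h_{\om\eta\varepsilon},\Theta h_{\om\sigma\varepsilon}\>_{HS}$ with $\Theta=\id_\om\ts\tau$. Isometric Pieri maps give $\|h_{\om\sigma\varepsilon}\|_{HS}^2=d_\varepsilon$, and $\Theta$ is a self-adjoint involution of the path space.

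Fix the labelling, though. The extreme weights are $\om=\la(\mfw_v)$ and $\varepsilon=\la(\mfe_v)$, and the two paths pass through $\sigma=\la(\mfs_v)$ and $\eta=\la(\mfn_v)$. With your assignment ($\mfs_v$ bottom, $\mfn_v$ top), Remark \ref{Rk---NSRep}, which you invoke for contact points, could never hold, and the trace would not be over $V_{\la(\mfe_v)}$.

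There are two genuine gaps.

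\emph{The diagonal entry.} Write $M_{kl}$ for the mixed Casimir between tensor factors $k,l$ of $V_\om\ts V\ts V$. The operators that are diagonal in the path basis are $M_{12}$ and $M_{13}+M_{23}$: the Casimir of the first two factors, and the total Casimir minus it.
\begin{itemize}
\item $M_{23}$ is a nonzero multiple of $\Theta$, so it is not in their span. If it were, $\Theta$ would be diagonal in the path basis, and the off-diagonal entry $\sin\th_v$ you need at crossing points would vanish.
\item So ``Casimir of $V_{\la_v}\ts V\ts V$ minus that of $V_{\la_v}\ts V$, and so on'' cannot produce $1/r_v$.
\item The missing ingredient is $\Theta M_{12}\Theta=M_{13}$. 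It turns $M_{23}=(M_{13}+M_{23})-\Theta M_{12}\Theta$ into a commutation relation between $\Theta$ and the two diagonal operators, namely \eqref{eq---Hecke}: $Z_2\Theta-\Theta Z_1=2$.
\item Pairing this with $h_{\om\sigma\varepsilon}$, and using that $Z_1,Z_2$ are self-adjoint with the path vectors as eigenvectors, yields \eqref{eq---CosEigLap}, i.e. $\cos\th=1/r_v$ (Proposition \ref{Prop---Reflection}).
\end{itemize}

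\emph{The sign at crossing points.} Isometry plus $\Theta^2=1$ only gives $\<h_{\om\eta\varepsilon},\Theta h_{\om\sigma\varepsilon}\>_{HS}=\pm d_\varepsilon\sin\th_v$. The statement demands $+d_\varepsilon\sin\th_v\ge0$ at every crossing point simultaneously.
\begin{itemize}
\item Your reason that ``each $h_e$ appears at both endpoints'' works against you. Flipping $h_e$ flips the value at both endpoints, so the product over vertices does not depend on the choice. Hence per-edge sign flips can never repair a wrong overall sign, and the existence of a good choice is a real constraint, not bookkeeping.
\item The $h_e$ must also depend only on the weight pair $(\la(L(e)),\la(R(e)))$. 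Otherwise, at a contact point the two paths are not literally the same vector, and you do not read $+\cos\th_v$.
\end{itemize}
The paper proves existence in Lemma \ref{lem---SignCoxeter}:
\begin{itemize}
\item On the cubical complex of weights in a convex region, $\psi(f)=\mathrm{sign}(\sin\th_f)$ is a $\Z/2\Z$-valued $2$-cochain. It changes by the coboundary $\partial\varphi$ when the $h_{\a\b}$ are multiplied by signs $\varphi$, as in \eqref{eq---Coboundary}.
\item $\psi$ is closed, by the braid relation $s_{12}s_{23}s_{12}=s_{23}s_{12}s_{23}$ on $\Hom_G(V_\b,V_\a\ts V^{\ts 3})$ applied on each $3$-cube.
\item $H^2$ of a convex set vanishes, so $\psi$ is a coboundary and a sign choice $\varphi$ making every $\sin\th_f>0$ exists.
\end{itemize}
Appealing to a ``Gelfand--Tsetlin normalisation'' without such an argument leaves this step unproved.
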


%\begin{lem}\label{lem---SignCoxeter} With the same assumption as in Lemma \ref{LEM---LemTraceVertex}, we can furthermore choose  the isomorphisms $H_{e,\partial_r\la(e)}\simeq\C$ for all $e\in E_+,$ such that \eqref{eq---IntertwinerContraction} holds true with $\<J^\la\#\tau_\mu\>_v^{\partial_r\la,l}\ge 0$ for any crossing point $v$.
% \end{lem}

 \begin{figure}
 
{\centering  \includegraphics[width=9cm,height=4.5cm]{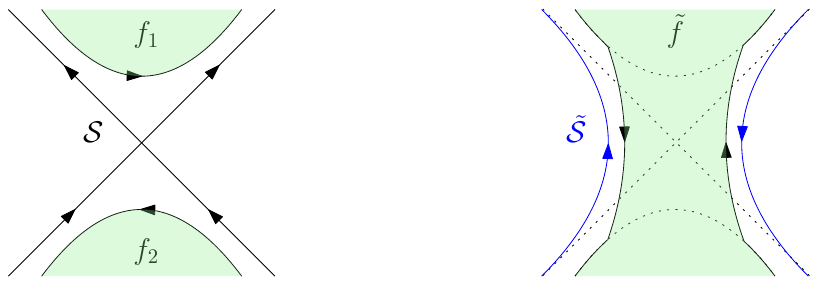}
}
 \caption{Merging operation at a crossing point of $\mathcal{S}$ used in Theorem \ref{THM---IRFOneSpinN}'s proof. \label{Fig--CrossingPMerge}  }
\end{figure}
\begin{proof}[Proof of Theorem \ref{THM---IRFOneSpinN}]  Let us consider first the second identity of \eqref{eq---DimEulerSwap}.  Assume $\la:F\to \hat{G}$ and  $f_1,f_2$ are two surfaces  with non-empty boundaries, given by faces of $\Gbb_\mathcal{S}$  meeting "north-south" at a contact point   $v\in V,$ that is, satisfying  $f_1=\mfn_v,f_2=\mfs_v.$

Desingularising  the intersection  at $v$ yields a  collection of loops $\tilde{\mathcal{S}}$ where $f_1,f_2$ are replaced by one surface $\tilde f$, obtained by gluing opposite sides of a rectangle along their boundaries as shown on {figure \ref{Fig--CrossingPMerge}}  so that  $\chi(\tilde f)=\chi(f_1\cup f_2)-1.$  Recalling \eqref{Rk---NSRep}, set  $\la(\tilde f)=\la(f_1)=\la(f_2).$ Either  $f_1=f_2$ and then   $m_{\tilde f}=m_{f_1}-2,$ or $f_1\not = f_2$ and  $m_{\tilde f}= m_{f_1}+m_{f_2}-2.$ In both 
cases
\[ \prod_{f\in F}  d_{\la(f)}^{\chi(f)-\frac{m_f}{2}}=d_{\la(\tilde f)}^{\chi(\tilde f)-\frac{m_{\tilde f}}{2} }    \prod_{f\in F\setminus\{ f_1,f_2\}} d_{\la(f)}^{\chi(f)-\frac{m_f}{2}}.  \]
Iterating this operation for all contact points of $V$ yields $\mathcal{S}_\la$ and thanks to remark \ref{Rk---CuttingLoopsNoAlternatingO},
\[\prod_{f\in F}  d_{\la(f)}^{\chi(f)-\frac{m_f}{2}}=\prod_{f\in F'}  d_{\la(f)}^{\chi(f)}=\mathcal{D}_{\mathcal{S},\la}.\]
Let us now prove the rest of the statement for  the case $G=U(N).$ The case $G=SU(N)$ follows from the exact same argument with $\Omega^*$ in place of $\Omega.$    

 Using  \eqref{eq---WilsonASSpinN} and combining Lemma \ref{eq---AdjCondSpinN} with  \ref{LEM---SpinNCharacFaces},   
\[\EE_{\mathcal{G}_\mathcal{S},U(N)}[W_{\mathcal{S}} W_\la]=\EE_{\mathcal{G}_\mathcal{S},U(N)}[S_{\mu,\tau}W_\la]= 1_{\la\in \Omega_N}  \frac{1}{\prod_{e\in E_+}d_{\partial_r\la(e)}}\prod_{v\in V}   \<J^\la\#I\>_v^{\partial_r\la,l}.  \]
Now 
\[\prod_{e\in E_+} d_{\partial_r\la(e)}^2=   \prod_{v\in V} \left(\prod_{e\in E_+ :\underline{e}=v} d_{\partial_r\la(e)} \prod_{e\in E_+ :\overline{e}=v} d_{\partial_r\la(e)} \right) =  \prod_{v\in V}  d_{\la(\mathfrak{n}_v)} d_{\la(\mathfrak{s}_v)} d^2_{\la(\mathfrak{e}_v)}. \]
We then find
\begin{equation}
 \frac{1}{\prod_{e\in E_+}d_{\partial_r\la(e)}} \prod_{v\in V}  d_{\la(\mathfrak{e}_v)}= \prod_{v\in V}\frac{1}{\sqrt{d_{\la(\mathfrak{s}_v)}d_{\la(\mathfrak{n}_v)}}} =\prod_{f\in F} \frac{1}{d_{\la(f)}^{m_f/2}}
\end{equation}
which together  with  Lemmas \ref{LEM---LemTraceVertex}  and \ref{lem---SignCoxeter} concludes this proof.
%and \ref{lem---SignCoxeter} concludes this proof.
\end{proof}

\begin{proof}[Proof of Lemma \ref{LEM---LemTraceVertex}] Let us fix for each $\a\in \hat{G}$ an invariant  inner product on $V_{\a}$ and endow $V_{\a}\ts V$  with the product inner products for each $\g\in \hat{G}.$  Denoting by $St$ the isomorphism class of the standard representation $V,$ for each choose $e\in E_+,$ 
choose $h_e\in  H_{\partial_l\la(e),St}^{\partial_r\la(e)}=\Hom_G(V_{\partial_r \la(e)},V_{\partial_l\la(e)}\ts V)$ with {$h_{e}^{*}h_{e}=\Id_{V_{\partial_r\la(e)}}.$ } Let us now fix $v\in V$ and set 
$\varepsilon=\la(\mfe_v),\eta=\la(\mfn_v),\omega=\la(\mfw_v),\sigma=\la(\mfs_v)$.  
Set 
%and $St$ for the isomorphism class of $V.$

 \begin{figure}
 
{\centering  \includegraphics[width=10cm,height=7cm]{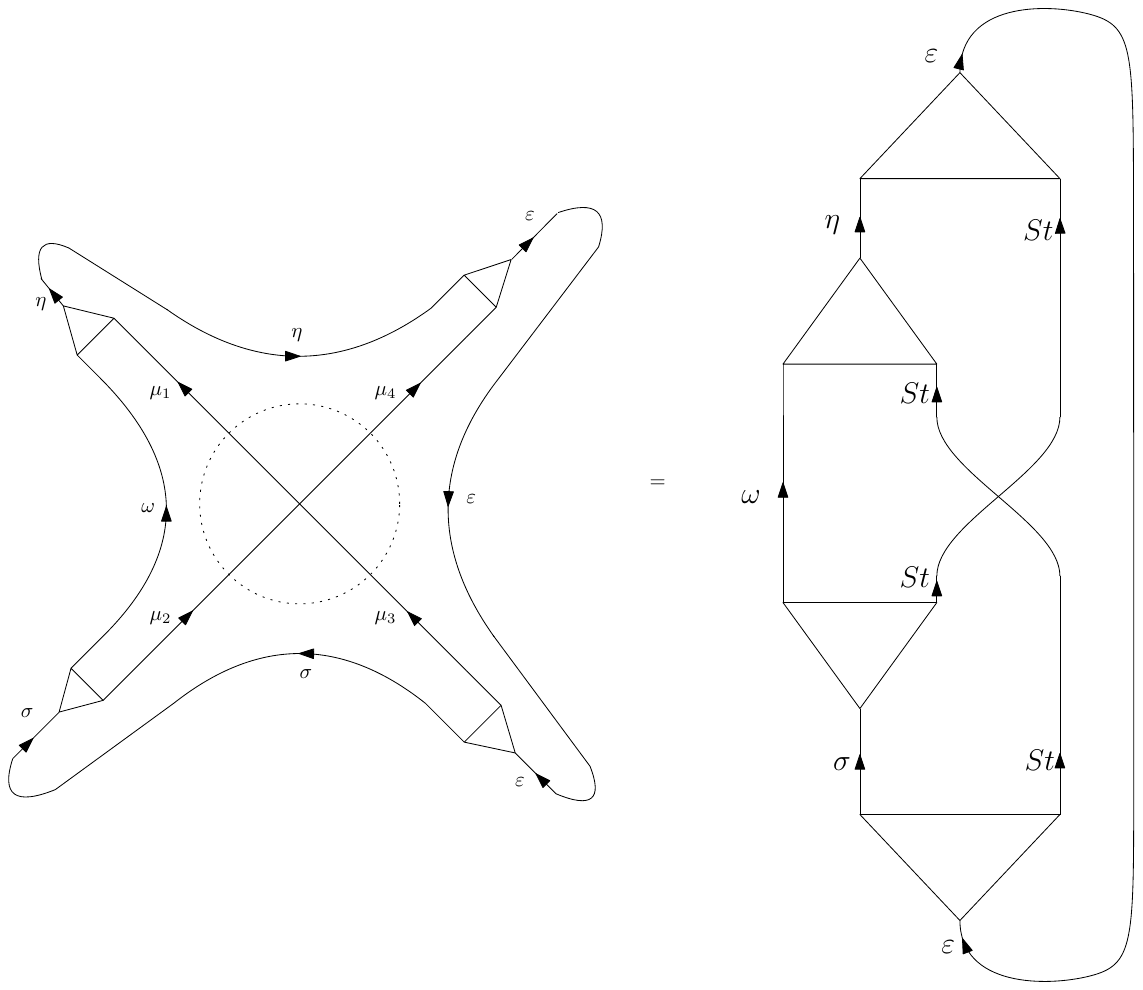}
}
 \caption{Following Figure \ref{Fig---PWAroundVertexMFree} with $I_v=\tau_v, \mu_1=\mu_2=\mu_3=\mu_4=St,$ graphical calculus of $\<J^\la\#\tau_\mu\>_v$ used in Lemma \ref{LEM---LemTraceVertex}'s proof.    \label{Fig--CoxeterDiag}  }
\end{figure}

Considering the graphical calculus of figure\footnote{and first expanding the left-hand side  choosing bases of $V_{\sigma},V_{\varepsilon},V_{\eta},V_{\omega}.$} \ref{Fig--CoxeterDiag},  using self-explanatory notations for the edges of $E_+$ meeting $v$,  
\begin{equation}
\< J^\la \#\tau_\mu \>_v^{\partial \la_r,l}=\Tr_{ V_{\varepsilon}}(h_{\mfn\mfe}^* \left(h_{\mfw\mfn}^*\ts \id_{V}\right)  \left(\id_{\om}\ts \tau\right)( h_{\mfw\mfs} \ts \id_{V}) h_{\mfs\mfe}).\label{eq---FlipCoeff}
\end{equation}
Using Proposition \ref{Prop---Reflection},   the last right-hand side is
\begin{equation}
\Tr_{ V_{\varepsilon}}( h_{\om\eta\varepsilon}^*  \left(\id_{\om}\ts \tau\right)h_{\om\sigma\varepsilon}) = \< h_{\omega\eta\varepsilon} ,s_{\om\varepsilon}(h_{\omega\sigma\varepsilon})\>_{HS}.\label{eq---FlipCoeffIP}
\end{equation}
Since   $s_{\om\varepsilon}$ is an isometry and $s_{\om\varepsilon}^2=\id_{\Hom_G(V_{\varepsilon},V_\om\ts V^{\ts 2})},$  it follows that $s=\pm\id$ when $m_{\om\varepsilon}=1,$ while  \eqref{eq---Reflection} holds for some $\th$ when $m_{\om\varepsilon}=2.$ The constraints on the first sign and on $\theta$ follows from Proposition \ref{Prop---Reflection} which together with \eqref{eq---NormalMultiplicityVectors} concludes this proof.  

%{\color{orange}\[\< J^\la \#\tau_\mu \>_v^{\partial \la_r,l}=\Tr_{ V_{\varepsilon}}(\left(P^{\eta\ts St}_{\varepsilon}\ts \id_{St}\right)\circ \left(P^{\om\ts St}_{\eta}\ts \id_{St}\right)\circ \left(\id_{\om}\ts \tau\right)\circ \left(P_{\omega\ts St}^\sigma \ts \id_{St}\right) \circ P_{\sigma\ts St}^{\varepsilon}). \]}
%Since 
%\[\Tr_{V_{\om}\ts St^{\ts 2}}()=.\]
%

\end{proof}
\begin{rmk} To prove Theorem \ref{thm---IRFformula} we only used that $s_{\om\varepsilon}$ is an isometry and not Proposition \ref{Prop---Reflection}.
\end{rmk}

\begin{lem} \label{Lem---BasisMultModule} Assume $G=U(N)$ or $SU(N).$      Let us write $\a\uparrow\b$ whenever $\a,\b\in \hat{G}$ with $\dim(\Hom_G(V_{\b}, V_{\a}\ts V))>0,$ and fix  then  $h_{\a\b}\in\Hom_G(V_{\b}, V_{\a}\ts V)\setminus\{0\}.$   

\begin{enumerate}
\item When $\a,\b\in \hat{G},$ 
\begin{equation}
m_{\a\b}=\dim(\Hom_G(V_{\b},V_{\a}\ts V^{\ts n}))=\#\{\a_1,\ldots,\a_{n-1}\in\hat{G}:\a\uparrow\a_1\ldots\a_{n-1}\uparrow \b \}  \label{eq---MultiTensn}
\end{equation}
when  $\sum_{i=1}^N(\b_i-\a_i)=n\ge 1$ and $0$ otherwise.  

\item For any $x,y\in \R^N,$ writing $x\prec y$ when $x_i\le y_i$ for all $i$, then for any $\a\not=\b\in \hat{G},$  $a\prec \b$ iff  $m_{\a\b}>0$.  When $\a,\b\in \hat{G}$ with $\sum_{i=1}^N(\b_i-\a_i)=2,$
 \begin{equation}
 m_{\om\varepsilon}  \in\{0,1,2\}.\label{eq---MultiTens2}
 \end{equation}
 \item Setting 
\begin{equation}
h_{\a_0\a_1\ldots\a_n}=   (h_{\a_{0}\a_{1}} \ts \id_{V^{\ts n-1}})\circ\ldots(h_{\a_{n-2}\a_{n-1}} \ts \id_{V})\circ h_{\a_{n-1}\a_n}\in\Hom_G(V_{\a_n},V_{\a_0}\ts V^{\ts n})\label{eq---LatticePath}
\end{equation}
for any $\a_0\uparrow\a_1\uparrow\ldots\uparrow \a_n$ defines a basis of  $\Hom_G(V_\varepsilon, V_{\om} \ts V^{\ts n}).$

\item  Assume  $V_{\a_0}$ endowed with an invariant inner product  for any $\a\in \hat{G}$ and define   inner products on  
$V_{\a_0}\ts V,\ldots,V_{\a_0}\ts V^{\ts n}$ taking products\footnote{setting e.g. $\<x \ts  v , y \ts w \>= \<x, y\> \< v,w\>$ for $x,y\in V_\a, v,w\in V$ where in the right-hand side $\<\cdot,\cdot\>$ denotes the inner products fixed on $V_\a$ and $V.$ }  for pure tensors.  Then $h_{\a\b}$ can be assumed to be an isometry for all $\a\uparrow \b$ with $\a\prec \a_0$ and   the  basis \eqref{eq---LatticePath} is orthogonal  for the Hilbert-Schmidt inner product with 
\begin{equation}
\|h_{\a_0 \a_1\ldots\a_n}\|_{HS}^2=d_{\a_n}\quad \forall \a_0\uparrow\a_1\uparrow\ldots\uparrow\a_n.\label{eq---NormalMultiplicityVectors}
\end{equation}
\end{enumerate}

\end{lem}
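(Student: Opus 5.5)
The plan is to deduce all four items from Pieri's rule, $V_\a\ts V=\bigoplus_i V_{\a+\om_i}$ with the sum over those $i$ for which $\a+\om_i\in\ZD$, together with Schur's lemma. Item (1) will come from induction on $n$. The center of $G$ acts on $V_\a\ts V^{\ts n}$ by the character of total weight $\sum_i\a_i+n$. For $U(N)$ this forces $\sum_i(\b_i-\a_i)=n$ whenever $m_{\a\b}>0$, and for $SU(N)$ the same holds modulo $N$ after choosing representatives in the quotient. We then write $V_\a\ts V^{\ts n}=(V_\a\ts V^{\ts n-1})\ts V$ and decompose $V_\a\ts V^{\ts n-1}=\bigoplus_{\a_{n-1}} m_{\a\a_{n-1}}V_{\a_{n-1}}$. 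Applying Pieri to each summand gives $m_{\a\b}=\sum_{\a_{n-1}\uparrow\b}m_{\a\a_{n-1}}$, and unwinding this recursion counts the lattice paths $\a\uparrow\a_1\uparrow\cdots\uparrow\b$. This gives \eqref{eq---MultiTensn}.

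Item (2) will follow from (1). A path $\a\uparrow\cdots\uparrow\b$ increases one coordinate at a time, so $m_{\a\b}>0$ implies $\a\prec\b$. Conversely, if $\a\prec\b$ with $\a\not=\b$, one can reach $\b$ from $\a$ by raising coordinates in increasing index order: at each step, raise the smallest index $i$ with current entry below $\b_i$. This choice keeps the vector non-increasing, because the entry at $i-1$ is already equal to $\b_{i-1}\ge\b_i$, which is strictly larger than the current entry at $i$. When $|\b-\a|=2$, the difference $\b-\a$ is either $2\om_i$, giving at most one path, or $\om_i+\om_j$ with $i\not=j$, giving at most two paths. Hence $m\in\{0,1,2\}$.

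For item (3), each $h_{\a\b}$ is an injective intertwiner. The composite $h_{\a_0\ldots\a_n}$ therefore embeds $V_{\a_n}$ into the summand of $V_{\a_0}\ts V^{\ts n}$ obtained by refining along the flag $V_{\a_0}\ts V^{\ts k}\subset\cdots$, that is, into the copy of $V_{\a_n}$ contained in $V_{\a_{n-1}}\ts V\subset (V_{\a_0}\ts V^{\ts n-1})\ts V$. Distinct paths land in images that form a direct sum, since the isotypic pieces at some intermediate stage $k$ are distinct ($\a_k\not=\a'_k$). So these maps are linearly independent, and their number equals $m_{\a_0\a_n}$ by (1); hence they form a basis.

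For item (4), $h_{\a\b}$ is determined up to a scalar by multiplicity freeness, so we normalize it to satisfy $h_{\a\b}^*h_{\a\b}=\Id_{V_\b}$ with respect to the fixed invariant inner products and their products. Composites of isometries tensored with identities are again isometries, so $h^*h=\Id_{V_{\a_n}}$ and $\|h\|_{HS}^2=\Tr(h^*h)=d_{\a_n}$. For orthogonality, take two distinct paths with the same endpoints and let $k$ be the first index with $\a_k\not=\a'_k$. Then $h'^*h$ factors through $(h_{\a_{k-1}\a'_k}^*h_{\a_{k-1}\a_k}\ts\id)$, and $h_{\a_{k-1}\a'_k}^*h_{\a_{k-1}\a_k}\in\Hom_G(V_{\a_k},V_{\a'_k})$, which vanishes by Schur's lemma. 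The main obstacle is keeping the orthogonality and independence argument tidy: one must check that the first point of divergence gives orthogonal images in the invariant inner product, which holds because distinct isotypic components are orthogonal.
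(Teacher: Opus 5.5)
Your proposal is correct and follows the paper's route. The paper only says that the multiplicity count comes from applying Pieri's rule $n$ times and leaves the rest as standard; your induction via Pieri's rule, the greedy lattice path for (2), and the Schur-lemma arguments for independence, normalisation and orthogonality supply exactly those omitted details, with the $SU(N)$ statements read on suitable representatives in $\ZD$ as the paper does when reducing to $U(N)$ in the proof of Proposition \ref{Prop---Reflection}.
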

\begin{proof} The multiplicity \eqref{eq---MultiTens2} follows from applying $n$ times Pieri's rule. The rest of the proof is standard and  details are left to the reader.
\end{proof}

\begin{prop}  \label{Prop---Reflection}Assume $\a_*\in\hat{G}$ fixed and  $(h_{\a\b})_{\a\uparrow \b,\a_*\prec \a}$ as in Lemma \ref{Lem---BasisMultModule} and for any $\om,\varepsilon$ with $\sum_{i=1}^N(\varepsilon_i-\om_i)=2$ and $m_{\om\varepsilon}>0,$ define $s_{\om\varepsilon}\in\Hom(\Hom_G(V_{\varepsilon},V_\om\ts V^{\ts 2}))$ setting    
\begin{equation}
s_{\om\varepsilon}(h)=(\id_{\om}\ts \tau )\circ h\quad\forall h\in \Hom_G(V_{\varepsilon},V_\om\ts V^{\ts 2}).
\end{equation}
\begin{enumerate}[label=(\roman*)]
\item When $m_{\om\varepsilon}=1$ and  $\g\in\hat{G}$  with $\om\uparrow\g\uparrow\varepsilon,$  
\begin{equation}
 s_{\om\varepsilon}( h_{\om\g\varepsilon})=  {\epsilon} h_{\om\g\varepsilon}
\end{equation}
with $\epsilon\in\{-1,1\}$ given by
\begin{equation}
\epsilon= \frac{1}{(\varepsilon+\rho)_j-(\sigma+\rho)_i},
\end{equation}
where  $i,j\in \{1,\ldots,N\}$  are  such that 
$\eta=\om+\om_j,\sigma=\om+\om_i, \varepsilon=\om+\om_i+\om_j.$   
%%{\color{orange}with \begin{equation}
%%\epsilon=\left\{\begin{array}{ll} 1&\text{when}\quad\varepsilon=\om+2\om_i \quad\text{for some }i, \\&\\-1&\text{otherwise.} 
%%\end{array}\right.
%%\end{equation}} 
\item When $m_{\om\varepsilon}=2,$ $f=(\om,\sigma,\eta,\varepsilon)$ with  $\eta,\sigma\in\hat{G,}$  $\eta\not=\sigma,$ $\om\uparrow \eta \uparrow \varepsilon $ and $\om\uparrow \sigma \uparrow \varepsilon,$ 
\begin{equation}
s_{\om\varepsilon}( h_{\om\sigma\varepsilon})=  \cos(\th_f) h_{\om \sigma \varepsilon}+ \sin(\th_f) h_{\om\eta\varepsilon}\label{eq---Reflection}
\end{equation}
for some $\theta_f\in [0,2\pi)$ such that 
\begin{equation}
\cos(\theta_f)=\frac{1}{(\varepsilon+\rho)_j-(\sigma+\rho)_i}.\label{eq---ReflectionCos}
\end{equation}
and  $i,j\in \{1,\ldots,N\}$  are  such that 
$\eta=\om+\om_j,\sigma=\om+\om_i, \varepsilon=\om+\om_i+\om_j.$   
\end{enumerate}\end{prop}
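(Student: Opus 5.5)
The plan is to compute the action of the flip $\id_\om\ts\tau$ on the lattice-path basis through the Casimir/Laplacian, in the spirit of the Jucys--Murphy approach of \cite{Naz}. On $V_\om\ts V\ts V$ write $C_{0}$, $C_{01}$, $C_{012}$ for the Casimir operators (with the normalisation \eqref{eq---InnerProdLieA}) of the action of $U(N)$ on the first factor, on the first two factors, and on all three factors. Denote by $\Omega_{ab}$ the mixed (polarisation) term between factors $a$ and $b$. On $V\ts V$ one has $\Omega_{12}=\frac{1}{N}\tau$ up to the normalisation used in \eqref{eq---SWSpecialU}; the $U(N)$ Casimir on $V^{\ts 2}$ is indeed built from the flip, exactly as in the identity preceding \eqref{eq---SWSpecialU}. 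The standard identities are $C_{01}=C_0+C_1+2\Omega_{01}$ and $C_{012}=C_{01}+C_2+2\Omega_{02}+2\Omega_{12}$. Since every operator here is $G$-equivariant, they act on $\Hom_G(V_\varepsilon,V_\om\ts V^{\ts 2})$ by post-composition.

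The key steps run as follows.
\begin{enumerate}
\item Identify the eigenvalues. The basis vector $h_{\om\g\varepsilon}$ factors through the intermediate irrep $V_\g\subset V_\om\ts V$. Hence it is an eigenvector of $C_{01}$ with eigenvalue $-\mfc_\g$, and every vector in the space is an eigenvector of $C_{012}$ with eigenvalue $-\mfc_\varepsilon$. The Jucys--Murphy-type operator $X_1:=\Omega_{01}$ is therefore diagonal in the basis $\{h_{\om\g\varepsilon}\}$, with eigenvalue a content $c_1(\g)$. Using $N\mfc_\a=\|\a+\rho\|^2-\|\rho\|^2$, if $\g=\om+\om_i$ then $c_1=(\om+\rho)_i+\tfrac12$ up to a common additive constant and factor $1/N$.
\item Do the same for $X_2:=\Omega_{02}+\Omega_{12}$. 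Then $X_1+X_2$ is the scalar $\tfrac12(C_{012}-C_0-2C_V)$. If $\varepsilon=\om+\om_i+\om_j$, the eigenvalue of $X_2$ on $h_{\om\g\varepsilon}$ is $(\varepsilon+\rho)_j$-type when $\g=\om+\om_i$, in the same normalisation.
\item Use the degenerate affine Hecke relation. Conjugating by the flip, $s:=\id_\om\ts\tau$ satisfies $sX_1s=X_2-\Omega_{12}$, so $sX_1-X_2s=-\frac1N\id$ up to sign conventions; this is checked directly from $\tau\Omega_{01}\tau=\Omega_{02}$ and $\tau\,\Omega_{12}=\Omega_{12}\tau=\tfrac1N$ on $V^{\ts 2}$. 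Apply the relation to $h_{\om\sigma\varepsilon}$ and pair with $h_{\om\sigma\varepsilon}$, using that the basis is orthogonal by \eqref{eq---NormalMultiplicityVectors}. This gives $\<h_{\om\sigma\varepsilon},s h_{\om\sigma\varepsilon}\>\,(c_1(\sigma)-c_2(\sigma))=\pm\|h_{\om\sigma\varepsilon}\|^2$. Hence the diagonal coefficient equals $\frac{1}{(\varepsilon+\rho)_j-(\sigma+\rho)_i}$, which is \eqref{eq---ReflectionCos}.
\item Conclude using only that $s$ is a self-adjoint isometric involution, with respect to the Hilbert--Schmidt structure coming from product inner products on $V^{\ts 2}$.
\begin{enumerate}
\item[(i)] In case (i) the space is a line, so $s=\epsilon\,\id$ with $\epsilon=\pm1$, and step 3 identifies $\epsilon$ with the stated formula.
\item[(ii)] In case (ii), with both basis vectors normalised to the same norm $d_\varepsilon$, $s$ is a real reflection. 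Its matrix is therefore $\begin{pmatrix}\cos\th&\sin\th\\ \sin\th&-\cos\th\end{pmatrix}$, after possibly rescaling $h_{\om\eta\varepsilon}$ by a unimodular phase to make the off-diagonal entry real and nonnegative. This yields \eqref{eq---Reflection}.
\end{enumerate}
\end{enumerate}

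The main obstacle is getting the normalisations and signs exactly right. The content values have to be read off from $\mfc_\a$ with the $\rho$-shift, matching $(\varepsilon+\rho)_j-(\sigma+\rho)_i$ with the correct $\pm1$ offsets. The commutation relation between $s$ and $X_1,X_2$ has to carry the correct sign, since the statement's formula has no $\pm$ freedom in the denominator.

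I would first check everything on the simplest case $\om=0$, i.e.\ $V^{\ts2}=\mathrm{Sym}^2\oplus\Lambda^2$. There $\varepsilon=2\om_1$ must give $\epsilon=+1$, and $\varepsilon=\om_1+\om_2$ must give $-1$. These fix the conventions. The $SU(N)$ case follows verbatim, since shifting by constants does not change the differences.
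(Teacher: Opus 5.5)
Your proposal is correct and is essentially the paper's own proof. The paper's $Z_1,Z_2$ are differences of the Laplacians acting on one, two and three tensor factors, shifted by $N$, and are constant multiples of your $X_1=\Omega_{01}$ and $X_2=\Omega_{02}+\Omega_{12}$; its identity \eqref{eq---Hecke} is your degenerate affine Hecke relation, and it concludes exactly as you plan, from the eigenvalues on the path basis and the fact that $s_{\om\varepsilon}$ is an isometric involution.

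For the record, with the normalisation \eqref{eq---InnerProdLieA} one has $\Omega_{12}=-\frac1N\tau$, hence $sX_1-X_2s=+\frac1N\,\id$, while the weight-dependent parts of the eigenvalues of $X_1,X_2$ carry a factor $-\frac1N$; your two sign slips therefore cancel. Your unimodular rescaling of $h_{\om\eta\varepsilon}$ in case (ii) makes explicit a phase normalisation that the paper's proof leaves implicit.
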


\begin{proof}[Proof of Prop. \ref{Prop---Reflection}]   Whenever  $\a,\b\in \ZD/\Z$  with $\a\uparrow \b$ and $\tilde \a$ projects onto  $\a$,   there is a unique $\tilde \b\in\ZD$ with projection $\b$ and $\tilde \a\uparrow \tilde \b.$ Therefore, we can assume without loss of generality  that $G=U(N).$ Consider the  representations  $(\rho_i)_{1\le i\le 3}$ of $G$ with vector space $V_{\varepsilon}\ts V^{\ts 2}$ acting on one, two or three tensors, namely  with  
\[\rho_1(g)=\rho_\om(g)\ts  \id_V^{\ts 2},\rho_2(g)=\rho_\om(g)\ts g\ts \id_V\quad\text{and}\quad\rho_3(g)=\rho_\om(g)\ts g\ts g.\]
Consider the endomorphisms 
\[Y_1=\rho_1(\Delta_{\mfu(N)})-\rho_2(\Delta_{\mfu(N)}), Y_2=\rho_2(\Delta_{\mfu(N)})-\rho_3(\Delta_{\mfu(N)})\in\Hom(V_\om\ts V^{\ts 2}) \]
where $\Delta_{\mfu(N)}$ is the Laplacian on $\mfu(N)$ with inner product\footnote{here $\Tr_N$ is the standard non-normalised trace on $M_N(\C).$} 
\[\<X,Y\>_N=\Tr_N(X^*Y)\quad\forall X,Y\in \mfu(N).\]
Then choosing an orthonormal basis $(X_i)_{1\le i\le N^2}$ of $\mfu(N),$ 
\begin{equation}
\sum_{i=1}^{N^2}X_i^2=-N\Id_V\quad\text{and}\quad \sum_{i=1}^{N^2}X_i\ts X_i =- \tau
\end{equation}
while\footnote{In the expressions just above and below the tensor products are over $\C$.}
\begin{equation}
Y_1 (u \ts v\ts w)= -\sum_{i=1}^{N^2}    \left( u\ts X_i^2 v\ts w  +     2  (\rho_\om(X_i). u)\ts X_i v\ts w \right)
 \end{equation}
 and 
 \begin{equation}
Y_2 (u \ts v\ts w)= -\sum_{i=1}^{N^2}    \left( u\ts  v\ts  X_i^2 w  +     2  (\rho_\om(X_i). u)\ts v\ts  X_i  w  +2  u\ts X_i v\ts  X_i  w  \right).
 \end{equation}
Setting 
\begin{equation}
Z_i=Y_i -N\Id_{f} \quad\text{for}\,i\in\{1,2\}\quad\text{and}\quad\Theta=\id_\om\ts \tau
\end{equation}
the three last equations yield  $ Z_2=2 \Theta+  \Theta Z_1 \Theta  $ or equivalently
 \begin{equation}
Z_2\Theta-\Theta Z_1=2.  \label{eq---Hecke}
 \end{equation}
 Now since $\rho_\a(\Delta_{\mfu(N)})=(\|\a+\rho\|^2-\|\rho\|^2)\Id_{V_\a}$ for any $\a\in \ZD,  $
%  for  $\g$ with $\om\uparrow \g\uparrow \varepsilon,$
 \begin{equation}
 Z_1 h_{\om \sigma \varepsilon}= (\|\sigma+\rho  \|^2 -   \|\omega+\rho\|^2-N)h_{\om \sigma \varepsilon}\quad\text{and}\quad Z_2 h_{\om \sigma \varepsilon}= (\|\varepsilon+\rho  \|^2 -   \| \sigma+\rho\|^2-N)h_{\om \sigma \varepsilon}\label{eq---DiagLaplaPath}
 \end{equation}
 Since  $s_{\om\varepsilon}$ is an isometry with $s_{\om\varepsilon}^2=\id$, when  $m_{\om\varepsilon}=2, \sigma\not=\eta$ and  $\Theta h_{\om\sigma\varepsilon}=   s_{\om\varepsilon}(h_{\om\sigma\varepsilon})=\cos(\th)h_{\om\sigma\varepsilon}+\sin(\th)h_{\om\eta\varepsilon}$ for some $\th\in [0,2\pi).$  Using  \eqref{eq---Hecke} and \eqref{eq---DiagLaplaPath}, we conclude that 
\begin{equation}
\cos(\th) (\|\varepsilon+\rho  \|^2+\|\om+\rho  \|^2 -2   \| \sigma+\rho\|^2) = 2 \label{eq---CosEigLap}
\end{equation}
 and \eqref{eq---ReflectionCos} follows.
%\[\rho_1(g).  u\ts  v\ts w =  \rho_\om(g).u\ts v\ts w,  \rho_2(g).  u\ts  v\ts w =  \rho_\om(u)\ts g.v \ts w \quad\text{and}\quad  \rho_3(g).  u\ts  v\ts w= \rho_\om(g). u\ts g.v\ts g.w. \]
When  $m_{\om\varepsilon}=1,$ $\sigma=\eta,$ and using the same equations, $s_{\om\varepsilon}(h_{\omega\sigma\varepsilon})=\epsilon h_{\omega\sigma\varepsilon}$ with 
\[\epsilon (\|\varepsilon+\rho  \|^2+\|\om+\rho  \|^2 -2   \| \sigma+\rho\|^2) = 2.\]%{\color{orange}Case $m_{\om\varepsilon}=1$}
\end{proof}
 
For any $x\in \R^N$ and $k\ge 1,$ let us set $A_{x,k}=\{ y\in\R^N :x\prec y, \sum_{i}(y_i-x_i)\le k\}.$ 
 \begin{lem}\label{lem---SignCoxeter}   In Proposition \ref{Prop---Reflection}, for any $k\ge 1,$  it is furthermore possible to choose $(h_{\a\b})_{\a\uparrow \b,\a\in A_{\a_*,k}}$  such that  
 \[\theta_{f}\in(0,\pi) \quad\forall 
 f=(\om, \sigma,\eta,\varepsilon)\]
  with $ \om\uparrow \sigma\uparrow \varepsilon,\om\uparrow\eta\uparrow\varepsilon,\eta\not=\sigma$ and $\a_*\prec \om.$
 \end{lem}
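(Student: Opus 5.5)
The plan is a sign-fixing argument. The multiplicity vectors $h_{\a\b}$ are only determined up to a phase, and the previous normalisation (isometries) leaves them determined up to a unimodular scalar, so there is room to fix signs. First I will restrict these scalars to be real. To do this, I fix an invariant real structure, namely Gelfand--Tsetlin bases realised through the branching $U(N)\supset U(N-1)\supset\cdots$, in which all $h_{\a\b}$ can be chosen with real matrix coefficients. Then $s_{\om\varepsilon}$ is a real orthogonal involution on the real span of $h_{\om\sigma\varepsilon},h_{\om\eta\varepsilon}$. By Proposition \ref{Prop---Reflection}, when $m_{\om\varepsilon}=2$ we know $\cos\theta_f=1/r$ with $r\in\Z$ and $|r|\ge 2$. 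Indeed $r=\pm1$ would force $s=\pm\id$ on a two-dimensional space, contradicting the existence of two distinct paths. Hence $\sin\theta_f\neq0$, and the only freedom left is the sign of $\sin\theta_f$, i.e.\ whether $\theta_f\in(0,\pi)$ or $(\pi,2\pi)$. Changing $h_{\om\eta\varepsilon}\mapsto -h_{\om\eta\varepsilon}$ flips this sign.

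Next I must check that these sign flips can be realised coherently by changing signs of the elementary $h_{\a\b}$ only. The difficulty is that the vectors $h_{\om\eta\varepsilon}=(h_{\om\eta}\ts\id)h_{\eta\varepsilon}$ are products, and each elementary $h_{\a\b}$ occurs in several squares $f$. I will index signs $\epsilon_{\a\b}\in\{\pm1\}$ on the edges of the lattice $A_{\a_*,k}$ (edges $\a\uparrow\a+\om_i$). The sign of $\sin\theta_f$ then changes by $\epsilon_{\om\eta}\epsilon_{\eta\varepsilon}\epsilon_{\om\sigma}\epsilon_{\sigma\varepsilon}$, the product around the square. So the requirement is a $\{\pm1\}$-valued $2$-cochain on the squares that must be a coboundary. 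On the cubical complex given by the Young-type lattice (a subcomplex of $\Z^N$ truncated by $\ZD$ and by level $\le k$), a cochain is a coboundary exactly when it is closed, i.e.\ its product over the six faces of every $3$-cube is $1$, provided the complex is simply connected in the appropriate sense. The complex is a convex region of a product of chains, hence contractible, so $H^2=0$.

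The main obstacle is verifying the cocycle condition on $3$-cubes $\om\to\om+\om_i+\om_j+\om_l$. I would first try to derive it from the braid relation $\Theta_1\Theta_2\Theta_1=\Theta_2\Theta_1\Theta_2$ acting on $\Hom_G(V_{\om+\om_i+\om_j+\om_l},V_\om\ts V^{\ts3})$, using the bases \eqref{eq---LatticePath} (a Yang--Baxter/Young-orthogonal-form computation). In Young's orthogonal form with all off-diagonal entries $\sqrt{1-r^{-2}}>0$, the braid relations do hold, and this shows that the prescription ``all sines positive'' is consistent. Alternatively one can avoid cohomology by choosing $h_{\a\b}$ to be the Gelfand--Tsetlin/Jucys--Murphy normalised maps for which Young's seminormal form is known to have positive off-diagonal coefficients. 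The truncation $A_{\a_*,k}$ serves only to keep the set finite.
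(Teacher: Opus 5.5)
Your outline follows the paper's proof. You take a $\{\pm1\}$-valued $1$-cochain on the edges $\alpha\uparrow\beta$ and note that rescaling the $h_{\alpha\beta}$ by it multiplies $\mathrm{sign}(\sin\theta_f)$ by its coboundary. You then use vanishing of $H^2$ of the truncated cube complex, and closedness of $\psi(f)=\mathrm{sign}(\sin\theta_f)$ on $3$-cells coming from the braid relation for the flips on $\Hom_G(V_\beta,V_\alpha\otimes V^{\otimes 3})$. Your preliminary reduction to real coefficients is a point the paper leaves implicit in Proposition~\ref{Prop---Reflection}.

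The gap is in the closedness step. It is the only place where something must be computed, and you flag it yourself as the main obstacle, but you do not carry it out, and what you offer instead does not prove it. Here is the missing computation. For a $3$-cell with vertices $\alpha,\alpha_1,\alpha_2,\alpha_3,\alpha_{12},\alpha_{13},\alpha_{23},\beta$, factor the path vectors as $h_{\alpha\alpha_i\alpha_{ij}\beta}=(h_{\alpha\alpha_i\alpha_{ij}}\otimes\id_V)\,h_{\alpha_{ij}\beta}=(h_{\alpha\alpha_i}\otimes\id_{V^{\otimes 2}})\,h_{\alpha_i\alpha_{ij}\beta}$. Then $s_{12}$ only mixes two paths differing at the first intermediate vertex, with the coefficients of the corresponding square through $\alpha$. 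Likewise $s_{23}$ only mixes two paths differing at the second intermediate vertex, with the coefficients of the corresponding square through $\beta$; this is \eqref{eq---OrthoFlip}--\eqref{eq---RestricIP3}. The paths $h_{\alpha\alpha_1\alpha_{13}\beta}$ and $h_{\alpha\alpha_2\alpha_{23}\beta}$ are antipodal in the hexagon of six paths. Hence $\langle s_{12}s_{23}s_{12}h_{\alpha\alpha_1\alpha_{13}\beta},h_{\alpha\alpha_2\alpha_{23}\beta}\rangle_{HS}$ reduces to a single product of three off-diagonal coefficients, from the squares $\alpha\alpha_1\alpha_3\alpha_{13}$, $\alpha_3\alpha_{13}\alpha_{23}\beta$ and $\alpha\alpha_2\alpha_3\alpha_{23}$. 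The same coefficient of $s_{23}s_{12}s_{23}$ is the product over the three remaining squares. All factors are non-zero, so equating the two sides gives $\partial\psi=1$ on every $3$-cell.

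Your remark that Young's orthogonal form with positive off-diagonal entries satisfies the braid relations does not replace this. It exhibits one sign pattern compatible with the braid relations. It says nothing about the pattern $\psi$ of your given $h_{\alpha\beta}$ unless you know that the braid relation forces the product of the six signs, which is exactly the single-term computation above. Your alternative of Gelfand--Tsetlin normalised maps with positive seminormal form presupposes the lemma. The whole issue is whether such a normalisation can be realised by edge-level choices compatible with the factorisation \eqref{eq---LatticePath}.

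Finally, your justification of $|r|\ge 2$ is wrong. $s_{\omega\varepsilon}$ is never $\pm\id$ on the two-dimensional space: swapping $\sigma$ and $\eta$ changes $r$ into $-r$, so its diagonal entries in the path basis are $1/r$ and $-1/r$. For $r=\pm1$ it would just be diagonal with entries $\pm1,\mp1$, which is no contradiction. The correct reason is dominance. We have $r=(\omega+\rho)_j-(\omega+\rho)_i$, where $\sigma$ and $\eta$ raise the coordinates $i\neq j$. Then $|r|=1$ forces $|i-j|=1$ and equality of the $i$-th and $j$-th coordinates of $\omega$. In that case one of $\sigma,\eta$ is not dominant, contradicting $m_{\omega\varepsilon}=2$.
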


 \begin{proof}  Assume without loss of generality that $G=U(N),$  $\a_*\in \hat{G},$ $ k\ge 1$ are  fixed and a choice of $(h_{\a\b})_{\a\uparrow \b,\a_*\prec \a}$ has been made as in Proposition \ref{Prop---Reflection}.  Let us reformulate the above question in terms of  cohomology.  Identify $\hat{G}$ with $\rho+\ZD=\Z_{sym}^N\cap \mathcal{W}$ where $\Z_{sym}=\Z+\lfloor \frac{N+1}{2}\rfloor$ and $\mathcal{W}=\{x\in \R^N:x_1<\ldots<x_N\}.$ Consider the 
 $N$-dimensional complex  CW-complex  $X_{\a_*,k}$ whose   cells are given by  the  cells of the lattice $\rho+\Z^N$ that are strictly included in the interior of the compact, convex set  $ \scrC_{\a_*,k}=\overline{W}\cap A_{\a_*,k}.$   
Identifying   $\rho\cap \ZD$ with $\hat G,$   $1$-cells of $X_{\a_*,k}$   are in bijection with pairs $(\a,\b)$ with $\a\in A_{\a_*,k}$ and  $\a\uparrow\b,$ while $2$-cells are in bijections with tuples $f=(\om,\sigma,\eta,\varepsilon)$  with $\om\in 
 A_{\a_*,k}, \om\uparrow \sigma\uparrow \varepsilon$, $\om\uparrow \eta\uparrow \varepsilon$ and $\eta\not=\sigma.$ Consider now the co-chain complex of $X_{\a_*,k}, $ $\partial: 0\to C^0(X_{\a_*,k},\Z/2\Z)\to C^1(X_{\a_*,k},\Z/2\Z)\to\ldots C^N(X_{\a_*,k},\Z/2\Z)\to0. $ Writing a $1$-cochain in  $C^1(X_{\a_*,k},\Z/2\Z)$  as $(\varphi(\a\b))_{\a\uparrow \b,\a\in A_{\a_*,k}}$ with $\varphi(\a\b)\in\{-1,1\},$ consider 
\[\tilde h_{\a \b}= \varphi(\a \b) h_{\a\b}.\]
Then for any  $2$-cell $f=(\om,\sigma,\eta,\varepsilon),$ $\partial \varphi(f)=\varphi(\om\eta)+\varphi(\eta\varepsilon)+\varphi(\sigma\varepsilon)+\varphi(\om\sigma)$ and
\begin{equation}
s_{\om\varepsilon}(\tilde h_{\om\sigma\varepsilon})=\cos(\th_f)\tilde h_{\om\sigma \varepsilon}+ \partial \varphi (f)\sin(\th_f) \tilde h_{\omega \eta \varepsilon}.\label{eq---Coboundary}
\end{equation}
When $f=(\om,\sigma,\eta,\varepsilon)$ is a $2$-cell of $X_{\a_*,k},$ since $m_{\om,\varepsilon}=2,$ by  \eqref{eq---ReflectionCos},   $\sin(\th_f)\not=0$.  Define a $2$-cochain setting 
\[ \psi(f)= \mathrm{sign}(\sin(\th_f)) \]
whenever $f=(\om,\sigma,\eta,\varepsilon)$ is a $2$-face.   By \eqref{eq---Coboundary}, it is enough to show that $\psi$ is co-exact.  As $\scrC_{\a_*,k}$ retracts to $X_{\a_*,k},$  $H^2(X_{\a_*,k},,\Z/2\Z)=H^2(\scrC_{\a_*,k},\Z/2\Z)=\{0\}$  and it is enough to show that $\psi$ is closed.

Consider a  $3$-cell $c$ of $X_{\a_*,k}.$ Label its vertices $\a, \a_{1},\a_2,\a_3,\a_{12},\a_{13},\a_{23},\b\in  \rho+\ZD$  such that $\sum_{l=1}^N (\b_l-\a_l) =3,$   $\a\uparrow \a_i$ for any $i$ and $ \a_i\uparrow \a_{\min(i,j),\max(i,j)} \uparrow  \b$ for  $i\not=j. $ 
%$\a\uparrow \a_i, \b_i\uparrow \b$ for  $1\le i,j\le 3, $ 
%
%
%
%while for $1\le i\le 2,$ $\a_i\uparrow  \b_i, \a_i\uparrow \b_3, \a_3\uparrow \b_i$ 
Let  $s_{12},s_{23}$ be the  flip maps acting on  $V_{\a}\ts V^{\ts 3},$ mapping $t\ts v_1\ts v_2\ts v_3$ to respectively  $t\ts v_2\ts v_1\ts v_3 $ and $t\ts v_1\ts v_3\ts v_2$ and denote abusively by the same symbol the elements of $\Hom(\Hom_G(V_{\b},V_{\a}\ts V^{\ts 3}))$ acting by post-composition.  Then for $i\not=j,$
\begin{equation}
\<s_{12}h_{\a\a_{i}\a_{ij}\b}, h_{\g_{12}} \>_{HS}=0\quad\text{and}\quad \<s_{23}h_{\a\a_{i}\a_{ij}\b}, h_{\g_{23}} \>_{HS}=0\label{eq---OrthoFlip}
\end{equation}
except when respectively  $\g_{12}\in\{\a\a_i\a_{i,j}\b,\a\a_{j}\a_{i,j}\b\}$ and $\g_{23}\in \{\a\a_i\a_{i,j}\b,\a\a_{i}\a_{i,l}\b\}$ with $l\not= i,j.$ Then \begin{equation}
\<s_{12}h_{\a\a_{i}\a_{ij}\b}, h_{\a\a_{j}\a_{i,j}\b} \>_{HS}= \<s_{\a \a_{i,j}} h_{\a\a_{i}\a_{ij}}, h_{\a\a_{j}\a_{ij}} \>_{HS} \label{eq---RestricIP2}
\end{equation}
and
\begin{equation}
\<s_{23}h_{\a\a_{i}\a_{ij}\b}, h_{\a\a_{i}\a_{i,l}\b} \>_{HS}=\<s_{\a_{i}\b}h_{\a_{i}\a_{ij}\b}, h_{\a_{i}\a_{i,l}\b} \>_{HS}.
\label{eq---RestricIP3}\end{equation}
 Since $s_{12}s_{23}s_{12}=s_{23}s_{12}s_{23},$ it follows that  

\[
 \<s_{\a \a_{13}}h_{\a \a_{1}\a_{13}}, h_{\a \a_{3}\a_{13}} \>_{HS}\<s_{\a_{3}\b}h_{\a_{3}\a_{13}\b}, h_{\a_{3}\a_{23}\b} \>_{HS} \<s_{\a \a_{23}}h_{\a \a_{1}\a_{23}}, h_{\a \a_{2}\a_{23}} \>_{HS}
 \]
and 
\[
\<s_{\a_{1}\b}h_{\a_{1}\a_{13}\b}, h_{\a_{1}\a_{12}\b} \>_{HS}\<s_{\a\a_{12}}h_{\a \a_1\a_{12}}, h_{\a \a_2\a_{12}} \>_{HS}\<s_{\a_{2}\b}h_{\a_{2}\a_{12}\b}, h_{\a_{2}\a_{2,3}\b} \>_{HS}
 \]
are both equal to 
\[\<s_{12}s_{23}s_{12}h_{\a\a_1\a_{13}\b},h_{\a\a_2\a_{23}\b}\>.\]
In particular, the product of their signs is $1$ and $\partial \psi(c)= 1.$
 \end{proof}
 The last part of the above argument is a variation of \cite[Thm 3.12]{Naz}.

 \begin{rmk}  As argued in \cite{IRFThierry}, the Makeenko-Migdal equations \cite{LevMF,DHKMM,DGHKMM} can be proved  from \ref{THM---IRFOneSpinN}. The curious reader can  indeed check it here using equation \eqref{eq---CosEigLap}.  
 \end{rmk}

  \section{Duality with the Brauer algebra}
  \label{Section---RepBrauer}
  The purpose of this section is to give an expression for the projection on traceless tensors in terms of the walled Brauer algebra that satisfies the bound \eqref{eq---BoundHarmonicProj}  on the coefficient of each 
  diagram. Besides we also give such an expression for  projections onto its isotopic $U(N)$-components.  We shall  give here expressions depending on the  Jucys-Murphy elements of $\mathcal{B}_{n,m}(N).$

Assume $n,m\ge 1,$   recall that 
\[X_i= (1\,i )+(2\, i)+\ldots + (i-1\,i)\quad\text{for}\quad i\le n,\] 
  and set  for  $ 1\le j\le m,$
\[Y_{j}=N- \<1 \, n+j\>-\<2 \, n+j\>-\ldots - \<n \, n+j\>+(n+1\,  n+j )+(n+2 \, n+j)+\ldots + (n+j-1\,n+j). \] 
Let us also define their "symmetric" versions 
\[\overline{X}_{j}= (n+1\,  n+j )+(n+2 \, n+j)+\ldots + (n+j-1\,n+j) \quad\text{for}\quad j\le m\]
and for  $ 1\le i\le n,$
\[\overline{Y}_{i}=N- \<i \, n+1\>-\<i\, n+2\>-\ldots - \<n \, n+m\>+(1\,  i)+(2 \, i)+\ldots + (i-1\,i),\]
with the convention $X_1=\overline{X}_1=0, Y_{1}=N- \<1 \, n+j\>-\<2 \, n+j\>-\ldots - \<n \, n+j\>$ and $\overline{Y}_{1}=N- \<1 \, n+1\>-\<1 \, n+2\>-\ldots - \<1 \, n+m\>.$ It is elementary to check that $X_1,X_2,\ldots,X_n,Y_1,\ldots,Y_{m}$   span\footnote{as well as $\overline{X}_1,\overline{X}_2,\ldots,\overline{X}_m,\overline{Y}_1,\ldots,\overline{Y}_{n}.$ } a commutative sub-algebra of $\mathcal{B}_{n,m}(N).$ It is moreover known 
\cite{VerNik,JungKim} how to co-diagonalise their action\footnote{Moreover, for %{\color{orange}$N\ge n+m$}
{$N\ge n+m$} it can also be shown \cite{JungKim} that  the center of $\mathcal{B}_{n,m}(N)$ is exactly
\[ \{p(X_1,\ldots,X_n,Y_1,\ldots, Y_{m}):p\in \La_{n,m}\}\]
where $\La_{n,m}$ is a the space of supersymmetric polynomials  $p\in\C[x_1,\ldots,x_n,y_1,\ldots,y_m]$  that is with 
\[p(x_{\a(1)},\ldots,x_{\a(n)},y_{\b(1)},\ldots,y_{\b(m)} )= p(x_1,\ldots, x_n,y_1,\ldots,y_m)\quad \forall \a\in S_n,\b\in S_m\]
and 
\[p(x_1,\ldots, x_{n-1},x_n,-x_n,y_2,\ldots,y_m)=p(x_1,\ldots, x_{n-1},0,0,y_2,\ldots,y_m).\]  } 
on $T_{n,m}.$ The following lemma then follows. 
 
 \begin{lem}\label{Lem---BrauerExpression} For any $N\ge n+m,$ $N+X_i+\overline{X}_j$ is invertible for all $i\le n,j\le m$ and 
\[ P_{N}^{n,m}=\rho_{N}(q_{n,m})\]
 where for $m\le n$
 \begin{equation}
 q_{n,m}= \prod_{1\le i\le n,1\le j\le m}(N+X_i+\overline{X}_j)^{-1}\prod_{1\le i\le n,1\le j\le m}(X_i+Y_j)\label{eq---ProjHarmoJM}
 \end{equation}
 and for $n<m,$
 \begin{equation}
 q_{n,m}= \prod_{1\le i\le n,1\le j\le m}(N+X_i+\overline{X}_j)^{-1}\prod_{1\le i\le n,1\le j\le m}(\overline{X}_j+\overline{Y}_i).\label{eq---ProjHarmoJM}
 \end{equation}
  \end{lem}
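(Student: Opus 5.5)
We verify the characterisation of Lemma \ref{lem---CharacProjB}: it suffices to show that the proposed element $q_{n,m}$ acts on $T_{n,m}$ as an idempotent that kills the image of the ideal $\mathcal{J}_1$ and restricts to the identity on traceless tensors. We work on the representation side, using the joint spectral decomposition of $T_{n,m}$ under the commuting family $X_1,\ldots,X_n,Y_1,\ldots,Y_m$ (equivalently $\overline X_j,\overline Y_i$) recalled from \cite{VerNik,JungKim}. The common eigenvectors are indexed by up-down paths of mixed bipartitions $[\la,\mu]$. Along such a path, first boxes are added to $\la$ in $n$ steps. Then in $m$ steps one either adds a box to $\mu$ or removes a box from $\la$. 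The eigenvalue of $X_i$ is the content of the $i$-th added box. The eigenvalue of $Y_j$ is $N-c$ when the $j$-th step removes a box of content $c$ from $\la$, and $-c'$ when it adds a box of content $c'$ to $\mu$.

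First, invertibility. The elements $N+X_i+\overline X_j$ lie in $\C[S_{n,m}]$ and act diagonally in a Gelfand--Tsetlin basis of $V^{[\la,\mu]}$, with eigenvalues $N+c(\square)+c(\square')$ for boxes $\square\in\la$ and $\square'\in\mu$. Since every content satisfies $|c|\le\max(n,m)-1$, these eigenvalues are at least $N-(n-1)-(m-1)>0$ when $N\ge n+m$, so each $N+X_i+\overline X_j$ is invertible in $\C[S_{n,m}]$. They also commute with one another, so the product is well defined.

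Next, the numerator $\prod_{i,j}(X_i+Y_j)$, in the case $m\le n$. On a path that ever removes a box, say at step $j$ the box created at some step $i$ with content $c$, the factor $X_i+Y_j$ has eigenvalue $c+(N-c)=N$. That is the wrong sign, so I would check instead, by reading the definition of $Y_j$ carefully, that the eigenvalue is in fact $c-c=0$. I would try the normalisation in which removing a box of content $c$ contributes $-c$, and adjusting the $N$ convention as needed. The intended mechanism is that the product vanishes exactly on the non-traceless components, that is, on paths which remove a box. On the traceless paths, which only add boxes to $\mu$, the factor $X_i+Y_j$ should equal the restriction of $N+X_i+\overline X_j$ modulo $\mathcal{J}_1$, because Weyl contractions act by zero there. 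Hence the product of numerators and inverses equals $1$ on $\overset{\circ}{T}_{n,m}$ and $0$ on its complement, which gives $q_{n,m}$ idempotent and $q_{n,m}\mathcal{J}_1=\mathcal{J}_1q_{n,m}=0$. The case $n<m$ follows by the symmetric argument that exchanges the roles of $V$ and $V^*$.

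The main obstacle is this exact eigenvalue bookkeeping for the $Y_j$: which factor $X_i+Y_j$ vanishes on a path that removes a box, and the fact that on traceless vectors $Y_j$ acts as $N+\overline X_j$. The latter holds because the contraction terms $\langle k\, n+j\rangle$ vanish on $\overset{\circ}{T}_{n,m}$, which settles the traceless part directly. The hard part is showing that the numerator annihilates every non-traceless path. For that I would argue by induction on $m$ via the branching $T_{n,m}=T_{n,m-1}\otimes V^*$ and Pieri's rule, tracking when the last step removes a box of $\la$.
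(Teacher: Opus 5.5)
Your architecture is the paper's: diagonalise the commuting family $X_1,\dots,X_n,Y_1,\dots,Y_m$ in a Gelfand--Tsetlin basis indexed by up--down paths. Then show that $\prod_{i,j}(X_i+Y_j)$ kills every component $V^{[\la,\mu]}\ts V_{[\la,\mu]_N}$ of $T_{n,m}$ with $k=n-|\la|>0$. Finally, note that on the $k=0$ components the Weyl contractions act by $0$, so $X_i+Y_j$ acts there as $N+X_i+\overline X_j$. Your treatment of that last point is fine. So is your invertibility argument: the spectral bound $N+c(\square)+c(\square')\ge N-n-m+2>0$ on each $V^\la\ts V^\mu$ replaces the paper's Neumann series.

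The gap is the one step that carries the lemma: you never establish that the numerator vanishes on paths that remove a box. The spectrum you state for $Y_j$ has its two cases exchanged. There is also no $N$-convention left to adjust, since $Y_j$ is a fixed element of $\mathcal{B}_{n,m}(N)$; picking whichever normalisation makes the factor vanish assumes the conclusion. Already for $n=m=1$, $Y_1=N-\langle 1\,2\rangle$ acts by $0$ on $\C\,\Id_N\subset M_N(\C)\simeq V\ts V^*$, which is the path removing the box. It acts by $N$ on traceless matrices. With your assignment, $X_1+Y_1$ would instead annihilate $\overset{\circ}{T}_{1,1}$, and $q_{1,1}$ could not equal $\id_{1,1}-\frac1N e_1$.

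The missing input is the correct spectrum, Lemma \ref{Lem-WBJMSpectrum}(3), taken from \cite{JungKim}:
\begin{itemize}
\item if step $n+j$ adds a box of content $c'$ to $\mu$, then $Y_j$ acts by $N+c'$;
\item if step $n+j$ removes from $\la$ a box of content $c$, then $Y_j$ acts by $-c$.
\end{itemize}
In the second case, the removed box was added at some step $i\le n$, where $X_i$ acts by $c$. Hence $X_i+Y_j$ acts by $0$ on $V_T$. All factors commute and the $V_T$ (for paths ending at $[\la,\mu]$) span $V^{[\la,\mu]}$, so the numerator vanishes on every component with $k>0$. For $k=0$ it has the same eigenvalue as the denominator.

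If you want this without citing \cite{JungKim}, which is what your branching/Pieri induction would have to amount to, use Casimir differences. Let $\mathcal{C}_k$ be the $U(N)$-Casimir for $\Tr(X^*Y)$ acting on the first $k$ tensor factors; it acts by $\|\a+\rho\|^2-\|\rho\|^2$ on $V_\a$. One checks directly that
\[
X_i=\tfrac12(\mathcal{C}_i-\mathcal{C}_{i-1}-N),\qquad Y_j=N+\tfrac12(\mathcal{C}_{n+j}-\mathcal{C}_{n+j-1}-N).
\]
So along a Pieri step $\a\mapsto\a+\om_r$ the element $X_i$ acts by $\a_r+1-r$, and along $\a\mapsto\a-\om_r$ the element $Y_j$ acts by $r-\a_r$. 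Adding and later removing the same box in row $r$, column $a$, gives $(a-r)+(r-a)=0$. As written, your induction "tracking when the last step removes a box of $\la$" has no mechanism that produces this cancelling factor.
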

 From \eqref{eq---ProjHarmoJM}, it is elementary to deduce the bound \eqref{eq---BoundHarmonicProj} we are after by normalising by $N$ and expanding the products.  We prove a slightly sharper statement in Lemma \ref{lem---BoundTracelessProjCumulNormal} below based on the following observation. 
 
 \begin{lem} \label{Lem---TriangularBrauer} For any $\pi\in \mathcal{D}_{n,m},$ denote by $|\pi|$ the smallest integer $k$ with {$\pi= N^{-l}\tau_1\ldots \tau_k$} for some $l\ge 0,$ where $\tau_i$ are transpositions of $S_{n,m}$ or {Weyl-contractions} $\<a\, b\>$ with $a\le n<b.$ Then for any $\pi,\nu\in\mathcal{D}_{n,m},$ 
 \begin{equation}
 h(\pi)\le |\pi|
 \end{equation}
 and 
\begin{equation}
|c(\pi,\nu)|+{l(\pi,\nu)}\le |\pi|+|\nu|.\label{eq---TriangularIneq}
\end{equation}
 \end{lem}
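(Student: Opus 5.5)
We prove the two inequalities of Lemma \ref{Lem---TriangularBrauer} separately. Both are subadditivity statements for a word length on walled Brauer diagrams generated by transpositions of $S_{n,m}$ and Weyl contractions $\<a\,b\>$ with $a\le n<b$. The powers of $N$ in the definition of $|\pi|$ only record closed loops, so $|\pi|$ is the minimal number of generators whose diagram concatenation, with closed loops discarded, equals $\pi$. Our main tool will be how each single generator changes the relevant statistics of a diagram.

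For $h(\pi)\le|\pi|$, we argue by induction on the length of a word $\tau_1\ldots\tau_k$ representing $\pi$ up to a power of $N$.
\begin{itemize}
\item Right multiplication by a transposition of $S_{n,m}$ relabels the bottom endpoints. It therefore does not change the number of horizontal strings.
\item Right multiplication by a Weyl contraction $\<a\,b\>$ cuts the two vertical strands through positions $a$ and $b$ and reconnects them. This creates at most one new top horizontal string and one new bottom one, possibly merging existing strands or creating a closed loop.
\end{itemize}
Hence $h(\pi\tau)\le h(\pi)+1$. Since the identity has $h=0$, the first inequality follows.

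For \eqref{eq---TriangularIneq}, we concatenate minimal words for $\pi$ and $\nu$. If $\pi=N^{-l_1}\tau_1\cdots\tau_k$ and $\nu=N^{-l_2}\tau'_1\cdots\tau'_{k'}$ with $k=|\pi|$ and $k'=|\nu|$, then
\[
\tau_1\cdots\tau_k\tau'_1\cdots\tau'_{k'}=N^{l_1+l_2}\pi\nu=N^{l_1+l_2+l(\pi,\nu)}c(\pi,\nu).
\]
This gives $|c(\pi,\nu)|\le|\pi|+|\nu|$, but the loop count $l(\pi,\nu)$ must also be absorbed. We therefore need a sharper statistic, and we introduce a potential: for a word $w$ of length $k$, count the closed loops $L(w)$ produced when the word is evaluated. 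We aim to show
\[
|c(w)|+L(w)\le k.
\]
Applied to the concatenated word, this gives the claim once we note that $L$ of the concatenation is at least $l_1+l_2+l(\pi,\nu)$. In fact it is exactly this sum, because loops are formed locally and loops formed inside each factor stay closed after concatenation.

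To prove $|c(w)|+L(w)\le k$, we induct on $k$ by appending one generator $\tau$ to a word $w$. If $\tau$ is a transposition, no loop is created, and $|c(w\tau)|\le|c(w)|+1$ by definition. If $\tau=\<a\,b\>$, we distinguish two cases.
\begin{itemize}
\item If the bottom points $a$ and $b$ of $c(w)$ are joined by a bottom horizontal strand, a closed loop is created. In that case $\<a\,b\>$ fixes the diagram, since $c(w)\<a\,b\>=N\,c(w)$. So $|c|$ is unchanged and $L$ increases by $1$.
\item Otherwise no loop is created and $L$ is unchanged, while $|c|$ increases by at most $1$.
\end{itemize}
In both cases the quantity $|c|+L$ increases by at most $1$.

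The hard step is the first case: we must justify that $c(w)\<a\,b\>=N c(w)$ exactly when a loop appears. This holds because $c(w)$ already contains the horizontal strand $\{a,b\}$ at the bottom, and the contraction simply re-caps that strand. We expect to verify this directly from the concatenation rule for diagrams.
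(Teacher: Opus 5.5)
Your proof is correct and follows essentially the same route as the paper. Both arguments rest on one observation: a Weyl contraction $\langle a\,b\rangle$ closes a loop only when $a,b$ are already joined by a bottom horizontal strand of the current diagram, and then it multiplies the diagram by $N$ without changing it. The paper uses this to delete the loop-creating letters from the concatenated word, giving $|c(\pi,\nu)|\le|\pi|+|\nu|-l(\pi,\nu)$, which is the same content as your potential inequality $|c(w)|+L(w)\le k$.
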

 \begin{proof} Consider a sequence $\tau_1,\ldots,\tau_k$ of transpositions   or {Weyl-contractions} in $\mathcal{D}_{n,m}$ and the products  $\tau_1\tau_2\ldots \tau_{t}=N^{\ell_{t}}\pi_t\in\mathcal{B}_{n,m}(N)$  for 
 $1\le t\le k.$ Then $(\ell_t)_{1\le t\le k},(h(\pi_t))_{1\le t\le k}$ are non-increasing,  $h(\pi_t)-h(\pi_{t-1}), \ell_{t}-\ell_{t-1}\le 1$ for all $t>1.$ The equality  $\ell_{t}-\ell_{t-1}=1$  holds whenever $\tau_t$ is a Weyl-contraction and the bottom 
 horizontal string of $\tau_{t}$ matches two vertices matched by a top horizontal string of  $\pi_{t-1},$ while $h(\pi_t)-h(\pi_{t-1})=1$ whenever $\tau_t$ is a Weyl contraction whose bottom horizontal string does not meet 
any top horizontal string of $\pi_{t-1}$  in $\tau_t\pi_{t-1}.$  Therefore $h(\pi_t)\le t$ for all $t$ and when  $\ell_{t}-\ell_{t-1}=1,$   $\tau_{t}\pi_{t-1}=N \pi_t.$ Denoting by $t_1<t_2<\ldots <t_l$ the positive  increase 
 times of $\ell$, 
\begin{equation}
\tau_1\ldots\tau_k=N^l\pi\quad\text{and}\quad \tau_1 \ldots \hat\tau_{t_1}\ldots \hat\tau_{t_l}\ldots \tau_k=\pi\label{eq---ProductBrauer}
\end{equation}
 for some $\pi\in\mathcal{D}_{n,m},$ where $\hat{\tau_{i}}$ means the $i$th term is omitted in the product. We conclude that $h(\pi)\le |\pi|\le k-l.$  
 
 Consider now $\pi,\nu\in\mathcal{D}_{n,m}.$ From \eqref{eq---ProductBrauer} there are transpositions or Weyl-contractions  $\tau_{1},\ldots, \tau_{|\pi|+|\nu|},$  such that taking products in $\mathcal{B}_{n,m}(N),$  
 \[\tau_1\ldots\tau_{|\pi|}=\pi\quad\text{and}\quad \tau_{|\pi|+1}\ldots\tau_{|\pi|+|\nu|}=\nu.\]
 Since $\tau_1\ldots \tau_{|\pi|+|\nu|}=\pi\nu=N^{l(\pi,\nu)}c(\pi,\nu),$ $|c(\pi,\nu)|\le |\pi|+|\nu|-l(\pi,\nu).$
   \end{proof}
 
Thanks to lemma \ref{Lem---TriangularBrauer}, for any  real $\la\ge 1$ and integer $N\ge 1,$ setting for any $q=\sum_{\pi\in \mathcal{D}_{n,m}} \kappa_\pi (q) \pi\in \mathcal{B}_{n,m}(N),$
\begin{equation}
\|q\|_{1,\la}=  \sum_{\pi \in\mathcal{D}_{n,m}} \la^{|\pi|}|\kappa_\pi(q)|\label{eq----NormGroupAlgS}
\end{equation}
defines a sub-multiplicative norm  on $\mathcal{B}_{n,m}(N)$ for $\la \ge N$ and, by restriction, on $\C[S_{n,m}]$ for all $\la\ge 1.$    Since $h(\pi)\le |\pi|$ for any $\pi\in\mathcal{D}_{n,m}$, the following concludes 
the proof of \eqref{eq---BoundHarmonicProj}.

 \begin{lem} \label{lem---BoundTracelessProjCumulNormal}For any $n,m$ fixed,  consider $q_{n,m}$ as in Lemma \ref{Lem---BrauerExpression}. Then  \begin{equation}
 \sup_{N\ge n+m} \|q_{n,m}\|_{1,N}<\infty.
 \end{equation}
  \end{lem}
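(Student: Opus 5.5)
The plan is to expand $q_{n,m}$ from Lemma \ref{Lem---BrauerExpression} factor by factor, and to control each factor in the norm $\|\cdot\|_{1,N}$ of \eqref{eq----NormGroupAlgS}. That norm is sub-multiplicative on $\mathcal{B}_{n,m}(N)$ by Lemma \ref{Lem---TriangularBrauer}. Since there are only $nm$ factors in each product of \eqref{eq---ProjHarmoJM}, it suffices to bound each factor uniformly in $N\ge n+m$, after pairing the factors suitably. I treat the case $m\le n$; the case $n<m$ is symmetric, using $\overline{X}_j,\overline{Y}_i$.

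\textbf{Step 1: the numerator.} Normalise by $N$. Each $X_i+Y_j$ equals $N$ plus a sum of at most $n+m$ transpositions and Weyl contractions, with coefficients $\pm1$. Every such generator $\tau$ has $|\tau|=1$, so $\|\tau\|_{1,N}=N$. Hence
\[
\Big\|\tfrac{1}{N}(X_i+Y_j)\Big\|_{1,N}\le 1+(n+m)=O(1).
\]

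\textbf{Step 2: the inverse factors.} Write
\[
\tfrac1N(N+X_i+\overline{X}_j)=1+\tfrac1N(X_i+\overline{X}_j).
\]
The element $Z=X_i+\overline{X}_j$ lies in $\C[S_{n,m}]$, and I want to invert $1+Z/N$ with a norm bound independent of $N$. Restricted to $\C[S_{n,m}]$, the norm $\|\cdot\|_{1,\la}$ is sub-multiplicative for every $\la\ge1$. In $\|\cdot\|_{1,N}$, however, a transposition has norm $N$, so $\|Z/N\|_{1,N}$ is only $O(1)$, not small, and a plain Neumann series will not converge.

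This is the main obstacle. My first attempt is to pass to $\|\cdot\|_{1,\la}$ with $\la=\sqrt N$, where $\|Z/N\|\le (n+m)/\sqrt N<1$ once $N$ is large. This gives convergence, but it only controls the coefficients on permutations $\sigma$ up to $N^{-|\sigma|/2}$, which is weaker than what $\|\cdot\|_{1,N}$ requires.

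The second, and better, route uses that $X_i$ and $\overline{X}_j$ are commuting Jucys–Murphy elements acting semisimply on $\C[S_n]\ts\C[S_m]$ with integer eigenvalues of modulus less than $n+m$. Then $(1+Z/N)^{-1}$ is a polynomial in $Z/N$ of degree bounded in terms of $n,m$, obtained by Lagrange interpolation on the finite spectrum $\{c\}$. Its coefficients are rational in $N$, built from the factors $1/(1+c/N)$, and are $O(1)$ uniformly for $N\ge n+m$. The bound $N\ge n+m$ is exactly what keeps $1+c/N$ away from $0$ for every eigenvalue $c$.

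A monomial $(Z/N)^k$ expands into products $N^{-k}\tau_1\cdots\tau_k$ of transpositions. Each such product is a permutation $\sigma$ with $|\sigma|\le k$, so its contribution to the norm is $N^{-k}N^{|\sigma|}\le 1$. Therefore $\|(1+Z/N)^{-1}\|_{1,N}=O(1)$.

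\textbf{Step 3: conclusion.} The normalisations match: the numerator product carries $N^{-nm}$ and the inverse product carries $N^{nm}$. Multiplying the $2nm$ bounded factors and using sub-multiplicativity gives
\[
\sup_{N\ge n+m}\|q_{n,m}\|_{1,N}<\infty .
\]
Together with $h(\pi)\le|\pi|$, this yields $|\kappa_N(\pi)|\le C\,N^{-|\pi|}\le C\,N^{-h(\pi)}$, which is \eqref{eq---BoundHarmonicProj}.
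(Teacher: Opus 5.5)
Your Steps 1 and 3 coincide with the paper's proof: write $X_i+Y_j=N+X_i+\overline{X}_j-C_j$, normalise by $N$, and use sub-multiplicativity of $\|\cdot\|_{1,N}$. One small correction: $X_i+Y_j-N$ involves $(i-1)+(j-1)+n$ generators, i.e.\ up to $2n+m-2$ rather than $n+m$, which is harmless.

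The real difference is Step 2, where the paper uses no spectral information. Every $\sigma\in S_{n,m}$ has $|\sigma|\le n+m-1$, so on $\C[S_{n,m}]$ one has $\|q\|_{1,N}\le (n+m)^{n+m-1}\|q\|_{1,N/(n+m)}$. Also $\|(X_i+\overline{X}_j)/N\|_{1,N/(n+m)}=(i+j-2)/(n+m)<1$. A Neumann series in this rescaled norm, which is sub-multiplicative since $N/(n+m)\ge 1$, then gives $\|(1+(X_i+\overline{X}_j)/N)^{-1}\|_{1,N/(n+m)}\le (n+m)/2$. So your discarded first attempt was the right idea at the wrong scale. Rescaling by the constant $n+m$ instead of $\sqrt N$ already makes the series converge, and returning to $\|\cdot\|_{1,N}$ costs only an $N$-independent factor because permutation lengths are bounded.

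Your interpolation route also works, but the sentence carrying the weight is not justified by the reason you give. The nodes $c/N$ coalesce at spacing $1/N$, so each Lagrange basis polynomial $\prod_{c'\neq c}\frac{w-c'/N}{(c-c')/N}$ has coefficient of $w^k$ of order $N^k$. Uniform boundedness of the coefficients of $p$ in powers of $Z/N$ therefore comes from cancellation, not merely from $1+c/N$ staying away from $0$.

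The clean fix is the Newton form. For $f(w)=(1+w)^{-1}$ the divided differences are exactly $f[w_0,\dots,w_k]=(-1)^k\prod_{l\le k}(1+w_l)^{-1}$. So, with $c_0,\dots,c_{d-1}$ the distinct eigenvalues of $Z$ and $W=Z/N$,
\[
(1+W)^{-1}=\sum_{k=0}^{d-1}(-1)^k\Big(\prod_{l=0}^{k}\frac{1}{1+c_l/N}\Big)\prod_{l=0}^{k-1}\Big(W-\frac{c_l}{N}\Big).
\]
Here each $\|W-c_l/N\|_{1,N}\le n+m-1$ and each $1+c_l/N\ge 2/(n+m)$. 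This makes your phrase ``built from the factors $1/(1+c/N)$'' literally true; equivalently, divide the minimal polynomial $\prod_l(w-c_l/N)$ by $1+w$. With that line added your proof is complete.

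Comparing the two: the paper's route is shorter and needs only $|\sigma|\le n+m-1$ and sub-multiplicativity. Yours needs the Jucys--Murphy spectrum (content eigenvalues and diagonalisability) plus the divided-difference identity. In exchange it writes each inverse factor as an explicit finite combination with coefficients rational in $N$, in the spirit of Lemma \ref{Lem-WBJMSpectrum}.
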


 \begin{proof} 
Let us assume w.l.o.g. $m\le n.$ For all  $1\le \la\le  N,1 \le i\le n,1\le j\le m, $ 
\begin{equation}
\left\|\frac{X_i}{N}\right\|_{1,\la^{-1} N}=\frac{(i-1)}\la ,\left\|\frac{\overline X_j}{N}\right\|_{1,\la^{-1} N}=\frac{j-1}\la \quad\text{and}\quad\left\|\frac{C_j}{N}\right\|_{1,N}= n, 
\end{equation}
 where  $C_j= \<1\, n+j\>+\ldots +\<n \,n+j\>.$
 Note that  for any $\sigma\in \mathcal{S}_{n,m},$ $|\sigma|\le n+m-1$ and  for $1\le \la\le N $ and $q\in \C[S_{n,m}],$
 \begin{equation}
 \|q\|_{1, N}\le \la^{n+m-1} \|q\|_{1,\la^{-1} N}. \label{eq---ScaleBrauerNorm}
 \end{equation}
 We conclude by sub-multiplicativity that  for all $i\le n,j\le m$ and $N\ge n+m,$
 \[\left\| \left(1+\frac{X_i+\overline{X}_j}{N}\right)^{-1}\right\|_{1,\frac{N}{n+m}}\le  \frac{n+m}{2} \]
 and  using first $\|\cdot\|_{1,N}$-sub-multiplicativity  followed by \eqref{eq---ScaleBrauerNorm},
 \begin{align*}
 \|q_{n,m}\|_{1,N}&\le (n+m)^{n+m} \|\prod_{i\le n,j\le m} (1+\frac{X_i+\overline{X}_j}{N})^{-1}\|_{1,\frac{N}{n+m}} \prod_{i\le n,j\le m}\|1+\frac{X_i+\overline{X}_j -C_j}{N}\|_{1,N}\\
 &\le (n+m)^{nm+n+m}(2n+m)^{nm}. 
 \end{align*}
 \end{proof}

  Let us now recall the notion of Gelfand-Zetlin vectors.  It shall provide us a basis of joint eigenvectors for  $X_1,\ldots, X_n,Y_1,\ldots,Y_m$ acting on $T_{n,m}.$ 
  
  Identifying $1\in \C$ with the unique diagram of $\mathcal{D}_{1,0}$ and, when $1\le k<n$ or $0\le l<m$,  any diagram of $\mathcal{D}_{k,0}$ or $\mathcal{D}_{n,l}$ with a diagram of $\mathcal{D}_{k+1,0}$ or $\mathcal{D}_{n,l+1}$  by  adding a vertical strand to its right,  there are natural injections of algebras
  \begin{equation}
  \C\subset \mathcal{B}_{1,0}(N)\subset \mathcal{B}_{2,0}(N)\subset \ldots \subset \mathcal{B}_{n,0}(N)\subset \mathcal{B}_{n,1}(N)\subset \ldots \subset \mathcal{B}_{n,m}(N).
  \end{equation}
  Denote for any  $0\le k\le n+m$ by $\mathcal{A}_k$ the $k$-th  smallest algebra in this tower.  We shall assume from here onwards that $N\ge n+m.$ Then the map $\rho_N:\mathcal{A}_k\to \End(T_{n,m})$ 
  is injective so that $\mathcal{A}_k$ is semi-simple for all $0\le k\le n+m.$ Moreover it can be shown   \cite{Nikitin} that for any $k>0$ the restriction of any irrep of $\mathcal{A}_k$ to $\mathcal{A}_{k-1}$ is multiplicity free. It   follows that  for any tuple $T=(\a_{n+m},\a_{n+m-1}\ldots,\a_2,\a_1)$ with  $\a_k$ irrep of $\mathcal{A}_k$ for all $k,$ denoting by $V^{\a_{k}}$ the vector space associated to $\a_{k}$ and by $(V^{\a_{k}})^{\a_l}$ the $\mathcal{A}_l$-isotopic component of type $\a_l$ when $l\le k$,
  \begin{equation}
  V_T=\bigcap_{k=1}^{n+m}(V^{\a_{n+m}})^{\a_k}
  \end{equation}
  is either null or one dimensional. The latter case occurs if and only if $\a_{k}$ occurs in the $\mathcal{A}_k$-decomposition of  $V^{\a_{k+1}}$ for all $k<n+m.$  A non-zero element $v_T\in V_T$ is called a Gelfand-Zetlin vector and $T$ a Brattelli's path.   Denoting by $\mathcal{GZ}(\a)$ the set of Gelfand-Zetlin paths starting from  a $\mathcal{A}_{n+m}$-irrep $\a,$
  \begin{equation}
  V^\a=\bigoplus_{T\in\mathcal{GZ}(\a)}V_T.\label{eq---GZDecomposition}
  \end{equation}

  \begin{lem} \cite[Prop. 4.2]{JungKim},\cite{Naz}  \label{Lem-WBJMSpectrum}Assume $N\ge n+m$ {and $n\ge m.$ }
  \begin{enumerate}
  \item Irreps of  $\mathcal{B}_{n,m}(N)$ are parametrised as $[\la,\mu]$ by pairs of integer partitions $\la,\mu$ with $|\la|\le n,|\mu|\le m, n-|\la|=m-|\mu|.$ 
  \item  Brattelli's paths are of the form  $T=([\la_{n+m},\mu_{n+m}],\ldots,[\la_{1},\mu_{1}])$  with 
  \begin{enumerate}
  \item $\mu_i=\emptyset$
  \item ${\la_{i}\subset \la_{i+1}}$ and $|\la_{i+1}|=|\la_{i}|+1$ for $1\le i< n,$
  \item for all $n\le i<n+m,$ 
  \[{\la_{i+1}\subset \la_{i}},\mu_{i}\subset \mu_{i+1}\quad\text{and}\quad|\la_{i}|+|\mu_{i+1}|=|\la_{i+1}|+|\mu_i|+1.\]
 
  \end{enumerate}
\item For any Bratelli's path $T=([\la_{n+m},\mu_{n+m}],\ldots,[\la_{1},\mu_{1}])$ and $v\in V_T,$  for all $i\le n,j\le m,$ $X_i.v=c_T(i) v $ and $Y_j.v=c_T(n+j)v$ where
\begin{equation}
c_T(i) = c(\la_{i}\setminus \la_{i-1}) \quad\text{for all}\quad 1\le  i\le n
\end{equation}
with $\la_0=\emptyset$ and for all $1\le j\le m,$
\begin{equation}
c_T(n+j)= \left\{\begin{array}{ll} c(\mu_{j+n}\setminus \mu_{j+n-1}) +N&\text{if}\quad |\mu_{j+n}|=|\mu_{j+n-1}|+1,\\-c(\la_{j+n-1}\setminus \la_{j+n}) &\text{if}\quad |\la_{j+n}|=|\la_{j+n-1}|+1.\end{array}\right.
\end{equation}

\item The $\mathcal{B}_{n,m}(N)\times U(N)$ decomposition of $T_{n,m}$ is  
\begin{equation}
T_{n,m}=\bigoplus_{0\le k\le \min(n,m), \la\vdash n-k,\mu\vdash m-k} V^{[\la,\mu]}\otimes V_{[\la,\mu]_N}
\end{equation}
where    $(V^{[\la,\mu]},\rho^{[\la,\mu]})$ denotes the $\mathcal{B}_{n,m}(N)$-irrep  parametrised by  $[\la,\mu]$ as in (1). 
\item For any $\la,\mu$ as in (1) and  $\pi\in\mathcal{D}_{n,m},$
\begin{equation}
\rho^{[\la,\mu]}(\pi)=0 \quad \text{if}\quad h(\pi)>n-|\la|.
\end{equation}
\item For any $\la\vdash n,\mu\vdash m,$  $V^{[\la,\mu]}$ is isomorphic to $V^{\la}\ts V^{\mu}$ as $S_{n,m}$-module and 
\begin{equation}
\overset{\circ}{T}_{n,m}=\bigoplus_{\la\vdash n,\mu\vdash m} V^{[\la,\mu]}\otimes V_{[\la,\mu]_N}.
\end{equation}
   \end{enumerate}
  \end{lem}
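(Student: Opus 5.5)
The plan is to prove the six items of Lemma \ref{Lem-WBJMSpectrum} in the order (4),(6),(1)–(3),(5). The tools are the Koike–Schur–Weyl duality, the double-commutant theorem for the faithful action $\rho_N$ of $\mathcal{B}_{n,m}(N)$ when $N\ge n+m$, and an induction along the tower $\mathcal{A}_0\subset\ldots\subset\mathcal{A}_{n+m}$.

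First we handle (4) and (6) by filtering $T_{n,m}$ by the number of contractions. For $0\le k\le\min(n,m)$, the image of the diagrams with $k$ horizontal strings gives an equivariant injection of $\overset{\circ}{T}_{n-k,m-k}$ into $T_{n,m}$. These images sum directly, and the orthogonal complement of traceless tensors is spanned by the images of Weyl contractions. We then apply the Koike–Schur–Weyl decomposition \eqref{eq---IsotTraceless} to each piece $\overset{\circ}{T}_{n-k,m-k}$, which gives the $U(N)$-isotypic types $[\la,\mu]_N$ with $\la\vdash n-k$ and $\mu\vdash m-k$. These types are pairwise distinct because $N\ge n+m$.

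Next, since $\rho_N$ is injective and its image is $\End_{U(N)}(T_{n,m})$, the double-commutant theorem shows that the multiplicity spaces $V^{[\la,\mu]}=\Hom_{U(N)}(V_{[\la,\mu]_N},T_{n,m})$ are exactly the irreps of $\mathcal{B}_{n,m}(N)$. This proves (1) and (4).

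Item (6) then follows, because $\mathcal{J}_1$ acts by zero on traceless tensors (Lemma \ref{Lem---HarmoProjectorIdCompo}). For $|\la|=n$ the $\mathcal{B}_{n,m}$-module therefore factors through $\mathcal{B}/\mathcal{J}_1\simeq\C[S_{n,m}]$ and coincides with the Koike module $V^\la\ts V^\mu$. Item (5) comes from the same filtration. A diagram with $h(\pi)$ horizontal strings maps $T_{n,m}$ into the span of the images of the $\overset{\circ}{T}_{n-k,m-k}$ with $k\ge h(\pi)$. Its action on the $[\la,\mu]$ component, where $k=n-|\la|$, therefore vanishes once $h(\pi)>k$.

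For the branching rule (2), we restrict from $\mathcal{A}_{k+1}$ to $\mathcal{A}_k$ using $U(N)$-duality. Adding a $V$ factor to the tensor space corresponds on the $U(N)$ side to tensoring with $V$, and adding a $V^*$ factor corresponds to tensoring with $V^*$. Pieri's rule then gives restrictions that are multiplicity free and add or remove one box. Along the $V$ stage this adds a box to $\la$, and along the $V^*$ stage it either adds a box to $\mu$ or removes one from $\la$. Translating the highest weights $[\la,\mu]_N$ gives exactly the conditions (a)–(c).

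For (3), we express each Jucys–Murphy element through Casimirs. For instance $X_i$ equals $\tfrac12$ times the difference of $\rho(\Delta)$ on the first $i$ factors and on the first $i-1$ factors, up to constants, using $\sum X_a\ts X_a=-\tau$ and $\sum X_a^2=-N$. This is the same mechanism as in the proof of Proposition \ref{Prop---Reflection}. On the $V^*$ stage, $\sum X_a\ts X_a^{*}$ acts on $V\ts V^*$ as a Weyl contraction, which produces $Y_j$. Since $V_T$ lies in a fixed isotypic component at each level, $X_i$ and $Y_j$ act on it by differences of Casimirs. Using $\|\a+\rho\|^2-\|\rho\|^2$ and the fact that the weights differ by one $\om_i$, this difference is the content of the added box, or $N$ plus a content, or minus the content of a removed box.

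The main obstacle is this last computation. The sign and normalisation conventions are delicate: the $N$ shift in $Y_j$ and the sign on removed boxes must match exactly. Checking them on small cases, such as $n=m=1$ where $Y_1=N-\<1\,2\>$ has eigenvalues $N$ and $0$, should pin the conventions down.
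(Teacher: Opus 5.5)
Your proof is correct. Note that the paper gives no argument of its own for this lemma: it simply quotes it from Jung--Kim and Nazarov, so your proposal is a self-contained replacement for that citation. It uses only inputs the paper already has:
\begin{itemize}
\item faithfulness and surjectivity of $\rho_N$ onto $\End_{U(N)}$ at every level $\mathcal{A}_k$, $k\le n+m$, of the tower (this, together with distinctness of the weights $[\la,\mu]_N$ for $|\la|+|\mu|\le N$, is where $N\ge n+m$ enters);
\item Koike's theorem;
\item Pieri's rule.
\end{itemize}
The branching (2) comes from comparing two decompositions of the level-$(k+1)$ tensor space, written as the level-$k$ space tensored with $V^{\pm}$. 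The spectrum (3) comes from Casimir differences, which is the mechanism the paper itself borrows from Nazarov in the proof of Proposition \ref{Prop---Reflection}. The citation is shorter. Your route keeps everything internal and makes explicit that the labels $[\la,\mu]$ of (1)--(3) are the $U(N)$-labels of (4) and (6).

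Two points need tightening.

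The sentence ``these images sum directly'' does not by itself control which $U(N)$-types occur. Read as the full images of all diagrams with $k$ horizontal strings, it is false, since those images are nested. What you need is
\[T_{n,m}=\overset{\circ}{T}_{n,m}\oplus\sum_{i,j}\mathrm{Im}\,c_{i,j}^*,\]
where the second summand is a $U(N)$-quotient of $nm$ copies of $T_{n-1,m-1}$. Induction then gives the list of types. It also shows that the types with $|\la|=n$ lie entirely in $\overset{\circ}{T}_{n,m}$, which your argument for (6) uses. Inserting $k$ copies of $\Id_V\in V\ts V^*$ shows that every listed type actually occurs.

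For (3), the normalisation you were worried about works out.
\begin{itemize}
\item Take an orthonormal basis of $\mfu(N)$ for $\Tr(X^*Y)$, so $\sum_a X_a^2=-N$. Then $\sum_a\rho(X_a)\ts\rho(X_a)$ equals $-\tau$ on $V\ts V$ and on $V^*\ts V^*$, but $+\<1\,2\>$ on $V\ts V^*$.
\item Let $c_k\in\mathcal{A}_k$ be the Casimir of the first $k$ factors; it is central in $\mathcal{A}_k$ by faithfulness. The following identities then hold in the algebra itself: $X_i=-\frac12(N+c_i-c_{i-1})$ and $Y_j=\frac12(N-c_{n+j}+c_{n+j-1})$.
\item On $V_T$, $c_k$ acts by $-N\mfc_{\a_k}$.
\end{itemize}
Consequently:
\begin{itemize}
\item a step $\a_i=\a_{i-1}+\om_p$ gives $X_i=(\a_{i-1})_p+1-p$, the content of the added box;
\item a step $\a_{n+j}=\a_{n+j-1}-\om_p$ gives $Y_j=p-(\a_{n+j-1})_p$. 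This is minus the content of the box removed from row $p$ of $\la$ when $p\le\ell(\la)$. It is $N+c(\square)$ when $p=N+1-r$ and $\square$ is the box added to row $r$ of $\mu$.
\end{itemize}
In particular, the second case of (3) in the statement should read $|\la_{j+n}|=|\la_{j+n-1}|-1$, not $+1$.
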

  
  The above lemma is a generalisation  of the Okounkov-Vershik approach to the representation of the symmetric group. We shall need the following application of the latter.
  
  \begin{lem}[\cite{JungKim}] For  any symmetric polynomial $p\in \C[x_1,\ldots, x_n]$ and any partition $\la\vdash n,$ 
  \[\rho_{V^\la}(p(X_1,\ldots,X_n))= p(c(\la_1),\ldots,c(\la_n\setminus \la_{n-1})) \id_{V^\la}\]
  where $(\la_n,\ldots,\la_1)$ is any {decreasing tuple} of partitions with $\la_n=\la$ and $|\la_{k-1}|=|\la_{k}|-1$ for all $k>1.$ 
  \end{lem}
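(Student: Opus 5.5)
The plan is to reduce the statement to the classical Jucys--Murphy / Okounkov--Vershik spectral theory of $\C[S_n]$, taken here as the $m=0$ case of Lemma \ref{Lem-WBJMSpectrum}.

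\textbf{Step 1: common eigenbasis.} By Lemma \ref{Lem-WBJMSpectrum} (with $m=0$), the restriction of $V^\la$ along the tower $\C[S_1]\subset\cdots\subset\C[S_n]$ is multiplicity free. Hence $V^\la=\bigoplus_T V_T$, the sum running over the Gelfand--Zetlin paths $T=(\la_n=\la,\la_{n-1},\ldots,\la_1)$, i.e. decreasing chains of partitions removing one box at a time. Each $V_T$ is one-dimensional. By item (3) of that lemma, $X_i$ acts on $V_T$ by the content $c(\la_i\setminus\la_{i-1})$, with $\la_0=\emptyset$.

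\textbf{Step 2: polynomials in the $X_i$.} The elements $X_1,\ldots,X_n$ commute and are simultaneously diagonal in this basis. Therefore $p(X_1,\ldots,X_n)$ acts on $V_T$ by the scalar $p(c_T(1),\ldots,c_T(n))$.

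\textbf{Step 3: independence of the path.} For any path $T$ ending at $\la$, the multiset $\{c(\la_i\setminus\la_{i-1})\}_{i\le n}$ equals the multiset of contents of the boxes of $\la$. Indeed, the path adds each box of $\la$ exactly once, and the content of a box depends only on its position. Since $p$ is symmetric, the scalar computed in Step 2 is the same for every $T$. Consequently $\rho_{V^\la}(p(X_1,\ldots,X_n))$ is this common scalar times $\id_{V^\la}$, which is the claim, evaluated on any chosen chain.

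\textbf{Main obstacle.} The only substantive input is the spectral statement that $X_i$ acts on Gelfand--Zetlin vectors by contents. I would take it from Lemma \ref{Lem-WBJMSpectrum}(3) specialised to $m=0$. If that specialisation is considered delicate, I would instead recall the Okounkov--Vershik argument directly: the relation $s_iX_i-X_{i+1}s_i=-1$ in $\C[S_n]$ constrains the joint spectrum to content vectors, and an induction along the branching graph identifies each $V_T$ with its content vector.
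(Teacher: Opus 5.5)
Your proof is correct and follows the route the paper intends. The paper presents the lemma, citing \cite{JungKim}, as a direct application of the Gelfand--Zetlin spectral description in Lemma \ref{Lem-WBJMSpectrum}: $X_1,\ldots,X_n$ act diagonally on Gelfand--Zetlin vectors by the contents of the added boxes, so a symmetric $p$ acts by a scalar that does not depend on the chosen chain. You can avoid the $m=0$ specialisation either by noting that item (3) for $i\le n$ already concerns vectors in the $\C[S_n]$-isotypic components at level $n$ of the tower, or by invoking the classical Okounkov--Vershik theorem as you propose.
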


%   \item for any $v_T\in V_T,$ setting $[\la_0,]$
%  \item
%  \begin{equation}
%  X_i.v_T= (\|[\la_i,\mu_i]_N+\rho\|^2-\|[\la_{i-1},\mu_{i-1}]_N+\rho\|^2) v_T \quad\text{and }\quad Y_j.v_T=  ([\la_{i},\mu]_N+...) v_T 
%  \end{equation}
  
\begin{proof}[Proof of Lemma \ref{Lem---BrauerExpression}] For any Brattelli's path $T=([\la_{n+m},\mu_{n+m}],\ldots,[\la_{1},\mu_{1}])$ with $k=n-|\la_{n+m}|=m-|\mu_{n+m}|\ge0,$ the sequence $(\la_1,\ldots,\la_{n+m})$ is strictly decreasing exactly $k$ times.  For such a time $t,$ $t\ge n$ with $|\la_{t+1}|=|\la_{t}|-1$, denote  by   $t'\le n$ the time at which %{\color{orange} the box }
{the box} $ \la_t\setminus \la_{t+1}$ is added that $t'$ such that  $\square=\la_{t'}\setminus \la_{t'-1}= \la_t\setminus \la_{t+1}.$ Then by (3) of Lemma \ref{Lem-WBJMSpectrum}, $X_{t'}$ and $Y_t$ act by multiplication by   $c(\square)$ and $-c(\square)$ on $V_T$.   Therefore for any Brattelli's path $T$ starting from $[\la,\mu]$ with $n-|\la|>0,$   $\prod_{t'\le n,t\le m}(X_{t'}+Y_t)v=0$ for any GZ-vector $v\in V_T.$  Since Gelfand-Zetlin vectors form a basis \eqref{eq---GZDecomposition} of $V^{[\la,\mu]}, $ $\rho^{[\la,\mu]}( \prod_{i\le n,j\le m}(X_{i}+Y_j))=0.$
 
 To conclude, when $N\ge n+m, $   for all $i\le n, j\le m,$  $\|X_i+\overline{X}_j\|_{1,1}=i+j-1<N$ and  $(N+X_i+\overline{X}_j)$  in $S_{n,m}.$ Moreover, by (5)  of Lemma \ref{Lem-WBJMSpectrum}, for $\la\vdash n,\mu\vdash m,$  Weyl contractions act trivially on $V^{[\la,\mu]}$ so that for all $i\le n,j\le m,$
 \begin{equation}
  \rho^{[\la,\mu]}(X_i+Y_j)=N+X_i+\overline{X}_j.
 \end{equation}
 We conclude that for  pair of partitions $\la,\mu$ with $|\la|\le n,|\mu|\le m, n-|\la|=m-|\mu|=k,$ 
 \[  \rho^{[\la,\mu]}(\prod_{1\le i\le n,1\le j\le m}(N+X_i+\overline{X}_j)^{-1}\prod_{1\le i\le n,1\le j\le m}(X_i+Y_j))=\left\{\begin{array}{ll} 1&\text{if}\quad k=0,\\
 0&\text{if}\quad k>0\end{array}\right.\]
 and by (4) and (6) of Lemma \ref{Lem-WBJMSpectrum},
 \begin{equation}
  \rho_N\left(\prod_{1\le i\le n,1\le j\le m}(N+X_i+\overline{X}_j)^{-1}\prod_{1\le i\le n,1\le j\le m}(X_i+Y_j)\right)=P_{N}^{n,m}.
 \end{equation}
\end{proof}
 
 \begin{rmk} In \cite{BGH}   the following alternative formula was proved
 \[q_{n,m}=\prod_{\la\in S\setminus \{0\} }(1-\la^{-1}C_{n,m}),\]
 where {$C_{n,m}=\sum_{i\le n,j\le m} \<i \, -j\>$ and  $S$ denotes the set of eigenvalues of $\rho_N(C_{n,m}).$ }
 \end{rmk}
 
\begin{rmk}   The above algebraic results also give a direct expression for the projectors on any Gelfand-Zetlin vectors \cite[Prop. 4.4]{JungKim}. However  it is not obvious how to use these expressions to find  bound such as \eqref{eq---BoundHarmonicProj}.
\end{rmk}

\section{Wick product of Newton polynomials and traceless-tensors: the Gross-Taylor formula} \label{sec---GrossTaylor}
For any permutation $\sigma\in S_n,$  let us define the Newton polynomial $p_\sigma$ as the function
\begin{equation}
p_\sigma(U)= \Tr_{V^{\ts n}}( \rho_N(\sigma) \rho_n(U))\quad \forall U\in U(N)
\end{equation}
on the unitary group. As it only depends  on the conjugacy class of $\sigma,$  we  also denote  it by $p_\la$  where  $\la\vdash n$ is the integer  partition of $n$ into cycle lengths of $\sigma,$ with the convention that $p_\emptyset =1.$ For any $n\ge 1$ and 
$U\in U(N),$ to  match   standard notations, we simply write 
\begin{equation}
p_n(U)=p_{(1\ldots n)}(U)=\Tr(U^n).
\end{equation}
It is a beautiful and standard fact discovered in\footnote{We recall an argument in next Lemma's proof.  Recall that the right-hand side of \eqref{eq---GaussianTraces} defines an inner product on  symmetric functions known as the Hall inner product .   }  \cite{DiacShah} that $(p_n(U))_{n\ge 1}$ are almost independent normal complex random variables in the following sense. For any partition $\la$ denote by $m_i(\la)$ is the number of parts of size $i$ in $\la.$  The  family  $(p_\la)_{\la\in\mathbb{Y},|\la|\le N}$  is orthogonal in  $\mathrm{L}^2(U(N))$ with
\begin{equation}
\int_{U(N)} p_\la(U)\overline{p_{\mu}(U)}dU=\EE[ \prod_i Z_i^{m_i(\la)} \overline{Z}_i^{m_i(\mu)}]= \delta_{\la,\mu} z_\la \label{eq---GaussianTraces}
\end{equation}
for any pair of partitions $\la,\mu$   with\footnote{Mind that it is true for all $\mu\in \Yb.$} $|\la|\le N.$ Here $(Z_i)_i$ are independent normal, centred, complex random variables of variance $i$ and so
\begin{equation}
z_\la=\prod_{i\ge 1}  i^{m_i(\la)}m_i(\la)!.
\end{equation}

Consider now the representation $\overset{\circ}{T}_{n,m}$ in place of $V^{\ts n}$ in the above definition.   For  $n,m\ge 1$ and $\a\in S_n,\b\in S_m$ define
\begin{equation}
p_{[\a,\b]}(U)=\Tr_{\overset{\circ}{T}_{n,m}}({\rho}_{N}(\a\times\b) \rho_{n,m}(U))\quad\forall U\in (N).
\end{equation}
It only depends on the conjugacy classes of $\a$ and $\b$ and when  $\la\vdash n,\mu\vdash m$ are the  partitions  into cycle lengths of $\a,\b,$ we shall also denote  it by $p_{[\la,\mu]}$ with the convention that 
\begin{equation}
p_{[\emptyset,\emptyset]} =1, p_{[\la,\emptyset]}=p_\la \quad\text{and}\quad p_{[\emptyset,\mu]}=\overline{p}_{\mu}.
\end{equation}
For $n\ge 1$ and $a,b\ge 0,$ we set 
\begin{equation}
p_{n}^{(a,b)}=p_{[(n^a),(n^b)]}.
\end{equation}
Recall \cite{AvramTaqqu} that when $X_1,\ldots, X_k$ are random variables  on a same probability space with $m_S=\EE[\prod_{i\in S} |X_i|] <\infty$ for any subset $S$ of $[k],$ then their Wick product  is defined recursively by
\[ : X_1\ldots X_k : =P(X_1,\ldots, X_k)\]
where $P \in \C[Y_1,\ldots, Y_k]$  is a polynomial of degree $k$ such that 
\[ \frac{\partial}{\partial Y_i} (P)(X_1,\ldots,X_k)= :X_1\ldots \hat{X_i}\ldots  X_k: \quad\forall 1\le i\le k\]
and 
\[ \EE[: X_1\ldots X_k :]=0 \]
where $\hat{X_i}$ means the $i$th symbol is omitted when $k\ge 2$ and for $k=1,$ $\frac{\partial}{\partial Y_1} P(Y_1)=1.$ For instance, when  $X,Y$ have second moments, 
\[:X:= X- \EE[X] \quad\text{and}\quad :XY:= XY- \EE[X]Y-\EE[Y]X+\EE[XY].\]
When $(X_1,\ldots,X_n)$ is a centered Gaussian vector,
\begin{equation}
:X_1\ldots X_n:= \sum_{\pi} \prod_{a\not=b:\{a,b\}\in \pi} (-\EE[X_aX_b]) \prod_{c: \{c\}\in \pi} X_c\label{eq---WickNormal}
\end{equation}
and 
{ \begin{equation}
X_1\ldots X_n= \sum_{\pi} \prod_{a\not=b:\{a,b\}\in \pi} \EE[X_aX_b] : \prod_{c: \{c\}\in \pi} X_c:\label{eq---ProducttoWickNormal}
\end{equation}}
where the summation is over partitions of $[n]$ with blocs of size $1$ or  $2.$ Then for any subset $A,B$ of $[n],$ 
\begin{equation}
\EE[:\prod_{i\in A} X_i: :\prod_{i\in B}  X_i:]= \sum_{\pi} \prod_{\{a,b\}\in\pi,a\in A, b\in B} \EE[X_aX_b]\label{eq---ProductWick}
\end{equation}
where the sum runs over partition of $A\sqcup B$ into blocs of size $2.$

\begin{prop} \label{Prop---WickP} For any $\la,\mu\in \Yb$ with $|\la|+|\mu|\le N,$ 
\begin{equation}
p_{[\la,\mu]}(U) = :p_{\la}(U) p_{\mu}(U^{-1}): .
\end{equation} \end{prop}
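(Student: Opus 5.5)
We will show that $p_{[\la,\mu]}$ satisfies the two properties that characterise the Wick product: it has the right top-degree part together with the derivative recursion, and it is orthogonal to constants. Since the Wick product is determined by these, the two sides will agree.

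First we expand the trace over traceless tensors. By \eqref{eq---CharacterTraceless}, $p_{[\a,\b]}(U)=\Tr_{T_{n,m}}(P_N^{n,m}\rho_N(\a\times\b)\rho_{n,m}(U))$. We write $\Id-P_N^{n,m}$ as an alternating sum over nonempty sets of Weyl contractions, using inclusion-exclusion on the kernels of the $c_{i,j}$. A cleaner route is to decompose $T_{n,m}$ by the filtration into $\rho_N(\mathcal{J}_k)$ images. The isotypic decomposition (4) of Lemma \ref{Lem-WBJMSpectrum} gives $T_{n,m}\simeq\bigoplus_k \mathrm{Ind}\,\overset{\circ}{T}_{n-k,m-k}$, where each summand is obtained by choosing $k$ pairs $(i,j)$ and a matching, i.e. $\overset{\circ}{T}_{n-k,m-k}\ts (V\ts V^*)^{U(N)\,\ts k}$. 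Taking the trace of $\rho_N(\a\times\b)\rho_{n,m}(U)$ on each piece yields
\[p_\la(U)p_\mu(U^{-1})=\sum_{\pi}\prod_{\{a,b\}\in\pi} c_{a,b}\, p_{[\la\setminus\pi,\mu\setminus\pi]}(U),\]
where $\pi$ runs over partial matchings between cycles of $\a$ and of $\b$ of equal length $i$, and each matched pair contributes the weight $c=i=\EE[Z_i\overline Z_i]$ coming from the trace of cyclic rotation on the invariant line. This is exactly the shape of \eqref{eq---ProducttoWickNormal} for the Gaussian family $(Z_i,\overline Z_i)$ of \eqref{eq---GaussianTraces}: there $\EE[Z_iZ_j]=0$ and $\EE[Z_i\overline Z_j]=\delta_{ij}i$.

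Since this triangular identity inverts uniquely, by induction on $|\la|+|\mu|$, it forces $p_{[\la,\mu]}$ to coincide with the Gaussian Wick polynomial \eqref{eq---WickNormal} evaluated at $Z_i=p_i(U)$. It then remains to check that this Gaussian Wick product equals the Haar Wick product $:p_\la(U)p_\mu(U^{-1}):$. Both are defined by the same derivative recursion, and the constants are fixed by expectations. The needed joint moments of the $p_i(U),\overline{p_j(U)}$ agree with the Gaussian ones by \eqref{eq---GaussianTraces}, which holds as long as the total degree stays $\le N$; this is guaranteed by $|\la|+|\mu|\le N$. For the centering we use $\EE[p_{[\la,\mu]}(U)]=\dim(\overset{\circ}{T}_{n,m})^{U(N)}$ twisted by $\a\times\b$, which is $0$ for $(n,m)\ne(0,0)$ since the trivial irrep $[\emptyset,\emptyset]_N$ does not occur in \eqref{eq---IsotTraceless}.

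The main obstacle is the first step: deriving the exact weights when $T_{n,m}$ is decomposed into contracted pieces, and checking that the trace of $\a\times\b$ on a piece where the contraction pairs cycles of $\a$ and $\b$ factorises as a product over matched cycle pairs of length $i$, with weight $i$ and no $N$-dependence. We would first try to get this directly from Lemma \ref{Lem-WBJMSpectrum}(4) and the Frobenius formula, comparing characters of $\mathcal{B}_{n,m}(N)$-irreps restricted to $S_{n,m}$. If that proves awkward, we would instead verify the identity through its orthogonality consequence: show $\EE[p_{[\la,\mu]}\overline{p_{[\la',\mu']}}]=\delta z_\la z_\mu$, matching \eqref{eq---ProductWick}, using Koike-Schur-Weyl duality and Schur orthogonality.
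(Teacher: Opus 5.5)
\textbf{Comparison with the paper.} Your route is genuinely different from the paper's. The paper only uses the traceless part and the fact $q_{n,m}-\id_{n,m}\in\mathcal{J}_1$. Lemma \ref{lem---OrthoNewton} characterises $(p_{[\la,\mu]})_{|\la|+|\mu|\le N}$ as the unique orthogonal family with $p_{[\la,\mu]}-p_\la\overline{p}_\mu\in\mathcal{F}_{|\la|+|\mu|-2}$: orthogonality comes from Koike--Schur--Weyl duality plus orthogonality of $U(N)$- and $S_n$-characters, and triangularity from Lemma \ref{Lem---HarmoProjectorIdCompo}. The paper then checks that $:p_\la\overline{p}_\mu:$ has the same two properties, since every moment involved has total degree $\le N$ and is Gaussian by \eqref{eq---GaussianTraces}.

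You instead aim for the exact expansion $p_\la\overline{p}_\mu=\sum_{\nu\le\la\wedge\mu}C^{[\la,\mu]}_\nu p_{[\la\setminus\nu,\mu\setminus\nu]}$. This is the paper's second Gross--Taylor formula, which the paper only deduces \emph{from} the proposition. You then invert this unitriangular system by induction and compare with \eqref{eq---ProducttoWickNormal}, whose coefficients are the same $C^{[\la,\mu]}_\nu$ because $\EE[Z_iZ_j]=0$ and $\EE[Z_i\overline{Z}_j]=i\delta_{ij}$. The inversion is correct. Your last step (the Haar Wick product equals the Gaussian Wick polynomial in the $p_i$ because moments agree up to degree $N$) is exactly the paper's final step. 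This route yields a pointwise algebraic identity without any $L^2$/uniqueness argument. The price is that it needs the $S_{n,m}\times U(N)$-structure of all of $T_{n,m}$, not only of $\overset{\circ}{T}_{n,m}$.

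\textbf{The gap.} That structure is precisely what is not established. Lemma \ref{Lem-WBJMSpectrum}(4) describes $T_{n,m}$ as a $\mathcal{B}_{n,m}(N)\times U(N)$-module. Nothing in the paper says how the walled Brauer irreps $V^{[\la,\mu]}$ with $|\la|<n$ restrict to $S_n\times S_m$; item (6) covers only $|\la|=n$. So (4) does not give $T_{n,m}\simeq\bigoplus_k\mathrm{Ind}_{S_{n-k}\times S_{m-k}\times\Delta S_k}^{S_n\times S_m}(\overset{\circ}{T}_{n-k,m-k}\ts\mathbf{1})$.

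You need an extra input, most simply Koike's Littlewood--Richardson expansion $\chi_{\a'}\overline{\chi_{\b'}}=\sum_{\la,\mu,\nu}c^{\a'}_{\la\nu}c^{\b'}_{\mu\nu}\chi_{[\la,\mu]_N}$ for $\ell(\a')+\ell(\b')\le N$ (\cite[Thm 2.3]{Koike}, mentioned but not used in the paper). With it, the character comparison you suggest goes through:
\begin{enumerate}
\item insert it into $p_\a\overline{p_\b}=\sum_{\a',\b'}\chi^{\a'}(\a)\chi^{\b'}(\b)\chi_{\a'}\overline{\chi_{\b'}}$;
\item write $\sum_{\a'}c^{\a'}_{\la\nu}\chi^{\a'}(\a)$ as an induced character, i.e. a sum over $\a$-stable $A\subset[n]$ of $\chi^\la(\a_{|A^c})\chi^\nu(\a_{|A})$;
\item use column orthogonality $\sum_\nu\chi^\nu(x)\chi^\nu(y)=z_x1_{x\sim y}$.
\end{enumerate}
The weight $i$ per matched pair of $i$-cycles then appears as the number of bijections intertwining the two cycles, i.e. the number of $(\a\times\b)$-fixed matchings. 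It is not the trace of a rotation on the invariant line, which is $1$.

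A direct proof that $T_{n,m}=\bigoplus_M\iota_M(\overset{\circ}{T}_{n-|M|,m-|M|})$ for $N\ge n+m$ is also possible. Spanning follows since $(\overset{\circ}{T}_{n,m})^\perp=\sum_{i,j}\mathrm{Im}(c_{i,j}^*)$ and $c_{i,j}^*$ inserts the invariant vector. Directness, however, needs a dimension count that the paper does not supply.

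\textbf{Minor points.} Drop the inclusion--exclusion remark: the intersection of the kernels of the $c_{i,j}$ is not computed that way. The announced ``derivative recursion'' for $p_{[\la,\mu]}$ and the centering check play no role once $p_{[\la,\mu]}$ is identified with the Gaussian Wick polynomial evaluated at $Z_i=p_i(U)$.
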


This observation follows from the almost Gaussianity \eqref{eq---GaussianTraces} and the fact that  $(p_{[\la,\mu})_{\la,\mu}$ are an orthogonalisation of $(p_\la \overline{p}_\mu)_{\la,\mu}$. For all $n\ge 0,$ let  $\mathcal{F}_n$ be the linear span of $\{p_\la\overline{p}_\mu : |\la|+|\mu|\le n \}$ and $\mathcal{F}_{-n}=\{0\}.$ 

\begin{lem}  \label{lem---OrthoNewton} The family $(p_{[\la,\mu]})_{\la,\mu\in \Yb, |\la|+|\mu|\le N} $ is the unique  orthogonal family of $L^2(U(N))$ with  
\begin{equation}
\<p_{[\la,\mu]},p_{[\tilde\la,\tilde\mu]}\>_{\mathrm{L}^2(U(N))}=  z_{\la}z_{\mu}\delta_{\la,\tilde\la}\delta_{\mu,\tilde\mu}\label{eq---OrthoHarmoNewton}
\end{equation}
and
\begin{equation}
p_{[\la,\mu]}-p_{\la}\overline{p}_\mu\in \mathcal{F}_{|\la|+|\mu|-2} \label{eq---HierarchyNewton}
\end{equation}
for all $ \la,\tilde\la,\mu,\tilde\mu\in\Yb$ with $ |\la|+|\mu|,  |\tilde\la|+|\tilde\mu|  \le N.$ 
 \end{lem}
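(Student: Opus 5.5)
We prove Lemma \ref{lem---OrthoNewton} in two parts: first the orthogonality relation \eqref{eq---OrthoHarmoNewton}, then the triangularity \eqref{eq---HierarchyNewton}, and finally uniqueness by a Gram--Schmidt argument.

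\textbf{Orthogonality.} We expand $p_{[\la,\mu]}$ in irreducible characters using the Koike--Schur--Weyl decomposition \eqref{eq---IsotTraceless}. Taking the trace of $\rho_N(\a\times\b)\rho_{n,m}(U)$ over each isotypic block $V^{[\nu,\kappa]}\ts V_{[\nu,\kappa]_N}$ gives
\[
p_{[\la,\mu]}(U)=\sum_{\nu\vdash n,\kappa\vdash m,\ \ell(\nu)+\ell(\kappa)\le N}\chi^\nu(\la)\chi^\kappa(\mu)\,\chi_{[\nu,\kappa]_N}(U).
\]
Since $n+m\le N$, every pair $(\nu,\kappa)$ with $\nu\vdash n$, $\kappa\vdash m$ satisfies the length constraint, so the sum runs over all such pairs. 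The highest weights $[\nu,\kappa]_N$ are distinct for distinct $(n,m,\nu,\kappa)$ with $n+m\le N$. By orthonormality of irreducible characters in $L^2(U(N))$,
\[
\<p_{[\la,\mu]},p_{[\tilde\la,\tilde\mu]}\>=\delta_{|\la|,|\tilde\la|}\,\delta_{|\mu|,|\tilde\mu|}\sum_{\nu}\chi^\nu(\la)\chi^\nu(\tilde\la)\sum_\kappa\chi^\kappa(\mu)\chi^\kappa(\tilde\mu).
\]
The column orthogonality relations for $S_n$ and $S_m$ turn this into $z_\la z_\mu\delta_{\la,\tilde\la}\delta_{\mu,\tilde\mu}$, which is \eqref{eq---OrthoHarmoNewton}.

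\textbf{Triangularity.} Write $\Id=P_N^{n,m}+(\Id-P_N^{n,m})$ on $T_{n,m}$. Then
\[
p_\la\overline{p}_\mu=\Tr_{T_{n,m}}\big(\rho_N(\a\times\b)\rho_{n,m}(U)\big)=p_{[\la,\mu]}+\Tr_{T_{n,m}}\big((\Id-P_N^{n,m})\rho_N(\a\times\b)\rho_{n,m}(U)\big),
\]
using that $P_N^{n,m}$ commutes with both actions. By Lemma \ref{Lem-WBJMSpectrum}(4), $\Id-P_N^{n,m}$ projects onto $\bigoplus_{k\ge1}\bigoplus_{\nu\vdash n-k,\kappa\vdash m-k}V^{[\nu,\kappa]}\ts V_{[\nu,\kappa]_N}$. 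Hence the correction term is a linear combination of characters $\chi_{[\nu,\kappa]_N}$ with $|\nu|+|\kappa|\le n+m-2$.

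It remains to show that each such character lies in $\mathcal{F}_{|\nu|+|\kappa|}$. We argue by induction on $|\nu|+|\kappa|$ with the inversion $\chi_{[\nu,\kappa]_N}=\frac{1}{|\nu|!|\kappa|!}\sum_{\a,\b}\chi^\nu(\a)\chi^\kappa(\b)\,p_{[\a,\b]}$, which follows from character orthogonality in the expansion above. Combining this inversion with the decomposition just obtained at lower level expresses every $p_{[\cdot,\cdot]}$ and every $\chi_{[\cdot,\cdot]_N}$ of size $\le s$ in $\mathcal{F}_s$. The step expecting most care is organising this joint induction cleanly; its base case is $p_{[\emptyset,\emptyset]}=1$.

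\textbf{Uniqueness.} Since $\{p_\la\overline{p}_\mu\}$ spans the filtration $\mathcal{F}_n$, and \eqref{eq---HierarchyNewton} together with orthogonality forces $p_{[\la,\mu]}$ to be $p_\la\overline p_\mu$ minus its orthogonal projection onto $\mathcal{F}_{|\la|+|\mu|-2}$ (the parity-shifted pieces being orthogonal), the family is determined by Gram--Schmidt.

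Two points need checking here. First, elements of different degree $|\la|+|\mu|$ but the same parity and the same $n-m$ are orthogonal; this follows from the distinctness of highest weights established above. Second, the gap of $2$ is needed: differing $n-m$ gives orthogonality via the action of the centre $U(1)$.
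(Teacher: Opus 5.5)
Your proof is correct. Orthogonality and uniqueness follow the paper: you expand $p_{[\la,\mu]}$ into irreducible characters via Koike--Schur--Weyl, use orthonormality of $U(N)$-characters together with column orthogonality in $S_n\times S_m$, and then run Gram--Schmidt along the filtration $(\mathcal{F}_s)_s$.

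The genuine difference is in the triangularity \eqref{eq---HierarchyNewton}.

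\textbf{The paper's route.} It writes $p_{[\a,\b]}(U)=\Tr_{T_{n,m}}(\rho_N(\a\times\b)\rho_N(q_{n,m})\rho_{n,m}(U))$ and uses $q_{n,m}-\id_{n,m}\in\mathcal{J}_1$ (Lemma \ref{Lem---HarmoProjectorIdCompo}). Every diagram with a horizontal string then contributes directly a single product $N^{l}p_{\la'}\overline{p}_{\mu'}$ with $|\la'|+|\mu'|\le n+m-2$. The reason is that a cycle through a horizontal string visits both a $V$- and a $V^*$-position, so a $U$ cancels against a $U^{-1}$. No induction is needed, and the coefficients $\kappa_N(\tau)$ are explicit.

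\textbf{Your route.} You use the full $\mathcal{B}_{n,m}(N)\times U(N)$-decomposition of $T_{n,m}$ (Lemma \ref{Lem-WBJMSpectrum}(4)). This expresses the correction as a combination of lower characters $\chi_{[\nu,\kappa]_N}$ with $|\nu|+|\kappa|\le n+m-2$. You then need a joint induction with the inversion $\chi_{[\nu,\kappa]_N}=\frac{1}{|\nu|!|\kappa|!}\sum\chi^\nu(\a)\chi^\kappa(\b)p_{[\a,\b]}$ to place these characters in $\mathcal{F}_{|\nu|+|\kappa|}$. That induction is sound:
\begin{itemize}
\item at level $s$ the inversion only involves $p_{[\a,\b]}$ of level $s$;
\item their corrections only involve characters of level at most $s-2$;
\item the inversion is legitimate because $|\nu|+|\kappa|\le N$ makes the length constraint vacuous.
\end{itemize}

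\textbf{What each buys.} Your route never uses the Brauer-diagram form of $P_N^{n,m}$. As a by-product it shows that every $\chi_{[\nu,\kappa]_N}$ with $|\nu|+|\kappa|\le N$ lies in $\mathcal{F}_{|\nu|+|\kappa|}$; your inversion formula is exactly the first identity in \eqref{eq---GenFrobenius}. The paper's route is shorter and makes the lower-order terms explicit.

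The two ``points to check'' at the end of your uniqueness paragraph are superfluous. Orthogonality of a competing family is part of the hypothesis. Orthogonality of the $p_{[\cdot,\cdot]}$ across different degrees is already contained in your first step. Finally, projecting onto $\mathcal{F}_{s-2}$ (as you do) or onto $\mathcal{F}_{s-1}$ (as the paper does) yields the same element.
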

 
Let us give an argument that builds on the  one of \cite{DiacShah} that we recall now for completeness. 
 \begin{proof}[Proof of \eqref{eq---GaussianTraces}] For any $\sigma\in S_n,$
\[p_\sigma=\sum_{\la\vdash n,\ell(\la)\le N}\chi^\la(\sigma)\chi_\la\]
and for $\a\in S_n,\b\in S_m$, by orthogonality of $U(N)$-characters 
\[\<p_{\a},p_\b\>_{L^2(U(N))}=\delta_{n,m}\sum_{\la\vdash n,\ell(\la)\le N} \chi^\la(\a)\chi^\la(\b^{-1}).\]
In particular \eqref{eq---GaussianTraces} holds when $|\la|\not=|\mu|.$  Also for $n\le N,$ denoting by $\chi(\mu)$ the value of an $S_n$-character $\chi$ on a conjugacy class with cycle length type $\mu,$ by orthogonality of characters $(z_\mu^{-\frac{1}{2}}\chi^\la(\mu))_{\la,\mu\vdash n}$ is an orthogonal matrix. Orthogonality of its columns together with the last display yield  \eqref{eq---OrthoHarmoNewton}  when $|\la|\le N.$

 \end{proof}
 \begin{proof}[Proof of Lemma \ref{lem---OrthoNewton}]

Assume  $n,m\ge 1$.  For any $\a\times\b\in S_n\times S_m$  by Koike-Schur-Weyl duality
\begin{equation}
p_{[\a,\b]}=\sum_{\la\vdash n,\mu\vdash m,\ell(\la)+\ell(\mu)\le N}\chi^{[\la,\mu]}(\a\times \b)\chi_{[\la,\mu]}\label{eq---HarmoNewtonTOCharacters}
\end{equation}
and by  orthogonality of $U(N)$-characters, for $\pi\times\sigma\in S_{n',m'},$
\begin{equation}
\<p_{[\a, \b]},p_{[\pi,\sigma]}\>_{L^2(U(N))}=\delta_{n,n'}\delta_{m,m'}\sum_{\la\vdash n,\mu\vdash m,\ell(\la)+\ell(\mu)\le N} \chi^\la(\a) \chi^\mu(\b) \chi^\la(\pi^{-1}) \chi^\mu(\sigma^{-1}). \label{eq---orthoNewton}
\end{equation}
When $N\ge n+m,$ since $(z_\mu^{-\frac{1}{2}}\chi^\la(\mu))_{\la,\mu\vdash n}$ is orthogonal, the last display equals
\[(\sum_{\la\vdash n} \chi^\la(\a) \chi^\la(\pi^{-1}))(\sum_{\mu\vdash m}\chi^\mu(\b) \chi^\mu(\sigma^{-1}))=\delta_{[\a],[\pi]}\delta_{[\b],[\sigma]}z_{[\a]}z_{[\b]}.\]
This concludes \eqref{eq---OrthoHarmoNewton}.  Now writing $p_{[\a,\b]}(U)=\Tr_{T_{n,m}}(\rho_{N} ( \a\times \b)\rho_N ( q_{n,m}) \rho_{n,m}(U)),$ the identity   \eqref{eq---HierarchyNewton} follows from Lemma 
\ref{Lem---HarmoProjectorIdCompo} and is valid for all $\la,\mu\in\Yb$ with $|\la|+|\mu|\le N.$   By induction on  $n$ with $n\le N,$  it implies that $\{p_{[\la,\mu] :|\la|+|\mu|\le n}\}$ is a basis of $\mathcal{F}_n$ and $p_\la\overline p_\mu-p_{[\la,\mu]} $ is the orthogonal projection of $p_{\la}\overline{p}_\mu$ on $\mathcal{F}_{n-1}$ for any $\la,\mu\in \Yb$ with $|\la|+|\mu|\le N.$ 
 
\end{proof}

 \begin{rmk} It follows also from \eqref{eq---orthoNewton} that  \eqref{eq---OrthoHarmoNewton} holds when $|\la|+|\mu|\le N< |\tilde\la|+|\tilde\mu|. $
 \end{rmk}
 
 \begin{proof}[Proof of Prop. \ref{Prop---WickP}]  For any $\la,\mu\in \Yb,$ denote by $Q_{\la,\mu}$ the polynomial with 
 \[: \prod_i Z_{i}^{m_i(\la)}  \overline{Z}_i^{m_i(\mu)}:= Q_{\la,\mu}(Z_i,\overline{Z}_i,i\ge 1).\] 
 When  $|\la|+|\mu|\le N,$ since  $:p_{\la}(U) p_\mu(U^{-1}):$ only depends on moments $\EE[p_{\nu}\overline{p}_{\pi}]$ with $|\nu|+|\pi|\le N,$ by \eqref{eq---GaussianTraces}, 
 \[:p_{\la}(U) p_\mu(U^{-1}):=Q_{\la,\mu}(p_i,\overline{p}_i,i\ge 1).\]
In particular thanks to \eqref{eq---WickNormal}, $:p_{\la}(U) p_\mu(U^{-1}):-p_\la\overline{p}_\mu\in\mathcal{F}_{|\la|+|\mu|-2}.$ Similarly for $|\la|+|\mu|, |\tilde\la|+|\tilde\mu|\le  N ,$ since $|\la|+|\tilde\mu|\le N$ or  $|\tilde \la|+|\mu|\le N$, using 
first \eqref{eq---GaussianTraces} and then \eqref{eq---ProductWick},
\begin{align*}
\EE[:p_{\la} \overline{p}_\mu:  \overline{: p_{\tilde\la} \overline p_{\tilde\mu}:}]  &=\EE[:p_{\la} \overline{p}_\mu:  :\overline p_{\tilde \la} p_{\tilde\mu}: ]\\
&= \EE[ : \prod_i Z_{i}^{m_i(\la)}  \overline{Z}_i^{m_i(\mu)}):  : \prod_i \overline{Z}_{i}^{m_i(\tilde\la)}  Z_i^{m_i(\tilde\mu)}):  ]\\
&=\delta_{\la,\tilde \la}\delta_{\mu,\tilde\mu}\prod_{i\ge 1}(i^{m_i(\la)+m_i(\mu)} m_i(\la)!m_i(\mu)!)=  \delta_{\la,\tilde \la}\delta_{\mu,\tilde\mu} z_{\la}z_\mu.
\end{align*}
 The identity follows then from Lemma \ref{lem---OrthoNewton}.

  \end{proof}
  
  When $|\la|+|\mu|\le N,$ the functions $p_{[\la,\mu]}$ can therefore be written explicitly in terms of standard Newton functions.  Define a partial order on $\mathbb{Y}$ setting 
 \begin{equation}
 \la\le \mu \quad \text{if and only if} \quad  m_i(\la)\le m_i(\mu)\quad \text{for all} \, i\ge 1.
 \end{equation}
  For instance, for $\la,\mu\in\mathbb{Y},$ $\la\wedge \mu$ is the partition with  $m_i(\la\wedge \mu)=m_i(\la) \wedge m_i(\mu)$  for all $i.$ When $\la\le \mu,$ let $\mu\setminus\la\in\mathbb{Y}$ be the partition with 
  \begin{equation}
  m_i(\mu\setminus\la)=m_i(\mu)-m_i(\la)\quad\forall i\ge 1.
  \end{equation}
For any $\la,\mu\in\mathbb{Y}$ with $\la \le \mu,$  
\begin{equation}
c_{\la, \mu }=\prod_{i\ge 1} { m_i(\mu)\choose m_i(\la)}\label{eq---binomial}
\end{equation}
is the number of ways to yield $\mu\setminus\la$ by deleting some parts of $\mu.$ For $\nu,\la,\mu\in \Yb$ with $\nu\le \la\wedge \mu,$ set 
\begin{equation}
C_\nu^{[\la,\mu]}= z_\nu  c_{\nu,\la}  c_{\nu,\mu}=\prod_i \frac{m_i(\la)! m_i(\mu)! i^{m_i(\nu)}}{ m_i(\nu)! m_i(\la \setminus \nu)! m_i(\mu\setminus\nu)!}. 
\end{equation}

\begin{lem}[Gross-Taylor formulas]  For all $\la,\mu\in\Yb$ with $|\la|+|\mu|\le N,$
\begin{equation}
p_{[\la,\mu]}=\prod_i p_{i}^{(m_i(\la),m_i(\mu))}. \label{eq---factoHarmoNewton}
\end{equation}
Moreover 
\begin{equation}
p_{\la,\mu}= \sum_{\nu\in\mathbb{Y}:  \,\nu \le \la\wedge \mu}  C_{\nu}^{[\la,\mu]} p_{[\la\setminus \nu,\mu\setminus\nu]} 
\end{equation}
and
\begin{equation}
p_{[\la,\mu]}=\sum_{\nu\in\mathbb{Y}:  \,\nu \le \la\wedge \mu} (-1)^{\ell(\nu)}  C_{\nu}^{[\la,\mu]}  p_{\la\setminus \nu}\overline p_{\mu\setminus\nu}. \label{eq---GTHarmoNewton} \end{equation}
 \end{lem}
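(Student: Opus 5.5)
The plan is to deduce all three identities from Proposition \ref{Prop---WickP}, which identifies $p_{[\la,\mu]}$ with the Wick product $:p_\la(U)p_\mu(U^{-1}):$. By \eqref{eq---GaussianTraces}, this Wick product equals $Q_{\la,\mu}(p_i,\overline p_i)$, where $Q_{\la,\mu}$ is the Wick polynomial of the Gaussian monomial $\prod_i Z_i^{m_i(\la)}\overline Z_i^{m_i(\mu)}$, with $(Z_i)$ independent centred complex Gaussians satisfying $\EE[Z_i\overline Z_i]=i$ and $\EE[Z_iZ_j]=0$. Every identity can therefore be proved at the Gaussian level and then transported back by substituting $Z_i\mapsto p_i$. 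This substitution is legitimate because $|\la|+|\mu|\le N$, so only moments $\EE[p_\nu\overline p_\pi]$ with $|\nu|+|\pi|\le N$ are involved.

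For the factorisation \eqref{eq---factoHarmoNewton}, recall that the Wick product of a family that splits into mutually independent blocks factorises as the product of the Wick products of the blocks. The blocks here are $\{Z_i,\overline Z_i\}$, one for each $i$. This is immediate from \eqref{eq---WickNormal}, since any pairing with nonzero weight only matches $Z_i$ with $\overline Z_i$ for the same $i$. Hence $Q_{\la,\mu}=\prod_i Q_{(i^{m_i(\la)}),(i^{m_i(\mu)})}$. Applying Proposition \ref{Prop---WickP} to each factor gives $p_i^{(m_i(\la),m_i(\mu))}$; each of these factors again has total degree at most $N$.

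For the two expansion formulas, I would work inside a single block first and then multiply over $i$. In one block, write $a=m_i(\la)$, $b=m_i(\mu)$ and $X=Z_i$. Formula \eqref{eq---WickNormal} gives
\[:X^a\overline X^b:=\sum_{k}(-i)^k\binom ak\binom bk k!\,X^{a-k}\overline X^{b-k},\]
because a pairing consists of $k$ pairs, each joining one $X$ to one $\overline X$, and there are $\binom ak\binom bk k!$ such pairings. Formula \eqref{eq---ProducttoWickNormal} gives the same expansion with the signs removed, expressing $X^a\overline X^b$ in terms of the Wick products $:X^{a-k}\overline X^{b-k}:$. Taking the product over $i$ and writing $k=m_i(\nu)$ produces the coefficient $\prod_i i^{m_i(\nu)}m_i(\nu)!\binom{m_i(\la)}{m_i(\nu)}\binom{m_i(\mu)}{m_i(\nu)}=C^{[\la,\mu]}_\nu$, together with the sign $(-1)^{\sum_i m_i(\nu)}=(-1)^{\ell(\nu)}$. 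Substituting $Z_i\mapsto p_i$ then gives \eqref{eq---GTHarmoNewton}. In the other formula, each Wick product $:p_{\la\setminus\nu}\overline p_{\mu\setminus\nu}:$ is converted back into $p_{[\la\setminus\nu,\mu\setminus\nu]}$ by Proposition \ref{Prop---WickP}; the hypothesis of that proposition holds because $|\la\setminus\nu|+|\mu\setminus\nu|\le N$.

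I expect the main obstacle to be justifying the transfer from Gaussian variables to traces. An identity that holds as a polynomial identity for the $Z_i$ holds verbatim for the $p_i$, so the only point that needs care is that the Wick polynomial of the $p$'s coincides with $Q_{\la,\mu}$. This follows from the recursive definition of the Wick product: it is determined by the mixed moments of orders up to $|\la|+|\mu|\le N$, and these moments agree with the Gaussian ones by \eqref{eq---GaussianTraces}. With that settled, the remaining work is the combinatorial count of pairings, which is routine.
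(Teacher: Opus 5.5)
Your proposal is correct and follows the paper's own route: Proposition~\ref{Prop---WickP} reduces everything to the Gaussian formulas \eqref{eq---WickNormal} and \eqref{eq---ProducttoWickNormal}, and your pairing count correctly reproduces $C_\nu^{[\la,\mu]}$ and the sign $(-1)^{\ell(\nu)}$. The only cosmetic difference is that the paper reads off the factorisation \eqref{eq---factoHarmoNewton} from the third formula, whereas you get it from the factorisation of Wick products over the independent blocks $\{Z_i,\overline Z_i\}$; this is the same observation.
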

 \begin{proof} The first formula follows from the third. Thanks to Proposition \ref{Prop---WickP}, the second and third  equations follow from  \eqref{eq---WickNormal} and \eqref{eq---ProducttoWickNormal}.
 \end{proof}

 \begin{rmk} The second formula implies that for any $a,b\ge0,i\ge 1$ with $a+b\le \frac N i,$
 \begin{equation}
  p_i^{(a,b)}=\sum_{k=0}^{a\wedge b} (-1)^k k! i^k {a \choose k} {b \choose k}  p^{a-k}_i \overline{p}^{b-k}_i.
 \end{equation}
 \end{rmk}

The above three formulas are due to Gross and Taylor \cite[Appendix A]{GTII}, who also gave the outline of an argument showing $(p_{[\la,\mu]})_{\la,\mu}$ is the orthogonalisation of $(p_{\la}\overline{p}_\mu)_{\la,\mu}$. \footnote{ In contrast to the current proof, Gross and Taylor do not  mention nor use the relation to Wick products. Instead they make an Ansatz for the form \eqref{eq---factoHarmoNewton} of $p_{[\la,\mu]}$ and provide instead a direct computation  to show the  
orthogonality \eqref{eq---OrthoHarmoNewton}.}  

In turn the last proposition  yields as well the expression \eqref{eq---GenFrobenius} for irreducible $U(N)$-characters generalising Fröbenius formula. For any $n\ge 0,$ set 
\begin{equation}
\mathcal{H}_n= \mathrm{Span}\{ p_{[\la,\mu]} : \la,\mu \in \Yb \,\text{with}\, : |\la|+|\mu|=n \}
\end{equation}
and denote by $P_n$ the orthogonal projection of $\mathrm{L}^2(U(N))$ onto $\mathcal{H}_n$.

\begin{coro} \label{coro---FrobeniusGT} For any $\la,\mu\in \Yb $ with $|\la|+|\mu|\le N,$ 
\begin{align}
\chi_{[\la,\mu]_N}&= P_{|\la|+|\mu|}(\chi_\la \overline{\chi}_\mu)\label{eq---ProjectionSchur}\\
&=\frac{1}{n!m!}\sum_{\a\times \b\in S_{n,m}} \chi^\la(\a)\chi^\mu(\b) p_{[\a,\b]}=\frac{1}{n!m!}\sum_{\a\times \b\in S_{n,m}} \chi^\la(\a)\chi^\mu(\b) :p_\a\overline{p}_\b :.\label{eq---GenFrobenius} 
\end{align}
Moreover 
\[d_{\la,\mu}= \frac{1}{n! m!} \chi^{[\la,\mu]}(\Omega_{n,m})\]
with 
\begin{align}
 \Omega_{n,m}&= N^{n+m}\sum_{\a\times \b\in S_{n,m}} N^{-|\a|-|\b|}  \EE\left[\prod_{k\ge 1} (1+\frac{i}{N} Z_k)^{m_k(\a)} (1+\frac {i} N \overline{Z_k})^{m_k(\b)} \right]\a\times\b \label{eq---OmegaFunctionGT}\\
 &=\sum_{\a\times\b\in S_{n,m}} \EE\left[\prod_{k\ge 1} (N+i Z_k)^{m_k(\a)} (N+i \overline{Z_k})^{m_k(\b)}\right]\a\times\b,
\end{align}
where $(Z_k)_{k\ge 1}$ are independent,  with $Z_k\sim\mathcal{N}_\mathbb{C}(0,k)$ and  $m_k(\sigma)$ stands for the number of cycles of length $k$ of  a permutation $\sigma$ for all $k\ge 1.$
\end{coro}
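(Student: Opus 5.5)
The plan is to prove the three displays of Corollary \ref{coro---FrobeniusGT} in order, using Koike--Schur--Weyl duality, Proposition \ref{Prop---WickP}, Lemma \ref{lem---OrthoNewton} and the dimension formula \eqref{eq---DimOmega}.

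For \eqref{eq---GenFrobenius}, start from Corollary \ref{coro---CharacTraceHarmoT}, which writes $n!m!\chi_{[\la,\mu]_N}$ as $\Tr_{\overset{\circ}{T}_{n,m}}(\rho_N(\chi^{[\la,\mu]})\rho_{n,m}(U))$. Expanding $\chi^{[\la,\mu]}=\sum_{\a\times\b}\chi^\la(\a)\chi^\mu(\b)\,\a\times\b$ gives
\[
n!m!\chi_{[\la,\mu]_N}=\sum_{\a\times\b}\chi^\la(\a)\chi^\mu(\b)\,p_{[\a,\b]}.
\]
Characters of symmetric groups are real and class functions, so the inverses coming from the definition of $p_{[\a,\b]}$ cause no trouble. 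The second equality in \eqref{eq---GenFrobenius} is then immediate from Proposition \ref{Prop---WickP}.

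For \eqref{eq---ProjectionSchur}, use the classical Frobenius formulas $\chi_\la=\frac1{n!}\sum_\a\chi^\la(\a)p_\a$ (valid with $\chi_\la=0$ when $\ell(\la)>N$) and $\overline{\chi}_\mu=\frac1{m!}\sum_\b\chi^\mu(\b)\overline p_\b$. These give
\[
\chi_\la\overline\chi_\mu=\frac1{n!m!}\sum_{\a,\b}\chi^\la(\a)\chi^\mu(\b)\,p_\a\overline p_\b .
\]
By Lemma \ref{lem---OrthoNewton} and its proof, the spaces $\mathcal H_k$ for $k\le N$ are mutually orthogonal. Moreover $p_\a\overline p_\b-p_{[\a,\b]}\in\mathcal F_{n+m-2}=\bigoplus_{k\le n+m-2}\mathcal H_k$, so $P_{n+m}(p_\a\overline p_\b)=p_{[\a,\b]}$. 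Applying $P_{n+m}$ term by term and using the first step yields \eqref{eq---ProjectionSchur}. The one point needing care is that $\mathcal H_{n+m}\perp \mathcal F_{n+m-2}$ inside $L^2(U(N))$; this holds because $n+m\le N$, and the remark after Lemma \ref{lem---OrthoNewton} covers the mixed-size cases.

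For the dimension and $\Omega_{n,m}$ formula, the relation $d_{[\la,\mu]_N}=\chi^{[\la,\mu]}(\Omega_{n,m})/(n!m!)$ is exactly \eqref{eq---DimOmega}, so what remains is to identify the coefficients $\Omega_{n,m}(\a\times\b)$. Since $\Omega_{n,m}(\sigma)=\Tr_{\overset\circ T_{n,m}}(\rho_N(\sigma^{-1}))=p_{[\a^{-1},\b^{-1}]}(\Id_N)$ and cycle types are invariant under inversion, it suffices to evaluate $p_{[\la,\mu]}(\Id)$. Proposition \ref{Prop---WickP} gives $p_{[\la,\mu]}=Q_{\la,\mu}(p_k,\overline p_k)$, where $Q_{\la,\mu}$ is the Wick polynomial of the Gaussian family $(Z_k,\overline Z_k)$. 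Since $p_k(\Id)=N$, we need $Q_{\la,\mu}$ evaluated at $Z_k=\overline Z_k=N$.

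The key identity, and the step I expect to be the main obstacle, is that for a centered Gaussian vector
\[
:\!X_1\cdots X_r\!:\big|_{X=x}=\EE\Big[\prod_j(x_j+iG_j)\Big],
\]
where $G$ is an independent copy of $X$ with the same complex covariance structure. I would check this by expanding the right side: each pairing contributes $i^2\EE[G_aG_b]=-\EE[X_aX_b]$, which matches \eqref{eq---WickNormal}. The only pairings that survive are $Z_k$ with $\overline Z_k$, with $\EE[Z_k\overline Z_k]=k$, consistent with the Gross--Taylor coefficients in \eqref{eq---GTHarmoNewton}. Evaluating at $x=N$ gives $\EE\prod_k(N+iZ_k)^{m_k(\a)}(N+i\overline Z_k)^{m_k(\b)}$, and factoring out $N^{\#\a+\#\b}=N^{n+m-|\a|-|\b|}$ gives the first form of \eqref{eq---OmegaFunctionGT}.
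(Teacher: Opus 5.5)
Your proposal is correct and follows essentially the same route as the paper: the Frobenius-type identity comes from Koike--Schur--Weyl duality, the projection identity from \eqref{eq---HierarchyNewton} together with \eqref{eq---OrthoHarmoNewton}, and the $\Omega_{n,m}$ formula from evaluating the Wick polynomial at $\Id_N$. The one difference is cosmetic. The paper expands $p_{[\la,\mu]}(\Id_N)$ with the Gross--Taylor coefficients $C_\nu^{[\la,\mu]}$ of \eqref{eq---GTHarmoNewton} and then asserts, without proof, that this sum equals $\EE[\prod_k (N+iZ_k)^{m_k(\la)}(N+i\overline{Z}_k)^{m_k(\mu)}]$; your pairing check of $:X_1\cdots X_r:|_{X=x}=\EE[\prod_j(x_j+iG_j)]$ is exactly the justification of that identity.
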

  
\begin{proof}  The first identity of \eqref{eq---GenFrobenius} follows the Koike-Schur-Weyl identity \eqref{eq---HarmoNewtonTOCharacters} and the orthogonality of characters.  Together with the orthogonality relations \eqref{eq---OrthoHarmoNewton}, this identity yields  \eqref{eq---ProjectionSchur}.  Evaluating \eqref{eq---GenFrobenius} at $\mathrm{Id}_N,$ the last relation follows from \eqref{eq---GTHarmoNewton} and that for any $\nu,\la,\mu\in\Yb$ with $\nu\le \la\wedge\mu,$
\[\sum_{\nu\in\mathbb{Y}:  \,\nu \le \la\wedge \mu} (-1)^{\ell(\nu)}  C_{\nu}^{[\la,\mu]}  p_{\la\setminus \nu}(\Id_N)\overline p_{\mu\setminus\nu}(\Id_N)= \EE[\prod_{k} (N+i  Z_k)^{m_i(\la)}(N+i\overline{Z}_k)^{m_k(\mu)}] . \]
\end{proof}

}  

\section*{Acknowledgement}

Many thanks are due to G.Cébron, B. Hall, T. Lemoine,  T. L\'evy  and M. Magee for  discussions related to this work. We  are in particular grateful to Thierry L\'evy for sharing in 2022 the formula \eqref{eq---WilsonLoopFaceCharac}  and  ideas of its proof,  which has been the impetus for starting this project.

\bibliographystyle{alpha}

\newcommand{\etalchar}[1]{$^{#1}$}

\end{document}